\documentclass[a4paper]{article}
\usepackage{graphicx}
\usepackage{amsmath}
\usepackage{amssymb}
\usepackage{amsfonts}
\usepackage{amsthm}
\usepackage[all]{xy}
\usepackage{makeidx}
\usepackage{eucal}
\usepackage{tikz}
\usepackage[nottoc]{tocbibind}

\usetikzlibrary{trees, positioning, calc}

\theoremstyle{plain}
\newtheorem{theorem}{Theorem}[subsection]
\newtheorem{lemma}[theorem]{Lemma}
\newtheorem{proposition}[theorem]{Proposition}
\newtheorem{corollary}[theorem]{Corollary}
\newtheorem*{proposition*}{Proposition}

\theoremstyle{definition}
\newtheorem{definition}[theorem]{Definition}
\newtheorem{example}[theorem]{Example}

\newtheorem{remark}[theorem]{Remark}

\makeatletter
\@addtoreset{theorem}{section}
\@addtoreset{subsection}{section}
\makeatother

\DeclareMathAlphabet{\mathpzc}{OT1}{pzc}{m}{it}
\setlength{\parindent}{0.0in}
\setlength{\parskip}{0.05in}

\setlength{\oddsidemargin}{20pt} 
\setlength{\evensidemargin}{20pt} 
\setlength{\textwidth}{411pt} 
\setlength{\marginparwidth}{0pt}

\begin{document}

\begin{center}

{ \LARGE \bfseries On the equivalence between Lurie's model and the dendroidal model for infinity-operads}\\[0.5cm]
\textsc{ \large  Gijs Heuts, Vladimir Hinich and Ieke Moerdijk } \\ [2cm]

\end{center}




\begin{abstract}
We compare two approaches to the homotopy theory of $\infty$-operads. One of them, the theory of dendroidal sets, is based on an extension of the theory of simplicial sets and $\infty$-categories which replaces simplices by trees. The other is based on a certain homotopy theory of marked simplicial sets over the nerve of Segal's category $\Gamma$. In this paper we prove that for operads without constants these two theories are equivalent, in the precise sense of the existence of a zig-zag of Quillen equivalences between the respective model categories.
\end{abstract}

\tableofcontents

\newpage

\section{Introduction}
The goal of this paper is to compare two rather different approaches to the theory of higher operads. Both theories are based on and to some extent parallel the theory of higher categories. Ordinary category theory arose in algebraic topology, in the analysis of functoriality of constructions in the homotopy category of spaces or spectra and related categories like the derived category of an abelian category. In this context, it was soon realised that the naive notions of limit and colimit are of little practical use. Instead, one needs the notions of homotopy limits and colimits, the description of which requires higher categorical structure. One of the standard solutions is to equip the category of spaces (or spectra, or chain complexes, etc.) with the additional structure of a Quillen model category \cite{quillenrational,quillenHA}. Another and closely related way of encoding much of the same information is by a simplicial category constructed as the Dwyer-Kan localisation \cite{dwyerkan}. \par

Geometric problems have subsequently led to the analysis of the totality of homotopy categories --- these are, for example, problems of homotopical descent, where one needs to consider a homotopy category which is `glued' from `smaller' homotopy theories consisting of locally given objects. To efficiently study these questions, one needs a `homotopy theory' of these higher structures, or what is sometimes referred to as a `homotopy theory of homotopy theories'. As a consequence various concepts have arisen, among which we mention Rezk's theory of complete Segal spaces \cite{rezk}, the category of simplicial categories equipped with their so-called Dwyer-Kan model category structure \cite{bergner}, as well as the category of simplicial sets itself, but endowed with a weaker model structure than the classical Quillen one, namely the Joyal model structure \cite{joyal}. These approaches are all equivalent, at least to the extent that they can be related by (zig-zags of) Quillen equivalences.  All these approaches yield what is now called a theory of $\infty$-categories, or more precisely of $(\infty,1)$-categories: morally, they describe higher categorical objects for which nontrivial arrows of all degrees exist, but all higher arrows are invertible up to homotopy. \par 

The existence of a zig-zag of Quillen equivalences does not, unfortunately, allow one to automatically translate constructions from one formalism of $(\infty,1)$-categories to another.  For some specific applications a certain formalism may be more convenient than another.  The Joyal model is, in a sense, the most `economical'. Moreover, it bears a close relation to classical ideas of weak and categorical structures in homotopy theory of Boardman and Vogt \cite{boardmanvogt}, since its fibrant objects are precisely the weak Kan complexes of loc. cit. The effectiveness of this model is shown by recent applications to the theory of higher topoi and higher algebra, as for example in Lurie's books \cite{htt,higheralgebra}. It plays an important role in current advances in derived algebraic geometry, specifically chiral homology \cite{francis,higheralgebra}, geometric representation theory \cite{arinkingaitsgory,benzvinadler2,benzvinadler1,francisgaitsgory,gaitsgory,gaitsgoryrozenblyum} and mirror symmetry \cite{nadler}. \par 

As several of these references illustrate, any attempt to study algebraic structures in the context of $\infty$-categories leads one to the notion of an $\infty$-operad. Here it is to some extent possible to work with simplicial operads, equipped with a model structure which extends the one on simplicial categories mentioned above \cite{cisinskimoerdijk3}. However, this approach has several difficulties. First and foremost, to be able to work with algebras over simplicial operads, one has to convert the $\infty$-category under consideration into a simplicial category, using one of the Quillen equivalences mentioned above. Also, for a well-behaved homotopy theory of algebras over an operad, one often needs a cofibrant (or `almost cofibrant') resolution of this operad. Many naturally occurring simplicial operads are not cofibrant and the necessary (almost) cofibrant replacement is a non-trivial procedure. A third point concerns the Boardman-Vogt tensor product of operads. It plays an important role in the study of the little cubes operads $\mathbb{E}_n$ (see \cite{boardmanvogt, may}); roughly speaking, tensor products of such operads again yield little cubes operads \cite{dunn,fiedorowiczvogt}. Unfortunately, the Boardman-Vogt tensor product is not compatible with the model structure on simplicial operads (i.e. this is not a monoidal model structure), which complicates the study of its homotopical properties. \par 

Two approaches to the theory of higher operads will be discussed in this paper, namely Lurie's theory of preoperads \cite{higheralgebra} and the theory of dendroidal sets \cite{moerdijkweiss,cisinskimoerdijk1}. These address the issues raised above as follows. Both Lurie's theory and the theory of dendroidal sets are naturally adapted to Joyal's model structure on simplicial sets; they allow one to work directly with algebras in an $\infty$-category. (In fact, there are variants of the theory of dendroidal sets \cite{cisinskimoerdijk2} adapted to the theory of complete Segal spaces \cite{rezk} and Segal categories \cite{hirschowitzsimpson}.) Also, all objects in Lurie's model category of preoperads are cofibrant. This is not quite true for dendroidal sets, but there cofibrant objects are easily recognised and cofibrant replacement is an easy and explicit procedure. Both Lurie's category and the category of dendroidal sets carry a tensor product. In Lurie's category, this tensor product is compatible with the model structure, but is not symmetric. In the dendroidal category, it is symmetric, but only compatible with the model structure on the subcategory modelling operads without constants. In this paper we compare Lurie's approach to the denroidal approach and to the theory of simplicial operads. While the first two are both based on the theory of $\infty$-categories \cite{joyal,htt}, these two theories have rather different starting points. \par 

At a rather naive level, these starting points can already be explained within ordinary category theory. On the one hand, a coloured operad in the category of sets can be seen as a generalization of a category, where instead of arrows $f: x \rightarrow y$ with one input one has arrows $f: x_1,\ldots,x_n \rightarrow y$ with multiple inputs. With this picture in mind, a search for a homotopy-coherent notion of operad leads to the dendroidal theory. On the other hand, a coloured operad can be seen as a weak kind of monoidal (or tensor) category, in which tensor products $x_1 \otimes \cdots \otimes x_n$ are defined only as covariant functors $y \mapsto \mathrm{Hom}(x_1 \otimes \cdots \otimes x_n, y)$ which are not necessarily representable. This leads one to a homotopy-coherent notion of a coloured operad as a weakened version of the notion of a symmetric monoidal infinity-category and to Lurie's approach \cite{higheralgebra}. These two points of view are reflected in the various terms used to refer to coloured operads, such as (symmetric) `multicategories' \cite{lambek} and `pseudo-tensor categories' \cite{beilinsondrinfeld}. \par 

The category of dendroidal sets is designed to bear the same relation to the category of operads (in $\mathbf{Sets}$) as the category of simplicial sets bears to the category of (small) categories. In particular, there is a nerve functor from operads to dendroidal sets, extending the usual nerve functor from categories to simplicial sets. To achieve this, the simplex category $\mathbf{\Delta}$ is replaced by a category $\mathbf{\Omega}$ of finite rooted trees, which contains $\mathbf{\Delta}$ as a full subcategory. The category of dendroidal sets is the category of presheaves on $\mathbf{\Omega}$ and carries a model structure which extends (in a precise sense) the Joyal model structure on presheaves on $\mathbf{\Delta}$, i.e. on simplicial sets. This model structure is used to develop a theory of $\infty$-operads, which can now simply be defined as the fibrant objects in this model structure on the category of dendroidal sets. This dendroidal approach to $\infty$-operads has several advantages. For example, it is completely parallel to the simplicial theory of $\infty$-categories. An important aspect of this dendroidal theory is that every $\infty$-operad can be strictified, in the sense of being equivalent to the homotopy coherent nerve of an ordinary (simplicial or topological) coloured operad \cite{cisinskimoerdijk3}. A disadvantage of the theory, at least in its current state, is that laying the groundwork for it requires the analysis of rather a lot of delicate combinatorial properties of finite trees, surely not unlike those of simplices and shuffles from the early days of (semi-)simplicial topology and homological algebra in the 1950's and 1960's, but more involved. The reader will see some illustrations of this phenomenon in this paper as well, for example in the proofs of Propositions \ref{prop:poprodinneranodyne}, \ref{prop:poprodrootanodyne} and \ref{prop:poprodleafanodyne}. \par 

Lurie's theory on the other hand does not parallel the theory of $\infty$-categories, but builds structure on top of it. Following an old idea of Graeme Segal \cite{segal}, one can define a symmetric monoidal category as a (pseudo-)functor (satisfying some conditions) from the category of finite pointed sets to the the category of small categories. One can deal with simplicial or topological symmetric monoidal categories in the same way. Alternatively, such a structure can be presented by a (simplicial) category cofibered over (i.e., endowed with a coCartesian fibration to) the category of finite pointed sets. In a similar way, the more general notion of an operad can be modelled as a category that is `partially' cofibered (partially, much like a vector bundle can carry a partial connection) over the category of finite pointed sets. This leads to the definition of an $\infty$-operad as a simplicial set which is partially cofibered (in the appropriate weak, up-to-homotopy sense) over the simplicial set defined as the nerve of the category of finite pointed sets. In order to be able to efficiently work with such objects, a Quillen model category structure is constructed on the ambient category of so-called preoperads --- marked simplicial sets over the nerve of the category of finite pointed sets. One can then model $\infty$-operads as the fibrant objects in this model category. \par 

From the beginning of the development of these two theories, the general feeling was that they should be equivalent in the precise sense of there being a Quillen equivalence between the two model categories. This was already stated explicitly in the early installments of Lurie's DAG-series \cite{luriedagiii} and later in his Higher Algebra \cite{higheralgebra}. In this paper we will establish a (zig-zag of) Quillen equivalence(s), under the assumption that the $\infty$-operads have no constant (i.e. nullary) operations. No direct comparison seems to be possible; there are several different aspects to our somewhat indirect approach. In hindsight, the first step is a quite logical one: in Lurie's approach, the representable objects are much like those in dendroidal sets, with one big difference, namely that they correspond to `forests' (i.e. disjoint unions of trees), rather than just trees. To bring the two categories more in line with each other, we first develop a theory of `forest sets', close to dendroidal sets and Quillen equivalent to it. It is somewhat non-trivial to develop such a theory and the proof that it is equivalent to dendroidal sets requires the theory of dendroidal complete Segal spaces and its forest analogue, which takes up a large part of the paper (Chapter \ref{chap:forestsets}). \par 

A second difference is that in the theory of dendroidal sets, `equivalences' are treated by means of the infinite-dimensional sphere $J$ like in Joyal's original approach \cite{joyalpaper}, while Lurie deals with equivalences through markings on simplical sets. Again, to bring these in line, we extend the theory of dendroidal sets and of forest sets to marked dendroidal and forest sets. A slightly different (in fact, more general) theory of marked dendroidal sets had already been developed earlier in \cite{heuts}. \par 
 
Finally, a somewhat awkward feature is that in Lurie's approach there is a non-trivial zero-object $\langle 0 \rangle$, which is not the initial object, but acts as an initial object only in a homotopy-theoretic sense. However, this complication is easily overcome by moving to the Quillen equivalent slice category of objects under $\langle 0 \rangle$. \par
 
These constructions together result in the following diagram of model categories, the arrows between which we will comment on below. The names of the categories in this diagram are as follows: $\mathbf{dSets}$ for dendroidal sets, $\mathbf{fSets}$ for forest sets and $\mathbf{POp}$ for Lurie's category of $\infty$-preoperads. A superscript plus indicates that the objects of the category are endowed with `markings'. A subscript $o$ indicates the restriction to subcategories modelling the theory of $\infty$-operads without constants.

\[
\xymatrix@C=35pt@R=35pt{
\mathbf{dSets}_o^{\mathbf{\Delta}^{\mathrm{op}}} & \mathbf{fSets}_o^{\mathbf{\Delta}^{\mathrm{op}}} \ar[l]_{u^*} & \\ 
\mathbf{dSets}_o \ar[u]\ar[d]_{(-)^\flat} & \mathbf{fSets}_o \ar[u]\ar[l]_{u^*} \ar[d]_{(-)^\flat} & \\
\mathbf{dSets}_o^+ & \mathbf{fSets}_o^+ \ar[l]_{u^*} \ar[dr]^{\bar\omega^*} & \mathbf{POp}_o \ar[l]_{\omega_!} \ar[d]^{\langle 0 \rangle_!} \\
& & \langle 0 \rangle /\mathbf{POp}_o
}
\] 

The arrows in this diagram all denote left Quillen equivalences that we will construct. The functors $(-)^\flat$ are equivalences which are left adjoint to the right Quillen equivalences which forget the markings, exactly as in Chapter 3 of \cite{htt}. The functor $u^*$ is an obvious restriction functor from presheaves on forests to presheaves on trees, but we are only able to show that it is a left Quillen equivalence by passing through the categories of complete dendroidal and forest Segal spaces on top of the diagram. The main functors connecting the `Lurie side' of the diagram to the dendroidal side are the functors $\omega_!$ and $\bar \omega^*$, which we will construct in Chapter \ref{chap:dendrification}. It is in the proofs that these are left Quillen functors where much of the combinatorial aspects of our work lie, see Sections \ref{sec:omega!leftQ} and \ref{sec:omega*leftQ}. \par 

As mentioned above, both Lurie's model and the dendroidal model come with a notion of tensor product. Roughly speaking, the tensor product $\mathbf{P} \otimes \mathbf{Q}$ of two $\infty$-operads can be characterized by the fact that algebras over $\mathbf{P} \otimes \mathbf{Q}$ correspond to $\mathbf{P}$-algebras in the category of $\mathbf{Q}$-algebras, or equivalently $\mathbf{Q}$-algebras in the category of $\mathbf{P}$-algebras. We will show that (the derived functors of) $\omega_!$ and $\bar\omega^*$ respect these tensor products up to weak equivalence. Although the monoidal structure on the category $\mathbf{POp}$ of preoperads is not symmetric in the usual sense, it is symmetric up to weak equivalence. This observation can be exploited to give the homotopy category of $\mathbf{POp}$ a symmetric monoidal structure. We will demonstrate that our equivalence between the two models gives an equivalence of symmetric monoidal homotopy categories (see Section \ref{sec:tensorproduct}). \par 

One useful and immediate corollary of our work is a strictification result for Lurie's $\infty$-operads. Indeed, the model category of dendroidal sets is known to be Quillen equivalent to the model category of simplicial operads \cite{cisinskimoerdijk3}. Therefore our results produce a zig-zag of Quillen equivalences between the model category of simplicial operads without constants and Lurie's model category $\mathbf{POp}_o$ of preoperads without constants. However, there is also a straightforward direct construction of a functor from the category of (fibrant) simplicial operads to the category $\mathbf{POp}_o$. In Section \ref{sec:strictification} we compare this functor to the zig-zag just described and prove that they are equivalent in an appropriate sense, thereby obtaining a direct equivalence between the associated homotopy categories.

\subsection*{Acknowledgements}
Parts of this paper were written while the second author was visiting the IHES, and the third author was visiting the Universit\'{e} de Paris VII, and the Newton Institute and St. John's College in Cambridge, respectively. We are grateful to these institutions for their hospitality and excellent working conditions. In addition, we would like to thank the Dutch Science Foundation (NWO) for supporting several mutual visits and the Acad\'{e}mie des Sciences for supporting Moerdijk's visit to Paris through a Descartes-Huygens Prize. The authors would like to thank Jacob Lurie for several useful conversations.

\section{Several models for the theory of $\infty$-operads}
\label{chap:mainresults}

We briefly review three different models for the theory of $\infty$-operads, namely the $\infty$-operads in the sense of Lurie, dendroidal sets and simplicial operads. The equivalence of the latter two approaches has already been shown in \cite{cisinskimoerdijk3}. The goal of this paper is to establish an equivalence between the first two. At the end of this chapter we describe our results. The rest of the paper is devoted to their proofs.

\subsection{Operads}
Throughout this paper the term \emph{operad} will always mean \emph{symmetric coloured operad}. An operad $\mathbf{P}$ in a given closed symmetric monoidal category $\mathbf{V}$ with tensor unit $I$ consists of a set of colours $\mathrm{col}(\mathbf{P})$ and, for each tuple $(c_1, \ldots, c_n, d)$ of such colours, an object
\begin{equation*}
\mathbf{P}(c_1, \ldots, c_n; d)
\end{equation*}
of $\mathbf{V}$. This object is to be thought of as parametrizing operations of $\mathbf{P}$ with $n$ inputs of the respective colours $c_1, \ldots, c_n$ and an output of colour $d$. (The set of inputs is allowed to be empty.) There should be composition maps
\begin{equation*}
\mathbf{P}(d_1, \ldots, d_n; e) \otimes \mathbf{P}(c_1^1, \ldots, c_1^{m_1}; d_1) \otimes \cdots \otimes \mathbf{P}(c_n^1, \ldots, c_n^{m_n}; d_n) \longrightarrow \mathbf{P}(c_1^1, \ldots, c_n^{m_n}; e)
\end{equation*}
and, for each $c \in \mathrm{col}(\mathbf{P})$, an identity (or unit)
\begin{equation*}
I \longrightarrow \mathbf{P}(c;c).
\end{equation*}
Finally, permutations $\sigma \in \Sigma_n$ should act on the right by transformations
\begin{equation*}
\mathbf{P}(c_1, \ldots, c_n; d) \longrightarrow \mathbf{P}(c_{\sigma(1)}, \ldots, c_{\sigma(n)}; d).
\end{equation*}
All of these data are required to satisfy various well-known associativity, equivariance and unit axioms. A morphism of operads $f: \mathbf{P} \longrightarrow \mathbf{Q}$ consists of a map $f: \mathrm{col}(\mathbf{P}) \longrightarrow \mathrm{col}(\mathbf{Q})$ together with a collection of morphisms
\begin{equation*}
\mathbf{P}(c_1, \ldots, c_n; d) \longrightarrow \mathbf{Q}(f(c_1), \ldots, f(c_n); f(d))
\end{equation*}
which are compatible with the given compositions, units and symmetric group actions. The cases of most interest to us here will be those where $\mathbf{V}$ is either the category of sets or that of simplicial sets, the symmetric monoidal structure coming from the categorical product in both cases. We denote the category of operads in sets (resp. simplicial sets) by $\mathbf{Op}$ (resp. $\mathbf{sOp}$). We will say an operad is \emph{non-unital} if $\mathbf{P}(-;d) = \varnothing$ for every colour $d$ of $\mathbf{P}$; in other words, if $\mathbf{P}$ does not contain any nullary operations. A special role will be played by the operad $\mathbf{Com}^-$ parametrizing non-unital commutative algebras; it has one colour, one operation of every strictly positive arity and no nullary operations. Observe that the category of non-unital operads in $\mathbf{Sets}$ is precisely the slice category $\mathbf{Op}/\mathbf{Com}^-$, and similarly for non-unital simplicial operads. We will denote those categories by $\mathbf{Op}_o$ and $\mathbf{sOp}_o$ respectively. Note that these are full subcategories of $\mathbf{Op}$ and $\mathbf{sOp}$. \par 
When $\mathbf{V} = \mathbf{Sets}$ we get special examples of (non-unital) operads from categories. Indeed, if $\mathbf{C}$ is a (small) category we can define an operad $\iota_!\mathbf{C}$ whose colours are the objects of $\mathbf{C}$ by setting
\begin{equation*}
\iota_!\mathbf{C}(c_1, \ldots, c_n; d) := \begin{cases} \mathbf{C}(c_1, d) & \text{if } n=1 \\
\varnothing & \text{otherwise.} \end{cases}
\end{equation*}
This procedure is part of an adjunction 
\[
\xymatrix@C=40pt{
\iota_!: \mathbf{Cat} \ar@<.5ex>[r] & \mathbf{Op}: \iota^* \ar@<.5ex>[l]
}
\]
between the category of small categories and the category of operads (which in fact factors uniquely through $\mathbf{Op}_o$). The right adjoint $\iota^*$ is given by discarding all non-unary operations. Note that the left adjoint $\iota_!$ is fully faithful. \par 
For later use, we will introduce the construction of the \emph{category of operations} associated to an operad. First we need some notation.
\begin{definition}
Given finite sets $A$ and $B$, a \emph{partial map} $f: A \longrightarrow B$ is a pair $(A', f')$, where $A' \subseteq A$ is a subset of $A$ and $f': A' \longrightarrow B$ is an ordinary map of sets. We will use the notation $\langle n \rangle$ for the set $\{1, \ldots, n\}$. Denote by $\mathbf{F}$ the category which has as objects the sets $\langle n \rangle$ for $n \geq 0$ (where $\langle 0 \rangle$ is the empty set by convention) and as morphisms the partial maps between those sets.
\end{definition}
Note that $\mathbf{F}$ is a skeleton of the category of all finite sets and partial maps between them, which in turn is the opposite of Segal's category $\Gamma$. In \cite{higheralgebra} Lurie uses the category $\mathcal{F}\mathrm{in}_*$, which is a skeleton of the category of pointed finite sets. There is a canonical functor
\begin{equation*}
\mathcal{F}\mathrm{in}_* \longrightarrow \mathbf{F}
\end{equation*}
given by forgetting the basepoint and assigning to a map $f: A \longrightarrow B$ of pointed finite sets the obvious partial map with domain of definition $f^{-1}(B\backslash \{*\})$. This functor is an isomorphism of categories. 

\begin{definition}
A morphism $f: A \longrightarrow B$ in $\mathbf{F}$ is said to be \emph{inert} if the preimage of any element of $B$ consists of exactly one element of $A$.  A morphism $f: A \longrightarrow B$ in $\mathbf{F}$ is \emph{active} if its domain of definition is all of $A$. For $n \geq 1$ and $1 \leq i \leq n$ denote by $\rho^i: \langle n \rangle \longrightarrow \langle 1 \rangle$ the unique inert partial map whose domain of definition is precisely $\{i\}$.
\end{definition}
 
Let us now describe the functor which assigns to an operad in $\mathbf{Sets}$ its category of operations. Given $\mathbf{P} \in \mathbf{Op}$ we define a category $\mathrm{cat}(\mathbf{P})$ as follows:
\begin{itemize}
\item[(1)] The objects of $\mathrm{cat}(\mathbf{P})$ are (possibly empty) tuples $(c_1, \ldots, c_m)$ of colours of $\mathbf{P}$.
\item[(2)] A morphism
\begin{equation*}
f: (c_1, \ldots, c_m) \longrightarrow (d_1, \ldots, d_n)
\end{equation*}
in $\mathrm{cat}(\mathbf{P})$ is a morphism $\phi: \langle m \rangle \longrightarrow \langle n \rangle$ in $\mathbf{F}$ together with a collection of operations
\begin{equation*}
f_i \in \mathbf{P}\bigl((c_j)_{j \in \phi^{-1}\{i\}}; d_i \bigr)
\end{equation*}
for $1 \leq i \leq n$.
\item[(3)] The composition in $\mathrm{cat}(\mathbf{P})$ is given by composition in $\mathbf{F}$ and use of the composition maps of the operad $\mathbf{P}$.
\end{itemize}
There is an obvious functor
\begin{equation*}
\pi_{\mathbf{P}}: \mathrm{cat}(\mathbf{P}) \longrightarrow \mathbf{F}
\end{equation*}
To provide motivation for one of the definitions of an $\infty$-operad to be given later on, we make the following observations:
\begin{itemize}
\item[(1)] Suppose we are given an inert morphism $\phi: \langle m \rangle \longrightarrow \langle n \rangle$ in $\mathbf{F}$ and an object $(c_1, \ldots, c_m)$ of $\mathrm{cat}(\mathbf{P})$. These data canonically give rise to a morphism $(\phi, \{f_i\}_{1 \leq i \leq n})$ in $\mathrm{cat}(\mathbf{P})$ where the $f_i$ are all identities. This morphism has the special property that it is $\pi_{\mathbf{P}}$-coCartesian.
\item[(2)] Let $(c_1, \ldots, c_m)$ and $(d_1, \ldots, d_n)$ be two objects of $\mathrm{cat}(\mathbf{P})$ and let $f: \langle m \rangle \longrightarrow \langle n \rangle$ be a partial map. Recall the inert morphisms $\rho^i: \langle n \rangle \longrightarrow 1$ described above. Consider the canonical lifts (as described in (1)) of these maps to morphisms
\begin{equation*}
(d_1, \ldots, d_n) \longrightarrow (d_i) 
\end{equation*}  
Then these morphisms induce bijections
\begin{equation*}
\mathrm{cat}(\mathbf{P})\bigl( (c_1, \ldots, c_m), (d_1, \ldots, d_n) \bigr)_f \longrightarrow \prod_{i=1}^n \mathrm{cat}(\mathbf{P})\bigl( (c_1, \ldots, c_m), d_i \bigr)_{\rho^i \circ f}
\end{equation*}
The subscript $f$ on the left-hand side indicates that one only considers morphisms projecting to $f$ under $\pi_{\mathbf{P}}$, the subscript on the right has the analogous meaning. 
\item[(3)] There is a canonical equivalence (even an isomorphism) of categories
\begin{equation*}
\pi_{\mathbf{P}}^{-1}(\langle n \rangle) \longrightarrow \pi_{\mathbf{P}}^{-1}(\langle 1 \rangle)^{\times n}
\end{equation*}
\end{itemize}

The construction of the category of operations admits a straightforward extension to the case where $\mathbf{P}$ is a simplicial operad, in which case $\mathrm{cat}(\mathbf{P})$ is a simplicial category over $\mathbf{F}$, the latter now regarded as a discrete simplicial category. We will return to this construction later. Also, observe that the category of operators of the non-unital commutative operad $\mathbf{Com}^-$ is precisely the category of finite sets and \emph{surjective} partial maps. We will denote this category by $\mathbf{F}_o$.

\subsection{The category of $\infty$-preoperads}
\label{sec:POp}

In \cite{higheralgebra} Lurie introduces a formalism for the theory of $\infty$-operads. He organizes his $\infty$-operads into an $\infty$-category $\mathbf{Op}_\infty$ and exhibits this category as the underlying  $\infty$-category of a simplicial model category $\mathbf{POp}_\infty$, the category of so-called \emph{$\infty$-preoperads}. We will review the relevant definitions now. Also, we will abbreviate the notation $\mathbf{POp}_\infty$ to $\mathbf{POp}$ from now on. \par 
We will be interested in the category $\mathbf{sSets}/N\mathbf{F}$ of simplicial sets over the nerve of $\mathbf{F}$. Given an object $p: X \longrightarrow N\mathbf{F}$ of that category and an object $\langle n \rangle$ of $\mathbf{F}$, we will use the shorthand $X_{\langle n \rangle}$ to denote the fiber of $p$ over the corresponding vertex of $N\mathbf{F}$. For $p$ an inner fibration, the reader should also recall \cite{htt} the notion of a $p$-coCartesian edge of $X$, whose definition we do not repeat here.
\begin{definition}
A (Lurie) $\infty$-operad is an inner fibration of simplicial sets $p: \mathcal{O} \longrightarrow N\mathbf{F}$ which satisfies the following:
\begin{itemize}
\item[(1)] For every inert morphism $f: \langle m \rangle \longrightarrow \langle n \rangle$ in $\mathbf{F}$ and every vertex $C \in \mathcal{O}_{\langle m \rangle}$ there exists a $p$-coCartesian edge $f': C \longrightarrow C'$ in $\mathcal{O}$ such that $p(f') = f$. In particular, we can associate to $f$ a map of simplicial sets $f_!: \mathcal{O}_{\langle m \rangle} \longrightarrow \mathcal{O}_{\langle n \rangle}$, uniquely up to homotopy.
\item[(2)] Let $C \in \mathcal{O}_{\langle m \rangle}$ and $C' \in \mathcal{O}_{\langle n \rangle}$ be two vertices and let $f: \langle m \rangle \longrightarrow \langle n \rangle$ be a partial map. Let $\mathrm{Map}_{\mathcal{O}}(C, C')_f$ be the preimage of $f \in \mathbf{F}(\langle m \rangle, \langle n \rangle)$ under $p$. Choose $p$-coCartesian lifts $C' \longrightarrow C'_i$ of the maps $\rho^i: \langle n \rangle \longrightarrow \langle 1 \rangle$ defined above. We obtain a map (unique up to homotopy) as follows:
\begin{equation*}
\mathrm{Map}_{\mathcal{O}}(C, C')_f \longrightarrow \prod_{i=1}^n \mathrm{Map}_{\mathcal{O}}(C, C'_i)_{\rho^i \circ f}
\end{equation*}
This is a homotopy equivalence.
\item[(3)] Using (1) we obtain for each $n \geq 0$ a collection of maps $\{\rho^i_!: \mathcal{O}_{\langle n \rangle} \longrightarrow \mathcal{O}_{\langle 1 \rangle} \}_{1 \leq i \leq n}$. These induce equivalences of $\infty$-categories
\begin{equation*}
\mathcal{O}_{\langle n \rangle} \longrightarrow \mathcal{O}_{\langle 1 \rangle}^{\times n}
\end{equation*}
\end{itemize}
\end{definition}

We will now introduce the terminology necessary to describe the model category $\mathbf{POp}$. A \emph{marked simplicial set} is a pair $(X, \mathcal{E})$, where $X$ is a simplicial set and $\mathcal{E}$ is a subset of the set of 1-simplices of $X$. We require $\mathcal{E}$ to contain all the degenerate edges of $X$. The category of marked simplicial sets, which we will denote by $\mathbf{sSets}^+$, has as objects marked simplicial sets and as morphisms those maps of simplicial sets that map marked edges to marked edges. We define the marked simplicial set 
\begin{equation*}
N\mathbf{F}^{\natural} := (N\mathbf{F}, \mathcal{I})
\end{equation*}
where $N\mathbf{F}$ is the nerve of the category $\mathbf{F}$ and $\mathcal{I}$ is the collection of all inert morphisms in $\mathbf{F}$. The category of $\infty$-preoperads is defined by
\begin{equation*}
\mathbf{POp} := \mathbf{sSets}^+/N\mathbf{F}^\natural
\end{equation*}
This category is naturally tensored over simplicial sets. Indeed, for $X \in \mathbf{POp}$ and $K \in \mathbf{sSets}$ one sets $X \otimes K := X \times K^\sharp$, where $K^\sharp$ denotes $K$ with all its edges marked. By adjunction this tensoring induces the structure of a simplicial category on $\mathbf{POp}$. \par 
Given an $\infty$-operad $p: \mathcal{O} \longrightarrow N\mathbf{F}$, we will say that an edge $f$ of $\mathcal{O}$ is \emph{inert} if it is a $p$-coCartesian lift of an inert morphism of $\mathbf{F}$. Set $\mathcal{O}^\natural := (\mathcal{O}, \mathcal{I}_{\mathcal{O}})$, where $\mathcal{I}_{\mathcal{O}}$ is the collection of inert edges of $\mathcal{O}$. The following is due to Lurie \cite{higheralgebra}:

\begin{proposition}
\label{prop:modelstructpop}
There exists a model structure on $\mathbf{POp}$ which is characterized by the following properties:
\begin{itemize}
\item[(C)] A morphism is a cofibration precisely if its underlying map of simplicial sets is a monomorphism.
\item[(F)] Fibrant objects are precisely objects of the form $\mathcal{O}^\natural$, for $\mathcal{O}$ an $\infty$-operad.
\end{itemize}
Furthermore this model structure is left proper, combinatorial and simplicial with respect to the simplicial structure described above.
\end{proposition}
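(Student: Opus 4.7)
The plan is to apply Jeff Smith's recognition theorem for combinatorial model categories (HTT A.2.6.13, or equivalently Lurie's categorical patterns machinery from Higher Algebra Appendix B, specialised to the pattern on $N\mathbf{F}^{\natural}$). Take the class $C$ of cofibrations to consist of those maps whose underlying map of simplicial sets is a monomorphism, and define the class $W$ of weak equivalences by declaring $f \colon X \to Y$ to lie in $W$ if and only if for every $\infty$-operad $\mathcal{O}$ the induced map
\begin{equation*}
\mathrm{Map}^{\flat}_{N\mathbf{F}^{\natural}}(Y, \mathcal{O}^{\natural}) \longrightarrow \mathrm{Map}^{\flat}_{N\mathbf{F}^{\natural}}(X, \mathcal{O}^{\natural})
\end{equation*}
is a categorical equivalence of simplicial sets, where $\mathrm{Map}^{\flat}$ denotes the simplicial enrichment coming from the tensoring $(-) \otimes K = (-) \times K^{\sharp}$.

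The first step is to verify Smith's hypotheses. Closure of $W$ under two-out-of-three and retracts is immediate from the definition. The class $C$ is generated by the set of boundary inclusions $\partial\Delta^n \hookrightarrow \Delta^n$ (over all maps to $N\mathbf{F}^{\natural}$) together with the marking change $(\Delta^1)^{\flat} \hookrightarrow (\Delta^1)^{\sharp}$; a direct argument shows that maps with the right lifting property against all of $C$ lie in $W$. Accessibility of $W$ follows because one need only test against $\infty$-operads whose underlying simplicial set is bounded in cardinality by some fixed regular cardinal, and filtered colimits of categorical equivalences are categorical equivalences. Smith's theorem then produces a combinatorial model structure on $\mathbf{POp}$ with cofibrations $C$ and weak equivalences $W$. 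To identify the fibrant objects I would exhibit an explicit set $J$ of generating trivial cofibrations consisting of: (i) inner horn fillings $\Lambda^n_k \hookrightarrow \Delta^n$ for $0 < k < n$ over arbitrary maps to $N\mathbf{F}^{\natural}$; (ii) marking-extension maps modelled on HTT Section 3.1 which force every coCartesian lift of an inert edge to be marked; and (iii) Segal-type inclusions encoding the equivalences on mapping spaces and fibers demanded by conditions (2) and (3) in the definition of an $\infty$-operad. A direct check shows that an object has the right lifting property against $J$ precisely when it is of the form $\mathcal{O}^{\natural}$ for some $\infty$-operad $\mathcal{O}$, yielding (F).

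Left properness is automatic from the fact that every object is cofibrant. Combinatoriality is part of the output of Smith's theorem. For the simplicial structure one checks the pushout-product axiom: for $i \colon X \hookrightarrow Y$ in $\mathbf{POp}$ and a monomorphism $j \colon K \hookrightarrow L$ of simplicial sets, the pushout-product $(Y \times K^{\sharp}) \cup_{X \times K^{\sharp}} (X \times L^{\sharp}) \to Y \times L^{\sharp}$ is a cofibration in $\mathbf{POp}$, and a trivial cofibration whenever $i$ or $j$ is. The main obstacle is showing that the explicit set $J$ above really does generate $W \cap C$ as a weakly saturated class --- equivalently, that any map with the right lifting property against $J$ is actually a trivial fibration in the model structure just produced. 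This is the operadic analogue of the classical characterisation of trivial Kan fibrations and requires combining inner anodyne arguments, coCartesian lifting techniques in the spirit of HTT Section 3.2, and a careful handling of how the Segal-type maps interrelate the fibers $\mathcal{O}_{\langle n \rangle}$ with the products $\mathcal{O}_{\langle 1 \rangle}^{\times n}$.
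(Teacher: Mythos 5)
The paper itself does not prove this result; it is simply stated and attributed to Lurie, so there is no ``paper's proof'' to compare your argument against. What you give is a reconstruction of the strategy Lurie follows in Higher Algebra (Appendix~B, via the theory of categorical patterns, which is Smith's recognition theorem packaged for exactly this kind of situation on a slice of $\mathbf{sSets}^+$), and at the level of outline your plan is the correct one.

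That said, as a standalone proof there are two real gaps. The first, which you correctly flag yourself as ``the main obstacle,'' is the identification of the fibrant objects: once Smith's theorem hands you a model structure with cofibrations $C$ and weak equivalences $W$, showing that the fibrant objects are precisely the $\mathcal{O}^{\natural}$ is where essentially all of the combinatorial and coCartesian-lifting work lives. Your description of a candidate generating set $J$ is reasonable, but the argument that right lifting against $J$ characterises fibrancy is not a routine step --- it is the bulk of Lurie's Appendix~B (his $\mathfrak{P}$-anodyne maps and Theorem~B.0.20). Flagging the obstacle is not the same as clearing it, and the hardest technical content of the proposition is therefore unresolved in your proposal. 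The second gap is the accessibility of $W$: you assert that one ``need only test against $\infty$-operads whose underlying simplicial set is bounded in cardinality,'' but for that reduction to go through one needs to know that for arbitrarily large $X$ the functor $\mathcal{O} \mapsto \mathrm{Map}^{\flat}(X,\mathcal{O}^{\natural})$ behaves well under $\kappa$-filtered colimits of $\infty$-operads, which does not follow merely from filtered colimits preserving categorical equivalences; this needs a bounding argument (or a reformulation of $W$ as the preimage of an accessible subcategory along an accessible functor) that is not supplied. Neither of these is a wrong idea --- the approach would work --- but neither is carried out, so the proposal is an outline of a proof rather than a proof.
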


As we noted earlier, the construction of the category of operators can be extended to simplicial operads. For a given simplicial operad $\mathbf{P}$, this construction now yields a \emph{simplicial} category over $\mathbf{F}$, the latter regarded as a discrete simplicial category. Let 
\begin{equation*}
N: \mathbf{sCat} \longrightarrow \mathbf{sSets}
\end{equation*}
denote the homotopy-coherent nerve. The following result (see \cite{higheralgebra}) provides many examples of $\infty$-operads:

\begin{proposition}
Let $\mathbf{P}$ be a fibrant simplicial operad, i.e. an operad for which the simplicial sets $\mathbf{P}(c_1, \ldots, c_n; d)$ are all Kan complexes. Then
\begin{equation*}
N(\mathrm{cat}(\mathbf{P})) \longrightarrow N\mathbf{F}
\end{equation*}
is an $\infty$-operad. 
\end{proposition}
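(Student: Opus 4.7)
The plan is to verify the three defining conditions of an $\infty$-operad directly from the construction of $\mathrm{cat}(\mathbf{P})$, using that the homotopy-coherent nerve behaves well on locally Kan simplicial categories.

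First I would show $p: N(\mathrm{cat}(\mathbf{P})) \to N\mathbf{F}$ is an inner fibration. It suffices to check that $\mathrm{cat}(\mathbf{P}) \to \mathbf{F}$ is a local fibration of locally Kan simplicial categories, since $N$, as the right adjoint in a Quillen equivalence between the Bergner and Joyal model structures, sends such a morphism to a categorical (in particular inner) fibration. The mapping space
\[
\mathrm{Map}_{\mathrm{cat}(\mathbf{P})}\bigl((c_1,\ldots,c_m),(d_1,\ldots,d_n)\bigr) \;=\; \coprod_{\phi} \prod_{i=1}^n \mathbf{P}\bigl((c_j)_{j\in\phi^{-1}(i)}; d_i\bigr),
\]
indexed over partial maps $\phi: \langle m\rangle \to \langle n\rangle$, is by the fibrancy hypothesis a disjoint union of Kan complexes; its projection to the discrete set $\mathbf{F}(\langle m\rangle,\langle n\rangle)$ is therefore automatically a Kan fibration.

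For condition (1), given an inert $f: \langle m\rangle \to \langle n\rangle$ and $C=(c_1,\ldots,c_m)$, let $\sigma(i)$ be the unique element of $f^{-1}(i)$. I would take $C' = (c_{\sigma(1)},\ldots,c_{\sigma(n)})$ and the edge $f': C \to C'$ lying over $f$ whose $n$ components are the identity operations $\mathrm{id}_{c_{\sigma(i)}}$. To verify that $f'$ is $p$-coCartesian I would apply the mapping-space criterion: for every $D=(d_1,\ldots,d_k)$ and every partial map $\psi: \langle n\rangle \to \langle k\rangle$, postcomposition with $f'$ induces a map
\[
\prod_j \mathbf{P}\bigl((c_{\sigma(l)})_{l\in\psi^{-1}(j)}; d_j\bigr) \longrightarrow \prod_j \mathbf{P}\bigl((c_i)_{i\in (\psi\circ f)^{-1}(j)}; d_j\bigr),
\]
and since $f$ being inert implies $(\psi\circ f)^{-1}(j) = \sigma(\psi^{-1}(j))$ and the components of $f'$ are identities, this is literally the identity map; passing to nerves gives the required homotopy equivalence on mapping spaces.

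Condition (2) is then essentially built into the construction of $\mathrm{cat}(\mathbf{P})$: the fiber of $\mathrm{Map}_{\mathcal{O}}(C,D)$ over a partial map $f$ is by definition $\prod_i \mathbf{P}((c_j)_{j\in f^{-1}(i)}; d_i)$, and the coCartesian lifts of the $\rho^i$ from the previous step identify this product with $\prod_i \mathrm{Map}_{\mathcal{O}}(C, D_i)_{\rho^i\circ f}$ for $D_i=(d_i)$. For condition (3), direct inspection shows that the simplicial category $\mathrm{cat}(\mathbf{P})_{\langle n\rangle}$ is \emph{literally} the $n$-fold Cartesian product of $\mathrm{cat}(\mathbf{P})_{\langle 1\rangle}$ (its objects are $n$-tuples of colours and its morphisms over $\mathrm{id}_{\langle n\rangle}$ factor componentwise into unary operations), and since $N$ preserves products, $\mathcal{O}_{\langle n\rangle} \to \mathcal{O}_{\langle 1\rangle}^{\times n}$ is even an isomorphism of simplicial sets. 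The only real subtlety is the translation from strict categorical properties of $\mathrm{cat}(\mathbf{P})$ to the up-to-homotopy conditions in the nerve; this translation works cleanly precisely because $\mathbf{P}$ is fibrant, so that the mapping spaces of $N(\mathrm{cat}(\mathbf{P}))$ are weakly equivalent to those of $\mathrm{cat}(\mathbf{P})$ and the mapping-space criterion for $p$-coCartesian edges genuinely reduces the $\infty$-categorical conditions to verifications in the simplicial category.
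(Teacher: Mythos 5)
Your proof is essentially correct, and it follows what is by now the standard argument. The paper does not give its own proof of this proposition; it simply refers to Lurie's \emph{Higher Algebra} (the statement is Proposition 2.1.1.27 there), and Lurie's proof is the same direct verification you carry out: check that $\mathrm{cat}(\mathbf{P}) \to \mathbf{F}$ is a local Kan fibration (giving an inner fibration on coherent nerves), exhibit strict coCartesian lifts of inert morphisms by identity components and verify coCartesianness via the simplicial mapping-space criterion, and observe that conditions (2) and (3) hold on the nose in $\mathrm{cat}(\mathbf{P})$ and therefore up to equivalence in $N(\mathrm{cat}(\mathbf{P}))$ because the nerve's mapping spaces agree with those of the locally Kan simplicial category.

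Two very minor points of hygiene. First, to justify the inner fibration you appeal to $N$ being right Quillen for the Bergner–Joyal Quillen equivalence; this preserves \emph{Bergner} fibrations, which between fibrant objects are local Kan fibrations that are additionally isofibrations on homotopy categories. You only verify the local Kan fibration half. In the present case the isofibration condition is automatic (isomorphisms in $\mathbf{F}$ are permutations, which lift by identity components), so the conclusion stands, but either say this or instead invoke the weaker and more elementary fact that $N$ sends local Kan fibrations over a locally Kan base to inner fibrations (HTT, Corollary 2.4.1.11 and its proof). Second, the map you display in the coCartesian verification is \emph{pre}composition with $f'$, not postcomposition; a slip of the pen, but worth fixing.
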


In this paper we will mostly restrict our attention to \emph{non-unital} $\infty$-operads. To be precise, we will say an $\infty$-operad $p: \mathcal{O} \longrightarrow N\mathbf{F}$ is non-unital if $p$ factors through $N\mathbf{F}_o$. Since the map $N\mathbf{F}_o \rightarrow N\mathbf{F}$ is a monomorphism, such a factorization is necessarily unique (if it exists). We will denote by $\mathbf{POp}_o$ the slice category
\begin{equation*}
\mathbf{POp}/N\mathbf{F}_o^\natural
\end{equation*}  
and refer to it as the category of non-unital $\infty$-preoperads. Note that this category inherits a model structure from $\mathbf{POp}$ to which the description of cofibrations and fibrant objects of Proposition \ref{prop:modelstructpop} still applies. Let $(\mathbf{sOp}_o)_f$ denote the full subcategory of $\mathbf{sOp}_o$ spanned by the fibrant open simplicial operads. Then using the previous result we obtain a functor
\begin{equation*}
\nu: (\mathbf{sOp}_o)_f \longrightarrow \mathbf{POp}_o: \mathbf{P} \longmapsto \Bigl(N(\mathrm{cat}(\mathbf{P}))^\natural \longrightarrow N(\mathbf{F}_o)^\natural \Bigr).
\end{equation*}
We will see later that this functor in fact induces an equivalence of homotopy categories.

\subsection{Dendroidal sets}

\subsubsection{The category of dendroidal sets}
In this section we review the basic definitions concerning the category of dendroidal sets. For more details we refer the reader to \cite{cisinskimoerdijk1}, \cite{moerdijklectures} and \cite{moerdijkweiss}. As in these references, we write $\mathbf{\Omega}$ for the following category of trees. Objects of $\mathbf{\Omega}$ are finite rooted trees. Such a tree has \emph{internal} (or \emph{inner}) and \emph{external} (or \emph{outer}) edges. Internal edges connect two vertices, while external edges are attached to only one vertex. One of the external edges is designated as being the \emph{root}, all the others are called \emph{leaves}. The choice of root gives a canonical notion of direction on the tree (namely `towards the root'), which allows us to speak of the \emph{input edges} and \emph{output edge} of every vertex. The number of input edges is called the \emph{valence} of the vertex. We refer to the vertex connected to the root edge as the \emph{root vertex} and to a vertex all of whose inputs are leaves as a \emph{leaf vertex}. A vertex with no input edges is called a \emph{stump}. The collection of \emph{external} (or \emph{outer}) vertices is formed by the leaf vertices, the stumps and the root vertex. For example, the tree

\[
\begin{tikzpicture} 
[level distance=10mm, 
every node/.style={fill, circle, minimum size=.1cm, inner sep=0pt}, 
level 1/.style={sibling distance=20mm}, 
level 2/.style={sibling distance=10mm}, 
level 3/.style={sibling distance=5mm}]
\node[style={color=white}] {} [grow'=up] 
child {node (p) {} 
	child{ node (q){}
		child
		child
	}
	child{ node (r) {}
	}
	child{
	}
};
\tikzstyle{every node}=[]

\node at ($(p) + (255:15pt)$) {$a$};
\node at ($(p) + (-10:6pt)$) {$p$};
\node at ($(p) + (150:20pt)$) {$b$};
\node at ($(p) + (105:15pt)$) {$e$};
\node at ($(p) + (35:21pt)$) {$f$};
\node at ($(q) + (200:5pt)$) {$q$};
\node at ($(q) + (125:15pt)$) {$c$};
\node at ($(q) + (60:16pt)$) {$d$};
\node at ($(r) + (180:5pt)$) {$r$};

\end{tikzpicture} 
\]

with the root edge $a$ drawn at the bottom, has three leaves $c$, $d$ and $f$ and three vertices $p$, $q$ and $r$ which are all external and have valence 3, 2 and 0 respectively. There exists one tree which has no vertices at all, in which the root edge is also a leaf; it is pictured as

\[
\begin{tikzpicture} 
[level distance=10mm, 
every node/.style={fill, circle, minimum size=.1cm, inner sep=0pt}, 
level 1/.style={sibling distance=20mm}, 
level 2/.style={sibling distance=10mm}, 
level 3/.style={sibling distance=5mm}]
\node[style={color=white}] {} [grow'=up] 
child; 
\end{tikzpicture} 
\]

We will denote this tree by $\eta$. \par 
Each tree $T$ in $\mathbf{\Omega}$ generates a (symmetric, coloured) operad $\Omega(T)$ in $\mathbf{Sets}$. The colours of this operad are the edges of the tree and the operations are generated by the vertices. One way to select a set of generators is by fixing a planar structure on the tree $T$. For example, for the tree pictured above with the planar structure as drawn, the natural generators are
\begin{eqnarray*}
p & \in & \Omega(T)(b, e, f; a) \\
q & \in & \Omega(T)(c, d; b) \\
r & \in & \Omega(T)( - ; e)
\end{eqnarray*} 
and the other operations are either identities or obtained from $p$, $q$ and $r$ by symmetrization and composition. Thus, for example, $\Omega(T)$ also has operations like

\[
\renewcommand{\arraystretch}{2} 
\begin{tabular}{l c l l}
$1_b$ & $\in$ & $\Omega(T)(b; b)$ & (identity) \\
$p \circ_e r$ & $\in$ & $\Omega(T)(b, f; a)$  & (composition) \\
$q \cdot \tau$ & $\in$ & $\Omega(T)(d, c; b)$ & (symmetry)
\end{tabular}
\]

where $\tau$ is the nontrivial element in the symmetric group $\Sigma_2$, etc. Another planar structure on the tree $T$ defines a different set of generators, but the \emph{same} operad $\Omega(T)$. \par 
Arrows in the category $\mathbf{\Omega}$ from a tree $S$ to a tree $T$ are maps of operads $\Omega(S) \longrightarrow \Omega(T)$. This completes the definition of the category $\mathbf{\Omega}$. \par 
The simplex category $\mathbf{\Delta}$ admits a natural inclusion into $\mathbf{\Omega}$ by the functor
\begin{equation*}
i: \mathbf{\Delta} \longrightarrow \mathbf{\Omega}
\end{equation*}
which sends an object $[n]$ to the linear tree with $n$ vertices and $n+1$ edges, labelled $0, \ldots, n$, where $0$ is the leaf and $n$ is the root:
\[
\begin{tikzpicture} 
[level distance=6mm, 
every node/.style={fill, circle, minimum size=.1cm, inner sep=0pt}, 
level 1/.style={sibling distance=20mm}, 
level 2/.style={sibling distance=10mm}, 
level 3/.style={sibling distance=5mm}]
\node[style={color=white}] {} [grow'=up] 
child {node (zero){} 
		child {}
};
\node[style={color=white}, below=3cm of zero] {} [grow'=up] 
child {node (n){}
		child{} 
};
\draw[dashed] ($(zero) + (-90:6mm)$) --($(n) + (90:6mm) $);

\tikzstyle{every node}=[]

\node at ($(zero) + (120:10pt)$) {$0$};
\node at ($(zero) + (-120:10pt)$) {$1$};
\node at ($(n) + (-120:10pt)$) {$n$};

\end{tikzpicture} 
\]

Just like in the category $\mathbf{\Delta}$, the arrows in the category $\mathbf{\Omega}$ are generated by a family of arrows that one can describe in simple terms. In $\mathbf{\Omega}$ there are \emph{faces} and \emph{degeneracies}, extending the corresponding notions in $\mathbf{\Delta}$, and also \emph{isomorphisms} of trees. Any arrow $S \longrightarrow T$ decomposes as a composition of degeneracies followed by an isomorphism followed by a composition of faces. For example, with the tree $T$ as pictured above in the centre, we have the following morphisms:

\[
\begin{tikzpicture} 
[level distance=10mm, 
every node/.style={fill, circle, minimum size=.1cm, inner sep=0pt}, 
level 1/.style={sibling distance=20mm}, 
level 2/.style={sibling distance=10mm}, 
level 3/.style={sibling distance=5mm}]

\node (centretree)[style={color=white}] {} [grow'=up] 
child {node (p) {} 
	child{ node (q) {}
		child
		child
	}
	child{ node {}
	}
	child{
	}
};

\node(treeabove)[style={color=white}, above=4cm of centretree] {} [grow'=up] 
child {node(abovep) {} 
	child{ node {}
		child
		child
	}
	child{ node {}
	}
	child{
	} 
};
\node[above left = .6cm of abovep]{};

\node(lefttree)[style={color=white}, left=4cm of centretree] {} [grow'=up] 
child {node (leftp) {} 
	child
	child{ node {}
	}
	child{
	} 
};

\node(righttree)[style={color=white}, right=4cm of centretree] {} [grow'=up] 
child {node {} 
	child{ node (w){}
		child
		child
	}
	child{ node {}
	}
	child{
	} 
};

\node(treebelow)[style={color=white}, below=4cm of centretree] {} [grow'=up] 
child {node (v) {} 
	child
	child
	child{ node {}
	}
	child
};

\tikzstyle{every node}=[]

\draw[->] ($(treeabove) + (0,-5pt)$) -- node[right]{$\sigma_s$} ($(centretree) + (0, 80pt)$);
\draw[->] ($(treebelow) + (0,65pt)$) -- node[right]{$\partial_b$} ($(centretree) + (0, -15pt)$);
\draw[->] ($(lefttree) + (35pt,30pt)$) -- node[above]{$\partial_q$} ($(centretree) + (-35pt, 30pt)$);
\draw[->] ($(righttree) + (-35pt,30pt)$) -- node[above]{$\tau$} ($(centretree) + (35pt, 30pt)$);

\node at ($(p) + (-10:6pt)$) {$p$};
\node at ($(p) + (150:20pt)$) {$b$};
\node at ($(q) + (125:15pt)$) {$c$};
\node at ($(q) + (60:16pt)$) {$d$};
\node at ($(q) + (200:5pt)$) {$q$};

\node at ($(abovep) + (140:30pt)$) {$b_1$};
\node at ($(abovep) + (155:11pt)$) {$b_2$};
\node at ($(abovep) + (122:21pt)$) {$s$};

\node at ($(leftp) + (150:20pt)$) {$b$};

\node at ($(v) + (-10:6pt)$) {$v$};
\node at ($(v) + (157:25pt)$) {$c$};
\node at ($(v) + (127:22pt)$) {$d$};

\node at ($(w) + (125:16pt)$) {$d$};
\node at ($(w) + (60:14pt)$) {$c$};
\node at ($(w) + (200:6pt)$) {$w$};

\end{tikzpicture} 
\]

The arrow $\sigma_s$ is a degeneracy; as a map of operads, it sends the generating operation $s$ to the identity operation of the edge $b$.  The arrow $\partial_q$ corresponds to chopping off the vertex $q$ and is called an \emph{external face} of $T$. As a map of operads, it is simply the obvious inclusion. (Such an external face exists for any vertex with exactly one inner edge attached to it; any leaf vertex satisfies this condition, the root vertex might or might not.) The arrow $\partial_b$ corresponds to contracting the inner edge $b$ and is called an \emph{inner face}. As a map of operads, it sends the generator $v$ to $p \circ_b q$. The arrow $\tau$ is the isomorphism of trees interchanging $c$ and $d$. As a map of operads, it sends the generator $w$ to $q \cdot \tau$. \par 
The category of \emph{dendroidal sets} is the category of presheaves on $\mathbf{\Omega}$:
\begin{equation*}
\mathbf{dSets} := \mathbf{Sets}^{\mathbf{\Omega}^{\mathrm{op}}}
\end{equation*} 
The inclusion $i$ induces an adjoint pair (left adjoint on the left)
\[
\xymatrix@C=40pt{
i_!: \mathbf{sSets} \ar@<.5ex>[r] & \mathbf{dSets}: i^* \ar@<.5ex>[l]
}
\]
The functor $i_!$ is fully faithful and allows us to regard any simplicial set as a dendroidal set. In the other direction, each dendroidal set $X$ has an \emph{underlying simplicial set} $i^*X$. Let us list several examples of dendroidal sets. \par 

\begin{example}
Every tree $T \in \mathbf{\Omega}$ gives rise to a representable dendroidal set, which we denote by $\Omega[T]$. This notation resembles the notation $\Delta[n]$ for representable simplicial sets and we have
\begin{equation*}
i_!\Delta[n] = \Omega[i[n]]
\end{equation*}
\end{example}

\begin{example}
For a tree $T \in \mathbf{\Omega}$, the \emph{boundary} $\partial \Omega[T]$ of $T$ is the subpresheaf of $\Omega[T]$ obtained as the union of all proper monomorphisms (i.e. monomorphisms which aren't isomorphisms) into $\Omega[T]$. The map $\partial \Omega[T] \longrightarrow \Omega[T]$ can be obtained as the union of all the face inclusions
\begin{equation*}
\partial_x: \Omega[S] \longrightarrow \Omega[T]
\end{equation*}
where $x$ ranges over inner edges and those outer vertices of $T$ attached to only one inner edge (i.e. all leaf vertices and possibly the root vertex).
\end{example}

\begin{example}
For an inner edge $e$ in a tree $T$, the \emph{inner horn} $\Lambda^e[T]$ corresponding to $e$ is the subpresheaf of $\Omega[T]$ obtained as the union of all proper monomorphisms into $\Omega[T]$ having the edge $e$ in their image. It can be obtained as the union of all faces of $T$ \emph{except} the one given by contracting $e$.
\end{example}

\begin{example}
For an operad $\mathbf{P}$ in $\mathbf{Sets}$, its (dendroidal) nerve $N_d(\mathbf{P})$ is the dendroidal set defined by
\begin{equation*}
N_d(\mathbf{P})(T) := \mathbf{Op}(\Omega(T), \mathbf{P})
\end{equation*}
This defines a fully faithful functor
\begin{equation*}
N_d: \mathbf{Op} \longrightarrow \mathbf{dSets}
\end{equation*}
which has a left adjoint denoted
\begin{equation*}
\tau_d: \mathbf{dSets} \longrightarrow \mathbf{Op}
\end{equation*}
These functors are compatible with the similar pair $\tau$ and $N$ relating categories and simplicial sets, in the sense that the following two squares, of right and left adjoints respectively, commute:
\[
\xymatrix@C=40pt@R=40pt{
\mathbf{sSets} \ar@<.5ex>^\tau[r]\ar@<-.5ex>_{i_!}[d] & \mathbf{Cat} \ar@<.5ex>^N[l]\ar@<-.5ex>_{\iota_!}[d] \\
\mathbf{dSets} \ar@<-.5ex>_{i^*}[u]\ar@<.5ex>^{\tau_d}[r] & \mathbf{Op}\ar@<-.5ex>_{\iota^*}[u]\ar@<.5ex>^{N_d}[l]
}
\]
\end{example}

We need some discussion of \emph{open} dendroidal sets. First of all, we will say a tree $T$ is open if it contains no stumps (i.e. nullary vertices). Denote by $\mathbf{\Omega}_o$ the full subcategory of $\mathbf{\Omega}$ on the open trees. We will refer to the category of presheaves on $\mathbf{\Omega}_o$ as the category of open dendroidal sets and denote it by $\mathbf{dSets}_o$. The inclusion $\mathbf{\Omega}_o \rightarrow \mathbf{\Omega}$ induces a fully faithful functor $\mathbf{dSets}_o \rightarrow \mathbf{dSets}$, which canonically factors through $\mathbf{dSets}/N_d\mathbf{Com}^-$. In fact, this gives an isomorphism of categories
\begin{equation*}
\mathbf{dSets}_o \simeq \mathbf{dSets}/N_d\mathbf{Com}^-
\end{equation*}
and we will often blur the distinction between these two categories, regarding dendroidal sets as either presheaves on $\mathbf{\Omega}_o$ or dendroidal sets equipped with a (necessarily unique) map to $N_d\mathbf{Com}^-$. The reader should note that the dendroidal nerve of a non-unital operad in $\mathbf{Sets}$ is an open dendroidal set. Also, the embedding $i_!: \mathbf{sSets} \rightarrow \mathbf{dSets}$ factors canonically through the category of open dendroidal sets.

\subsubsection{A model structure on dendroidal sets}

\begin{definition}
A dendroidal set $X$ is called \emph{normal} if for each tree $T$, the action of $\mathrm{Aut}(T)$ on $X(T)$ is free. More generally, a monomorphism $X \longrightarrow Y$ of dendroidal sets is called \emph{normal} if for each tree $T$, the group $\mathrm{Aut}(T)$ acts freely on the complement of the image of $X(T)$ in $Y(T)$. 
\end{definition}

\begin{definition}
A map $X \longrightarrow Y$ of dendroidal sets is called an \emph{inner Kan fibration}, or just an \emph{inner fibration}, if it has the right lifting property with respect to all inner horn inclusions
\begin{equation*}
\Lambda^e[T] \longrightarrow \Omega[T]
\end{equation*}
for all trees $T \in \mathbf{\Omega}$ and all inner edges $e$ of $T$. A \emph{dendroidal inner Kan complex} is a dendroidal set $X$ for which the map $X \longrightarrow 1$ to the terminal object is an inner Kan fibration. These dendroidal inner Kan complexes are also referred to more briefly as \emph{(dendroidal) $\infty$-operads}.
\end{definition}

Together with Cisinski, the third author established the following (cf. \cite{cisinskimoerdijk1}):
\begin{theorem}
\label{thm:CMmodelstruct}
There exists a model structure on the category $\mathbf{dSets}$ characterized by the following two properties:
\begin{itemize}
\item[(C)] The cofibrations are the normal monomorphisms.
\item[(F)] The fibrant objects are the dendroidal $\infty$-operads.
\end{itemize}
\end{theorem}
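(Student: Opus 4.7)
The plan is to deduce this theorem by applying Cisinski's general machinery for constructing model structures on presheaf categories (developed in \emph{Les préfaisceaux comme modèles des types d'homotopie}). That machinery takes as input three pieces of data: a cellular (= accessible, well-generated) class of cofibrations, a class of ``anodyne extensions'' closed under pushout, transfinite composition and retract, and an interval object that is compatible with both. Given such input it produces a cofibrantly generated, left proper model structure in which the fibrant objects are precisely those $X$ for which $X \to 1$ has the right lifting property against the anodyne extensions.

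First I would verify that the normal monomorphisms form a suitable cellular class. Concretely, one checks that every monomorphism of dendroidal sets factors as a composite of a normal monomorphism followed by an isomorphism on the non-free part (using skeletal filtrations tree-by-tree and the fact that a monomorphism $X \hookrightarrow Y$ is normal iff for every tree $T$ the $\mathrm{Aut}(T)$-action on $Y(T) \setminus X(T)$ is free), and that the class is generated by a \emph{set} of normal boundary inclusions of the form $\partial\Omega[T] \to \Omega[T]$ together with inclusions of the form $\Omega[T]/H \to \Omega[T]/K$ for subgroups $H\subseteq K\subseteq \mathrm{Aut}(T)$. These closure properties (pushout, transfinite composition, retract) can be established using the combinatorics of trees, paralleling the simplicial case.

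Next I would define the class of \emph{inner anodyne} maps to be the weakly saturated class generated by the inner horn inclusions $\Lambda^e[T] \to \Omega[T]$, and then enlarge it to a class of anodyne extensions by throwing in the pushout-products of normal monomorphisms with the endpoint inclusions $\{0\},\{1\} \hookrightarrow J$, where $J = N_d(\iota_!(0 \rightleftarrows 1))$ is the dendroidal nerve of the contractible groupoid on two objects. One then defines weak equivalences using $J$: a morphism $f$ is a weak equivalence if for every inner Kan complex $Z$ the induced map on homotopy classes $[Y, Z]_J \to [X, Z]_J$ is a bijection (equivalently, if $\mathrm{Map}(Y,Z) \to \mathrm{Map}(X,Z)$ is a weak homotopy equivalence for the mapping dendroidal sets built using $J$). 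The interval $J$ is shown to be contractible and compatible with the cofibrations in the sense required by Cisinski's axioms.

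The key combinatorial step — and the main obstacle — is to establish the \emph{pushout-product property}: the pushout-product of a normal monomorphism with an inner horn inclusion is inner anodyne. This is the dendroidal analogue of Joyal's theorem for quasi-categories and is proved by a careful analysis of the tree combinatorics, filtering the pushout-product by a clever ordering on the non-degenerate dendrices of $\Omega[S] \otimes \Omega[T]$ (or the relevant pushout) and showing that each successive attachment is an inner horn filling. Once this is in hand, the standard Cisinski arguments yield that anodyne extensions are precisely the trivial cofibrations, that fibrant objects are exactly the inner Kan complexes, and that the resulting model structure is cofibrantly generated with the desired characterizations (C) and (F). Properness and other formal properties follow from the general theory.
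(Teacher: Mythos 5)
Your overall plan---Cisinski's machinery on presheaves, the normal monomorphisms as the cellular class of cofibrations, the interval $J = N_d\iota_!(\{0\simeq 1\})$, inner horns as generating anodynes, and the pushout-product property as the main combinatorial input---correctly reproduces the structure of the Cisinski--Moerdijk argument. But there is a genuine gap in your final paragraph. You assert that once the pushout-product property is in hand, ``standard Cisinski arguments yield \ldots\ that fibrant objects are exactly the inner Kan complexes.'' This is not a formal consequence. Cisinski's theory identifies the fibrant objects as those having the right lifting property against the \emph{entire} anodyne class, which contains not just the inner anodynes but also the $J$-anodyne maps, i.e.\ the pushout-products of normal monomorphisms with the endpoint inclusions $\{e\}\to J$. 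A dendroidal inner Kan complex lifts against inner anodynes by definition, but there is no a~priori reason it should lift against the $J$-anodynes. Establishing this requires the dendroidal analogue of Joyal's special outer-horn theorem: an inner Kan complex admits fillers for the outer horns $\Lambda^r[T]\to\Omega[T]$ (resp.\ leaf horns) whenever the root (resp.\ leaf) corolla maps to an equivalence. This is Theorems~4.2 and A.7 of \cite{cisinskimoerdijk1}---precisely what the present paper invokes separately in the proof of Theorem~\ref{thm:Cartequivalences}---and its proof has combinatorial content comparable to the pushout-product property and cannot be omitted. Without it, the machinery produces a model structure with the right cofibrations but with fibrant objects that you cannot identify with the inner Kan complexes.

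Two smaller slips: the maps $\Omega[T]/H\to\Omega[T]/K$ you list among the generating cofibrations are quotient maps, not monomorphisms, and are unnecessary---the boundary inclusions $\partial\Omega[T]\to\Omega[T]$ alone generate the normal monomorphisms, as recorded in property~(a) immediately after the theorem in the paper. And the claim that ``anodyne extensions are precisely the trivial cofibrations'' is false; as in the Joyal model structure on simplicial sets, the anodynes form a proper subclass of the trivial cofibrations. Neither slip affects the characterizations (C) and (F) themselves, but they reflect a misreading of the machinery whose more consequential manifestation is the gap above.
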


We should recall the following additional properties of this model structure:
\begin{itemize}
\item[(a)] The model structure is combinatorial (so in particular cofibrantly generated) and left proper. The boundary inclusions $\partial\Omega[T] \longrightarrow \Omega[T]$ form a set of generating cofibrations.
\item[(b)] For the representable dendroidal set $\eta$, the slice category $\mathbf{dSets}/\eta$ is isomorphic to the category of simplicial sets, by an isomorphism which identifies the forgetful functor
\begin{equation*}
\mathbf{dSets}/\eta \longrightarrow \mathbf{dSets}
\end{equation*}
with the functor
\begin{equation*}
i_!: \mathbf{sSets} \longrightarrow \mathbf{dSets}.
\end{equation*}
Under this isomorphism, the induced model structure on $\mathbf{dSets}/\eta$ corresponds to the Joyal model structure on $\mathbf{sSets}$.
\item[(c)] The fibrations between fibrant objects can be characterized explicitly as those inner fibrations $X \longrightarrow Y$ with the additional property that the functor $\tau i^*(X) \longrightarrow \tau i^*(Y)$ is a categorical fibration. Recall that a functor $f: \mathbf{A} \longrightarrow \mathbf{B}$ is a categorical fibration if, for any isomorphism $\theta: b \simeq b'$ in $\mathbf{B}$, any lift of $b$ to an object $a$ of $\mathbf{A}$ (i.e. $f(a) = b$) can be extended to a lift of $\theta$ to an isomorphism $a \simeq a'$ for some object $a'$ of $\mathbf{A}$. 
\end{itemize}

\begin{remark}
The category $\mathbf{dSets}$ carries a symmetric tensor product related to the Boardman-Vogt \cite{boardmanvogt} tensor product of operads (see \cite{moerdijklectures,moerdijkweiss}). In particular, using this tensor product and the functor $i_!$, the model category $\mathbf{dSets}$ of the theorem becomes enriched in $\mathbf{sSets}$ with the Joyal model structure, in a sense explicitly discussed in Section \ref{subsec:weakenrichment}. In addition, the tensor product restricts to a tensor product on the category $\mathbf{dSets}_o$ (because of the fact that $N_d\mathbf{Com}^- \otimes N_d\mathbf{Com}^- = N_d\mathbf{Com}^-$), and this tensor product is compatible with the restricted model structure on $\mathbf{dSets}_o$ (see \cite{cisinskimoerdijkerr}).
\end{remark}

\subsection{Simplicial operads}

The category $\mathbf{sOp}$ of simplicial operads carries a model structure \cite{cisinskimoerdijk3} analogous to the Bergner model structure \cite{bergner} on the category of simplicial categories. The functor
\begin{equation*}
\Omega(-): \mathbf{\Omega} \longrightarrow \mathbf{Op}
\end{equation*} 
can be lifted to a functor
\begin{equation*}
W: \mathbf{\Omega} \longrightarrow \mathbf{sOp} 
\end{equation*}
by means of the Boardman-Vogt $W$-resolution with respect to the simplicial interval $\Delta^1$:
\begin{equation*}
W(T) := W(\Delta^1,\Omega(T))
\end{equation*}
where the right-hand side corresponds to the notation of \cite{bergermoerdijk}. This functor $W$ induces an adjoint pair
\[
\xymatrix@C=40pt{
W_!: \mathbf{dSets} \ar@<.5ex>[r] & \mathbf{sOp}: W^* \ar@<.5ex>[l].
}
\]
We will refer to $W_!(X)$ as the \emph{Boardman-Vogt resolution} of the dendroidal set $X$ and to $W^*(\mathbf{P})$ as the \emph{homotopy-coherent nerve} of the simplicial operad $\mathbf{P}$. When restricted to simplicial sets on the left and simplicial categories on the right, the above adjunction reduces to the adjoint pair that is denoted $(\mathfrak{C}, N)$ in \cite{htt}. The following result, which can be viewed as a strictification result for dendroidal $\infty$-operads, was proved in \cite{cisinskimoerdijk3}:

\begin{theorem}
The adjoint pair $(W_!, W^*)$ defines a Quillen equivalence between the category of dendroidal sets equipped with the model structure of Theorem \ref{thm:CMmodelstruct} and the category of simplicial operads equipped with the model structure established in \cite{cisinskimoerdijk3}.
\end{theorem}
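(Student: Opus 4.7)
My plan is first to establish that $(W_!, W^*)$ is a Quillen pair, and then to upgrade this to a Quillen equivalence. Since $W_!$ preserves colimits, for the Quillen pair it suffices to check it sends generating (trivial) cofibrations to (trivial) cofibrations of simplicial operads. The generating cofibrations are the boundary inclusions $\partial\Omega[T] \hookrightarrow \Omega[T]$; because $W(T) := W_!(\Omega[T]) = W(\Delta^1, \Omega(T))$ is the Boardman-Vogt resolution, whose spaces of operations are unions of cubes indexed by subtrees and planar structures, one can identify $W_!(\partial\Omega[T]) \hookrightarrow W(T)$ as a free extension of simplicial operads along a boundary inclusion on those cube complexes. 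For the generating trivial cofibrations, the key case is that of the inner horn inclusions $\Lambda^e[T] \hookrightarrow \Omega[T]$: contracting the inner edge $e$ corresponds to a specific cubical face in $W(T)$, and the induced map on simplicial operads may be analyzed as a pushout along a free extension by a contractible parameter space, hence a Dwyer-Kan equivalence. The remaining generating trivial cofibrations (those related to $J$-equivalences capturing categorical isomorphisms) are handled through the simplicial subcategory.

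For the Quillen equivalence, I would verify that for every normal dendroidal set $X$ the derived unit $X \to W^*((W_!X)^{\mathrm{fib}})$ is a weak equivalence. By cellular induction over normal monomorphisms and left-properness, this reduces to the case of representables $X = \Omega[T]$. Since $W(T)$ is cofibrant-fibrant in $\mathbf{sOp}$, the statement becomes that $\Omega[T] \to W^*W(T)$ is a weak equivalence in $\mathbf{dSets}$. One verifies this by an induction on the number of vertices of $T$: attaching a vertex to a subtree $T'$ realizes $W(T)$ as a pushout of $W(T')$ against a gluing cube, which corresponds dendroidally to a sequence of cell attachments along inner horn inclusions. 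The contractibility of the BV cubes produces the required weak equivalence at each stage.

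The main obstacle is unquestionably the combinatorial step of showing that $W_!$ sends inner horn inclusions to Dwyer-Kan equivalences of simplicial operads. This requires a careful pushout decomposition of $W(T)$ along the cube associated with $e$, together with an induction on the size of $T$ which separates the `upper' and `lower' subtrees meeting at $e$. A useful consistency check and base case is provided by restriction along $i_!: \mathbf{sSets} \hookrightarrow \mathbf{dSets}$, where $(W_!, W^*)$ reduces to Lurie's adjunction $(\mathfrak{C}, N)$, known to be a Quillen equivalence between the Joyal model structure and Bergner's model structure on simplicial categories. This anchors the induction at linear trees and reduces the dendroidal content of the argument to genuinely tree-theoretic phenomena.
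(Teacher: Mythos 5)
The paper does not prove this theorem; it is cited from Cisinski–Moerdijk, \emph{Dendroidal sets and simplicial operads}. The published proof there is quite different from your sketch: it does not compute the derived unit directly but instead builds a chain of intermediate Quillen equivalences through complete dendroidal Segal spaces and Segal (pre-)operads, and shows the $(W_!, W^*)$ adjunction is compatible with that chain. With that in mind, there are two concrete gaps in your proposal.

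\textbf{Misidentification of generating trivial cofibrations.} You propose checking that $W_!$ sends generating trivial cofibrations — which you take to be the inner horn inclusions $\Lambda^e[T] \rightarrowtail \Omega[T]$ together with $J$-related maps — to trivial cofibrations. But the Cisinski–Moerdijk model structure on $\mathbf{dSets}$ is a Cisinski-type (Bousfield localized) model structure in which the generating trivial cofibrations are \emph{not} given by an explicit small family of this form; the inner horns and $J$-maps characterize fibrations between fibrant objects (see remark (c) following Theorem \ref{thm:CMmodelstruct}), which is a strictly weaker datum. For instance, a monomorphism between fibrant $\infty$-operads which is a weak equivalence is a trivial cofibration, yet need not lie in the saturation of the inner anodynes and $J$-maps. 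The standard repair is to argue on the right adjoint: show that $W^*$ sends fibrant simplicial operads to inner Kan dendroidal sets (this is due to Moerdijk--Weiss) and sends fibrations between fibrant objects to operadic fibrations, and then invoke the criterion that a left adjoint preserving cofibrations is left Quillen once its right adjoint preserves fibrant objects and fibrations between them. Your approach as stated cannot be executed because the set of generators you want to track is not actually a set of generators.

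\textbf{$W(T)$ is not fibrant.} In the unit computation you write that since $W(T)$ is cofibrant-fibrant, the derived unit at $\Omega[T]$ is $\Omega[T] \to W^*W(T)$. But the spaces of operations of $W(T)=W(\Delta^1,\Omega(T))$ are cubes $(\Delta^1)^k$ (and unions thereof), which are not Kan complexes; hence $W(T)$ is not fibrant in $\mathbf{sOp}$. The fibrant replacement you initially wrote must be retained, and consequently the identification $W^*W(T)$ must be replaced by $W^*$ of a Kan-fibrant replacement. This matters because $W^*$ is a right Quillen functor and one cannot apply it to the non-fibrant $W(T)$ and expect a homotopy-meaningful answer. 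Given this, your proposed induction on vertices of $T$ is essentially the content that is hidden inside the Segal-operad chain in the actual proof; making it work directly requires a careful analysis of how the Segal core inclusion $\mathrm{Sc}(T) \rightarrowtail \Omega[T]$ interacts with $W_!$ and with the fibrant replacement, and it is precisely to control this that Cisinski--Moerdijk introduce their intermediate model categories rather than attempt a bare-hands computation.
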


\begin{remark}
\label{rmk:W!nonunital}
The pair $(W_!,W^*)$ restricts to an adjunction between the categories of open dendroidal sets and non-unital simplicial operads, providing a Quillen equivalence between these model categories.
\end{remark}

\begin{remark}
\label{rmk:Sigmacofibrant}
We should mention one more fact concerning simplicial operads. A simplicial operad $\mathbf{P}$ is called \emph{$\Sigma$-cofibrant} if the symmetric group actions inherent in the definition of $\mathbf{P}$ are all free. A cofibrant simplicial operad $\mathbf{P}$ is $\Sigma$-cofibrant, but the converse of this statement generally fails to hold. It is not hard to verify that if $\mathbf{P}$ is $\Sigma$-cofibrant, then the dendroidal set $W^*\mathbf{P}$ is normal and thus cofibrant in the model structure on $\mathbf{dSets}$ discussed above.
\end{remark}

\subsection{Main results}
\label{sec:mainresults}
The goal of this paper is to show that there exists a chain of Quillen equivalences connecting the categories $\mathbf{dSets}_o$ and $\mathbf{POp}_o$, both equipped with their respective model structures as described above. A key ingredient is the construction of an auxiliary category $\mathbf{fSets}$, the category of $\emph{forest sets}$. Just like $\mathbf{dSets}$, this is a presheaf category. The indexing category $\mathbf{\Phi}$ is a category of \emph{forests}. There exists a fully faithful functor
\begin{equation*}
u: \mathbf{\Omega} \longrightarrow \mathbf{\Phi}
\end{equation*}  
which by left and right Kan extension induces adjunctions
\[
\xymatrix@C=40pt{
u_!: \mathbf{dSets} \ar@<.5ex>[r] & \mathbf{fSets}: u^* \ar@<.5ex>[l]
}
\]
and
\[
\xymatrix@C=40pt{
u^*: \mathbf{fSets} \ar@<.5ex>[r] & \mathbf{dSets}: u_* \ar@<.5ex>[l].
}
\]
We will define the category $\mathbf{fSets}$ in detail in Chapter \ref{chap:forestsets}, as well as its full subcategory $\mathbf{fSets}_o$ of \emph{open forest sets}. The main result there is:

\begin{theorem}
\label{thm:forestsets}
The category of forest sets carries a model structure, homotopically enriched (see Section \ref{subsec:weakenrichment}) over the Joyal model structure on simplicial sets, for which the adjoint pair $(u^*, u_*)$ forms a Quillen equivalence with the category of dendroidal sets.
\end{theorem}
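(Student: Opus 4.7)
First I would set up the model structure on $\mathbf{fSets}$ following the template of the Cisinski--Moerdijk model structure on $\mathbf{dSets}$. The category $\mathbf{\Phi}$ of forests carries natural notions of face and degeneracy maps, automorphisms, and \emph{inner edges} (edges internal to one of the tree components of a forest). Writing $\Phi[F]$ for the forest set represented by a forest $F$, the generating structure for the model structure should consist of three kinds of inclusions: normal monomorphism generators $\partial \Phi[F] \longrightarrow \Phi[F]$ as cofibrations; forest inner horn inclusions $\Lambda^e[F] \longrightarrow \Phi[F]$ as one family of anodyne extensions; and \emph{Segal inclusions} $\Phi[T_1] \sqcup \cdots \sqcup \Phi[T_k] \longrightarrow \Phi[T_1 \sqcup \cdots \sqcup T_k]$ as a second family, encoding that a fibrant forest set evaluated on a disjoint union of trees is a product. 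Together with $J$-style extensions for equivalences, I would invoke Cisinski's machinery for cofibrantly generated model structures on presheaf categories to produce the model structure; the fibrant objects come out as forest $\infty$-operads, that is, forest sets satisfying both the inner Kan condition and the Segal/product condition. The enrichment over the Joyal model structure is then obtained by constructing a symmetric tensor product on $\mathbf{fSets}$ extending the Boardman--Vogt-type tensor on $\mathbf{dSets}$ and verifying the pushout-product axiom with the simplicial structure coming from the embedding $i_!: \mathbf{sSets} \to \mathbf{fSets}$, in close analogy with the dendroidal case.

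Next I would verify that $(u^*, u_*)$ is a Quillen adjunction. Since $u^*$ is a left adjoint between presheaf categories and $u$ is fully faithful and injective on objects, $u^*$ visibly takes normal monomorphisms to normal monomorphisms. To check that it preserves trivial cofibrations, it suffices to check the generators: the forest inner horns restrict either to dendroidal inner horns or to identities of dendroidal faces, while the Segal inclusions restrict to maps whose image already lies in $\mathbf{dSets}$ as representables of trees, and those restricted maps are trivial cofibrations (indeed isomorphisms on the relevant components). Because $u$ is fully faithful, one has $u^* u_* \cong \mathrm{id}$, so the counit of the adjunction is already an isomorphism and the derived counit of the Quillen pair is automatically a weak equivalence.

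The main obstacle is therefore proving that the derived unit $X \longrightarrow u_*\bigl((u^*X)^{\mathrm{fib}}\bigr)$ is a weak equivalence for every cofibrant $X \in \mathbf{fSets}$, equivalently that $u_*$ detects weak equivalences between fibrant objects. My plan here is the zig-zag through complete Segal-type model structures suggested by the diagram in the introduction. The idea is to endow $\mathbf{dSets}^{\mathbf{\Delta}^{\mathrm{op}}}$ and $\mathbf{fSets}^{\mathbf{\Delta}^{\mathrm{op}}}$ with model structures of \emph{complete dendroidal Segal spaces} and \emph{complete forest Segal spaces}, and to prove Rezk-type Quillen equivalences
\[
\mathbf{dSets} \;\simeq\; \mathbf{dSets}^{\mathbf{\Delta}^{\mathrm{op}}}, \qquad \mathbf{fSets} \;\simeq\; \mathbf{fSets}^{\mathbf{\Delta}^{\mathrm{op}}}
\]
modeled on the Joyal--Tierney comparison between quasi-categories and complete Segal spaces. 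At the complete-Segal level, a complete forest Segal space by construction satisfies the product condition on each disjoint union of trees, so it is determined up to canonical weak equivalence by its restriction along $u$; thus $(u^*, u_*)$ upstairs is a Quillen equivalence essentially by construction. Transporting this through the vertical Quillen equivalences yields the equivalence downstairs.

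The hardest technical step will be establishing the vertical Rezk-style Quillen equivalences, particularly on the forest side, since this requires a full development of dendroidal and forest Segal spaces, a careful analysis of completeness via a $J$-object, and checking that the bisimplicial Reedy-type structures interact correctly with the Segal, horn and completeness conditions. This is where the bulk of the combinatorial work alluded to in the introduction is concentrated, and where one must verify that the necessary pushout-products and Segal-type inclusions are trivial cofibrations --- arguments of the same flavour as those cited for Propositions \ref{prop:poprodinneranodyne}, \ref{prop:poprodrootanodyne} and \ref{prop:poprodleafanodyne}.
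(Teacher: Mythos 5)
Your overall architecture matches the paper: construct the operadic model structure on $\mathbf{fSets}$ with inner horns plus sum/Segal localizations, verify a Quillen adjunction $(u^*,u_*)$ directly on generators, and then route the proof of equivalence through (complete) dendroidal and forest Segal spaces. That part of the plan is sound, and the paper does indeed carry out exactly this zig-zag in Section~\ref{sec:equivfsetsdsets} (Theorems~\ref{thm:dcomplSegspace}, \ref{thm:fcomplSegspace}, Lemmas~\ref{lem:projQequiv}, \ref{lem:locQequiv}). You differ in one respectable way: you propose to invoke Cisinski's abstract machinery to manufacture the model structure, whereas the paper constructs it by hand (countable subpresheaves, skeletal filtrations, the ``countable generation'' of trivial cofibrations). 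The Cisinski route would likely succeed but requires an exact cylinder, which is slightly delicate here because the tensor product on $\mathbf{fSets}$ is only associative up to weak equivalence; the paper sidesteps this by introducing the ad hoc but explicit notion of a \emph{homotopically enriched} model category in Section~\ref{subsec:weakenrichment}.

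There is, however, a genuine gap. You write that since $u$ is fully faithful, $u^*u_*\cong\mathrm{id}$, and therefore ``the derived counit of the Quillen pair is automatically a weak equivalence,'' concluding that only the unit side needs work. This is false, and the paper explicitly warns against it in the remark immediately following the statement of Theorem~\ref{thm:equivfSetsdSets}. The derived counit $\mathbf{L}u^*\,\mathbf{R}u_*(Y)\to Y$ requires a cofibrant replacement $C\to u_*(Y)$, and $u_*(Y)$ is \emph{never} cofibrant for nonempty $Y$: any element of $Y(T)$ restricts along the codiagonal $T\amalg T\to T$ to an element of $u_*Y(T\oplus T)$ fixed by the twist, so the $\mathrm{Aut}$-action cannot be free. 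Consequently $u^*(C)\to u^*u_*(Y)=Y$ is $u^*$ applied to a weak equivalence whose target is not cofibrant, and $u^*$ is not known to preserve such maps. The paper also points out the dual problem: $u^*$ of a fibrant object is typically not fibrant, because $u_!$ does not send inner horn inclusions to monomorphisms (Remark~\ref{rmk:warning}). Both halves of the derived unit/counit computation are therefore obstructed, which is precisely why the paper does not argue via unit and counit at all, but instead builds a chain of Quillen equivalences through the projective and Reedy model structures on $\mathbf{sSets}^{\mathbf{\Phi}^{\mathrm{op}}}$ and $\mathbf{sSets}^{\mathbf{\Omega}^{\mathrm{op}}}$, where the $(u_!,u^*)$ and $(u^*,u_*)$ adjunctions both behave well after localizing at sums.
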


Later, in Section \ref{sec:tensorproduct}, we will show that when restricted to open forest sets, this model structure as well as the Quillen pair $(u^*,u_*)$ are compatible with tensor products.

To continue, two more auxiliary categories are needed. These are $\mathbf{dSets}^+$ and $\mathbf{fSets}^+$ (and their `open' variants), the categories of \emph{marked dendroidal sets} and \emph{marked forest sets} respectively. We will construct these categories in Chapter \ref{chap:markedfsets}. Both these categories are closely related to their unmarked analogues, and in fact the main result of that chapter will be:

\begin{theorem}
\label{thm:markeddSetsfSets}
There exists a commutative square of left Quillen functors as follows:
\[
\xymatrix@C=40pt@R=40pt{
\mathbf{dSets} \ar[d]_{(-)^\flat} & \mathbf{fSets} \ar[d]^{(-)^\flat}\ar[l]_{u^*} \\
\mathbf{dSets}^+ & \mathbf{fSets}^+ \ar[l]^{u^*}
}
\]
All these functors induce Quillen equivalences and are compatible with tensor products.
\end{theorem}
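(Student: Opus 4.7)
The plan is to establish the theorem in four steps: prove the vertical functors $(-)^\flat$ are left Quillen equivalences, verify strict commutativity of the square, deduce the bottom $u^*$ is a Quillen equivalence by two-out-of-three, and verify tensor product compatibility.

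The main work lies in the first step, which I would treat along the lines of \cite[\S3.1]{htt}. Since $(-)^\flat$ does not alter the underlying presheaf, it automatically preserves monomorphisms, hence cofibrations. To see that it preserves trivial cofibrations, I would verify that it sends the generating trivial cofibrations of $\mathbf{dSets}$ --- inner horn inclusions $\Lambda^e[T] \to \Omega[T]$ together with the $J$-based extensions handling equivalences --- to marked anodyne extensions of $\mathbf{dSets}^+$; an analogous check handles the forest case, using the model structure of Theorem \ref{thm:forestsets}. To upgrade the resulting Quillen pair to a Quillen equivalence, the model structure on $\mathbf{dSets}^+$ must be set up so that a fibrant object is a dendroidal $\infty$-operad whose marking consists \emph{precisely} of its equivalences. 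Granting this, for a cofibrant $X \in \mathbf{dSets}$ and fibrant $(Y,\mathcal{E}) \in \mathbf{dSets}^+$, a map $X^\flat \to (Y,\mathcal{E})$ is a weak equivalence in $\mathbf{dSets}^+$ if and only if the adjoint $X \to Y$ is a weak equivalence in $\mathbf{dSets}$, since the markings on both sides are forced by the underlying objects. This is the standard Quillen equivalence criterion, and the same argument applies verbatim to the $\mathbf{fSets}$ column.

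Step two is immediate from the definitions: $(-)^\flat$ only adjoins the degenerate $1$-simplices as marking, while $u^*$ is restriction of presheaves along $u \colon \mathbf{\Omega} \hookrightarrow \mathbf{\Phi}$, so the two operations commute strictly and $u^*(X^\flat) = (u^*X)^\flat$. Step three then follows formally: the top $u^*$ is a Quillen equivalence by Theorem \ref{thm:forestsets}, the vertical $(-)^\flat$ are Quillen equivalences by step one, and the induced diagram of derived functors commutes by step two; since three of its four edges are equivalences, so is the fourth, and a left Quillen functor whose derived functor is an equivalence is itself a Quillen equivalence. For step four, $(-)^\flat$ is compatible with tensor products via the canonical identification $(X \otimes Y)^\flat \cong X^\flat \otimes Y^\flat$ (a $1$-simplex of the tensor is degenerate iff both its projections are), while the tensor compatibility of $u^*$ is the content of Section \ref{sec:tensorproduct}; these combine to give compatibility of the entire square.

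The principal obstacle is the first step. The formal structure is standard, but the combinatorial verifications require care: one must set up the marked anodyne extensions in $\mathbf{dSets}^+$ and $\mathbf{fSets}^+$ so that (i) the generating trivial cofibrations of the unmarked theories push forward to marked anodyne maps under $(-)^\flat$ and (ii) fibrant objects have their marking forced to be the collection of equivalences. Both points are more delicate than their simplicial analogues in \cite{htt} because objects of $\mathbf{\Omega}$ and $\mathbf{\Phi}$ have non-trivial automorphisms; the marked anodyne maps and the definition of equivalence must be stable under these symmetries. Once the marked model structures in Chapter \ref{chap:markedfsets} are constructed with these properties in mind, the remaining arguments are essentially formal.
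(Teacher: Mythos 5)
Your four-step decomposition mirrors the structure of the paper's proof (Corollary~\ref{cor:equivmarkedunmarked2}), and steps two, three, and four are essentially correct. Step one, however, contains a gap as written. You propose to show $(-)^\flat$ preserves trivial cofibrations by checking that it sends ``the generating trivial cofibrations of $\mathbf{dSets}$ --- inner horn inclusions $\Lambda^e[T] \to \Omega[T]$ together with the $J$-based extensions'' --- to marked anodynes. But inner horns and $J$-extensions are \emph{not} a generating set of trivial cofibrations for the operadic model structure: they are (operadic) anodynes, which are trivial cofibrations, but the converse fails. The class of trivial cofibrations is only recovered from the anodynes after further closing up under \emph{cancellation} among cofibrations (this is the point of the hypersaturated-class characterization, Proposition~\ref{prop:trivcofoperadic}, stated for forest sets but equally true dendroidally); the only explicit generating set provided is the trivial cofibrations between countable normal objects (Lemma~\ref{lem:countablegentrivcof}), which cannot be checked case by case. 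So verifying the anodyne generators alone does not establish that $(-)^\flat$ is left Quillen.

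The paper sidesteps this by a cleaner argument (Corollary~\ref{cor:equivmarkedunmarked}): since $(-)^\flat$ does not change the underlying presheaf it obviously preserves cofibrations, and because every trivial cofibration is a retract of a pushout of one between normal objects (Lemma~\ref{lem:trivcofgenbetweennormals} and its marked analogue) while $(-)^\flat$ preserves pushouts and retracts, it suffices to check that $(-)^\flat$ preserves weak equivalences between cofibrant objects. This is immediate from the canonical identification $\mathrm{Map}^\flat(X^\flat, E^\natural) \simeq \mathbf{hom}(X,E)$ for $X$ normal and $E$ operadically local. The same identification, together with the fact that a map between operadically local objects automatically preserves equivalences, gives the Quillen-equivalence condition directly. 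If you wish to retain your route, the correct fix is to (i) verify $(-)^\flat$ sends the anodynes to marked \emph{trivial cofibrations} (not necessarily marked anodynes) and (ii) observe that the class of maps which $(-)^\flat$ sends to marked trivial cofibrations is itself hypersaturated --- closure under cancellation follows because $(-)^\flat$ preserves cofibrations and marked trivial cofibrations satisfy the cancellation property (Lemma~\ref{lem:markedtrivcof}) --- and then invoke Proposition~\ref{prop:trivcofoperadic} and its dendroidal analogue. Finally, a minor point: the tensor compatibility of $u^*$ is settled already by Proposition~\ref{prop:fsetsproperties}(iv), not in Section~\ref{sec:tensorproduct}, which concerns the comparison with $\mathbf{POp}_o$; your identification $(X \otimes Y)^\flat \cong X^\flat \otimes Y^\flat$ is correct, though the justification should refer to $1$-corollas factoring through $\eta$ rather than to ``projections'' of the tensor.
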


With all these preliminaries in place, we can finally relate the category of open dendroidal sets to the category $\mathbf{POp}_o$. In Chapter \ref{chap:dendrification} we construct the \emph{dendrification functor}
\begin{equation*}
\omega: \mathbf{\Delta}/N\mathbf{F}_o \longrightarrow \mathbf{fSets}_o.
\end{equation*}
Here $\mathbf{\Delta}/N\mathbf{F}_o$ denotes the Grothendieck construction of the simplicial set $N\mathbf{F}_o$, also called its \emph{category of simplices}. Roughly speaking, one can visualize a simplex in $N\mathbf{F}_o$ by drawing a picture of a \emph{layered forest}. For example, we can draw the 2-simplex
\begin{equation*}
A: \Delta^2 \longrightarrow N\mathbf{F}_o
\end{equation*} 
given by

\[
\xymatrix{
\langle 6 \rangle \ar[r]^f & \langle 3 \rangle \ar[r]^g & \langle 1 \rangle
}
\]
\[
\xymatrix{
f(1)=f(2)=f(3)= 1, & f(4) = 2, & f(5)=f(6)= 3, & g(1)=g(2)= 1
}
\]

as follows:

\[
\begin{tikzpicture} 
[level distance=10mm, 
every node/.style={fill, circle, minimum size=.1cm, inner sep=0pt}, 
level 1/.style={sibling distance=20mm}, 
level 2/.style={sibling distance=10mm}, 
level 3/.style={sibling distance=5mm}]

\node (lefttree)[style={color=white}] {} [grow'=up] 
child {node (level1) {} 
	child{ node (level2) {}
		child
		child
		child
	}
	child{ node {}
		child
	}
};

\node (righttree)[style={color=white}, right = 1.5cm of lefttree] {};
\node (righttreestart)[style={color=white}, above = .88cm of righttree] {} [grow'=up] 
child {node {} 
	child
	child
};

\tikzstyle{every node}=[]

\draw[dashed] ($(level1) + (-1.5cm, 0)$) -- ($(level1) + (2.5cm, 0)$);
\draw[dashed] ($(level1) + (-1.5cm, 1cm)$) -- ($(level1) + (2.5cm, 1cm)$);

\node at ($(level1) + (-1.5cm, 1.5cm)$) {$0$};
\node at ($(level1) + (-1.5cm, .5cm)$) {$1$};
\node at ($(level1) + (-1.5cm, -.5cm)$) {$2$};

\end{tikzpicture} 
\]

The forest $\omega(A)$ is then simply the forest obtained from this picture by forgetting the layered structure. \par 
Using $\omega$, we construct an adjunction
\[
\xymatrix@C=40pt{
\omega_!: \mathbf{POp}_o \ar@<.5ex>[r] & \mathbf{fSets}_o^+: \omega^* \ar@<.5ex>[l]
}
\]

The main result of this paper is:

\begin{theorem}
\label{thm:omegaequivalence}
The pair $(\omega_!, \omega^*)$ is a Quillen equivalence.
\end{theorem}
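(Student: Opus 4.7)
The proof will proceed in two stages. First, I would establish that $\omega_!$ is a left Quillen functor, which is the main technical content. Second, I would upgrade this to a Quillen equivalence by exploiting the other parts of the diagram in the introduction.

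For the first stage, cofibrations in $\mathbf{POp}_o$ are monomorphisms of underlying simplicial sets. Since $\omega_!$ is defined as a left Kan extension along $\omega$, it commutes with colimits, and the explicit description of $\omega$ on simplices of $N\mathbf{F}_o$ shows that $\omega_!$ carries monomorphisms to normal monomorphisms of forest sets. The substantial point is that $\omega_!$ must preserve trivial cofibrations. This can be reduced to verifying that $\omega_!$ sends certain specific families of generating trivial cofibrations to weak equivalences in $\mathbf{fSets}_o^+$, corresponding to the three defining properties of a Lurie $\infty$-operad: inner horn extensions over $N\mathbf{F}_o^\natural$, markings of $p$-coCartesian lifts of inert morphisms, and Segal-type extensions enforcing the product decomposition of fibers. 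Writing a simplex $A: \Delta^r \to N\mathbf{F}_o$ as a layered forest, $\omega_!(A)$ is the underlying unlayered forest, so each generator pushes forward to an explicit morphism of forest sets whose anodyne-ness has to be verified. This analysis is the combinatorial heart of Section \ref{sec:omega!leftQ} and is in the spirit of (and relies on) Propositions \ref{prop:poprodinneranodyne}, \ref{prop:poprodrootanodyne} and \ref{prop:poprodleafanodyne}.

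For the second stage, with left Quillenness of $\omega_!$ in hand, I would argue that $\omega_!$ is a Quillen equivalence as follows. A parallel combinatorial argument shows that $\bar\omega^*: \mathbf{fSets}_o^+ \to \langle 0\rangle/\mathbf{POp}_o$ is a left Quillen functor (Section \ref{sec:omega*leftQ}). The slice functor $\langle 0\rangle_!: \mathbf{POp}_o \to \langle 0\rangle/\mathbf{POp}_o$ is a Quillen equivalence because $\langle 0 \rangle$ is weakly initial in the non-unital setting, as noted in the introduction. Identifying $\bar\omega^* \circ \omega_!$ with $\langle 0\rangle_!$ up to a natural transformation (arising from a distinguished basepoint), two-out-of-three reduces the Quillen equivalence for $\omega_!$ to that of $\bar\omega^*$, which can be verified by a direct analysis of the derived unit on representable forests, combined with the already-established equivalences $(-)^\flat$ and $u^*$ from Theorems \ref{thm:forestsets} and \ref{thm:markeddSetsfSets}.

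The main obstacle is unambiguously the first stage, and within it the Segal-type generators. The image under $\omega_!$ of such a map unfolds the layered product decomposition of simplices in $N\mathbf{F}_o$ into the grafting and leaf-amalgamation structure of forests, producing a morphism whose triviality is neither immediate nor reducible to a single known anodyne class. It must be attacked by induction on the complexity of the layered forest representing $A$, carefully decomposing into compositions of inner, root, and leaf anodyne extensions in the category of forest sets. Once this combinatorial verification is complete, Stage 2 is essentially organized bookkeeping among already-established Quillen equivalences in the diagram of the introduction.
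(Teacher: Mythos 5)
Your Stage 1 matches the paper: Proposition \ref{prop:omega!leftQ} is proved exactly by showing that $u^*\omega_!$ carries generating $\mathfrak{P}$-anodynes to compositions of inner, root, and leaf anodynes, via the combinatorics of Propositions \ref{prop:poprodinneranodyne}, \ref{prop:poprodrootanodyne}, \ref{prop:poprodleafanodyne}. But your Stage 2 is where there is a genuine gap, and you have mislocated the difficulty of the theorem.

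First, the phrase ``identifying $\bar\omega^*\circ\omega_!$ with $\langle 0\rangle_!$ up to a natural transformation'' is doing far too much work. The natural transformation $\langle 0\rangle_! \to \bar\omega^*\bar\omega_!\langle 0\rangle_! = \bar\omega^*\omega_!$ is precisely the unit of the $(\bar\omega_!,\bar\omega^*)$-adjunction, and it is \emph{not} an isomorphism. To apply a two-out-of-three argument you would have to prove that this unit is a natural weak equivalence on cofibrant objects, which is exactly Proposition \ref{prop:unitequivalence} — the decisive step of the whole argument, not a formality. The paper's proof of Proposition \ref{prop:unitequivalence} uses the fact that $\bar\omega^*$ is both left and right Quillen to reduce by skeletal induction (Lemma \ref{lem:unitequiv1}) to $0$- and $1$-simplices of $N\mathbf{F}_o^\natural$, and then Lemmas \ref{lem:connected1simplex} and \ref{lem:reducetoconnected} carry out explicit $\mathfrak{P}$-anodyne filtrations to handle the $1$-simplex case; the ``book with pages'' construction and the generalized $(B_1')$/$(C_2')$ anodynes of Remark \ref{rmk:sumaxium} are required here. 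None of this is ``organized bookkeeping.'' You have the pieces arranged in the right order but the plug in the middle is missing.

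Second, the claim that the Quillen equivalence for $\bar\omega^*$ ``can be verified by ... the already-established equivalences $(-)^\flat$ and $u^*$ from Theorems \ref{thm:forestsets} and \ref{thm:markeddSetsfSets}'' is a non sequitur. Those theorems compare $\mathbf{dSets}_o$, $\mathbf{dSets}_o^+$, $\mathbf{fSets}_o$ and $\mathbf{fSets}_o^+$ among themselves and say nothing about $\mathbf{POp}_o$; they cannot assist in showing that either $\omega_!$ or $\bar\omega^*$ is an equivalence. In the paper, once the unit is a weak equivalence (Corollary \ref{cor:omegaunit}), the counit is handled in Proposition \ref{prop:counitequiv} by reducing along Segal cores to corollas and $\eta$, writing such an object as $\omega_!(A)$, and invoking the triangle identity to transfer the unit equivalence to the counit. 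Both the unit and counit arguments go through $\langle 0\rangle/\mathbf{POp}_o$ and exploit that $\bar\omega^*$ is a left Quillen functor, which is itself a separate substantial verification carried out in Section \ref{sec:omega*leftQ}; you have underestimated that as well.

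So the overall skeleton of your proposal — show $\omega_!$ and $\bar\omega^*$ are left Quillen, pass through $\langle 0\rangle/\mathbf{POp}_o$ via $\langle 0\rangle_!$, and then compare — is correct and is indeed what the paper does. What is missing is the realization that ``comparing'' means proving the unit $X\to\bar\omega^*\bar\omega_!(X)$ is a weak equivalence, and that this is a second, independent block of combinatorics roughly on the scale of the left-Quillen proofs, not a corollary of anything previously established.
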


\begin{corollary}
\label{cor:equivalencePOpdSets}
There is a zig-zag of Quillen equivalences as follows (left adjoints on top):
\[
\xymatrix@C=35pt{
\mathbf{dSets}_o \ar@<.5ex>[r]^{(-)^\flat} & \mathbf{dSets}_o^+ \ar@<.5ex>[l] \ar@<-.5ex>[r] & \mathbf{POp}_o \ar@<-.5ex>[l]_-{u^*\omega_!}. 
}
\]
\end{corollary}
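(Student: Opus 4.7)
The corollary is a direct assembly of the three preceding theorems, rather than requiring new input, so my plan is largely a matter of bookkeeping. First I would observe that, by the open versions of Theorem \ref{thm:markeddSetsfSets}, the adjunction $(-)^\flat \dashv (\text{forget markings})$ between $\mathbf{dSets}_o$ and $\mathbf{dSets}_o^+$ is a Quillen equivalence, which supplies the left half of the displayed zig-zag with its labelled left Quillen functor $(-)^\flat$.

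For the right half, the plan is to build the arrow $\mathbf{POp}_o \to \mathbf{dSets}_o^+$ as a composite of two left Quillen equivalences already in hand. Specifically, Theorem \ref{thm:omegaequivalence} gives a left Quillen equivalence
\[
\omega_!: \mathbf{POp}_o \longrightarrow \mathbf{fSets}_o^+,
\]
while Theorem \ref{thm:markeddSetsfSets}, restricted to the open subcategories, gives a left Quillen equivalence
\[
u^*: \mathbf{fSets}_o^+ \longrightarrow \mathbf{dSets}_o^+.
\]
Composing these yields the left Quillen functor $u^*\omega_!$ displayed in the statement. The composite of two left Quillen equivalences is a left Quillen equivalence (immediate from the 2-out-of-3 property of weak equivalences applied to the derived unit and counit, together with the fact that left Quillen functors compose), so this functor is a left Quillen equivalence from $\mathbf{POp}_o$ to $\mathbf{dSets}_o^+$.

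There is no genuine obstacle here beyond a trivial compatibility check, namely that the open subcategories are preserved by the relevant functors; this is clear from the construction, since $\omega$ lands in $\mathbf{fSets}_o$ by design and $u^*$ and $(-)^\flat$ restrict to the open subcategories on both sides. Putting the two halves together produces the claimed zig-zag, with the middle vertex $\mathbf{dSets}_o^+$. The substantive content of the corollary is entirely packaged into Theorems \ref{thm:markeddSetsfSets} and \ref{thm:omegaequivalence}; the corollary itself is a one-line consequence once those are available.
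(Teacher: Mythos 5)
Your proposal is correct and matches the intent of the paper, which presents the corollary without proof precisely because it is an immediate assembly of Theorems \ref{thm:markeddSetsfSets} and \ref{thm:omegaequivalence}. The only place you should be slightly more careful is the phrase "open versions of Theorem \ref{thm:markeddSetsfSets}": the theorem as stated is for the full categories, and its open analogue for the $(-)^\flat$ functor is recorded explicitly in Corollary \ref{cor:equivmarkedunmarked} (and implicitly for $u^*$), using the identification $\mathbf{dSets}_o \simeq \mathbf{dSets}/N_d\mathbf{Com}^-$ and the fact that the model structures on the open subcategories are the induced slice model structures—but this is indeed routine, as you say, and the composition argument for $u^*\omega_!$ is standard.
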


Chapter \ref{chap:dendrification} is devoted to the proof of Theorem \ref{thm:omegaequivalence}. In Section \ref{sec:tensorproduct} we will investigate the behaviour of the relevant functors with respect to tensor products. We will prove the following:

\begin{theorem}
\label{thm:omega!monoidal}
The equivalence of Corollary \ref{cor:equivalencePOpdSets} is monoidal on the level of homotopy categories. More precisely, for $X, Y \in \mathbf{POp}_o$ there exists a natural weak equivalence of cofibrant marked forest sets as follows:
\[
\xymatrix{
\omega_!(X \odot Y) \ar[r]^-{\simeq} & \omega_!(X) \otimes \omega_!(Y)
}
\]
where $\odot$ (resp. $\otimes$) denotes the tensor product on $\mathbf{POp}_o$ (resp. $\mathbf{fSets}_o^+$).
\end{theorem}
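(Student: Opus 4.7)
The plan is to construct the natural comparison map, reduce to the case of representable preoperads by a colimit/cell argument, and then verify the equivalence on representables by an explicit combinatorial identification.

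First I would construct the natural transformation. Using the adjunction $(\omega_!, \omega^*)$, it suffices to produce a natural map
\[
X \odot Y \longrightarrow \omega^*\bigl(\omega_!(X) \otimes \omega_!(Y)\bigr).
\]
The units $X \to \omega^*\omega_!(X)$ and $Y \to \omega^*\omega_!(Y)$ reduce this to producing a lax comparison
\[
\omega^*(A) \odot \omega^*(B) \longrightarrow \omega^*(A \otimes B)
\]
for marked forest sets $A$, $B$. This arises from the description of $\omega$ on the representables $\Delta^n/N\mathbf{F}_o^\natural$: such a simplex is precisely an $n$-layered open forest, and the smash-induced multiplication $N\mathbf{F}_o \times N\mathbf{F}_o \to N\mathbf{F}_o$ corresponds combinatorially to combining two layered forests into a single layered forest whose underlying unlayered forest is obtained by grafting/interleaving.

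Next I would reduce to representables. Both $\odot$ and $\otimes$ are cocontinuous in each variable, and $\omega_!$ is a left adjoint, so both sides of the comparison preserve colimits in each variable. Moreover, by Theorem \ref{thm:omegaequivalence} the functor $\omega_!$ is left Quillen, and the tensor product on $\mathbf{fSets}_o^+$ is a left Quillen bifunctor (as established in Section \ref{sec:tensorproduct} for the unmarked version and inherited through Theorem \ref{thm:markeddSetsfSets}); similarly $\odot$ is a left Quillen bifunctor on $\mathbf{POp}_o$. By Ken Brown's lemma and a standard cell induction along generating (trivial) cofibrations, it suffices to check that the comparison is a weak equivalence when $X$ and $Y$ are of the form $(\Delta^n)^\flat$ and $(\Delta^m)^\flat$ equipped with fixed structure maps to $N\mathbf{F}_o^\natural$.

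The heart of the proof is then a combinatorial comparison on such representables. On the one hand, $X \odot Y$ is the marked simplicial set $\Delta^n \times \Delta^m$ over $N\mathbf{F}_o^\natural$, whose non-degenerate simplices are the usual shuffles of $\Delta^n$ and $\Delta^m$; each such shuffle projects to $N\mathbf{F}_o$ as a chain of partial surjections, i.e.\ a layered forest. Applying $\omega_!$ and forgetting layerings produces a specific normal cover of $\omega_!(X) \otimes \omega_!(Y)$ by forests. On the other hand, the Boardman--Vogt tensor product of the two unlayered forests $\omega_!(X)$ and $\omega_!(Y)$ has, by its standard description, a cover by tensor-product ``shuffle trees'' indexed by the same combinatorial data (interleavings of vertices of the two forests). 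Matching these two descriptions gives that the comparison is a cellular iso\-morph\-ism on the underlying presheaf, and marked edges on both sides correspond to inert edges in the total object, so the map is an isomorphism of marked forest sets — hence in particular a weak equivalence. Cofibrancy of both sides is automatic from the left Quillen properties invoked above, and naturality is built into the construction.

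The main obstacle is the combinatorial identification at the representable level: one must verify carefully that the bijection between shuffles of layered forests (coming from products in $\mathbf{\Delta}/N\mathbf{F}_o$) and shuffle trees in the Boardman--Vogt tensor product is compatible with face/degeneracy maps and with the markings (inert edges on the preoperad side versus the forest-set markings produced by $(-)^\flat$). This is where the delicate combinatorics of layered forests genuinely enter, analogous to the shuffle analyses referenced in Propositions \ref{prop:poprodinneranodyne}, \ref{prop:poprodrootanodyne} and \ref{prop:poprodleafanodyne}.
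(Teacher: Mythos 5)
Your construction of the comparison map and the reduction to representables follows the paper's strategy, but the claim at the heart of your argument is false: on representables $A$, $B$ the map $\theta_{A,B}\colon \omega_!(A \odot B) \to \omega_!(A) \otimes \omega_!(B)$ is \emph{not} an isomorphism of marked forest sets. The shuffles of $\Delta^m \times \Delta^n$ correspond precisely to the \emph{layered} shuffles of the forests $\omega_!(A)$ and $\omega_!(B)$, i.e.\ shuffles in which each layer is simultaneously a layer of $\omega_!(A)$ or of $\omega_!(B)$ applied across all branches of the forest. The Boardman--Vogt tensor product $\omega_!(A) \otimes \omega_!(B)$, by contrast, is the union of \emph{all} shuffles, including those in which vertices of the two forests interleave independently along different branches of the forest. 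The paper gives an explicit example (with a $2$-simplex $A$ and a $1$-simplex $B$) where there are five shuffles of the corresponding forests but only three of them are layered. So $\theta_{A,B}$ is in general a proper inclusion, and the proof must show that this inclusion is a weak equivalence rather than exhibit it as an isomorphism.

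The missing step is the further reduction below arbitrary representables. After reducing to representables via the cube lemma, the paper's proof uses the Segal core inclusions $\mathrm{Sc}(A) \to A$ and $\mathrm{Sc}(B) \to B$ (which are trivial cofibrations, and which the tensor products respect by the left Quillen bifunctor property) to reduce to simplices of dimension at most $1$, and then applies Lemma \ref{lem:reducetoconnected} to reduce further to \emph{connected} $0$- and $1$-simplices, i.e.\ those for which $\omega_!$ produces $\varnothing$, $\eta$, or a single corolla. Only in these terminal cases is $\theta_{A,B}$ actually an isomorphism. Without this chain of reductions your argument would have to show directly that the inclusion of the layered shuffles into all shuffles is a weak equivalence for arbitrary $m$ and $n$, which is a nontrivial combinatorial assertion you have not addressed.
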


The following corollary is not a purely formal consequence of this theorem, but will follow easily once we have studied the functor $\omega^*$.

\begin{corollary}[See Corollary \ref{cor:omega*monoidal2}]
\label{cor:omega*monoidal}
For cofibrant objects $P, Q \in \mathbf{fSets}_o^+$ there is a natural weak equivalence
\begin{equation*}
\omega^*(P) \odot \omega^*(Q) \longrightarrow \omega^*(P \otimes Q). 
\end{equation*}
\end{corollary}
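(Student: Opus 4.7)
The plan is to first construct the natural transformation via adjunction, and then verify the weak equivalence claim by reducing through the Quillen equivalence of Theorem \ref{thm:omegaequivalence}. For any $P, Q \in \mathbf{fSets}_o^+$, Theorem \ref{thm:omega!monoidal} applied with $X = \omega^*(P)$ and $Y = \omega^*(Q)$ supplies a natural weak equivalence
\[
\alpha_{P,Q}: \omega_!\bigl(\omega^*(P) \odot \omega^*(Q)\bigr) \xrightarrow{\simeq} \omega_!\omega^*(P) \otimes \omega_!\omega^*(Q);
\]
post-composing with the tensor of counits $\epsilon_P \otimes \epsilon_Q$ and transposing across $(\omega_!, \omega^*)$ yields the candidate natural transformation $m_{P,Q}: \omega^*(P) \odot \omega^*(Q) \to \omega^*(P \otimes Q)$.

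Next, I would exploit that all objects of $\mathbf{POp}_o$ are cofibrant---cofibrations being monomorphisms by Proposition \ref{prop:modelstructpop}---so that in particular the source and target of $m_{P,Q}$ are cofibrant. The Quillen equivalence then implies that $\omega_!$ reflects weak equivalences between cofibrant objects, reducing the task to showing that $\omega_!(m_{P,Q})$ is a weak equivalence in $\mathbf{fSets}_o^+$. A direct triangle-identity calculation gives
\[
\epsilon_{P \otimes Q} \circ \omega_!(m_{P,Q}) \;=\; (\epsilon_P \otimes \epsilon_Q) \circ \alpha_{P,Q},
\]
and since $\alpha_{P,Q}$ is a weak equivalence, 2-out-of-3 further reduces the problem to verifying that both $\epsilon_{P \otimes Q}$ and $\epsilon_P \otimes \epsilon_Q$ are weak equivalences.

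For fibrant $P$, $Q$, and $P \otimes Q$ these counits are weak equivalences directly by the Quillen equivalence, and the compatibility of $\otimes$ on $\mathbf{fSets}_o^+$ with the model structure (Theorem \ref{thm:markeddSetsfSets} and the discussion in Section \ref{sec:tensorproduct}) ensures that $\epsilon_P \otimes \epsilon_Q$ remains a weak equivalence. To handle an arbitrary cofibrant pair $P, Q$, I would pick fibrant replacements $P \to P^f$, $Q \to Q^f$ and compare $m_{P,Q}$ with $m_{P^f,Q^f}$ via naturality, using the monoidal compatibility once more to argue that the vertical edges of the resulting square are weak equivalences, thereby reducing to the fibrant case. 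The main obstacle, and the reason this corollary is not purely formal, lies precisely in the monoidal-homotopical analysis of $\otimes$ on marked forest sets needed to execute this fibrant-replacement reduction---the package of tensor-product results that must be developed alongside the study of $\omega^*$ signalled in the surrounding text.
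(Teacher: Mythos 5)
Your construction of the map $m_{P,Q}$ (as the transpose of $(\epsilon_P \otimes \epsilon_Q) \circ \alpha_{P,Q}$) matches the paper's, and the observations that all objects of $\mathbf{POp}_o$ are cofibrant, that $\omega_!$ reflects weak equivalences between cofibrant objects, and the triangle-identity rewriting $\epsilon_{P \otimes Q} \circ \omega_!(m_{P,Q}) = (\epsilon_P \otimes \epsilon_Q) \circ \alpha_{P,Q}$ are all fine.

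The gap is in how you propose to dispose of the counit $\epsilon_{P \otimes Q}$. You need $\epsilon_{P \otimes Q}: \omega_!\omega^*(P \otimes Q) \to P \otimes Q$ to be a weak equivalence, and a generic Quillen equivalence only guarantees this when the object is \emph{fibrant}. Your reduction passes to fibrant replacements $P \to P^f$, $Q \to Q^f$, but this never makes the tensor product fibrant: $P^f \otimes Q^f$ is not in general a fibrant object of $\mathbf{fSets}^+_o$, so you end up needing the counit at a cofibrant-but-not-fibrant object, which is exactly where you started. The naturality square you describe therefore does not close the argument. What is really required is Proposition \ref{prop:counitequiv} of the paper, which asserts that the counit $\bar\omega_!\bar\omega^* Y \to Y$ is a weak equivalence for \emph{every} cofibrant $Y$, not merely the fibrant ones; this is a genuine, non-formal result (proved via Segal cores) and it is also the reason $\bar\omega^*$ is a \emph{left} Quillen functor, which gives the needed fact that $\omega^*$ preserves weak equivalences between cofibrant objects. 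Once you have Proposition \ref{prop:counitequiv}, your fibrant-replacement detour becomes unnecessary: your reduction already works for all cofibrant $P, Q$. So your diagnosis of the ``main obstacle'' is slightly misplaced — it is not primarily the monoidal-homotopical analysis of $\otimes$ (that is supplied by Proposition \ref{prop:newnormalmonopoprod} and Theorem \ref{thm:markeddSetsfSets}), but the two-sidedness of $\bar\omega^*$ as a Quillen functor.

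For comparison, the paper's proof sidesteps $\epsilon_{P\otimes Q}$ altogether: it shows $\psi = (\epsilon_P \otimes \epsilon_Q)\circ\alpha_{P,Q}$ is a weak equivalence, takes a fibrant replacement $P\otimes Q \to (P\otimes Q)_f$ on the \emph{forest-set} side, uses the cofibrant-to-fibrant criterion for Quillen equivalences to see that the composite $\omega^*(P)\odot\omega^*(Q) \to \omega^*((P\otimes Q)_f)$ is a weak equivalence, and then uses that $\omega^*$ preserves weak equivalences between cofibrant objects (since $\bar\omega^*$ is left Quillen) to conclude by two-out-of-three. Both routes ultimately rest on the same non-formal ingredients.
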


The tensor product on $\mathbf{POp}_o$ is `symmetric up to weak equivalence'. This can be used to construct a symmetric monoidal structure on the homotopy category $\mathrm{Ho}(\mathbf{POp}_o)$. We will show in Section \ref{sec:tensorproduct} that the tensor product of open dendroidal sets, which is symmetric, is associative up to weak equivalence (in a precise sense), which endows the homotopy category $\mathrm{Ho}(\mathbf{dSets}_o)$ with a symmetric monoidal structure as well. We will finish Section \ref{sec:tensorproduct} by relating these structures:

\begin{proposition}
\label{prop:omegasymmonoidal}
The zig-zag of Quillen equivalences between $\mathbf{dSets}_o$ and $\mathbf{POp}_o$ induces an equivalence of symmetric monoidal categories between $\mathrm{Ho}(\mathbf{dSets}_o)$ and $\mathrm{Ho}(\mathbf{POp}_o)$.
\end{proposition}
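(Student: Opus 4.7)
The plan is to upgrade the zig-zag of Quillen equivalences from Corollary \ref{cor:equivalencePOpdSets} to an equivalence of symmetric monoidal categories by tracking the monoidal and symmetry data through each functor in the zig-zag. As discussed just before the statement, the symmetric monoidal structure on $\mathrm{Ho}(\mathbf{POp}_o)$ comes from descending the (strictly associative, only weakly symmetric) tensor product $\odot$, while the one on $\mathrm{Ho}(\mathbf{dSets}_o)$ comes from descending the (strictly symmetric, only weakly associative) tensor product $\otimes$. Thus two things must be verified: (i) the derived equivalence carries natural isomorphisms intertwining $\odot$ and $\otimes$ on homotopy categories; and (ii) these natural isomorphisms are coherent with both the associators and the braidings on each side.

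For (i), I would combine the monoidality data supplied across the zig-zag. Theorem \ref{thm:markeddSetsfSets} already asserts that the left Quillen equivalences $(-)^\flat: \mathbf{dSets}_o \to \mathbf{dSets}_o^+$ and $u^*: \mathbf{fSets}_o^+ \to \mathbf{dSets}_o^+$ are compatible with tensor products, so their derived versions are strong monoidal. Theorem \ref{thm:omega!monoidal} provides, naturally in $X, Y \in \mathbf{POp}_o$, a weak equivalence $\omega_!(X \odot Y) \xrightarrow{\simeq} \omega_!(X) \otimes \omega_!(Y)$ of cofibrant objects in $\mathbf{fSets}_o^+$. Composing these, one assembles a natural isomorphism in the homotopy category intertwining $\odot$ and $\otimes$ along the zig-zag, which together with the monoidality of $(-)^\flat$ equips the derived equivalence with a monoidal structure.

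For (ii), the main obstacle is that the associator on the dendroidal side and the braiding on the preoperadic side are themselves zig-zags of weak equivalences rather than strict equalities, so that verifying compatibility amounts to comparing two morphisms in $\mathrm{Ho}(\mathbf{fSets}_o^+)$ each constructed out of several zig-zags. The key observation, visible from the explicit description of the dendrification functor just before Theorem \ref{thm:omegaequivalence}, is that iterated tensor products on both sides dendrify to the same underlying unlayered forest construction; hence the natural weak equivalences of Theorem \ref{thm:omega!monoidal} should commute both with any reparenthesization and with any permutation of tensor factors. Concretely, for associativity one compares the two maps $\omega_!((X \odot Y) \odot Z) \to \omega_!(X) \otimes \omega_!(Y) \otimes \omega_!(Z)$ arising from the two parenthesizations, and for the braiding one compares the two maps $\omega_!(X \odot Y) \to \omega_!(Y) \otimes \omega_!(X)$ obtained by applying $\omega_!$ to the symmetry of $\odot$ either before or after invoking Theorem \ref{thm:omega!monoidal} and the (strict) symmetry of $\otimes$ on $\mathbf{fSets}_o^+$.

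In each case, naturality of the equivalences in Theorem \ref{thm:omega!monoidal} together with the identification of underlying forests reduces the claim to a finite diagram chase in the homotopy category. Combined with the monoidality of the remaining Quillen equivalences from Theorem \ref{thm:markeddSetsfSets}, this produces the required equivalence of symmetric monoidal structures on $\mathrm{Ho}(\mathbf{dSets}_o)$ and $\mathrm{Ho}(\mathbf{POp}_o)$. The most delicate point is likely to lie in making precise the sense in which the dendrification of the preoperadic symmetry coincides, at the level of unlayered forests, with the dendroidal symmetry; I expect this to come out of the combinatorial description of $\omega$ given just before Theorem \ref{thm:omegaequivalence} together with the naturality afforded by Corollary \ref{cor:omega*monoidal}.
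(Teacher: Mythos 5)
Your proposal takes a genuinely different route from the paper. In Section 5.3 the paper introduces the notion of a \emph{colax symmetric monoidal structure} (a corepresentable operad) and the associated unbiased tensor functors $\otimes_T$, then encodes the symmetric monoidal structure on $\mathrm{Ho}(\mathbf{POp}_o)$ via the contractible groupoids $\mathfrak{S}(n)$ of $n$-fold smash products, assembling everything into a simplicial operad $(\mathbf{POp}_o)_{\mathfrak{S}}^\otimes$ and a corresponding homotopy operad $\mathrm{Ho}\bigl((\mathbf{fSets}_o^+)_{\mathfrak{S}}^\otimes\bigr)$. Proposition \ref{prop:equivsymmon} then shows that $\omega^*$ induces an equivalence of these operads, from which Proposition \ref{prop:omegasymmonoidal} follows formally. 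All of the pentagon/hexagon coherence you propose to chase by hand is absorbed automatically into the operadic packaging. Your plan is instead biased: build a monoidal natural isomorphism from the binary comparison $\theta_{X,Y}$ of Theorem \ref{thm:omega!monoidal}, then check compatibility with associators and braidings one diagram at a time.

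There are two genuine gaps in your sketch. First, the step ``applying $\omega_!$ to the symmetry of $\odot$'' is not well-formed, because $\odot$ has no symmetry at the level of $\mathbf{POp}_o$: the swap $X \times Y \to Y \times X$ does not commute with the structure maps to $N\mathbf{F}_o^\natural$, since $\wedge$ is not symmetric. What exists is a natural isomorphism in $\mathfrak{S}(2)$ between $\wedge$ and its transpose, and this only yields a \emph{zig-zag} of weak equivalences between $X \odot Y$ and $Y \odot X$, not a morphism in $\mathbf{POp}_o$. So there is nothing to which $\omega_!$ can be directly applied; one must first make precise how the braiding on $\mathrm{Ho}(\mathbf{POp}_o)$ is defined (this is exactly what the $\mathfrak{S}$-formalism and Lemma \ref{lem:othersmashes} achieve) before comparing. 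Second, the proposed ``finite diagram chase in the homotopy category'' is too imprecise. On the dendroidal side the associator relating $(X \otimes Y) \otimes Z$ and $X \otimes (Y \otimes Z)$ is itself only a zig-zag through the ternary $\otimes_3$-tensor (cf.\ Proposition \ref{lem:alphaunbiased}); after inverting weak equivalences the coherence axioms become statements about compositions of several such zig-zags, and these have to be verified, not just asserted. Your correct observation that iterated tensor products on both sides dendrify to the same unlayered forest is close in spirit to Lemmas \ref{lem:unbiasedequiv} and \ref{lem:othersmashes}, but for a complete proof this has to be promoted to the operadic statement that $\theta$ extends to an equivalence of the relevant (homotopy) operads, rather than only a natural comparison of binary tensors.
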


\begin{remark}
With a little more care, one can extract symmetric monoidal $\infty$-categories from the model categories $\mathbf{dSets}_o$ and $\mathbf{POp}_o$ and show that our Quillen adjunctions induce equivalences between these. We will not belabour the details of such a construction here.
\end{remark}

\subsection{Strictification}

The chain of Quillen equivalences between the categories of open dendroidal sets and non-unital $\infty$-preoperads, as expressed by Corollary \ref{cor:equivalencePOpdSets}, allows us to transfer various properties of the model category of dendroidal sets to that of preoperads and vice versa. By way of illustration, we will in this section give an example of this, namely the strictification of non-unital (Lurie) $\infty$-operads (i.e. fibrant objects in $\mathbf{POp}_o$). \par 
Recall that Lurie's definition of an $\infty$-operad $\mathcal{O}$ involves various choices: for an inert 1-simplex $f: \langle m \rangle \rightarrow \langle n \rangle$ one has to choose coCartesian 1-simplices of $\mathcal{O}$ lying over it and one uses these to construct a map
\begin{equation*}
f_!: \mathcal{O}_{\langle m \rangle} \longrightarrow \mathcal{O}_{\langle n \rangle}
\end{equation*}
of simplicial sets. This map is only unique up to homotopy (or rather, up to a contractible space of choices) and functorial in the weak sense that for another inert morphism $g: \langle l \rangle \rightarrow \langle m \rangle$, the composition $f_! g_!$ is homotopic, not necessarily equal, to $(fg)_!$. In analogy with the theory of (co)fibered categories, we call an $\infty$-operad \emph{split} if it comes equipped with explicit choices of coCartesian 1-simplices over inert maps $f: \langle m \rangle \rightarrow \langle n \rangle$, as well as explicit choices of corresponding maps $f_!: \mathcal{O}_{\langle m \rangle} \rightarrow \mathcal{O}_{\langle n \rangle}$, functorial in the sense that $f_!g_! = (fg)_!$. If $\mathcal{P}$ is an arbitrary $\infty$-operad, a \emph{strictification} of $\mathcal{P}$ is a weak equivalence $\mathcal{P} \rightarrow \mathcal{P}'$, where $\mathcal{P}'$ is a split $\infty$-operad.

\begin{theorem}
Every non-unital $\infty$-operad admits a strictification.
\end{theorem}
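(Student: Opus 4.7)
The plan is to exhibit, for any given fibrant $\mathcal{P} \in \mathbf{POp}_o$, a fibrant simplicial operad $\mathbf{P} \in \mathbf{sOp}_o$ together with a weak equivalence in $\mathbf{POp}_o$ of the form $\mathcal{P} \to \nu(\mathbf{P}) = N(\mathrm{cat}(\mathbf{P}))^\natural$. Any $\infty$-operad of the form $\nu(\mathbf{P})$ is manifestly split: the canonical lifts described in observation (1) after the definition of $\mathrm{cat}(\mathbf{P})$ provide strictly functorial coCartesian lifts in $\mathrm{cat}(\mathbf{P})$ of the inert morphisms of $\mathbf{F}$, built out of identity operations, and the homotopy-coherent nerve preserves this strict functoriality for the resulting maps $f_!$ between fibers.

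To produce the relevant $\mathbf{P}$, I would use the chain of Quillen equivalences of Corollary \ref{cor:equivalencePOpdSets} together with the Quillen equivalence $(W_!, W^*)$ between $\mathbf{dSets}_o$ and $\mathbf{sOp}_o$ from Remark \ref{rmk:W!nonunital}. These assemble into an equivalence of homotopy categories $\mathrm{Ho}(\mathbf{POp}_o) \simeq \mathrm{Ho}(\mathbf{sOp}_o)$. Applying the appropriate right adjoints to $\mathcal{P}$ (and fibrantly/cofibrantly replacing at the necessary intermediate stages) yields a fibrant simplicial operad $\mathbf{P}$ which corresponds to $\mathcal{P}$ under this equivalence.

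The crux of the proof — and its main obstacle — is to verify that the \emph{direct} functor $\nu:(\mathbf{sOp}_o)_f \to \mathbf{POp}_o$ induces, on passage to homotopy categories, the equivalence inverse to the one produced by the zig-zag above. This is not a formal consequence of the constructions: $\nu$ bypasses all of the intermediate categories of (marked) dendroidal sets, (marked) forest sets and dendrification that are used to build the zig-zag. The comparison must be carried out by hand, producing for each fibrant $\mathbf{P}$ a natural chain of weak equivalences in $\mathbf{POp}_o$ connecting $\nu(\mathbf{P})$ to the image of $\mathbf{P}$ under the derived zig-zag; the matching is on the level of fibers over $\langle n \rangle$ and on the canonical coCartesian lifts, exploiting the Segal-type decomposition $\mathcal{O}_{\langle n \rangle} \simeq \mathcal{O}_{\langle 1 \rangle}^{\times n}$ that holds on both sides. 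I would expect to set up the comparison by first treating the operadic nerve directly on the forest-set side, where $\nu(\mathbf{P})$ can be related to $\omega_!$ applied to a natural forest-set model of $W^*(\mathbf{P})$, and then transporting through $(-)^\flat$ and $u^*$.

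Once this comparison is established, $\mathcal{P}$ and $\nu(\mathbf{P})$ represent the same object in $\mathrm{Ho}(\mathbf{POp}_o)$. Since every object of $\mathbf{POp}_o$ is cofibrant by Proposition \ref{prop:modelstructpop}, and both $\mathcal{P}$ and $\nu(\mathbf{P})$ are fibrant, the resulting isomorphism in the homotopy category is represented by an honest weak equivalence $\mathcal{P} \to \nu(\mathbf{P})$ in $\mathbf{POp}_o$, which is the desired strictification.
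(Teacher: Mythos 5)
Your proposal is correct but takes a genuinely different, and considerably heavier, route than the paper. The paper proves this theorem directly from the zig-zag between $\mathbf{POp}_o$ and the dendroidal/forest side: given a fibrant $\mathcal{P} \in \mathbf{POp}_o$, the derived unit $\mathcal{P} \longrightarrow \omega^*\bigl((\omega_!\mathcal{P})_f\bigr)$ is a weak equivalence (Corollary \ref{cor:omegaunit}), and the target is already split, because for any operadically local forest set $E$ the object $\omega^*(E^\natural)$ carries strictly functorial choices of coCartesian lifts over inert morphisms of $\mathbf{F}_o$ coming from the genuine direct sum decomposition of forests --- indeed the paper notes the splitting induces an \emph{isomorphism} $\mathcal{O}_{\langle n \rangle} \simeq \mathcal{O}_{\langle 1 \rangle}^{\times n}$, not merely an equivalence. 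No simplicial operads, and no functor $\nu$, enter.

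Your route instead passes through $\mathbf{sOp}_o$ and the operadic nerve $\nu$. Your observation that $\nu(\mathbf{P})$ is split is correct, and your final step (cofibrant-fibrant objects isomorphic in the homotopy category are joined by an honest weak equivalence) is sound. But the step you flag as ``the crux and its main obstacle'' --- establishing that $\nu$ agrees, on homotopy categories, with the derived zig-zag --- is exactly the \emph{second} theorem of this section, proved separately in Section \ref{sec:strictification}. So your proof of the present theorem is logically correct only modulo a substantial further result that the paper treats as a distinct and later goal. (Strictly speaking you need less than full agreement with the zig-zag: essential surjectivity of $\nu$ on homotopy categories already gives you some $\mathbf{P}$ with $\nu(\mathbf{P}) \cong \mathcal{P}$ in $\mathrm{Ho}(\mathbf{POp}_o)$, which suffices.) What your approach buys, if one is willing to pay this price, is a stronger form of strictification in which the split $\infty$-operad is the honest operadic nerve of a simplicial operad; for the theorem as stated, though, the paper's direct argument via $\omega^*$ is much lighter.
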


This result follows from our equivalence between open dendroidal sets and non-unital preoperads, since the image of any open dendroidal $\infty$-operad under this equivalence admits a canonical strictification. In fact, even more is true: for $\mathcal{O}$ an $\infty$-operad obtained from a dendroidal set, the natural choice of strictification will induce an \emph{isomorphism}
\begin{equation*}
\mathcal{O}_{\langle n \rangle} \simeq \mathcal{O}_{\langle 1 \rangle}^{\times n}
\end{equation*}
rather than just an equivalence. \par 
Our results also provide a zig-zag of Quillen equivalences between the category $\mathbf{POp}_o$ and the category of non-unital simplicial operads, by composing the equivalence
\begin{equation*}
W^*: \mathbf{sOp}_o \longrightarrow \mathbf{dSets}_o
\end{equation*}
with the chain of equivalences of Corollary \ref{cor:equivalencePOpdSets}. A careful inspection (cf. Section \ref{sec:strictification}) of the functors involved will show that, on the level of homotopy categories, this equivalence between simplicial operads and $\mathbf{POp}_o$ agrees with the functor $\nu$ described in Section \ref{sec:POp}, so that we obtain a further `strictification' of Lurie's $\infty$-operads:

\begin{theorem}
The functor
\begin{equation*}
\nu: (\mathbf{sOp}_o)_f \longrightarrow \mathbf{POp}_o: \mathbf{P} \longmapsto \bigl(N\mathrm{cat}(\mathbf{P})^\natural \rightarrow N\mathbf{F}_o^\natural \bigr)
\end{equation*}
induces an equivalence on the level of homotopy categories.
\end{theorem}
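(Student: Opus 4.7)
The plan is to combine the Cisinski-Moerdijk Quillen equivalence $W^* : (\mathbf{sOp}_o)_f \to \mathbf{dSets}_o$ with the chain of Quillen equivalences of Corollary \ref{cor:equivalencePOpdSets} between $\mathbf{dSets}_o$ and $\mathbf{POp}_o$, and then to show that the resulting derived functor is naturally isomorphic to the one induced by $\nu$. Since the composite chain is already known to be an equivalence of homotopy categories, it suffices to produce, naturally in a fibrant $\mathbf{P} \in \mathbf{sOp}_o$, a weak equivalence in $\mathbf{POp}_o$ between $\nu(\mathbf{P}) = N\mathrm{cat}(\mathbf{P})^\natural$ and a canonical representative of the image of $\mathbf{P}$ under the composite of derived equivalences.

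The key observation is that the dendrification functor $\omega: \mathbf{\Delta}/N\mathbf{F}_o \to \mathbf{fSets}_o$ of Chapter \ref{chap:dendrification} was defined precisely so that a $k$-simplex of $N\mathrm{cat}(\mathbf{P})$ lying over $\sigma \in (N\mathbf{F}_o)_k$ is the same datum as a morphism of simplicial operads $W(\omega(\sigma)) \to \mathbf{P}$. Indeed, such a simplex is by construction a simplicial functor $\mathfrak{C}[\Delta^k] \to \mathrm{cat}(\mathbf{P})$ over $\sigma$, and unpacking the definitions of $\mathrm{cat}(\mathbf{P})$ and of the Boardman-Vogt $W$-resolution yields a natural bijection with operad morphisms from the $W$-resolution of the forest $\omega(\sigma)$ into $\mathbf{P}$. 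Moreover, the inert edges of $\nu(\mathbf{P})$ --- coCartesian lifts of inert morphisms in $\mathbf{F}_o$ --- correspond under this bijection precisely to the edges marked by $(-)^\flat$ on the forest side.

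Globalising these identifications over all simplices of $N\mathbf{F}_o$ produces a canonical comparison map
\begin{equation*}
\nu(\mathbf{P}) \longrightarrow \omega^* u_*\bigl( (W^*\mathbf{P})^\flat \bigr)
\end{equation*}
in $\mathbf{POp}_o$, the right-hand side being a representative of the image of $\mathbf{P}$ under the chain of right-adjoint Quillen equivalences. Both sides are fibrant preoperads, and the identification above shows that the comparison map is an isomorphism on fibers over each object of $N\mathbf{F}_o$ as well as on the underlying mapping spaces, hence a weak equivalence by the characterisation of weak equivalences between fibrant objects in $\mathbf{POp}_o$ (see Proposition \ref{prop:modelstructpop}). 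Naturality in $\mathbf{P}$ then gives the required identification of derived functors on homotopy categories, and the theorem follows.

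The main obstacle is the careful verification of the natural identification of $k$-simplices of $N\mathrm{cat}(\mathbf{P})$ with operad maps out of Boardman-Vogt $W$-resolutions of forests $\omega(\sigma)$, fully compatible with faces, degeneracies, markings, and the projection to $N\mathbf{F}_o$. This is essentially a forest-theoretic refinement of the tree-level compatibility between the homotopy-coherent nerve and the $W$-construction underlying the equivalence of \cite{cisinskimoerdijk3}. The delicate point is to check that the layering of a simplex of $N\mathbf{F}_o$ --- which encodes how operations of $\mathbf{P}$ compose along the simplex --- corresponds exactly to the simplicial structure of $\mathfrak{C}[\Delta^k]$ once translated through $\omega$ and $W$.
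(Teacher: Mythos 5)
Your overall strategy — compose $W^*$ with the chain of Corollary \ref{cor:equivalencePOpdSets} and exhibit a natural weak equivalence between the resulting representative and $\nu(\mathbf{P})$ — is the right one and is what the paper does. But there is a genuine gap in the key step, namely the claim that a $k$-simplex of $N\mathrm{cat}(\mathbf{P})$ over $\sigma$ \emph{is} (naturally bijectively) an operad map from the $W$-resolution of $\omega(\sigma)$ to $\mathbf{P}$. This is false whenever $\sigma$ has a level $\sigma(i)$ of cardinality greater than one. Concretely, a $k$-simplex of $N\mathrm{cat}(\mathbf{P})$ over $\sigma$ is a simplicial functor $\mathfrak{C}(\Delta^k) \to \mathrm{cat}(\mathbf{P})$ over $\mathbf{F}_o$, and the mapping objects $\mathfrak{C}(\Delta^k)(i,j)$ are cubes of dimension $j-i-1$; on the other hand, an operad map $W_!(u^*\omega_!(\sigma)) \to \mathbf{P}$ (equivalently, a $k$-simplex of $\omega^*u_*W^*\mathbf{P}$ over $\sigma$) involves mapping cubes whose dimension equals the number of \emph{inner edges} of the forest $\omega_!(\sigma|_{\Delta^{\{i,\ldots,j\}}})$, which is in general strictly larger. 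The natural comparison $\phi_{i,j}$ between these cubes — assigning the same length parameter to every inner edge living in the same layer — is a diagonal embedding, not an isomorphism. So there is a natural \emph{map}, not a bijection, and it goes in the direction $\omega^*u_*(W^*\mathbf{P}) \to N\mathrm{cat}(\mathbf{P})$: precomposing with $\phi$ takes an operad map out of $W_!(u^*\omega_!(\sigma))$ to a functor out of $\mathfrak{C}(\Delta^k)$, but there is no canonical map in your proposed direction $\nu(\mathbf{P}) \to \omega^*u_*(\ldots)$ without making non-canonical choices of extension along $\phi$.

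Because of this, the comparison map must be constructed going from $\omega^*u_*(W^*(\mathbf{P})^\natural)$ to $\nu(\mathbf{P})$, and one then has to \emph{prove} it is a weak equivalence rather than observe it is an isomorphism. The paper does this by checking essential surjectivity (where the map is an isomorphism on vertices) and then verifying that the induced maps on the left mapping spaces $\mathrm{Map}^L(x,y)_f$ over active morphisms $\langle k \rangle \to \langle 1 \rangle$ are isomorphisms — which holds because both sides identify $n$-simplices of these particular mapping spaces with operad maps $W_!(T) \amalg_{W_!(\partial_v T)} W_!(\eta) \to \mathbf{P}$ for a specific linear tree $T$. Your claim that the comparison is ``an isomorphism on fibers'' and ``on the underlying mapping spaces'' conflates this restricted mapping-space isomorphism with a global simplex-level bijection which does not exist. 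A secondary issue: $(W^*\mathbf{P})^\flat$ is not fibrant in $\mathbf{dSets}^+_o$ (only degenerate corollas are marked), so $\omega^*u_*((W^*\mathbf{P})^\flat)$ need not be fibrant; you want $(W^*\mathbf{P})^\natural$ on the dendroidal side before applying $\omega^*u_*$, and of course $\flat \to \natural$ is a weak equivalence so this is easy to fix, but as written the assertion ``both sides are fibrant preoperads'' is not justified.
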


\begin{remark}
In fact, the result we will prove is stronger. It shows that the functor $\nu$ induces an equivalence of relative categories, in the language of \cite{barwickkan1}, or equivalently, an equivalence between the simplicial localizations of the categories involved \cite{dwyerkan,barwickkan2}.
\end{remark}

\section{Forest sets}
\label{chap:forestsets}

In this chapter we will introduce another model for the homotopy theory of $\infty$-operads, closely related to dendroidal sets, but with trees replaced by forests. The plan for this chapter is as follows. First, we will introduce the category $\mathbf{\Phi}$ of forests. The category of presheaves on $\mathbf{\Phi}$ is the category of \emph{forest sets}. Next, we discuss a special class of maps between forest sets, namely the \emph{normal monomorphisms}. Afterwards, we will establish a model category structure on this presheaf category. The chapter will end with a proof that the model category of forest sets is Quillen equivalent to dendroidal sets.

\subsection{The category $\mathbf{\Phi}$ of forests}

We recall the category $\mathbf{\Omega}$ of trees from Chapter \ref{chap:mainresults}. Its objects are trees, its arrows between trees $S \longrightarrow T$ are maps $\Omega(S) \longrightarrow \Omega(T)$ between the operads freely generated by $S$ and $T$. Any tree induces a natural partial order on its edges, where $e \leq e'$ if the unique path from $e'$ to the root of $T$ contains $e$. (There is of course a similar partial order on the vertices of $T$.) Two edges of $T$ are called \emph{incomparable}, or \emph{independent}, if they are not related in this partial order. Two sets of edges $A$ and $B$ are called \emph{independent} if any two edges $a \in A$ and $b \in B$ are incomparable. Thus, a collection $\{A_i\}$ of sets of edges of $T$ is pairwise independent if any path from the root of $T$ to any leaf of $T$ intersects at most one of the sets $A_i$. \par 
We can now define the category $\mathbf{\Phi}$, which can be thought of as obtained from $\mathbf{\Omega}$ by freely adjoining \emph{sums} of trees and ``independent'' maps. An object of $\mathbf{\Phi}$ is a finite non-empty collection
\begin{equation*}
F \, = \, \{S_i \in \mathbf{\Omega} \, | \, i \in I \}
\end{equation*}
We will call such objects \emph{forests} and we will also write
\begin{equation*}
F \, = \, \bigoplus_{i \in I} S_i
\end{equation*}
while referring to such an $F$ as the \emph{direct sum} of the trees $S_i$. If $G = \bigoplus_{j \in J} T_j$ is another forest, an arrow
\begin{equation*}
(\alpha, f): F \longrightarrow G
\end{equation*}
is a pair consisting of a function $\alpha: I \longrightarrow J$ and for each $i \in I$ a map $f_i: S_i \longrightarrow T_{\alpha(i)}$ in $\mathbf{\Omega}$. Moreover, if $\alpha(i) = j = \alpha(i')$, where $i \neq i'$, then $f_i$ and $f_{i'}$ should have independent images in $T_j$. In other words, if $e \in S_i$ and $e' \in S_{i'}$ are two edges, then $f_i(e)$ and $f_{i'}(e')$ are incomparable in the partial order on the edges of $T_j$. \par 
Observe that the operation assigning to two forests $F$ and $G$ their direct sum $F \oplus G$ equips $\mathbf{\Phi}$ with the structure of a (non-unital) symmetric monoidal category. Note, however, that this operation is \emph{not} a coproduct in $\mathbf{\Phi}$. Indeed, a would-be codiagonal $S \oplus S \longrightarrow S$ does not satisfy the independence condition on morphisms and is therefore not an arrow in $\mathbf{\Phi}$. \par 
There is an obvious full and faithful functor
\begin{equation*}
u: \mathbf{\Omega} \longrightarrow \mathbf{\Phi}
\end{equation*}
which sends a tree $T$ to the forest $u(T)$ consisting of only the tree $T$. We will often be somewhat informal and view $\mathbf{\Omega}$ as a subcategory of $\mathbf{\Phi}$ and we'll sometimes just write $T$ for $u(T)$ when it is clear that we are considering the tree $T$ as an object of $\mathbf{\Phi}$. However, some care is needed when it comes to the discussion of \emph{faces} (and in the next section, of \emph{boundaries} and \emph{horns}), as we will now explain. \par 
The arrows in $\mathbf{\Omega}$ are generated by ``elementary'' face maps, degeneracy maps and isomorphisms. In fact, every arrow can uniquely be written as a composition of degeneracies, followed by an isomorphism, followed by a composition of face maps (see \cite{moerdijklectures}). The elementary faces of a tree $T$ in $\mathbf{\Omega}$ come in two kinds: inner faces given by the contraction of an inner edge in $T$ and external faces chopping off a vertex on the top of a tree, or, in case the root vertex has only one internal edge attached to it, the face obtained by deleting the root vertex and all external edges attached to it. In $\mathbf{\Phi}$, however, there is a root face of a different kind: regardless of the number of inner edges of $T$ attached to the root vertex, we can delete the root vertex and the root edge and what remains is a forest which we denote by $\partial_{\mathrm{root}}(u(T))$, or by $\partial_r(u(T))$ if it is clear that $r$ is the root vertex. Note that there is an evident inclusion in the category  $\mathbf{\Phi}$,
\begin{equation*}
\partial_{\mathrm{root}}(u(T)) \longrightarrow u(T),
\end{equation*}
which looks like
\[
\begin{tikzpicture} 
[level distance=10mm, 
every node/.style={fill, circle, minimum size=.1cm, inner sep=0pt}, 
level 1/.style={sibling distance=20mm}, 
level 2/.style={sibling distance=15mm}, 
level 3/.style={sibling distance=14mm}]

\node (lefttree)[style={color=white}] {} [grow'=up] 
child {node (left) {} 
	child
	child
};

\node (middletree)[style={color=white}, right = 2cm of lefttree] {} [grow'=up] 
child {node (middle) {} 
	child
	child
};

\node (righttree)[style={color=white}, right = 2cm of middletree] {} [grow'=up] 
child {node (right) {} 
	child
	child
};

\node(forest)[style={color=white}, right = 4cm of righttree]{};
\node[style={color=white}, below = .5cm of forest] {} [grow'=up] 
child {node {} 
	child {node(fleft){} 
		child
		child}
	child {node(fmiddle){} 
		child
		child}
	child {node(fright){} 
		child
		child}
};

\tikzstyle{every node}=[]

\node at ($(left) + (1.05cm, 0)$) {$\oplus$};
\node at ($(middle) + (1.05cm, 0)$) {$\oplus$};

\node at ($(left) + (0,.6cm)$) {$\cdots$};
\node at ($(middle) + (0,.6cm)$) {$\cdots$};
\node at ($(right) + (0,.6cm)$) {$\cdots$};
\node at ($(fleft) + (0,.6cm)$) {$\cdots$};
\node at ($(fmiddle) + (0,.6cm)$) {$\cdots$};
\node at ($(fright) + (0,.6cm)$) {$\cdots$};

\draw[->] ($(right) + (1cm, 0)$) -- ($(right) + (2cm, 0)$);

\end{tikzpicture} 
\]

In other words, the tree $T$ viewed as a forest $u(T)$ has top external faces just like $T$ in $\mathbf{\Omega}$ and moreover it will always have a root face, at least if $T$ is not the tree $\eta$ or the unique tree with one edge and one vertex of valence zero (the ``stump''). This root face is a proper forest (i.e. an object of $\mathbf{\Phi}$ not in the image of $\mathbf{\Omega} \longrightarrow \mathbf{\Phi}$), unless the root of $T$ is a unary vertex. Also, if $\partial_{\mathrm{root}}(T)$ does exist in $\mathbf{\Omega}$, then there is a map
\begin{equation*}
u(\partial_{\mathrm{root}}(T)) \longrightarrow \partial_{\mathrm{root}}(u(T))
\end{equation*}
which is an isomorphism only if the root vertex is unary. \par 
We should be explicit about our conventions concerning corollas, i.e. trees with just one vertex. For the corolla $C_n$ with leaves $1, \ldots, n$ and root edge $0$, there are $n+1$ faces in $\mathbf{\Omega}$,
\[
\xymatrix{
\eta \, \ar@<-.2ex>@{^(->}[r]^i & C_n 
}
\]
which are all external. In $\mathbf{\Phi}$ there is one such for $i = 0$ and if $n > 0$ there is one other, namely the $n$-fold direct sum of copies of $\eta$, as follows:
\[
\xymatrix{
\eta \oplus \cdots \oplus \eta = \partial_{\mathrm{root}}(C_n) \, \ar@<-.2ex>@{^(->}[r] & C_n
}
\] 

The following lemma also explains some aspects of the difference between the category $\mathbf{\Omega}$ of trees and the category $\mathbf{\Phi}$ of forests.

\begin{lemma}
The category $\mathbf{\Phi}$ is obtained from $\mathbf{\Omega}$ as follows. The objects of $\mathbf{\Phi}$ are obtained by formally closing the objects of $\mathbf{\Omega}$ under non-empty finite direct sums. The arrows are generated by
\begin{itemize}
\item[(i)] All arrows $u(S) \longrightarrow u(T)$ arising from arrows $S \longrightarrow T$ in $\mathbf{\Omega}$
\item[(ii)] Inclusions $F \longrightarrow F \oplus G$ of summands
\item[(iii)] Inclusions of the form $\partial_{\mathrm{root}}(u(T)) \longrightarrow u(T)$
\end{itemize}
subject to the condition that $\oplus$ is functorial in both variables, symmetric and associative.
\end{lemma}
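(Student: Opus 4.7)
The plan is to show, by induction on the total number of vertices of the target forest, that every morphism $(\alpha,f) : F \to G$ in $\mathbf{\Phi}$ with $F = \bigoplus_{i \in I} S_i$ and $G = \bigoplus_{j \in J} T_j$ can be written as a composite of morphisms of types (i)--(iii) together with the structural isomorphisms (associativity and symmetry) of the monoidal operation $\oplus$.

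The first reduction factors $(\alpha,f)$ through the subforest $G_0 := \bigoplus_{j \in \alpha(I)} T_j$ of $G$. The inclusion $G_0 \hookrightarrow G$ is obtained by iterating summand inclusions of type (ii), intertwined with associators and symmetries for $\oplus$, so we may assume $\alpha$ is surjective. In that case, grouping the summands of $F$ according to the fibers of $\alpha$ and invoking the symmetry and associativity of $\oplus$ rewrites $F$ as $\bigoplus_{j \in J} F_j$, where $F_j := \bigoplus_{i \in \alpha^{-1}(j)} S_i$. The morphism then splits as $\bigoplus_{j} g_j$ for morphisms $g_j : F_j \to T_j$, by functoriality of $\oplus$. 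It therefore suffices to decompose an arbitrary morphism $g : \bigoplus_{i \in I} S_i \to T$ whose target is a single tree, with images $g_i(S_i) \subseteq T$ pairwise independent.

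If $|I| = 1$, then $g$ comes from a morphism in $\mathbf{\Omega}$ and is of type (i). If $|I| \geq 2$, the key observation is that no image $g_i(S_i)$ can contain the root vertex of $T$: otherwise it would contain the root edge, which is comparable to every edge of $T$, contradicting the required independence with the non-empty image of any other component. Consequently, each $g_i$ factors through the subforest obtained by deleting the root vertex and root edge, and $g$ itself factors through the type (iii) inclusion $\partial_{\mathrm{root}}(u(T)) \hookrightarrow u(T)$. The resulting morphism $\bar{g} : \bigoplus_i S_i \to \partial_{\mathrm{root}}(u(T))$ lands in a forest with strictly fewer vertices than $T$, and the inductive hypothesis decomposes it into morphisms of the desired types. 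The corner cases where $\partial_{\mathrm{root}}$ is not defined, namely $T = \eta$ and $T$ the stump, force $|I| = 1$ by the same independence argument, so they fall under the type (i) clause.

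I do not anticipate a conceptual obstacle; the work is essentially bookkeeping. The mildly delicate part will be tracking how the various presentations of $F$ and $G$ as iterated $\oplus$-sums are matched up across the reductions, which amounts to the usual coherence manipulations one performs in a non-unital symmetric monoidal category.
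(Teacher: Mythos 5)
Your proof is correct and follows essentially the same strategy as the paper: reduce to a map into a single tree $T$, observe that when the source has more than one component the independence condition forces a factorization through $\partial_{\mathrm{root}}(u(T))$, and then induct. The only substantive differences are that you make the induction explicit (on the vertex count of the target rather than the paper's somewhat terse "induction on the size of the fibers of $\alpha$"), you spell out the reason the root face must be hit first (the root edge is comparable to every edge of $T$), and you explicitly dispatch the corner cases $T = \eta$ and $T$ a stump. These are good additions of detail but not a different argument.
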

\begin{proof}
Consider a map
\begin{equation*}
(\alpha, f): \bigoplus_{i \in I} S_i \longrightarrow \bigoplus_{j \in J} T_j
\end{equation*}
in $\mathbf{\Phi}$. Using maps as in (ii) and the stated condition on $\oplus$, such a map can be obtained from maps where $J$ is a singleton. So consider a map
\begin{equation*}
\bigoplus_{i \in I} S_i \longrightarrow T
\end{equation*}
If $I$ has precisely one element, then it is a map of type (i). If $I$ has more than one element, then the independence condition on morphisms in $\mathbf{\Phi}$ implies that our map factors as a composition
\begin{equation*}
\bigoplus_{i \in I} S_i \longrightarrow \partial_{\mathrm{root}} T \longrightarrow T
\end{equation*}
One can now finish the proof by induction on the size of the fibers of $\alpha$.  $\Box$
\end{proof}

\begin{definition}
\begin{itemize}
\item[(i)] If $S \longrightarrow S'$ is an elementary degeneracy in $\mathbf{\Omega}$ (i.e. a map identifying two adjacent edges of $S$), then we call any map of the form
\begin{equation*}
S \longrightarrow S' \quad\quad \text{or} \quad\quad S \oplus F \longrightarrow S' \oplus F
\end{equation*}
in $\mathbf{\Phi}$ an \emph{elementary degeneracy}, or just a \emph{degeneracy}. (We sometimes use the word elementary to stress the fact that $S$ has exactly one more vertex than $S'$ and to distinguish this from a composition of several degeneracies.)
\item[(ii)] For an object of $\mathbf{\Phi}$ consisting of a single tree $S$, an \emph{elementary face} of $S$ is a map in $\mathbf{\Phi}$ of one of the following two kinds:
\begin{itemize}
\item[(a)] A map $S' \longrightarrow S$ which is induced by an internal face or a leaf face in $\mathbf{\Omega}$.
\item[(b)] The root face inclusion $\partial_{\mathrm{root}} S \longrightarrow S$.
\end{itemize}
More generally, an \emph{elementary face} of a forest $S \oplus F$ (where $S$ is a tree) is a map in $\mathbf{\Phi}$ of one of the following three kinds:
\begin{itemize}
\item[(a)] A map $S \oplus F \longrightarrow S' \oplus F$ induced by a map $S \longrightarrow S'$ which is an internal face or a leaf face in $\mathbf{\Omega}$.
\item[(b)] A map of the form $\partial_{\mathrm{root}} S \oplus F \longrightarrow S \oplus F$ induced by the root face inclusion of $S$, regarded as a forest.
\item[(c)] A map of the form $F \longrightarrow \eta \oplus F$ which is the identity on the $F$ summand.
\end{itemize} 
\end{itemize}
\end{definition}

Note that elementary degeneracies are surjective on edges and reduce the number of edges by one. Elementary faces are injective on edges and increase the number of vertices by one or in case (c) keep the number of vertices equal but increase the number of connected components of the forest by one. Exactly as in $\mathbf{\Omega}$, one has the following factorization of arrows in $\mathbf{\Phi}$:

\begin{lemma}
Any arrow $F \longrightarrow G$ in $\mathbf{\Phi}$ can be decomposed uniquely as
\[
\xymatrix{
F \ar@{->>}[r] & F' \ar^{\simeq}[r] & G' \,\,\ar@{>->}[r] & G,
}
\]
where the first map is a composition of degeneracies, the second map is an isomorphism and the third map is a composition of faces. Note that a map in $\mathbf{\Phi}$ is an isomorphism if it induces a bijection on connected components and the restriction to every component is an isomorphism in $\mathbf{\Omega}$. 
\end{lemma}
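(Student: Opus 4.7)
My plan is to adapt the standard factorization argument in $\mathbf{\Omega}$ recalled earlier, pushing the extra combinatorics induced by the root face and $\eta$-summand generators of $\mathbf{\Phi}$ onto a single inductive step. Given $(\alpha,f)\colon\bigoplus_{i\in I}S_i\to\bigoplus_{j\in J}T_j$ I first apply the $\mathbf{\Omega}$-factorization componentwise to write each $f_i=\iota_i\psi_i\sigma_i$ with $\sigma_i$ degeneracies, $\psi_i$ an isomorphism and $\iota_i$ a composition of elementary $\mathbf{\Omega}$-faces. Direct-summing produces a $\mathbf{\Phi}$-degeneracy $\sigma=\bigoplus_i\sigma_i$ followed by an isomorphism $\psi=\bigoplus_i\psi_i$, leaving a residual map $\iota\colon\bigoplus_{i}S_i''\to\bigoplus_{j}T_j$ whose restriction to each summand is an $\mathbf{\Omega}$-face inclusion and whose images in each $T_j$ are pairwise independent.

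The heart of the argument is then to realize $\iota$ as a composition of elementary $\mathbf{\Phi}$-faces. I would induct on the quantity
\[
N:=\sum_{j}\bigl(|V(T_j)|-|V(\mathrm{Im}_j)|\bigr)+\#\{j:\alpha^{-1}(j)=\varnothing\}+\#\{j:|\alpha^{-1}(j)|\geq 2\},
\]
exhibiting at each step an elementary $\mathbf{\Phi}$-face $G'\hookrightarrow G$ through which $\iota$ factors. If some $T_j=\eta$ is not hit, strip it off with a type-(c) face; if an unhit $T_j\neq\eta$ carries a leaf vertex, strip it via a leaf face; if a unique summand $S_i''$ includes properly into $T_j$, peel off an external vertex of $T_j$ not in the image as in the classical tree argument; and if $T_j$ carries two or more independent images, the independence condition forces the root of $T_j$ to lie outside the image, so we apply a type-(b) root face and redistribute the summands among the branches of $\partial_{\mathrm{root}}T_j$. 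Once $N$ reaches $0$ the residual map is an isomorphism, which is absorbed into $\psi$.

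Uniqueness follows the standard pattern: degeneracies are epic and faces are monic in $\mathbf{\Phi}$, so the image sub-forest $G'\subseteq G$ and the epi $F\twoheadrightarrow F'$ are both determined by $(\alpha,f)$, forcing the intermediate isomorphism. The isomorphism characterization at the end of the lemma is then formal: an isomorphism in $\mathbf{\Phi}$ is in particular a bijection on underlying edges, and the independence condition forces the induced function on index sets to be a bijection with each componentwise restriction an $\mathbf{\Omega}$-isomorphism; the converse is immediate. I expect the main technical obstacle to be the bookkeeping in the two-or-more-summands case: after applying a root face one must verify that the independence hypothesis is preserved inside each branch of $\partial_{\mathrm{root}}T_j$, and that repeatedly splitting off root faces in successive steps does not disturb the component-wise $\mathbf{\Omega}$-face structure already extracted in the first reduction.
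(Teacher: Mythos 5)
Your approach is essentially the paper's: factor each component in $\mathbf{\Omega}$, assemble the degeneracies and isomorphisms by direct sum, and decompose the residual map into elementary $\mathbf{\Phi}$-faces one tree of the codomain at a time, using root faces (type (b)) when $|\alpha^{-1}(j)|\geq 2$ and type (c) faces when $\alpha^{-1}(j)=\varnothing$. Your observations that independence forces the images to miss the root of $T_j$ whenever $|\alpha^{-1}(j)|\geq 2$, and that the residual then factors through $\partial_{\mathrm{root}}T_j$, are both correct and match the paper.

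However, your proposed induction quantity $N$ does not always strictly decrease. Take $T_j$ with a binary root whose branches are $T_{j,1}$ and $T_{j,2}$, with $\alpha^{-1}(j)=\{i_1,i_2\}$ and both $S_{i_1}''$, $S_{i_2}''$ landing (independently) inside $T_{j,1}$, leaving $T_{j,2}$ unhit. Peeling off the root face drops the vertex-deficit term by exactly $1$, but it creates a new empty fiber over $T_{j,2}$ (so the second term rises by $1$), while the new fiber over $T_{j,1}$ still has size $2$ (so the third term is unchanged); the net change in $N$ is $0$, and your induction stalls. This is easily repaired: use the edge deficit $|E(G)|-|E(F')|$ alone (every elementary $\mathbf{\Phi}$-face strictly increases edge count, so this drops by at least $1$ per peeled face), or the size deficit $|G|-|F'|$ in the sense the paper uses for its skeletal filtration. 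Two further small imprecisions: ``strip it via a leaf face'' overlooks that an unhit $T_j$ with no leaf vertices can still have a stump to chop off, and ``peel off an external vertex of $T_j$ not in the image'' is not always available — the classical $\mathbf{\Omega}$ argument you appeal to must sometimes contract an inner edge outside the image instead — but both are within the standard $\mathbf{\Omega}$ factorization you cite, so these are matters of phrasing rather than genuine gaps.
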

\begin{proof}
Consider an arbitrary map $(\alpha, f): \bigoplus_{i \in I} S_i \longrightarrow \bigoplus_{j \in J} T_j$ as before. Factor each $f_i: S_i \longrightarrow T_{\alpha(i)}$ as
\[
\xymatrix{
S_i \ar@{->>}[r] & S_i' \ar^-{\simeq}[r] & T_{\alpha(i)}' \,\, \ar@{>->}[r] & T_{\alpha(i)}
}
\]
using the known factorization of morphisms in $\mathbf{\Omega}$. This gives a composition of maps in $\mathbf{\Phi}$ as follows:
\[
\xymatrix{
\bigoplus_{i \in I} S_i \ar@{->>}[r] & \bigoplus_{i \in I} S_i' \ar^-{\simeq}[r] & \bigoplus_{i \in I} T_{\alpha(i)}' \,\, \ar@{>->}[r] & \bigoplus_{j \in J} T_{\alpha(i)}
}
\]
The first map is clearly a composition of degeneracies. The last one is a composition of maps of the form
\begin{equation*}
\bigl(\bigoplus_{i \in \alpha^{-1}(j)} T_{\alpha(i)}' \bigr) \oplus G \longrightarrow T_j \oplus G
\end{equation*}
Using an induction as in the proof of the previous lemma, we can write each $\bigoplus_{i \in \alpha^{-1}(j)} T_{\alpha(i)}' \longrightarrow T_j$ as a composition of faces, where one uses elementary faces of type (b) if $\alpha^{-1}(j)$ has more than one element and of type (c) if $\alpha^{-1}(j)$ is empty. Uniqueness follows straightforwardly by using the uniqueness of the factorization in $\mathbf{\Omega}$. $\Box$
\end{proof}

\begin{remark}
The previous lemma in fact shows that, like $\mathbf{\Omega}$, the category $\mathbf{\Phi}$ is a dualizable generalized Reedy category in the sense of \cite{bergermoerdijkReedy}. Explicitly, one defines $\mathbf{\Phi}^+$ to consist of maps which are injective on edges and $\mathbf{\Phi}^-$ as consisting of those which are surjective. We will use the resulting Reedy model structure on simplicial presheaves in Section \ref{sec:equivfsetsdsets}.
\end{remark}

\subsection{Presheaves on the category of forests}
\label{sec:fsetsproperties}

In this section we discuss some constructions in, and properties of, the category of set-valued presheaves on $\mathbf{\Phi}$. (Presheaves with values in simplicial sets will feature in Section \ref{sec:equivfsetsdsets}.) We will refer to such presheaves as \emph{forest sets} and denote the category of these as
\begin{equation*}
\mathbf{fSets} := \mathbf{Sets}^{\mathbf{\Phi}^{\mathrm{op}}}
\end{equation*}
Let us notice right away that the inclusion functor
\begin{equation*}
u: \mathbf{\Omega} \longrightarrow \mathbf{\Phi}
\end{equation*}
induces a triple of adjoint functors relating forest sets to dendroidal sets:
\[
\xymatrix@C=40pt{
\mathbf{dSets} \ar@/^/@<.75ex>[r]^{u_!} \ar@/_/@<-.75ex>[r]_{u_*} & \mathbf{fSets} \ar[l]_{u^*}
}
\]
Also notice that since $u$ is fully faithful, so are $u_!$ and $u_*$. In particular, for any dendroidal set $X$ the canonical maps
\begin{equation*}
u^*u_*X \longrightarrow X \longrightarrow u^*u_!X
\end{equation*}
are isomorphisms. \par 
The functors $u_!$ and $u_*$ provide many examples of forest sets coming from dendroidal sets. Also, each forest $F$ defines a representable forest set which we denote by $\Phi[F]$. Thus, for a tree $T$, we have the relation
\begin{eqnarray*}
u_!\Omega[T] & = & \Phi[u(T)] \quad\quad (\text{or simply } \Phi[T])
\end{eqnarray*}
When no confusion can arise, we will often just write $T$ for $\Omega[T]$ and $uT$ or $u_!T$ for $\Phi[u(T)]$. \par 
\textit{Direct sums.} The category $\mathbf{fSets}$ has all (small) colimits, so we can extend the operation $\oplus$ on $\mathbf{\Phi}$ to a symmetric monoidal structure on $\mathbf{fSets}$ as follows. We first define it on representables as
\begin{equation*}
\Phi[F] \oplus \Phi[G] \, := \, \Phi[F \oplus G]
\end{equation*}
Next, for a fixed forest $F$, we view $\Phi[F] \oplus \Phi[-]$ as a functor
\begin{equation*}
\mathbf{\Phi} \longrightarrow \Phi[F]/\mathbf{fSets}
\end{equation*}
and extend it (in a way that is unique up to unique isomorphism) to a colimit preserving functor
\begin{equation*}
\Phi[F] \oplus - : \mathbf{fSets} \longrightarrow \Phi[F]/\mathbf{fSets}
\end{equation*}
This defines $\Phi[F] \oplus X$ for any forest $F$ and any object $X$ of $\mathbf{fSets}$. Note that $\Phi[F] \oplus X$ comes equipped with a map $X \longrightarrow \Phi[F] \oplus X$, naturally in $F$. Thus we have a functor
\begin{equation*}
- \oplus X : \mathbf{\Phi} \longrightarrow X / \mathbf{fSets}
\end{equation*}
which we can again extend to a colimit preserving functor
\begin{equation*}
- \oplus X : \mathbf{fSets} \longrightarrow X / \mathbf{fSets}
\end{equation*}
This procedure defines a symmetric monoidal structure on the category $\mathbf{fSets}$ which we will refer to as \emph{direct sum}, with the initial object $\varnothing$ as the unit. Also note that there is a canonical monomorphism
\begin{equation*}
X \amalg Y \longrightarrow X \oplus Y
\end{equation*}
from the coproduct to the direct sum, which is never an isomorphism if $X$ and $Y$ are nonempty.

\begin{remark}
The functor $X \oplus - : \mathbf{fSets} \longrightarrow X/\mathbf{fSets}$ has a right adjoint, denoted
\begin{equation*}
(X \rightarrow Z) \longmapsto Z \ominus X
\end{equation*}
Thus, there is a natural bijective correspondence between maps $X \oplus Y \longrightarrow Z$ under $X$ and maps $Y \longrightarrow Z \ominus X$. Since $\oplus$ is symmetric, these also correspond to maps $X \longrightarrow Z \ominus Y$ if $Z$ is viewed as an object under $Y$.
\end{remark}

\textit{Tensor product.} The (``Boardman-Vogt'') tensor product on dendroidal sets induces another tensor product on $\mathbf{fSets}$, completely determined up to unique isomorphism by the following conditions on $X \otimes Y$ for forest sets $X$ and $Y$: 
\begin{itemize}
\item[(i)] $X \otimes Y$ preserves colimits in each variable separately. 
\item[(ii)] The functor $X \otimes - $ distributes over $\oplus$.
\item[(iii)] The functor $u_!: \mathbf{dSets} \longrightarrow \mathbf{fSets}$ preserves the tensor product (up to natural isomorphism).
\end{itemize}
More explicitly, for forests $F = \bigoplus_{i \in I} S_i$ and $G = \bigoplus_{j \in J} T_j$, one defines
\begin{equation*}
F \otimes G \, := \, \bigoplus_{(i,j) \in I \times J} u_!(S_i \otimes T_j)
\end{equation*}
and one then extends this operation from representable objects $F$ and $G$ to arbitrary objects in $\mathbf{fSets}$, by writing the latter as colimits of representables. If one extends the definition of shuffles of trees (as in \cite{moerdijklectures,moerdijkweiss}) to forests, then the tensor product $F \otimes G$ can also be described as the union of all shuffles of the forests $F$ and $G$, just like for dendroidal sets. \par 
For later reference we summarize some of the properties of these structures on $\mathbf{fSets}$ and their relations to the corresponding notions on $\mathbf{dSets}$:

\begin{proposition}
\label{prop:fsetsproperties}
The category $\mathbf{fSets}$ carries two symmetric tensor products, $\otimes$ and $\oplus$, satisfying the following properties:
\begin{itemize} 
\item[(i)] $\otimes$ distributes over $\oplus$. 
\item[(ii)] There are canonical maps $X \rightarrow X \oplus Y \leftarrow Y$ and the functor 
\begin{equation*}
X \oplus - : \mathbf{fSets} \longrightarrow X / \mathbf{fSets}
\end{equation*}
has a right adjoint.
\item[(iii)] The functor $u_!: \mathbf{dSets} \longrightarrow \mathbf{fSets}$ is compatible  with $\otimes$, i.e. there is a natural isomorphism
\begin{eqnarray*}
u_!(X \otimes Y) & \simeq & u_!(X) \otimes u_!(Y)
\end{eqnarray*}
for any two dendroidal sets $X$ and $Y$.
\item[(iv)] The functor $u^*: \mathbf{fSets} \longrightarrow \mathbf{dSets}$ is compatible with $\otimes$ and sends direct sums to coproducts:
\begin{eqnarray*}
(a) \quad\quad u^*(X \otimes Y) & \simeq & u^*(X) \otimes u^*(Y) \\
(b) \quad\quad u^*(X \oplus Y) & \simeq & u^*(X) \amalg u^*(Y)
\end{eqnarray*}
\end{itemize}
\end{proposition}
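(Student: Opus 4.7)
The plan is to exploit the very definitions of $\otimes$ and $\oplus$, checking most assertions on representables and then extending by colimits. Items (i) and (iii) are essentially built into the construction of $\otimes$ (they are conditions (ii) and (iii) of its defining properties), so there is nothing to verify beyond observing that such a tensor product is uniquely determined by these properties via a left Kan extension. For (ii), the canonical maps $X \to X \oplus Y \leftarrow Y$ are produced by construction: each $\Phi[F] \oplus X$ is defined together with its map from $X$, and by symmetry one obtains the map from $Y$. That $X \oplus -$ admits a right adjoint follows from the adjoint functor theorem, since $X \oplus -$ preserves colimits by construction and both $\mathbf{fSets}$ and $X/\mathbf{fSets}$ are locally presentable.

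The real content lies in (iv). Since $u^*$ is both left adjoint to $u_*$ and right adjoint to $u_!$, it preserves all small colimits; and $\otimes$, $\oplus$ preserve colimits in each variable. Hence in both (iv)(a) and (iv)(b), it suffices to verify the isomorphism on representable forest sets. The key technical computation will be the following: for a forest $F = \bigoplus_{i \in I} S_i$, one has
\[
u^*\Phi[F] \;\cong\; \coprod_{i \in I} \Omega[S_i].
\]
This follows by unpacking the definition of morphisms in $\mathbf{\Phi}$ with a tree-indexed source: a map $uT \to \bigoplus_{i \in I} S_i$ consists of a function $\alpha \colon \{*\} \to I$ and a tree map $T \to S_{\alpha(*)}$, since the source has a single component and so the independence condition is vacuous. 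Given this, (iv)(b) follows immediately by evaluating $u^*(\Phi[F] \oplus \Phi[G]) = u^*\Phi[F \oplus G]$ and observing that $F \oplus G$ is, as a forest, simply the concatenation of the lists of trees comprising $F$ and $G$.

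For (iv)(a), one unpacks the explicit formula
\[
\Phi[F] \otimes \Phi[G] \;=\; \bigoplus_{(i,j) \in I \times J} u_!(S_i \otimes T_j)
\]
and applies $u^*$. Using (iv)(b) together with the identity $u^*u_! \cong \mathrm{id}$ (which holds because $u$ is fully faithful), this evaluates to $\coprod_{(i,j)} S_i \otimes T_j$. On the other hand,
\[
u^*\Phi[F] \otimes u^*\Phi[G] \;=\; \bigl(\coprod_i \Omega[S_i]\bigr) \otimes \bigl(\coprod_j \Omega[T_j]\bigr),
\]
and colimit-preservation of the dendroidal tensor product in each variable yields the same expression $\coprod_{(i,j)} S_i \otimes T_j$. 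The main obstacle, such as it is, lies in the careful bookkeeping for (iv)(a) and in arguing that the identifications are natural across all the choices of representables, so that they extend consistently once one passes to the colimit formulas; everything else is either definitional or an immediate consequence of the structure of morphisms in $\mathbf{\Phi}$.
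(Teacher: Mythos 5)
Your proof is correct and follows the same route as the paper: dispose of (i)--(iii) as definitional, reduce (iv) to representables using colimit-preservation of $u^*$, and derive (iv)(a) from (iv)(b), $u^*u_! \cong \mathrm{id}$, and distributivity of $\otimes$ over $\oplus$. The paper's proof is essentially the terse version of yours; you have usefully spelled out the computation $u^*\Phi[F] \cong \coprod_{i\in I}\Omega[S_i]$, which the paper calls ``clear from the definitions,'' and your appeal to the adjoint functor theorem for (ii) is a valid alternative to the paper's explicit construction of the right adjoint $Z \ominus X$ given in the remark following this proposition.
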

\begin{proof}
Only property (iv) has not been discussed before. Since $u^*$ preserves colimits, for (b) it suffices to prove that for a collection of trees $S_1, \ldots, S_n$ we have
\begin{equation*}
u^*(S_1 \oplus \cdots \oplus S_n) \, = \, u^*(S_1) \amalg \ldots \amalg u^*(S_n)
\end{equation*}
This is clear from the definitions. Since $\otimes$ distributes over $\oplus$, (iv)(a) now follows from $u^*u_! = \mathrm{id}$. $\Box$
\end{proof}

\begin{remark}
One can define a Grothendieck topology on the category $\mathbf{\Phi}$, generated by covering families of the form
\begin{equation*}
\{S_j \longrightarrow \bigoplus_{i \in I} S_i \}_{j \in I}
\end{equation*}
The topos $\mathrm{Sh}(\mathbf{\Phi})$ of sheaves for this topology is canonically equivalent to $\mathbf{dSets}$. We will use a homotopy theoretic version of this observation later on, when we compare $\mathbf{dSets}$ and $\mathbf{fSets}$ as model categories.
\end{remark}

As for dendroidal sets before, there is a full subcategory $\mathbf{\Phi}_o$ of $\mathbf{\Phi}$ of open forests, i.e. forests whose constituent trees are open. We will write $\mathbf{fSets}_o$ for the full subcategory of $\mathbf{fSets}$ consisting of presheaves on $\mathbf{\Phi}_o$. It is again a slice category of $\mathbf{fSets}$ over a subobject of the terminal object, namely $u_*N_d(\mathbf{Com}^-)$. Note that the functors $u_!$, $u^*$, $u_*$, as well direct sums and tensor products all restrict to open objects.

\subsection{Normal monomorphisms and boundaries in $\mathbf{fSets}$}
\label{sec:normalmonos}

Exactly as for dendroidal sets, we will call a monomorphism $X \longrightarrow Y$ between forest sets \emph{normal} if for every forest $F$, the group $\mathrm{Aut}(F)$ acts freely on the complement of the image of $X(F) \longrightarrow Y(F)$. An object $Y$ in $\mathbf{fSets}$ is called \emph{normal} if $\varnothing \longrightarrow Y$ is a normal monomorphism, i.e. if $\mathrm{Aut}(F)$ acts freely on $Y(F)$ for every $F$ in $\mathbf{\Phi}$. The following is clear from the definition:

\begin{lemma}
\label{lem:normalovernormal}
If $X \longrightarrow Y$ is a map of forest sets and $Y$ is normal, then $X$ is normal as well.
\end{lemma}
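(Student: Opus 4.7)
The plan is to verify the freeness condition for $X$ directly by transporting fixed points along the given map $f : X \to Y$ to $Y$, where freeness is assumed.

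Fix a forest $F$ and suppose $\sigma \in \mathrm{Aut}(F)$ satisfies $\sigma^* x = x$ for some $x \in X(F)$. I would apply the component $f_F : X(F) \to Y(F)$ of the natural transformation $f$. By naturality of $f$ with respect to the morphism $\sigma : F \to F$ in $\mathbf{\Phi}$, one has
\[
\sigma^* f_F(x) \;=\; f_F(\sigma^* x) \;=\; f_F(x),
\]
so $f_F(x) \in Y(F)$ is fixed by $\sigma$. Since $Y$ is normal, the action of $\mathrm{Aut}(F)$ on $Y(F)$ is free, and hence $\sigma = \mathrm{id}_F$. This shows that $\mathrm{Aut}(F)$ acts freely on $X(F)$ for every $F \in \mathbf{\Phi}$, i.e.\ $X$ is normal.

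There is essentially no obstacle here; the only subtlety is to notice that no hypothesis on $f$ (such as being a monomorphism) is required, because the conclusion only uses that presheaf maps are equivariant with respect to the automorphism actions on the sections. The same argument in fact shows the slightly stronger statement that the class of normal objects is closed under taking subobjects and, more generally, is preserved by pulling back along any morphism in $\mathbf{fSets}$.
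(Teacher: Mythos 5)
Your proof is correct and is precisely the argument the paper treats as immediate (it states the lemma "is clear from the definition" without writing out a proof). The key observation — that naturality of $f$ with respect to $\sigma \colon F \to F$ transports fixed points of the $\mathrm{Aut}(F)$-action from $X(F)$ to $Y(F)$, with no hypothesis on $f$ needed — is exactly right.
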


\begin{remark}
\label{rmk:sumcofibration}
Given normal forest sets $X$ and $Y$, the map
\begin{equation*}
X \amalg Y \longrightarrow X \oplus Y
\end{equation*}
is a normal monomorphism.
\end{remark}

\begin{lemma}
The functor $u^*: \mathbf{fSets} \longrightarrow \mathbf{dSets}$ sends normal monomorphisms to normal monomorphisms.
\end{lemma}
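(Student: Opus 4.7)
The plan is very short: the claim reduces to the fact that $u\colon\mathbf{\Omega}\hookrightarrow\mathbf{\Phi}$ is fully faithful and that $u^*$ is defined by evaluation at the objects $u(T)$. Let me spell this out.

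First, suppose $f\colon X\to Y$ is a normal monomorphism in $\mathbf{fSets}$. Applying $u^*$, for each tree $T\in\mathbf{\Omega}$ the map
\[
(u^*X)(T)\;=\;X(u(T))\;\longrightarrow\;Y(u(T))\;=\;(u^*Y)(T)
\]
is the evaluation of $f$ at the forest $u(T)$, hence injective; so $u^*f$ is a monomorphism. It remains to verify freeness of the $\mathrm{Aut}_{\mathbf{\Omega}}(T)$-action on the complement. Since $u$ is fully faithful, the canonical map
\[
\mathrm{Aut}_{\mathbf{\Omega}}(T)\;\longrightarrow\;\mathrm{Aut}_{\mathbf{\Phi}}(u(T))
\]
is an isomorphism, and it intertwines the two actions on $Y(u(T))$ (respectively $X(u(T))$). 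By the assumption that $f$ is normal in $\mathbf{fSets}$, the group $\mathrm{Aut}_{\mathbf{\Phi}}(u(T))$ acts freely on $Y(u(T))\setminus\mathrm{im}(X(u(T))\to Y(u(T)))$; transporting along the isomorphism above gives the required freeness of the $\mathrm{Aut}_{\mathbf{\Omega}}(T)$-action on the complement of the image of $(u^*X)(T)\to(u^*Y)(T)$.

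There is no real obstacle here; the only thing one might want to be a little careful about is confirming that $u$ really does induce an isomorphism on automorphism groups, but this is immediate from the definition of $\mathbf{\Phi}$ (a forest consisting of a single tree $T$ has no room for a nontrivial permutation of summands, and any map $u(T)\to u(T)$ in $\mathbf{\Phi}$ is by definition a map $T\to T$ in $\mathbf{\Omega}$). Hence the lemma holds.
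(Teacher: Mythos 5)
Your proof is correct and is precisely the argument the paper gives: use $u^*(X)(T) = X(uT)$ and the isomorphism $\mathrm{Aut}_{\mathbf{\Omega}}(T) \simeq \mathrm{Aut}_{\mathbf{\Phi}}(uT)$ coming from full faithfulness of $u$. You have merely spelled out the equivariance bookkeeping that the paper leaves implicit.
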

\begin{proof}
This is clear from the identities $u^*(X)(T) = X(uT)$ and $\mathrm{Aut}_{\mathbf{\Omega}}(T) = \mathrm{Aut}_{\mathbf{\Phi}}(uT)$, the second one following from the fact that $u$ is fully faithful. $\Box$
\end{proof}

\begin{remark}[Warning]
\label{rmk:warning}
The functor $u_!: \mathbf{dSets} \longrightarrow \mathbf{fSets}$ does \emph{not} send normal monomorphisms to normal monomorphisms. In fact, it does not even send them to monomorphisms in general. Consider the following example. Let $T$ be the tree
\[
\begin{tikzpicture} 
[level distance=10mm, 
every node/.style={fill, circle, minimum size=.1cm, inner sep=0pt}, 
level 1/.style={sibling distance=20mm}, 
level 2/.style={sibling distance=20mm}, 
level 3/.style={sibling distance=20mm},
level 4/.style={sibling distance=18mm}]
\node[style={color=white}] {} [grow'=up] 
child {node (r) {} 
	child{ node (u){}
		child{ node (v){}
			child
			child
			}
		child{ node (w){}
			child
			child
			}
	}
};
\tikzstyle{every node}=[]

\node at ($(r) + (.2cm,0)$) {$r$};
\node at ($(r) + (-.2cm,-.5cm)$) {$a$};
\node at ($(r) + (-.2cm,.5cm)$) {$b$};
\node at ($(u) + (.2cm,-.1cm)$) {$u$};
\node at ($(u) + (-.7cm,.5cm)$) {$c$};
\node at ($(u) + (.7cm,.5cm)$) {$f$};
\node at ($(v) + (.2cm,0)$) {$v$};
\node at ($(v) + (-.68cm,.5cm)$) {$d$};
\node at ($(v) + (.65cm,.5cm)$) {$e$};
\node at ($(w) + (.2cm,0)$) {$w$};
\node at ($(w) + (-.68cm,.5cm)$) {$g$};
\node at ($(w) + (.65cm,.5cm)$) {$h$};

\end{tikzpicture} 
\]

Now, the map $\partial_b(T) \cup \partial_r(T) \longrightarrow T$ is a normal monomorphism in $\mathbf{dSets}$; in fact, every mono into a representable is. On the other hand, consider 
\begin{equation*}
u_!(\partial_b(T) \cup \partial_r(T)) \, = \, \varinjlim_{R} u_!(R)
\end{equation*}
where the colimit is over all $R \longrightarrow T$ in $\mathbf{\Omega}$ which factor through $\partial_b(T)$ or $\partial_r(T)$ (or both). The two corollas with vertices $v$ and $w$ give rise to two \emph{different} maps
\begin{equation*}
u_!C_2 \oplus u_!C_2 \longrightarrow u_!(\partial_b(T) \cup \partial_r(T))
\end{equation*}
Indeed, there is one factoring through $u_!(\partial_b(T))$ and another one factoring through $u_!(\partial_r(T))$; these two maps only agree on the subobject
\begin{equation*}
u_!C_2 \amalg u_!C_2 \subseteq u_!C_2 \oplus u_!C_2
\end{equation*}
Hence the map
\begin{equation*}
u_!(\partial_b(T) \cup \partial_r(T)) \longrightarrow u_!(T)
\end{equation*}
is not a monomorphism. \par 
On the other hand, one easily checks that the composition
\[
\xymatrix{
\mathbf{sSets} \ar[r]^{i_!} & \mathbf{dSets} \ar[r]^{u_!} & \mathbf{fSets}
}
\]
does send monos to normal monos.
\end{remark}

We will now discuss the skeletal filtration of a normal forest set. Its description in Proposition \ref{prop:skeletalfiltration} below makes use of the notion of nondegenerate elements and of boundaries of forests, which we discuss first. \par 

\textit{Boundaries.} For a forest $F$, we will write $\partial \Phi[F]$, or simply $\partial F$, for
\begin{equation*}
\varinjlim_{G \rightarrowtail F} \Phi[G]
\end{equation*}
where the colimit ranges over all maps $G \longrightarrow F$ which strictly increase the number of edges. Thus, for direct sums we have
\begin{equation*}
\partial(F \oplus G) \, = \, \partial F \oplus G \cup F \oplus \partial G ,
\end{equation*}
so the calculation of the boundary of a forest reduces to that of the boundaries of its constituent trees $T$. There we have
\begin{equation*}
\partial(uT) \, = \, \bigcup_{F \rightarrowtail T} F
\end{equation*}
where $F$ ranges over the faces of $T$. Compared to the boundary of $T$ as computed in dendroidal sets, the only new face which arises is the root face, except in the two special cases $T = \eta$ and $T = C_0$ where there is no root face. Note that we have
\begin{equation*}
\partial \eta \, = \, \varnothing
\end{equation*}
and for a corolla $C_p$ we have
\begin{equation*}
\partial(u_!C_p) \, = \, \eta \amalg p \cdot \eta
\end{equation*} 
where the first copy of $\eta$ corresponds to the root of $C_p$ and 
\begin{equation*}
p \cdot \eta \, := \, \bigoplus_{i=1}^p \eta
\end{equation*}
is the ``crown'' of $C_p$, i.e. the direct sum of its leaves. If $p=0$, this crown is empty. Also notice that from these formulas and Proposition \ref{prop:fsetsproperties} it follows easily that
\begin{equation*}
u^*(\partial F) \, = \, \partial u^*(F)
\end{equation*}
for any forest $F$. \par 

\textit{Non-degenerate elements.} Let $X$ be a forest set and $F \in \mathbf{\Phi}$ a forest. An element $x \in X(F)$ is called \emph{degenerate} if there exists an $\alpha: F \longrightarrow G$ in $\mathbf{\Phi}$ and a $y \in X(G)$ with $x = \alpha^*(y)$, while $G$ has strictly fewer edges than $F$. Notice that if this is the case, the generalized Reedy structure on $\mathbf{\Phi}$ allows us to factor $\alpha$ as
\[
\xymatrix{
F \ar@{->>}[r]^\beta & H \,\,\ar@{>->}[r]^\gamma & G,
}
\]
where $\beta \in \mathbf{\Phi}^-$ and $\gamma \in \mathbf{\Phi}^+$. Therefore $x = \alpha^*(y) = \beta^*(z)$ with $z = \gamma^*(y) \in X(H)$. Thus $x \in X(F)$ is degenerate if and only if there is a nontrivial degeneracy $\beta: F \longrightarrow H$ such that $x$ is the restriction of an element in $X(H)$ along $\beta$. \par 
Clearly, when writing $X$ as a colimit of a diagram consisting of representables, we only need to take representables into account which correspond to non-degenerate elements of $X$ and it suffices to take just one in each isomorphism class. \par 

\textit{Skeletal filtration.} To set up a useful skeletal filtration, we need a notion of \emph{size} of a forest $F$, in such a way that a face of $F$ has strictly smaller size than $F$. We cannot just count vertices (as we do in $\mathbf{dSets}$), because of face inclusions like
\[
\xymatrix{
F \, \ar@{^(->}@<-.3ex>[r] & F \oplus \eta
}
\] 
and we cannot just count edges because of the face
\[
\xymatrix{
\eta \, \ar@{^(->}@<-.3ex>[r] & C_0
}
\] 
Therefore, let us define the size $|F|$ as the sum of the number of edges and the number of vertices of $F$. \par 
Let $X$ be a forest set. As noted above, $X$ can be written canonically as a colimit of representables corresponding only to non-degenerate elements. For $n \geq 0$, let $X^{(n)} \subseteq X$ be the subobject obtained as the colimit of the subdiagram of this canonical diagram consisting only of forests of size at most $n+1$. This yields an exhaustive filtration
\begin{equation*}
X^{(0)} \subseteq X^{(1)} \subseteq X^{(2)} \subseteq \cdots \quad\quad\quad \bigcup_{i=0}^\infty X^{(i)} \, = \, X
\end{equation*} 

\begin{proposition}
\label{prop:skeletalfiltration}
Let $X$ be a normal forest set. Then for each $n \geq 0$ the following diagram is a pushout:
\[
\xymatrix{
\coprod_{[e]} \partial F_e \ar[d]\ar[r] & X^{(n-1)} \ar[d] \\
\coprod_{[e]} F_e \ar[r] & X^{(n)}
}
\]
Here the coproduct ranges over all isomorphism classes of elements $e \in X^{(n)}$, corresponding to maps $e: F_e \longrightarrow X$ where $F_e$ is a forest of size exactly $n+1$. We have adopted the convention $X^{(-1)} = \varnothing$.
\end{proposition}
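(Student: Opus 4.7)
The plan is to verify the pushout property level-wise: for each forest $G$, applying evaluation at $G$ to the square produces a pushout of sets, and since colimits in $\mathbf{fSets}$ are computed pointwise this suffices. Commutativity of the square is immediate, since for any non-degenerate $e \colon F_e \to X$ with $|F_e| = n+1$, every proper face of $F_e$ has size at most $n$, so the restriction of $e$ to $\partial F_e$ lands in $X^{(n-1)}$.

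For the universal property I would use the generalized Reedy structure on $\mathbf{\Phi}$ recorded just after the second factorization lemma. By an Eilenberg--Zilber-type induction on $|G|$, every $x \in X(G)$ may be written as $x = \beta^*(y)$ for some composite of degeneracies $\beta \colon G \to H$ and some non-degenerate $y \in X(H)$, and when $x \in X^{(n)}(G)$ one may arrange $|H| \leq n+1$. Split into two cases according to $|H|$: if $|H| \leq n$, then by definition $x \in X^{(n-1)}(G)$ and is recovered through the top horizontal arrow. If $|H| = n+1$, then $y$ represents some isomorphism class $[e]$ appearing in the indexing coproduct; after fixing a representative $e \colon F_e \to X$ and choosing an isomorphism $\sigma \colon H \xrightarrow{\sim} F_e$ with $e \circ \sigma = y$, the composite $\sigma \beta \in \Phi[F_e](G)$ lifts $x$ to the coproduct of representables.

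The role of normality is to guarantee uniqueness. If $\sigma, \sigma' \colon H \to F_e$ both satisfy $e\sigma = e\sigma' = y$, then $\sigma \circ {\sigma'}^{-1} \in \mathrm{Aut}(F_e)$ fixes the non-degenerate element $e \in X(F_e)$; normality forces the $\mathrm{Aut}(F_e)$-action on $X(F_e)$ to be free, so $\sigma = \sigma'$. Hence the assignment $x \mapsto \sigma\beta$ is well-defined once representatives of the isomorphism classes have been chosen. The identifications in the pushout, induced by $\coprod \partial F_e \to \coprod F_e$, capture precisely the remaining redundancy: an element $\sigma\beta \in \Phi[F_e](G)$ is already seen in $X^{(n-1)}(G)$ exactly when $\sigma\beta$ factors through some proper face of $F_e$, that is, when it lies in $\partial F_e(G)$. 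The main obstacle is the bookkeeping of isomorphism classes and ensuring normality is invoked at the right step; once this is organized, the pushout property follows by direct verification, mirroring the standard skeletal filtration arguments for simplicial and dendroidal sets.
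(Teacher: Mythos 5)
Your overall strategy — checking the pushout pointwise, using the generalized Reedy structure on $\mathbf{\Phi}$ to produce non-degenerate representatives, and invoking normality to control automorphisms — is essentially the same strategy the paper uses, and the surjectivity half of your argument (every element of $X^{(n)}(G)$ arises either from $X^{(n-1)}(G)$ or from some $F_e(G)$) is fine. The gap is in injectivity, where you assert exactly the hard parts as if they were routine.

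Specifically, you claim that ``an element $\sigma\beta \in \Phi[F_e](G)$ is already seen in $X^{(n-1)}(G)$ exactly when $\sigma\beta$ factors through some proper face of $F_e$,'' and you file this under ``bookkeeping'' and ``direct verification.'' The ``only if'' direction is trivial, but the ``if'' direction — that a map $G \to F_e$ whose composite with $e$ lands in $X^{(n-1)}$ must already factor through $\partial F_e$ — is precisely assertion (a) in the paper's proof, and it is not direct. The paper proves it by choosing a section of the surjective part of $G \to F_e$, applying the Reedy factorization to the resulting map $F_e \to G'$ into a smaller forest, and deriving a contradiction with the non-degeneracy of $e$; none of that is visible in your outline, and the normality-based argument you do give (freeness of $\mathrm{Aut}(F_e)$ on $X(F_e)$) controls a different redundancy, namely the choice of isomorphism to a fixed representative. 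You also never address the paper's assertion (b): that non-isomorphic non-degenerate elements $e_1, e_2$ of size $n+1$ have $F_{e_1}(G)$ and $F_{e_2}(G)$ meeting only inside $X^{(n-1)}(G)$. Without that, nothing prevents two distinct summands of $\coprod_{[e]} F_e$ from being identified outside of $\coprod_{[e]}\partial F_e$, which would break the pushout. Your appeal to ``Eilenberg--Zilber-type induction'' gives existence of a non-degenerate representative but you never state the uniqueness half, and it is exactly the uniqueness half (proved via the same section-and-factorization trick) that would yield (b). To make the proof complete you need to actually carry out the two arguments in the paper's assertions (a) and (b), both of which turn on choosing a section of a surjection in $\mathbf{\Phi}^-$, refactoring, and contradicting non-degeneracy; that is the substantive content of the proposition, not bookkeeping.
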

\begin{proof}
The forest set $X^{(0)}$ is a disjoint union of copies of $\eta$ and the diagram is clearly a pushout for $n = 0$. We proceed by induction. We'll write $P^{(0)} = X^{(0)}$ and $P^{(n)}$ (if $n > 0$) for the pushout
\[
\xymatrix{
\coprod_{[e]} \partial F_e \ar[d]\ar[r] & P^{(n-1)} \ar[d] \\
\coprod_{[e]} F_e \ar[r] & P^{(n)}
}
\]
Then it suffices to prove for each $n \geq 0$ that the evident map $P^{(n)} \longrightarrow X$ is mono. Assuming this is the case for all $k < n$ (so that $P^{(k)} = X^{(k)}$ in those cases), the fact that $P^{(n)} \longrightarrow X^{(n)}$ is also mono follows from the following two assertions:
\begin{itemize}
\item[(a)] For each $e$ as above, the diagram
\[
\xymatrix{
\partial F_e \ar[d]\ar[r] & X^{(n-1)} \ar[d] \\
F_e \ar[r] & X
}
\]
is a pullback.
\item[(b)] If $e_1: F_{e_1} \longrightarrow X$ and $e_2: F_{e_2} \longrightarrow X$ are two non-isomorphic elements of $X^{(n)}$, then 
\begin{equation*}
F_{e_1} \times_X F_{e_2} \, \subseteq \, X^{(n-1)} \times_X X^{(n-1)}
\end{equation*}
Note that the latter object is simply $X^{(n-1)}$.
\end{itemize}
To prove (a), suppose 
\[
\xymatrix{
G \ar[r]^{\alpha} & F_e \ar[r]^e & X
}
\] 
factors through $X^{(n-1)}$. Then $x = e \circ \alpha$ can also be obtained as $x = z \circ \beta$ as in
\[
\xymatrix{
G \ar[r]^\alpha\ar[d]_{\beta}\ar[dr]^x & F_e \ar[d]^{e} \\
G' \ar[r]^z & X
}
\]
where $G'$ has size strictly less than $n+1$. If $\alpha$ factors through $\partial F_e$ we are done, so we may assume $\alpha$ is surjective. Choose a section $\sigma$ of $\alpha$ and factor $\beta \circ \sigma: G \longrightarrow G'$ as
\[
\xymatrix{
F_e \ar@{->>}^\epsilon[r] & H \,\,\ar@{>->}^\delta[r] & G'
}
\]
Then
\begin{equation*}
e \, = \, e \alpha \sigma \, = \, x \sigma \, = \, z \beta \sigma = z \delta \epsilon
\end{equation*}
contradicting the fact that $e$ is non-degenerate. \par 
To prove (b), suppose $x \in X^{(n)}$ can be written in two ways, say $e_1 \alpha = x = e_2 \beta$ as in
\[
\xymatrix{
G \ar[r]^\beta\ar[dr]^x\ar[d]_\alpha & F_{e_2} \ar[d]^{e_2} \\
F_{e_1} \ar[r]_{e_1} & X
}
\]
We can assume $\alpha$ and $\beta$ are surjective, because otherwise $x \in X^{(n-1)}$ and there is nothing to prove. Choose sections $u$ of $\alpha$ and $v$ of $\beta$. Then $e_2 = e_2 \beta v = e_1 \alpha v$, so $\alpha v$ must be an isomorphism because $e_2$ is non-degenerate (and $F_{e_1}$ and $F_{e_2}$ have the same size). But then $e_1$ and $e_2$ are isomorphic, contradicting the assumption. $\Box$
\end{proof}

\begin{remark}
Since we're counting edges and vertices, the skeleta grow somewhat differently from the way they do in dendroidal sets. For example, for the corolla $C_p$ viewed as a forest set -- let us write $u_!(C_p)$ for emphasis -- we have
\begin{eqnarray*}
u_!(C_p)^{(0)} & = & \coprod_{i=0}^p \eta \\
u_!(C_p)^{(p-1)} & = & \eta \amalg p \cdot \eta \, = \, \partial(u_!(C_p)) \\
u_!(C_p)^{(p)} & = & u_!(C_p)
\end{eqnarray*}
More generally, consider any dendroidal set $X$ and form the colimit
\begin{equation*}
V \, := \, \varinjlim F_e
\end{equation*}
over all $e: F_e \longrightarrow u_!X$ where $F_e$ has no vertices. This can be a much more complicated object than just a disjoint union of copies of $\eta$, which is what we would get by forming a similar colimit in $\mathbf{dSets}$, giving the 0-skeleton of $X$ in that category. Indeed, the colimit diagram for $V$ can contain objects of the form
\begin{equation*}
p \cdot \eta \, = \, \eta \oplus \cdots \oplus \eta
\end{equation*}
and maps between them. If $X$ is normal, these maps are all monomorphisms. These monos are all obtained by pushout and composition of monos of the form
\begin{equation*}
\partial(p \cdot \eta) \longrightarrow p \cdot \eta 
\end{equation*}
as expressed by Proposition \ref{prop:skeletalfiltration}. The same need not be true if $X$ is not normal, as one sees by considering objects of the form $(\eta \oplus \eta) /\Sigma_2$, where $\Sigma_2$ acts by interchanging the two copies of $\eta$.
\end{remark}

In exactly the same way as Proposition \ref{prop:skeletalfiltration} one can prove the following:
\begin{proposition}
Let $f: X \longrightarrow Y$ be a normal monomorphism and form the relative skeleta
\begin{equation*}
Y^{(n)}_X \, = \, Y^{(n)} \cup_f X \subseteq Y
\end{equation*}
Then for each $n$, the diagram
\[
\xymatrix{
\coprod_{[e]} \partial F_e \ar[d]\ar[r] & Y^{(n-1)}_X \ar[d] \\
\coprod_{[e]} F_e \ar[r] & Y^{(n)}_X 
}
\]
is a pushout, where the coproduct ranges over isomorphism classes of non-degenerate elements $e \in Y(F_e) - X(F_e)$ and where $F_e$ has size exactly $n+1$.
\end{proposition}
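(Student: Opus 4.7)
The plan is to repeat the inductive argument of Proposition \ref{prop:skeletalfiltration} essentially verbatim, with $Y^{(n-1)}_X$ playing the role previously played by $X^{(n-1)}$, and with the single new observation that the hypothesis $e \in Y(F_e) - X(F_e)$ absorbs the only extra case that arises from the enlargement of the base. Specifically, I would proceed by induction on $n$, with base case $Y^{(-1)}_X = X$ (set by the convention $Y^{(-1)} = \varnothing$), and form the tentative pushout $Q^{(n)}$ defined by gluing $\coprod_{[e]} F_e$ to $Y^{(n-1)}_X$ along $\coprod_{[e]} \partial F_e$. The canonical map $Q^{(n)} \to Y^{(n)}_X$ is surjective because any element of $Y^{(n)}_X = Y^{(n)} \cup_f X$ lies in $X$ (hence in $Y^{(n-1)}_X$), or in $Y^{(n-1)}$ (hence in $Y^{(n-1)}_X$ by induction), or is a non-degenerate element of size exactly $n+1$ in $Y$; in this last case, either it lies in $X$ and is already accounted for, or it is one of the $e$'s in our indexing set.

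The injectivity of $Q^{(n)} \to Y^{(n)}_X$ reduces, as in the absolute case, to the following two claims: (a) for each admissible $e$, the square with corners $\partial F_e$, $Y^{(n-1)}_X$, $F_e$, $Y$ is a pullback; and (b) for non-isomorphic admissible $e_1, e_2$, the fibre product $F_{e_1} \times_Y F_{e_2}$ is contained in $Y^{(n-1)}_X$. To prove (a), take $\alpha : G \to F_e$ such that $e \alpha$ factors through $Y^{(n-1)}_X$, and factor $\alpha$ by the generalized Reedy structure on $\mathbf{\Phi}$ as $G \twoheadrightarrow H \hookrightarrow F_e$. If $H \neq F_e$, then $\alpha$ already factors through $\partial F_e$ and we are done; otherwise $\alpha$ is a composition of degeneracies and admits a section $\sigma : F_e \to G$. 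In this case the condition $e\alpha \in Y^{(n-1)}_X(G)$ splits into two subcases: either $e \alpha$ factors through $Y^{(n-1)}$, in which case precomposing with $\sigma$ and factoring in $\mathbf{\Phi}$ exactly as in the proof of Proposition \ref{prop:skeletalfiltration} exhibits $e$ as degenerate, a contradiction; or $e \alpha = f y$ for some $y : G \to X$, in which case $e = e \alpha \sigma = f y \sigma$ shows that $e$ lies in the image of $X(F_e) \to Y(F_e)$, contradicting $e \in Y(F_e) - X(F_e)$.

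The proof of (b) is an equally small modification of the original argument. Given a diagram $G \to F_{e_1} \to Y \leftarrow F_{e_2} \leftarrow G$ exhibiting a point of the fibre product with common image $x \in Y(G)$, one of the two structural maps $\alpha, \beta$ out of $G$ must fail to admit a section -- otherwise the argument from Proposition \ref{prop:skeletalfiltration} forces $e_1 \cong e_2$. But then that map factors through the corresponding boundary, so $x$ factors through a face of size at most $n$, hence through $Y^{(n-1)} \subseteq Y^{(n-1)}_X$.

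I do not expect a serious obstacle: the only potentially delicate point is the extra subcase in (a) where $e \alpha$ factors through $X$ via the mono $f$, and this is immediately killed by the hypothesis that $e$ itself does not lie in $X(F_e)$. Everything else, including the surjectivity of $Q^{(n)} \to Y^{(n)}_X$, the use of the Reedy factorization, and the bookkeeping with sizes, is identical to the absolute case and can be cited from the proof of Proposition \ref{prop:skeletalfiltration}.
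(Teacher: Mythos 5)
Your proof is correct and takes exactly the approach the paper has in mind: the paper merely asserts that the relative statement follows ``in exactly the same way'' as Proposition \ref{prop:skeletalfiltration}, and you have worked out the necessary modification carefully. The one genuinely new subcase, in claim (a), is where $e\alpha$ factors through $X$ via $f$; your observation that $e = e\alpha\sigma = f(y\sigma)$ then contradicts $e \notin X(F_e)$ is exactly the point, and the remaining bookkeeping (surjectivity, claim (b), the use of the Reedy factorization and of sizes) carries over verbatim as you say.
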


\begin{corollary}
\label{cor:gennormalmonos}
The class of normal monomorphism in $\mathbf{fSets}$ is the saturation of the set of boundary inclusions $\partial F \longrightarrow F$. More specifically, every normal monomorphism is a transfinite composition of pushouts of maps of the form $\partial F \longrightarrow F$.
\end{corollary}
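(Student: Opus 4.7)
The plan is to derive both inclusions --- normal monomorphisms are in the saturation, and the saturation is contained in the normal monomorphisms --- with the first being an immediate consequence of the relative skeletal filtration just stated, and the second a routine verification that the class of normal monomorphisms has the relevant closure properties.

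First, given a normal monomorphism $f: X \to Y$, I would use the filtration $X = Y^{(-1)}_X \subseteq Y^{(0)}_X \subseteq Y^{(1)}_X \subseteq \cdots$ with union $Y$. By the preceding Proposition each inclusion $Y^{(n-1)}_X \to Y^{(n)}_X$ is the pushout of a coproduct $\coprod_{[e]} \partial F_e \to \coprod_{[e]} F_e$ along the map to $Y^{(n-1)}_X$. A coproduct of pushouts of maps in a set $\mathcal{S}$ can itself be rewritten as an iterated sequence of single pushouts of maps in $\mathcal{S}$ (by indexing the summands by an ordinal and taking a transfinite composition of one-at-a-time pushouts). Concatenating these decompositions across $n$, we obtain $f$ as a transfinite composition of pushouts of boundary inclusions $\partial F \to F$, which places $f$ in the saturation of the set $\{\partial F \to F \mid F \in \mathbf{\Phi}\}$.

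For the reverse inclusion I would check that the class of normal monomorphisms is itself saturated and contains all boundary inclusions. The boundary inclusion $\partial F \to F$ is a normal monomorphism because the only element of $\Phi[F](F)$ not in $\partial F(F)$ is the identity, on which $\mathrm{Aut}(F)$ acts freely. Next, normal monomorphisms are closed under pushout: if $A \to B$ is normal and we form a pushout along $A \to C$, then the complement of $C(G)$ inside the pushout is a quotient of the complement of $A(G)$ in $B(G)$ by a free action-compatible relation, so $\mathrm{Aut}(G)$ still acts freely (alternatively, one uses Lemma~\ref{lem:normalovernormal} applied to the pushout whenever the target is already normal, plus a direct argument in general). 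Finally, normal monomorphisms are closed under transfinite composition since the free action property is detected pointwise and is preserved by filtered colimits of monomorphisms. Combining these closure properties with the observation that boundary inclusions are normal yields the reverse inclusion.

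The main point of potential subtlety is the first step, namely turning the coproduct pushout in the Proposition into a genuine sequence of single pushouts; this is purely formal but worth spelling out, since the ``more specifically'' clause of the statement asks for an explicit transfinite composition of pushouts of individual boundary inclusions $\partial F \to F$, not of coproducts thereof. Everything else is bookkeeping with the generalized Reedy structure on $\mathbf{\Phi}$ already established.
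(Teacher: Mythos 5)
Your proposal is correct and matches the proof the paper leaves implicit: the ``more specifically'' clause follows directly from the relative skeletal filtration proposition, together with the standard manoeuvre of decomposing the pushout of a coproduct $\coprod_{[e]} \partial F_e \to \coprod_{[e]} F_e$ into a transfinite composition of single pushouts, and the first sentence is then a formal consequence.

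One small gap in your reverse inclusion: the saturation is by definition also closed under retracts, so you need the (easy) observation that a retract of a normal monomorphism is again a normal monomorphism. Given a retraction diagram exhibiting $X \to Y$ as a retract of $A \to B$, any $y \in Y(F) \setminus X(F)$ must map into $B(F) \setminus A(F)$ (else the retract would force $y \in X(F)$), so $(Y \setminus X)(F)$ injects $\mathrm{Aut}(F)$-equivariantly into $(B \setminus A)(F)$ and freeness transfers. Also, your description of the pushout step is slightly off: for a pushout of a normal mono $A \to B$ along $A \to C$, the complement $(D \setminus C)(G)$ is not a quotient of $(B \setminus A)(G)$ but is canonically in equivariant bijection with it, since pushouts of presheaves along monomorphisms are computed pointwise and preserve the complement; the conclusion you draw is nevertheless correct.
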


Applying Quillen's small object argument, we get:
\begin{corollary}
\label{cor:normalmonofactorization}
Every map $X \longrightarrow Y$ in $\mathbf{fSets}$ can be factored as $X \rightarrowtail Z \rightarrow Y$, where $X \rightarrowtail Z$ is a normal monomorphism and $Z \rightarrow Y$ has the right lifting property with respect to all normal monomorphisms.
\end{corollary}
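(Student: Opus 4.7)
The plan is a direct application of Quillen's small object argument, which is essentially automatic once Corollary \ref{cor:gennormalmonos} is in place. First I would observe that since $\mathbf{\Phi}$ is a small category, $\mathbf{fSets} = \mathbf{Sets}^{\mathbf{\Phi}^{\mathrm{op}}}$ is a locally presentable category, and the set
\[
\mathcal{I} \, := \, \{\partial F \longrightarrow \Phi[F] \mid F \in \mathbf{\Phi}\}
\]
is a (small) set of morphisms whose domains and codomains are finitely presentable (being finite colimits of representables). In particular the domains are $\kappa$-small relative to the class of all monomorphisms for some regular cardinal $\kappa$, which is all that is required to run the small object argument.

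Next, given a map $f: X \longrightarrow Y$ in $\mathbf{fSets}$, I would apply the small object argument to the set $\mathcal{I}$ to obtain a factorization
\[
\xymatrix{X \ar[r]^{i} & Z \ar[r]^{p} & Y}
\]
in which $i$ is built as a transfinite composition of pushouts of coproducts of maps in $\mathcal{I}$, and $p$ has the right lifting property with respect to every element of $\mathcal{I}$. By Corollary \ref{cor:gennormalmonos}, the class of normal monomorphisms is the saturation of $\mathcal{I}$; in particular it is closed under coproducts, pushouts and transfinite compositions, so $i$ is a normal monomorphism. Conversely, the class of maps having the right lifting property with respect to a fixed set is closed under forming the saturation of that set on the left, so $p$ has the right lifting property with respect to every normal monomorphism.

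There is no genuine obstacle here: the only thing to verify is the presentability/smallness hypothesis needed to invoke the small object argument, and this is immediate from the fact that $\mathbf{fSets}$ is a presheaf category on the small category $\mathbf{\Phi}$. The real content of the corollary lies in the generating statement Corollary \ref{cor:gennormalmonos}, which in turn rests on the skeletal filtration developed in Proposition \ref{prop:skeletalfiltration}.
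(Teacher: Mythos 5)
Your argument is exactly the paper's: the corollary is stated immediately after the line ``Applying Quillen's small object argument, we get:'' and rests on Corollary \ref{cor:gennormalmonos}, precisely as you use it. Your write-up merely makes explicit the routine smallness hypotheses, which the paper leaves implicit.
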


As for dendroidal sets, this corollary leads to the following definition:

\begin{definition}
A \emph{normalization} of a forest set $Y$ is a map $Y' \longrightarrow Y$ from a normal object $Y'$, having the right lifting property with respect to all normal monomorphisms.
\end{definition}

In particular, the previous corollary shows that every forest set admits a normalization.

\subsection{Tensor products and normal monomorphisms}
\label{sec:tensorprodsnormals}

In this section we investigate the behaviour of normal monomorphisms with respect to tensor products. The arguments are of a rather technical nature; the reader might want to skip this section on first reading, only noting the following crucial result:

\begin{proposition}
\label{prop:newnormalmonopoprod}
Let $X \rightarrow Y$ and $U \rightarrow V$ be normal monomorphisms between forest sets and assume one of the following two conditions is satisfied:
\begin{itemize}
\item[(i)] Either $Y$ or $V$ is a simplicial set, i.e. is in the essential image of the functor $u_! \circ i_!: \mathbf{sSets} \rightarrow \mathbf{fSets}$.
\item[(ii)] Both $Y$ and $V$ are open forest sets.
\end{itemize}
Then the pushout-product
\begin{equation*}
X \otimes V \cup_{X \otimes U} Y \otimes U \longrightarrow Y \otimes V
\end{equation*}
is a normal monomorphism.
\end{proposition}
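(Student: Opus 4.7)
The plan is to reduce, via a standard saturation argument, to the pushout-product of boundary inclusions between single trees, and then carry out a combinatorial analysis via shuffles. Explicitly, by Corollary \ref{cor:gennormalmonos} the class of normal monomorphisms in $\mathbf{fSets}$ is the saturation of $\{\partial F \hookrightarrow F\}$, and since $\otimes$ preserves colimits in each variable, the pushout-product operation sends a pair of saturated classes to a saturated class in each variable (a routine formal argument). It therefore suffices to handle the case where the two normal monomorphisms are themselves boundary inclusions $\partial F \hookrightarrow F$ and $\partial G \hookrightarrow G$ with $F,G$ satisfying the respective hypothesis. Writing $F = \bigoplus_i S_i$ as a direct sum of trees and using the distributivity of $\otimes$ over $\oplus$ from Proposition \ref{prop:fsetsproperties} together with the identity $\partial(F_1 \oplus F_2) = \partial F_1 \oplus F_2 \,\cup\, F_1 \oplus \partial F_2$, an induction on the number of summands (and the analogous reduction for $G$) brings the problem to the case in which $F = uS$ and $G = uT$ for single trees $S,T$.

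In this tree case one exploits the description $uS \otimes uT = u_!(S \otimes T)$ as a union of shuffles, each of which is a tree. The shuffles carry a natural partial order refining the vertex count, and an induction along this order shows that the successive inclusions in the corresponding filtration of $u_!(S \otimes T)$ are obtained as pushouts of boundary inclusions $\partial R \hookrightarrow R$ for shuffle trees $R$, along the inclusions of their forest-set boundaries into earlier stages. This identifies
\[
\partial(uS) \otimes uT \,\cup\, uS \otimes \partial(uT) \,\hookrightarrow\, u_!(S \otimes T)
\]
with a transfinite composition of pushouts of generating normal monomorphisms, hence with a normal monomorphism. This is a forest-set analogue of the standard dendroidal shuffle argument of \cite{moerdijklectures, moerdijkweiss}.

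The essential subtlety, and the reason the hypotheses are needed, is that $\partial(uS)$ in $\mathbf{fSets}$ is strictly larger than $u_!\partial S$ whenever the root of $S$ has valence $\geq 2$: the extra face is the forest root face $\partial_{\mathrm{root}}(uS)$, which is a proper forest with no analogue in $\mathbf{\Omega}$. Remark \ref{rmk:warning} shows that this discrepancy can cause $u_!$ of a dendroidal monomorphism to fail to be a monomorphism in $\mathbf{fSets}$; the offending configuration combines stumps with such root faces. Hypotheses (i) and (ii) are designed precisely to rule this out: if $T$ is linear then every vertex of $T$ is unary, so the forest-set root faces of every shuffle ``ending in $T$'' coincide with their dendroidal counterparts; and if both $S$ and $T$ are open then no shuffle contains a stump, blocking the pathological identifications in the warning. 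The main obstacle, and the bulk of the technical work, is to turn these qualitative observations into a clean verification that the shuffle filtration genuinely produces the required pushouts of boundary inclusions.
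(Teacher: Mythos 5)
Your approach has a real gap, and it is worth pinpointing. To set up your shuffle filtration you present the domain as the union $\partial F \otimes G \,\cup\, F \otimes \partial G$ inside $F \otimes G$, but the pushout-product is by construction the map \emph{from the pushout} $\partial F \otimes G \cup_{\partial F \otimes \partial G} F \otimes \partial G$. That the pushout maps isomorphically onto this union requires knowing (a) that $\partial F \otimes G \to F \otimes G$ is a monomorphism, (b) that $F \otimes \partial G \to F \otimes G$ is a monomorphism, and (c) that the intersection $\partial F \otimes G \cap F \otimes \partial G$ inside $F \otimes G$ is precisely $\partial F \otimes \partial G$. These three statements are exactly where the hypotheses on $F$ and $G$ are consumed, and they are the entire substance of the proposition; a filtration argument about subobjects of $F \otimes G$ cannot even be started without them, since the intersections $R \cap A_i$ are not computable before one knows the relevant maps into $F \otimes G$ are injective. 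You flag this as the ``main obstacle and bulk of the technical work,'' but it is not a finishing detail --- it is the whole proof.

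The paper attacks exactly these three points by a different device. After the saturation/skeletal reduction to boundary inclusions $\partial F \hookrightarrow F$, $\partial G \hookrightarrow G$ (which you also carry out), it proves Lemma \ref{lem:tensorintersection} and its two companion lemmas: tensoring with $G$ preserves intersections of elementary faces of $F$ (for (a), under the openness or simplex hypothesis), an analogous statement for simplices, and $\partial_x F \otimes G \cap F \otimes \partial_y G = \partial_x F \otimes \partial_y G$ (for (c)). The key tool is the notion of a \emph{thin operad} together with Lemma \ref{lem:thinmono}: a map of thin operads is a monomorphism iff it is injective on colours, which reduces the injectivity of the candidate maps to an easy colour check; shuffle analysis is then used only to prove \emph{surjectivity} of the comparison maps between intersections, not to filter all of $F \otimes G$. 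Note also that the shuffle filtration result you are modeling your argument on, Proposition \ref{prop:poprodinneranodyne}, explicitly opens by \emph{invoking} Proposition \ref{prop:newnormalmonopoprod}, so in the paper's architecture that filtration machinery is deliberately downstream of the present result, not a route to proving it.
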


By standard arguments, this proposition is a consequence of the following result:

\begin{proposition}
\label{prop:newnormalmonopoprod2}
Let $F$ and $G$ be forests and assume one of the following two conditions is satisfied:
\begin{itemize}
\item[(i)] Either $F$ or $G$ is a simplex, i.e. is in the essential image of the functor $u \circ i: \mathbf{\Delta} \rightarrow \mathbf{\Phi}$.
\item[(ii)] Both $F$ and $G$ are open forests.
\end{itemize}
Then the pushout-product
\begin{equation*}
\partial F \otimes G \cup_{\partial F \otimes \partial G} F \otimes \partial G \longrightarrow F \otimes G
\end{equation*}
is a normal monomorphism.
\end{proposition}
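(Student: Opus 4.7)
The plan is to reduce the proposition to the known dendroidal analogue via a two-stage combinatorial reduction, and then handle the extra root-face contribution that distinguishes the forest boundary from the dendroidal one.

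First, I would reduce to the case where $F$ and $G$ are each single trees. If $F = F' \oplus F''$ with both summands nonempty, Proposition \ref{prop:fsetsproperties} gives the distributivity $F \otimes G = (F' \otimes G) \oplus (F'' \otimes G)$, while the identity $\partial F = \partial F' \oplus F'' \cup F' \oplus \partial F''$ decomposes the boundary (and similarly for $G$). Combining these with Remark \ref{rmk:sumcofibration} expresses the pushout-product for $F$ and $G$ as built from pushout-products associated to the smaller forests $F', F''$ via pushouts, unions, and direct sums of normal monomorphisms. Induction on the number of summands of $F$ (and then of $G$) reduces the proposition to the case $F = uS$ and $G = uT$ for single trees $S, T \in \mathbf{\Omega}$.

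Second, for single trees we have $uS \otimes uT = u_!(S \otimes T)$ by Proposition \ref{prop:fsetsproperties}(iii), and $S \otimes T$ admits its usual filtration by shuffles in $\mathbf{\Omega}$, each shuffle being a single tree. I would build the pushout-product as a transfinite composition of pushouts of representable boundary inclusions $\partial \Phi[R] \to \Phi[R]$, attaching shuffles $R$ one at a time in an appropriate order. This parallels the dendroidal argument from \cite{cisinskimoerdijk1}, where the corresponding pushout-product using the $\mathbf{\Omega}$-boundary is a normal monomorphism. Since at each stage of the filtration only a single tree-valued shuffle is attached (and not a union of several), the pathology of $u_!$ on unions exhibited in Remark \ref{rmk:warning} never arises, so the dendroidal argument can be transferred shuffle-by-shuffle to $\mathbf{fSets}$.

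Third, I would incorporate the extra root-face contribution to the forest boundary: one has $\partial(uS) = u_!(\partial_{\mathbf{\Omega}} S) \cup \partial_{\mathrm{root}}(uS)$, and the piece $\partial_{\mathrm{root}}(uS)$ is a genuine non-tree forest whenever the root of $S$ has valence at least two. Under hypothesis (i), when $S$ is a linear tree $uL_n$, the root face is again linear, so $\partial(uL_n) = u_! i_! \partial \Delta^n$; and since the composition $u_! \circ i_!$ does preserve normal monomorphisms (Remark \ref{rmk:warning}), the desired conclusion follows directly from the dendroidal case in this setting. Under hypothesis (ii), both $S$ and $T$ are open, so every shuffle of $S \otimes T$ is open and the additional root-face pieces can be attached as further pushouts along boundary inclusions of representable open forests without destroying the monomorphism property at any stage.

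The principal obstacle is the concrete combinatorial bookkeeping of step three: exhibiting an explicit well-ordering of shuffles, together with their root-face extensions, so that the full pushout-product is realized as a transfinite composition of pushouts of boundary inclusions $\partial \Phi[F_\alpha] \to \Phi[F_\alpha]$ for representable forests $F_\alpha$. This is strictly more intricate than the analogous dendroidal argument because of the forest-only root faces, and the hypotheses (i) and (ii) are precisely what rules out the obstruction of Remark \ref{rmk:warning} that would otherwise make the attaching maps fail to be monomorphisms.
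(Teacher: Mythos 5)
Your proposal takes a genuinely different route from the paper. The paper introduces the auxiliary notion of a \emph{thin operad} and uses the elementary criterion that a map of thin operads is mono iff it is injective on colours. It then proves the proposition as a formal consequence of three intersection lemmas: (a) for $G$ open and $H_1, H_2$ elementary faces of $F$, tensoring with $G$ preserves the intersection $H_1 \cap H_2$; (b) the analogous statement for simplices; (c) for one of $F, G$ open and faces $\partial_x F$, $\partial_y G$, the intersection $\partial_x F \otimes G \,\cap\, F \otimes \partial_y G$ equals $\partial_x F \otimes \partial_y G$. Injectivity of the pushout-product then follows directly, since the various pieces are nerves of thin operads and the lemmas show they glue along the intersections one expects. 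Your approach, by contrast, aims to build the inclusion as a transfinite composition of pushouts of representable boundary inclusions along a well-ordering of shuffles, in the style of the anodyne arguments elsewhere in the paper.

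There is a gap in your plan that is worth pinpointing. Your steps two and three assert that attaching shuffles one by one avoids the pathology of Remark \ref{rmk:warning}, but this observation does not by itself establish that each successive stage is a subobject of $F \otimes G$. To know that attaching a shuffle $R$ is a pushout along a boundary inclusion $\partial \Phi[R] \to \Phi[R]$, you must identify $R \cap (\text{previous stage})$ as the expected union of faces of $R$ --- and this identification is exactly the content of the paper's intersection lemmas. In other words, the shuffle filtration does not bypass the intersection analysis; it presupposes it. The failure mode is concrete: as the Remark following Lemma \ref{lem:tensorintersection} shows, when $G$ is not open, $\partial_v F \otimes G \cap \partial_e F \otimes G$ can strictly contain $(\partial_v F \cap \partial_e F) \otimes G$, in which case your filtration would glue too much at some stage and the claimed pushout square would fail to be one. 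The hypotheses (i) and (ii) in the proposition are exactly what excludes this, but showing so requires a careful case analysis of faces of forests and their tensor products, which your sketch defers. The paper's thin-operad approach is in fact more economical for this particular statement, since only a monomorphism (not an anodyne) is being claimed, and the colour-injectivity criterion reduces the verification to elementary bookkeeping.
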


The rest of this section is devoted to proving the previous proposition. We will, as before, suppress the functors $u_!$ and $i_!$ from the notation, simply writing $\Delta^n$ for the forest set obtained by applying $u_!$ and $i_!$ to the $n$-simplex. First, we need an easy way to establish that certain maps we encounter are monomorphisms.

\begin{definition}
An operad $\mathbf{P}$ in $\mathbf{Sets}$ is called \emph{thin} if for every tuple $(c_1, \ldots, c_n, d)$ of colours of $\mathbf{P}$, the set of operations $\mathbf{P}(c_1, \ldots, c_n; d)$ is either empty or a singleton.
\end{definition}

Examples of thin operads are the operads $\Omega(T)$ freely generated by trees in $\Omega$. Observe that the class of thin operads is closed under small limits and, as a consequence of the Boardman-Vogt relation, contains tensor products of the form $\Omega(S) \otimes \Omega(T)$. We will make frequent use of the following obvious lemma:

\begin{lemma}
\label{lem:thinmono}
A map of thin operads is a monomorphism if and only if it is injective on colours.
\end{lemma}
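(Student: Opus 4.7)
The plan is to unpack the definition of a monomorphism in $\mathbf{Op}$ and exploit the fact that, in a thin operad, an operation is determined by its arity and its input/output colours (when such an operation exists at all).

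First I would handle the "only if" direction. Suppose $f: \mathbf{P} \to \mathbf{Q}$ is a monomorphism of (thin) operads. To test injectivity on colours, I would use the free operad on a single colour, namely $\Omega(\eta)$, which has one colour and a unique identity operation. Maps $\Omega(\eta) \to \mathbf{P}$ are in bijection with colours of $\mathbf{P}$. Given colours $c, c' \in \mathrm{col}(\mathbf{P})$ with $f(c) = f(c')$, the two induced maps $g, h: \Omega(\eta) \to \mathbf{P}$ satisfy $f \circ g = f \circ h$, so by the monomorphism hypothesis $g = h$, hence $c = c'$.

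For the converse, suppose $f$ is injective on colours, and let $g, h: \mathbf{R} \to \mathbf{P}$ be morphisms from an arbitrary operad $\mathbf{R}$ with $f \circ g = f \circ h$. On colours, equality of $f \circ g$ and $f \circ h$ combined with injectivity of $f$ on colours immediately gives $g = h$ on $\mathrm{col}(\mathbf{R})$. For an operation $\omega \in \mathbf{R}(c_1,\ldots,c_n;d)$, both $g(\omega)$ and $h(\omega)$ lie in the set
\begin{equation*}
\mathbf{P}\bigl(g(c_1),\ldots,g(c_n);g(d)\bigr) = \mathbf{P}\bigl(h(c_1),\ldots,h(c_n);h(d)\bigr),
\end{equation*}
which by thinness is either empty or a singleton. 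Since both images exist, the set is a singleton and $g(\omega) = h(\omega)$. Hence $g = h$ as operad maps.

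There is no real obstacle here; the only subtlety is choosing a convenient "probe" operad to detect non-injectivity on colours, and $\Omega(\eta)$ does the job. The proof makes no use of symmetric group actions, units, or composition beyond the bookkeeping already built into the definition of an operad map.
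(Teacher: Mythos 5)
Your proof is correct. The paper does not actually supply an argument for this lemma (it is labelled ``obvious''), so there is no alternative to compare against; the argument you give is the natural one. For the ``only if'' direction, probing with $\Omega(\eta)$ is exactly the right move, since operad maps $\Omega(\eta)\to\mathbf{P}$ correspond to colours of $\mathbf{P}$. For the ``if'' direction, the key observation — that once $g$ and $h$ agree on colours, thinness of the target $\mathbf{P}$ forces agreement on every operation set, without even needing $f\circ g = f\circ h$ at the operation level — is correct and in fact slightly stronger than required. You only use thinness of the domain of $f$, so the lemma's hypothesis is mildly redundant, but that does not affect correctness.
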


Note that monomorphisms of operads give rise to monomorphisms of forest sets by applying the functor $u_* \circ N_d$, which we will in this section also refer to as the \emph{nerve}. To prove Proposition \ref{prop:newnormalmonopoprod2}, we need some discussion of the intersections between different faces of a forest. So, let $F$ be a forest and let $H_1$ and $H_2$ be two elementary faces of $F$. For simplicity, assume $F$ consists of only one tree (although the general case is no more difficult). There is one `exceptional' and one `generic' case to consider: \par 
\emph{Case 1}. The forest $F$ has a leaf vertex $v$ attached to an inner edge $e$ and we have $H_1 = \partial_v F$ and $H_2 = \partial_e F$. To describe their intersection, let us denote by $w$ the vertex attached to the bottom of $e$. One of the leaves of $w$ is $e$; label the others by $l_1, \ldots, l_n$ (the \emph{siblings} of $e$). Denote the outgoing edge of $w$ by $r$. Let us write $F/l_i$ for the maximal subtree of $F$ with $l_i$ as its root and $r/F$ for the tree obtained from $F$ by chopping off everything above the edge $r$. Then we have
\begin{equation*}
\partial_v F \cap \partial_e F = r/F \amalg (F/l_1 \oplus \cdots \oplus F/l_n).
\end{equation*}
In particular, this is \emph{not} a representable forest set, unless $e$ is the only leaf of the vertex $w$. \par
\emph{Case 2}. For any choice of $H_1$ and $H_2$ which is not of the type described in Case 1, the intersection of the two \emph{is} representable, i.e. is just a forest, which is simultaneously a face of $H_1$ and of $H_2$. \par 
Proposition \ref{prop:newnormalmonopoprod2} is a consequence of the following three lemmas.

\begin{lemma}
\label{lem:tensorintersection}
Let $F$ and $G$ be forests, assume $G$ is open and let $H_1$, $H_2$ be elementary faces of $F$. Then the natural map
\begin{equation*}
(H_1 \times_F H_2) \otimes G \longrightarrow (H_1 \otimes G) \times_{F \otimes G} (H_2 \otimes G)
\end{equation*}
is an isomorphism. In words, tensoring with $G$ preserves the intersection of $H_1$ and $H_2$.
\end{lemma}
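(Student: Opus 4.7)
The plan is to reduce the statement to a combinatorial check that uses thinness to detect subobjects by their colours. The forest sets $F \otimes G$ and all the subobjects under consideration embed into the nerve of a thin operad built from $\tau_d F$ and $\tau_d G$, and by Lemma \ref{lem:thinmono} a subobject of the nerve of a thin operad is determined by the set of edges it carries on each shuffle. Openness of $G$ is the hypothesis that keeps the Boardman--Vogt relations from introducing unexpected identifications across branches, which would otherwise spoil thinness.

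First, I would reduce to the case that $F$ consists of a single tree. If $F = S \oplus F'$ with both $H_1, H_2$ affecting only the $S$-summand, distributivity of $\otimes$ over $\oplus$ together with preservation of coproducts by $-\otimes G$ reduce the claim to the corresponding statement for $S$. If $H_1, H_2$ affect different summands, the intersection decomposes as a direct sum of intersections in each summand, and the result follows again by distributivity. Elementary faces of type (c), which insert a new $\eta$-summand, are trivial.

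With $F$ a tree, I would follow the two-case analysis preceding the lemma. In Case 2 (the generic case), $H_1 \cap_F H_2$ is itself a face of $F$; both sides of the comparison map are then subobjects of $F \otimes G$ carrying the same edges on every shuffle, so they agree by thinness. In Case 1 (the exceptional case of a leaf vertex $v$ on an inner edge $e$) one has $H_1 \cap_F H_2 = r/F \amalg (F/l_1 \oplus \cdots \oplus F/l_n)$. Since $-\otimes G$ preserves coproducts and distributes over $\oplus$, the left-hand side of the comparison map is naturally identified with $(r/F \otimes G) \amalg ((F/l_1 \oplus \cdots \oplus F/l_n) \otimes G)$. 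The essential check is that no shuffle of $F \otimes G$ can use $F$-edges from both summands at once: no edge of $F$ lies both in $r/F$ and in $F/l_1 \oplus \cdots \oplus F/l_n$, so thinness forces any element of $(H_1 \otimes G) \times_{F \otimes G} (H_2 \otimes G)$ to factor through exactly one of the two summands.

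The principal obstacle is Case 1, where the intersection is not representable and one must rule out identifications in $F \otimes G$ that bridge the two summands of $H_1 \cap_F H_2$; this is exactly where the openness of $G$ is used, since stumps in $G$ would create Boardman--Vogt relations capable of crossing that divide. Everything else reduces to routine distributivity and edge bookkeeping.
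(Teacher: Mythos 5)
Your high-level plan matches the paper --- reduce to $F$ a tree, split into the generic and exceptional cases, get injectivity from Lemma \ref{lem:thinmono}, and locate the role of openness in the exceptional case --- but in both cases the surjectivity half of the argument, which is where the content of the lemma lies, is asserted rather than proved. In Case 2 you say the two sides ``carry the same edges on every shuffle, so they agree by thinness.'' Thinness (Lemma \ref{lem:thinmono}) only gives that the comparison map is injective, because a map of thin operads is mono iff it is injective on colours; it does not say that a subpresheaf of the nerve of a thin operad is determined by its colours --- a tree and its boundary have the same edges, for instance. What actually has to be shown is that for every shuffle $S$ of $H_1 \otimes G$, the intersection $S \cap (H_2 \otimes G)$ lies inside $(H_1 \cap H_2) \otimes G$; the paper does this by a case analysis on what kind of elementary face $H_2$ is, identifying $S \cap (H_2 \otimes G)$ as a specific face of $S$ which is visibly a shuffle of $(H_1 \cap H_2) \otimes G$.

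In Case 1 the ``essential check'' you state --- that no shuffle of $F \otimes G$ can use $F$-edges from both summands at once --- is simply false: every shuffle of $F \otimes G$ uses every edge of $F$. What must be proved is that no dendrex of $S \cap (H_2 \otimes G)$, for $S$ a shuffle of $H_1 \otimes G$, can have root edge $r \otimes g$ and a leaf $l_i \otimes g_i$; the disjointness of the colour sets of $r/F$ and $F/l_1 \oplus \cdots \oplus F/l_n$ does not by itself rule this out, since one dendrex may perfectly well carry colours from both. The real argument goes as follows: in $H_2 = \partial_e F$ the only route from $r$ upward to $l_i$ is through the amalgamated vertex whose inputs include the leaves $k_1, \ldots, k_m$ of $v$, so such a dendrex must carry colours of the form $k_j \otimes g''$; because $G$ is open, each $k_j$-branch of the dendrex must terminate in a genuine leaf of colour $k_j \otimes g'$ rather than a stump; but $S$, being a shuffle of $\partial_v F \otimes G$, carries no $k_j$-colours at all. (When $v$ is nullary the same argument runs with $e$ in place of the $k_j$'s, using that $H_2$ lacks the colour $e$.) This is exactly where openness enters concretely --- a stump of $G$ lets a $k_j$-branch end with no leaf, as in the counterexample of the remark following the lemma --- and it is the crux of the exceptional case, not routine bookkeeping.
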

\begin{proof}
For simplicity, we will use the symbol $\cap$ for intersections instead of writing pullbacks as in the statement of the lemma; this should not cause confusion. To avoid cluttering up the exposition, let us assume that both $F$ and $G$ consist of a single tree. The modifications for the general case are trivial. Also, we will assume $F$ has at least two vertices; the cases where $F$ is either $\eta$ or a corolla are trivial. Recall the discussion above about the intersection of faces. If we are in Case 2 discussed there, both the forest sets mentioned in the map above are the nerves of thin operads. It is therefore immediate from Lemma \ref{lem:thinmono} that the stated map is a monomorphism. To prove surjectivity, suppose $S$ is a shuffle of the tensor product $H_1 \otimes G$. If we can prove that the intersection $S \cap (H_2 \otimes G)$ is contained in $(H_1 \cap H_2) \otimes G$, we are done (as far as Case 2 is concerned). Let us distinguish the following possibilities:
\begin{itemize}
\item[(a)] The face $H_2$ is obtained by contracting an inner edge $e$ of $F$, which is also an inner edge of $H_1$. Then the intersection $S \cap (H_2 \otimes G)$ is the forest obtained from $S$ by contracting all the edges of the form $e \otimes g$ in $S$. Note that these are indeed inner edges and that the resulting forest is a shuffle of the tensor product $(H_1 \cap H_2) \otimes G$.
\item[(b)] The face $H_2$ is obtained by contracting an inner edge $e$ of $F$, which is \emph{not} an inner edge of $H_1$. Since we are in Case 2, this means $H_1$ must be the root face of $F$; we can then interchange the roles of $H_1$ and $H_2$ and move to (c) below.
\item[(c)] The face $H_2$ is chops off the root vertex of $F$. Let us call the root that is being deleted $r$. The shuffle $S$ has a connected subtree containing the root, containing precisely all the edges of $S$ whose colour is of the form $r \otimes g$ for some colour $g$ of $G$. By taking iterated root faces, we may delete all these edges. The intersection $S \cap (H_2 \otimes G)$ is the forest resulting from this procedure. Again, it is clear that this forest is precisely a shuffle of the tensor product $(H_1 \cap H_2) \otimes G$.
\item[(d)] The face $H_2$ chops off a leaf vertex $v$ of $F$, with leaves $l_1, \ldots, l_n$. Since we are in Case 2, $v$ is also a leaf vertex of $H_1$. The shuffle $S$ potentially contains inner edges of the form $l_i \otimes g$. First contract all these. There are then potentially leaf corollas of $S$ left with leaves of the form $l_i \otimes g$. Take the iterated outer face chopping of these leaf corollas. The intersection $S \cap (H_2 \otimes G)$ is precisely the resulting tree. This tree is a shuffle of $(H_1 \cap H_2) \otimes G$.
\end{itemize}
The reader should observe that so far we haven't used the assumption that $G$ is open. This assumption will only play a role when the choice of $H_1$ and $H_2$ is as in Case 1 above, which we will deal with now. We use the same notation introduced there, with the addition that we label the leaves of $v$ by $k_1, \ldots, k_m$ (in case $v$ has any leaves). In the case at hand, the left-hand side of the map stated in the lemma is not quite the nerve of a thin operad, but rather a coproduct of such nerves, and it is still clear that the stated map is mono. To establish surjectivity, we should argue that for any shuffle $S$ of $H_1 \otimes G$, the intersection $S \cap (H_2 \otimes G)$ splits as
\begin{equation*}
S \cap \bigl((r/F \amalg F/l_1 \oplus \cdots \oplus F/l_n) \otimes G\bigr).
\end{equation*}
This follows if we can show that there is no dendrex of $S \cap (H_2 \otimes G)$ whose root edge is of the form $r \otimes g$ and whose leaves are of the form $l_i \otimes g_i$, for colours $g, g_i$ of $G$. First assume $v$ is not nullary. By our assumption that $G$ is open, any dendrex of $H_2 \otimes G$ with a leaf of colour $l_i \otimes g_i$ must also contain a leaf of colour $k_i \otimes g'_i$ (in fact, for each $1 \leq i \leq m$). Since these colours are not in $S$, the intersection can have no such dendrex. In the case where $v$ is a nullary vertex, observe that any dendrex of $S$ with root edge of the form $r \otimes g$ and at least one leaf of the form $l_i \otimes g_i$ must also have a leaf of the form $e \otimes g'$ (again using the assumption that $G$ is open). But such an edge is not in $H_2 \otimes G$, so that such a dendrex cannot be in the intersection of $S$ with $H_2 \otimes G$. $\Box$
\end{proof}

\begin{remark}
The assumption on $G$ is necessary (cf. \cite{cisinskimoerdijkerr}). A counterexample to the statement of the lemma in the case of a non-open $G$ is the following:
\[
\begin{tikzpicture} 
[level distance=5mm, 
every node/.style={fill, circle, minimum size=.1cm, inner sep=0pt}, 
level 1/.style={sibling distance=10mm}, 
level 2/.style={sibling distance=5mm}, 
level 3/.style={sibling distance=5mm}]

\node (lefttree)[style={color=white}] {} [grow'=up] 
child {node (Llevel1) {} 
	child{ node (Llevel2) {}
		child
	}
	child{
	}
};

\node (righttree)[style={color=white}, right = 3cm of lefttree] {};
\node (righttreestart)[style={color=white}, above = .4cm of righttree] {} [grow'=up] 
child {node(Rlevel1)[draw,fill=none]{} 
};

\tikzstyle{every node}=[]

\node at ($(Llevel1) + (-.25cm, .2cm)$) {$e$};
\node at ($(Llevel1) + (-.45cm, .5cm)$) {$v$};
\node at ($(Llevel1) + (-.15cm, -.25cm)$) {$r$};
\node at ($(Llevel1) + (.25cm, .2cm)$) {$f$};
\node at ($(Rlevel1) + (-.15cm, -.25cm)$) {$g$};
\node at ($(Llevel1) + (-1cm, .5cm)$) {$F:$};
\node at ($(Rlevel1) + (-1cm, 0cm)$) {$G:$};

\end{tikzpicture} 
\]
Let $H_1 = \partial_v F$ and $H_2 = \partial_e F$. Then $H_1 \cap H_2$ is the disjoint union of two $\eta$'s, corresponding to the edges $r$ and $f$. Hence
\begin{equation*}
(H_1 \cap H_2) \otimes G \, \simeq \, C_0 \amalg C_0.
\end{equation*}  
On the other hand, one verifies that $H_1 \otimes G \cap H_2 \otimes G$ is the following tree:
\[
\begin{tikzpicture} 
[level distance=5mm, 
every node/.style={fill, circle, minimum size=.1cm, inner sep=0pt}, 
level 1/.style={sibling distance=10mm}, 
level 2/.style={sibling distance=5mm}, 
level 3/.style={sibling distance=5mm}]

\node (lefttree)[style={color=white}] {} [grow'=up] 
child {node (Llevel1) {} 
	child{ node (Llevel2) {}
	}
};

\tikzstyle{every node}=[]

\node at ($(Llevel1) + (.45cm, -.25cm)$) {$r \otimes g$};
\node at ($(Llevel1) + (.45cm, .25cm)$) {$f \otimes g$};

\end{tikzpicture} 
\]
\end{remark}

\begin{lemma}
Let $F = \Delta^n$ and let $0 \leq i < j \leq n$. Then for any forest $G$, the natural map
\begin{equation*}
\partial_i\partial_j\Delta^n \otimes G \longrightarrow \partial_i\Delta^n \otimes G \cap \partial_j\Delta^n \otimes G
\end{equation*}
is an isomorphism.
\end{lemma}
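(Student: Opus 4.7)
The plan is to follow the strategy of Lemma \ref{lem:tensorintersection}: both sides of the map are sub-forest-sets of $\Delta^n \otimes G$, which is the nerve of the thin operad $\Omega(\Delta^n) \otimes \Omega(G)$. By Lemma \ref{lem:thinmono}, the map is automatically a monomorphism, since it is injective on colours, so the real content is surjectivity. I would attack this by the same case analysis of the pair of elementary faces $(\partial_i\Delta^n, \partial_j\Delta^n)$ that precedes the proof of the previous lemma. A careful reading of that proof shows that the openness hypothesis on $G$ is used only in Case 1; for every pair with $(i,j) \neq (0,1)$ one ends up in Case 2 (possibly after swapping the roles of $H_1$ and $H_2$ as in subcase (b)), and the argument given there goes through verbatim.

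The only remaining pair is $(i,j) = (0,1)$: the leaf vertex $v_1$ of $\Delta^n$ is attached to the inner edge $1$, placing us formally in Case 1. Here, however, the linearity of $\Delta^n$ simplifies matters, because the vertex $v_2$ below edge $1$ has no further inputs, so edge $1$ has no siblings at $v_2$. The direct-sum piece $F/l_1 \oplus \cdots \oplus F/l_n$ in the formula for $\partial_v F \cap \partial_e F$ is therefore absent, and the intersection reduces to the representable forest $\Delta^{n-2}$ on the edges $\{2,\ldots,n\}$. What remains is to show that any dendrex of $\Delta^n \otimes G$ whose colour-image avoids $\{(0,g),(1,g) : g \in \mathrm{col}(G)\}$ factors through $\Delta^{n-2} \otimes G$.

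By thinness this reduces to the purely operadic statement that $\Omega(\Delta^{n-2}) \otimes \Omega(G)$ is the full sub-operad of $\Omega(\Delta^n) \otimes \Omega(G)$ on the colours $\{2,\ldots,n\} \times \mathrm{col}(G)$. I would prove this by exploiting the linear order on the colours of $\Delta^n$: every operation of $\Omega(\Delta^n)$ sends its input to an output at a weakly larger level, and this level-monotonicity passes to the tensor product via the Boardman--Vogt presentation. Consequently an operation of $\Omega(\Delta^n) \otimes \Omega(G)$ all of whose colours live at level $\geq 2$ can be built using only the generators $v_k \otimes g$ with $k \geq 3$ and $k \otimes \sigma$ with $k \geq 2$, which together generate exactly $\Omega(\Delta^{n-2}) \otimes \Omega(G)$. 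The main obstacle is the case where $G$ has nullary operations: a shuffle realisation may then traverse the forbidden levels $0$ and $1$ via a nullary source, so one cannot simply read off the containment from any chosen shuffle. I would handle this by appealing to thinness, which forces the operation to be uniquely determined by its signature, and then exhibiting a realisation inside $\Omega(\Delta^{n-2}) \otimes \Omega(G)$ directly from the $\Omega(G)$-operation obtained by projecting away the $\Delta^n$-coordinate.
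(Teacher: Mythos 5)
Your argument is correct, but the paper is more economical. The opening move is the same (thinness of the relevant operads gives injectivity), but the paper then establishes surjectivity in one uniform step, valid for all pairs $(i,j)$ at once: an operation $(k_1 \otimes c_1, \ldots, k_m \otimes c_m) \to l \otimes d$ lies in the operad underlying $\partial_i\Delta^n \otimes G \cap \partial_j\Delta^n \otimes G$ precisely when none of the $k$'s or $l$ equals $i$ or $j$, each $k_p \leq l$, and there is an operation $(c_1, \ldots, c_m) \to d$ of $G$; and the identical criterion characterizes $\partial_i\partial_j\Delta^n \otimes G$. This is in substance the level-monotonicity-plus-projection argument you develop — but you deploy it only for the exceptional pair $(0,1)$, having first peeled off every other pair by re-running the shuffle argument from Case 2 of Lemma \ref{lem:tensorintersection}. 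Your observation that Case 2 of that proof never uses openness of $G$ is right (the paper states it explicitly there), so the split is sound, but it is also redundant: the operadic characterization you work out for $(0,1)$ is already uniform in $(i,j)$ and would have dispatched every case simultaneously, with no appeal to shuffles needed. You also correctly isolate and resolve the one genuine subtlety — that a shuffle realization may dip through the forbidden levels when $G$ has nullary operations, so one must rebuild a good realization from the signature after projecting to $\Omega(G)$ rather than inspect an arbitrary shuffle — which is exactly the role thinness is silently playing in the paper's one-line conclusion.
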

\begin{proof}
Note that both forest sets appearing in the map above are nerves of thin operads. Therefore Lemma \ref{lem:thinmono} shows that the stated map is a monomorphism. To establish surjectivity, observe that for a tuple of colours
\begin{equation*}
(k_1 \otimes c_1, \ldots, k_m \otimes c_m, l \otimes d) 
\end{equation*} 
of $\Delta^n \otimes G$, there exists an operation
\begin{equation*}
(k_1 \otimes c_1, \ldots, k_m \otimes c_m) \longrightarrow l \otimes d
\end{equation*}
of (the operad underlying) $\partial_i\Delta^n \otimes G \cap \partial_j\Delta^n \otimes G$ if and only if none of the $k$'s and $l$ equal $i$ or $j$, all of the $k$'s are less than or equal to $l$ and there exists an operation
\begin{equation*}
(c_1, \ldots, c_m) \longrightarrow d
\end{equation*}
of $G$. But clearly such an operation also exists in $\partial_i\partial_j\Delta^n \otimes G$. $\Box$
\end{proof}

\begin{lemma}
Let $F$ and $G$ be forests and suppose at least one of the two is open. (In particular, this is the case if one of the two is a simplex.) Let $\partial_x F$ be a face of $F$ and $\partial_y G$ a face of $G$. Then
\begin{equation*}
\partial_x F \otimes G \cap F \otimes \partial_y G = \partial_x F \otimes \partial_y G. 
\end{equation*}
\end{lemma}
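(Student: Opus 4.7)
The inclusion $\partial_x F \otimes \partial_y G \subseteq \partial_x F \otimes G \cap F \otimes \partial_y G$ is immediate from functoriality of $\otimes$, so the content lies entirely in the reverse inclusion. My plan is to mirror the strategy used in the proofs of Lemma \ref{lem:tensorintersection} and the preceding lemma: identify all of the forest sets in sight as nerves of thin sub-operads of $\Omega(F) \otimes \Omega(G)$, and then apply Lemma \ref{lem:thinmono} to reduce the desired equality to a verification on the level of edges (colours).

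First I would record that each elementary face $\partial_z F \hookrightarrow F$, regardless of its type (inner face, leaf face, root face, or type (c) face removing an $\eta$-summand), simply deletes a specific subset $D_z$ of the edges of $F$, and $\Omega(\partial_z F)$ is the sub-operad of $\Omega(F)$ generated by the remaining edges $E_z := \mathrm{edges}(F) \setminus D_z$. Analogous data $D_y, E_y$ are associated to $\partial_y G$. Consequently, an edge $e \otimes f$ of $F \otimes G$ is an edge of $\partial_x F \otimes G$ exactly when $e \in E_x$, and an edge of $F \otimes \partial_y G$ exactly when $f \in E_y$.

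Next, using the openness hypothesis together with the distributivity of $\otimes$ over $\oplus$ (Proposition \ref{prop:fsetsproperties}), I would identify each of the four forest sets $F \otimes G$, $\partial_x F \otimes G$, $F \otimes \partial_y G$ and $\partial_x F \otimes \partial_y G$ with the nerve of a thin sub-operad of $\Omega(F) \otimes \Omega(G)$ with the expected edge sets. This identification is precisely what can fail without the openness assumption, as illustrated by the counterexample in the remark following Lemma \ref{lem:tensorintersection}: nullary vertices can interact with the tensor product to create parallel edges and destroy thinness. Given this identification, Lemma \ref{lem:thinmono} reduces the problem to checking that the canonical mono $\partial_x F \otimes \partial_y G \hookrightarrow \partial_x F \otimes G \cap F \otimes \partial_y G$ is bijective on edges.

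The edge check is then immediate from the first step: any edge $e \otimes f$ of the intersection must satisfy both $e \in E_x$ and $f \in E_y$, which already makes it an edge of $\partial_x F \otimes \partial_y G$. I expect the main obstacle to be the thinness/nerve identification in step two, which must be carried out uniformly across forests with multiple components and all four flavours of face, handling (via $\oplus$-distributivity) both the case that $\partial_x F$ is again a single tree and the case that it splits as a proper forest (e.g.\ when $\partial_x F$ is a root face). Once that identification is in place, everything else is bookkeeping about which edges survive in each factor.
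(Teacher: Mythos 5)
Your proof establishes the inclusion $\partial_x F \otimes \partial_y G \subseteq \partial_x F \otimes G \cap F \otimes \partial_y G$ and correctly reduces, via Lemma \ref{lem:thinmono}, to showing that this monomorphism of (nerves of) thin sub-operads of $\Omega(F) \otimes \Omega(G)$ is an isomorphism. But there is a gap in the final step: you conclude from the bijection on colours that the map is an isomorphism, and Lemma \ref{lem:thinmono} does \emph{not} license that inference. Two thin sub-operads of a common thin operad can have exactly the same set of colours yet differ, because one can contain operations that the other lacks. The counterexample in the remark following Lemma \ref{lem:tensorintersection} is precisely of this form: there, $(H_1 \cap H_2) \otimes G$ and $H_1 \otimes G \cap H_2 \otimes G$ have the same edges $\{r \otimes g, f \otimes g\}$, yet the latter contains a unary operation $f \otimes g \to r \otimes g$ (and a nullary one) that the former does not. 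So a colour count alone cannot close the argument, and your diagnosis that openness is needed to preserve ``thinness'' is off the mark --- thinness of the ambient operad and all its sub-operads is automatic; what can fail is that the tensor products of faces are \emph{full} sub-operads, i.e.\ contain every operation of $\Omega(F) \otimes \Omega(G)$ supported on the relevant colours.

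What needs to be shown, and what the openness hypothesis is really for, is surjectivity on operations. The paper does this shuffle-by-shuffle: assuming without loss of generality that $F$ is open, one takes a shuffle $S$ of $\partial_x F \otimes G$ and shows that $S \cap (F \otimes \partial_y G)$ lands in $\partial_x F \otimes \partial_y G$ by explicitly exhibiting a face of $S$ (the procedure of Case 2 in the proof of Lemma \ref{lem:tensorintersection}, which branches according to whether $\partial_y G$ is an inner, leaf, or root face). This is the combinatorial content your proposal is missing. An alternative route would be to prove directly that, for open forests, face inclusions $\Omega(\partial_z F) \hookrightarrow \Omega(F)$ are \emph{full} sub-operad inclusions and that tensoring full sub-operads yields a full sub-operad of $\Omega(F) \otimes \Omega(G)$; then, since intersections of full sub-operads are full on the intersection of colour sets, the edge check would indeed suffice. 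But that fullness statement is the substantive claim, it is not automatic, and it is essentially as strong as the lemma you are trying to prove, so it would need its own shuffle-level argument anyway.
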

\begin{proof}
Without loss of generality, assume $F$ is open. Again, a straightforward application of Lemma \ref{lem:thinmono} shows that the natural map
\begin{equation*}
\partial_x F \otimes \partial_y G \longrightarrow \partial_x F \otimes G \cap F \otimes \partial_y G
\end{equation*}
is a monomorphism. To establish surjectivity, consider a shuffle $S$ of $\partial_x F \otimes G$. We need to show that the intersection $S \cap (F \otimes \partial_y G)$ is contained in $\partial_x F \otimes \partial_y G$. Using the same procedure as in Case 2 of the proof of Lemma \ref{lem:tensorintersection}, depending on what type of face $\partial_y G$ is, we form an associated face of $S$ (possibly of high codimension) and observe that it is a shuffle of the tensor product $\partial_x F \otimes \partial_y G$. $\Box$
\end{proof}

\begin{remark}
Again, the assumption that one of the two forests is open is necessary. A counterexample without this assumption is given by setting $F = G = C_0$, the 0-corolla.
\end{remark}

\begin{remark}
\label{rmk:tensorface}
In the proofs of the previous lemmas, we repeatedly made the following type of observation. Suppose $F$ is a forest, $S$ is a shuffle of the tensor product $\Delta^n \otimes F$ and $R$ is a face of $S$, satisfying one of the following conditions:
\begin{itemize}
\item[-] The forest $R$ does not contain any edges of the form $i \otimes e$, for some fixed colour $e$ of $F$.
\item[-] The forest $R$ does not contain any vertices of the form $i \otimes v$, for some fixed vertex $v$ of $F$.
\end{itemize}
Then $R$ is contained in $\Delta^n \otimes A$, for $A$ a face of $F$ corresponding to the relevant case above; in particular, $R$ is contained in $\Delta^n \otimes \partial F$. By a standard argument, the analogous observation holds if $\Delta^n$ is replaced by an arbitrary simplicial set.
\end{remark}

\subsection{Homotopically enriched model categories}
\label{subsec:weakenrichment}

Before establishing a model structure on the category $\mathbf{fSets}$, we will have to construct simplicial mapping objects $\mathbf{hom}(X,Y)$ between forest sets $X$ and $Y$. These mapping objects will not quite be part of a simplicial enrichment, but a slightly weaker structure. In this section we discuss the general setup of such weakly simplicial categories (or weakly enriched categories), as well as the compatibility of such a structure with a model structure, in order to facilitate later discussion.

Let $\mathcal{E}$ be a category and $\mathcal{S}$ a monoidal category. In our motivating example, $\mathcal{E}$ will be $\mathbf{fSets}$ and $\mathcal{S}$ will be $\mathbf{sSets}$. For convenience we will denote the tensor product and unit of $\mathcal{S}$ by $\times$ and $1$ respectively, although it is irrelevant whether the monoidal structure on $\mathcal{S}$ is Cartesian.

We will assume that $\mathcal{E}$ is \emph{weakly enriched, tensored and cotensored} over $\mathcal{S}$, meaning that it is equipped with functors
\begin{eqnarray*}
\mathcal{E}^{\mathrm{op}} \times \mathcal{E} \longrightarrow \mathcal{S}: && (X,Y) \longmapsto \mathbf{hom}(X,Y) \\
\mathcal{S} \times \mathcal{E} \longrightarrow \mathcal{E}: && (M,X) \longmapsto M \otimes X \\
\mathcal{S}^{\mathrm{op}} \times \mathcal{E} \longrightarrow \mathcal{E}: && (M,X) \longmapsto X^M
\end{eqnarray*}
which are adjoint in the sense that there exist isomorphisms
\begin{equation*}
\mathcal{S}(M, \mathbf{hom}(X,Y)) \simeq \mathcal{E}(M \otimes X, Y) \simeq \mathcal{E}(X, Y^M)
\end{equation*}
natural in the objects $M \in \mathcal{S}$ and $X, Y \in \mathcal{E}$. Furthermore, there should be natural associativity and unit maps
\begin{eqnarray*}
\alpha_{M,N,X}: (M \times N) \otimes X & \longrightarrow & M \otimes (N \otimes X) \\
\alpha_X: 1 \otimes X & \longrightarrow & X 
\end{eqnarray*}
satisfying the following conditions:
\begin{itemize}
\item[(i)] The map $\alpha_X: 1 \otimes X \rightarrow X$ is an isomorphism.
\item[(ii)] The following associativity diagram, using the associator of the monoidal structure of $\mathcal{S}$, commutes:
\[
\xymatrix@C=0pt{
& (L \times (M \times N)) \otimes X \ar[rr]^\alpha \ar[dl] & & L \otimes ((M \times N) \otimes X) \ar[dr]^{L \otimes \alpha} & \\
((L \times M) \times N) \otimes X \ar[drr]_{\alpha} & & & & L \otimes (M \otimes (N \otimes X)) \\
& & (L \times M) \otimes (N \otimes X) \ar[urr]_{\alpha} & & 
}
\]
\end{itemize}

In case $\mathcal{S}$ is additionally a \emph{symmetric} monoidal category, we will say that the weak enrichment of $\mathcal{E}$ is \emph{symmetric} if it is equipped with natural isomorphisms
\begin{equation*}
M \otimes (N \otimes X) \longrightarrow N \otimes (M \otimes X)
\end{equation*}
which make the following diagram, involving the symmetry of the monoidal structure on $\mathcal{S}$, commute:
\[
\xymatrix{
(M \times N) \otimes X \ar[r]^\alpha \ar[d] & M \otimes (N \otimes X) \ar[d] \\
(N \times M) \otimes X \ar[r]_\alpha & N \otimes (M \otimes X).
}
\]

We will now describe our main examples of weakly enriched categories, namely $\mathbf{fSets}$ and $\mathbf{dSets}$. Recall that we have an embedding
\[
\xymatrix{
\mathbf{sSets} \ar[r]^-{i_!} & \mathbf{dSets}.
}
\]
Using the tensor product of dendroidal sets, we can then define the following functor:
\begin{equation*}
- \otimes - : \mathbf{sSets} \times \mathbf{dSets} \longrightarrow \mathbf{dSets}: (M, X) \longmapsto i_!(M) \otimes X.
\end{equation*}
This functor preserves colimits in each variable separately. We can then define $\mathbf{hom}(X,Y)$ and $X^M$, for dendroidal sets $X,Y$ and a simplicial set $M$, by the adjointness formulas above. Next, we should define maps
\begin{equation*}
\alpha_{M,N,X}: (M \times N) \otimes X \rightarrow M \otimes (N \otimes X).
\end{equation*}
Let us assume $M$, $N$ and $X$ are representable; the general case then follows by extending by colimits. So, set $M = \Delta^m$, $N = \Delta^n$ and $X = T$. These objects are the dendroidal nerves of the operads $[m]$, $[n]$ and $\Omega(T)$ respectively. Moreover, there is a natural isomorphism
\begin{equation*}
\Delta^m \otimes (\Delta^n \otimes T) \simeq N_d([m] \otimes ([n] \otimes \Omega(T))),
\end{equation*}
where the tensor product on the right is the Boardman-Vogt tensor product of operads. By adjunction, supplying a map 
\begin{equation*}
(\Delta^m \times \Delta^n) \otimes T \longrightarrow N_d([m] \otimes ([n] \otimes \Omega(T)))
\end{equation*}
is equivalent to supplying a map
\begin{equation*}
\tau_d((\Delta^m \times \Delta^n) \otimes T) \longrightarrow [m] \otimes ([n] \otimes \Omega(T)).
\end{equation*}
But on the left-hand side $\tau_d$ distributes over the tensor product, so that this expression is naturally isomorphic to
\begin{equation*}
(\tau_d(\Delta^m) \otimes \tau_d(\Delta^n)) \otimes \tau_d(T) = ([m] \otimes [n]) \otimes \Omega(T).
\end{equation*}
By associativity of the tensor product of operads, a natural map to $[m] \otimes ([n] \otimes \Omega(T))$ exists (and is in fact an isomorphism).

\begin{remark}
The map $\alpha_{M,N,X}$ is typically \emph{not} an isomorphism. A more elaborate discussion of the tensor product of dendroidal sets and its associativity properties is given in Section \ref{subsec:unbiased}.
\end{remark}

Furthermore, there is an evident natural isomorphism
\begin{equation*}
\alpha_X: 1 \otimes X \longrightarrow X.
\end{equation*}

Using the distributivity of the tensor product over direct sums, we can make completely analogous definitions with the category of dendroidal sets replaced by that of forest sets. The proof of the following proposition is a straightforward verification given the definitions above; we leave the details to the reader.

\begin{proposition}
\label{prop:weakenrichment}
With the structure described above, the categories $\mathbf{dSets}$ and $\mathbf{fSets}$ become weakly enriched, tensored and cotensored over $\mathbf{sSets}$. Furthermore, these weak enrichments are symmetric.
\end{proposition}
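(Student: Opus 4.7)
The plan is to reduce everything to facts about the Boardman–Vogt tensor product of operads, which is the ``strict'' symmetric monoidal structure from which the weak enrichment on (dendroidal and) forest sets is derived.

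First I would establish the adjunctions. By construction $M \otimes X := i_!(M) \otimes X$ (respectively $u_! i_!(M) \otimes X$ in the forest case), and since $i_!$ (resp. $u_! i_!$) is a left adjoint and the Boardman–Vogt tensor on $\mathbf{dSets}$ and $\mathbf{fSets}$ preserves colimits in each variable separately, so does $- \otimes -\colon \mathbf{sSets} \times \mathcal{E} \to \mathcal{E}$. Hence by the adjoint functor theorem on presheaf categories the right adjoints $\mathbf{hom}(X,-)$ and $(-)^M$ exist and yield the displayed hom-set isomorphisms.

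Next I would verify the associator $\alpha_{M,N,X}\colon (M\times N)\otimes X \to M\otimes(N\otimes X)$ and the unit isomorphism $\alpha_X\colon 1\otimes X\to X$. Since all three functors preserve colimits in each variable, both sides of the associativity and pentagon diagrams are colimit-preserving functors of $M$, $N$ and $X$, so it suffices to check the axioms on representables $M=\Delta^m$, $N=\Delta^n$, $X=T$ (a tree in the dendroidal case, or a forest in the forest-set case). Under $\tau_d$ (which is strong monoidal with respect to the Boardman–Vogt tensor, being a left adjoint whose restriction to representables respects tensor products by construction), the target $M\otimes(N\otimes X)$ is identified with $N_d\bigl([m]\otimes([n]\otimes\Omega(T))\bigr)$ and $(M\times N)\otimes X$ with $N_d\bigl(([m]\otimes[n])\otimes\Omega(T)\bigr)$, and $\alpha_{M,N,X}$ is then the nerve of the strict associator of the Boardman–Vogt tensor of operads. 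Coherence (i.e.\ the pentagon) now follows from the strict associativity of the operadic tensor product. The unit isomorphism $\alpha_X$ similarly reduces, via $\Delta^0 = i_!([0])$, to the fact that $\eta = N_d([0])$ is a strict unit for the dendroidal Boardman–Vogt tensor (and so, via $u_!$, for the forest tensor as well).

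For the symmetry statement, I would use that the Boardman–Vogt tensor on operads is symmetric, so the strict symmetry $[m]\otimes[n]\cong[n]\otimes[m]$ together with the already-constructed associators yields a natural isomorphism $M\otimes(N\otimes X)\cong (M\times N)\otimes X\cong (N\times M)\otimes X\cong N\otimes(M\otimes X)$ on representables, extended by colimits. Commutativity of the hexagon with the symmetry of the Cartesian product of simplicial sets reduces, once more, to the corresponding strict hexagon for the operadic Boardman–Vogt tensor. Finally, the forest-set case follows from the dendroidal case by distributivity of $\otimes$ over $\oplus$ (Proposition~\ref{prop:fsetsproperties}): every $F\in\mathbf{\Phi}$ is a direct sum of trees, and both the associators and the symmetry, being natural and colimit-preserving in the $X$-variable, extend uniquely from trees to forests. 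The main (but still routine) obstacle is simply the bookkeeping needed to check that all the coherence diagrams really do come from the operadic ones under $\tau_d \dashv N_d$; no new ideas are required beyond strong monoidality of $\tau_d$ on representables and the strict symmetric monoidal structure on $\mathbf{Op}$.
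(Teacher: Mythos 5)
Your argument goes wrong at the point where you write that $(\Delta^m \times \Delta^n)\otimes T$ ``is identified with'' $N_d\bigl(([m]\otimes[n])\otimes\Omega(T)\bigr)$ and that $\alpha_{M,N,X}$ is then the nerve of the strict operadic associator. If both sides were nerves in this way, $\alpha_{M,N,X}$ would be an isomorphism (the nerve of an isomorphism of operads) --- but the paper explicitly remarks, directly after constructing $\alpha$, that $\alpha_{M,N,X}$ is typically \emph{not} an isomorphism. Strong monoidality of $\tau_d$ only tells you $\tau_d(X\otimes Y)\cong\tau_d X\otimes\tau_d Y$; it does not identify $X\otimes Y$ with $N_d(\tau_d X\otimes\tau_d Y)$, since the unit $X\otimes Y\to N_d\tau_d(X\otimes Y)$ is far from an isomorphism in general. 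In fact $(\Delta^m\times\Delta^n)\otimes T$ is a union of shuffles that is not the nerve of any operad.

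The construction is asymmetric: only the codomain $\Delta^m\otimes(\Delta^n\otimes T)$ is a nerve, and the map $\alpha$ is obtained by \emph{adjunction}: a map into $N_d(Q)$ is the same as an operad map $\tau_d\bigl((\Delta^m\times\Delta^n)\otimes T\bigr)\to Q$, and the distribution of $\tau_d$ over $\otimes$ plus the strict associativity of the operadic tensor supplies this. The associativity coherence diagram must then be verified the same way, by checking that both composites into the (common, nerve-valued) target have equal adjoints in $\mathbf{Op}$; your route of pushing the whole diagram ``through the nerve'' fails because intermediate vertices such as $(L\times M)\otimes(N\otimes X)$ are not nerves. Your chain of isomorphisms for the symmetry, $M\otimes(N\otimes X)\cong(M\times N)\otimes X\cong(N\times M)\otimes X\cong N\otimes(M\otimes X)$, has the same flaw: its first and last links are associators, which are not invertible, so it produces no isomorphism at all. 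The correct construction compares $N_d([m]\otimes([n]\otimes\Omega(T)))$ and $N_d([n]\otimes([m]\otimes\Omega(T)))$ directly via the symmetry of the Boardman--Vogt tensor, and verifies the required hexagon by adjunction. The parts of your proposal concerning the existence of the right adjoints, the reduction to representables, and passing from trees to forests via $\oplus$ are all sound.
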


Now suppose $\mathcal{E}$ is a model category, which is weakly tensored, cotensored and enriched over a monoidal model category $\mathcal{S}$. To finish this section, we describe how the weak enrichment can interact with these model structures. First we introduce an analogue of the usual axiom of enriched model categories.

\begin{definition}
Under the assumptions above, we say that the weak enrichment of $\mathcal{E}$ over $\mathcal{S}$ satisfies axiom (H1) if, for any cofibrations $i: M \rightarrow N$ in $\mathcal{S}$ and $j: X \rightarrow Y$ in $\mathcal{E}$, the pushout-product
\begin{equation*}
M \otimes Y \cup_{M \otimes X} N \otimes X \longrightarrow N \otimes Y
\end{equation*}
is a cofibration in $\mathcal{E}$, which is trivial if either $i$ or $j$ is trivial. 
\end{definition}

There is one further axiom we might impose. Consider cofibrations $i: M \rightarrow M'$, $j: N \rightarrow N'$ in $\mathcal{S}$ and a cofibration $k: X \rightarrow X'$ in $\mathcal{E}$. For the sake of brevity, let us write $i \, \hat{\times} \, j$ for the pushout-product of the maps $i$ and $j$ with respect to the monoidal structure of $\mathcal{S}$ and $j \, \hat{\otimes} \, k$ for the pushout-product of $j$ and $k$ with respect to the (weak) tensoring of $\mathcal{E}$ over $\mathcal{S}$, as in the previous definition. Now write $f: A \rightarrow B$ for the map
\begin{equation*}
(i \, \hat{\times} \, j) \, \hat{\otimes} \, k
\end{equation*}
and similarly write $g: C \rightarrow D$ for the map
\begin{equation*}
i \, \hat{\otimes} \, (j \, \hat{\otimes} \, k).
\end{equation*}
Note that $\alpha$ induces maps $A \rightarrow C$ and $B \rightarrow D$, which give a commutative diagram
\[
\xymatrix{
A \ar[d]_f \ar[r] & C \ar[d] \ar@/^/[ddr]^g & \\
B \ar[r]\ar@/_/[drr] & B \cup_A C \ar[dr]^h & \\
& & D.
}
\]

\begin{definition}
\label{def:stronghomotenrich}
The weak enrichment of $\mathcal{E}$ over $\mathcal{S}$ satisfies axiom (H2) if, for any choice of cofibrations $i$, $j$ and $k$ as above, the map 
\begin{equation*}
h: B \cup_A C \longrightarrow D
\end{equation*}
as just constructed is a trivial cofibration. We say that $\mathcal{E}$ is \emph{homotopically enriched} over $\mathcal{S}$ if it satisfies (H1) and (H2).
\end{definition}

If $\mathcal{E}$ is homotopically enriched over $\mathcal{S}$, then by taking the domains of $i$, $j$ and $k$ to be initial objects we see that a map of the form
\begin{equation*}
\alpha_{M,N,X}: (M \times N) \otimes X \longrightarrow M \otimes (N \otimes X)
\end{equation*}
is a trivial cofibration for any choice of cofibrant objects $M$, $N$ and $X$.

Let us note a straightforward consequence of our definitions.  For objects $X, Y \in \mathcal{E}$ and $M \in \mathcal{S}$, there is a natural map
\begin{equation*}
\beta_{M,X,Y}: \mathbf{hom}(X,Y^M) \longrightarrow \mathbf{hom}(X,Y)^M.
\end{equation*}
Indeed, for $N \in \mathcal{S}$, the map $\alpha_{M,N,X}$ allows us to form the sequence of maps
\begin{equation*}
\mathcal{S}(N, \mathbf{hom}(X,Y^M)) \simeq \mathcal{E}(M \otimes (N \otimes X), Y) \longrightarrow \mathcal{E}((M \times N) \otimes X, Y) \simeq \mathcal{S}(N, \mathbf{hom}(X,Y)^M),
\end{equation*}
which, by the Yoneda lemma, defines the map $\beta_{M,X,Y}$.

\begin{lemma}
\label{lem:exphomotenriched}
Suppose the weak enrichment of $\mathcal{E}$ over $\mathcal{S}$ satisfies axiom (H2). If $M \in \mathcal{S}$ and $X \in \mathcal{E}$ are cofibrant and $Y \in \mathcal{E}$ is fibrant, then $\beta_{M,X,Y}$ is a trivial fibration. 
\end{lemma}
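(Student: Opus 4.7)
The plan is to verify that $\beta_{M,X,Y}$ has the right lifting property against an arbitrary cofibration $i\colon N \rightarrowtail N'$ in $\mathcal{S}$. A lifting problem
\[
\xymatrix{
N \ar[r]\ar[d]_i & \mathbf{hom}(X, Y^M) \ar[d]^{\beta_{M,X,Y}} \\
N' \ar[r] & \mathbf{hom}(X,Y)^M
}
\]
translates, via the adjunctions defining $\otimes$, $\mathbf{hom}$ and $(-)^{(-)}$, into the following data in $\mathcal{E}$: a map $a\colon M \otimes (N \otimes X) \to Y$ and a map $b\colon (M \times N') \otimes X \to Y$, whose pullbacks to $(M \times N) \otimes X$ along $\alpha_{M,N,X}$ and along $(M \times i) \otimes X$ respectively agree. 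This is the same as a map out of the pushout
\[
P \;:=\; M \otimes (N \otimes X) \cup_{(M \times N) \otimes X} (M \times N') \otimes X,
\]
and the existence of a diagonal filler for the original square is equivalent to extending this map along the canonical comparison $h\colon P \to M \otimes (N' \otimes X)$ induced by $\alpha_{M,N,X}$ and $\alpha_{M,N',X}$.

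Next, I would observe that $h$ is exactly the map whose triviality is asserted by axiom (H2), applied to the three cofibrations $\varnothing \rightarrowtail M$ and $i\colon N \rightarrowtail N'$ in $\mathcal{S}$, together with $\varnothing \rightarrowtail X$ in $\mathcal{E}$; here the first and third are cofibrations by the cofibrancy assumptions on $M$ and $X$. Since the functors $- \otimes X$, $M \otimes -$, $- \otimes (N \otimes X)$, etc., are all left adjoints, they preserve initial objects, and so the various pushout-products in the definition of (H2) simplify: $(i \,\hat{\times}\, j) \,\hat{\otimes}\, k$ reduces to $(M \times N) \otimes X \to (M \times N') \otimes X$, while $i \,\hat{\otimes}\, (j \,\hat{\otimes}\, k)$ reduces to $M \otimes (N \otimes X) \to M \otimes (N' \otimes X)$. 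The comparison map between them, induced by $\alpha$, is precisely the map $h$ above.

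By axiom (H2), $h$ is a trivial cofibration. Since $Y$ is fibrant, the required extension $M \otimes (N' \otimes X) \to Y$ exists, which upon reapplying the adjunctions yields the sought diagonal $N' \to \mathbf{hom}(X, Y^M)$. As $i$ was an arbitrary cofibration in $\mathcal{S}$, this shows that $\beta_{M,X,Y}$ is a trivial fibration. The one step that needs a little care is the identification of $h$ with the comparison map from axiom (H2) — once the collapses caused by initial objects in the pushout-products are spelled out, this is essentially bookkeeping, and I do not anticipate a substantive obstacle.
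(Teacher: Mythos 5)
Your proof is correct and follows essentially the same line as the paper's: unwind the lifting problem against an arbitrary cofibration $i\colon N \to N'$ in $\mathcal{S}$ via the tensor-cotensor adjunctions into an extension problem along the comparison map $h\colon M\otimes(N\otimes X)\cup_{(M\times N)\otimes X}(M\times N')\otimes X \to M\otimes(N'\otimes X)$, recognize $h$ as the (H2)-comparison for the cofibrations $\varnothing\rightarrowtail M$, $i$, $\varnothing\rightarrowtail X$ (using that the initial objects make the pushout-products collapse), and conclude using the fibrancy of $Y$. This matches the paper's proof; you simply spell out the identification of $h$ with the (H2)-map in more detail than the paper does.
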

\begin{proof}
For $K \rightarrow L$ a cofibration in $\mathcal{S}$, the lifting problem
\[
\xymatrix{
K \ar[d]\ar[r] & \mathbf{hom}(X,Y^M) \ar[d] \\
L \ar[r]\ar@{-->}[ur] & \mathbf{hom}(X,Y)^M
}
\]
is equivalent to the lifting problem
\[
\xymatrix{
M \otimes (K \otimes X) \cup_{(M \times K) \otimes X} (M \times L) \otimes X \ar[d] \ar[r] & Y. \\
M \otimes (L \otimes X) \ar@{-->}[ur]
}
\]
The latter admits a solution since $Y$ is fibrant and the left vertical map is a trivial cofibration by assumption. $\Box$
\end{proof}

\subsection{Tensor products, inner horns and Segal cores}


In the next section we will establish a model structure on the category of forest sets. Before we can do so, we need to understand the behaviour of \emph{inner horn inclusions} with respect to tensor products. Also, we will investigate \emph{Segal cores} and their relation to inner horns. We will later need these Segal cores to obtain a convenient description of the trivial cofibrations between forest sets.

To begin with, let us be more precise about these inner horns. Recall for a forest $F$ its boundary $\partial F \rightarrowtail F$, the union of all its faces, as well as the fact that this operation satisfies a `derivation rule':
\begin{equation*}
\partial(F \oplus G) \, = \, \partial F \oplus G \cup F \oplus \partial G
\end{equation*}
If $e$ is an inner edge of $F$, i.e. an inner edge in one of the constituent trees of $F$, then $\Lambda^e[F]$ (the \emph{inner horn} associated to $e$) is defined to be the union of all the faces of $F$ \emph{except} the one given by contraction of $e$, or equivalently, the union of all the faces whose image contains the edge $e$. Notice that for an inner edge in such a forest $F$, one has the identity
\begin{equation*}
\Lambda^e[F \oplus G] \, = \, \Lambda^e[F] \oplus G \cup F \oplus \partial G
\end{equation*}
In particular, if the inner edge $e$ lies in the tree $T$ where $F \simeq uT \oplus G$, then
\begin{equation}
\label{eq:innerhornforest}
\Lambda^e[F] \, = \, \Lambda^e[uT] \oplus G \cup uT \oplus \partial G
\end{equation}

We will call a map of forest sets \emph{inner anodyne} if it can be written as composition of pushouts of inner horn inclusions. By induction over skeleta, we can immediately conclude the following:

\begin{corollary}
\label{cor:posuminneranod}
For any normal monomorphism $A \longrightarrow B$ of forest sets, the map
\begin{equation*}
\Lambda^e[F] \oplus B \cup F \oplus A \longrightarrow F \oplus B
\end{equation*}
is an inner anodyne map. In particular, for a normal forest set $A$ the map $\Lambda^e[F] \oplus A \longrightarrow F \oplus A$ is again inner anodyne. More generally, for any inner anodyne map $C \longrightarrow D$, the map
\begin{equation*}
C \oplus B \cup D \oplus A \longrightarrow D \oplus B
\end{equation*}
is inner anodyne again.
\end{corollary}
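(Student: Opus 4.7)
The plan is to reduce everything to a direct identification of the pushout-product with a single inner horn inclusion of a larger forest, using the derivation rule (\ref{eq:innerhornforest}), and then to bootstrap via saturation. Throughout I will use that the class of inner anodyne maps is saturated, i.e.\ closed under pushouts, transfinite compositions and retracts, as it is defined as the saturation of the set of inner horn inclusions.

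The first step is a general observation about pushout-products along $\oplus$. For a fixed map $X \to Y$ of forest sets, the functor sending $(A \to B)$ to the ``pushout-product'' $X \oplus B \cup Y \oplus A \to Y \oplus B$ preserves pushouts and transfinite compositions in $(A \to B)$; this is formal, since $\oplus$ preserves colimits in each variable separately (by construction in Section \ref{sec:fsetsproperties}). By Corollary \ref{cor:gennormalmonos}, every normal monomorphism is a transfinite composition of pushouts of boundary inclusions $\partial F' \to F'$. Therefore, to prove the first statement for an arbitrary normal monomorphism $A \to B$, it is enough to treat the case $A \to B = (\partial F' \to F')$.

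The second step is the key calculation. Write $F = uT \oplus G$ where $e$ is an inner edge of the tree $T$ (allowing $G = \varnothing$, in which case the corresponding terms drop out). Using (\ref{eq:innerhornforest}) and distributivity of $\oplus$ over unions:
\begin{align*}
\Lambda^e[F] \oplus F' \cup F \oplus \partial F'
&= \Lambda^e[uT] \oplus G \oplus F' \cup uT \oplus \partial G \oplus F' \cup uT \oplus G \oplus \partial F' \\
&= \Lambda^e[uT] \oplus (G \oplus F') \cup uT \oplus \bigl(\partial G \oplus F' \cup G \oplus \partial F'\bigr) \\
&= \Lambda^e[uT] \oplus (G \oplus F') \cup uT \oplus \partial(G \oplus F') \\
&= \Lambda^e[F \oplus F'],
\end{align*}
where the last equality is again (\ref{eq:innerhornforest}) applied to the forest $F \oplus F' = uT \oplus (G \oplus F')$. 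Thus the map in question is literally the inner horn inclusion $\Lambda^e[F \oplus F'] \to F \oplus F'$, hence inner anodyne by definition. Combined with Step 1, this gives the first statement of the corollary. The ``in particular'' statement is the special case $A = \varnothing \to B$, which is a normal monomorphism precisely when $B$ is normal (Lemma \ref{lem:normalovernormal} is not even needed here).

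For the last statement, fix the normal monomorphism $A \to B$ and now vary $C \to D$. The functor $(C \to D) \mapsto (C \oplus B \cup D \oplus A \to D \oplus B)$ again preserves pushouts and transfinite compositions in its argument by the same distributivity. Since the inner anodyne maps are the saturation of the inner horn inclusions $\Lambda^e[F] \to F$, it suffices to verify the statement when $C \to D = \Lambda^e[F] \to F$; but this is exactly the first statement just proved. No real obstacle arises: the whole proof is a direct application of the derivation rule (\ref{eq:innerhornforest}) combined with standard saturation arguments, and the only point requiring a modicum of care is the bookkeeping when $G$ or $F'$ happens to be empty.
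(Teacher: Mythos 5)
Your proof is correct and follows essentially the same approach as the paper, which simply says "by induction over skeleta" after establishing the identity $\Lambda^e[F \oplus G] = \Lambda^e[F] \oplus G \cup F \oplus \partial G$; your key insight — that for boundary inclusions the pushout-product is literally an inner horn inclusion, whence the general case follows by saturation — is exactly what that terse remark means. The one small detour: you derive $\Lambda^e[F] \oplus F' \cup F \oplus \partial F' = \Lambda^e[F \oplus F']$ by decomposing $F = uT \oplus G$ and appealing repeatedly to (\ref{eq:innerhornforest}), which implicitly uses distributivity of $\oplus$ over unions of subobjects; this is avoidable, since the unlabeled general identity stated in the text just above (\ref{eq:innerhornforest}) applies directly with $G = F'$ and gives the conclusion in one step.
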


We should emphasize that if $F = uT$ consists of a single tree, this `forestial' inner horn is generally larger than the `dendroidal' one, because of the extra root face (cf. Section \ref{sec:normalmonos}). In general, we have
\begin{equation*}
\Lambda^e[uT] \, = \, \mathrm{Im}(u_!(\Lambda^e[T])) \cup \partial_{\mathrm{root}}(uT)
\end{equation*}
where $\mathrm{Im}$ denotes the image as a subpresheaf of $uT$, while $\Lambda^e[uT] = \mathrm{Im}(u_!(\Lambda^e[T]))$ only if the root vertex in $T$ is unary. \par 

The statements in Propositions \ref{prop:poprodinneranodyne} and \ref{prop:Segcoregeninneran} below are analogues of basic facts about dendroidal sets, cf. \cite{cisinskimoerdijk2, moerdijkweiss}. However, their proofs do not carry over to the present setting, because of the difference between the boundary of a tree in $\mathbf{dSets}$ and its boundary in $\mathbf{fSets}$.

\begin{proposition}
\label{prop:poprodinneranodyne} 
Let $F$ and $G$ be two forests. Suppose that one of them is a simplex or both are open. For an inner edge $e$ in $F$, the map
\begin{equation*}
\Lambda^e[F] \otimes G \cup F \otimes \partial G \longrightarrow F \otimes G
\end{equation*}
is inner anodyne.
\end{proposition}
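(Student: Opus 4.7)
My strategy adapts the shuffle-filtration argument used for the dendroidal analogue (see \cite{moerdijkweiss, cisinskimoerdijk2}), paying attention to two new features in the forest setting: the root face, which is now part of $\partial(uT)$ for any tree $T$, and the direct-sum structure of general forests.

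First I would reduce to the case where $F$ and $G$ are each single trees. Suppose $G = uR \oplus G'$ with $G'$ smaller, and assume the proposition for smaller forests by induction. Combining the inductively given inner anodynes for $uR$ and $G'$ via Corollary \ref{cor:posuminneranod} (whereby the $\oplus$-pushout-product of an inner anodyne with a normal monomorphism is again inner anodyne) produces an inner anodyne map whose target is $F \otimes uR \oplus F \otimes G' = F \otimes G$ and whose source contains $\Lambda^e[F] \otimes G \cup F \otimes \partial G$; a further application of Corollary \ref{cor:posuminneranod} folds in any pieces that are still missing. An analogous argument handles the case $F = uT \oplus F'$ with $e$ an inner edge of $T$. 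Proposition \ref{prop:newnormalmonopoprod} is needed here to guarantee that the various pushout-products one forms remain normal monomorphisms, which is the hypothesis of Corollary \ref{cor:posuminneranod}.

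Having reduced to $F = uT$ and $G = uR$, write $F \otimes G = u_!(T \otimes R)$ and decompose it into the union of its shuffles $uS$, indexed by the shuffles $S$ of the dendroidal tensor product $T \otimes R$. Order the shuffles by percolation level and then lexicographically, as in \cite{moerdijkweiss}, and set $A_0 = \Lambda^e[F] \otimes G \cup F \otimes \partial G$ and $A_k = A_{k-1} \cup uS_k$. I claim each inclusion $A_{k-1} \hookrightarrow A_k$ is inner anodyne. Inside each such step one runs the usual codimension-by-codimension filling organized around the characteristic (percolated) inner edges of $S_k$, exactly as in the dendroidal argument. The essential new verification is that the root face $\partial_r(uS_k)$, which contributes to $\partial(uS_k)$ in the forest setting but was absent in the dendroidal one, already lies in $A_0$: the root vertex of $S_k$ must be of the form $(v_T, r_R)$ or $(r_T, v_R)$, and in the first case $\partial_r(uS_k) \subseteq \partial_r(uT) \otimes uR \subseteq \Lambda^e[uT] \otimes uR$ since $\partial_r(uT)$ is one of the faces comprising $\Lambda^e[uT]$ for any inner edge $e$, while in the second case $\partial_r(uS_k) \subseteq uT \otimes \partial_r(uR) \subseteq F \otimes \partial G$.

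The main obstacle I anticipate is the internal step of the shuffle induction: organizing each $A_{k-1} \hookrightarrow A_k$ as a composition of pushouts of inner horn inclusions is an intricate combinatorial bookkeeping already in the dendroidal case, and it must now be carried out with the slightly larger forest boundaries of each $uS_k$. The open-or-simplex hypothesis is exactly what makes the intersection calculations behave well, via Lemma \ref{lem:tensorintersection} and its companions; without it, the root-face analysis above and the various face intersections fail, as shown by the counterexamples following that lemma. A secondary point of care is to ensure that the direct-sum manipulations in the reduction step neither duplicate nor omit parts of $F \otimes G$, which is controlled by inducting on the total number of tree summands in $F \oplus G$.
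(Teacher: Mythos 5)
Your proposal is correct and follows essentially the same shuffle-filtration route as the paper: the root-face observation is exactly the new ingredient the paper singles out (each $P^{[H]}$ inherits the root vertex of its ambient shuffle, which is necessarily of the form $v_S \otimes r_T$ or $r_S \otimes v_T$, so its root face lands in $A_0$), and the dependence on the face-intersection lemmas — hence on the open/simplex hypothesis — is also as in the paper. The notable difference is the order of operations. You propose to reduce both $F$ and $G$ to single trees via $\oplus$ and Corollary \ref{cor:posuminneranod} \emph{before} filtering by shuffles; the paper instead observes that the shuffle argument carries over verbatim when $F$ is a general forest and $G$ is still a single tree (and explicitly warns that the root-face observation can fail once both $F$ and $G$ are forests), and only reduces $G$ via $\oplus$ \emph{afterwards}. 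Both organizations can be made to work, but be aware that your $F$-reduction for $F = uT \oplus F'$ is less automatic than ``an analogous argument handles the case'' suggests: the summand $F'$ carries no copy of the inner edge $e$, so it feeds only a normal monomorphism $\partial F' \otimes G \cup F' \otimes \partial G \to F' \otimes G$ — rather than an inner anodyne — into the $\oplus$-pushout-product, and closing the gap between $\Lambda^e[F] \otimes G \cup F \otimes \partial G$ and the source of that pushout-product requires a further pair of pushouts (using, for example, that $\Lambda^e[uT] \otimes \partial G \to uT \otimes \partial G$ is inner anodyne, by the inductive hypothesis plus a skeletal induction). The phrase ``folds in any pieces that are still missing'' is hiding this nontrivial step; the explicit pushout diagram in the paper's $G$-reduction is a good template for what is actually required, and the paper's choice to shuffle for $F$-forest/$G$-tree directly is arguably the more economical bookkeeping.
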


The proof of this proposition requires a fair amount of combinatorics; we will first set up some terminology.

\begin{definition}
\label{def:pruning}
Let $T$ be a tree. A \emph{pruning} of $T$ is a subtree $P \subseteq T$ such that the root of $P$ coincides with the root of $T$ and so that the inclusion map of $P$ into $T$ can be written as a composition of outer face maps. In other words, $P$ is obtained from $T$ by iteratively chopping off leaf corollas.
\end{definition}

\begin{proof}[Proof of Proposition \ref{prop:poprodinneranodyne}]
Observe first that by Proposition \ref{prop:newnormalmonopoprod}, the map in the statement of the proposition is a normal monomorphism. Let us first prove this proposition in the case where $F$ and $G$ are just trees, say $S$ and $T$ respectively. The vertices of the constituent shuffles of the tensor product $S \otimes T$ are all of the form $v \otimes t$ or $s \otimes w$, where $v$ (resp. $w$) is a vertex of $S$ (resp. $T$) and $s$ (resp. $t$) is a colour of $S$ (resp. $T$). We will loosely refer to vertices of the first kind as `vertices of $S$' and vertices of the second kind as `vertices of $T$'. Throughout this proof we will draw vertices of $S$ as being black and vertices of $T$ white, as follows:

\[
\begin{tikzpicture} 
[level distance=10mm, 
every node/.style={fill, circle, minimum size=.1cm, inner sep=0pt}, 
level 1/.style={sibling distance=20mm}, 
level 2/.style={sibling distance=20mm}, 
level 3/.style={sibling distance=5mm}]

\node(anchorS)[style={color=white}] {} [grow'=up] 
child {node(vertexS) {} 
	child
	child
};

\node[style={color=white}, right=4cm of anchorS] {} [grow'=up] 
child {node(vertexT)[draw,fill=none] {} 
	child
	child
};

\tikzstyle{every node}=[]

\node at ($(vertexS) + (-1.5cm,0)$) {$\text{vertex of } S$};
\node at ($(vertexS) + (0,1cm)$) {$\vdots$};
\node at ($(vertexS) + (0,-1.2cm)$) {$\vdots$};

\node at ($(vertexT) + (-1.5cm,0)$) {$\text{vertex of } T$};
\node at ($(vertexT) + (0,1cm)$) {$\vdots$};
\node at ($(vertexT) + (0,-1.2cm)$) {$\vdots$};

\end{tikzpicture} 
\]

The set of shuffles of the tensor product $S \otimes T$ has a natural partial ordering in which the smallest element is the shuffle given by grafting copies of $T$ onto the leaves of $S$:

\[
\begin{tikzpicture} 
[level distance=15mm, 
every node/.style={fill, circle, minimum size=.1cm, inner sep=0pt}, 
level 1/.style={sibling distance=35mm}, 
level 2/.style={sibling distance=30mm}, 
level 3/.style={sibling distance=20mm}]

\node(anchorS)[style={color=white}] {} [grow'=up] 
child {node(vertexS) {} 
	child{ node(vertexT1)[draw,fill=none] {}
		child
		child
	}
	child{ node(vertexT2)[draw,fill=none] {}
		child
		child
	}
};

\tikzstyle{every node}=[]

\node at ($(vertexS) + (0,.7cm)$) {$S$};

\node at ($(vertexT1) + (0,.9cm)$) {$T$};
\node at ($(vertexT2) + (0,.9cm)$) {$T$};
\node at ($(vertexS) + (0, 1.5cm)$) {$\cdots$};

\draw ($(vertexT1) + (.3cm,-.3cm)$) -- ($(vertexT2) + (-.3cm,-.3cm)$);
\draw ($(vertexT1) + (-1.03cm,1.5cm)$) -- ($(vertexT1) + (1.03cm,1.5cm)$);
\draw ($(vertexT2) + (-1.03cm,1.5cm)$) -- ($(vertexT2) + (1.03cm,1.5cm)$);

\end{tikzpicture} 
\]

If a shuffle $R_2$ is obtained from another shuffle $R_1$ by percolating a vertex of $T$ down through a vertex of $S$, as in the picture below, then $R_1 < R_2$ in this partial order;

\[
\begin{tikzpicture} 
[level distance=10mm, 
every node/.style={fill, circle, minimum size=.1cm, inner sep=0pt}, 
level 1/.style={sibling distance=20mm}, 
level 2/.style={sibling distance=20mm}, 
level 3/.style={sibling distance=10mm}]

\node(anchorR1)[style={color=white}] {} [grow'=up] 
child {node(vertexR1) {} 
	child{node[draw,fill=none] {}
		child
		child
 	}
	child{node[draw,fill=none] {}
		child
		child
 	}
};

\node[style={color=white}, right=5cm of anchorR1] {} [grow'=up] 
child {node(vertexR2)[draw,fill=none] {} 
	child{node {}
		child
		child
 	}
	child{node {}
		child
		child
 	}
};

\tikzstyle{every node}=[]

\node at ($(vertexR1) + (-1.5cm,.5cm)$) {$R_1$};
\node at ($(vertexR1) + (0,2.4cm)$) {$\vdots$};
\node at ($(vertexR1) + (0,-1.2cm)$) {$\vdots$};
\node at ($(vertexR1) + (2.5cm,.5cm)$) {$\longrightarrow$};

\node at ($(vertexR2) + (-1.5cm,.5cm)$) {$R_2$};
\node at ($(vertexR2) + (0,2.4cm)$) {$\vdots$};
\node at ($(vertexR2) + (0,-1.2cm)$) {$\vdots$};

\end{tikzpicture} 
\]

Now let $v_e$ denote the bottom vertex attached to the inner edge $e$ in $S$. The shuffle $R$ will contain one or several vertices of the form $v_e \otimes t$, where $t$ is a colour of $T$. Each such vertex has a leaf (or incoming edge) $e \otimes t$ and we will refer to these edges of $R$ as \emph{special edges}.
\par  
With all this terminology set up, we can begin our induction. Define
\begin{equation*}
A_0 \, := \, \Lambda^e[uS] \otimes uT \cup uS \otimes \partial(uT)
\end{equation*}
Choose a linear ordering on the set of shuffles of $S \otimes T$ extending the partial order described above. By adjoining these shuffles one by one, we obtain a filtration
\begin{equation*}
A_0 \subseteq A_1 \subseteq \cdots \subseteq \bigcup_{i} A_i = uS \otimes uT
\end{equation*}
We will show that each of the inclusions in this filtration is inner anodyne. Say $A_{i+1}$ is obtained from $A_i$ by adjoining a shuffle $R$. Define a further filtration
\begin{equation*}
A_i \, =: \, A_i^0 \subseteq A_i^1 \subseteq \cdots \subseteq \bigcup_{j} A_i^j \, = \, A_{i+1}
\end{equation*}
by adjoining all prunings of $R$ one by one, in an order that extends the partial order of size (i.e. number of vertices) of prunings. Consider an inclusion $A_i^j \subseteq A_i^{j+1}$ given by adjoining a pruning $P$ of $R$. 
Let $\Sigma_P$ denote the intersection of the set of special edges of $R$ with the set of inner edges of $P$. We may assume this intersection is non-empty, because otherwise $P$ is already contained in $A_0$. Define
\begin{equation*}
\mathcal{H}_{P} \, := \, I(P) - \Sigma_{P}
\end{equation*} 
where $I(P)$ denotes the set of inner edges of $P$. For each subset $H \subseteq \mathcal{H}_{P}$, define the tree $P^{[H]}$ as the tree obtained from $P$ by contracting all edges in $\mathcal{H}_P - H$. Pick a linear order on the subsets of $\mathcal{H}_P$ extending the partial order of inclusion and adjoin the trees $P^{[H]}$ to $A_i^j$ in this order to obtain a filtration
\begin{equation*}
A_i^j \, =: \, A_i^{j,0} \subseteq A_i^{j,1} \subseteq \cdots \subseteq \bigcup_k A_i^{j,k} \, = \, A_i^{j+1}
\end{equation*}
Finally, consider one of the inclusions $A_i^{j,k} \subseteq A_i^{j,k+1}$ in this filtration, given by adjoining a tree $P^{[H]}$. If the map
\begin{equation*}
P^{[H]} \longrightarrow uS \otimes uT
\end{equation*}
factors through $A_i^{j,k}$, then the inclusion under consideration is the identity and there is nothing to prove. If it doesn't, we can say the following:
\begin{itemize}
\item[-] Any outer face chopping off a leaf corolla factors through $A_i^j$ by our induction on the size of the prunings.
\item[-] The outer face chopping off the root of $P^{[H]}$ factors through $A_0$.
\item[-] An inner face contracting an edge that is not special (i.e. not contained in $\Sigma_P$) factors through $A_i^{j,k}$ by our induction on the size of $H$.
\item[-] An inner face contracting a special edge, or a composition of inner faces contracting several special edges, cannot factor through an earlier stage of the filtration. Indeed, it cannot factor through an earlier shuffle by the way special edges are defined. Given this, it is clear that it also cannot factor through $A_i^{j'}$ for $j' \leq j$ because of the size of the pruning $P$ under consideration or through $A_i^{j,k'}$ for $k' \leq k$ by the definition of the $P^{[H']}$. 
\end{itemize}
We conclude that the map $A_i^{j,k} \subseteq A_i^{j,k+1}$ is a pushout of the map
\begin{equation*}
\Lambda^{\Sigma_P}[uP^{[H]}] \longrightarrow uP^{[H]} 
\end{equation*}
where $\Lambda^{\Sigma_P}[uP^{[H]}]$ denotes the union of all the faces of the tree $uP^{[H]}$ except for the inner faces contracting edges in $\Sigma_P$. This map is easily seen to be a composition of pushouts of inner horn inclusions (cf. Lemma \ref{lem:moreinneranodynes}(b) below), which finishes the proof in this case, i.e. under the assumption that $F$ and $G$ are trees $S$ and $T$. \par 
Let us now show how to remove the restrictions on $F$ and $G$. First, let $F$ be a forest with an inner edge $e$, but let us still assume that $G$ equals a tree $T$. One verifies that the proof given above carries over to this case verbatim, using the fact that tensor products distribute over direct sums, with the caveat that a pruning of a forest is now a subforest obtained by iteratively chopping off leaf corollas of the constituent trees. The only phrase that needs altering is the second item in the last step, which becomes:
\begin{itemize}
\item[-] An outer face chopping off the root of any of the constituent trees of $P^{[H]}$ factors through $A_0$.
\end{itemize}
Notice that this observation \emph{does} need the fact that $G$ is still just a single tree $T$. If it is not, this assertion could be false. \par 
Now assume $F$ is as above and $G$ is a forest. Write $G = T \oplus G_1$ by arbitrarily singling out a tree $T$ in this forest. By what was proved above, the map
\begin{equation*}
\Lambda^e[F] \otimes T \cup F \otimes \partial T \longrightarrow F \otimes T 
\end{equation*}
is inner anodyne. Let us use the abbreviations
\begin{eqnarray*}
A & := & \Lambda^e[F] \otimes G \cup F \otimes \partial G \\
A_T & := & \Lambda^e[F] \otimes T \cup F \otimes \partial T \\
A_1 & := & \Lambda^e[F] \otimes G_1 \cup F \otimes \partial G_1
\end{eqnarray*}
Now observe that (invoking Corollary \ref{cor:posuminneranod}) all maps in the following pushout square are inner anodyne:
\[
\xymatrix{
\Lambda^e[F] \otimes (T \oplus G_1) \ar[d]\ar[r] & (F \otimes T) \oplus (\Lambda^e[F] \otimes G_1) \ar[d] \\
(\Lambda^e[F] \otimes T) \oplus (F \otimes G_1) \ar[r] & (\Lambda^e[F] \otimes T) \oplus (F \otimes G_1) \cup (F \otimes T) \oplus (\Lambda^e[F] \otimes G_1)
}
\]
Now we push out this square along the inclusion $\Lambda^e[F] \otimes (T \oplus G_1) \rightarrow A$ to obtain the following pushout square consisting of inner anodyne maps:
\[
\xymatrix{
A \ar[r]\ar[d] & A \cup \bigl((\Lambda^e[F] \otimes T) \oplus (F \otimes G_1) \bigr) \ar[d] \\
A \cup \bigl((F \otimes T) \oplus (\Lambda^e[F] \otimes G_1)\bigr) \ar[r] & \bigl(A_T \oplus (F \otimes G_1)\bigr) \cup \bigl((F \otimes T) \oplus A_1\bigr)
}
\]
In particular, the map
\begin{equation*}
A \longrightarrow \bigl(A_T \oplus (F \otimes G_1)\bigr) \cup \bigl((F \otimes T) \oplus A_1\bigr)
\end{equation*}
is inner anodyne. Since $A_T \longrightarrow F \otimes T$ is inner anodyne and $A_1 \longrightarrow F \otimes G_1$ is a cofibration, we may use Corollary \ref{cor:posuminneranod} again to conclude that 
\begin{equation*}
\bigl(A_T \oplus (F \otimes G_1)\bigr) \cup \bigl((F \otimes T) \oplus A_1\bigr) \longrightarrow (F \otimes T) \oplus (F \otimes G_1) \, =  \, F \otimes G
\end{equation*}
is inner anodyne, which also leads us to conclude that
\begin{equation*}
A \longrightarrow F \otimes G
\end{equation*}
is inner anodyne. $\Box$
\end{proof}

We now wish to give a more efficient description of the class of inner anodyne maps using the notion of \emph{Segal core}. Recall from \cite{cisinskimoerdijk2} that for a tree $T$, its Segal core $\mathrm{Sc}(T) \rightarrowtail T$ in the category $\mathbf{dSets}$ is the union of all the corollas contained in $T$. Its analogue for forests is the following:

\begin{definition}
Let $F$ be a forest. Its \emph{(forest) Segal core} 
\begin{equation*}
\mathrm{fSc}(F) \rightarrowtail F
\end{equation*}
is the colimit over all embeddings $G \rightarrowtail F$ of subforests (i.e. compositions of outer face maps) whose constituent trees all have at most one vertex. (In other words, the trees in $G$ are all either a copy of the unit tree $\eta$ or a corolla.)
\end{definition}

\begin{remark}
\label{rmk:Segalcore}
(a). As an example, consider the following tree $T$:
\[
\begin{tikzpicture} 
[level distance=10mm, 
every node/.style={fill, circle, minimum size=.1cm, inner sep=0pt}, 
level 1/.style={sibling distance=20mm}, 
level 2/.style={sibling distance=15mm}, 
level 3/.style={sibling distance=5mm}]

\node (anchor)[style={color=white}] {} [grow'=up] 
child {node (r) {} 
	child {node (p) {}
		child
		child
		child
		child
	}
	child {node (q) {}
		child
		child
		child
	}
};

\tikzstyle{every node}=[]

\node at ($(r) + (.2cm, -.05cm)$) {$r$};
\node at ($(r) + (-.65cm, .5cm)$) {$a$};
\node at ($(r) + (.65cm, .5cm)$) {$b$};
\node at ($(q) + (.2cm, -.05cm)$) {$q$};
\node at ($(p) + (-.2cm, -.05cm)$) {$p$};

\end{tikzpicture} 
\]

Then its dendroidal Segal core is
\begin{equation*}
C_2 \cup C_3 \cup C_4 \rightarrowtail T
\end{equation*}
where the union is the pushout under the copy of $\eta$ corresponding to the edge $a$, respectively $b$. The forest Segal core of $T$ is
\begin{equation*}
C_2 \cup_{\eta \oplus \eta} (C_3 \oplus C_4) \rightarrowtail uT
\end{equation*}
(b). The obvious formula
\begin{equation*}
\mathrm{fSc}(F \oplus G) = \mathrm{fSc}(F) \oplus \mathrm{fSc}(G)
\end{equation*}
reduces the calculation of Segal cores to trees. \par 
(c). For a tree $T$, one has the following inductive formulas for its Segal core:
\begin{eqnarray*}
\mathrm{fSc}(\eta) & = & \eta \\
\mathrm{fSc}(C_p) & = & C_p \\
\mathrm{fSc}(C_p \star (T_1, \ldots, T_p)) & = & C_p \cup_{p \cdot \eta} \bigl(\mathrm{fSc}(T_1) \oplus \cdots \oplus \mathrm{fSc}(T_p)\bigr)
\end{eqnarray*}
Here $C_p$ is the corolla with $p$ leaves and $C_p \star (T_1, \ldots, T_p)$ is the tree obtained by gluing the trees $T_1, \ldots, T_p$ onto the leaves of this corolla, while $p \cdot \eta$ denotes the forest $\eta \oplus \cdots \oplus \eta$, as before.
\end{remark}

\begin{proposition}
\label{prop:Segalcoreinnanod}
For any forest $F$, the inclusion $\mathrm{fSc}(F) \rightarrowtail F$ is inner anodyne (cf. \cite{cisinskimoerdijk2} for the dendroidal case).
\end{proposition}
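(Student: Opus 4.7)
My plan is to reduce to the case of a single tree and then induct on the number of vertices. For a forest $F = F_1 \oplus F_2$, the identity $\mathrm{fSc}(F) = \mathrm{fSc}(F_1) \oplus \mathrm{fSc}(F_2)$ from Remark \ref{rmk:Segalcore}(b), together with iterated applications of Corollary \ref{cor:posuminneranod} (valid because the Segal cores and their ambient forests are all normal), reduces the statement to showing $\mathrm{fSc}(uT) \to uT$ is inner anodyne for every tree $T$. For the tree case I would induct on the number of vertices $n(T)$; the base cases $T = \eta$ and $T = C_p$ are immediate, as $\mathrm{fSc}(T) = T$ there.

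For the inductive step, assume $n(T) \geq 2$ and write $T = C_p \star (T_1, \ldots, T_p)$ with $p \geq 1$. The inductive formula from Remark \ref{rmk:Segalcore}(c) factors the Segal core inclusion as
\begin{equation*}
\mathrm{fSc}(uT) \,\longrightarrow\, Z \,:=\, C_p \cup_{p \cdot \eta} (T_1 \oplus \cdots \oplus T_p) \,\longrightarrow\, uT.
\end{equation*}
By the induction hypothesis together with the reduction step, the map $\mathrm{fSc}(T_1) \oplus \cdots \oplus \mathrm{fSc}(T_p) \to T_1 \oplus \cdots \oplus T_p$ is inner anodyne, and pushing it out along $p \cdot \eta \to C_p$ shows that the first arrow $\mathrm{fSc}(uT) \to Z$ is inner anodyne as well.

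It remains to show that $Z \to uT$ is inner anodyne. I first observe that $Z$ is canonically the union inside $uT$ of the two outer faces $C_p$ and $\partial_{\mathrm{root}}(uT) = T_1 \oplus \cdots \oplus T_p$, which meet precisely in $p \cdot \eta$; thus the pushout agrees with the honest subpresheaf union. The faces of $T$ missing from $Z$ are those whose image involves both the root vertex $r$ and at least one vertex above it, which correspond to the ``composite'' operations of $\Omega(T)$ mixing the root operation with operations in the subtrees. My plan is to adjoin these faces to $Z$ one at a time, in order of the number of vertices, each as a pushout of an inner horn inclusion $\Lambda^{b_i}[R] \to R$ at one of the inner edges $b_i$ (for $i \in I := \{i : T_i \neq \eta\}$) connecting the root vertex to the subtree above.

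The main obstacle is the combinatorial verification that this filtration works: when adjoining a face $R$ via the inner horn at $b_i$, one needs $\Lambda^{b_i}[R]$ to be contained in the preceding stage. The outer leaf faces of $R$ and the remaining inner faces $\partial_{e'} R$ for $e' \neq b_i$ all have strictly fewer vertices and so appear earlier in the filtration; the root face of $R$ in $\mathbf{\Phi}$ is always a subforest of $\partial_{\mathrm{root}}(uT) \subseteq Z$ and is present from the start. It is this last observation, which depends on the root face operation peculiar to the category $\mathbf{\Phi}$ of forests, that makes the whole argument go through, and which is one of the core reasons for passing to forest sets rather than working purely dendroidally.
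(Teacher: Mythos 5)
Your reduction to a single tree and your treatment of the first arrow $\mathrm{fSc}(uT) \to Z$ are correct; this first half in fact mirrors what the paper does in the proof of Proposition \ref{prop:Segcoregeninneran}. The gap is in the assertion that $Z \to uT$ is inner anodyne via the filtration you describe, namely adjoining the missing faces in order of vertex count as pushouts along single inner horns $\Lambda^{b_i}[R] \to R$. Taken literally this fails already for $T = C_2 \star (C_1, C_1)$ with inner edges $a = b_1$, $b = b_2$: the missing faces with two vertices are the two subtree faces $\partial_{u_a}T$, $\partial_{u_b}T$ \emph{and} the two contraction faces $\partial_a T$, $\partial_b T$, each of which contains the root vertex, a vertex above it, and some $b_i$ as an inner edge, so all four qualify. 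After adjoining all four to $Z = C_2 \cup \partial_{\mathrm{root}}(uT)$ one has reached the full boundary $\partial(uT)$, because $C_2 \subseteq \partial_{u_a}T$ and $\partial_{\mathrm{root}}(uT) \subseteq Z$; but $\partial(uT) \to uT$ is a generating cofibration, not an inner horn, so the filtration cannot be completed. Relatedly, the claim that the remaining inner faces $\partial_{e'}R$ ``appear earlier in the filtration'' is not substantiated: one would need each such face to itself contain some $b_j$ as an inner edge, and adjoining them as separate explicit steps rather than having them absorbed into the horn of a larger face is exactly what produces the problem above.

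A version of your plan can be salvaged, but it requires more care than ``adjoin missing faces by vertex count'': adjoin only the subtree faces $R$ (iterated external faces of $T$), each via the multi-edge horn $\Lambda^{B_R}[R] \to R$ where $B_R$ is the set of all $b_i$'s that are inner in $R$, so that the contraction faces are absorbed rather than explicitly adjoined; here Lemma \ref{lem:moreinneranodynes}(b) is what makes the multi-edge horn inner anodyne. The paper avoids this bookkeeping entirely by not factoring through $Z$ at all: it filters $\mathrm{fSc}(uT) \to uT$ directly by the subforests of $T$ using a two-parameter filtration $A_{n,k}$ (at most $k$ vertices, each constituent tree of size at most $n$), at each step adjoining new subforests by a pushout of $\partial^{\mathrm{ext}}(F) \to F$, and invoking Lemma \ref{lem:moreinneranodynes}(c) to provide the inner anodyneness in one stroke with no case analysis of which faces are already present.
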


For the proof of this proposition we need a few simple observations concerning faces and boundaries in $\mathbf{fSets}$. These are analogues of similar facts in the dendroidal case, cf. \cite{cisinskimoerdijk2}.

\begin{lemma}
\label{lem:moreinneranodynes}
Let $T$ be a tree and let $A$ be a non-empty set of inner edges of $T$. Write $\Lambda^A[uT]$ for the union of all the faces of $uT$ \emph{except} the ones given by contraction of an edge in $A$.
\begin{itemize}
\item[(a)] For any other inner edge $e \notin A$,
\begin{equation*}
\partial_e(uT) \cap \Lambda^{A \cup \{e\}}[uT] \, = \, \Lambda^A \partial_e(uT)
\end{equation*}
\item[(b)] The map $\Lambda^A[uT] \rightarrowtail uT$ is inner anodyne.
\item[(c)] For any tree $T$ with at least one inner edge, the inclusion
\begin{equation*}
\partial^{\mathrm{ext}}(uT) \rightarrowtail uT
\end{equation*}
of the union of all external faces is inner anodyne.
\end{itemize}
\end{lemma}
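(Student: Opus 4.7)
The plan is to prove (a) by a direct analysis of face intersections in $\mathbf{\Phi}$, derive (b) from (a) by a double induction on the size of $T$ and the cardinality of $A$, and obtain (c) as the special case of (b) in which $A$ consists of all inner edges of $T$.

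For (a), I will argue by mutual inclusion of the two sub-presheaves of $\partial_e(uT)$. The inclusion $\Lambda^A \partial_e(uT) \subseteq \partial_e(uT) \cap \Lambda^{A \cup \{e\}}[uT]$ is straightforward, because every face of $\partial_e(uT)$ whose index does not lie in $A$ arises as a codimension-two face $\partial_\alpha \partial_e(uT) = \partial_e \partial_\alpha(uT)$ of $uT$ with $\alpha \notin A \cup \{e\}$. The reverse inclusion will be established by a case analysis on the type of face $\partial_\alpha(uT)$ through which a given section factors. For inner contractions $\partial_{e'}$ with $e' \neq e$, and for outer faces of $uT$ not adjacent to $e$, the faces commute and the intersection is a representable codimension-two face of $\partial_e(uT)$ whose index does not lie in $A$. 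The delicate cases are the two outer faces of $uT$ adjacent to $e$: the root face $\partial_{\mathrm{root}}(uT)$, which occurs when the bottom endpoint $w_1$ of $e$ is the root vertex, and the leaf chop $\partial_{w_2}(uT)$, which occurs when the top endpoint $w_2$ of $e$ is a leaf vertex. The root case is immediate, since $\partial_{\mathrm{root}}(uT) \cap \partial_e(uT) = \partial_{\mathrm{root}} \partial_e(uT)$ is itself an outer face of $\partial_e(uT)$. For the leaf case, Case 1 of the discussion preceding Lemma \ref{lem:tensorintersection} identifies the intersection with the non-representable coproduct $(r/uT) \amalg (uT/l_1 \oplus \cdots \oplus uT/l_n)$; the key observation is that each summand factors through an external face of $\partial_e(uT)$ (the first into a leaf chop above $r$, which always exists as $\partial_e(uT)$ has at least one leaf vertex above $r$; the second into $\partial_{\mathrm{root}} \partial_e(uT)$), so that any section lands componentwise in $\partial^{\mathrm{ext}} \partial_e(uT) \subseteq \Lambda^A \partial_e(uT)$.

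Given (a), I will prove (b) by a primary induction on the number of vertices of $T$ combined with a secondary induction on $|A|$. The base case $|A| = 1$ is the definition of an inner horn inclusion, hence inner anodyne. For the step $|A| \geq 2$, pick any $e \in A$ and set $B := A \setminus \{e\}$, which is non-empty. The identity $\Lambda^B[uT] = \Lambda^A[uT] \cup \partial_e(uT)$ together with (a) yields a pushout square
\[
\xymatrix{
\Lambda^B \partial_e(uT) \ar[r]\ar[d] & \Lambda^A[uT] \ar[d] \\
\partial_e(uT) \ar[r] & \Lambda^B[uT].
}
\]
The primary hypothesis, applied to the smaller tree $\partial_e(uT)$ and the non-empty set $B$, shows that the left-hand vertical map is inner anodyne; hence so is the right-hand pushout map. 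Composing with $\Lambda^B[uT] \to uT$, which is inner anodyne by the secondary hypothesis since $|B| < |A|$, completes the step.

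Finally, part (c) follows immediately by applying (b) with $A$ equal to the non-empty set of all inner edges of $T$, since then $\Lambda^A[uT] = \partial^{\mathrm{ext}}(uT)$. The main technical obstacle lies in the exceptional leaf-vertex case of (a), where the intersection of $\partial_e(uT)$ with $\partial_{w_2}(uT)$ fails to be representable; here one must argue component by component, exploiting the fact that maps out of a coproduct of forests factor summandwise, so that each summand can be separately routed through an external face of $\partial_e(uT)$.
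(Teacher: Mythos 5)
Your proof is correct and essentially identical to the paper's: you identify the same two exceptional cases in part (a) (the root face when $e$ is attached to the root vertex, and the leaf chop when the top vertex of $e$ is a leaf vertex), you use the same pushout square for (b) together with an explicit double induction on tree size and $|A|$ (the paper writes only "induction on the size of $A$" but of course implicitly also descends to the smaller tree $\partial_e(uT)$), and you obtain (c) by specializing $A$ to the set of all inner edges. One minor caveat on the forward inclusion in (a): the commutation $\partial_\alpha\partial_e(uT) = \partial_e\partial_\alpha(uT)$ that you invoke fails exactly for the two outer faces of $\partial_e(uT)$ that are created or altered by contracting $e$ --- the leaf chop at the merged vertex and the root face when $e$ is attached to the root --- since in those cases $\partial_e\partial_\alpha(uT)$ is not even defined; what is true, and all that is needed, is that $\partial_\alpha\partial_e(uT)$ is contained in some face $\partial_\beta(uT)$ with $\beta \notin A \cup \{e\}$ (for instance $\partial_{w_2}(uT)$ or $\partial_{\mathrm{root}}(uT)$), which the paper also leaves implicit by dismissing this direction as "clearly".
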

\begin{proof}
(a). Clearly $\Lambda^A\partial_e(uT) \subseteq \partial_e(uT) \cap \Lambda^A[uT]$. For the reverse inclusion, one checks the different kinds of faces $\partial_x(uT)$ involved in forming $\Lambda^A[uT]$. If $x$ is an internal edge other than $e$, or an external (leaf or root) vertex not attached to $e$, then
\begin{equation*}
\partial_e(uT) \cap \partial_x(uT) \, = \, \partial_x\partial_e(uT)
\end{equation*}
If $e$ is attached to the root, write $T_d$ for the subtree of $T$ with root edge $d$ (the tree `above' $d$), where $d$ is any input edge of the root vertex in $T$. Then
\begin{eqnarray*}
\partial_{\mathrm{root}}(uT) \cap \partial_e(uT) & = & \partial_{\mathrm{root}}(uT_e) \oplus \bigl(\bigoplus_{d \neq e} uT_d\bigr) \\
& = & \partial_{\mathrm{root}}\partial_e(uT)
\end{eqnarray*}
Finally, if $x$ is a leaf vertex of the tree $T$ and $e$ is attached to $x$, then $\partial_x(uT) \cap \partial_e(uT)$ is more complicated; but any forest contained in it will also be contained in $\partial_{\mathrm{root}}\partial_e(uT)$ or in $\partial_y\partial_e(uT)$ for some leaf vertex $y$ in $T$ other than $x$, so $\partial_x(uT) \cap \partial_e(uT) \subseteq \Lambda^A\partial_e(uT)$ as well. \par 
(b). From part (a) we conclude that for $B = A \cup \{e\}$ one has a pushout diagram
\[
\xymatrix{
\Lambda^A\partial_e(uT) \ar[d]\ar[r] & \Lambda^B[uT] \ar[d] \\
\partial_e(uT) \ar[r] & \Lambda^A[uT]
}
\]
and fact (b) follows by induction on the size of $A$, the case where $A$ has one element being true by definition. \par 
(c). This is the special case of (b) where $A$ is the set of all inner edges. $\Box$
\end{proof}

\begin{proof}[Proof of Proposition \ref{prop:Segalcoreinnanod}]
By Remark \ref{rmk:Segalcore}(b) and Corollary \ref{cor:posuminneranod} above, it suffices to check this for trees. Notice that $\mathrm{fSc}(uT) \rightarrowtail uT$ is an isomorphism if $T$ has at most one vertex and an inner horn if $T$ has two vertices. For larger $T$ we consider \emph{subforests} of $T$, i.e. maps $F \rightarrowtail T$ obtained as a composition of external faces. Write $A_{n,k}$ for the union of all subforests with at most $k$ vertices, in which every constituent tree has at most $n$ vertices. Write
\begin{equation*}
A_n \, = \, \bigcup_{k \geq 0} A_{n,k}
\end{equation*}
This is in fact a finite union of course, bounded by the number $N$ of vertices in $T$. Also $A_{n,k} \subseteq A_{n-1}$ if $k<n$ and
\begin{equation*}
A_1 \, = \, \mathrm{fSc}(uT) \rightarrowtail uT
\end{equation*}
while
\begin{equation*}
A_{N-1} \, = \, \partial^{\mathrm{ext}}(uT) \rightarrowtail uT
\end{equation*}
which is inner anodyne by the previous Lemma. So it suffices to prove by induction 
\begin{equation*}
A_{n-1} \cup A_{n,k} \rightarrowtail A_{n-1} \cup A_{n,k+1}
\end{equation*}
is inner anodyne. Let $F_0, \ldots, F_p$ be all the subforests with exactly $k+1$ vertices, in which every tree has at most $n$ vertices and in which at least one tree has exactly $n$ vertices. Write
\begin{equation*}
S_j \, = \, F_0 \cup \cdots \cup F_j \quad\quad\quad (j = 0, \ldots, p)
\end{equation*}
and write $A = A_{n-1} \cup A_{n,k}$ just for now. We claim that each of the maps
\begin{equation*}
A \rightarrow A \cup S_0 \rightarrow A \cup S_1 \rightarrow \cdots \rightarrow A \cup S_p = A_{k-1} \cup A_{n,k+1}
\end{equation*}
is inner anodyne. Indeed, $A \rightarrow A \cup S_0$ is a pushout of $A \cap S_0 \rightarrow S_0$, i.e. of $A \cap F_0 \rightarrow F_0$, and
\begin{eqnarray*}
A \cap F_0 & = & (A_{n-1} \cap F_0) \cup (A_{n,k} \cap F_0) \\
& = & \partial^{\mathrm{ext}}(F_0)
\end{eqnarray*}
because $A_{n-1} \cap F_0$ is contained in $\partial^{\mathrm{ext}}(F_0)$. Similarly $A \cup S_{j-1} \rightarrow A \cup S_j$ is a pushout of $(A \cup S_{j-1}) \cap F_j \rightarrow F_j$ and 
\begin{eqnarray*}
(A \cup S_{j-1}) \cap F_j & = & (A_{n-1} \cap F_j) \cup (A_{n,k} \cap F_j) \cup \bigcup_{j<i} F_j \cap F_i
\end{eqnarray*}
which is $\partial^{\mathrm{ext}}(F_j)$ again. This proves the proposition. $\Box$
\end{proof}

We say a class $\mathcal{A}$ of normal monomorphisms in $\mathbf{fSets}$ is \emph{hypersaturated} if it is closed under pushouts, retracts, (transfinite) composition, direct sums and also satisfies the following cancellation property: if
\[
\xymatrix{
A \ar[r]^i & B \ar[r]^j & C
}
\]
are normal monomorphisms such that $i$ and $ji$ are in $\mathcal{A}$, then $j$ is also in $\mathcal{A}$.  As a kind of converse to Proposition \ref{prop:Segalcoreinnanod}, we now prove that the Segal cores generate the inner horn inclusions in the following sense:

\begin{proposition}
\label{prop:Segcoregeninneran}
Let $\mathcal{A}$ be a hypersaturated class of normal monomorphisms containing all the Segal cores $\mathrm{fSc}(uT) \rightarrowtail uT$ of trees. Then $\mathcal{A}$ contains all inner horn inclusions $\Lambda^e[uT] \rightarrowtail uT$.
\end{proposition}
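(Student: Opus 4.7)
The plan is to induct on the number of vertices $|T|$ of $T$, exploiting the cancellation property of the hypersaturated class $\mathcal{A}$ at several places. When $|T|=1$ there are no inner edges and nothing to prove; when $|T|=2$ with unique inner edge $e$, a direct inspection shows that the two non-contraction faces of $uT$ in $\mathbf{\Phi}$, namely the leaf face $\partial_v(uT)$ (the root corolla) and the forest root face $\partial_{\mathrm{root}}(uT)=\eta^{\oplus k}\oplus C_v$, are exactly the one-vertex subforests generating the Segal core. Hence $\mathrm{fSc}(uT)=\Lambda^e[uT]$ as subobjects of $uT$, and the desired inclusion is in $\mathcal{A}$ by the hypothesis on Segal cores.

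For the inductive step, assume the claim holds for all trees with fewer vertices. By iteratively applying the pushout square of Lemma~\ref{lem:moreinneranodynes}(a), the inclusion $\partial^{\mathrm{ext}}(uT)=\Lambda^{I(T)}[uT]\hookrightarrow\Lambda^{\{e\}}[uT]$ is expressed as a finite composition of pushouts of maps of the form $\Lambda^{A'}[\partial_f(uT)]\to\partial_f(uT)$, each a multi-edge horn inclusion in a strictly smaller tree. The inductive hypothesis provides the single-edge horn inclusions for these smaller trees, and combined with Lemma~\ref{lem:moreinneranodynes}(b) and hypersaturation this upgrades to multi-edge horn inclusions. So $\partial^{\mathrm{ext}}(uT)\hookrightarrow\Lambda^e[uT]$ lies in $\mathcal{A}$, and by cancellation applied to $\partial^{\mathrm{ext}}(uT)\hookrightarrow\Lambda^e[uT]\hookrightarrow uT$, it suffices to show $\partial^{\mathrm{ext}}(uT)\to uT\in\mathcal{A}$. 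A further application of cancellation to the factorization $\mathrm{fSc}(uT)\hookrightarrow\partial^{\mathrm{ext}}(uT)\hookrightarrow uT$ through the Segal core reduces the problem to showing $\mathrm{fSc}(uT)\to\partial^{\mathrm{ext}}(uT)$ is in $\mathcal{A}$.

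I would prove this final statement by adjoining the external faces $F_1,\ldots,F_\ell$ of $uT$ to the Segal core one at a time (leaf faces first, root face last). Each adjunction is a pushout along $F_i\cap Y_{i-1}\to F_i$, and the intersection computes to $\mathrm{fSc}(F_i)\cup\bigcup_{j<i}(F_i\cap F_j)$, with each $F_i\cap F_j$ an external (or codimension-two) face of $F_i$. When $F_i$ is a leaf face, this produces an inclusion of the same shape for the smaller tree $F_i=\partial_{w_i}(uT)$, accessible through the outer induction on $|T|$ via an analogous secondary induction on the size of the set of faces adjoined. For the final root face $F_\ell=\bigoplus_i uT_i$, we use the identity $\mathrm{fSc}(\bigoplus uT_i)=\bigoplus\mathrm{fSc}(uT_i)$ from Remark~\ref{rmk:Segalcore} together with the direct-sum closure of $\mathcal{A}$ to reduce the claim to the tree-level statements for each $uT_i$. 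The principal obstacle is precisely this final forest step: because the root face in $\mathbf{\Phi}$ is generically a proper disjoint sum of trees, the induction must be formulated for forests rather than trees, which is exactly the reason hypersaturation requires closure under $\oplus$.
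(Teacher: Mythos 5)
Your proposal is correct and rests on the same core mechanisms as the paper's argument: induction on the number of vertices of $T$, repeated use of the cancellation property, pushouts along horn inclusions for smaller trees, and closure under direct sums to handle the root face. The reduction chain $\mathrm{fSc}(uT)\to\partial^{\mathrm{ext}}(uT)\to\Lambda^e[uT]\to uT$ is the same one the paper uses. Your base case computation $\mathrm{fSc}(uT)=\Lambda^e[uT]$ for a two-vertex tree is correct, and the passage from $\partial^{\mathrm{ext}}(uT)$ to $\Lambda^e[uT]$ via pushouts of $\Lambda^{A'}[\partial_f(uT)]\to\partial_f(uT)$ using Lemma~\ref{lem:moreinneranodynes} is exactly what the paper does in its last step.

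The one genuine difference is the order in which external faces are adjoined. The paper adjoins the root face \emph{first}, exploiting the structural identity $\mathrm{fSc}(T) = C_p \cup_{p\cdot\eta}\bigl(\mathrm{fSc}(T_1)\oplus\cdots\oplus\mathrm{fSc}(T_p)\bigr)$: this makes the step $\mathrm{fSc}(T)\to\mathrm{fSc}(T)\cup\partial_{\mathrm{root}}(T)$ a single pushout of $\bigoplus_i\mathrm{fSc}(T_i)\to\bigoplus_i T_i$, which lies in $\mathcal{A}$ immediately by closure under $\oplus$, and then every subsequent leaf-face intersection remains entirely within the tree world. You instead save the root face for \emph{last}, which forces the final pushout to be taken along $\mathrm{fSc}(F_\ell)\cup(\text{faces})\to F_\ell$ with $F_\ell=\bigoplus_i uT_i$ a genuine forest. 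You correctly identify that this obliges the whole secondary induction to be stated for forests rather than trees, and the direct-sum closure of $\mathcal{A}$ is what makes this salvageable, but the bookkeeping of faces of a disconnected forest (leaf faces, root faces of components, deletion of an $\eta$-component) is noticeably heavier than what the paper encounters. Both routes go through, but the paper's choice of adjoining the root face first is the one that keeps the secondary induction clean and confined to trees; it is worth internalizing that ordering trick, since the same idea recurs in Propositions~\ref{prop:poprodinneranodyne}, \ref{prop:poprodrootanodyne} and \ref{prop:poprodleafanodyne}.
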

\begin{proof}
For the duration of this proof, let us simply write $T$ for $uT$, which should not cause confusion; everything we do is to be considered in $\mathbf{fSets}$. We will argue by induction on $T$ and prove first that all three of the inclusions
\begin{equation*}
\mathrm{fSc}(T) \longrightarrow \mathrm{fSc}(T) \cup \partial_{\mathrm{root}}(T) \longrightarrow \partial^{\mathrm{ext}}(T) \longrightarrow \Lambda^e[T]
\end{equation*}
belong to $\mathcal{A}$. This is clear if $T$ has at most two vertices, since all these maps are then isomorphisms. For a larger tree $T$, write
\begin{equation*}
T \, = \, C_p \star (T_1, \ldots, T_p)
\end{equation*}
where $p$ is the valence of the root vertex $r$ of $T$ and $\partial_r(T) = T_1 \oplus \cdots \oplus T_p$. Then by the inductive assumption and the fact that $\mathcal{A}$ is assumed to be closed under direct sums, we find that
\begin{equation*}
\mathrm{fSc}(T_1) \oplus \cdots \oplus \mathrm{fSc}(T_p) \longrightarrow T_1 \oplus \cdots \oplus T_p = \partial_r(T)
\end{equation*}
belongs to $\mathcal{A}$ and hence by a pushout so does its union with $C_p$, the corolla at the root:
\begin{equation*}
\mathrm{fSc}(T) = C_p \cup (\mathrm{fSc}(T_1) \oplus \cdots \oplus \mathrm{fSc}(T_p)) \longrightarrow C_p \cup (T_1 \oplus \cdots \oplus T_p) = \mathrm{fSc}(T) \cup \partial_{\mathrm{root}}(T)
\end{equation*}
This proves that the first of the three maps above is in $\mathcal{A}$. Now, let $v$ be any leaf vertex and let $\partial_v(T)$ be the corresponding external face. Consider the pushout
\[
\xymatrix{
C_p \cup \bigl(\partial_v(T) \cap \partial_{\mathrm{root}}(T)\bigr) \ar[d]\ar[r] & C_p \cup \partial_{\mathrm{root}}(T) \ar[d] \\
\partial_v(T) \ar[r] & C_p \cup \partial_{\mathrm{root}}(T) \cup \partial_v(T)
}
\]
Since $\partial_v(T) \cap \partial_{\mathrm{root}}(T) = \partial_{\mathrm{root}}(\partial_v(T))$ and $C_p \cup \partial_{\mathrm{root}}(\partial_v(T)) = \mathrm{fSc}(\partial_v(T)) \cup \partial_{\mathrm{root}}(\partial_v(T))$, the left-hand vertical map belongs to $\mathcal{A}$ by induction. Hence so does the right-hand vertical map. Next, if we have shown that 
\begin{equation*}
C_p \cup \partial_{\mathrm{root}}(T) \longrightarrow \partial_{\mathrm{root}}(T) \cup \partial_{v_1}(T) \cup \cdots \cup \partial_{v_k}(T)
\end{equation*}
belongs to $\mathcal{A}$ for a sequence of leaf vertices $v_1, \ldots, v_k$ of $T$, we can adjoin another leaf face $\partial_{v_{k+1}}(T)$ in exactly the same way. Having done this for all the leaf faces, we conclude that the map
\begin{equation*}
\mathrm{fSc}(T) \cup \partial_{\mathrm{root}}(T) = C_p \cup \partial_{\mathrm{root}}(T) \longrightarrow \partial^{\mathrm{ext}}(T)
\end{equation*}
belongs to $\mathcal{A}$. \par 
Finally, we will adjoin the inner faces $\partial_{a_i}(T)$ for all the inner edges $a_1, \ldots, a_n$ in $T$ other than $e$ and show that each of
\begin{equation*}
\partial^{\mathrm{ext}}(T) \cup \partial_{a_1}(T) \cup \cdots \cup \partial_{a_i}(T) \longrightarrow \partial^{\mathrm{ext}}(T) \cup \partial_{a_1}(T) \cup \cdots \cup \partial_{a_{i+1}}(T)
\end{equation*}
belongs to $\mathcal{A}$, for $i = 0, \ldots, n-1$. Indeed, this map is a pushout of
\begin{equation*}
\bigl(\partial^{\mathrm{ext}}(T) \cup \partial_{a_1}(T) \cup \cdots \cup \partial_{a_i}(T)\bigr) \cap \partial_{a_{i+1}}(T) \, = \, \partial^{\mathrm{ext}}\partial_{a_{i+1}}(T) \cup \partial_{a_1}\partial_{a_{i+1}}(T) \cup \cdots \cup \partial_{a_i}\partial_{a_{i+1}}(T) \longrightarrow \partial_{a_{i+1}}(T),
\end{equation*}
so the assertion follows by induction on $T$ and $i$, since the base of the induction was already established at the start of our proof. \par 
We have now shown that in the following diagram, the vertical and skew maps are in $\mathcal{A}$:
\[
\xymatrix{
\mathrm{fSc}(T) \ar[d]\ar[dr] & \\
\Lambda^e[T] \ar[r] & T
}
\]
By the assumed closure property of $\mathcal{A}$, we conclude that it also contains the inner horn inclusion $\Lambda^e[T] \rightarrowtail T$. $\Box$
\end{proof}

The following result is crucial when showing that the weak enrichment of the category of forest sets satisfies axiom (H2) of Definition \ref{def:stronghomotenrich}.

\begin{proposition}
\label{prop:forestsetsH2}
For simplices $\Delta^m$, $\Delta^n$ and a forest $F$, consider the map
\begin{equation*}
\alpha: (\Delta^m \times \Delta^n) \otimes F \longrightarrow \Delta^m \otimes (\Delta^n \otimes F).
\end{equation*}
If $\mathcal{A}$ is a hypersaturated class of normal monomorphisms containing all the Segal cores $\mathrm{fSc}(uT) \rightarrowtail uT$ of trees, then $\mathcal{A}$ contains $\alpha$.
\end{proposition}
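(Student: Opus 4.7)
The plan is to reduce, via the Segal core inclusion, to the case where $F$ is a corolla, and then to dispose of the corolla case by an explicit shuffle-by-shuffle filtration parallel to the proof of Proposition \ref{prop:poprodinneranodyne}. Throughout, I write $\mathcal{F} := (\Delta^m \times \Delta^n) \otimes -$ and $\mathcal{G} := \Delta^m \otimes (\Delta^n \otimes -)$, so that $\alpha = \alpha_F \colon \mathcal{F}(F) \to \mathcal{G}(F)$ is natural in $F$. Note that since $\mathcal{A}$ is hypersaturated and contains all Segal cores, Proposition \ref{prop:Segcoregeninneran} implies that $\mathcal{A}$ contains every inner horn inclusion, hence every inner anodyne map.

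First, I would reduce to Segal cores. By Proposition \ref{prop:Segalcoreinnanod}, $\mathrm{fSc}(F) \hookrightarrow F$ is inner anodyne. Applying Proposition \ref{prop:poprodinneranodyne} with the generating cofibrations $\partial \Delta^k \rightarrowtail \Delta^k$ of $\mathbf{sSets}$ and saturating, tensoring with any simplicial set preserves inner anodyne maps of forest sets; in particular both vertical arrows in the naturality square
\[
\xymatrix@C=40pt{
(\Delta^m \times \Delta^n) \otimes \mathrm{fSc}(F) \ar[r]^-{\alpha_{\mathrm{fSc}(F)}} \ar[d] & \Delta^m \otimes (\Delta^n \otimes \mathrm{fSc}(F)) \ar[d] \\
(\Delta^m \times \Delta^n) \otimes F \ar[r]^-{\alpha_F} & \Delta^m \otimes (\Delta^n \otimes F)
}
\]
are inner anodyne and thus in $\mathcal{A}$. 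If $\alpha_{\mathrm{fSc}(F)} \in \mathcal{A}$, the composite along the top and right is in $\mathcal{A}$, and the cancellation axiom of hypersaturation yields $\alpha_F \in \mathcal{A}$.

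Second, I would reduce to a single corolla. Since both $\mathcal{F}$ and $\mathcal{G}$ preserve colimits and distribute over $\oplus$ (Proposition \ref{prop:fsetsproperties}), and $\mathrm{fSc}(\bigoplus_i T_i) = \bigoplus_i \mathrm{fSc}(T_i)$, the direct-sum closure of $\mathcal{A}$ reduces the problem to trees $F = T$. The Segal core $\mathrm{fSc}(T)$ is then assembled from the vertex corollas $C_{p_v}$ by pushouts along $\eta$: ordering the vertices $v_1, \ldots, v_k$ of $T$ so that each $v_i$ (for $i \geq 2$) shares an inner edge with some earlier $v_j$, the subobjects $\mathrm{fSc}_i \subseteq \mathrm{fSc}(T)$ satisfy $\mathrm{fSc}_i = \mathrm{fSc}_{i-1} \cup_\eta C_{p_{v_i}}$. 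The crucial observation is that $\alpha_\eta$ is an isomorphism, since under $i_! \colon \mathbf{sSets} \to \mathbf{dSets}$ one has $i_! \Delta^m \otimes i_! \Delta^n = i_!(\Delta^m \times \Delta^n)$ (the Boardman--Vogt tensor of two linear orders coincides with their Cartesian product as categories); thus $\mathcal{F}(\eta) = \mathcal{G}(\eta)$. Using this identification, $\alpha_{\mathrm{fSc}_i}$ factors through the intermediate object $\mathcal{G}(\mathrm{fSc}_{i-1}) \cup_{\mathcal{F}(\eta)} \mathcal{F}(C_{p_{v_i}})$ as a pushout of $\alpha_{\mathrm{fSc}_{i-1}}$ followed by a pushout of $\alpha_{C_{p_{v_i}}}$, and pushout closure of $\mathcal{A}$ inductively reduces the claim to showing $\alpha_{C_p} \in \mathcal{A}$ for every corolla $C_p$, $p \geq 0$.

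It remains to treat corollas. For $p = 1$ the corolla $C_1$ lies in the image of $\mathbf{sSets} \hookrightarrow \mathbf{fSets}$, and all four objects in the square for $\alpha_{C_1}$ reduce to $i_!(\Delta^m \times \Delta^n \times \Delta^1)$, so $\alpha_{C_1}$ is an isomorphism. For $p \geq 2$ (and, with additional care, for $p = 0$) it suffices by Proposition \ref{prop:Segcoregeninneran} to show that $\alpha_{C_p}$ is inner anodyne. I would prove this by filtering $\Delta^m \otimes (\Delta^n \otimes C_p)$, starting from $(\Delta^m \times \Delta^n) \otimes C_p$ as the initial stage, by adjoining the remaining shuffles in an order refining the `percolation' partial order of the corolla vertex through the two simplicial layers (exactly as in Proposition \ref{prop:poprodinneranodyne}), and then, within each shuffle, further filtering by prunings and by subsets of inner edges to contract. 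Each successive adjunction is then identified as a pushout of an inner horn along the appropriate set of special edges.

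The main obstacle is this last combinatorial step for $p \geq 2$: the shuffles of $\Delta^m \otimes (\Delta^n \otimes C_p)$ that are not already present in $(\Delta^m \times \Delta^n) \otimes C_p$ encode the non-strictness of the Boardman--Vogt associator, and correctly identifying the well-order on shuffles, the corresponding prunings, and the sets of special edges whose contractions cannot factor through earlier filtration stages requires intricate bookkeeping of exactly the kind already met in Proposition \ref{prop:poprodinneranodyne}. The case $p = 0$ is additionally delicate because $C_0$ is not open, so the cleanest monomorphicity inputs from Proposition \ref{prop:newnormalmonopoprod2} are unavailable and one must verify the relevant intersections of tensored faces by hand.
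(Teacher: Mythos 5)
Your reduction of $F$ to its Segal core, and then (via the pushout decomposition $\mathrm{fSc}_i = \mathrm{fSc}_{i-1} \cup_\eta C_{p_{v_i}}$, using that $\alpha_\eta$ is an isomorphism) down to a single corolla, is sound and broadly parallel in spirit to the paper's argument. But there is a substantial gap, and it stems from applying the Segal-core reduction only in the $F$-slot. The paper's proof applies it \emph{simultaneously} to all three factors: it forms the square
\[
\xymatrix{
\bigl(\mathrm{fSc}(\Delta^m) \times \mathrm{fSc}(\Delta^n)\bigr) \otimes \mathrm{fSc}(F) \ar[r]\ar[d] & \mathrm{fSc}(\Delta^m) \otimes \bigl(\mathrm{fSc}(\Delta^n) \otimes \mathrm{fSc}(F)\bigr) \ar[d] \\
(\Delta^m \times \Delta^n) \otimes F \ar[r] & \Delta^m \otimes (\Delta^n \otimes F),
}
\]
uses Propositions \ref{prop:poprodinneranodyne} and \ref{prop:Segcoregeninneran} to see that both vertical maps are in $\mathcal{A}$, and then uses cancellation to reduce to the top row. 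Because $\mathrm{fSc}(\Delta^m)$ and $\mathrm{fSc}(\Delta^n)$ are unions of $1$-simplices glued over $0$-simplices, and tensoring distributes over direct sums, the top map reduces (by a pushout argument, exactly as you ran for the $F$-slot) to the case
\begin{equation*}
(\Delta^1 \times \Delta^1) \otimes C_k \longrightarrow \Delta^1 \otimes (\Delta^1 \otimes C_k),
\end{equation*}
where the combinatorics collapses: the set of ``missing'' shuffles is small enough to be listed explicitly (for $k=2$ there are just two, $S_1$ and $S_2$), and each is adjoined by an evident pushout along an inner horn, with no delicate well-order on prunings or special-edge bookkeeping required.

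By stopping the Segal-core reduction at $F$, you leave yourself with the full filtration of $\Delta^m \otimes (\Delta^n \otimes C_p)$ by shuffles, prunings, and sets of contracted inner edges. You flag this yourself as ``the main obstacle'' requiring ``intricate bookkeeping,'' but that bookkeeping is precisely the content of the proposition, and it is not carried out; the appeal to ``exactly as in Proposition \ref{prop:poprodinneranodyne}'' does not suffice, because there one shuffles a single linear factor through the other, while here one must track the interaction of two independent percolating simplicial layers with a corolla, and it is not evident that the same pruning-and-contraction scheme identifies the correct inner horns at each stage. The concern you raise about $p=0$ (loss of the openness input to Proposition \ref{prop:newnormalmonopoprod2}) is also genuine and left unresolved. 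The moral is that the extra reduction of $\Delta^m$ and $\Delta^n$ to $1$-simplices is not a convenience but the step that makes the combinatorics tractable; without it, the proof as written is incomplete.
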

\begin{proof}
Form the following commutative square:
\[
\xymatrix{
\bigl(\mathrm{fSc}(\Delta^m) \times \mathrm{fSc}(\Delta^n)\bigr) \otimes \mathrm{fSc}(F) \ar[r]\ar[d] & \mathrm{fSc}(\Delta^m) \otimes \bigl(\mathrm{fSc}(\Delta^n) \otimes \mathrm{fSc}(F)\bigr) \ar[d] \\
(\Delta^m \times \Delta^n) \otimes F \ar[r] & \Delta^m \otimes (\Delta^n \otimes F).
}
\]
The vertical maps are in $\mathcal{A}$ by Proposition \ref{prop:poprodinneranodyne} and the fact that $\mathcal{A}$ contains the inner anodynes, by Proposition \ref{prop:Segcoregeninneran}. Therefore it suffices to show that the top horizontal map is in $\mathcal{A}$. This is a composition of pushouts of maps involving only 1-simplices, corollas and sums of such. Since tensor products distribute over sums, it suffices to prove that the map
\begin{equation*}
\alpha: (\Delta^1 \times \Delta^1) \otimes C_k \longrightarrow \Delta^1 \otimes (\Delta^1 \otimes C_k)
\end{equation*} 
is inner anodyne (for every $k \geq 0$). For notational simplicity we treat the case $k=2$; the higher cases are completely analogous. The tensor product $\Delta^1 \otimes (\Delta^1 \otimes C_2)$ is the nerve of the operad $[1] \otimes ([1] \otimes \Omega(C_2))$. It can be described as the union of all the shuffles of the three trees $\Delta^1$, $\Delta^1$ and $C_2$. The forest set $(\Delta^1 \times \Delta^1) \otimes C_2$ is the union of only a subset of all these shuffles. Let us label the leaves of $C_2$ by $a$ and $b$. Then the `missing shuffles' in $(\Delta^1 \times \Delta^1) \otimes C_2$ are the following:

\[
\begin{tikzpicture} 
[level distance=10mm, 
every node/.style={fill, circle, minimum size=.1cm, inner sep=0pt}, 
level 1/.style={sibling distance=20mm}, 
level 2/.style={sibling distance=10mm}, 
level 3/.style={sibling distance=5mm},
level 4/.style={sibling distance=4mm}]

\node (shuffle1)[style={color=white}] {} [grow'=up] 
child {node[draw,fill=none] {} 
	child{ node {}
		child{ node {}
			child
		}
	}
	child{ node {}
		child{ node {}
			child
		}
	}
};

\node (shuffle2)[style={color=white}, right = 3cm of shuffle1] {} [grow'=up] 
child {node[draw,fill=none] {} 
	child{ node {}
		child{ node {}
			child
		}
	}
	child{ node {}
		child{ node {}
			child
		}
	}
};

\tikzstyle{every node}=[]


\node at ($(shuffle1) + (-.5cm, 0)$) {$S_1$};
\node at ($(shuffle1) + (-.6cm, 1.5cm)$) {$11a$};
\node at ($(shuffle1) + (-.85cm, 2.5cm)$) {$01a$};
\node at ($(shuffle1) + (-.85cm, 3.5cm)$) {$00a$};
\node at ($(shuffle1) + (.6cm, 1.5cm)$) {$11b$};
\node at ($(shuffle1) + (.85cm, 2.5cm)$) {$10b$};
\node at ($(shuffle1) + (.85cm, 3.5cm)$) {$00b$};

\node at ($(shuffle2) + (-.5cm, 0cm)$) {$S_2$};
\node at ($(shuffle2) + (-.6cm, 1.5cm)$) {$11a$};
\node at ($(shuffle2) + (-.85cm, 2.5cm)$) {$10a$};
\node at ($(shuffle2) + (-.85cm, 3.5cm)$) {$00a$};
\node at ($(shuffle2) + (.6cm, 1.5cm)$) {$11b$};
\node at ($(shuffle2) + (.85cm, 2.5cm)$) {$01b$};
\node at ($(shuffle2) + (.85cm, 3.5cm)$) {$00b$};

\end{tikzpicture} 
\]

In words, they are those shuffles of $\Delta^1 \otimes (\Delta^1 \otimes C_2)$ where we place one shuffle of $\Delta^1 \times \Delta^1$ on the leaf $a$ and the other shuffle of $\Delta^1 \times \Delta^1$ on the leaf $b$. (In this picture, a label like $00a$ is shorthand for $0 \otimes 0 \otimes a$.) We will demonstrate how to adjoin $S_1$ to $(\Delta^1 \times \Delta^1) \otimes C_2$ by an inner anodyne map; the argument for subsequently adjoining $S_2$ is completely analogous. Consider the following external face $R$ of $S_1$:

\[
\begin{tikzpicture} 
[level distance=10mm, 
every node/.style={fill, circle, minimum size=.1cm, inner sep=0pt}, 
level 1/.style={sibling distance=20mm}, 
level 2/.style={sibling distance=10mm}, 
level 3/.style={sibling distance=5mm},
level 4/.style={sibling distance=4mm}]

\node (shuffle1)[style={color=white}] {} [grow'=up] 
child {node[draw,fill=none] {} 
	child{ node {}
		child
	}
	child{ node {}
		child
	}
};

\tikzstyle{every node}=[]


\node at ($(shuffle1) + (-.5cm, 0)$) {$R$};
\node at ($(shuffle1) + (-.6cm, 1.5cm)$) {$11a$};
\node at ($(shuffle1) + (-.85cm, 2.5cm)$) {$01a$};
\node at ($(shuffle1) + (.6cm, 1.5cm)$) {$11b$};
\node at ($(shuffle1) + (.85cm, 2.5cm)$) {$10b$};

\end{tikzpicture} 
\]

Then the map $(\Delta^1 \times \Delta^1) \otimes C_2 \rightarrow R \cup (\Delta^1 \times \Delta^1) \otimes C_2$ is a pushout of the map $\Lambda^E[R] \rightarrow R$, where $E$ is the set of inner edges $\{11a, 11b\}$. Subsequently, consider the external face $T_1$ (resp. $T_2$) obtained from $S_1$ by chopping off the leaf $00a$ and its adjacent vertex (resp. the leaf $00b$ and its adjacent vertex). Then $T_1$ (and then $T_2$) can be adjoined a pushout along the inner horn inclusion $\Lambda^E[T_1] \rightarrow T_1$ (and then along $\Lambda^E[T_2] \rightarrow T_2$). Finally, we can then adjoin $S_1$ itself by a pushout along the inner horn inclusion $\Lambda^E[S_1] \rightarrow S_1$.
$\Box$ 
\end{proof}

\subsection{The operadic model structure on $\mathbf{fSets}$}

In this section we will use the weak enrichment of the category of forest sets to establish a model structure on this category. 

\begin{definition}
An object $E$ of $\mathbf{fSets}$ is \emph{operadically local} if it satisfies the following three conditions:
\begin{itemize}
\item[(1)] For every normal monomorphism between normal forest sets $A \rightarrowtail B$,  the map
\begin{equation*}
\mathbf{hom}(B, E) \longrightarrow \mathbf{hom}(A, E)
\end{equation*}
is a categorical fibration of simplicial sets, i.e. a fibration in the Joyal model structure. 
\item[(2)] For any two normal forest sets $C$ and $D$, the map
\begin{equation*}
\mathbf{hom}(C \oplus D, E) \longrightarrow \mathbf{hom}(C \amalg D, E)
\end{equation*}
is a trivial fibration of simplicial sets.
\item[(3)] For every inner horn inclusion of a forest $\Lambda^e[F] \rightarrowtail F$, the map
\begin{equation*}
\mathbf{hom}(F, E) \longrightarrow \mathbf{hom}(\Lambda^e[F], E)
\end{equation*}
is a trivial fibration of simplicial sets.
\end{itemize}
\end{definition}

In particular, by applying condition (1) to the map $\varnothing \longrightarrow A$, the simplicial set $\mathbf{hom}(A, E)$ is an $\infty$-category for any normal forest set $A$ and operadically local object $E$.

We denote by $J$ the forest set which is the nerve of the groupoid interval, i.e. the category with two objects labelled $0, 1$ and an isomorphism between them. It comes with maps
\[
\xymatrix{
\{0\} \amalg \{1\} \ar[r]^-{i_0 \amalg i_1} & J \ar[r]^\varepsilon & \eta
}
\]
where $\{0\}$ and $\{1\}$ denote copies of $\eta$. We will use the short-hand notation
\begin{equation*}
\partial J := \{0\} \amalg \{1\}
\end{equation*}

Here is a reformulation and simplification of the previous definition:

\begin{lemma}
\label{lem:genbasicanodyne}
\begin{itemize}
\item[(i)] A forest set $E$ is an operadically local object if and only if $E$ has the right lifting property with respect to all maps of the following types:
\begin{eqnarray*}
(a) && \Lambda^n_k \otimes B \cup \Delta^n \otimes A \longrightarrow  \Delta^n \otimes B \\ 
(b) && J \otimes A \cup \{0\} \otimes B \longrightarrow J \otimes B \\
(c) && \Delta^n \otimes (C \amalg D) \cup \partial \Delta^n \otimes (C \oplus D) \longrightarrow \Delta^n \otimes (C \oplus D) \\
(d) && \Delta^n \otimes \Lambda^e[F] \cup \partial \Delta^n \otimes F \longrightarrow \Delta^n \otimes F.
\end{eqnarray*}
Here we assume $0 < k < n$, the map $A \rightarrowtail B$ is a normal monomorphism between normal objects, $C$ and $D$ are normal forest sets and $\Lambda^e[F] \rightarrow F$ is an inner horn inclusion.
\item[(ii)] A forest set $E$ is an operadically local object if and only if $E$ has the right lifting property with respect to all maps of the following types:
\begin{eqnarray*}
(a) && \Lambda^e[F] \longrightarrow F \\ 
(b) && J \otimes \partial F \cup \{0\} \otimes F \longrightarrow J \otimes F \\
(c) && \Delta^n \otimes (F \amalg G) \cup \partial \Delta^n \otimes (F \oplus G) \longrightarrow \Delta^n \otimes (F \oplus G).
\end{eqnarray*}
where $F$ and $G$ are representable forest sets and $\Lambda^e[F] \rightarrow F$ is an inner horn inclusion. 
\end{itemize}
\end{lemma}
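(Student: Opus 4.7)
The plan is to translate each of the three clauses in the definition of ``operadically local'' through the adjunction
\[
\mathbf{sSets}(M, \mathbf{hom}(X,Y)) \;\cong\; \mathbf{fSets}(M\otimes X, Y)
\]
of the weak tensoring (Proposition \ref{prop:weakenrichment}), and then to appeal to standard generating sets on the simplicial side together with the saturation machinery developed in the preceding sections.

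For part (i), I would use the characterizations that a map of simplicial sets is a Joyal fibration precisely when it has the right lifting property against the inner horn inclusions $\Lambda^n_k \hookrightarrow \Delta^n$ (for $0<k<n$) and the endpoint inclusion $\{0\} \hookrightarrow J$, and a trivial fibration precisely when it has RLP against the boundary inclusions $\partial\Delta^n \hookrightarrow \Delta^n$. Transposing condition (1) across the adjunction yields RLP of $E$ against the pushout-products listed in (a) and (b); transposing conditions (2) and (3) analogously yields (c) and (d). This proves part (i) directly.

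For part (ii), one direction is immediate: every map listed in (ii) is an instance of one in (i). (For example, (ii)(a) is the case $n=0$ of (i)(d), using that $\partial\Delta^0 = \varnothing$ and $\Delta^0 \otimes X \cong X$; (ii)(b) is the case $A = \partial F,\, B = F$ of (i)(b); and (ii)(c) is the case $C = F,\, D = G$ of (i)(c) for representable $F, G$.) For the converse I would proceed by saturation arguments. For types (a) and (d) of (i), Proposition \ref{prop:poprodinneranodyne} shows that the pushout-product of a simplicial inner horn (respectively of $\partial\Delta^n \hookrightarrow \Delta^n$) with the boundary inclusion of a forest (respectively with a forest inner horn inclusion) is inner anodyne; combined with the generation of all normal monomorphisms by boundary inclusions of forests (Corollary \ref{cor:gennormalmonos}), this expresses the maps in (i)(a) and (i)(d) as inner anodyne, and hence as lying in the saturated class generated by the forest inner horn inclusions (ii)(a). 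For types (b) and (c) I would instead use the elementary fact that, for a fixed map $i$, the class of monomorphisms $j$ such that the pushout-product of $i$ with $j$ lies in a prescribed saturated class is itself saturated. Applied to $i=\{0\}\hookrightarrow J$ this reduces (i)(b) to (ii)(b); a double version of the same argument, varying $C$ and $D$ in turn and using that $-\oplus D$ and $C \oplus -$ preserve colimits (Proposition \ref{prop:fsetsproperties}) together with the fact that $F \amalg G \hookrightarrow F \oplus G$ is a normal monomorphism between normal forest sets (Remark \ref{rmk:sumcofibration}), reduces (i)(c) to (ii)(c). The only routine but non-trivial bookkeeping concerns this last step, where one must check that the simultaneous skeletal filtrations of $C$ and $D$ build $C \oplus D$ from $C \amalg D$ as an iterated pushout of maps of the form $F \amalg G \hookrightarrow F \oplus G$ for non-degenerate cells $F, G$.
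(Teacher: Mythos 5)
Your proposal is correct and follows essentially the same approach as the paper's proof: establish part (i) by transposing the three defining conditions of operadic locality across the adjunction using the standard generating (trivial) cofibrations for the Joyal model structure, and then reduce the families in (i) to those in (ii) by saturation. In particular you invoke the same three ingredients the paper relies on for the reduction — Proposition \ref{prop:poprodinneranodyne} to absorb (i)(a) and (i)(d) into the saturation of the inner horn inclusions (ii)(a), Corollary \ref{cor:gennormalmonos} together with the usual pushout-product saturation argument to reduce (i)(b) to (ii)(b), and a skeletal induction to reduce (i)(c) to (ii)(c) — so the two arguments coincide in both structure and substance.
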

\begin{proof}
Recall that a map between $\infty$-categories is a categorical fibration if and only if it has the right lifting property with respect to the maps
\begin{eqnarray*}
\Lambda^n_k & \longrightarrow & \Delta^n, \quad\quad\quad 0 < k < n, \\
\{0\} & \longrightarrow & J.
\end{eqnarray*}
The statement of part (i) of the lemma is then clear from the definitions. We should verify part (ii). First, note that by Proposition \ref{prop:poprodinneranodyne} the maps of (i)(a) and (i)(d) are inner anodyne, which allows us to replace them by (ii)(a). We can reduce (i)(b) to (ii)(b) by the characterization of normal monomorphisms given in Corollary \ref{cor:gennormalmonos}. Finally, (i)(c) can be reduced to (ii)(c) by a standard skeletal induction argument (see the proof of Proposition \ref{prop:operadicwes} for a typical example). $\Box$
\end{proof}

\begin{definition}
\label{def:genbasicanodyne}
The class of \emph{operadic anodyne maps} is the saturation of the class of maps occurring in the lemma, i.e. the closure of (i)(a)-(d), or equivalently (ii)(a)-(c), under pushouts, transfinite compositions and retracts. We will call the maps in (ii)(a)-(c) the \emph{generating operadic anodyne maps}. Note that by Proposition \ref{prop:newnormalmonopoprod}(i), these are all normal monomorphisms. When $A$ and $B$ range over \emph{simplicial} sets, recall that the saturation (within the category of simplicial sets) of the class of maps in (i)(a) and (i)(b) is called the class of \emph{$J$-anodyne maps} of simplicial sets.
\end{definition}

For later use, let us record the following elementary property:
\begin{lemma}
\label{lem:basicanodpoprod}
For an operadic anodyne map $A \rightarrow B$ between forest sets and a cofibration $M \rightarrow N$ between simplicial sets, the pushout-product
\begin{equation*}
M \otimes B \cup N \otimes A \longrightarrow N \otimes B
\end{equation*}
is again an operadic anodyne map.
\end{lemma}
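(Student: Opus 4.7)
The plan is to apply the standard Leibniz-rule saturation argument. Since the pushout-product preserves colimits separately in each variable, and both classes of maps in question are saturated (the operadic anodynes by definition, the cofibrations of simplicial sets by the generating boundary inclusions $\partial\Delta^m \hookrightarrow \Delta^m$), it suffices to verify the conclusion when $i = \partial\Delta^m \hookrightarrow \Delta^m$ and $f$ is one of the three types of generators (ii)(a)--(c) of Lemma~\ref{lem:genbasicanodyne}. For $f$ the inner horn inclusion $\Lambda^e[F] \to F$, the pushout-product $i\,\hat{\otimes}\,f$ coincides by inspection with a generator of type (i)(d), hence is operadic anodyne.

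For the remaining two cases I would first rewrite the generator $f$ itself as a pushout-product inside $(\mathbf{fSets}, \otimes)$: case (ii)(b) as $(u_!i_!\{0\} \to u_!i_!J)\,\hat{\otimes}\,(\partial F \to F)$, and case (ii)(c) as $(u_!i_!\partial\Delta^n \to u_!i_!\Delta^n)\,\hat{\otimes}\,(F \amalg G \to F \oplus G)$. Viewing $i$ similarly as the normal monomorphism $\tilde{i} := u_!i_!(i)$ of forest sets (Remark~\ref{rmk:warning} ensures that $u_!i_!$ sends monomorphisms of simplicial sets to normal monomorphisms of forest sets) and using the strict associativity of the Boardman--Vogt tensor product on $\mathbf{fSets}$, one reorganizes $i\,\hat{\otimes}\,f$ as $(\tilde{i}\,\hat{\otimes}\,\tilde{j})\,\hat{\otimes}\,\varphi$, where $\tilde{j}$ denotes the first (simplicial) factor of $f$ pushed into $\mathbf{fSets}$ via $u_!i_!$ and $\varphi$ is the remaining forest-set normal monomorphism.

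The key observation is that the composite $u_!i_! : \mathbf{sSets} \to \mathbf{fSets}$ is strictly symmetric monoidal from $(\mathbf{sSets}, \times)$ to $(\mathbf{fSets}, \otimes)$: the Boardman--Vogt tensor product of two categories agrees with their Cartesian product, and each simplex $\Delta^n$ is the nerve of $[n]$, so combined with the colimit-preservation of $i_!$ and the monoidality of $u_!$ (Proposition~\ref{prop:fsetsproperties}(iii)) one obtains $u_!i_!(M \times K) \cong u_!i_!(M) \otimes u_!i_!(K)$ for all $M, K \in \mathbf{sSets}$. Consequently $\tilde{i}\,\hat{\otimes}\,\tilde{j} = u_!i_!(i\,\hat{\times}\,j)$, where $\hat{\times}$ denotes the Leibniz product in $\mathbf{sSets}$. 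In case (ii)(b) this is a $J$-anodyne map, since the Joyal model structure is Cartesian monoidal and $\{0\} \to J$ is $J$-anodyne; writing it as a transfinite composition of pushouts of its generators $\Lambda^n_k \hookrightarrow \Delta^n$ and $\{0\} \hookrightarrow J$ and post-composing with $\,\hat{\otimes}\,\varphi$ yields a transfinite composition of pushouts of operadic anodyne generators of types (i)(a)--(b); in case (ii)(c) the Leibniz product is a monomorphism of simplicial sets, and the same procedure yields pushouts of generators of type (i)(c). The main obstacle is that the simplicial tensoring on $\mathbf{fSets}$ is only weakly associative, the map $\alpha$ of Section~\ref{subsec:weakenrichment} generally failing to be an isomorphism; the strict monoidality of $u_!i_!$ circumvents this exactly in the regime needed, allowing all rearrangements to be performed inside the genuinely monoidal category $(\mathbf{fSets}, \otimes)$.
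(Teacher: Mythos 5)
Your reduction to generators is fine, and the (ii)(a) case is handled correctly: for $i = \partial\Delta^m \to \Delta^m$ and $f = \Lambda^e[F] \to F$, the pushout-product is literally a generator of type (i)(d).

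The argument for cases (ii)(b) and (ii)(c) has a genuine gap: the tensor product $\otimes$ on $\mathbf{fSets}$ is \emph{not} associative, so the rearrangement $i\,\hat\otimes\,(\tilde j\,\hat\otimes\,\varphi) = (\tilde i\,\hat\otimes\,\tilde j)\,\hat\otimes\,\varphi$ is false. The strict monoidality of $u_!i_!$ does give $u_!i_!(M\times K)\cong u_!i_!(M)\otimes u_!i_!(K)$, but once a third, genuinely forest-set factor $X$ enters, the associator $\alpha_{M,K,X}\colon (M\times K)\otimes X\to M\otimes(K\otimes X)$ is generally only a monomorphism, not an isomorphism. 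The paper gives an explicit counterexample in the proof of Proposition~\ref{prop:forestsetsH2}: $(\Delta^1\times\Delta^1)\otimes C_2$ is a \emph{proper} subobject of $\Delta^1\otimes(\Delta^1\otimes C_2)$, missing two shuffles. Thus the codomain of $i\,\hat\otimes\, f$ is $N\otimes(J\otimes F)$, which does not agree with $(N\times J)\otimes F$, and your ``rearrangements inside the genuinely monoidal category $(\mathbf{fSets},\otimes)$'' cannot be performed---$(\mathbf{fSets},\otimes)$ is not a monoidal category. (The fact that $\alpha$ is a \emph{trivial cofibration} is part of axiom (H2), proved only later in Lemma~\ref{lem:forestsetsH2}, and even if available would give weak equivalence rather than the equality you need, since the class of operadic anodynes is not a priori closed under two-out-of-three.) The paper's own proof avoids associativity entirely: it uses only the \emph{symmetry} of the weak enrichment (Proposition~\ref{prop:weakenrichment}), i.e.\ the honest isomorphism $M\otimes(N\otimes X)\cong N\otimes(M\otimes X)$, to move $J$ (respectively $\Delta^n$) to the outermost position, after which the map is visibly of the form (i)(b) (respectively (i)(c)) with inner factor $M\otimes F\cup N\otimes\partial F\to N\otimes F$, a normal monomorphism by Proposition~\ref{prop:newnormalmonopoprod}.
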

\begin{proof}
Clearly it suffices to treat the case where $A \rightarrow B$ is a generating operadic anodyne. If it is of the form (ii)(a), then the desired conclusion follows from Proposition \ref{prop:poprodinneranodyne}. Now suppose $A \longrightarrow B$ is a map of the form
\begin{equation*}
\{0\} \otimes F \cup J \otimes \partial F \longrightarrow J \otimes F
\end{equation*}
for a forest $F$. We have to consider the map
\begin{equation*}
M \otimes B \cup N \otimes A \longrightarrow N \otimes B.
\end{equation*}
Explicitly, it can be written as
\begin{equation*}
M \otimes \bigl(J \otimes F\bigr) \cup N \otimes \bigl(\{0\} \otimes F \cup J \otimes \partial F \bigr) \longrightarrow N \otimes \bigl(J \otimes F\bigr).
\end{equation*}
By the symmetry of the enrichment, this is isomorphic to the map
\begin{equation*}
\{0\} \otimes \bigl(N \otimes F\bigr) \cup J \otimes \bigl(M \otimes F \cup N \otimes \partial F\bigr) \longrightarrow J \otimes \bigl(N \otimes F \bigr).
\end{equation*}
This is an operadic anodyne, since the map 
\begin{equation*}
M \otimes F \cup N \otimes \partial F \longrightarrow N \otimes F
\end{equation*}
is a normal monomorphism by virtue of Proposition \ref{prop:newnormalmonopoprod}. The case where $A \longrightarrow B$ is of the form
\begin{equation*}
\Delta^n \otimes (C \amalg D) \cup \partial \Delta^n \otimes (C \oplus D) \longrightarrow \Delta^n \otimes (C \oplus D)
\end{equation*}
is treated similarly. $\Box$
\end{proof}

Also, we immediately conclude the following:
\begin{proposition}
\label{prop:countable}
\begin{itemize}
\item[(i)] For any forest set $X$ there exists an operadic anodyne map $X \longrightarrow X_f$ into an operadically local object $X_f$.
\item[(ii)] For a monomorphism $X \longrightarrow Y$ and any choice of $X \longrightarrow X_f$ as in (i), there exists an operadic anodyne $Y \longrightarrow Y_f$ such that $Y_f$ is an operadically local object and there is a commutative square of monomorphisms as follows:
\[
\xymatrix{
X \ar[r]\ar[d] & Y \ar[d] \\
X_f \ar[r] & Y_f
}
\]
\item[(iii)] If $X$ is countable, $X_f$ can be chosen to be countable as well.
\item[(iv)] If $A \subseteq Y_f$ is countable, then there exists a countable $X$ and a commutative square of monomorphisms
\[
\xymatrix{
X \ar[r]\ar[d] & Y \ar[d] \\
X_f \ar[r] & Y_f
}
\]
such that the map $A \longrightarrow Y_f$ factors through $X_f$.
\end{itemize}
\end{proposition}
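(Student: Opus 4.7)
Everything follows from Quillen's small object argument applied to the set $\mathcal{I}$ of generating operadic anodyne maps given in Lemma~\ref{lem:genbasicanodyne}(ii). The key observation is that each map in $\mathcal{I}$ has finite domain and codomain---they are built from finite simplices, from the finite object $J$, and from finite forests---hence is $\omega$-small; the small object argument therefore terminates after $\omega$ steps.

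For (i), factor $X \to \ast$ as $X \to X_f \to \ast$ where the first map is an $\omega$-indexed composition of pushouts of coproducts of elements of $\mathcal{I}$ (hence operadic anodyne) and the second has the right lifting property against $\mathcal{I}$; by Lemma~\ref{lem:genbasicanodyne}(ii), $X_f$ is then operadically local. For (ii), given $X \to X_f$ from (i), form the pushout $Y' := Y \cup_X X_f$ and apply (i) to $Y' \to \ast$ to obtain an operadic anodyne $Y' \to Y_f$. The map $Y \to Y'$ is a pushout of the operadic anodyne $X \to X_f$ and hence operadic anodyne; both it and the composite $Y \to Y_f$ are monomorphisms, since the generators in $\mathcal{I}$ are themselves monos (Proposition~\ref{prop:newnormalmonopoprod}(i)) and monos are stable under pushout in presheaf categories. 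Dually $X_f \to Y' \to Y_f$ is a mono. For (iii), if $X$ is countable then each stage of the small object argument stays countable, because there are only countably many attaching squares from the finite domains of generators into a countable object and each generator has finite codomain; the $\omega$-colimit $X_f$ is therefore countable.

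Part (iv) is the most delicate. The construction of $Y_f$ presents it as an increasing union $Y = Y^{(0)} \subseteq Y^{(1)} \subseteq \cdots$, where $Y^{(n+1)}$ is obtained from $Y^{(n)}$ by simultaneously pushing out along every attaching square of an element of $\mathcal{I}$ existing at stage $n$. Given a countable subcomplex $A \subseteq Y_f$, the compactness of the codomains of generators in $\mathcal{I}$ implies that $A$ meets only countably many of these attached cells. Build a countable $X \subseteq Y$ by a back-and-forth closure: start with $X_0 := A \cap Y$, and at each step enlarge by including the images in $Y$ of the sources of all attaching squares whose attached cell meets $A$ or the growing subcomplex, following these sources down through the filtration until they land in $Y$. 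Only countably many squares arise in total, so $X := \bigcup_n X_n \subseteq Y$ is countable. Running the small object argument on this $X$ using exactly the restricted attaching squares produces a subcomplex $X_f \subseteq Y_f$ containing $A$, and by (iii) it is countable.

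The main obstacle lies in the bookkeeping for (iv): one must set up the small object argument for $Y$ in a way sufficiently functorial with respect to countable subcomplexes $X \subseteq Y$ that the replacement of $X$ embeds inside that of $Y$ as a subobject still containing $A$. Choosing the version in which \emph{all} attaching squares at stage $n$ are pushed out simultaneously makes this manageable, because any subcomplex $X^{(n)} \subseteq Y^{(n)}$ together with any subset of the attaching squares available at stage $n$ then lifts canonically to a subcomplex $X^{(n+1)} \subseteq Y^{(n+1)}$; iterating this yields the desired compatibility between $X_f$ and $Y_f$.
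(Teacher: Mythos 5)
Your approach is the same as the paper's: all four parts are established via the small object argument applied to the generating set $\mathcal{I}$ from Lemma~\ref{lem:genbasicanodyne}(ii), with (iii) and (iv) following from smallness of the domains of those generators. The paper's own proof is a one-liner citing ``standard arguments'' and ``finiteness of the objects involved,'' so you have filled in details rather than taken a different route.

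One point deserves flagging, although it is an imprecision the paper itself shares. You write that $J$ is a ``finite object''; this is false. The forest set $J$ is the nerve of the groupoid interval (the contractible groupoid on two objects), which has two nondegenerate simplices in \emph{every} dimension $n\geq 0$, so it is countably infinite, not finite. Consequently the type (b) generators $J\otimes\partial F\cup\{0\}\otimes F \to J\otimes F$ have countably infinite domains and codomains, and these are not $\omega$-compact. For (i) and (ii) this causes no harm, since countably presented objects are still small (one simply allows the small object argument to run beyond $\omega$ if needed). For (iii) and (iv), however, ``each generator has finite codomain'' is exactly what is no longer literally true, so your count of attaching squares and the ``meets only countably many cells'' compactness argument in (iv) both need to be adjusted: one must either argue that, after the type (a) and type (c) lifting conditions are enforced (which use only finite cells), the remaining type (b) lifting data are controlled by finite information, or carefully bound the number of type (b) cells one actually needs to adjoin. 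The paper's phrase ``clear from finiteness'' elides the same point, so your write-up is faithful to the spirit of the paper's proof while inheriting its slight imprecision; it is worth correcting ``finite'' to ``countable'' and saying a word about why the countability claims survive.
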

\begin{proof}
The first two parts follow from standard arguments involving the small object argument. The rest is clear from the finiteness of the objects involved in the maps of Lemma \ref{lem:genbasicanodyne}(ii)(a)-(c). $\Box$
\end{proof}

We now define the classes of maps involved in the operadic model structure.

\begin{definition}
\begin{itemize}
\item[(i)] A map $X \longrightarrow Y$ in $\mathbf{fSets}$ is called a \emph{cofibration} if it is a normal monomorphism.
\item[(ii)] A map $X \longrightarrow Y$ in $\mathbf{fSets}$ is called an \emph{operadic weak equivalence} if there exists a commutative diagram
\[
\xymatrix{
X' \ar@{->>}[d] \ar[r] & Y'\ar@{->>}[d] \\
X \ar[r] & Y
}
\]
where the vertical maps are normalizations and $X' \longrightarrow Y'$ induces an equivalence of $\infty$-categories
\begin{equation*}
\mathbf{hom}(Y', E) \longrightarrow \mathbf{hom}(X', E)
\end{equation*}
for every operadically local object $E$. (One could construct the diagram so that the map $X' \longrightarrow Y'$ is in addition a cofibration, in which case the stated map between $\infty$-categories will in fact be a trivial fibration.) 
\item[(iii)] A map $X \longrightarrow Y$ is called an \emph{operadic fibration} if it has the right lifting property with respect to all trivial cofibrations, i.e. those cofibrations that are also operadic weak equivalences.
\end{itemize}
\end{definition}

Another useful concept is that of $J$-homotopy:

\begin{definition}
Two maps $f,g: X \longrightarrow Y$ between forest sets are \emph{$J$-homotopic} if there exists a dashed arrow as indicated in the following diagram:
\[
\xymatrix{
\{0\} \amalg \{1\} \ar[d]\ar[r]^-{f \amalg g} & \mathbf{hom}(X,Y) \\
J \ar@{-->}[ur] &
}
\]
\end{definition}

\begin{remark}
The previous definition gives rise to an obvious notion of $J$-homotopy equivalence between forest sets. Using the fact that $J$-homotopy equivalences of simplicial sets are equivalences in the Joyal model structure, it is easy to see that a $J$-homotopy equivalence of normal forest sets is an operadic weak equivalence.
\end{remark}

The rest of this section and the next will be devoted to a proof of the following theorem. We will give a proof using fairly elementary methods to stress the essential simplicity of the arguments involved.

\begin{theorem}
\label{thm:basicmodelstructure}
\begin{itemize}
\item[(i)] The normal monomorphisms, operadic weak equivalences and operadic fibrations define a model structure on $\mathbf{fSets}$, to be referred to as the \emph{operadic model structure}.
\item[(ii)] The operadic model structure is cofibrantly generated and left proper.
\item[(iii)] The fibrant objects in this model structure are exactly the operadically local objects.
\item[(iv)] The fibrations between fibrant objects are precisely the maps having the right lifting property with respect to the operadic anodyne morphisms.
\item[(v)] The operadic model structure is homotopically enriched over the Joyal model structure on the category of simplicial sets.
\end{itemize}
\end{theorem}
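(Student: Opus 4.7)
The plan is to construct a cofibrantly generated model structure with generating cofibrations $I = \{\partial F \rightarrowtail F\}_{F \in \mathbf{\Phi}}$ and generating trivial cofibrations obtained by suitably enlarging the set of operadic anodyne maps. First I will verify the formal closure properties: operadic weak equivalences satisfy two-out-of-three and are closed under retracts, since equivalences of $\infty$-categories (in the Joyal sense) do and since the normalization square in the definition can be chosen functorially; cofibrations are closed under pushout, transfinite composition, and retract as a saturated class (Corollary~\ref{cor:gennormalmonos}); and operadic fibrations are so by their lifting-property definition.

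The central technical input is that every operadic anodyne map between normal forest sets is an operadic weak equivalence; more precisely, for $E$ operadically local and $A \rightarrowtail B$ an operadic anodyne, the restriction map $\mathbf{hom}(B,E) \rightarrow \mathbf{hom}(A,E)$ is a trivial fibration of simplicial sets. By adjunction this reduces to a lifting property against the boundary inclusions $\partial\Delta^n \rightarrow \Delta^n$, which after rearrangement through the weak $\mathbf{sSets}$-tensoring becomes the three defining conditions of an operadically local object in Lemma~\ref{lem:genbasicanodyne}, modulo the replacement of iterated tensors $\Delta^m \otimes (\Delta^n \otimes X)$ by the single tensor $(\Delta^m \times \Delta^n) \otimes X$; it is axiom (H2), established for $\mathbf{fSets}$ in Proposition~\ref{prop:forestsetsH2} and propagated to arbitrary cofibrations via Lemma~\ref{lem:basicanodpoprod}, that bridges this gap.

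Next I will verify the factorization axioms. The (cofibration, trivial fibration) factorization is produced by the small object argument applied to $I$; every map with the right lifting property against $I$ is a $J$-deformation retract, via a standard argument using the operadic anodyne of type (b) in Lemma~\ref{lem:genbasicanodyne}, and hence is an operadic weak equivalence. The main obstacle of the entire proof lies in the (trivial cofibration, fibration) factorization, for which I must exhibit a \emph{set} $J$ generating the trivial cofibrations. Here I will invoke the countability statements of Proposition~\ref{prop:countable}(iii)--(iv) to bound the cardinality of the witnesses detecting weak equivalences, so that $J$ may be taken as the union of the generating operadic anodynes of Definition~\ref{def:genbasicanodyne} with the set of trivial cofibrations whose domain and codomain are of size below a fixed countable cardinal. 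A small object argument on such a $J$ then produces the required factorization, and by construction an object is fibrant precisely when it is operadically local, which gives (iii).

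The remaining items are now short. Left properness follows by a skeletal induction using Proposition~\ref{prop:skeletalfiltration} together with the stability of weak equivalences under pushout along normal monos, the latter verified by evaluating $\mathbf{hom}(-,E)$ against an operadically local $E$. Item (iv), the characterization of fibrations between fibrant objects, reduces by a retract-of-pushout argument to the statement that every trivial cofibration with operadically local codomain is a retract of a transfinite composition of pushouts of operadic anodynes, which falls out of the construction of $J$. Finally, the homotopical enrichment (v) over the Joyal model structure is a formal consequence of Lemma~\ref{lem:basicanodpoprod} (for the pushout-product of cofibrations against operadic anodynes, giving (H1)) combined with Proposition~\ref{prop:forestsetsH2} (giving (H2)) in the sense of Definition~\ref{def:stronghomotenrich}.
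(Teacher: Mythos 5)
Your overall strategy is the same as the paper's: construct a cofibrantly generated model structure with generating cofibrations the boundary inclusions, use the weak simplicial enrichment and the operadic anodynes as the key technical input, and control the generating trivial cofibrations by a countability (cardinality-bounding) argument. That part is sound. However, there is a genuine gap in your treatment of item~(iv), and a more minor one in~(v).

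For (iv) you propose to reduce the characterization of fibrations between fibrant objects to the statement that \emph{every trivial cofibration with operadically local codomain is a retract of a transfinite composition of pushouts of operadic anodynes}, and you assert that this ``falls out of the construction of~$J$.'' It does not. Your $J$ is the union of the generating operadic anodynes with the set of small (countable, normal) trivial cofibrations; the latter are \emph{not} in the saturation of the operadic anodynes, and nothing in the construction of $J$ equates the two classes. If one tries to prove your auxiliary statement directly --- factor $i\colon A\rightarrowtail B$ as $A\to C\to B$ with $A\to C$ operadic anodyne and $C\to B$ having the right lifting property against operadic anodynes, so that $C$ is operadically local and $C\to B$ is a weak equivalence between fibrants having that lifting property --- then to conclude that $C\to B$ is a trivial fibration (hence lifts against $i$ and exhibits $i$ as a retract of $A\to C$) you need precisely item~(iv). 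So the argument as stated is circular: the auxiliary statement is a \emph{consequence} of (iv), not a way to prove it. The non-circular route is the direct one: factor the given map $Y\to X$ as $Y\xrightarrow{i}Z\xrightarrow{p}X$ with $p$ a fibration, use fibrancy of $Y$ to produce a retraction $r$ of $i$, and then build a $J$-homotopy from $fr$ to $p$ relative to $Y$ using the pushout-product stability statements (Lemma~\ref{lem:halfhomotenrich}) together with the fibrancy of $X$ and the lifting property of $f$ against the operadic anodyne $(J\otimes Y)\cup_Y Z\to J\otimes Z$. This exhibits $f$ as a retract of $p$ over $X$ and thus a fibration. Only after (iv) is established this way can one deduce the characterization of trivial cofibrations in terms of operadic anodynes (Lemma~\ref{lem:basictrivcofs}).

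For (v), Lemma~\ref{lem:basicanodpoprod} alone does not give (H1): it only handles the pushout-product of an \emph{operadic anodyne} with a simplicial cofibration. Axiom (H1) requires the pushout-product to be a trivial cofibration whenever \emph{either} leg is trivial, for arbitrary trivial cofibrations of forest sets (not just operadic anodynes) and arbitrary Joyal trivial cofibrations. The case where the simplicial leg is trivial (and the forest leg merely a cofibration) requires a separate argument using the symmetry of the enrichment and the fact that the relevant mapping space is a categorical fibration; and the case where the forest leg is an arbitrary trivial cofibration needs the reduction to trivial cofibrations between normal objects (Lemma~\ref{lem:trivcofgenbetweennormals}) and then Lemma~\ref{lem:halfhomotenrich}, which itself relies on Lemma~\ref{lem:exphomotenriched} and axiom (H2). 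Your citation is a sketch, but you should be aware that (H1) is not a one-line consequence of Lemma~\ref{lem:basicanodpoprod}.
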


Before embarking on the proof, let us draw an immediate consequence.

\begin{corollary}
The adjoint functors
\[
\xymatrix@C=40pt{
u^*: \mathbf{fSets} \ar@<.5ex>[r] & \mathbf{dSets}: u_* \ar@<.5ex>[l]
}
\]
form a Quillen pair between the operadic model structure on $\mathbf{fSets}$ and the model structure on $\mathbf{dSets}$ of Theorem \ref{thm:CMmodelstruct}.
\end{corollary}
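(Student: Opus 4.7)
To show $(u^*, u_*)$ is a Quillen pair, I verify that the left adjoint $u^*$ preserves cofibrations and trivial cofibrations. Preservation of cofibrations is already established: the functor $u^*$ sends normal monomorphisms to normal monomorphisms, as noted in Section \ref{sec:normalmonos}. Since the operadic model structure is cofibrantly generated with trivial cofibrations generated by the operadic anodyne maps of Lemma \ref{lem:genbasicanodyne}(ii), it suffices to check that $u^*$ sends each of the three families of generators there to a trivial cofibration in $\mathbf{dSets}$.

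Family (c) is sent to an isomorphism: by Proposition \ref{prop:fsetsproperties}(iv)(b), $u^*(F \oplus G) = u^*(F) \amalg u^*(G) = u^*(F \amalg G)$, so source and target of $u^*$ applied to the generator coincide. Family (b), consisting of the pushout-products of $\{0\} \to J$ with boundary inclusions $\partial F \to F$, becomes under $u^*$ --- which commutes with the tensoring by Proposition \ref{prop:fsetsproperties}(iii) --- the pushout-product of $\{0\} \to J_d$ (where $J_d := u^*(J)$ is the image under $i_!$ of the Joyal trivial cofibration $\{0\} \to J$) with the normal monomorphism $u^*(\partial F) \to u^*(F)$. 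This is a trivial cofibration by the compatibility of the Cisinski--Moerdijk model structure on $\mathbf{dSets}$ with the Joyal model structure on simplicial sets (via $i_!$).

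Family (a), the forest inner horn inclusions $\Lambda^e[F] \to F$, is handled by the hypersaturation machinery of Section \ref{sec:tensorprodsnormals}. Let $\mathcal{A}$ be the class of normal monomorphisms in $\mathbf{fSets}$ whose image under $u^*$ is a trivial cofibration in $\mathbf{dSets}$. Since $u^*$ preserves normal monomorphisms and colimits and converts $\oplus$ into $\amalg$, and since trivial cofibrations in $\mathbf{dSets}$ are stable under the relevant operations (with cancellation supplied by 2-out-of-3 for weak equivalences), the class $\mathcal{A}$ is hypersaturated. The key input is that $\mathcal{A}$ contains every forest Segal core $\mathrm{fSc}(uT) \to uT$: induction on $T$ using the recursion of Remark \ref{rmk:Segalcore}(c) shows that $u^*(\mathrm{fSc}(uT))$ coincides with the dendroidal Segal core $\mathrm{Sc}_d(T) \subseteq T$, whose inclusion into $T$ is inner anodyne in $\mathbf{dSets}$ by the dendroidal analogue of Proposition \ref{prop:Segalcoreinnanod} established in \cite{cisinskimoerdijk2}. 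Proposition \ref{prop:Segcoregeninneran} then implies $\mathcal{A}$ contains all tree inner horn inclusions $\Lambda^e[uT] \to uT$, and the $\oplus$-distributivity formula (\ref{eq:innerhornforest}) together with the hypersaturation of $\mathcal{A}$ (closure under direct sums, pushouts and cancellation) propagates the conclusion to all forest inner horn inclusions.

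The main obstacle is family (a): naively $u^*(\Lambda^e[uT])$ is strictly larger than the dendroidal inner horn $\Lambda^e_d[T]$, since the forest construction contains an extra root face whose $u^*$-image is a disjoint union of subtrees of $T$ that typically do not lie in $\Lambda^e_d[T]$. Routing through forest Segal cores, which under $u^*$ map onto dendroidal Segal cores, neatly avoids any direct comparison between the forest and dendroidal inner horns.
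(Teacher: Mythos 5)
Your reduction in the second sentence --- ``the operadic model structure is cofibrantly generated with trivial cofibrations generated by the operadic anodyne maps'' --- is not correct, and this is a genuine gap. The model structure on $\mathbf{fSets}$ is indeed cofibrantly generated, but its generating trivial cofibrations are the trivial cofibrations between countable normal objects (Lemma \ref{lem:countablegentrivcof}). The saturation of the operadic anodynes is a strictly smaller class than the class of trivial cofibrations: Lemma \ref{lem:basictrivcofs} says precisely that one obtains all trivial cofibrations from the operadic anodynes only after closing further under the two-out-of-three cancellation property. So verifying that $u^*$ sends the generators of Lemma \ref{lem:genbasicanodyne}(ii) to trivial cofibrations does not, by itself, give preservation of all trivial cofibrations.

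Your computations can still be assembled into a correct proof, but the right hook is Proposition \ref{prop:trivcofoperadic}: the trivial cofibrations in $\mathbf{fSets}$ are the smallest \emph{hypersaturated} class of cofibrations containing (a') the Segal cores $\mathrm{fSc}(uT)\to uT$, (b) the maps $\{0\}\otimes uT\cup J\otimes\partial(uT)\to J\otimes uT$, and (c) the maps $T_1\amalg\cdots\amalg T_k\to T_1\oplus\cdots\oplus T_k$. Your class $\mathcal{A}$ is hypersaturated --- your argument for that is sound, with the cancellation property supplied by two-out-of-three in $\mathbf{dSets}$ --- and your three verifications show $\mathcal{A}$ contains (a'), (b), (c). Citing Proposition \ref{prop:trivcofoperadic} at that point finishes the job. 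The paper's own route is shorter and avoids the hypersaturation machinery entirely: one shows instead that $u_*$ preserves fibrant objects and fibrations between fibrant objects, which by Theorem \ref{thm:basicmodelstructure}(iv) reduces exactly to $u^*$ sending operadic anodynes to trivial cofibrations. Two smaller points: the compatibility of $u^*$ with the tensor product is Proposition \ref{prop:fsetsproperties}(iv)(a), not (iii); and the worry in your final paragraph is unfounded --- each constituent subtree of $\partial_{\mathrm{root}}(uT)$ is a proper subobject of $T$ that factors through a face of $T$ other than $\partial_e$, so in fact $u^*\bigl(\Lambda^e[uT]\bigr)=\Lambda^e[T]$ and the Segal core detour, while valid, is not actually forced on you.
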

\begin{proof}[Proof of Corollary]
It suffices to show that $u^*$ preserves cofibrations and that $u_*$ preserves fibrant objects and fibrations between fibrant objects. The fact that $u^*$ preserves cofibrations was already discussed in Section \ref{sec:normalmonos}. Part (iv) of the theorem now shows that it suffices to prove that $u^*$ sends operadic anodynes to trivial cofibrations in $\mathbf{dSets}$. Since the model structure on $\mathbf{dSets}$ is homotopically enriched over the Joyal model structure, this is clear from the fact that $u^*$ preserves tensor products and sends direct sums to coproducts. $\Box$
\end{proof}

Let us now turn our attention to the proof of Theorem \ref{thm:basicmodelstructure}. We will begin with several lemmas concerning the weak equivalences. The first one shows that the definition of operadic weak equivalence is independent of the chosen square involving normalizations of $X$ and $Y$.

\begin{lemma}
\label{lem:normalizationsindep}
If $X \longrightarrow Y$ is an operadic weak equivalence and we have a square 
\[
\xymatrix{
X'\ar@{->>}[d]\ar[r] & Y' \ar@{->>}[d] \\
X \ar[r] & Y
}
\]
in which the vertical maps are normalizations, then the induced map
\begin{equation*}
\mathbf{hom}(Y', E) \longrightarrow \mathbf{hom}(X', E)
\end{equation*}
is an equivalence of $\infty$-categories for any operadically local object $E$. 
\end{lemma}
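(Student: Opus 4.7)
The plan is to show that any two normalization squares of $X \to Y$ differ by $J$-homotopy equivalences and squares commuting up to $J$-homotopy, and that applying $\mathbf{hom}(-, E)$ to such data yields equivalences of $\infty$-categories and natural isomorphisms between them, so that the property of inducing an equivalence is transferred.

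First, I would use the defining lifting property of normalizations to build comparison maps. Since $X'' \to X$ has the right lifting property against all normal monomorphisms, in particular against $\varnothing \rightarrowtail X'$, the map $X' \to X$ lifts to a morphism $\alpha\colon X' \to X''$ over $X$; symmetrically, one obtains $\alpha'\colon X'' \to X'$ over $X$, and analogously $\beta\colon Y' \to Y''$ and $\beta'\colon Y'' \to Y'$ over $Y$.

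Next, I would prove that $\alpha, \alpha', \beta, \beta'$ are $J$-homotopy equivalences of normal forest sets, and that the comparison square commutes up to $J$-homotopy. The key observation is the following: if $B' \to B$ is a normalization and $A$ is normal, then any two maps $A \to B'$ that agree after composition with $B' \to B$ are $J$-homotopic over $B$. Indeed, extending a pair $A \otimes \partial J \to B'$ to $A \otimes J \to B'$ compatibly over $B$ (using the canonical projection $A \otimes J \to A \otimes \eta = A$) is a lifting problem against $A \otimes \partial J \to A \otimes J$, which is a normal monomorphism by Proposition \ref{prop:newnormalmonopoprod} since $J$ is a simplicial set; the right lifting property of $B' \to B$ solves it. Applying this principle to the self-maps $\alpha'\alpha$ and $\mathrm{id}_{X'}$ over $X$ (and likewise to the other three compositions) shows $\alpha, \alpha', \beta, \beta'$ are $J$-homotopy inverses. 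Applied to $\beta f'$ and $f''\alpha$, viewed as maps $X' \to Y''$ projecting to the common composite $X' \to X \to Y$, it shows that the comparison square commutes up to $J$-homotopy over $Y$.

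Finally, I would transport these $J$-homotopies through $\mathbf{hom}(-, E)$ for $E$ operadically local. By the symmetric weak enrichment, a $J$-homotopy $h\colon A \otimes J \to B$ induces $h^{*}\colon \mathbf{hom}(B, E) \to \mathbf{hom}(A \otimes J, E) = \mathbf{hom}(A, E^{J})$, and by Lemma \ref{lem:exphomotenriched} the map $\mathbf{hom}(A, E^{J}) \to \mathbf{hom}(A, E)^{J}$ is a trivial fibration of $\infty$-categories. Composing through a section yields an edge $J \to \mathbf{hom}(\mathbf{hom}(B,E), \mathbf{hom}(A,E))$ witnessing that $f^{*}$ and $g^{*}$ are $J$-homotopic, hence naturally isomorphic in the Joyal model structure. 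Therefore $\alpha^{*}$ and $\beta^{*}$ are equivalences of $\infty$-categories, and the square with horizontal arrows $f''^{*}, f'^{*}$ and vertical arrows $\beta^{*}, \alpha^{*}$ commutes up to natural isomorphism; since $f'^{*}$ is an equivalence by hypothesis, so is $f''^{*}$.

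The main obstacle will be making the last step precise: translating a $J$-homotopy of forest set maps into a genuine natural isomorphism of functors between the Joyal-fibrant $\infty$-categories $\mathbf{hom}(-, E)$. This requires the symmetric weak enrichment together with the fact from Lemma \ref{lem:exphomotenriched} that the comparison $\beta_{J, A, E}$ is a trivial fibration, so that $J$-homotopies factor through the honest exponential $\mathbf{hom}(A, E)^{J}$ up to trivial fibration.
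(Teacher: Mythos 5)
Your proof is correct and reaches the same conclusion as the paper, but by a somewhat different route. The paper fixes a \emph{special} reference square $(X'', Y'')$ in which $X'' \rightarrowtail Y''$ is a normal monomorphism, and then produces comparison maps by lifting through the pushout $X' \amalg_{X''} Y'' \to Y'$; this guarantees that the comparison square of normalizations commutes \emph{strictly}, so that only the vertical maps need to be shown to induce $\mathbf{hom}$-equivalences. You instead build the four comparison maps $\alpha, \alpha', \beta, \beta'$ completely independently (by lifting against $\varnothing \rightarrowtail X'$, etc.), which is more symmetric but means the comparison square only commutes up to $J$-homotopy; you then compensate by proving that square commutes up to $J$-homotopy and transporting the $J$-homotopies through $\mathbf{hom}(-,E)$ via Lemma \ref{lem:exphomotenriched}. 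That transport step is also needed implicitly in the paper's proof (to conclude that a $J$-homotopy equivalence $X'' \simeq X'$ of normal objects gives an equivalence of $\infty$-categories after applying $\mathbf{hom}(-,E)$), so you have actually spelled out a detail the paper glosses over. Two minor slips: (a) you do not need to ``compose through a section'' of the trivial fibration $\mathbf{hom}(A, E^J) \to \mathbf{hom}(A,E)^J$ — composing the map $h^*$ with $\beta_{J,A,E}$ in the forward direction already produces the desired edge in $\mathbf{hom}(A,E)^J$, and restricting at $\{0\}$ and $\{1\}$ correctly recovers $f^*$ and $g^*$ by naturality of the associator; and (b) your final sentence inverts the roles: the hypothesis furnishes \emph{one} square for which the map on $\mathbf{hom}$ is an equivalence (say $f''^*$), and the homotopy-commutative square plus the equivalences $\alpha^*, \beta^*$ then transfer this to $f'^*$, not the other way around. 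Neither slip is a gap in the argument.
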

\begin{proof}
First, construct a square
\[
\xymatrix{
X'' \ar@{->>}[d]\ar[r] & Y'' \ar@{->>}[d] \\
X \ar[r] & Y
}
\]
by choosing a normalization $X''$ of $X$ and then factoring the composite map $X'' \longrightarrow Y$ into a normal mono $X'' \longrightarrow Y''$ followed by a map $Y'' \longrightarrow Y$ having the right lifting property with respect to all normal monos, which is therefore a normalization of $Y$. We will now show that any other square of normalizations as described in the lemma is equivalent to this one in an appropriate sense. Choose lifts as indicated by the dashed arrows in the squares
\[
\xymatrix{
\varnothing \ar[d]\ar[r] & X'\ar[d] & & & X' \ar[d]\ar[r] & Y' \ar[d]\\ 
X''\ar@{-->}[ur]^f\ar[r] & X    & & & X' \amalg_{X''} Y''\ar[r]\ar@{-->}[ur] & Y 
}
\]
This gives a commutative diagram
\[
\xymatrix{
X'' \ar[dd]\ar[dr]\ar[rr] & & Y'' \ar'[d][dd]\ar[dr] & \\
& X' \ar[rr]\ar[dl] & & Y'\ar[dl] \\
X \ar[rr] && Y & 
}
\]
It now suffices to show that the induced map
\begin{equation*}
\mathbf{hom}(X', E) \longrightarrow \mathbf{hom}(X'', E)
\end{equation*}
is an equivalence of $\infty$-categories (and similary for the map induced by $Y'' \longrightarrow Y'$). This follows from the fact that normalizations are unique up to $J$-homotopy equivalence. Indeed, successively lifting in the squares
\[
\xymatrix{
\varnothing \ar[r]\ar[d] & X'' \ar[d] \\
X' \ar@{-->}[ur]^g \ar[r]& X         
}
\]
and
\[
\xymatrix@C=35pt{
X' \amalg X' \ar[d]\ar[r]^{\mathrm{id} \amalg fg} & X' \ar[d] && X'' \amalg X'' \ar[d]\ar[r]^{\mathrm{id} \amalg gf} & X'' \ar[d] \\
X' \otimes J \ar@{-->}[ur]\ar[r] & X   && X'' \otimes J \ar@{-->}[ur]\ar[r] & X
}
\]
produces such an equivalence between $X'$ and $X''$. The fact that the left vertical maps in both squares are cofibrations follows from Proposition \ref{prop:newnormalmonopoprod}. $\Box$
\end{proof}

\begin{lemma}
\label{lem:RLPwrtnormalmonos}
A map in $\mathbf{fSets}$ which has the right lifting property with respect to all normal monomorphisms is an operadic weak equivalence.
\end{lemma}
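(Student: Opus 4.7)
The plan is to exhibit a single commutative square of the form appearing in the definition of operadic weak equivalence and then to prove that the top map in this square is a $J$-homotopy equivalence of normal forest sets. Let $p: X \to Y$ be the given map and choose a normalization $\pi: Y' \to Y$. Form the pullback
\[
\xymatrix{
X' \ar[d]_{\pi'} \ar[r]^f & Y' \ar[d]^{\pi} \\
X \ar[r]^p & Y
}
\]
with $X' := Y' \times_Y X$. Since the class of maps having the right lifting property against all normal monomorphisms is stable under pullback, both $f$ and $\pi'$ inherit this property from $p$ and $\pi$ respectively. Applying Lemma \ref{lem:normalovernormal} to $f$ shows that $X'$ is normal, hence $\pi'$ is a normalization. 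By Lemma \ref{lem:normalizationsindep} it suffices to prove that for every operadically local object $E$, the induced functor $f^*: \mathbf{hom}(Y', E) \to \mathbf{hom}(X', E)$ is an equivalence of $\infty$-categories.

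For this, we show that $f$ itself is a $J$-homotopy equivalence; the desired conclusion then follows from the remark after the definition of $J$-homotopy. Since $\varnothing \to Y'$ is a normal monomorphism, the lifting property of $f$ produces a section $s: Y' \to X'$ with $fs = \mathrm{id}_{Y'}$. To exhibit a $J$-homotopy between $sf$ and $\mathrm{id}_{X'}$, consider the lifting problem
\[
\xymatrix{
X' \otimes \partial J \ar[d] \ar[rr]^-{sf \amalg \mathrm{id}} & & X' \ar[d]^f \\
X' \otimes J \ar[rr] \ar@{-->}[urr]^h & & Y'
}
\]
in which the bottom arrow is the composite $X' \otimes J \to X' \otimes \eta \simeq X' \xrightarrow{f} Y'$ coming from the collapse $J \to \eta$; the square commutes because $fs = \mathrm{id}$ forces both paths from $X' \amalg X'$ to $Y'$ to equal $f \amalg f$.

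The only nontrivial point is that the left vertical map above really is a normal monomorphism, so that the lift $h$ exists. This is guaranteed by Proposition \ref{prop:newnormalmonopoprod}, taking the two normal monomorphisms to be $\varnothing \to X'$ and $\partial J \to J$ and using the hypothesis (i) of that proposition, namely that $J$ is a simplicial set. The resulting $h$ is the required $J$-homotopy, so $f$ is a $J$-homotopy equivalence between normal forest sets and therefore an operadic weak equivalence, whence so is $p$.
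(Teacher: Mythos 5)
Your proof is correct and establishes the lemma by the same high-level strategy as the paper (produce a square of normalizations whose top map is a $J$-homotopy equivalence), but it constructs that square differently. The paper begins with a normalization of the \emph{codomain} $X' \twoheadrightarrow X$ (for the map $f\colon Y\to X$ in the paper's notation), lifts against $f$ to obtain a section $s\colon X'\to Y$, factors $s$ into a normal monomorphism $i\colon X'\rightarrowtail Y'$ followed by a normalization $t\colon Y'\twoheadrightarrow Y$, lifts again to get a retraction $f'\colon Y'\to X'$ with $f'i=\mathrm{id}$, and then builds the $J$-homotopy $if'\simeq_J\mathrm{id}_{Y'}$ relative to $X'$ by lifting a pushout-product $\partial J\otimes Y'\cup J\otimes X'\rightarrowtail J\otimes Y'$ against the composite $ft$, which inherits the right lifting property. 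You instead normalize the codomain $Y$ and pull back along $p$: the pullback square supplies, in one stroke, a normal object $X'$ (via Lemma \ref{lem:normalovernormal}), a normalization $\pi'$ on the other side, \emph{and} a top map $f$ that itself has the right lifting property. With $f$ having the RLP directly, you can lift the simpler pushout-product $X'\otimes\partial J\rightarrowtail X'\otimes J$ against $f$ to get the homotopy $sf\simeq_J\mathrm{id}_{X'}$. This avoids the intermediate factorization of $s$ and the need to pass to the composite $ft$, giving a somewhat more streamlined argument; the paper's version has the minor advantage of producing a homotopy that is constant on $X'$, but that extra rigidity is not needed for the conclusion. Both uses of Proposition \ref{prop:newnormalmonopoprod} are legitimate since $J$ lies in the image of $u_!i_!$.
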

\begin{proof}
Let $f: Y \longrightarrow X$ be such a map. It will suffice to show the existence of a square
\[
\xymatrix{
Y' \ar@{->>}[d]\ar[r]^{f'} & X' \ar@{->>}[d] \\
Y \ar[r]_f & X
}
\]
in which the vertical maps are normalizations and $f'$ is a $J$-homotopy equivalence. To do this, choose a normalization $X' \longrightarrow X$ and lift in the square
\[
\xymatrix{
\varnothing \ar[r]\ar[d] & Y \ar[d]^f \\
X' \ar@{-->}[ur]^s\ar[r] & X
}
\]
Now factor the lift $s$ as 
\[
\xymatrix{
X'\ar[rr]^s\ar@{)->}[dr]_{i} && Y \\
& Y' \ar@{->>}[ur]_t &
}
\]
where $i$ is a normal mono and $t$ is a normalization. Finally, lift in
\[
\xymatrix{
X' \ar@{=}[r]\ar[d]_i & X' \ar[d] \\
Y' \ar@{-->}[ur]^{f'}\ar[r]_{ft} & X
}
\]
Then clearly $f'i = \mathrm{id}_{X'}$ and we claim that $if'$ is $J$-homotopic to $\mathrm{id}_{Y'}$. Indeed, $ft$ has the right lifting property with respect to all normal monomorphisms, so we can lift in 
\[
\xymatrix{
\partial J \otimes Y' \cup J \otimes X' \ar[r]^-\phi \ar[d] & Y' \ar[d]^{ft} \\
J \otimes Y' \ar@{-->}[ur] \ar[r]_-\psi & X
}
\] 
where $\phi = (if', \mathrm{id}_{Y'}) \cup i\varepsilon$ and $\psi = ft \varepsilon$, with $\varepsilon: J \longrightarrow \eta$ the obvious collapse map. Here we use the fact that the left vertical map is a normal mono, which follows from Proposition \ref{prop:newnormalmonopoprod}. Recall that $\partial J$ is shorthand for $\{0\} \amalg \{1\}$. $\Box$ 
\end{proof}

\begin{lemma}
\label{lem:pushoutbasictrivcof}
A pushout of an operadic trivial cofibration (i.e. a cofibration that is also an operadic weak equivalence) is again an operadic trivial cofibration. 
\end{lemma}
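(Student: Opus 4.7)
The plan is to reduce to the case where all forest sets involved are normal, so that $\mathbf{hom}(-,E)$ converts our pushout into a pullback of simplicial sets that is automatically a homotopy pullback for the Joyal model structure. Concretely, let $i: A \to B$ be an operadic trivial cofibration, let $A \to C$ be arbitrary, and form the pushout to obtain $i': C \to B \cup_A C$. That $i'$ is a cofibration is immediate from the fact that normal monomorphisms are closed under pushout. The work lies in showing $i'$ is an operadic weak equivalence.

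Choose a normalization $A_f \to A$, and by the small object argument applied to the generating normal monos $\partial F \to F$ (as in Corollary \ref{cor:normalmonofactorization}), extend it to normalizations $B_f \to B$ and $C_f \to C$ so that $A_f \hookrightarrow B_f$ and $A_f \hookrightarrow C_f$ are both normal monomorphisms. By Lemma \ref{lem:normalizationsindep}, the hypothesis that $i$ is an operadic weak equivalence is equivalent to asserting that $\mathbf{hom}(B_f, E) \to \mathbf{hom}(A_f, E)$ is an equivalence of $\infty$-categories for every operadically local $E$. Now form $P := B_f \cup_{A_f} C_f$ in $\mathbf{fSets}$; this is a pushout of normal monos between normal objects, hence is itself normal, with $C_f \hookrightarrow P$ a normal mono. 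The canonical map $P \to B \cup_A C$ need not be a normalization, so factor it (via Corollary \ref{cor:normalmonofactorization}) as a normal monomorphism $\iota: P \hookrightarrow Q$ followed by a map $Q \to B \cup_A C$ with the right lifting property against all normal monomorphisms. Then $Q$ is normal and $Q \to B \cup_A C$ is a normalization. Composing $C_f \hookrightarrow P \hookrightarrow Q$ yields a normalization square for $i'$, so by Lemma \ref{lem:normalizationsindep} it suffices to show that $\mathbf{hom}(Q, E) \to \mathbf{hom}(C_f, E)$ is an equivalence of $\infty$-categories.

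This factors through $\mathbf{hom}(P, E)$ and I will treat the two halves separately. For $\mathbf{hom}(P, E) \to \mathbf{hom}(C_f, E)$, apply the functor $\mathbf{hom}(-, E)$ to the defining pushout of $P$; this produces a pullback square of simplicial sets in which the right vertical map $\mathbf{hom}(C_f, E) \to \mathbf{hom}(A_f, E)$ is a categorical fibration by condition (1) of operadic locality, and the bottom map is an equivalence by the hypothesis on $i$. Right properness of the Joyal model structure then gives that the top map is an equivalence. For $\mathbf{hom}(Q, E) \to \mathbf{hom}(P, E)$, I claim $\iota: P \hookrightarrow Q$ is a $J$-homotopy equivalence, and I will argue this by imitating the proof of Lemma \ref{lem:RLPwrtnormalmonos}. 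A retraction $r: Q \to P$ arises by lifting the identity of $P$ against $Q \to B \cup_A C$, and a $J$-homotopy between $\iota r$ and $\mathrm{id}_Q$ that is constant on $P$ is produced by lifting in
\[
\xymatrix{
\partial J \otimes Q \cup J \otimes P \ar[d]\ar[r] & Q \ar[d] \\
J \otimes Q \ar@{-->}[ur] \ar[r] & B \cup_A C,
}
\]
where the left vertical map is a normal monomorphism by Proposition \ref{prop:newnormalmonopoprod}(i). Hence $\iota$ is a $J$-homotopy equivalence between normal forest sets, and consequently $\mathbf{hom}(Q,E) \to \mathbf{hom}(P,E)$ is an equivalence of $\infty$-categories for every operadically local $E$.

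The main conceptual obstacle is precisely the final step: the pushout $P$ of compatibly chosen normalizations is not itself a normalization of $B \cup_A C$, and one must verify that $P$ and an honest normalization $Q$ of $B \cup_A C$ differ only by a $J$-homotopy equivalence. Once this is in hand, composing the two equivalences supplies the required witness data for $i'$ to be an operadic weak equivalence.
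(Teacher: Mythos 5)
Your treatment of the "normal case" is essentially the paper's (the paper notes that $\mathbf{hom}(B,E)\to\mathbf{hom}(A,E)$ is a \emph{trivial fibration} and uses pullback-stability rather than right properness, but this is a cosmetic difference). The gap is in the reduction to that case, in precisely the place you flag as the main obstacle.

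Your retraction $r: Q \to P$ does not exist by the lifting argument you describe. The square you would need to lift in is
\[
\xymatrix{
P \ar@{=}[r] \ar[d]_{\iota} & P \ar[d] \\
Q \ar@{-->}[ur]^{r} \ar[r] & B \cup_A C,
}
\]
and what is required here is the right lifting property of the \emph{right-hand} vertical map $P \to B\cup_A C$ against the normal monomorphism $\iota$. But $P \to B\cup_A C$ is just the canonical comparison map out of the pushout of normalizations; it is not a normalization and has no reason to have the RLP against normal monos. (The RLP of $Q \to B\cup_A C$, which you do have, lets you lift \emph{into} $Q$, not out of it.) Indeed the claim that $P \hookrightarrow Q$ is a $J$-homotopy equivalence is, by two-out-of-three, equivalent to $P \to B\cup_A C$ being an operadic weak equivalence — i.e., it is an instance of the gluing lemma, which you cannot use here since $A$, $B$, $C$ need not be cofibrant and left properness of the operadic model structure is downstream of the very lemma being proved. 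The later proof of Lemma \ref{lem:RLPwrtnormalmonos}, which you cite as a template, works precisely because in that argument the right-hand vertical of the lifting square \emph{is} a normalization.

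The paper sidesteps this by normalizing in the opposite order: choose a normalization $D' \twoheadrightarrow D = B\cup_A C$ of the pushout itself, then pull the entire pushout square back along $D' \to D$. Since pullback along a fixed map in a presheaf category preserves all colimits, the resulting top face of the cube is again a pushout; since every object in it maps to the normal object $D'$, all its objects are normal by Lemma \ref{lem:normalovernormal}; and every vertical map is a pullback of $D' \twoheadrightarrow D$ and hence inherits the RLP against normal monos, so is itself a normalization. One then lands in the normal case with no comparison map left to analyse. If you want to keep your own scaffolding rather than adopt the paper's, you would need an independent argument that $P \to B\cup_A C$ is a weak equivalence, which in this context is not available cheaply.
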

\begin{proof}
Consider a pushout
\[
\xymatrix{
A \,\, \ar@{>->}^{\sim}[r]\ar[d] & B \ar[d] \\
C \,\, \ar@{>->}[r] & D
}
\]
and suppose $A \longrightarrow B$ is a trivial cofibration as indicated. First assume all objects in this square are normal. If $E$ is an operadically local object, then in the pullback square
\[
\xymatrix{
\mathbf{hom}(D, E) \ar[d]\ar[r] & \mathbf{hom}(C, E) \ar[d] \\
\mathbf{hom}(B, E) \ar@{->>}^{\sim}[r] & \mathbf{hom}(A, E)
}
\]
the bottom horizontal map is a trivial fibration. Hence so is the top horizontal map, so that $C \longrightarrow D$ is an operadically weak equivalence. We now show how to reduce the general case to this one. Choose a normalization 
\[
\xymatrix{
D' \ar@{->>}[r] & D
}
\]
and pull back along this map to produce a cube
\[
\xymatrix{
A' \ar@{->>}[dd]\ar[rr]\ar[dr] &           & C' \ar@{->>}'[d][dd]\ar[dr]&     \\
                               & B'\ar[rr]\ar@{->>}[dd] &   & D'\ar@{->>}[dd] \\
A  \ar'[r][rr]\ar[dr]          &           & C \ar[dr]      &                 \\
                               & B \ar[rr] &                & D  
}
\]
In this cube, both horizontal faces are pushouts and all vertical faces are pullbacks. Using Lemma \ref{lem:normalovernormal} we see that all objects in the top face are normal, so that all vertical maps are in fact normalizations. Now $A' \longrightarrow B'$ is a trivial cofibration between normal objects and we use the argument above to conclude that $C' \longrightarrow D'$ is as well. $\Box$
\end{proof}

\begin{lemma}
\label{lem:basicanodtrivcof}
Operadic anodyne maps are trivial cofibrations.
\end{lemma}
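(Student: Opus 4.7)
The plan is to argue in three steps.

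First, every operadic anodyne is a cofibration: each generating operadic anodyne in Definition \ref{def:genbasicanodyne} is a normal monomorphism by Proposition \ref{prop:newnormalmonopoprod}, and normal monos form a saturated class.

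Next, I would establish the following key subclaim: if $f: A \to B$ is an operadic anodyne between \emph{normal} forest sets and $E$ is operadically local, then $\mathbf{hom}(B, E) \to \mathbf{hom}(A, E)$ is a trivial Kan fibration of simplicial sets. Indeed, right lifting against a simplicial cofibration $M \rightarrowtail N$ corresponds by adjunction to right lifting of $E$ against $M \otimes B \cup N \otimes A \to N \otimes B$; by Lemma \ref{lem:basicanodpoprod} this pushout-product is itself operadic anodyne, so $E$ lifts via Lemma \ref{lem:genbasicanodyne}. Applying condition (1) of operadic locality to $\varnothing \rightarrowtail A$ and $\varnothing \rightarrowtail B$ shows that $\mathbf{hom}(A, E)$ and $\mathbf{hom}(B, E)$ are $\infty$-categories, so the trivial Kan fibration between them is in particular an equivalence of $\infty$-categories. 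Applied to the tautological normalization square with identity vertical maps, this certifies that $f$ is an operadic weak equivalence. Now each generating operadic anodyne of Lemma \ref{lem:genbasicanodyne}(ii) lies between normal forest sets --- representables, their boundaries and inner horns are normal, and the remaining tensor products and pushout-products are normal monos between normal objects by Proposition \ref{prop:newnormalmonopoprod} together with Lemma \ref{lem:normalovernormal} --- so the subclaim makes each generator into an operadic trivial cofibration.

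Finally, I would verify the class of operadic trivial cofibrations is saturated. Closure under pushouts is Lemma \ref{lem:pushoutbasictrivcof}, and closure under retracts is immediate. For a transfinite composition $X_0 \rightarrowtail X_1 \rightarrowtail \cdots \rightarrowtail X_\alpha = \varinjlim_{i<\alpha} X_i$ of operadic trivial cofibrations, I would construct by transfinite induction a compatible tower of normalizations $Y_i' \to X_i$ with each $Y_i' \rightarrowtail Y_{i+1}'$ a cofibration (using the small object argument at successor stages and taking colimits at limit stages, verifying via smallness of forests that the colimit $\varinjlim Y_i' \to X_\alpha$ still has the right lifting property against normal monos). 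Each $Y_i' \rightarrowtail Y_{i+1}'$ is then a trivial cofibration between normal objects: Lemma \ref{lem:normalizationsindep} guarantees this is equivalent to the fact that $X_i \to X_{i+1}$ is an operadic weak equivalence. Applying $\mathbf{hom}(-, E)$ for $E$ operadically local then yields an inverse tower of trivial Joyal fibrations (trivial by the weak equivalence condition, Joyal fibrations by operadic locality condition (1)), whose limit map $\mathbf{hom}(Y_\alpha', E) \to \mathbf{hom}(Y_0', E)$ is itself a trivial Joyal fibration by Reedy-fibrancy of the tower, hence an equivalence of $\infty$-categories. This shows $X_0 \to X_\alpha$ is an operadic weak equivalence.

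The main obstacle is assembling this last transfinite composition step coherently: constructing the compatible tower of normalizations, checking that the canonical map on the colimit is again a normalization, and applying an inverse-limit argument in the Joyal model structure. Everything else reduces formally to the interaction of $\mathbf{hom}(-, E)$ with the saturation operations, as mediated by Lemmas \ref{lem:basicanodpoprod}, \ref{lem:pushoutbasictrivcof}, and \ref{lem:normalizationsindep}.
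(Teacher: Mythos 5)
Your proposal is correct and uses the same key idea as the paper. The heart of the argument --- reduce to the generating operadic anodynes by saturation, then show that for a generator $U\to V$ and an operadically local $E$ the map $\mathbf{hom}(V,E)\to\mathbf{hom}(U,E)$ is a trivial fibration, via adjunction and Lemma~\ref{lem:basicanodpoprod} applied to the pushout-product $\partial\Delta^m\otimes V\cup\Delta^m\otimes U\to\Delta^m\otimes V$ --- is exactly what the paper does.

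Where you diverge is in the saturation step. The paper cites Lemma~\ref{lem:pushoutbasictrivcof} for closure under pushouts and dismisses closure under retracts and (transfinite) compositions as clear. You instead spell out the transfinite-composition case in full: building a compatible tower of normalizations $Y'_i\to X_i$, checking that the colimit is again a normalization using smallness of forests, and observing that applying $\mathbf{hom}(-,E)$ yields an inverse tower of trivial fibrations whose limit is again a trivial fibration. This is a correct and careful elaboration of a standard argument; it does not change the method, only the level of detail. (One very minor point: the appeal to ``Reedy-fibrancy of the tower'' is superfluous --- inverse limits of trivial fibrations are trivial fibrations directly, since the maps $\partial\Delta^n\to\Delta^n$ are finitely presented.) You also state your key subclaim for arbitrary operadic anodynes between normal objects rather than just generators; this slightly more general form is true, but you then only use it for generators before passing to saturation, so the extra generality is not exploited.
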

\begin{proof}
Since compositions and retracts of trivial cofibrations are clearly trivial cofibrations again and the same is true for pushouts by the preceding lemma, it suffices to prove that the generating operadic anodynes of Definition \ref{def:genbasicanodyne} are trivial cofibrations. We claim that if $U \longrightarrow V$ is a generating operadic anodyne and $E$ an operadically local object, then
\begin{equation*}
\mathbf{hom}(V, E) \longrightarrow \mathbf{hom}(U, E)
\end{equation*}
is a trivial fibration of simplicial sets. Indeed, it has the right lifting property with respect to boundary inclusions $\partial \Delta^m \rightarrow \Delta^m$, since the pushout-product
\begin{equation*}
\partial \Delta^m \otimes V \cup \Delta^m \otimes U \longrightarrow \Delta^m \otimes V
\end{equation*}
is an operadically anodyne map, by Lemma \ref{lem:basicanodpoprod}. $\Box$
\end{proof}

We now turn to the study of arbitrary trivial cofibrations.

\begin{lemma}
\label{lem:trivcofgenbetweennormals}
Every trivial cofibration is a retract of a pushout of a trivial cofibration between normal objects.
\end{lemma}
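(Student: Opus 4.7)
The plan is to start from a trivial cofibration $f: X \to Y$ and build a trivial cofibration $\tilde f: \tilde X \to Y'$ between normal forest sets, and then realize $f$ as a retract of a pushout of $\tilde f$.

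I would first normalize the target: using Corollary~\ref{cor:normalmonofactorization} I choose a normalization $p: Y' \to Y$ with $Y'$ normal and $p$ having the right lifting property with respect to all normal monomorphisms. By Lemma~\ref{lem:RLPwrtnormalmonos}, $p$ is then an operadic weak equivalence. Next I form the pullback $\tilde X := X \times_Y Y'$, which yields projections $\tilde p: \tilde X \to X$ and $\tilde f: \tilde X \to Y'$. Since the right lifting property with respect to normal monomorphisms is stable under pullback (a short diagram chase via the universal property of the pullback), $\tilde p$ enjoys the same property and is therefore a weak equivalence. The map $\tilde f$ is a normal monomorphism (pullback of the normal mono $f$), its source $\tilde X$ is normal by Lemma~\ref{lem:normalovernormal}, and two-out-of-three applied to the pullback square shows that $\tilde f$ is an operadic weak equivalence. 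Hence $\tilde f: \tilde X \to Y'$ is a trivial cofibration between normal objects.

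I then form the pushout $P := X \cup_{\tilde X} Y'$, with pushout map $h: X \to P$ (which is a trivial cofibration by Lemma~\ref{lem:pushoutbasictrivcof}) and the induced map $q: P \to Y$ satisfying $q h = f$. The last step is to produce a section $i: Y \to P$ of $q$ with $i f = h$, which presents $f$ as a retract of $h$; equivalently, one solves the lifting problem
\[
\xymatrix{
X \ar[d]_f \ar[r]^h & P \ar[d]^q \\
Y \ar@{=}[r] \ar@{-->}[ur]^i & Y.
}
\]

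The main technical obstacle is verifying that this lift exists. My plan is to show that $q: P \to Y$ inherits the right lifting property with respect to normal monomorphisms from $p$: using the pullback nature of the original square together with the universal property of the pushout defining $P$, a lifting problem against $q$ can be converted into a lifting problem against $p$, to which $p$'s normalization property applies. Once $q$ has this lifting property, the cofibration $f$ lifts along it, supplying the desired section $i$ and completing the retract diagram showing that $f$ is a retract of $h$, which is a pushout of the trivial cofibration $\tilde f$ between normal objects.
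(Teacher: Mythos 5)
Your proof follows essentially the same architecture as the paper's: normalize the target, pull back to get a trivial cofibration between normal objects, push out, and then show the induced map to $Y$ has the right lifting property with respect to cofibrations so that the original map becomes a retract of the pushout. Your argument that $\tilde f$ is a weak equivalence (via two-out-of-three together with Lemma~\ref{lem:RLPwrtnormalmonos} applied to $\tilde p$) is a mild variant of what the paper leaves implicit, and your sketch of how to transport the lifting problem from $q$ back to $p$ correctly identifies the hard step — the paper carries it out by pulling the pushout square back along $\partial F \to P$ to a cube and exploiting that the restricted normalization admits a section (since its codomain is a normal subobject of $\partial F$), which then feeds the lifting problem into $p$.
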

\begin{proof}
Let $u: A \longrightarrow B$ be a trivial cofibration. Choose a normalization $B' \longrightarrow B$ of $B$ and form the pullback
\[
\xymatrix{
A' \,\,\ar@{>->}[r]^{u'}\ar@{->>}[d]_p & B'\ar@{->>}[d]^q \\ 
A \,\,\ar@{>->}[r]_u & B
}
\]
Then $u'$ is a trivial cofibration between normal objects. Now form the pushout
\[
\xymatrix{
A' \ar[r]^{u'}\ar[d]_p & B' \ar[d]^r \\
A \ar[r]_v & P
}
\]
which gives a canonical map $s: P \longrightarrow B$. It now suffices to prove that $s$ has the right lifting property with respect to all cofibrations, because this would make $u$ a retract of $v$ by lifting in the square 
\[
\xymatrix{
A \ar[r]^v\ar[d]_u & P \ar[d]^s \\ 
B \ar@{=}[r]\ar@{-->}[ur] & B
}
\]
So, consider a lifting problem
\[
\xymatrix{
\partial F \ar[d]\ar[r] & P \ar[d]^s \\
F \ar[r] & B
}
\]
Pull our previous pushout diagram back along $\partial F \longrightarrow P$ to form the cube 
\[
\xymatrix{
E \ar[rr]\ar[dd]\ar[dr] & & D \ar'[d][dd]\ar[dr]& \\
& A' \ar[rr]\ar@{->>}[dd] & & B'\ar@{->>}[dd] \\
C \ar'[r][rr]\ar[dr] & & \partial F \ar[dr] & \\
& A \ar[rr] & & P 
}
\]
in which the front and back face are pushouts and all other faces are pullbacks. Then $E \longrightarrow C$ is a normalization and all objects in the back face are normal, so $E \longrightarrow C$ has a section and hence so does the pushout $D \longrightarrow \partial F$. Using this section, we can form a commutative diagram
\[
\xymatrix{
\partial F \ar[d]\ar[r] & D \ar[r] & B'\ar[d]^q \\
F \ar@{-->}[urr]\ar[rr] & & B
}
\]
in which a lift as indicated exists, which also gives a solution to our previous lifting problem. $\Box$ 
\end{proof}

\begin{lemma}
A trivial cofibration between normal and operadically local objects is a $J$-deformation retract.
\end{lemma}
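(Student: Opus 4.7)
The plan is to reduce everything to a lifting problem in the Joyal model structure on simplicial sets, applied to the mapping objects $\mathbf{hom}(-,E)$ for suitable $E$.

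First I would observe that, since $X$ and $Y$ are both already normal, we may take the identity maps as normalisations in the definition of operadic weak equivalence. By Lemma~\ref{lem:normalizationsindep}, the map
\begin{equation*}
f^*: \mathbf{hom}(Y,E) \longrightarrow \mathbf{hom}(X,E)
\end{equation*}
is a categorical equivalence for every operadically local $E$. On the other hand, condition (1) in the definition of operadically local, applied to the normal monomorphism $f:X\rightarrowtail Y$ between normal objects, tells us that $f^*$ is a categorical (i.e.\ Joyal) fibration. Since Joyal cofibrations are exactly monomorphisms, a map of simplicial sets which is both a categorical equivalence and a categorical fibration is a trivial fibration; so $f^*$ has the right lifting property against every monomorphism of simplicial sets.

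Next, I would construct the retraction. Taking $E=X$, the trivial fibration $\mathbf{hom}(Y,X)\to\mathbf{hom}(X,X)$ admits a lift of the vertex $\mathrm{id}_X$, producing a map $r:Y\to X$ with $rf=\mathrm{id}_X$. It remains to produce a $J$-homotopy $H:J\otimes Y\to Y$ between $\mathrm{id}_Y$ and $fr$ which restricts to the constant homotopy $J\otimes X\to X\xrightarrow{f}Y$ on $J\otimes X$. For this I would set $E=Y$ and consider the trivial fibration $f^*:\mathbf{hom}(Y,Y)\to\mathbf{hom}(X,Y)$. Note that both $\mathrm{id}_Y$ and $fr$ are sent to $f\in\mathbf{hom}(X,Y)$ (using $rf=\mathrm{id}_X$), so there is a well-posed lifting problem
\[
\xymatrix{
\partial J \ar[r]^-{(\mathrm{id}_Y,\,fr)}\ar[d] & \mathbf{hom}(Y,Y) \ar[d]^{f^*} \\
J \ar[r]_-{f\circ\varepsilon} \ar@{-->}[ur] & \mathbf{hom}(X,Y),
}
\]
where $\varepsilon:J\to\eta$ is the collapse map. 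Since $\partial J\to J$ is a monomorphism of simplicial sets, a lift exists, and its adjoint transpose $H:J\otimes Y\to Y$ provides the desired deformation.

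In summary: the retraction $r$ and the homotopy $H$ together exhibit $f$ as a $J$-deformation retract. The main technical point to pin down is that the Joyal equivalence $f^*$ is in fact a Joyal trivial fibration, which combines condition (1) of the operadic locality of $E$ (to get that $f^*$ is a Joyal fibration) with Lemma~\ref{lem:normalizationsindep} (to get that $f^*$ is a Joyal equivalence); once this is established, steps 2 and 3 are entirely routine lifts against the monomorphisms $\varnothing\to\Delta^0$ and $\partial J\to J$ respectively. No extra combinatorics on forests is needed.
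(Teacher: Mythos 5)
Your proof is correct and is essentially the same argument as the paper's: both identify $f^*\colon\mathbf{hom}(Y,E)\to\mathbf{hom}(X,E)$ as a trivial fibration for operadically local $E$, use $E=X$ to extract the retraction by lifting $\mathrm{id}_X$, and use $E=Y$ to lift $\partial J\to J$ against $f^*$ and obtain the $J$-homotopy from $\mathrm{id}_Y$ to $fr$. The only difference is that you spell out the trivial-fibration step (Joyal fibration from condition (1) of operadic locality, Joyal equivalence from the weak equivalence hypothesis via Lemma~\ref{lem:normalizationsindep}) where the paper states it without elaboration.
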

\begin{proof}
Let $f: A \longrightarrow B$ be such a trivial cofibration. Then the map 
\begin{equation*}
f^*: \mathbf{hom}(B, A) \longrightarrow \mathbf{hom}(A,A) 
\end{equation*}
is a trivial fibration of simplicial sets and therefore surjective on vertices. This allows us to pick a map $r: B \longrightarrow A$ such that $rf = \mathrm{id}_A$. We find a $J$-homotopy $fr \simeq \mathrm{id}_B$ by lifting in the diagram
\[
\xymatrix@C=35pt{
\partial J \ar[r]^-{(\mathrm{id}_B, fr)}\ar[d] & \mathbf{hom}(B,B) \ar[d]^{f^*} \\
J \ar[r]\ar@{-->}[ur] & \mathbf{hom}(A,B)
}
\]
where the bottom horizontal arrow is the constant map with value $f$. $\Box$
\end{proof}

\begin{lemma}
\label{lem:countablesubpresh}
Let $u: X \longrightarrow Y$ be a trivial cofibration between normal objects in $\mathbf{fSets}$. Then for any countable subpresheaves $A \subseteq X$ and $B \subseteq Y$, there exist intermediate countable subpresheaves $A \subseteq \tilde A \subseteq X$ and $B \subseteq \tilde B \subseteq Y$ which fit into a pullback diagram 
\[
\xymatrix{
\tilde A \ar[r]\ar[d] & X \ar[d]^u \\
\tilde B \ar[r] & Y
}
\]
and in which $\tilde A \longrightarrow \tilde B$ is also a trivial cofibration.
\end{lemma}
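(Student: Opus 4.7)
The plan is to build $\tilde A = \bigcup_n A_n$ and $\tilde B = \bigcup_n B_n$ as unions of increasing countable subpresheaves, via a L\"owenheim--Skolem-style back-and-forth, starting with $A_0 = A$ and $B_0 = B$. Three properties must be arranged in the limit: containment of $A$ and $B$, which is automatic; the pullback condition $\tilde A = u^{-1}(\tilde B)$, which is handled by alternating at each stage the enlargements $B_n \mapsto B_n \cup u(A_n)$ and $A_n \mapsto A_n \cup u^{-1}(B_n)$, both of which preserve countability because $u$ is a monomorphism; and the trivial cofibration status of $\tilde A \to \tilde B$.

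For the last property, first apply Proposition \ref{prop:countable}(i)--(ii) to produce an operadic anodyne fibrant replacement square
\[
\xymatrix{
X \ar[r]^u \ar[d] & Y \ar[d] \\
X_f \ar[r]^{u_f} & Y_f
}
\]
in which the vertical maps are operadic anodyne (hence trivial cofibrations) and $X_f, Y_f$ are operadically local. Since $u$ is a trivial cofibration, two-out-of-three makes $u_f$ a trivial cofibration between normal, operadically local objects, so by the preceding lemma $u_f$ is a $J$-deformation retract: there exist a retraction $r \colon Y_f \to X_f$ with $r u_f = \mathrm{id}$ and a $J$-homotopy $H \colon J \otimes Y_f \to Y_f$ from $u_f r$ to $\mathrm{id}_{Y_f}$ which is constant on the image of $u_f$.

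Alongside $A_n$ and $B_n$, build countable $A_n^\star \subseteq X_f$ and $B_n^\star \subseteq Y_f$ with $A_n \subseteq A_n^\star$ and $B_n \subseteq B_n^\star$. At each stage enforce the closure conditions $u_f(A_n^\star) \subseteq B_n^\star$, $r(B_n^\star) \subseteq A_n^\star$, and $H(J \otimes B_n^\star) \subseteq B_n^\star$, so that in the limit the deformation retract descends. Using Proposition \ref{prop:countable}(iv), also arrange that $A_n \to A_n^\star$ and $B_n \to B_n^\star$ are operadic anodyne, by realising $A_n^\star$ and $B_n^\star$ as countable subcomplexes of fixed small-object filtrations of $X \to X_f$ and $Y \to Y_f$. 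Each closure operation adds only countably many new cells, so countably many interleaved sub-stages at each $n$ suffice to meet all the conditions simultaneously.

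In the limit $\tilde A^\star$ and $\tilde B^\star$ are countable and operadically local, the maps $\tilde A \to \tilde A^\star$ and $\tilde B \to \tilde B^\star$ are operadic anodyne, and $\tilde A^\star \to \tilde B^\star$ inherits a $J$-deformation retract structure from $(r, H)$ and is therefore a trivial cofibration. Two-out-of-three then gives that $\tilde A \to \tilde B$ is a weak equivalence; it is a monomorphism between normal objects by Lemma \ref{lem:normalovernormal}, hence a cofibration, and thus the required trivial cofibration. The main obstacle is precisely this interleaving of the five closure operations ($u$, $u^{-1}$, $u_f$, $r$, $H$) with the reflection provided by Proposition \ref{prop:countable}(iv); the construction works because every one of these operations preserves countability and because the small-object construction of $X_f$ and $Y_f$ respects countable subpresheaves.
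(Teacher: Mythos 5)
Your proposal is correct and follows essentially the same route as the paper: pass to the fibrant replacements $X_f \to Y_f$ from Proposition \ref{prop:countable}, invoke the $J$-deformation retract structure on $X_f \to Y_f$ from the preceding lemma, run a back-and-forth construction of countable subpresheaves in $X$, $Y$, $X_f$, $Y_f$ closed under $u^{-1}$, $r$, and the homotopy, use Proposition \ref{prop:countable}(iv) to reflect back to $X$ and $Y$, and conclude that the colimit inherits a deformation retract and is therefore a weak equivalence. The paper merges some of the closure conditions you list separately (it enforces the pullback condition directly at the level of $X_f$, $Y_f$ and pulls back via Proposition \ref{prop:countable}(iv), rather than alternating $u$- and $u^{-1}$-closures in $X$ and $Y$ as you do), but these are inessential variations of the same argument.
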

\begin{proof}
We use Proposition \ref{prop:countable} to complete $u: X \longrightarrow Y$ into a diagram
\[
\xymatrix{
X \ar[r]\ar[d]_u & X_f \ar[d]^v \\
Y \ar[r] & Y_f
}
\]
Then $X_f \longrightarrow Y_f$ is again a trivial cofibration (by Lemma \ref{lem:basicanodtrivcof} and the obvious two-out-of-three property of operadic weak equivalences) and hence a deformation retract by the previous lemma. Write $r: Y_f \longrightarrow X_f$ for the retraction and 
\begin{equation*}
h: J \otimes Y_f \longrightarrow Y_f
\end{equation*}
for the homotopy. Let $A_0 = A$ and $B_0 = B$. Then we can `close' $A_0$ and $B_0$ inside $X_f$ and $Y_f$ respectively, to find countable $A_0'$ and $B_0'$ with $A_0 \subseteq A_0' \subseteq X$ and $B_0 \subseteq B_0' \subseteq Y$ and
\begin{eqnarray*}
v^{-1}(B_0') & = & A_0' \\
r(B_0') & = & B_0' \\
h(J \otimes B_0') & = & B_0'
\end{eqnarray*}
In other words, the diagram
\[
\xymatrix{
A_0' \ar[r]\ar[d] & X_f \ar[d] \\
B_0' \ar[r] & Y_f
}
\]
is a pullback and $r$ and $h$ restrict to a deformation retract between $A_0'$ and $B_0'$. Next use Proposition \ref{prop:countable} to find countable $A_1 \subseteq X$ and $B_1 \subseteq Y$ with $u^{-1}(B_1) = A_1$ and
\begin{equation*}
A_0' \subseteq (A_1)_f \subseteq X_f \quad\quad \text{and} \quad\quad B_1' \subseteq (B_1)_f \subseteq Y_f
\end{equation*}
Repeat the above construction to find $A_1'$ and $B_1'$ with $A_0' \subseteq A_1' \subseteq X$ and $B_0' \subseteq B_1' \subseteq Y_f$ for which $v^{-1}(B_1') = A_1'$ while $r$ and $h$ restrict to a deformation retract between $A_1'$ and $B_1'$. Iterating this process countably many times, we obtain a ladder
\[
\xymatrix@!0{
A_0 \ar[rr]\ar[dr]\ar[dd] & & A_1 \ar[rr]\ar[dr]\ar'[d][dd] & & A_2 \ar[rr]\ar[dr]\ar'[d][dd] & & & \cdots & \ar[rr] & & X \ar[dr]\ar'[d][dd] \\
& A_0' \ar[rr]\ar[dd] & & A_1' \ar[rr]\ar[dd] & & A_2' \ar[rr]\ar[dd] & & & \cdots & \ar[rr] & & X_f \ar[dd] \\
B_0 \ar[dr]\ar'[r][rr] & & B_1 \ar[dr]\ar'[r][rr] & & B_2 \ar[dr]\ar'[r][rr] & & & \cdots & \ar[rr] & & Y \ar[dr] \\
& B_0' \ar[rr] & & B_1' \ar[rr] & & B_2' \ar[rr] & & & \cdots & \ar[rr] & & Y_f \\
}
\]
where the vertical maps in the front are all deformation retracts and where $A_n' \subseteq (A_{n+1})_f$ and $B_n' \subseteq (B_{n+1})_f$. Let $\widetilde A = \bigcup_n A_n$ and $\widetilde B = \bigcup_n B_n$. Then in the diagram
\[
\xymatrix@!0{
\widetilde A \ar[dd]\ar[rr]\ar[dr] & & X \ar'[d][dd]\ar[dr] & \\
& (\widetilde A)_f \ar[dd]\ar[rr] & & X_f \ar[dd] \\
\widetilde B \ar[dr]\ar'[r][rr] & & Y \ar[dr] & \\
& (\widetilde B)_f \ar[rr] & & Y_f
}
\]
the map $(\widetilde A)_f \longrightarrow (\widetilde B)_f$ is the colimit of the deformation retracts $A'_n \longrightarrow B'_n$, hence is itself a deformation retract (by the same maps $r$ and $h$). Thus $(\widetilde A)_f \longrightarrow (\widetilde B)_f$ is a weak equivalence, hence so is $\widetilde A \longrightarrow \widetilde B$. $\Box$
\end{proof}

\begin{lemma}
\label{lem:countablegentrivcof}
The class of trivial cofibrations is generated by the trivial cofibrations between countable and normal objects.
\end{lemma}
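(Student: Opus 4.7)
The plan is to combine the two preceding lemmas. Lemma \ref{lem:trivcofgenbetweennormals} already shows that an arbitrary trivial cofibration is a retract of a pushout of one between normal objects, so since retracts and pushouts are part of the operation of forming the saturation, it suffices to show: every trivial cofibration $u: X \rightarrowtail Y$ between normal objects lies in the saturation generated by trivial cofibrations between countable normal objects. For this I would construct a transfinite filtration
\begin{equation*}
X = Y_0 \subseteq Y_1 \subseteq \cdots \subseteq Y_\kappa = Y
\end{equation*}
inside $Y$, in which every successor step $Y_\alpha \rightarrowtail Y_{\alpha+1}$ is a pushout of a trivial cofibration between countable normal objects.

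To build the successor step, fix a well-ordering of the (set of) countable subpresheaves of $Y$, and at stage $\alpha$ let $C_\alpha$ be the first one not already contained in $Y_\alpha$ (if none remains we stop). Note that $Y_\alpha$ is normal (being a subpresheaf of $Y$, by Lemma \ref{lem:normalovernormal}) and that the inclusion $Y_\alpha \rightarrow Y$ is a trivial cofibration: it is a mono, and applying two-out-of-three for operadic weak equivalences to $X \rightarrow Y_\alpha \rightarrow Y$ (using the inductive hypothesis that $X \rightarrow Y_\alpha$ is a weak equivalence) yields the claim. I may therefore apply Lemma \ref{lem:countablesubpresh} to the trivial cofibration $Y_\alpha \rightarrow Y$ with the countable subpresheaves $\varnothing \subseteq Y_\alpha$ and $C_\alpha \subseteq Y$. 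This produces countable normal $\tilde A_\alpha \subseteq Y_\alpha$ and $\tilde B_\alpha \subseteq Y$ with $C_\alpha \subseteq \tilde B_\alpha$, with the pullback identity $\tilde A_\alpha = Y_\alpha \cap \tilde B_\alpha$ (taken inside $Y$), and with $\tilde A_\alpha \rightarrowtail \tilde B_\alpha$ a trivial cofibration. Setting $Y_{\alpha+1} := Y_\alpha \cup \tilde B_\alpha \subseteq Y$, the pullback identity forces the square
\[
\xymatrix{
\tilde A_\alpha \ar@{>->}[r]^{\sim}\ar[d] & \tilde B_\alpha \ar[d] \\
Y_\alpha \ar[r] & Y_{\alpha+1}
}
\]
to be a pushout, which exhibits $Y_\alpha \rightarrow Y_{\alpha+1}$ in the desired form. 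In particular, by Lemma \ref{lem:pushoutbasictrivcof} it is a trivial cofibration, keeping the inductive hypothesis alive. At limit ordinals $\beta$ I take $Y_\beta := \bigcup_{\alpha < \beta} Y_\alpha$; that $X \rightarrow Y_\beta$ is still a weak equivalence follows from the definition of operadic weak equivalence applied to a compatible system of normalizations, since an inverse limit of trivial fibrations of $\infty$-categories along a tower built from monomorphisms is again a trivial fibration.

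Since every element of $Y$ lies in some countable subpresheaf and each such subpresheaf is absorbed at the stage when it comes up in our enumeration, the construction exhausts $Y$ at some ordinal $\kappa$ bounded by the cardinality of the set of countable subpresheaves of $Y$. Hence $u: X \rightarrowtail Y$ is a transfinite composition of pushouts of trivial cofibrations between countable normal objects, and therefore lies in the saturation generated by these, as required.

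The main obstacle is the subtlety at successor steps: one needs to perform the small-subpresheaf construction of Lemma \ref{lem:countablesubpresh} with respect to the partially-built trivial cofibration $Y_\alpha \rightarrow Y$, not the original $u$, since only this choice produces the crucial identity $\tilde A_\alpha = Y_\alpha \cap \tilde B_\alpha$ that turns the apparent union in $Y$ into an honest pushout. A secondary (but routine) point is checking that operadic weak equivalences satisfy two-out-of-three and are closed under the relevant transfinite composition of monomorphisms, both of which are direct from the definition in terms of $\mathbf{hom}(-, E)$ and the analogous properties in the Joyal model structure on simplicial sets.
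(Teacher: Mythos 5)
Your proof is correct and follows essentially the same strategy as the paper: reduce to trivial cofibrations between normal objects via Lemma \ref{lem:trivcofgenbetweennormals}, then build a transfinite filtration of $Y$ whose successor steps are pushouts of trivial cofibrations between countable normal objects, each obtained by applying Lemma \ref{lem:countablesubpresh} to the partially-constructed trivial cofibration $Y_\alpha \to Y$. The only cosmetic difference is that the paper well-orders the elements of $Y - X$ while you well-order the countable subpresheaves of $Y$; both ensure exhaustion of $Y$.
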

\begin{proof}
We already know that every trivial cofibration is a retract of a pushout of a trivial cofibration between normal objects (Lemma \ref{lem:trivcofgenbetweennormals}). Therefore it suffices to show that every trivial cofibration $X \rightarrowtail Y$ between normal objects lies in the saturation of the countable such maps. Well-order the elements of $Y - X$ as $\{y_\xi \, | \, \xi < \lambda \}$. By induction we will construct factorizations $X \rightarrowtail W_\xi \rightarrowtail Y$ of $X \rightarrowtail Y$ into trivial cofibrations, such that for $\xi < \xi '$ there is a commutative diagram
\[
\xymatrix{
X \ar[r]\ar[d] & W_\xi \ar[d]\ar[dl] \\
W_{\xi '} \ar[r] & Y
}
\]
and such that
\begin{itemize}
\item[-] $y_\xi \in W_{\xi + 1}$
\item[-] $X \rightarrowtail W_\xi$ lies in the saturation of the class of trivial cofibrations between countable normal objects.
\end{itemize} 
Then $W_{\lambda + 1}$ must equal $Y$, which completes the proof. If $W_\xi$ has been constructed for all $\xi < \zeta$, we construct $W_\zeta$ as follows. First, let 
\begin{equation*}
W_\zeta^- := \varinjlim_{\xi < \zeta} W_\xi
\end{equation*}
(Note that $W_{\xi + 1}^-$ is $W_\xi$, so this is only relevant if $\zeta$ is a limit ordinal.) Let 
\[
\xymatrix{
\widetilde A \ar[r]\ar[d] & W_\zeta^- \ar[d] \\
\widetilde B \ar[r] & Y
}
\]
be a pullback diagram as in Lemma \ref{lem:countablesubpresh}, with $y_\zeta \in \widetilde B$, and construct the pushout
\[
\xymatrix{
\widetilde A \ar[d]\ar[r] & W_\zeta^- \ar[d] \\
\widetilde B \ar[r] & W_\zeta 
}
\]
The universal property of the pushout gives us a unique map $W_\zeta \rightarrowtail Y$ compatible with the earlier maps. This map is mono since $W_\zeta^- \rightarrowtail Y$ is and since the previous square is a pullback. This finishes the proof. $\Box$
\end{proof}

We are now ready to complete the proof of parts (i)-(iii) of the main theorem. We defer the proofs of (iv) and (v) to the next section.

\begin{proof}[Proof of Theorem \ref{thm:basicmodelstructure}(i)-(iii)]
(i) We will check the usual axioms CM1-5 from \cite{quillenrational}. The axioms (CM1) for existence of limits and colimits, (CM2) for two-out-of-three for weak equivalences and (CM3) for retracts evidently hold (and in fact we have already used (CM2)). As to the factorization axiom (CM5), Corollary \ref{cor:normalmonofactorization} states that every map can be factored as a cofibration followed by a map having the right lifting property with respect to all cofibrations and the latter is a trivial fibration by Lemma \ref{lem:RLPwrtnormalmonos}. Similarly, any map $X \longrightarrow Y$ can be factored as $X \rightarrowtail Z \rightarrow Y$ where $X \rightarrowtail Z$ lies in the saturation of the class of trivial cofibrations between countable normal objects and $Z \rightarrow Y$ has the right lifting property with respect to this class. Lemma \ref{lem:countablegentrivcof} shows that $Z \rightarrow Y$ is a fibration. Finally, for the lifting axiom (CM4), consider a commutative square
\[
\xymatrix{
A \ar[d]_i\ar[r]^f & Y \ar[d]^p \\
B \ar[r]_g & X
}
\]
where $i$ is a cofibration and $p$ is a fibration. If $i$ is a weak equivalence, then a lift exists by definition of the fibrations. If $p$ is a weak equivalence, one applies the standard retract arguments: factor $Y \longrightarrow X$ as a cofibration $Y \rightarrowtail Z$ followed by a map $Z \longrightarrow X$ having the right lifting property with respect to all cofibrations. Then $Y \rightarrowtail Z$ is a trivial cofibration and successive liftings in
\[
\xymatrix{
A \ar[r]\ar[d] & Y \ar[r] & Z \ar[d] & & Y \ar[d]\ar@{=}[r] & Y \ar[d] \\
B \ar@{-->}[urr]\ar[rr] & & X               & & Z \ar@{-->}[ur]\ar[r] & X
}
\]
will give the required lift $B \longrightarrow Y$. \par 
(ii) We have already seen that the model structure is cofibrantly generated (Lemma \ref{lem:countablegentrivcof} and Corollary \ref{cor:gennormalmonos}). To see it is left proper, consider a pushout
\[
\xymatrix{
A \ar[r]\ar[d] & C \ar[d] \\
B \ar[r] & D
}
\]
in which $A \longrightarrow B$ is a weak equivalence and $A \longrightarrow C$ is a cofibration. We can `normalize' the pushout by pulling back along a normalization of $D$ (as in the proof of Lemma \ref{lem:pushoutbasictrivcof}) to get a cube
\[
\xymatrix@!0{
A' \ar[rr]\ar[dd]\ar[dr] & & C'\ar'[d][dd]\ar[dr] & \\  
& B' \ar[dd]\ar[rr] & & D'\ar[dd] \\
A \ar'[r][rr]\ar[dr] & & C \ar[dr] & \\
& B \ar[rr] & & D
}
\] 
in which the top square is again a pushout. Then the diagram of simplicial sets
\[
\xymatrix{
\mathbf{hom}(D', E) \ar[r]\ar[d] & \mathbf{hom}(B', E) \ar[d] \\
\mathbf{hom}(C', E) \ar[r] & \mathbf{hom}(A', E)
}
\]
is a pullback for any object $E$ and the bottom horizontal map is a categorical fibration. If $E$ is operadically local, all the simplicial sets in this diagram are $\infty$-categories and the right vertical map is an equivalence of $\infty$-categories. The square is then a homotopy pullback square in the Joyal model structure, so that the left vertical map is also an equivalence. \par 
(iii) Lemma \ref{lem:basicanodtrivcof} shows that any fibrant object is an operadically local object. Conversely, let $X$ be any operadically local object. Then $X$ has the right lifting property with respect to maps $A \longrightarrow B$ which are trivial cofibrations between normal objects, because in this case 
\begin{equation*}
\mathbf{hom}(B, X) \longrightarrow \mathbf{hom}(A, X)
\end{equation*}
is a trivial fibration of simplicial sets and therefore surjective on vertices. It now follows from Lemma \ref{lem:trivcofgenbetweennormals} that $X$ has the right lifting property with respect to arbitrary trivial cofibrations. $\Box$
\end{proof}

\subsection{Further properties of the operadic model structure} 

In this section we will establish parts (iv) and (v) of Theorem \ref{thm:basicmodelstructure}. Also, we give a convenient characterization of the trivial cofibrations in the operadic model structure in Proposition \ref{prop:trivcofoperadic} and a characterization of the weak equivalences between fibrant objects in Proposition \ref{prop:operadicwes}.

\begin{lemma}
\label{lem:forestsetsH2}
Let $i: M \rightarrow M'$ and $j: N \rightarrow N'$ be monomorphisms of simplicial sets and let $k: X \rightarrow X'$ be a normal monomorphism of forest sets. Then the map $h$ of Definition \ref{def:stronghomotenrich} is a trivial cofibration, i.e. the weak enrichment of the category of forest sets over the category of simplicial sets satisfies axiom (H2). 
\end{lemma}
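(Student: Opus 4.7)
The plan is to prove (H2) by a saturation argument in three variables, reducing to the base case supplied by Proposition \ref{prop:forestsetsH2}. By iterated application of axiom (H1), already established, the map $h$ is in all cases a cofibration; what must be shown is that it is an operadic weak equivalence.

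Fix simplicial cofibrations $i, j$ and let $\mathcal{T}(i, j)$ denote the class of normal monomorphisms $k$ of forest sets for which $h(i, j, k)$ is an operadic trivial cofibration. The first step is to check that $\mathcal{T}(i, j)$ is hypersaturated in the sense defined before Proposition \ref{prop:Segcoregeninneran}. Closures under pushouts, retracts, and transfinite compositions follow from the fact that these operations commute with pushout-products in the variable $k$ and are preserved by operadic trivial cofibrations. Closure under direct sums uses the distributivity of $\otimes$ over $\oplus$ (Proposition \ref{prop:fsetsproperties}): since the maps involved have no cross-terms one verifies that $h(i, j, k_1 \oplus k_2) \cong h(i, j, k_1) \oplus h(i, j, k_2)$, and a direct sum of operadic trivial cofibrations between normal objects is again a trivial cofibration (by mapping into an operadically local object and using condition (2) of the definition of operadically local). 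The cancellation property follows from two-out-of-three for operadic weak equivalences, combined with the standard factorization of pushout-products along composites: for composable $k_1, k_2$ one has $h(i, j, k_2 k_1)$ equal to the composition of $h(i, j, k_1)$ with a pushout of $h(i, j, k_2)$ along a cofibration.

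Given hypersaturation, it suffices by Corollary \ref{cor:gennormalmonos} and saturation to show that $\mathcal{T}(i, j)$ contains every boundary inclusion $\partial F \to F$. As a preliminary step I show that $\emptyset \to F \in \mathcal{T}(i, j)$ for every representable forest $F$: fixing $k$ to be $\emptyset \to F$ and running a symmetric hypersaturation argument now in the two simplicial variables, one reduces to the case where $i, j$ are of the form $\emptyset \to M$, $\emptyset \to N$. In this totally initial case $h$ is precisely the associator $\alpha_{M, N, F}$, which is an operadic trivial cofibration by Proposition \ref{prop:forestsetsH2} applied to the hypersaturated class of operadic trivial cofibrations (which contains the Segal core inclusions by Proposition \ref{prop:Segalcoreinnanod}). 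For the main induction on the size of $F$, assume $\partial G \to G \in \mathcal{T}(i, j)$ for every forest $G$ of size strictly smaller than $F$. By the skeletal filtration (Proposition \ref{prop:skeletalfiltration}) the map $\emptyset \to \partial F$ is a transfinite composition of pushouts of such $\partial G \to G$, hence lies in $\mathcal{T}(i, j)$. Combining this with $\emptyset \to F \in \mathcal{T}(i, j)$ and applying the cancellation property to $\emptyset \to \partial F \to F$ yields $\partial F \to F \in \mathcal{T}(i, j)$, completing the induction.

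The principal obstacle is the verification of hypersaturation for $\mathcal{T}(i, j)$, in particular the cancellation property, which rests on a careful composition formula for the three-variable pushout-product with respect to the natural transformation $\alpha$. The preliminary step likewise requires a parallel hypersaturation analysis in the two simplicial variables, technically analogous to but separate from the main argument and resting ultimately on the same base case from Proposition \ref{prop:forestsetsH2}.
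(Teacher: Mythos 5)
Your plan can in principle be made to work, but it takes a considerably longer route than the paper's and contains several gaps. Most significantly, the identity $h(i,j,k_1 \oplus k_2) \cong h(i,j,k_1) \oplus h(i,j,k_2)$ is false: already for the ordinary pushout-product, writing $u = i \,\hat{\times}\, j : U \to U'$ and $k_a : X_a \to X'_a$, the domain of $u \,\hat{\otimes}\, (k_1 \oplus k_2)$ is $(U' \otimes X_1 \oplus U' \otimes X_2) \cup (U \otimes X'_1 \oplus U \otimes X'_2)$ inside $U' \otimes (X'_1 \oplus X'_2)$, whereas $\mathrm{dom}(u \,\hat{\otimes}\, k_1) \oplus \mathrm{dom}(u \,\hat{\otimes}\, k_2)$ contains additional cross-terms such as $(U' \otimes X_1) \oplus (U \otimes X'_2)$, so the two are genuinely different. (This happens not to be fatal, since for the reduction to boundary inclusions via Corollary \ref{cor:gennormalmonos} you only need the saturation closures and the cancellation property, not closure under direct sums.) The composition formula you invoke is also stated the wrong way around: $h(i,j,k_1)$ has codomain $M' \otimes (N' \otimes X')$ with $X'$ the target of $k_1$, which cannot be matched to the domain of a pushout of $h(i,j,k_2)$. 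A valid version, $h(i,j,k_2 k_1) = h(i,j,k_2) \circ p$ with $p$ a pushout of $h(i,j,k_1)$, does hold, but proving it requires chasing a cube of corner maps which you do not carry out. Finally, the preliminary step reducing to $i, j$ of the form $\emptyset \to M$, $\emptyset \to N$ needs its own parallel cancellation argument in the simplicial variables, which is only gestured at.

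The paper's proof makes all of this unnecessary. After reducing to boundary inclusions (used only to verify that $h$ is a normal monomorphism, via the intersection identities of Section \ref{sec:tensorprodsnormals}), the key observation is that in the square of Definition \ref{def:stronghomotenrich} the two oblique maps $A \to C$ and $B \to D$ are compositions of pushouts of instances of $\alpha$, which by Proposition \ref{prop:forestsetsH2} together with the usual skeletal induction are trivial cofibrations for all cofibrant $M$, $N$, $X$. Hence $A \to C$ is a trivial cofibration, its pushout $B \to B \cup_A C$ is a trivial cofibration, and since the composite $B \to B \cup_A C \xrightarrow{h} D$ equals the trivial cofibration $B \to D$, two-out-of-three gives that $h$ is a weak equivalence. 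This single triangle argument replaces your entire hypersaturation induction, with no composition formula for $h$ and no multi-variable reduction.
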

\begin{proof}
By standard arguments it suffices to treat the case where $i$, $j$ and $k$ are of the form
\begin{equation*}
i: \partial \Delta^m \rightarrow \Delta^m, \quad j: \partial \Delta^n \rightarrow \Delta^n \quad \text{and} \quad k: \partial F \rightarrow F.
\end{equation*}
The map $h$ may somewhat informally be written as
\begin{equation*}
h: (\Delta^m \times \Delta^n) \otimes F \cup \partial \Delta^m \otimes (\Delta^n \otimes F) \cup \Delta^m \otimes (\partial \Delta^n \otimes F) \cup \Delta^m \otimes (\Delta^n \otimes \partial F) \longrightarrow \Delta^m \otimes (\Delta^n \otimes F).
\end{equation*}
To check that it is a normal monomorphism, we need to verify the following:
\begin{eqnarray*}
(\Delta^m \times \Delta^n) \otimes F \cap \partial \Delta^m \otimes (\Delta^n \otimes F) & = & (\partial \Delta^m \times \Delta^n) \otimes F, \\
(\Delta^m \times \Delta^n) \otimes F \cap \Delta^m \otimes (\partial \Delta^n \otimes F) & = & (\Delta^m \times \partial\Delta^n) \otimes F, \\
(\Delta^m \times \Delta^n) \otimes F \cap \Delta^m \otimes (\Delta^n \otimes \partial F) & = & (\Delta^m \times \Delta^n) \otimes \partial F.
\end{eqnarray*}
All of these identities follow by the same reasoning as was applied in Section \ref{sec:tensorprodsnormals}, specifically the type of observation mentioned in Remark \ref{rmk:tensorface}. \par 
By Proposition \ref{prop:forestsetsH2} we know that $\alpha_{M,N,X}$ is a trivial cofibration in case $M$, $N$ and $X$ are representable. It follows that $\alpha_{M,N,X}$ is a weak equivalence for any choice of cofibrant objects $M$, $N$ and $X$ by the usual induction on skeletal filtrations. We can now deduce formally that the map $h$ is a weak equivalence as well. Indeed, consider the square
\[
\xymatrix{
A \ar[d] \ar[r] & C \ar[d] \ar@/^/[ddr] & \\
B \ar[r]\ar@/_/[drr] & B \cup_A C \ar[dr]^h & \\
& & D.
}
\]
of Definition \ref{def:stronghomotenrich}. We know that the map $B \rightarrow D$ is a trivial cofibration, since it is the map $\alpha$ described above. Similarly, we deduce that the map $A \rightarrow C$ is a trivial cofibration, being a composition of pushouts of maps of the form $\alpha_{M,N,X}$. Therefore the map $B \rightarrow B \cup_A C$, being a pushout of a trivial cofibration, is a trivial cofibration itself. By two-out-of-three we conclude that $h$ is a weak equivalence. $\Box$ 
\end{proof}

\begin{lemma}
\label{lem:halfhomotenrich}
If $M \rightarrow N$ is a monomorphism of simplicial sets and $A \rightarrow B$ is a trivial cofibration between forest sets, then the pushout-product
\begin{equation*}
N \otimes A \cup M \otimes B \longrightarrow N \otimes B
\end{equation*}
is a trivial cofibration.
\end{lemma}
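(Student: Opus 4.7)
My plan is to verify the two requirements for a trivial cofibration separately. That the pushout-product is a normal monomorphism is immediate from Proposition \ref{prop:newnormalmonopoprod}(i), since $N$ is a simplicial set and therefore lies in the essential image of $u_! \circ i_!$.

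For the weak equivalence part, I would first reduce to the case where both $A$ and $B$ are normal. Fix a monomorphism $i \colon M \to N$ of simplicial sets and let $\mathcal{C}_i$ denote the class of monomorphisms $u$ of forest sets for which $u \, \hat{\otimes}\, i$ is an operadic trivial cofibration. Because $(-)\, \hat{\otimes}\, i$ preserves colimits in its forest-set argument, the source and target of $u \, \hat{\otimes}\, i$ are suitable pushouts of the source and target of $u_0 \, \hat{\otimes}\, i$ whenever $u$ is a pushout of $u_0$; combined with Lemma \ref{lem:pushoutbasictrivcof} and functoriality this shows that $\mathcal{C}_i$ is closed under pushouts and retracts. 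By Lemma \ref{lem:trivcofgenbetweennormals} it then suffices to treat trivial cofibrations $u\colon A \to B$ with $A$ and $B$ normal.

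In this normal case, write $P = N \otimes A \cup_{M \otimes A} M \otimes B$, so that $u\,\hat{\otimes}\,i \colon P \to N \otimes B$ is a normal monomorphism between normal forest sets. It is an operadic weak equivalence precisely when, for every fibrant object $E$, the induced map $\mathbf{hom}(N \otimes B, E) \to \mathbf{hom}(P, E)$ is an equivalence of $\infty$-categories. Using the adjunction $\mathbf{hom}(K \otimes X, E) \cong \mathbf{hom}(X, E^K)$ together with the trivial fibrations $\mathbf{hom}(X, E^K) \to \mathbf{hom}(X, E)^K$ supplied by Lemma \ref{lem:exphomotenriched}, I would fit the map into a commutative square whose right-hand vertical arrow is the Joyal pushout-product
\begin{equation*}
\mathbf{hom}(B, E)^N \longrightarrow \mathbf{hom}(B, E)^M \times_{\mathbf{hom}(A, E)^M} \mathbf{hom}(A, E)^N
\end{equation*}
and whose two horizontal arrows are trivial fibrations of simplicial sets (the bottom one because a map between pullbacks induced by three componentwise trivial fibrations is itself a trivial fibration).

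To finish, observe that $\mathbf{hom}(B, E) \to \mathbf{hom}(A, E)$ is a trivial fibration in the Joyal model structure: it is a categorical fibration by condition (1) in the definition of operadically local object, and a categorical equivalence by the definition of operadic weak equivalence. Since the Joyal model structure is Cartesian closed, pushout-producting this trivial fibration with the cofibration $i$ shows that the right-hand vertical map of the square is a trivial fibration, and then two-out-of-three yields the desired equivalence on the left. The main point of care throughout is the mismatch between the weak enrichment of $\mathbf{fSets}$ over $\mathbf{sSets}$ (where the associator $\alpha_{M,N,X}$ need not be an isomorphism) and the genuine Joyal cotensor; this is exactly what Lemma \ref{lem:exphomotenriched} — itself a consequence of axiom (H2) just established in Lemma \ref{lem:forestsetsH2} — allows us to ignore up to trivial fibration.
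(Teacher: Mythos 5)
Your proposal follows essentially the same route as the paper's proof: reduce to a trivial cofibration between normal forest sets, form the commutative square using the $\beta$ maps of Lemma \ref{lem:exphomotenriched}, and conclude by two-out-of-three with the fact that the left vertical map is a categorical fibration. The one place where I'd push back is your assertion that ``a map between pullbacks induced by three componentwise trivial fibrations is itself a trivial fibration''; this is false in general (take, for instance, the map from the pullback of $\{0\} \hookrightarrow \Delta^1 \hookleftarrow \{1\}$ to the pullback of $\ast \to \ast \leftarrow \ast$, with identities on the outer corners and the collapse in the middle). What makes the argument work here is either that all four legs of the two cospans are categorical fibrations — so both pullbacks are homotopy pullbacks and the induced map is at least a Joyal weak equivalence, which is all two-out-of-three needs — or, if one really wants the bottom map to be a trivial fibration as in the paper, that the relative $\beta$ map $\mathbf{hom}(A, E^N) \to \mathbf{hom}(A, E^M) \times_{\mathbf{hom}(A,E)^M} \mathbf{hom}(A,E)^N$ is itself a trivial fibration, a consequence of axiom (H2) (Lemma \ref{lem:forestsetsH2}), which is precisely what the paper cites at this point and what your proof does not make explicit.
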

\begin{proof}
By Lemma \ref{lem:trivcofgenbetweennormals} there is no loss of generality if we assume that $A$ and $B$ are both normal. We should verify that for every operadically local $E$, the map 
\begin{equation*}
\mathbf{hom}(N \otimes B, E) \longrightarrow \mathbf{hom}(N \otimes A \cup M \otimes B, E) = \mathbf{hom}(N \otimes A, E) \times_{\mathbf{hom}(M \otimes A, E)} \mathbf{hom}(M \otimes B, E)
\end{equation*}
is a trivial fibration of simplicial sets. We already know it is a fibration, since by Proposition \ref{prop:newnormalmonopoprod} the pushout-product under consideration is a normal monomorphism between normal objects. Using the maps $\beta$ of Lemma \ref{lem:exphomotenriched} we may form the diagram
\[
\xymatrix{
\mathbf{hom}(N \otimes B, E) \ar[r]\ar[d] & \mathbf{hom}(B, E)^N \ar[d] \\
\mathbf{hom}(N \otimes A, E) \times_{\mathbf{hom}(M \otimes A, E)} \mathbf{hom}(M \otimes B, E) \ar[r] & \mathbf{hom}(A, E)^N \times_{\mathbf{hom}(A, E)^M} \mathbf{hom}(B, E)^M.
}
\]
By Lemmas \ref{lem:forestsetsH2} and \ref{lem:exphomotenriched} we know that the horizontal maps are trivial fibrations. Furthermore, the right vertical map is a trivial fibration. Indeed, $\mathbf{hom}(B,E) \rightarrow \mathbf{hom}(A,E)$ is a trivial fibration by assumption, so that this follows from the fact that the Kan-Quillen model structure on simplicial sets is Cartesian. We may now conclude that the left vertical map in our diagram is a trivial fibration as well. $\Box$
\end{proof}

We can now provide the promised characterization of fibrations between fibrant objects in the operadic model structure:

\begin{proof}[Proof of Theorem \ref{thm:basicmodelstructure}(iv)]
Let $Y \longrightarrow X$ be a map between fibrant objects. If it is a fibration, Lemma \ref{lem:basicanodtrivcof} shows that it has the right lifting property with respect to operadic anodyne maps. Conversely, suppose $Y \longrightarrow X$ has this right lifting property. Factor the map as 
\[
\xymatrix{
Y \ar[r]^i & Z \ar[r]^p & X
}
\]
where $p$ is a fibration and $i$ is a trivial cofibration. Since $Y$ is fibrant, the map $i$ has a retract $r: Z \longrightarrow Y$. Next, the map
\begin{equation*}
J \otimes Y \cup \partial J \otimes Z \longrightarrow J \otimes Z
\end{equation*}
is a trivial cofibration by Lemma \ref{lem:halfhomotenrich}. Thus we can lift in the diagram
\[
\xymatrix@C=45pt{
J \otimes Y \cup \partial J \otimes Z \ar[r]^-{f\varepsilon \cup (p, fr)} \ar[d] & X \\
J \otimes Z \ar@{-->}[ur] & 
}
\]
because $X$ is fibrant as well. This gives a homotopy $h$ from $fr$ to $p$ relative to $Y$. Finally, let $(J \otimes Y) \cup_Y Z$ be the pushout along $i_0: Y \longrightarrow J \otimes Y$ and lift in 
\[
\xymatrix@C=40pt{
(J \otimes Y) \cup_Y Z \ar[r]^-{\varepsilon \cup r}\ar[d] & Y \ar[d]^f \\
J \otimes Z \ar@{-->}[ur]^k \ar[r]_h & X
}
\]
This is possible because the map on the left is an operadic anodyne. Then $r' = k_1$ has the property that $fr' = h_1 = p$ and $r' i = \varepsilon i_1 = \mathrm{id}_Y$. So $f$ is a retract of $p$ over $X$ and hence a fibration, since $p$ is. $\Box$
\end{proof}

\begin{lemma}
\label{lem:basictrivcofs}
The class of operadic trivial cofibrations is the smallest class $\mathcal{C}$ of cofibrations between forest sets containing the operadic anodynes and satisfying the following cancellation property: if
\[
\xymatrix{
A \ar[r]^i & B \ar[r]^j & C
}
\] 
are cofibrations such that $j$ and $ji$ are in $\mathcal{C}$, then so is $i$.
\end{lemma}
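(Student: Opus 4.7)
The easy inclusion is immediate: the class of operadic trivial cofibrations contains the operadic anodynes by Lemma \ref{lem:basicanodtrivcof}, and it satisfies the cancellation property, since in the cancellation hypothesis the map $i$ is already required to be a cofibration and two-out-of-three for operadic weak equivalences makes it a weak equivalence. Hence $\mathcal{C}$ is contained in the class of operadic trivial cofibrations.

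For the reverse inclusion, let $i : A \to B$ be an operadic trivial cofibration. First I would reduce to the case where $B$ is fibrant: apply the small object argument for operadic anodynes to $B \to *$ to obtain $B \xrightarrow{\eta} B^f$ operadic anodyne with $B^f$ fibrant. Then $\eta \in \mathcal{C}$, and if we can show $\eta \circ i \in \mathcal{C}$, then the cancellation property applied to $\eta$ and $\eta \circ i$ gives $i \in \mathcal{C}$. So assume $B$ is fibrant. Next I factor $i$ itself via the small object argument as $A \xrightarrow{\alpha} C \xrightarrow{p} B$ with $\alpha$ a relative operadic anodyne (hence $\alpha \in \mathcal{C}$) and $p$ having the right lifting property against operadic anodynes. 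Fibrancy of $B$ passes to $C$, and Theorem \ref{thm:basicmodelstructure}(iv) then identifies $p$ with a fibration between fibrant objects, which is a weak equivalence (two-out-of-three) and hence a trivial fibration.

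The retract argument produces $r : B \to C$ with $p \circ r = \mathrm{id}_B$ and $r \circ i = \alpha$. After a preliminary normalization reduction (in the spirit of Lemma \ref{lem:pushoutbasictrivcof}, pulling back along a normalization of $B$) we may assume $A$, and hence $C$, is normal; then $r$ is automatically a normal monomorphism since it is a split mono into a normal object, and so in particular a cofibration. Since $r \circ i = \alpha \in \mathcal{C}$, cancellation applied to $r$ and $r \circ i$ will give $i \in \mathcal{C}$ as soon as one knows $r \in \mathcal{C}$. Thus everything reduces to the following claim: any section $r : B \to C$ of a trivial fibration $p : C \to B$ between normal fibrant forest sets lies in $\mathcal{C}$.

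The main obstacle is this last claim: by construction $r$ is a trivial cofibration between normal fibrant objects, and we cannot simply close under retracts. To handle it, I would use the fact that $r$ is a $J$-deformation retract (via an earlier lemma on the structure of trivial cofibrations between normal fibrant objects) together with the observation that, for any normal monomorphism $X \rightarrowtail Y$, the pushout-products $\{\varepsilon\} \otimes Y \cup J \otimes X \to J \otimes Y$ for $\varepsilon \in \{0,1\}$ are operadic anodyne (a skeletal induction from the generators in Definition \ref{def:genbasicanodyne}). Splicing the $J$-homotopy $h : J \otimes C \to C$ with these pushout-products exhibits $r$ as obtained from operadic anodynes by a finite sequence of cancellations, completing the argument. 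The essential difficulty, and the core combinatorial content of the lemma, lies in converting the retract data furnished by the trivial fibration into a genuine cylinder decomposition whose pieces are operadic anodyne and can be removed by the cancellation property.
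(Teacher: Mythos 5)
Your setup runs exactly parallel to the paper's proof: reduce to $B$ fibrant via an operadic anodyne $\eta\colon B\to B^{f}$ and cancellation, factor $i$ as $A\xrightarrow{\alpha}C\xrightarrow{p}B$ with $\alpha$ operadic anodyne and $p$ having the RLP against operadic anodynes, observe that $p$ is a fibration between fibrant objects (Theorem~\ref{thm:basicmodelstructure}(iv)) and hence by two-out-of-three a trivial fibration, and lift to obtain $r\colon B\to C$ with $p\circ r=\mathrm{id}_{B}$ and $r\circ i=\alpha$.

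However, at precisely this point you lose the thread. The retract diagram
\[
\xymatrix{
A \ar[d]_{i}\ar@{=}[r] & A \ar[d]^{\alpha}\ar@{=}[r] & A \ar[d]^{i} \\
B \ar[r]^{r} & C \ar[r]^{p} & B
}
\]
exhibits $i$ as a \emph{retract of the operadic anodyne $\alpha$}, and the class of operadic anodynes is by definition (Definition~\ref{def:genbasicanodyne}) closed under retracts, being a saturated class. Therefore $i$ is itself operadic anodyne, hence $i\in\mathcal{C}$ simply because $\mathcal{C}$ contains the operadic anodynes. This is how the paper's proof concludes, and it is the entire content of the ``retract argument'' you invoked. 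There is no need for $\mathcal{C}$ to be closed under retracts, nor any need to show $r\in\mathcal{C}$.

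The confusion seems to be between two different closures: you wrote ``we cannot simply close under retracts,'' which would be correct if you were trying to place $i$ in $\mathcal{C}$ by being a retract of some general element of $\mathcal{C}$, but that is not the situation. You are looking at a retract of an operadic anodyne specifically, and that class \emph{is} retract-closed. Consequently, the entire last paragraph --- the normalization reduction, the claim that sections of trivial fibrations lie in $\mathcal{C}$, the $J$-deformation retract and the pushout-product lemma for $\{\varepsilon\}\otimes Y\cup J\otimes X\to J\otimes Y$ --- is a detour that is not only unnecessary but also left unfinished (you sketch a strategy but don't carry it out). The ``core combinatorial content'' you ascribe to the lemma does not exist; the argument really is the standard retract argument that the paper itself labels as ``standard,'' and you had already set it up completely.
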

\begin{proof}
These arguments are standard. Suppose $f: X \longrightarrow Y$ is a trivial cofibration between forest sets. Construct a square
\[
\xymatrix{
X \ar[r]\ar[d]_f & X' \ar[d]^g \\
Y \ar[r] & Y'
}
\]
in which the top and bottom horizontal maps are operadic anodyne, $X'$ and $Y'$ are fibrant and $g$ is a trivial cofibration. If we can prove that $g$ is an operadic anodyne then we are done. But this follows from the fact that all trivial cofibrations with fibrant codomain are operadic anodyne. Indeed, if $p: C \longrightarrow D$ is such a trivial cofibration, then factor it as an operadic anodyne $q: C \longrightarrow C'$ followed by a map $r: C' \longrightarrow D$ having the right lifting property with respect to all operadic anodynes. Since $D$ is fibrant, $C'$ is also fibrant. Now, by the characterization of fibrations between fibrant objects given in Theorem \ref{thm:basicmodelstructure}(iv) we conclude that $r$ is a trivial fibration. Lifting in the square
\[
\xymatrix{
C \ar[d]_p\ar[r]^q & C' \ar[d]^r \\
D \ar@{=}[r] & D
}
\]
exhibits $p$ as a retract of the operadic anodyne $q$. $\Box$
\end{proof}

The operadic trivial cofibrations can be characterized even a little more efficiently:

\begin{proposition}
\label{prop:trivcofoperadic}
In the operadic model structure on $\mathbf{fSets}$, the class of trivial cofibrations is the smallest hypersaturated class $\mathcal{C}$ containing the morphisms listed below. 
\begin{itemize}
\item[(a)] The inner horn inclusions
\begin{equation*}
\Lambda^e[uT] \longrightarrow uT
\end{equation*}
for any tree $T$ and any inner edge $e$ of $T$.
\item[(b)] For any tree $T$, the map
\begin{equation*}
\{0\} \otimes uT \cup J \otimes \partial(uT) \longrightarrow J \otimes uT.
\end{equation*}
\item[(c)] For any non-empty sequence of trees $T_1, \ldots, T_k$, the map
\begin{equation*}
T_1 \amalg \cdots \amalg T_k \longrightarrow T_1 \oplus \cdots \oplus T_k. 
\end{equation*}
\end{itemize}
In fact, we may replace (a) by the following:
\begin{itemize}
\item[(a')] For any tree $T$, the Segal core
\begin{equation*}
\mathrm{fSc}(uT) \longrightarrow uT.
\end{equation*}
\end{itemize}
\end{proposition}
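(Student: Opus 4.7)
The plan of the proof is to establish both inclusions. I would first verify that the listed maps (a), (b), (c) are trivial cofibrations, so that their hypersaturated closure $\mathcal{C}$ lies within the class of trivial cofibrations (which is itself hypersaturated, the required cancellation being a consequence of two-out-of-three for weak equivalences). The maps (a) are the generating operadic anodyne maps of type (ii)(a) from Lemma~\ref{lem:genbasicanodyne}, restricted to the case $F = uT$; the maps (b) are of type (ii)(b) with $F = uT$; and (c) with $k = 2$ is exactly the generating anodyne of type (ii)(c) with $n = 0$ (noting $\partial \Delta^0 = \varnothing$), with $k \geq 3$ following by iteration and composition. All are trivial cofibrations by Lemma~\ref{lem:basicanodtrivcof}.

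For the reverse inclusion, I would invoke Lemma~\ref{lem:basictrivcofs}, which characterizes trivial cofibrations as the smallest class of cofibrations containing the operadic anodyne maps and closed under the cancellation ``$j, ji \in \mathcal{C} \Rightarrow i \in \mathcal{C}$''. The first step is to show that $\mathcal{C}$ contains every generating operadic anodyne from Lemma~\ref{lem:genbasicanodyne}(ii), bootstrapping from the tree-level maps to general forests. For type (ii)(a), write $F = uT \oplus G$ with $e$ an inner edge of $T$. By equation~\eqref{eq:innerhornforest},
\begin{equation*}
\Lambda^e[F] \, = \, \Lambda^e[uT] \oplus G \, \cup \, uT \oplus \partial G.
\end{equation*}
Taking direct sum with $\mathrm{id}_G$ (closure under $\oplus$) moves the map of (a) to $\Lambda^e[uT] \oplus G \to uT \oplus G$ in $\mathcal{C}$, and $\Lambda^e[F] \to F$ is a pushout of this along $\Lambda^e[uT] \oplus G \to \Lambda^e[F]$. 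Type (ii)(b) is handled analogously, using the distributivity of $\otimes$ over $\oplus$ and a skeletal induction to reduce the forest-level $J$-extensions to the tree-level ones of (b). Type (ii)(c) follows from (c) combined with pushouts against boundary inclusions $\partial\Delta^n \to \Delta^n$.

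The main obstacle is the second step: verifying the cancellation property required by Lemma~\ref{lem:basictrivcofs}, since the cancellation built into hypersaturation goes in the opposite direction. I would derive it from the construction in the proof of that lemma. Given a trivial cofibration $f: X \to Y$, construct the square
\[
\xymatrix{
X \ar[r]^s \ar[d]_f & X' \ar[d]^g \\
Y \ar[r]_t & Y'
}
\]
with $s, t$ operadic anodyne and $X', Y'$ fibrant; then $g$ is also operadic anodyne, as all trivial cofibrations with fibrant codomain are. Let $P = Y \cup_X X'$ and $q: P \to Y'$ the universal map. The pushout map $f' : Y \to P$ lies in $\mathcal{C}$ (pushout of $s$), and since $t = q f'$ with $t, f' \in \mathcal{C}$, the (Hyp) cancellation yields $q \in \mathcal{C}$. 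A careful analysis of the splitting of $q$ together with the induced map $X' \to P$ should then allow us to realize $f$ as a pushout-and-retract of maps already known to lie in $\mathcal{C}$, bypassing the need for (Lem)-cancellation as a primitive operation.

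For the variant replacing (a) by (a'): Proposition~\ref{prop:Segalcoreinnanod} shows each Segal core inclusion $\mathrm{fSc}(uT) \to uT$ is inner anodyne, hence in the hypersaturated closure of (a); conversely, Proposition~\ref{prop:Segcoregeninneran} shows every inner horn inclusion lies in any hypersaturated class containing the Segal cores. Therefore (a) and (a') generate the same hypersaturated class, completing the proof.
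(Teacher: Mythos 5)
Your overall strategy mirrors the paper's: verify both inclusions, reduce from the tree-level generators (a)--(c) to the forest-level generating operadic anodynes, and appeal to Lemma~\ref{lem:basictrivcofs}. However, the bootstrap step for type~(ii)(a) contains a concrete error. Writing $F = uT \oplus G$, the square
\[
\xymatrix{
\Lambda^e[uT] \oplus G \ar[r]\ar[d] & \Lambda^e[F] \ar[d] \\
uT \oplus G \ar[r] & F
}
\]
is not a pushout. Since $uT \oplus G = F$, the bottom arrow is an identity, so a pushout would force $\Lambda^e[uT] \oplus G \to \Lambda^e[F]$ to be an isomorphism, which it is not (by~\eqref{eq:innerhornforest} the codomain also contains $uT \oplus \partial G$); concretely, the pushout $F \amalg_{\Lambda^e[uT] \oplus G} \Lambda^e[F]$ doubles every element of $\Lambda^e[F]$ not lying in $\Lambda^e[uT] \oplus G$, so the comparison map to $F$ fails to be a monomorphism. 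The intended argument is by cancellation rather than by a pushout: the composite $\Lambda^e[uT] \oplus G \to \Lambda^e[F] \to F$ equals the direct sum $(\Lambda^e[uT] \to uT) \oplus \mathrm{id}_G$ and hence lies in $\mathcal{C}$, while the first factor $\Lambda^e[uT] \oplus G \to \Lambda^e[F]$ is a genuine pushout of $(\Lambda^e[uT] \to uT) \oplus \mathrm{id}_{\partial G}$ along $\Lambda^e[uT] \oplus \partial G \to \Lambda^e[uT] \oplus G$ and so also lies in $\mathcal{C}$; the hypersaturation cancellation ($i, ji \in \mathcal{C} \Rightarrow j \in \mathcal{C}$) then yields $\Lambda^e[F] \to F \in \mathcal{C}$.

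The ``main obstacle'' you flag is indeed the delicate point, but your sketched resolution does not close the gap. After deducing $q \in \mathcal{C}$ via hypersaturation, you are holding $q$ and $g = q \circ (X' \to P)$ in $\mathcal{C}$ and want the first factor $X' \to P$, which is again exactly the wrong-direction cancellation; and even if $X' \to P$ were in $\mathcal{C}$ you would still not have $f$, since $X' \to P$ is a pushout of $f$ and not conversely, nor does $q$ possess a splitting to exploit in general. The argument in the proof of Lemma~\ref{lem:basictrivcofs} exhibits a trivial cofibration with \emph{fibrant} codomain as a retract of an anodyne, and this does drop it into $\mathcal{C}$; for non-fibrant $Y$, however, one seems forced to use the other cancellation direction in some form. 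It is worth noting that the paper's own proof is terse precisely here --- it asserts that Lemmas~\ref{lem:genbasicanodyne} and~\ref{lem:basictrivcofs} give the conclusion without spelling out how the cancellation property required by the latter is recovered from the closure conditions in the definition of a hypersaturated class --- so your instinct to isolate this step was sound, but the bypass you propose does not in fact avoid the issue.
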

\begin{proof}
It is clear that all the stated maps are trivial cofibrations and that the class of trivial cofibrations has the stated closure property. Conversely, consider the smallest class $\mathcal{C}$ having the stated closure properties and containing (a) and (b) for any \emph{forest} $F$ (instead of $uT$) and the maps described in Lemma \ref{lem:genbasicanodyne}(ii)(c), i.e. the inclusions
\begin{equation*}
\Delta^n \otimes (F \amalg G) \cup \partial \Delta^n \otimes (F \oplus G) \longrightarrow \Delta^n \otimes (F \oplus G)
\end{equation*}
for forests $F$ and $G$. Then Lemmas \ref{lem:genbasicanodyne} and  \ref{lem:basictrivcofs} show that this class is in fact the class of trivial cofibrations in the operadic model structure. \par 
Let us first show that it suffices to include only the $n=0$ version of the maps just listed, i.e. only the maps
\begin{equation}
\label{eq:sums1}
F \amalg G \longrightarrow F \oplus G
\end{equation} 
Indeed, the more general map listed before is of the form
\begin{equation}
\label{eq:sums2}
A \amalg B \cup A' \oplus B' \longrightarrow A \oplus B
\end{equation}
for normal monomorphisms $A' \rightarrowtail A$ and $B' \rightarrowtail B$. Let us first treat the case where $B' = B$ (which, by symmetry, will also cover the case $A' = A$). Such a map is in the saturation of the class of maps of the form
\begin{equation}
\label{eq:sums3}
F \amalg G \cup \partial F \oplus G \longrightarrow F \oplus G
\end{equation}
Form the following diagram, in which the square is a pushout and the vertical maps and the top right horizontal map are in the saturation of the class of maps of the form in (\ref{eq:sums1}):
\[
\xymatrix{
\partial F \amalg G \ar[d]_{\ast} \ar[r] & F \amalg G \ar[r]^{\ast}\ar[d]_{\ast} & F \oplus G \\
\partial F \oplus G \ar[r] & F \amalg G \cup \partial F \oplus G \ar[ur] &
}
\]  
By two-out-of-three, we get the map of (\ref{eq:sums3}) and hence by saturation the maps of (\ref{eq:sums2}) in the special cases $A' = A$ or $B' = B$. To remove this restriction, consider arbitrary normal monos $A' \rightarrowtail A$ and $B' \rightarrowtail B$ and form the following diagram, in which the square is a pushout:
\[
\xymatrix{
A' \amalg B \cup A' \oplus B' \ar[r]^{\ast}\ar[d] & A' \oplus B \ar[d] \ar[dr] & \\
A \amalg B \cup A' \oplus B' \ar[r]_{\ast} & A \amalg B \cup A' \oplus B \ar[r]_-{\ast} & A \oplus B
}
\]
The top horizontal and right bottom horizontal map are of the special form just described. Composing the bottom two horizontal maps gives the map of (\ref{eq:sums2}), so we have succeeded in reducing to (\ref{eq:sums1}). We now wish to get the maps of (\ref{eq:sums1}) from the maps listed in (c) of the proposition. For this, write $F = \bigoplus_i S_i$ and $G = \bigoplus_j T_j$ and form the diagram
\[
\xymatrix{
(S_1 \oplus \cdots \oplus S_m) \amalg (T_1 \oplus \cdots \oplus T_n) \ar[r] & (S_1 \oplus \cdots \oplus S_m) \oplus (T_1 \oplus \cdots \oplus T_n) \\
(S_1 \amalg \cdots \amalg S_m) \amalg (T_1 \amalg \cdots \amalg T_n) \ar[u]\ar[ur] & 
}
\] 
The skew map is of the form (c) and the vertical map is a coproduct of such maps.

We still have to demonstrate that it suffices to include the maps (a) and (b) of the proposition, rather than their analogues with $uT$ replaced by a forest $F$. This is done similarly, using the two-out-of-three property and what we already know about sums. For example, for a direct sum $F = S \oplus T$ of two trees, the map
\begin{equation*}
\{0\} \otimes (S \oplus T) \cup J \otimes \partial (S \oplus T) \longrightarrow J \otimes (S \oplus T)
\end{equation*}
fits into a diagram
\[
\xymatrix{
\{0\} \otimes (S \oplus T) \cup J \otimes \partial (S \oplus T) \ar[r] & J \otimes (S \oplus T) \\
\{0\} \otimes (S \oplus T) \cup J \otimes (\partial S \oplus T \cup S \oplus \partial T) \ar@{=}[u] & \\
\{0\} \otimes (S \oplus T) \cup J \otimes (\partial S \amalg T \cup S \amalg \partial T) \ar[u]\ar[r] & J \otimes (S \amalg T) \ar[uu]
}
\]
where the lower left and the right map are in the saturation of the class (c) as just argued and the lower horizontal map is an isomorphism. We leave the remaining case (a) to the reader. To replace (a) by (a') one uses Propositions \ref{prop:Segalcoreinnanod} and \ref{prop:Segcoregeninneran}. $\Box$
\end{proof}

Let us now prove that the operadic model structure is homotopically enriched:

\begin{proof}[Proof of Theorem \ref{thm:basicmodelstructure}(v)]
By Lemma \ref{lem:forestsetsH2} we know that the weak enrichment of $\mathbf{fSets}$ satisfies axiom (H2). It remains to verify (H1). By definition, the statement we have to prove is that given a cofibration $Y \longrightarrow Z$ of forest sets and a cofibration $M \longrightarrow N$ of simplicial sets, the map
\begin{equation*}
N \otimes Y  \cup_{M \otimes Y} M \otimes Z \longrightarrow N \otimes Z
\end{equation*}
is a cofibration, which is trivial if either $Y \longrightarrow Z$ is an operadic weak equivalence or $M \longrightarrow N$ is a weak equivalence in the Joyal model structure. We already know it is a cofibration by Proposition \ref{prop:newnormalmonopoprod}. \par 
(i) Assume $Y \longrightarrow Z$ is trivial. Then the conclusion follows from Lemma \ref{lem:halfhomotenrich}. \par
(ii) Assume $M \longrightarrow N$ is trivial. As before, Lemma \ref{lem:trivcofgenbetweennormals} shows there is no loss of generality if we assume that $Y$ and $Z$ are both normal. Let us write $A \rightarrow B$ for the pushout-product map
\begin{equation*}
N \otimes Y \cup_{M \otimes Y} M \otimes Z \longrightarrow N \otimes Z.
\end{equation*}
Now consider an arbitrary $K \rightarrow L$ of simplicial sets and write $A' \rightarrow B'$ for the similar pushout-product map with $M \rightarrow N$ replaced by $K \rightarrow L$. By symmetry of the enrichment, 
\begin{equation*}
\mathbf{hom}(B, E) \longrightarrow \mathbf{hom}(A,E)
\end{equation*}
has the right lifting property with respect to $K \rightarrow L$ if and only if
\begin{equation*}
\mathbf{hom}(B', E) \longrightarrow \mathbf{hom}(A',E)
\end{equation*}
has the right lifting property with respect to $M \rightarrow N$. It has this lifting property because it is a fibration in the Joyal model structure, since $A' \rightarrow B'$ is a normal monomorphism between normal objects. Consequently, the map $\mathbf{hom}(B, E) \rightarrow \mathbf{hom}(A,E)$ is a trivial fibration of simplicial sets and $A \rightarrow B$ is an operadic weak equivalence, as was to be shown.
$\Box$
\end{proof}

We finish this section with a characterization of the weak equivalences between fibrant objects in the operadic model structure:

\begin{proposition}
\label{prop:operadicwes}
Let $X$ and $Y$ be operadically local objects of $\mathbf{fSets}$. A map $f: X \longrightarrow Y$ is a weak equivalence in the operadic model structure if and only if the following two conditions hold:
\begin{itemize}
\item[(i)] For every corolla $C_n$, the map
\begin{equation*}
\mathbf{hom}(C_n, X) \longrightarrow \mathbf{hom}(C_n, Y)
\end{equation*}
is an equivalence of $\infty$-categories.
\item[(ii)] The map $i^*u^*X \longrightarrow i^*u^*Y$ of underlying $\infty$-categories is an equivalence.
\end{itemize}
\end{proposition}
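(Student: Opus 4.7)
For the forward direction, suppose $f \colon X \to Y$ is an operadic weak equivalence between operadically local objects. Since the model structure is homotopically enriched over the Joyal model structure (Theorem \ref{thm:basicmodelstructure}(v)) and $X, Y$ are fibrant, the functor $\mathbf{hom}(A, -)$ is right Quillen with values in the Joyal model category for every normal $A$, so by Ken Brown's lemma $\mathbf{hom}(A, f)$ is a categorical equivalence whenever $A$ is normal. Applied to $A = C_n$ this yields (i); applied to $A = \eta$ it yields (ii), once one observes the natural isomorphism $\mathbf{hom}(\eta, Z) \cong i^*u^*Z$, which follows from the identification $\Delta^n \otimes \eta \cong \Delta^n$ in $\mathbf{fSets}$ (since $\eta$ is the unit for $\otimes$ on trees).

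For the converse, let $\mathcal{A}$ denote the class of normal forest sets $A$ for which $\mathbf{hom}(A, X) \to \mathbf{hom}(A, Y)$ is a categorical equivalence of $\infty$-categories. It suffices to show that $\mathcal{A}$ contains every normal forest set: a Whitehead-style argument in the homotopy-enriched model structure (applying the representable functor to a normalization of $Y$) then lifts this to a homotopy inverse for $f$. The key observation is that, since $X$ and $Y$ are operadically local, $\mathbf{hom}(-, X)$ and $\mathbf{hom}(-, Y)$ carry cofibrations to Joyal fibrations (condition (1)), and hence convert pushouts along normal monomorphisms into homotopy pullbacks and transfinite compositions of cofibrations into homotopy limits in the Joyal model structure. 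Consequently $\mathcal{A}$ is closed under these two operations provided the constituent pieces already lie in $\mathcal{A}$.

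The plan is then to populate $\mathcal{A}$ in three steps. First, for a tree $T$, Proposition \ref{prop:Segalcoreinnanod} shows that $\mathrm{fSc}(uT) \to uT$ is inner anodyne, so by condition (3) it suffices to prove $\mathrm{fSc}(uT) \in \mathcal{A}$; by the recursive formula in Remark \ref{rmk:Segalcore}(c) the Segal core is an iterated pushout of corollas along disjoint unions of copies of $\eta$, and an induction on the number of vertices, using hypotheses (i), (ii), and the homotopy pullback observation, places $\mathrm{fSc}(uT)$ in $\mathcal{A}$. Second, for a forest $F = T_1 \oplus \cdots \oplus T_k$, condition (2) of operadic locality gives a trivial fibration $\mathbf{hom}(F, -) \to \prod_i \mathbf{hom}(T_i, -)$, so $F \in \mathcal{A}$ once every $T_i \in \mathcal{A}$. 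Third, for an arbitrary normal object $A$, the skeletal filtration (Proposition \ref{prop:skeletalfiltration}) expresses $A$ as a sequential colimit along cofibrations where each attaching map is a coproduct of boundary inclusions $\partial F_e \to F_e$ of forests; an induction on skeletal level, invoking the pushout and sequential-colimit closure of $\mathcal{A}$, shows $A \in \mathcal{A}$.

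The only delicate step is Step~1, where one must verify that the Segal-core pushout decomposition is genuinely a homotopy pushout of cofibrations between objects already known to lie in $\mathcal{A}$ (so that $\mathbf{hom}(-, X)$ converts it into a genuine homotopy pullback). Granted the homotopy enrichment of the model structure, this is bookkeeping; the substantive input from our hypotheses occurs precisely at the building blocks $\eta$ (giving (ii)) and the corollas $C_n$ (giving (i)). Everything else is a formal consequence of the axioms of an operadically local object.
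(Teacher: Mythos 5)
Your proposal is correct and follows essentially the same strategy as the paper: the forward direction uses that $\mathbf{hom}(A,-)$ is a right Quillen functor for cofibrant $A$, and the converse reduces to Segal cores (building blocks being corollas and $\eta$, reduced from direct sums via operadic locality), then skeletal induction, then a $J$-homotopy Whitehead argument. The one organizational difference is that you populate $\mathcal{A}$ via the recursive formula for $\mathrm{fSc}(uT)$, whereas the paper works directly with $\mathrm{fSc}(T)$ as the colimit of the diagram of subforests with at most one vertex per tree; since the recursion itself already involves direct sums $\mathrm{fSc}(T_1) \oplus \cdots \oplus \mathrm{fSc}(T_p)$, the closure of $\mathcal{A}$ under $\oplus$ (your ``Step~2'') is actually used inside ``Step~1'', so the two steps should really be a joint induction rather than sequential --- but this is cosmetic and the mathematical content is identical.
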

\begin{remark}
The `fully faithful' part of (ii) is already implied by (i), since $i^*u^*X = \mathbf{hom}(\eta, X)$ and $\mathbf{hom}(C_1, X) = i^*u^*X^{\Delta^1}$. So we may replace (ii) by the weaker condition that the functor 
\begin{equation*}
\tau i^*u^*X \longrightarrow \tau i^*u^*Y
\end{equation*}
between ordinary small categories is essentially surjective.
\end{remark}
\begin{proof}[Proof of Proposition \ref{prop:operadicwes}]
For the direct implication, note that the operadic model structure being homotopically enriched in particular implies that
\begin{equation*}
\mathbf{hom}(A, - ): \mathbf{fSets} \longrightarrow \mathbf{sSets}
\end{equation*}
is a right Quillen functor for any cofibrant forest set $A$ and therefore preserves weak equivalences between fibrants. For the converse, we will first show that 
\begin{equation*}
\mathbf{hom}(A, X) \longrightarrow \mathbf{hom}(A, Y)
\end{equation*}
is a weak equivalence for every normal forest set $A$.  In case $A$ is a tree $T$, consider the diagram
\[
\xymatrix{
\mathbf{hom}(T, X) \ar[r]\ar[d] & \mathbf{hom}(T, Y) \ar[d] \\
\mathbf{hom}(\mathrm{fSc}(T), X) \ar[r] & \mathbf{hom}(\mathrm{fSc}(T), Y)
}
\]
where the vertical maps are trivial fibrations. To prove that the top horizontal map is weak equivalence, it suffices to prove that the lower horizontal map is a weak equivalence. Now $\mathrm{fSc}(T)$ is a colimit of a finite diagram whose objects are direct sums of corollas and copies of $\eta$ and whose maps are normal monomorphisms. One deduces that this diagram is in fact a homotopy colimit and that similarly the diagram formed by applying $\mathbf{hom}(-,X)$ to it is a homotopy limit diagram. Therefore, to check that $\mathbf{hom}(\mathrm{fSc}(T), X) \longrightarrow \mathbf{hom}(\mathrm{fSc}(T), Y)$ is a weak equivalence, it suffices to check this assertion with $\mathrm{fSc}(T)$ replaced by the constituent objects of the homotopy colimit diagram for $\mathrm{fSc}(T)$. The map is then a weak equivalence by our assumptions (i) (for corollas) and (ii) (for $\eta$). Note that we are using the fact that $X$ and $Y$ are operadically local to reduce from sums of corollas to individual corollas here. Similarly, this also allows us to treat the case where $A$ is a forest $F$ rather than just a single tree $T$. To handle the case where $A$ is a general normal object, we proceed by skeletal induction. Indeed, suppose we are given a pushout
\[
\xymatrix{
\partial F \ar[d]\ar[r] & A \ar[d] \\
F \ar[r] & B
}
\]
and assume that the statement is true for $\partial F$, $F$ and $A$. Then in the cube
\[
\xymatrix{
\mathbf{hom}(B, E_1) \ar[rr]\ar[dr]\ar@{->>}[dd] & & \mathbf{hom}(B, E_2) \ar@{->>}'[d][dd]\ar[dr] & \\
& \mathbf{hom}(F, E_1) \ar@{->>}[dd]\ar[rr]^(.4)\sim & & \mathbf{hom}(F, E_2) \ar@{->>}[dd] \\
\mathbf{hom}(A, E_1) \ar'[r][rr]^(.1)\sim\ar[dr] & & \mathbf{hom}(A, E_2)\ar[dr] & \\
& \mathbf{hom}(\partial F, E_1) \ar[rr]^(.4)\sim & & \mathbf{hom}(\partial F, E_2)
}
\]
all vertices are $\infty$-categories and the vertical maps are all fibrations. Therefore the left and right squares, which are pullbacks, are in fact also homotopy pullbacks and the map
\begin{equation*}
\mathbf{hom}(B, E_1) \longrightarrow \mathbf{hom}(B, E_2)
\end{equation*}
must be an equivalence of $\infty$-categories as well, which finishes the induction. To deduce that $X \rightarrow Y$ is a weak equivalence, construct a square
\[
\xymatrix{
X' \ar@{->>}[d]\ar[r]^{f'} & Y' \ar@{->>}[d] \\
X \ar[r]_f & Y
}
\]
in which the vertical maps are normalizations. By what we just proved, the map
\begin{equation*}
f'_*: \mathbf{hom}(Y', X') \longrightarrow \mathbf{hom}(Y', Y')
\end{equation*}
is an equivalence of $\infty$-categories and hence a $J$-homotopy equivalence. Choose a homotopy inverse $\phi$ of $f'_*$ and set $g' = \phi(\mathrm{id}_{Y'})$. Then clearly $f'$ and $g'$ are part of a $J$-homotopy equivalence between $X'$ and $Y'$. In particular $f'$, and hence $f$, is an operadic weak equivalence. $\Box$
\end{proof}

\subsection{The equivalence of forest sets and dendroidal sets}
\label{sec:equivfsetsdsets}

The goal of this section is to prove the following result:

\begin{theorem}
\label{thm:equivfSetsdSets}
The Quillen pair
\[
\xymatrix@C=40pt{
u^*: \mathbf{fSets} \ar@<.5ex>[r] & \mathbf{dSets}: u_* \ar@<.5ex>[l]
}
\]
between dendroidal sets and forest sets equipped with the operadic model structure is a Quillen equivalence.
\end{theorem}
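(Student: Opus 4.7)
The plan is to prove the theorem by passing through the auxiliary model categories that appear at the top of the diagram displayed in the introduction. Specifically, I would introduce the categories $\mathbf{dSets}^{\mathbf{\Delta}^{\mathrm{op}}}$ and $\mathbf{fSets}^{\mathbf{\Delta}^{\mathrm{op}}}$ of simplicial objects in dendroidal sets and forest sets, equipped with the Reedy model structure coming from the generalized Reedy structures on $\mathbf{\Omega}$ and $\mathbf{\Phi}$ noted earlier, and then Bousfield-localize to obtain model categories $\mathbf{CSS}_{\mathbf{\Omega}}$ of complete dendroidal Segal spaces and $\mathbf{CSS}_{\mathbf{\Phi}}$ of complete forest Segal spaces. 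The functor $u: \mathbf{\Omega} \to \mathbf{\Phi}$ induces a restriction $u^*$ between these, which is part of a Quillen pair with $u_*$ on the right.

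First I would prove that $\mathbf{dSets}$ with its Cisinski--Moerdijk model structure is Quillen equivalent to $\mathbf{CSS}_{\mathbf{\Omega}}$, via a Quillen adjunction whose right adjoint evaluates a simplicial presheaf on $\mathbf{\Omega}$ at $[0] \in \mathbf{\Delta}$. This is the dendroidal analogue of the Joyal--Tierney comparison of quasi-categories with complete Segal spaces; the proof proceeds by first identifying the fibrant objects of $\mathbf{CSS}_{\mathbf{\Omega}}$ explicitly and then using the Segal core characterization of trivial cofibrations (the dendroidal analogue of Proposition \ref{prop:trivcofoperadic}) to verify the derived adjunction units and counits are weak equivalences. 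An analogous argument, using the forest Segal core technology developed in this chapter together with Proposition \ref{prop:trivcofoperadic} itself, establishes the same for $\mathbf{fSets}$ and $\mathbf{CSS}_{\mathbf{\Phi}}$.

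Second, I would show that the Quillen pair $(u^*, u_*)$ between $\mathbf{CSS}_{\mathbf{\Phi}}$ and $\mathbf{CSS}_{\mathbf{\Omega}}$ is itself a Quillen equivalence. The decisive fact is that once the forest Segal condition is imposed, the value of a complete forest Segal space on a direct sum $T_1 \oplus \cdots \oplus T_n$ is homotopically determined by its values on the constituent trees $T_i$ via a product decomposition. This is the homotopical manifestation of the fact that the topos of sheaves on $\mathbf{\Phi}$ for the Grothendieck topology generated by summand inclusions is equivalent to $\mathbf{dSets}$, as noted in the remark at the end of Section \ref{sec:fsetsproperties}. Once this step is established, a standard diagram chase using the two Quillen equivalences of the previous step together with two-out-of-three yields that the pair $(u^*, u_*): \mathbf{fSets} \rightleftarrows \mathbf{dSets}$ is a Quillen equivalence.

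The main obstacle I anticipate is the first step: proving that $\mathbf{dSets}$ and $\mathbf{fSets}$ with their operadic model structures are Quillen equivalent to the respective complete Segal space models. Unlike in the simplicial case, the categories $\mathbf{\Omega}$ and $\mathbf{\Phi}$ both contain nontrivial automorphisms, so the relevant Reedy arguments must be carried out in the framework of generalized Reedy categories, and the delicate combinatorics of trees and forests already encountered in this chapter (in particular the root-face phenomena peculiar to $\mathbf{\Phi}$ that forced the separate arguments in Sections \ref{sec:normalmonos} and \ref{sec:tensorprodsnormals}) will resurface when one has to verify that the local trivial cofibrations in the bisimplicial models are generated in the expected way.
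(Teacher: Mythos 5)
Your proposal follows essentially the same architecture as the paper's proof: pass through the auxiliary categories of simplicial presheaves on $\mathbf{\Omega}$ and $\mathbf{\Phi}$, identify the operadic model structures with suitable localizations (complete Segal space models, with the extra condition for sums on the forest side), and then transport the equivalence through the localizations. Your first step corresponds to Theorems \ref{thm:dcomplSegspace} and \ref{thm:fcomplSegspace}, and your second step — that the sums condition forces a complete forest Segal space to be determined by its values on trees — is exactly the decisive point.

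The place where the paper's execution diverges from yours is in how the second step is carried out, and the difference is worth noting because it sidesteps precisely the obstacle you flag at the end. Rather than comparing the localized Reedy (complete Segal) models directly — which, as you correctly anticipate, would require careful generalized-Reedy combinatorics of the root-face type — the paper first passes to the \emph{projective} model structures on $\mathbf{sSets}^{\mathbf{\Omega}^{\mathrm{op}}}$ and $\mathbf{sSets}^{\mathbf{\Phi}^{\mathrm{op}}}$, where fibrations and weak equivalences are pointwise. In that setting $u^*$ is simultaneously a \emph{left} and a \emph{right} Quillen functor (since $u_!$ preserves representables and $u^*$ sends representables to coproducts of representables), and after localizing only for the sums condition $(f\beta')$ the counit $u_!u^*(\Delta^n\times F)\to\Delta^n\times F$ and the unit $u^*u_* \cong \mathrm{id}$ are checked by elementary means. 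A general transfer lemma for left Bousfield localizations (Lemma \ref{lem:locQequiv}) then moves the resulting Quillen equivalence through the remaining Segal and completeness localizations and finally through the comparison with $\mathbf{dSets}$ and $\mathbf{fSets}$. So the topos remark you invoke is indeed the conceptual reason the comparison works, but the paper cashes it in at the projective level, where the verification is formal, rather than at the Segal space level, where the combinatorics you worry about would actually appear. Your plan as written would reach the same conclusion but would require grappling with those combinatorics directly.
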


\begin{remark}
Note that although $u^*u_*$ is isomorphic to the identity functor, $u_*(X)$ is never cofibrant (for non-empty $X$), since elements of $X(T)$ restrict along the codiagonal $T \amalg T \longrightarrow T$ to elements of
\begin{equation*}
u_*X(T \oplus T) \, = \, \mathbf{dSets}(T \amalg T, X)
\end{equation*}
which are invariant under the twist isomorphism of $T \oplus T$. So we cannot conclude a similar identity for the composition $(\mathbf{L}u^*)(\mathbf{R}u_*)$ of derived functors. Similarly, $u^*$ of a fibrant object is rarely fibrant because $u_!$ does not send all inner horns to monomorphisms, cf. Remark \ref{rmk:warning} above. So the calculation of the composition $(\mathbf{R}u_*)(\mathbf{L}u^*)$ is far from that of $u_*u^*$. For these reasons, we'll have to prove the theorem above in a rather roundabout way, using simplicial presheaves.
\end{remark}

\emph{Simplicial presheaves on $\mathbf{\Omega}$.} We recall some results from \cite{cisinskimoerdijk2} concerning simplicial presheaves on $\mathbf{\Omega}$, i.e. \emph{dendroidal spaces}. The category of dendroidal spaces is of course identical to that of simplicial objects in dendroidal sets,
\begin{equation*}
\mathbf{dSets}^{\mathbf{\Delta}^{\mathrm{op}}} \, = \, \mathbf{Sets}^{(\mathbf{\Delta}\times\mathbf{\Omega})^{\mathrm{op}}} \, = \, \mathbf{sSets}^{\mathbf{\Omega}^{\mathrm{op}}},
\end{equation*}
so we can study its homotopy theory in two ways: one departing from the Reedy model structure on $\mathbf{dSets}^{\mathbf{\Delta}^{\mathrm{op}}}$, the other departing from the generalized Reedy model structure (cf. \cite{bergermoerdijkReedy}) on $\mathbf{sSets}^{\mathbf{\Omega}^{\mathrm{op}}}$. For the first of these, the adjoint functors
\[
\xymatrix@C=40pt{
\mathbf{dSets} \ar@<.5ex>[r]^{\mathrm{con}} & \mathbf{dSets}^{\mathbf{\Delta}^{\mathrm{op}}} \ar@<.5ex>[l]^{\mathrm{ev}_0}
}
\]
given by the constant simplicial objects and the evaluation at the object $[0]$ of $\mathbf{\Delta}$ form a Quillen pair. This Quillen pair can easily be turned into a Quillen equivalence by forcing the fibrant (i.e. `local') objects in $\mathbf{dSets}^{\mathbf{\Delta}^{\mathrm{op}}}$ to be homotopically constant. More precisely, we can consider the left Bousfield localization of $\mathbf{dSets}^{\mathbf{\Delta}^{\mathrm{op}}}$ whose local objects $X$ have the property that the face maps $d_i: X_n \longrightarrow X_{n-1}$, which are fibrations for any object that is Reedy fibrant, are actually trivial fibrations of dendroidal sets. Equivalently, one can force the maps 
\begin{equation*}
X_n \, = \, \mathbf{Hom}(\Delta^n, X) \longrightarrow \mathbf{Hom}(\Lambda^n_k, X)
\end{equation*}
to be trivial fibrations of dendroidal sets for $0 \leq k \leq n$. Thus, the localized model structure which makes $\mathbf{dSets}^{\mathbf{\Delta}^{\mathrm{op}}}$ equivalent to $\mathbf{dSets}$ is completely characterized by forcing three classes of normal monomorphisms (i.e. Reedy cofibrations) to be trivial cofibrations:
\begin{itemize}
\item[($\alpha$)] For any tree $T$, any inner edge $e$ in $T$ and any $n \geq 0$,
\begin{equation*}
\partial\Delta^n \times T \cup \Delta^n \times \Lambda^e[T] \longrightarrow \Delta^n \times T
\end{equation*}
\item[($\beta$)] For any tree $T$ and any $n \geq 0$,
\begin{equation*}
\partial\Delta^n \times (J \otimes T) \cup \Delta^n \times (\{0\} \otimes T \cup J \otimes \partial T) \longrightarrow \Delta^n \times (J \otimes T)
\end{equation*}
\item[($\gamma$)] For any tree $T$ and any $0 \leq k \leq n$,
\begin{equation*}
\Lambda_k^n \times T \cup \Delta^n \times \partial T \longrightarrow \Delta^n \times T
\end{equation*}
\end{itemize}
Indeed, the right lifting property with respect to the first two characterizes being Reedy fibrant, while the last one corresponds to being homotopically constant.  \par 
Now let us start from the category $\mathbf{sSets}^{\mathbf{\Omega}^{\mathrm{op}}}$ of dendroidal spaces. The fibrant objects for the (generalized) Reedy structure are now exactly the ones having the right lifting property with respect to the class of maps ($\gamma$). So if one localizes further, to ask local objects to have the right lifting property with respect to ($\alpha$) and ($\beta$), one obtains an identical model category. For a Reedy fibrant dendroidal space, the right lifting property with respect to ($\alpha$) means that the fibration of simplicial sets
\begin{equation*}
\mathbf{hom}(T, X) \longrightarrow \mathbf{hom}(\Lambda^e[T], X)
\end{equation*}
is a trivial fibration. The right lifting property with respect to ($\beta$) means that 
\begin{equation*}
\mathrm{ev}_0: X^J \longrightarrow X
\end{equation*}
is a trivial fibration. The first is a Segal condition, the second is a \emph{completeness} condition similar to the condition for Rezk's complete Segal spaces. Therefore, the Reedy fibrant dendroidal spaces which are local with respect to ($\alpha$) and ($\beta$) are called \emph{dendroidal complete Segal spaces}, cf. \cite{cisinskimoerdijk2}. In conclusion, we have the following theorem:

\begin{theorem}
\label{thm:dcomplSegspace}
There are Quillen equivalences
\[
\xymatrix@C=30pt{
\mathbf{dSets} \ar@<.5ex>[r]^-{\mathrm{con}} & \Bigl(\mathbf{dSets}^{\mathbf{\Delta^{\mathrm{op}}}}\Bigr)_{\mathrm{Reedy,con}} \ar@<.5ex>[l]^-{\mathrm{ev}_0} \ar@{=}[r] & \Bigl(\mathbf{sSets}^{\mathbf{\Omega^{\mathrm{op}}}}\Bigr)_{\mathrm{Reedy,Segal,complete}}
}
\]
where the middle category is equipped with the Reedy model structure localized for homotopically constant objects and the right-hand one is equipped with the generalized Reedy model structure localized for dendroidal complete Segal spaces. 
\end{theorem}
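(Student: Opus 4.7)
The theorem has two essentially independent parts: (i) a Quillen equivalence between $\mathbf{dSets}$ and the Reedy-localized model structure on $\mathbf{dSets}^{\mathbf{\Delta}^{\mathrm{op}}}$ via $(\mathrm{con}, \mathrm{ev}_0)$, and (ii) the identification of this localized structure with the localized generalized Reedy structure on $\mathbf{sSets}^{\mathbf{\Omega}^{\mathrm{op}}}$. The plan is to establish both using the theory of left Bousfield localizations of combinatorial left proper model categories. First I would observe that both starting model structures are combinatorial and left proper (the Reedy structure inherits this from the Cisinski--Moerdijk model structure on $\mathbf{dSets}$; the generalized Reedy structure of \cite{bergermoerdijkReedy} inherits it from the Kan--Quillen structure on $\mathbf{sSets}$), so the left Bousfield localizations at the sets of maps $(\alpha), (\beta), (\gamma)$ respectively $(\alpha), (\beta)$ exist by Smith's theorem.

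For the Quillen equivalence, $\mathrm{con}$ clearly preserves cofibrations (a normal mono of dendroidal sets gives a levelwise, hence Reedy, normal mono) and sends weak equivalences of $\mathbf{dSets}$ to levelwise weak equivalences, so it is left Quillen for the Reedy structure and hence, a fortiori, for any left Bousfield localization. To see the derived adjunction is an equivalence, I would argue as follows. Any fibrant object $Y$ in the localized structure satisfies the right lifting property against $(\gamma)$, which forces the structure maps $Y_0 \to Y_n$ to be trivial fibrations of dendroidal sets; hence the map $\mathrm{con}(Y_0) \to Y$ is a levelwise weak equivalence, i.e.\ a Reedy weak equivalence, settling the derived counit. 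For the derived unit, take a fibrant replacement $\mathrm{con}(X) \to R$ in the localized structure; the fact that $(\alpha)$ and $(\beta)$ are among the trivial cofibrations guarantees that $R_0$ is fibrant in $\mathbf{dSets}$, while homotopy constancy of $R$ and $\mathrm{con}(X)_0 = X$ give $X \simeq R_0 = \mathrm{ev}_0(R)$.

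For the identification in (ii), the two model structures live on the same underlying category, and by the usual recognition lemma it suffices to show they have the same cofibrations and the same fibrant objects. The cofibrations on the Reedy side are the normal monomorphisms of bi-indexed presheaves with the appropriate $\mathbf{\Delta}$-Reedy latching condition, and on the generalized Reedy side with the $\mathbf{\Omega}$-Reedy latching condition (with its $\mathrm{Aut}(T)$-freeness); both may be identified with the class of monos $X \to Y$ such that for each pair $([n], T)$ the combined latching map is an $\mathrm{Aut}(T)$-free mono—this is the ``bi-Reedy'' class. As to fibrant objects: on the Reedy side the inner horn inclusions $(\alpha)$ and $J$-maps $(\beta)$ are already levelwise trivial cofibrations (they are trivial cofibrations in the CM structure on $\mathbf{dSets}$), so fibrancy reduces to RLP against $(\gamma)$ and the added $(\alpha), (\beta)$; on the generalized Reedy side the maps $(\gamma)$ are already generating trivial cofibrations, so fibrancy reduces to RLP against $(\alpha), (\beta)$ in a class already fibrant for $(\gamma)$. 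In both cases the fibrant objects are precisely the dendroidal complete Segal spaces, so the two localized structures coincide.

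The main obstacle is the third step—specifically the identification of cofibrations. On the Reedy side one uses ordinary latching objects over $\mathbf{\Delta}$, while on the generalized Reedy side one must handle the non-trivial tree automorphisms using the Berger--Moerdijk framework; showing these two latching conditions are compatible and together recover the ``bi-Reedy'' class requires a careful double-Reedy argument, essentially verifying that $\mathbf{\Delta} \times \mathbf{\Omega}$ admits a product generalized Reedy structure whose cofibrations may be computed in either variable first. Once this is in hand, the remaining verifications (coincidence of trivial cofibrations and fibrant objects) are standard consequences of the left Bousfield localization machinery.
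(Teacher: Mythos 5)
Your approach matches the paper's — in fact the paper presents this theorem as a recollection from \cite{cisinskimoerdijk2}, and its "proof" is exactly the sketch you carry out: observe that the fibrant objects for the Reedy-on-$\mathbf{\Delta}$ structure localized at $(\gamma)$ and those for the generalized-Reedy-on-$\mathbf{\Omega}$ structure localized at $(\alpha), (\beta)$ are both characterized by RLP against all three families, while the cofibration classes agree. You correctly flag the cofibration identification (the double-Reedy/$\mathbf{\Delta}\times\mathbf{\Omega}$ compatibility) as the one nontrivial prerequisite; that is indeed the part the paper takes on faith from \cite{bergermoerdijkReedy} and \cite{cisinskimoerdijk2} rather than proving. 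Two small imprecisions to fix. First, in the derived-counit step the maps forced by $(\gamma)$ to be trivial fibrations are the face maps $Y_n \to Y_{n-1}$ (more precisely $Y_n = \mathbf{Hom}(\Delta^n, Y) \to \mathbf{Hom}(\Lambda^n_k, Y)$); the degeneracies $Y_0 \to Y_n$ are then weak equivalences as sections of trivial fibrations, not trivial fibrations themselves. Second, the derived-unit step as written is a little circular: the cleaner statement is that $\mathrm{con}(X)$ is itself homotopically constant, hence already local for the $(\gamma)$-localization, so its fibrant replacement $R$ in the full localization is connected to it by a Reedy weak equivalence, whence $X = \mathrm{con}(X)_0 \to R_0$ is a weak equivalence in $\mathbf{dSets}$.
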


Exactly the same pattern of reasoning applies to the category $\mathbf{fSets}$ with its operadic model structure:

\begin{theorem}
\label{thm:fcomplSegspace}
There are Quillen equivalences
\[
\xymatrix@C=30pt{
\mathbf{fSets} \ar@<.5ex>[r]^-{\mathrm{con}} & \Bigl(\mathbf{fSets}^{\mathbf{\Delta^{\mathrm{op}}}}\Bigr)_{\mathrm{Reedy},\lambda} \ar@<.5ex>[l]^-{\mathrm{ev}_0} \ar@{=}[r] & \Bigl(\mathbf{sSets}^{\mathbf{\Phi^{\mathrm{op}}}}\Bigr)_{\mathrm{Reedy},\mu}
}
\]
where $\lambda$ is the localization of the Reedy model structure for homotopically constant objects and $\mu$ is the corresponding localization of the generalized Reedy model structure.
\end{theorem}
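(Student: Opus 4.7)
The plan is to adapt, nearly verbatim, the strategy used for the dendroidal analogue (Theorem \ref{thm:dcomplSegspace}). All the required ingredients are now in place: the operadic model structure on $\mathbf{fSets}$ is cofibrantly generated, left proper, and homotopically enriched over the Joyal model structure (Theorem \ref{thm:basicmodelstructure}); and $\mathbf{\Phi}$ is a dualizable generalized Reedy category, so the machinery of \cite{bergermoerdijkReedy} yields the generalized Reedy model structure on $\mathbf{sSets}^{\mathbf{\Phi}^{\mathrm{op}}}$.

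First I would construct the Reedy model structure on $\mathbf{fSets}^{\mathbf{\Delta}^{\mathrm{op}}}$ and form its left Bousfield localization $\lambda$ at the forest analogues of the $(\gamma)$-maps
\begin{equation*}
\Lambda^n_k \times F \cup \Delta^n \times \partial F \longrightarrow \Delta^n \times F,
\end{equation*}
whose $\lambda$-fibrant objects are the homotopically constant Reedy fibrant objects. A standard Reedy argument then shows that $\mathrm{con} \dashv \mathrm{ev}_0$ is a Quillen equivalence with $\mathbf{fSets}$: the derived unit and counit are weak equivalences, by unpacking homotopy-constancy. Independently I would form the localization $\mu$ of the generalized Reedy model structure on $\mathbf{sSets}^{\mathbf{\Phi}^{\mathrm{op}}}$ at three families of maps (each additionally tensored with the simplex boundary inclusions $\partial \Delta^n \rightarrowtail \Delta^n$) corresponding to the three classes of generating trivial cofibrations identified in Proposition \ref{prop:trivcofoperadic}: the inner horn inclusions $\Lambda^e[F] \rightarrowtail F$; the sum-comparison maps $F_1 \amalg \cdots \amalg F_k \rightarrowtail F_1 \oplus \cdots \oplus F_k$; and the completeness maps $\{0\} \otimes F \cup J \otimes \partial F \rightarrowtail J \otimes F$. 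The $\mu$-fibrant objects are then Reedy-fibrant ``complete forest Segal spaces.''

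The third and main step would be to identify $(\mathbf{fSets}^{\mathbf{\Delta}^{\mathrm{op}}})_{\mathrm{Reedy}, \lambda}$ with $(\mathbf{sSets}^{\mathbf{\Phi}^{\mathrm{op}}})_{\mathrm{Reedy}, \mu}$. The underlying categories agree, so it suffices to show that both localizations have the same cofibrations and the same fibrant objects. The cofibrations match as in the dendroidal case: both correspond to levelwise normal monomorphisms with the usual Reedy latching condition, using Proposition \ref{prop:newnormalmonopoprod} to control the interaction of $\partial \Delta^n \rightarrowtail \Delta^n$ with normal monomorphisms of forest sets. For fibrant objects, one checks via $\mathrm{ev}_0$ that a Reedy fibrant simplicial forest space local for $(\gamma)$ is operadically local in each degree and therefore automatically satisfies the Segal, sum, and completeness conditions; conversely, a Reedy fibrant object local for the $\mu$-maps has each $X_n$ operadically local and each face map a trivial fibration, forcing homotopy-constancy.

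The main obstacle will be this last identification: verifying that completeness together with the forest Segal and sum conditions imply homotopy-constancy of the underlying $\mathbf{\Delta}$-diagram. This runs parallel to the argument of \cite{rezk, cisinskimoerdijk2}, but requires care in handling the extra direct-sum combinatorics intrinsic to forests. The key technical inputs are the characterization of operadic trivial cofibrations in Proposition \ref{prop:trivcofoperadic}, together with the homotopical enrichment from Theorem \ref{thm:basicmodelstructure}(v), which ensures that pushout-products of the generating trivial cofibrations with simplex boundaries remain operadic trivial cofibrations.
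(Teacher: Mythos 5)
Your proposal follows essentially the same strategy as the paper. The paper's proof consists precisely of giving the explicit families $(f\alpha)$, $(f\beta)$, $(f\beta')$, $(f\gamma)$ (your $(\alpha)$, $(\beta)$, $(\beta')$, $(\gamma)$), observing that a fibrant object of $(\mathbf{fSets}^{\mathbf{\Delta}^{\mathrm{op}}})_{\mathrm{Reedy},\lambda}$ is characterized by RLP with respect to all four, and then splitting this single collection in two ways: the first three give Reedy fibrancy over $\mathbf{\Delta}$ with respect to the operadic model structure (so that $\lambda$ localizes only at $(f\gamma)$), while $(f\gamma)$ alone gives generalized Reedy fibrancy over $\mathbf{\Phi}$ (so that $\mu$ localizes at the other three). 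The first Quillen equivalence is, as you say, a direct carry-over of the dendroidal case.

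Your step 3 is framed slightly differently — you propose to match cofibrations and fibrant objects directly, whereas the paper matches the generating (trivial) cofibrations by exhibiting them as two regroupings of the same total set — but these are equivalent ways of identifying the model structures. One consequence worth noting: because the paper works with the generating sets themselves, it avoids the verification you flag as the "main obstacle" (that the Segal, sum and completeness conditions together with generalized Reedy fibrancy over $\mathbf{\Phi}$ imply homotopy-constancy of the $\mathbf{\Delta}$-diagram); this falls out tautologically from the regrouping rather than requiring a separate Rezk-style argument. So that part of your plan, while not wrong, is a somewhat harder route than is actually needed. Otherwise the technical ingredients you cite (Proposition \ref{prop:trivcofoperadic}, Proposition \ref{prop:newnormalmonopoprod}, axiom (H1) from Theorem \ref{thm:basicmodelstructure}(v)) are exactly the ones used to verify that RLP against $(f\alpha)$, $(f\beta)$, $(f\beta')$ encodes Reedy fibrancy in the operadic direction, and the cofibration matching via Proposition \ref{prop:newnormalmonopoprod} is correct.
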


We will need the explicit descriptions of $\lambda$ and $\mu$ later and these will also constitute a proof of the theorem.

\begin{proof}
For a Reedy fibrant object $X$ in the middle category, the localization $\lambda$ forces each
\begin{equation*}
\mathbf{Hom}(\Delta^n, X) \longrightarrow \mathbf{Hom}(\Lambda_k^n, X)
\end{equation*}
to be a trivial fibration in $\mathbf{fSets}$. So the fibrant (i.e. local) objects in $\bigl(\mathbf{fSets}^{\mathbf{\Delta^{\mathrm{op}}}}\bigr)_{\mathrm{Reedy},\lambda}$ are completely characterized by having the right lifting property with respect to the following four classes of cofibrations in $\bigl(\mathbf{fSets}^{\mathbf{\Delta^{\mathrm{op}}}}\bigr)_{\mathrm{Reedy}}$:
\begin{itemize}
\item[($f\alpha$)]  For any tree $T$, any inner edge $e$ in $T$ and any $n \geq 0$,
\begin{equation*}
\partial\Delta^n \times uT \cup \Delta^n \times \Lambda^e[uT] \longrightarrow \Delta^n \times uT
\end{equation*}
\item[($f\beta$)] For any tree $T$ and any $n \geq 0$,
\begin{equation*}
\partial\Delta^n \times (J \otimes uT) \cup \Delta^n \times (\{0\} \otimes uT \cup J \otimes \partial uT) \longrightarrow \Delta^n \times (J \otimes uT)
\end{equation*}
\item[($f\beta '$)] For any $n \geq 0$ and any non-empty sequence of trees $T_1, \ldots, T_k$,
\begin{equation*}
\partial\Delta^n\times (uT_1 \oplus \cdots \oplus uT_k) \cup \Delta^n \times (uT_1 \amalg \cdots \amalg uT_k) \longrightarrow \Delta^n \times (uT_1 \oplus \cdots \oplus uT_k)
\end{equation*}
\item[($f\gamma$)] For any forest $F$ and any $0 \leq k \leq n$,
\begin{equation*}
\Lambda_k^n \times F \cup \Delta^n \times \partial F \longrightarrow \Delta^n \times F
\end{equation*}
\end{itemize} 
Here the right lifting property with respect to the first three classes expresses Reedy fibrancy with respect to the operadic model structure, while the last one relates to being homotopically constant. So we let $\lambda$ be the localization with respect to ($f\gamma$). Notice that in the presence of ($f\beta '$), it is equivalent to require ($f\gamma$) only for trees. \par
To prove the theorem, we have to describe a set $\mu$ which when added to the generating family of trivial cofibrations for the generalized Reedy model structure on $\mathbf{sSets}^{\mathbf{\Phi}^{\mathrm{op}}}$ yields the family given by ($f\alpha$),($f\beta$),($f\beta '$) and ($f\gamma$). But notice that ($f\gamma$) expresses precisely Reedy fibrancy in $\mathbf{sSets}^{\mathbf{\Phi}^{\mathrm{op}}}$, so we let $\mu$ be the class of maps given by ($f\alpha$) (for `Segal forest spaces'), ($f\beta$) (for `complete' ones) and ($f\beta '$) (for locality with respect to sums). This proves the theorem. $\Box$
\end{proof}

For the proof of Theorem \ref{thm:equivfSetsdSets}, we will first consider a different but Quillen equivalent model structure on simplicial presheaves, viz. the \emph{projective} one. For an arbitrary small category $\mathbf{C}$, the projective model structure on the category $\mathbf{sSets}^{\mathbf{C}^{\mathrm{op}}}$ of simplicial presheaves is characterized by the fact that a map $X \longrightarrow Y$ is a fibration or a weak equivalence precisely if it is `pointwise' so, i.e. if $X(C) \longrightarrow Y(C)$ is one for every object $C$ of $\mathbf{C}$ (with respect to the Quillen model structure on $\mathbf{sSets}$). Its generating cofibrations are of the form
\begin{equation*}
\partial\Delta^n \times C \longrightarrow \Delta^n \times C
\end{equation*}
where $C$ ranges over the objects of $\mathbf{C}$, viewed as representable presheaves.

\begin{lemma}
\label{lem:projQequiv}
\begin{itemize}
\item[(i)] With respect to the projective model structures, the embedding $u: \mathbf{\Omega} \longrightarrow \mathbf{\Phi}$ induces \emph{two} Quillen pairs:
\[
\xymatrix@C=30pt{
\mathbf{sSets}^{\mathbf{\Omega}^{\mathrm{op}}} \ar@<.5ex>[r]^{u_!} & \mathbf{sSets}^{\mathbf{\Phi}^{\mathrm{op}}} \ar@<.5ex>[l]^{u^*} \ar@<.5ex>[r]^{u^*} & \mathbf{sSets}^{\mathbf{\Omega}^{\mathrm{op}}} \ar@<.5ex>[l]^{u_*}
}
\]
\item[(ii)] For the left Bousfield localization of $\mathbf{sSets}^{\mathbf{\Phi}^{\mathrm{op}}}$ with respect to the maps ($f\beta '$), both pairs are Quillen equivalences.
\end{itemize}
\end{lemma}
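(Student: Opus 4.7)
The plan is as follows. Part (i) is essentially formal. For the Quillen pair $(u_!, u^*)$: projective fibrations and weak equivalences are detected pointwise, and $(u^* X)(T) = X(uT)$, so $u^*$ preserves both, hence is right Quillen. For the pair $(u^*, u_*)$, the key observation is that for a forest $F = T_1 \oplus \cdots \oplus T_k$, any morphism $uT \to F$ from a tree $T$ factors through a unique summand, by connectedness of $T$ together with the independence axiom defining $\mathbf{\Phi}$. Consequently
\[
u^* \Phi[F] \; = \; \coprod_{i=1}^k \Omega[T_i],
\]
so $u^*$ sends generating projective (trivial) cofibrations on $\mathbf{\Phi}$ to coproducts of generating projective (trivial) cofibrations on $\mathbf{\Omega}$, which shows $u^*$ is left Quillen.

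For part (ii), the same computation gives $u^*(T_1 \amalg \cdots \amalg T_k) = u^* F = \coprod_i \Omega[T_i]$, so $u^*$ sends every map of type ($f\beta'$) to an identity map. Hence $u^*$ remains left Quillen for the localized model structure. The pair $(u_!, u^*)$ remains Quillen as well: $u_!$ sends projective trivial cofibrations on $\mathbf{\Omega}$ to projective trivial cofibrations on $\mathbf{\Phi}$, which are a fortiori trivial cofibrations in the localization.

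The heart of the argument is the following detection lemma: a map $f: X \to Y$ in $\mathbf{sSets}^{\mathbf{\Phi}^{\mathrm{op}}}$ is a weak equivalence in the localization if and only if $u^* f$ is a projective weak equivalence. For $X$ and $Y$ fibrant in the localization, ($f\beta'$)-locality combined with projective fibrancy yields $X(F) \simeq \prod_i X(T_i) = \prod_i (u^* X)(T_i)$, and similarly for $Y$; hence $f(F)$ is a weak equivalence of simplicial sets for every forest $F$ if and only if $u^* f$ is pointwise a weak equivalence, which for projectively fibrant objects is equivalent to being a projective weak equivalence. The general case reduces to this one by replacing $X$ and $Y$ by cofibrant-fibrant resolutions in the localization and invoking 2-out-of-3, using that $u^*$ preserves trivial fibrations (as a right adjoint to $u_!$) and preserves trivial cofibrations of the localization (being left Quillen for the localization).

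Finally we conclude both Quillen equivalences using the identities $u^* u_! \cong \mathrm{id}$ and $u^* u_* \cong \mathrm{id}$, which hold because $u$ is fully faithful, together with the triangle identities. For $(u^*, u_*)$: the derived counit $u^* Q u_* H \to u^* u_* H = H$ is $u^*$ applied to a trivial fibration, hence a projective weak equivalence; for the derived unit $G \to u_* R u^* G$, applying $u^*$ produces the fibrant replacement $u^* G \to u^* u_* R u^* G = R u^* G$ in $\mathbf{sSets}^{\mathbf{\Omega}^{\mathrm{op}}}$, which is a projective weak equivalence, and so the detection lemma forces the derived unit itself to be a localized weak equivalence. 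The argument for $(u_!, u^*)$ is symmetric: the derived unit $F \to u^* R u_! F$ agrees with $u^*$ applied to a localized trivial cofibration $u_! F \to R u_! F$ and is therefore a projective weak equivalence; and the derived counit $u_! Q u^* X \to X$ becomes, after applying $u^*$ and using the triangle identity, the cofibrant replacement $Q u^* X \to u^* X$, so by the detection lemma the derived counit is a localized weak equivalence. The main obstacle, and essentially the only nonformal ingredient, is the detection lemma; the rest of the proof proceeds by standard diagram chases with adjunctions and model-categorical replacements.
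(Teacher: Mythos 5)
Your proposal is correct, but it reaches the conclusion by a genuinely different route from the paper. Part (i) and the derived counit of $(u^*, u_*)$ are handled the same way in both; the divergence is in how the equivalences are then deduced. You prove a detection lemma --- that $u^*$ detects localized weak equivalences --- by reducing to fibrant-cofibrant objects via the two Quillen structures carried by $u^*$, and then check all four derived unit/counit conditions using $u^*u_! \cong \mathrm{id} \cong u^*u_*$ and the triangle identities. The paper instead establishes only two facts: the derived counit of $(\mathbf{L}u^*, \mathbf{R}u_*)$ is a weak equivalence (as you also show), and the counit $u_!u^*X \to X$ is a localized weak equivalence for every cofibrant $X$, proved directly by observing that on generating cofibrants $\Delta^n \times F$ this counit \emph{is} one of the localizing maps $(f\beta')$, followed by skeletal induction. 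From these two counit equivalences, the paper concludes that the derived functor of $u^*$, being simultaneously a left and a right derived functor, has both a left inverse $\mathbf{L}u_!$ and a right inverse $\mathbf{R}u_*$, and is therefore an equivalence of homotopy categories, which in turn makes both Quillen pairs equivalences. Your detection lemma is the stronger intermediate statement and makes the remaining diagram chases routine; the paper's ``two-sided inverse'' trick is more economical and avoids proving it. Both arguments ultimately rest on the identity $u^*\Phi[F] = \coprod_i \Omega[T_i]$, which is what shows $u^*$ is left Quillen and collapses the $(f\beta')$ maps to isomorphisms.
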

\begin{proof}
(i). Since $u_!$ preserves representables, it is clear that $u^*$ preserves fibrations and weak equivalences. So $u^*$ and $u_!$ form a Quillen pair. But $u^*$ sends representables to coproducts of representables, hence sends generating cofibrations to cofibrations. Thus $u^*$ is also a left Quillen functor, so $u^*$ and $u_*$ form a Quillen pair as well. \par 
(ii). Since $u^*$ sends direct sums to coproducts, the pair $(u^*,u_*)$ is also a Quillen pair for the localized model structure on $\mathbf{sSets}^{\mathbf{\Phi}^{\mathrm{op}}}$, while this is automatic for the pair $(u_!,u^*)$. So again, $u^*$ is both a left and a right Quillen functor. Now let $X$ be a fibrant object in $\mathbf{sSets}^{\mathbf{\Omega}^{\mathrm{op}}}$ (always with respect to the projective model structure in this proof). To calculate $(\mathbf{L}u^*)(\mathbf{R}u_*)(X) = (\mathbf{L}u^*)u_*(X)$, take a cofibrant resolution $C \longrightarrow u_*(X)$.  Then this map is a pointwise weak equivalence and hence so is $u^*C \longrightarrow u^*u_*(X)$, which shows that the derived counit
\[
\xymatrix{
(\mathbf{L}u^*)(\mathbf{R}u_*)(X) = u^*(C) \ar[r]^-\sim & u^*u_*(X) \ar[r]^-{\simeq} & X 
}
\]
is a weak equivalence. On the other hand, for a generating cofibrant object $\Delta^n \times F$ in $\mathbf{sSets}^{\mathbf{\Phi}^{\mathrm{op}}}$, the counit map $u_!u^*(\Delta^n \times F) \longrightarrow \Delta^n \times F$ is of the form
\begin{equation*}
\Delta^n \times (T_1 \amalg \cdots \amalg T_k) \longrightarrow \Delta^n \times (T_1 \oplus \cdots \oplus T_k),
\end{equation*}
hence a weak equivalence for the localized structure. By the usual induction on skeletal filtrations, this shows that $u_!u^*(X) \longrightarrow X$ is a weak equivalence for any cofibrant object $X$. But then, if $X$ is cofibrant as well as fibrant, we have that 
\begin{equation*}
\mathbf{L}u_!\mathbf{R}u^*(X) = \mathbf{L}u_!\mathbf{L}u^*(X) = u_!u^*(X) \longrightarrow X
\end{equation*}
is a weak equivalence. Thus at the level of homotopy categories, the derived functor of $u^*$ has both a left and a right inverse (up to natural isomorphism), hence must be an equivalence of categories. $\Box$
\end{proof}

\begin{lemma}
\label{lem:locQequiv}
Let $f_!: \mathcal{F} \rightleftarrows \mathcal{E}: f^*$ be a Quillen pair and let $\lambda$ be a set of cofibrations between cofibrant objects. Write $\mathcal{F}_\lambda$ for the left Bousfield localization forcing the cofibrations in $\lambda$ to become trivial (assuming it exists) and write $f_!(\lambda)$ for the image of $\lambda$ under $f_!$.
\begin{itemize}
\item[(i)] A fibrant object $E$ in $\mathcal{E}$ is local with respect to $f_!(\lambda)$ if and only if $f^*(E)$ is local with respect to $\lambda$.
\item[(ii)] The same functors also define a Quillen pair $f_!: \mathcal{F}_\lambda \rightleftarrows \mathcal{E}_{f_!(\lambda)}: f^*$ (assuming the localizations exist).
\item[(iii)] If the original pair $\mathcal{F} \rightleftarrows \mathcal{E}$ is a Quillen equivalence, then so is the induced pair $\mathcal{F}_\lambda \rightleftarrows \mathcal{E}_{f_!(\lambda)}$. 
\end{itemize}
\end{lemma}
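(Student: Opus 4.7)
The plan is to dispatch the three parts in order, with (i) and (ii) following directly from the adjunction together with standard left Bousfield localization theory, and (iii) requiring slightly more care with derived units and counits.

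For (i), I would use that $\lambda$ consists of cofibrations between cofibrant objects of $\mathcal{F}$, and that $f_!$ preserves cofibrations and cofibrant objects; thus the simplicial function complexes $\underline{\mathrm{Map}}_\mathcal{E}(f_!(-), E)$ and $\underline{\mathrm{Map}}_\mathcal{F}(-, f^*E)$ both compute derived mapping spaces, using that $f^*E$ is fibrant in $\mathcal{F}$ since $f^*$ is right Quillen. Under the adjunction these function complexes are naturally isomorphic, so the condition that $E$ be $f_!(\lambda)$-local translates precisely into the condition that $f^*E$ be $\lambda$-local.

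For (ii), since left Bousfield localization does not alter the cofibrations, $f_!$ still preserves cofibrations. By the standard recognition criterion for descending a Quillen pair to a Bousfield localization, it then suffices to check that $f_!$ sends each map in $\lambda$ to a weak equivalence in $\mathcal{E}_{f_!(\lambda)}$, which is tautological by the definition of $\mathcal{E}_{f_!(\lambda)}$. Equivalently, part (i) shows directly that $f^*$ sends fibrant objects of $\mathcal{E}_{f_!(\lambda)}$ to fibrant objects of $\mathcal{F}_\lambda$, and since fibrations between fibrant objects in a left Bousfield localization coincide with fibrations in the original model structure between fibrant-local objects, $f^*$ preserves this class of morphisms as well.

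For (iii), I would verify that the derived unit and counit of the localized pair are weak equivalences. The derived counit is the easier half: given $Y$ fibrant in $\mathcal{E}_{f_!(\lambda)}$, part (i) shows $f^*Y$ is fibrant in $\mathcal{F}_\lambda$, and a cofibrant replacement $A \to f^*Y$ in $\mathcal{F}_\lambda$ is the same as one in $\mathcal{F}$, so the original Quillen equivalence gives that $f_!A \to Y$ is a weak equivalence in $\mathcal{E}$, hence in $\mathcal{E}_{f_!(\lambda)}$. For the derived unit at a cofibrant $X$ of $\mathcal{F}$, I would first take a trivial cofibration $X \to X'$ in $\mathcal{F}_\lambda$ with $X'$ fibrant in $\mathcal{F}_\lambda$ (so $X'$ is cofibrant, fibrant in $\mathcal{F}$, and $\lambda$-local), and then a fibrant replacement $f_!X' \to W$ in $\mathcal{E}$. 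The original Quillen equivalence implies that $X' \to f^*W$ is a weak equivalence in $\mathcal{F}$; since $X'$ is $\lambda$-local, so is $f^*W$, and then (i) forces $W$ to be $f_!(\lambda)$-local. Hence $W$ is already fibrant in $\mathcal{E}_{f_!(\lambda)}$, so $f_!X \to W$ serves as a fibrant replacement there, and the derived unit $X \to f^*W$ factors as $X \to X' \to f^*W$, a composition of weak equivalences in $\mathcal{F}_\lambda$. The main delicate point, and the only place where (i) is used substantively in (iii), is the observation that the $\mathcal{E}$-fibrant replacement of $f_!$ of a $\mathcal{F}_\lambda$-fibrant object is automatically $f_!(\lambda)$-local.
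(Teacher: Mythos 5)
Your proof is correct and follows essentially the same line of reasoning as the paper: part (i) via the adjunction isomorphism of mapping spaces, part (ii) via the universal property of left Bousfield localization, and part (iii) hinging on the observation that an $\mathcal{E}$-fibrant replacement of $f_!$ applied to an $\mathcal{F}_\lambda$-fibrant-cofibrant object is automatically $f_!(\lambda)$-local, via (i) and the invariance of locality under weak equivalence between fibrant objects. The only cosmetic difference is that you organize part (iii) around the derived unit and counit, while the paper shows $\mathbf{R}\overline{f}^*$ is fully faithful and essentially surjective, but these are two phrasings of the same argument with the same key lemma.
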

\begin{proof}
Property (i) is clear from the equivalence $\mathrm{Map}(f_!A, E) \simeq \mathrm{Map}(A, f^*E)$ and (ii) is immediate from the universal property of left Bousfield localization. For (iii), let us now write $\overline{f}_!: \mathcal{F}_\lambda \rightleftarrows \mathcal{E}_{f_!(\lambda)}: \overline{f}^*$ for the induced Quillen pair. Since the cofibrations and trivial fibrations haven't changed and since fibrant objects in $\mathcal{E}_{f_!(\lambda)}$ are a fortiori fibrant in $\mathcal{E}$, we find for such a fibrant object $X$ that
\begin{equation*}
(\mathbf{L}\overline{f}_!)(\mathbf{R}\overline{f}^*)(X) \, = \, (\mathbf{L}f_!)(\mathbf{R}f^*)(X) \, = \, (\mathbf{L}f_!)f^*(X)
\end{equation*}  
which maps to $X$ via a weak equivalence by assumption (even a weak equivalence in $\mathcal{F}$, without localizing). This shows that $\mathbf{R}\overline{f}^*$ is fully faithful when considered as a functor $\mathrm{Ho}\mathcal{E}_{f_!(\lambda)} \longrightarrow \mathrm{Ho}\mathcal{F}_{\lambda}$, so it suffices to prove it is also essentially surjective. Let $Y$ be a fibrant and cofibrant object in $\mathcal{F}_\lambda$. Since $f_!$ and $f^*$ form an equivalence, the map $Y \longrightarrow (\mathbf{R}f^*)(\mathbf{L}f_!)(X)$ is a weak equivalence in $\mathcal{F}$. In other words, if $f_!(Y) \longrightarrow W$ is a fibrant replacement in $\mathcal{E}$, then $Y \longrightarrow f^*W$ is a weak equivalence. But then $f^*W$ is local with respect to $\lambda$ since $Y$ is (because being local is evidently invariant under weak equivalence between fibrant objects). Hence $W$ is local with respect to $f_!\lambda$ by part (i), i.e. fibrant in $\mathcal{E}_{f_!(\lambda)}$. This proves that $\mathbf{R}\overline{f}^*$ is essentially surjective as a functor on homotopy categories. $\Box$
\end{proof}

\begin{proof}[Proof of Theorem \ref{thm:equivfSetsdSets}]
Considering the projective as well as the Reedy model structures on our presheaf categories, we have a diagram of left Quillen functors
\[
\xymatrix{
\Bigl(\mathbf{sSets}^{\mathbf{\Phi}^{\mathrm{op}}}\Bigr)_{\mathrm{proj}} \ar[r]^{u^*}\ar[d] & \Bigl(\mathbf{sSets}^{\mathbf{\Omega}^{\mathrm{op}}}\Bigr)_{\mathrm{proj}} \ar[d] \\
\Bigl(\mathbf{sSets}^{\mathbf{\Phi}^{\mathrm{op}}}\Bigr)_{\mathrm{Reedy}} \ar[r]^{u^*} & \Bigl(\mathbf{sSets}^{\mathbf{\Omega}^{\mathrm{op}}}\Bigr)_{\mathrm{Reedy}}
}
\]
where the vertical identity functors are Quillen equivalences. Now consider the localizations with respect to sums (i.e. ($f\beta '$)) on the left. By Lemmas \ref{lem:projQequiv} and \ref{lem:locQequiv} this turns the left and top functors in the diagram into Quillen equivalences. Hence we also obtain a left Quillen equivalence
\[
\xymatrix{
\Bigl(\mathbf{sSets}^{\mathbf{\Phi}^{\mathrm{op}}}\Bigr)_{\mathrm{Reedy},(f\beta ')} \ar[r]^-{u^*} & \Bigl(\mathbf{sSets}^{\mathbf{\Omega}^{\mathrm{op}}}\Bigr)_{\mathrm{Reedy}}
}
\]
Now observe that $u^*$ sends the maps in the classes $(f\alpha)$, $(f\beta)$ and $(f\gamma)$ to the similar classes $(\alpha)$, $(\beta)$ and $(\gamma)$. Lemma \ref{lem:locQequiv}(iii) yields an equivalence
\[
\xymatrix{
\Bigl(\mathbf{sSets}^{\mathbf{\Phi}^{\mathrm{op}}}\Bigr)_{\mathrm{Reedy},\mu} \ar[r]^-{u^*} & \Bigl(\mathbf{sSets}^{\mathbf{\Omega}^{\mathrm{op}}}\Bigr)_{\mathrm{Reedy},u^*\mu}
}
\]
Now consider the diagram
\[
\xymatrix@C=50pt{
\mathbf{fSets} \ar[r]^{u^*}\ar[d]_{\mathrm{con}} & \mathbf{dSets} \ar[d] \\
\Bigl(\mathbf{sSets}^{\mathbf{\Phi}^{\mathrm{op}}}\Bigr)_{\mathrm{Reedy},\mu} \ar[r]^-{u^*} & \Bigl(\mathbf{sSets}^{\mathbf{\Omega}^{\mathrm{op}}}\Bigr)_{\mathrm{Reedy},u^*\mu}
}
\]
in which the vertical functors are left Quillen equivalences by Theorems \ref{thm:dcomplSegspace} and \ref{thm:fcomplSegspace}. We conclude that the top functor is also a left Quillen equivalence. $\Box$
\end{proof}

\section{Marked dendroidal and forest sets}
\label{chap:markedfsets}

Before we can set up a useful functor relating the category $\mathbf{POp}_o$ of non-unital preoperads to the category of open forest sets, we have to introduce markings into our categories. We will treat the categories $\mathbf{dSets}^+$ and $\mathbf{fSets}^+$ of \emph{marked dendroidal sets} and \emph{marked forest sets} respectively, which are defined analogously to the category of marked simplicial sets. We will establish (simplicial, combinatorial) model structures on these categories and show that they are Quillen equivalent to $\mathbf{dSets}$ and $\mathbf{fSets}$ (with their operadic model structures) respectively. The results of this chapter can be summarized in a commutative square of left Quillen functors, all of which are part of Quillen equivalences:
\[
\xymatrix{
\mathbf{dSets} \ar[d]_{(-)^\flat} & \mathbf{fSets} \ar[l]_-{u^*} \ar[d]^{(-)^\flat} \\
\mathbf{dSets}^+ & \mathbf{fSets}^+ \ar[l]^-{u^*}
}
\] 

The introduction of markings somewhat complicates notation. To not clutter things up too much, we will in this chapter mostly omit the functors $u_!$ and $i_!$ from the notation. When we write $T$ for a tree, it should be clear from the context whether it is to be interpreted as a representable dendroidal set or a representable forest set. We will also use the notations $C_1$ and $\Delta^1$ interchangeably for the 1-corolla as a representable forest set, which would strictly speaking have to be $u_!C_1$ and $u_!i_!\Delta^1$, respectively.

\subsection{Marked forest sets}
\label{sec:markedfsets}

For now, we will focus on the category of marked forest sets. In this section we will summarize the main definitions and results. Most proofs are deferred to the following sections. The corresponding results for marked dendroidal sets are established in completely analogous fashion; we will briefly summarize what we need in the last section of this chapter. \par 
A \emph{marked forest set} is a pair $(X, \mathcal{E})$ where $X$ is a forest set and $\mathcal{E}$ is a subset of its set of 1-corollas $X(C_1)$ containing all degenerate 1-corollas. A morphism of marked forest sets is a map of forest sets sending marked 1-corollas to marked 1-corollas. We denote the category of such marked forest sets by $\mathbf{fSets}^+$. A marked forest set is \emph{open} if its underlying forest set is open and we denote the full subcategory of open marked forest sets by $\mathbf{fSets}_o^+$. \par 
There is an obvious forgetful functor $a: \mathbf{fSets}^+ \longrightarrow \mathbf{fSets}$. This functor has a left adjoint $(-)^\flat$ and a right adjoint $(-)^\sharp$. These three functors obviously preserve the property of being open. For a forest set $X$, the marked forest set $X^\flat$ is $X$ with only degenerate 1-corollas marked and $X^\sharp$ is $X$ with all its 1-corollas marked. The tensor product on $\mathbf{fSets}$ can be used to define a tensor product on $\mathbf{fSets}^+$ by simply setting
\begin{equation*}
(X, \mathcal{E}_X) \otimes (X, \mathcal{E}_Y) \, := \, (X \otimes Y, \mathcal{E}_X \times \mathcal{E}_Y).
\end{equation*}
Similarly, we can use this tensor to supply $\mathbf{fSets}^+$ with a weak enrichment over the category of marked simplicial sets as follows. Define a marked simplicial set $\mathrm{Map}^+(X,Y)$ by
\begin{eqnarray*}
\mathrm{Map}^+(X,Y)_n & = & \mathbf{fSets}^+((\Delta^n)^\flat \otimes X, Y), \\
\mathcal{E}_{\mathrm{Map}^+(X,Y)} & = & \mathbf{fSets}^+((\Delta^1)^\sharp \otimes X, Y).
\end{eqnarray*}
Also, define simplicial mapping objects by
\begin{eqnarray*}
\mathrm{Map}^\flat(X,Y) & := & a\mathrm{Map}^+(X, Y) 
\end{eqnarray*}
(i.e. forget the markings) and let $\mathrm{Map}^\sharp(X,Y)$ be the simplicial subset of $\mathrm{Map}^\flat(X,Y)$ consisting of the simplices all of whose edges are marked in $\mathrm{Map}^+(X,Y)$. These mapping objects can be characterized by the following natural isomorphisms, for $K$ a simplicial set:
\begin{eqnarray*} 
\mathbf{sSets}(K,\mathrm{Map}^\flat(X,Y)) & \simeq & \mathbf{fSets}^+(K^\flat \otimes X, Y), \\
\mathbf{sSets}(K, \mathrm{Map}^\sharp(X,Y)) & \simeq & \mathbf{fSets}^+(K^\sharp \otimes X, Y).
\end{eqnarray*}
Note that on the right-hand side we interpret $K$ as a forest set via the embedding $u_!i_!: \mathbf{sSets} \longrightarrow \mathbf{fSets}$. For the second isomorphism we have used the fact that the functor $(-)^\sharp: \mathbf{sSets} \longrightarrow \mathbf{sSets}^+$ is left adjoint to the functor taking a marked simplicial set to the simplicial set consisting of all simplices whose edges are marked. \par 
Let us now introduce the terminology and notation necessary to describe the model structure on $\mathbf{fSets^+}$ that we need:

\begin{definition}
A map $f: X \longrightarrow Y$ of marked forest sets is called a \emph{normal monomorphism} if the underlying map $a(f)$ between forest sets is a normal monomorphism. Also, a marked forest set $X$ is \emph{normal} precisely if $aX$ is a normal forest set. A \emph{normalization} of a marked forest set $X$ is a map $X' \longrightarrow X$ from a normal marked forest set $X'$ to $X$, having the right lifting property with respect to all normal monomorphisms.
\end{definition}

\begin{remark}
\label{rmk:gencofmarkedfSets}
The class of normal monomorphisms in $\mathbf{fSets}^+$ is the smallest saturated class containing the following maps:
\begin{itemize}
\item[(i)] All boundary inclusions of forests, with minimal markings. In other words, for every forest $F$, the map $(\partial F)^\flat \longrightarrow F^\flat$.
\item[(ii)] The map $(C_1)^\flat \longrightarrow (C_1)^\sharp$.
\end{itemize}
By the small object argument, every map between marked forest sets can be factored into a normal mono followed by a map having the right lifting property with respect to all normal monos. In particular, every marked forest set admits a normalization.
\end{remark}

\begin{definition}
If $E$ is an operadically local forest set, then an \emph{equivalence in $E$} is a 1-corolla of $E$ which is an equivalence in the underlying $\infty$-category $i^*u^*E$ of $E$. We denote by $E^\natural$ the marked forest set obtained from $E$ by marking all 1-corollas which are equivalences in $E$.
\end{definition}

The following result should provide intuition for the role the markings play. We will prove it in Section \ref{sec:markedfsets3}.

\begin{proposition}
\label{prop:markedmappingspace}
Let $A$ be a marked forest set and let $E$ be an operadically local forest set. Suppose that $A$ is normal. Then $\mathrm{Map}^\flat(A, E^\natural)$ is an $\infty$-category and $\mathrm{Map}^\sharp(A, E^\natural)$ is the largest Kan complex contained in it.
\end{proposition}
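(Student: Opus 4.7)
My plan is to prove the two assertions separately. First, to show $\mathrm{Map}^\flat(A, E^\natural)$ is an $\infty$-category, I would use the adjunction defining $\mathrm{Map}^\flat$: an inner horn filling problem $\Lambda^n_k \to \mathrm{Map}^\flat(A, E^\natural)$ with $0 < k < n$ is equivalent to a lifting problem in $\mathbf{fSets}^+$ of a given map $(\Lambda^n_k)^\flat \otimes A \to E^\natural$ against the inclusion $(\Lambda^n_k)^\flat \otimes A \hookrightarrow (\Delta^n)^\flat \otimes A$. At the level of underlying forest sets, the map $\Lambda^n_k \otimes A \to \Delta^n \otimes A$ is an operadic trivial cofibration by the homotopic enrichment of $\mathbf{fSets}$ over the Joyal model structure on $\mathbf{sSets}$ (Theorem \ref{thm:basicmodelstructure}(v)), applied to the Joyal trivial cofibration $\Lambda^n_k \to \Delta^n$ and the cofibration $\varnothing \to A$ (which is a cofibration because $A$ is normal). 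A lift therefore exists in $\mathbf{fSets}$, since $E$ is operadically local. To see this lift respects markings, note that for $n \geq 2$ every inner horn $\Lambda^n_k$ contains all vertices of $\Delta^n$, hence all degenerate $1$-simplices of $\Delta^n$; consequently every marked $1$-corolla of $(\Delta^n)^\flat \otimes A$ (under the marking formula $\mathcal{E}_X \times \mathcal{E}_Y$) already lies in $(\Lambda^n_k)^\flat \otimes A$, so the marking condition imposes no further constraint on the lift.

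For the second part, I will rely on the standard fact that the largest Kan complex inside any $\infty$-category $\mathcal{C}$ is the simplicial subset of simplices all of whose edges are equivalences in $\mathcal{C}$. Since, by construction, $\mathrm{Map}^\sharp(A, E^\natural)$ is the simplicial subset of $\mathrm{Map}^\flat(A, E^\natural)$ consisting of simplices all of whose edges are marked in $\mathrm{Map}^+(A, E^\natural)$, the assertion reduces to proving the following claim: a $1$-simplex $\sigma \colon (\Delta^1)^\flat \otimes A \to E^\natural$ extends to a morphism $(\Delta^1)^\sharp \otimes A \to E^\natural$ in $\mathbf{fSets}^+$ if and only if the corresponding edge in $\mathrm{Map}^\flat(A, E^\natural)$ is an equivalence. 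By the usual characterization of equivalences in an $\infty$-category via extension along $\Delta^1 \hookrightarrow J$, the latter condition is equivalent to $\sigma$ admitting an extension to a map $J \otimes A \to E$ of forest sets, where $J$ denotes the forest set obtained from the nerve of the groupoid interval.

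Both directions of the claim will be established by producing extensions of $\sigma$, using that $\Delta^1 \otimes A \to J \otimes A$ is an operadic trivial cofibration (pushout-product of the $J$-anodyne $\Delta^1 \to J$ with $\varnothing \to A$, via Lemma \ref{lem:basicanodpoprod} and homotopic enrichment) and that $E$ is operadically local. The crucial additional input is that the marked $1$-corollas of $E^\natural$ are by definition the equivalences in $E$: given an extension $J \otimes A \to E$, the non-degenerate edges of $J$ tensored with marked $1$-corollas of $A$ are forced to land in equivalences of $E$, delivering the reverse implication. For the forward implication, a marked extension $(\Delta^1)^\sharp \otimes A \to E^\natural$ must be propagated along an inclusion $(\Delta^1)^\sharp \hookrightarrow J$ of marked objects to produce an extension of $\sigma$ to $J \otimes A$, using a $J$-anodyne argument in the marked setting. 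I expect the principal obstacle to lie in the bookkeeping of markings on tensor products: one must carefully match the tensor-product marking $\mathcal{E}_X \times \mathcal{E}_Y$ with the equivalences of $E$, invoking closure of equivalences under composition and the two-out-of-three property in the homotopy category of $E$ to ensure that every lift constructed at the level of underlying forest sets automatically satisfies the marking constraints.
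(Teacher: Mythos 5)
Your argument for the first assertion is correct and essentially parallel to the paper's: you lift the inner horn problem at the level of underlying forest sets using the homotopic enrichment, whereas the paper works in the marked category throughout via the pushout-product Lemma \ref{lem:poprodmarkedanodyne}; your observation that inner horns for $n \geq 2$ contain all vertices and hence all degenerate edges correctly handles the markings. The real issue lies in the second part, where your outline has genuine gaps that the paper's route avoids.

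The paper's proof of Corollary \ref{cor:markedmappingspace} establishes the identification of $\mathrm{Map}^\sharp(A,E^\natural)$ with the maximal Kan complex via two inputs you do not secure. First, it shows that $\mathrm{Map}^\sharp(A,E^\natural)$ \emph{is} a Kan complex, by observing (via Lemma \ref{lem:markedmappingspace}) that the map $\mathrm{Map}^\sharp(A,E^\natural) \to \ast$ is a right fibration over a point, hence a Kan fibration. With that in hand, the forward implication (marked $\Rightarrow$ equivalence) is immediate: a marked edge is an edge of the Kan complex $\mathrm{Map}^\sharp$, hence invertible there, hence invertible in $\mathrm{Map}^\flat$. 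You instead propose to extend $\sigma$ along $(\Delta^1)^\sharp \hookrightarrow J^\sharp$, but for this you would need $(\Delta^1)^\sharp \otimes A \to J^\sharp \otimes A$ to be a trivial cofibration of marked forest sets, and the paper's enrichment results (over $\mathbf{sSets}$ with the Joyal or Kan--Quillen structure, not over $\mathbf{sSets}^+$) do not directly give this; you would have to argue separately that $(\Delta^1)^\sharp \to J^\sharp$ is marked anodyne or otherwise a trivial cofibration, which is not among the generators $M_1$--$M_5$ and is not deduced in the paper. Second, for the reverse implication your assertion that ``the non-degenerate edges of $J$ tensored with marked $1$-corollas of $A$ are forced to land in equivalences of $E$'' is stated without justification: it requires observing that the image of (nondegenerate edge of $J$) $\otimes$ (degenerate $1$-corolla at a colour of $A$) is an equivalence (using that $J$ is the groupoid interval), together with a two-out-of-three argument in the square $C_1 \otimes C_1$ to deduce the diagonal is an equivalence. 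The paper circumvents all of this cleanly: it applies Lemma \ref{lem:poprodmarkedanodyne} to the marked anodyne $J^\flat \subseteq J^\sharp$ of type $(M_4)$ to conclude $\mathrm{Map}^+(A,E^\natural)$ has the RLP against $J^\flat \subseteq J^\sharp$, which says exactly that every equivalence in $\mathrm{Map}^\flat$ is marked. You should either import these two facts (Kan complex structure on $\mathrm{Map}^\sharp$ and the RLP against $J^\flat \subseteq J^\sharp$) or spell out the two-out-of-three arguments in full; as written, the ``bookkeeping of markings'' you flag as the principal obstacle is in fact a missing proof.
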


\begin{definition}
A map $f: X \longrightarrow Y$ between marked forest sets is called a \emph{marked equivalence} if there exists a commutative square
\[
\xymatrix{
X' \ar@{->>}[d]\ar[r] & Y' \ar@{->>}[d] \\
X \ar[r] & Y
}
\]
where the vertical maps are normalizations and $X' \longrightarrow Y'$ induces an equivalence of $\infty$-categories
\begin{equation*}
\mathrm{Map}^\flat(Y', E^\natural) \longrightarrow \mathrm{Map}^\flat(X', E^\natural)
\end{equation*}
for every operadically local forest set $E$.
\end{definition} 

\begin{remark}
This definition of marked equivalence is independent of the choice of normalizations. More precisely, if $X \longrightarrow Y$ is a marked equivalence, then for \emph{any} commutative square
\[
\xymatrix{
X'' \ar@{->>}[d]\ar[r] & Y'' \ar@{->>}[d] \\
X \ar[r] & Y
}
\]
in which the vertical arrows are normalizations, the induced map
\begin{equation*}
\mathrm{Map}^\flat(Y'', E^\natural) \longrightarrow \mathrm{Map}^\flat(X'', E^\natural)
\end{equation*}
is an equivalence of $\infty$-categories. The proof of this fact is virtually identical to that of Lemma \ref{lem:normalizationsindep}, so we leave it to the reader.
\end{remark}

The following theorem is the main result of this chapter. We will prove it in Section \ref{sec:markedfsetsmodelstruct}, after treating the necessary technical preliminaries in Sections \ref{sec:markedfsets2} and \ref{sec:markedfsets3}.

\begin{theorem}
\label{thm:markedfSets}
There exists a left proper, cofibrantly generated model structure on the category $\mathbf{fSets}^+$ such that:
\begin{itemize}
\item[(C)] The cofibrations are the normal monomorphisms.
\item[(W)] The weak equivalences are the marked equivalences.
\end{itemize}
Furthermore, this model structure enjoys the following properties:
\begin{itemize}
\item[(i)] An object is fibrant if and only if it is of the form $E^\natural$, for an operadically local forest set $E$.
\item[(ii)] A map $f$ between fibrant objects is a fibration if and only if it has the right lifting property with respect to all marked anodyne morphisms (see Definition \ref{def:markedanodynes}).
\item[(iii)] With the simplicial structure on $\mathbf{fSets}^+$ corresponding to the mapping objects $\mathrm{Map}^\sharp(-,-)$, the model structure is homotopically enriched over simplicial sets with the Kan-Quillen model structure.
\end{itemize} 
\end{theorem}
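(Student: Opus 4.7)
The plan is to mirror the construction of the operadic model structure on $\mathbf{fSets}$ (Theorem \ref{thm:basicmodelstructure}) and Lurie's analogous theorem for marked simplicial sets (HTT 3.1.3.7). First I would introduce the class of \emph{marked anodyne} morphisms promised in Definition \ref{def:markedanodynes}: a set of generating trivial cofibrations consisting of (a) the inner horn inclusions $\Lambda^e[F]^{\flat} \to F^{\flat}$ of \ref{sec:fsetsproperties}, (b) the sum cofibrations $F^{\flat} \amalg G^{\flat} \to (F \oplus G)^{\flat}$, (c) for every inner corolla $C_2$-like situation, the pushout-product type inclusions forcing certain 1-corollas to become marked (the dendroidal analog of Lurie's map $(\Lambda^2_1, \mathcal{E}) \to (\Delta^2, \mathcal{E}')$), (d) the map $J^{\flat} \to J^{\sharp}$, and (e) the pushout-products of marked $J$-anodynes with normal monomorphisms between normal marked forest sets. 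These are the maps whose right lifting property will characterize the objects $E^{\natural}$.

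The first key step is to prove property (i): a marked forest set $X$ has the right lifting property with respect to all marked anodynes if and only if $X = E^{\natural}$ for an operadically local $E$. The ``if'' direction reduces, by unpacking the markings, to the characterization of operadically local objects in Lemma \ref{lem:genbasicanodyne} together with the classical fact that equivalences in an $\infty$-category are closed under the 2-out-of-3 property and fill $\Lambda^2_1$ horns when the outer edges are equivalences; the fact that $i^{*}u^{*}E$ is an $\infty$-category allows one to import Joyal's arguments directly. The ``only if'' direction shows that any RLP object has equivalences precisely as its marked edges, giving $X = E^{\natural}$. With (i) established, a Yoneda/adjunction argument using Proposition \ref{prop:markedmappingspace} shows that a marked anodyne $A \to B$ induces a trivial fibration $\mathrm{Map}^{\flat}(B, E^{\natural}) \to \mathrm{Map}^{\flat}(A, E^{\natural})$, so every marked anodyne is a marked equivalence.

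Next I would assemble the model structure. Factorizations are produced by the small object argument applied to the (countable) set of generating cofibrations in Remark \ref{rmk:gencofmarkedfSets} and to the marked anodynes, exactly as in the unmarked case. The lifting axioms follow once one proves the analog of Lemma \ref{lem:pushoutbasictrivcof} (pushouts of trivial cofibrations are trivial cofibrations, using homotopy pullbacks of mapping spaces into $E^{\natural}$), Lemma \ref{lem:trivcofgenbetweennormals} (reduction to trivial cofibrations between normal objects by pulling back along a normalization), and the countable-saturation argument of Lemma \ref{lem:countablegentrivcof}. These proofs go through verbatim once one notes that the forgetful functor $a\colon \mathbf{fSets}^{+}\to \mathbf{fSets}$ preserves and detects normal monomorphisms and that normalizations of marked forest sets exist and are unique up to $J^{\sharp}$-homotopy. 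Property (ii) is then a standard marking-stable Brown-lemma retract argument: any map between fibrant objects with the RLP against marked anodynes is a retract of its marked-anodyne-then-fibration factorization.

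Finally, for the homotopical enrichment (iii) over $(\mathbf{sSets}, \mathrm{Joyal}_{\mathrm{Kan}})$ via $\mathrm{Map}^{\sharp}$, the required axioms (H1) and (H2) of Section \ref{subsec:weakenrichment} reduce to compatibility of $\otimes$ with normal monomorphisms, which is Proposition \ref{prop:newnormalmonopoprod} together with the observation that tensoring with $K^{\sharp}$ for $K$ a Kan complex preserves marked anodynes. The main obstacle in the whole argument is step (i): one needs to carefully identify the marked anodynes so that their RLP is equivalent to \emph{both} operadic locality of the underlying $E$ \emph{and} the marked edges being exactly the equivalences in $E$, and this in turn requires a combinatorial analysis of how corollas with an equivalence as one input interact with inner horn fillers in forests—essentially a dendroidal/forest version of Joyal's special horn filling theorem. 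Once that combinatorial input is in hand, the remaining verifications are formal variations on the proof of Theorem \ref{thm:basicmodelstructure}.
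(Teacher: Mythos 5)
Your overall strategy mirrors the paper's: define marked anodynes, prove the characterization of fibrant objects, show marked anodynes are marked equivalences, assemble the model structure via CM1--5 using the countability machinery, and establish (ii) by a retract argument and (iii) via pushout-product properties. You also correctly identify that the crucial combinatorial input is a forest version of Joyal's special horn filling for equivalences. However, there is a concrete gap in the detail that drives everything.

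Your proposed list of generating marked anodynes does not explicitly include the \emph{root anodynes} (the paper's class $(M_2)$): for a tree $T$ with a unary root corolla and its associated root vertex $r$, the inclusion $(\Lambda^r[T], \mathcal{E}\cap\Lambda^r[T]) \rightarrow (T, \mathcal{E})$, where $\mathcal{E}$ marks the root corolla. Your item (c) only references the dendroidal analog of Lurie's composition-type map $(\Lambda^2_1)^\sharp \cup_{(\Lambda^2_1)^\flat}(\Delta^2)^\flat \to (\Delta^2)^\sharp$ (the paper's $(M_3)$), which enforces the two-out-of-three property of marked corollas but does not supply the marked outer horn fillers. Without $(M_2)$, the RLP class would not force the marked corollas of a fibrant object to be equivalences, nor would $E^\natural$ be fibrant, so property (i) fails. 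The paper proves the relevant lifting (Theorem \ref{thm:Cartequivalences}) not by a fresh combinatorial argument in $\mathbf{fSets}$ but by transporting the dendroidal version of Cisinski–Moerdijk along the Quillen pair $(u^*, u_*)$ of Theorem \ref{thm:equivfSetsdSets} and a ``lift up to homotopy'' reduction; if you want to reproduce this without repeating the dendroidal combinatorics, that indirection is worth noting. A secondary imprecision: the marked anodynes are not shown (and need not be) a generating set of trivial cofibrations, so the factorization into trivial cofibration followed by fibration must be produced from the trivial cofibrations between countable normal objects (your citation of Lemma \ref{lem:countablegentrivcof} suggests you see this, but the earlier sentence claiming the small object argument runs on ``the marked anodynes'' would, if taken at face value, presuppose what is not known at that stage).

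Apart from these points, the road map is the same as the paper's, including the use of Propositions \ref{prop:poprodrootanodyne} and \ref{prop:poprodleafanodyne} (the pushout-product stability of root and leaf anodynes, with the open/simplicial hypotheses) to get Lemma \ref{lem:poprodmarkedanodyne} and hence the mapping-space and enrichment properties; you should make sure to carry those hypotheses along, as they are what justifies restricting to the open setting later in the paper.
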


\begin{corollary}
\label{cor:equivmarkedunmarked}
The adjunction
\[
\xymatrix@C=40pt{
(-)^\flat: \mathbf{fSets} \ar@<.5ex>[r] & \mathbf{fSets}^+: a \ar@<.5ex>[l]
}
\]
is a Quillen equivalence, as is its restriction to the corresponding subcategories of open objects.
\end{corollary}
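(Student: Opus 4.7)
The plan is to deduce the corollary from Theorem \ref{thm:markedfSets}. Since $a \circ (-)^\flat$ is the identity functor on $\mathbf{fSets}$, the unit of the adjunction is the identity; so it suffices to verify that $((-)^\flat, a)$ is a Quillen pair and that the derived counit is a weak equivalence at every fibrant object.

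For the Quillen pair: the functor $(-)^\flat$ preserves colimits and sends generating cofibrations $\partial F \to F$ to $(\partial F)^\flat \to F^\flat$, which are among the generators of the normal monomorphisms of $\mathbf{fSets}^+$ by Remark \ref{rmk:gencofmarkedfSets}, so it preserves cofibrations. For trivial cofibrations, I would use the tautological identity
\begin{equation*}
\mathrm{Map}^\flat(X^\flat, F^\natural) = \mathbf{hom}(X, F)
\end{equation*}
valid for any forest set $X$ and operadically local $F$, which arises from $(\Delta^n)^\flat \otimes X^\flat = (\Delta^n \otimes X)^\flat$ and the automatic preservation of degenerate 1-corollas. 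Unpacking the definitions, this identity shows that a map $f: X \to Y$ between normal forest sets is an operadic equivalence if and only if $f^\flat$ is a marked equivalence. Hence $(-)^\flat$ preserves weak equivalences between cofibrant objects, and so preserves trivial cofibrations.

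For the derived counit at a fibrant $E^\natural$, I would choose a normalization $\tilde E \to E$ in $\mathbf{fSets}$ so the derived counit is represented by $\tilde E^\flat \to E^\natural$. First observe that $\tilde E^\natural \to E^\natural$ is a normalization in $\mathbf{fSets}^+$: lifting against $(\partial F)^\flat \to F^\flat$ follows from $\tilde E \to E$ being a normalization in $\mathbf{fSets}$, and lifting against $(C_1)^\flat \to (C_1)^\sharp$ uses that a weak equivalence between operadically local objects reflects equivalences, a consequence of Proposition \ref{prop:operadicwes} applied to the underlying $\infty$-categories. Through this normalization, the claim reduces to showing that for every operadically local $F$, the natural inclusion
\begin{equation*}
\mathrm{Map}^\flat(\tilde E^\natural, F^\natural) \hookrightarrow \mathrm{Map}^\flat(\tilde E^\flat, F^\natural) = \mathbf{hom}(\tilde E, F)
\end{equation*}
is an equivalence of $\infty$-categories.

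This last step is the principal difficulty. Both sides are $\infty$-categories by Proposition \ref{prop:markedmappingspace}, and on vertices the inclusion is an equality because every morphism of $\infty$-operads preserves equivalences. For higher simplices, an $n$-simplex of $\mathbf{hom}(\tilde E, F)$ corresponds to a map $(\Delta^n)^\flat \otimes \tilde E \to F$; lying in $\mathrm{Map}^\flat(\tilde E^\natural, F^\natural)$ demands that for every equivalence $e$ of $\tilde E$ and every vertex $i \in \Delta^n$, the corresponding 1-corolla maps to an equivalence in $F$. I would verify this automatic condition by restricting along $(\Delta^n)^\flat \otimes C_1 \to (\Delta^n)^\flat \otimes \tilde E$, regarding the result as an $n$-simplex of $\mathbf{hom}(C_1, F)$, and invoking that the subcomplex of equivalences in $\mathbf{hom}(C_1, F)$ is the maximal Kan complex and hence is automatically closed under the $\Delta^n$-action. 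Finally, the open case follows without change, since $(-)^\flat$, $a$, and normalizations all preserve openness.
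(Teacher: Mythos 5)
Your proof is correct in substance and rests on the same key observation as the paper's: the isomorphism $\mathrm{Map}^\flat(Z^\natural, E^\natural) \cong \mathbf{hom}(Z,E)$ for operadically local $Z$, which holds because any map between operadically local objects automatically preserves equivalences. The paper's proof verifies the Quillen equivalence criterion directly (for cofibrant $X$ and fibrant $Y^\natural$, the map $X^\flat \to Y^\natural$ is a weak equivalence if and only if $X \to Y$ is) by combining this identity with the compatible choices of normalizations, while you route through the derived unit/counit criterion; these are formally different presentations of the same calculation.

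Two minor points deserve attention. First, the step ``the unit is the identity, so it suffices to check the derived counit'' is not quite a valid reduction on its own: the derived unit at cofibrant $X$ is $X \to a\bigl((X^\flat)_f\bigr)$ and involves a fibrant replacement, so it is not literally the identity, and verifying it is a weak equivalence requires exactly the same identity $\mathrm{Map}^\flat(E^\natural, F^\natural) \cong \mathbf{hom}(E,F)$ as the counit computation. (Alternatively, you could note that $(-)^\flat$ reflects weak equivalences between cofibrant objects, again by the same identity, and use the standard criterion that this plus the derived counit condition gives a Quillen equivalence.) Second, the final step does not need the invocation of the maximal Kan complex, and the statement ``the subcomplex of equivalences in $\mathbf{hom}(C_1,F)$ is the maximal Kan complex'' is not accurate (the maximal Kan complex consists of simplices all of whose \emph{edges} are equivalences, not whose vertices are marked 1-corollas). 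The argument you want is much more direct: given a map $\phi: \Delta^n \otimes \tilde E \to F$ and a vertex $i$ of $\Delta^n$, the restriction of $\phi$ along $\{i\} \otimes \tilde E \cong \tilde E$ is a map $\tilde E \to F$ of operadically local objects and hence preserves equivalences, so $\phi$ sends every marked 1-corolla $i \otimes e$ of $(\Delta^n)^\flat \otimes \tilde E^\natural$ to an equivalence in $F$, giving the desired bijection.
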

\begin{proof}
Clearly $(-)^\flat$ preserves normal monomorphisms. Considering the definitions, we see that it suffices to show this functor preserves weak equivalences between cofibrant objects in order for it to be left Quillen. So let $X \longrightarrow Y$ be a weak equivalence between normal forest sets. We have to check that for every operadically local forest set $E$, the map
\begin{equation*}
\mathrm{Map}^\flat(Y^\flat, E^\natural) \longrightarrow \mathrm{Map}^\flat(X^\flat, E^\natural)
\end{equation*}
is an equivalence of $\infty$-categories. But note that we can canonically identify this map with the map
\begin{equation*}
\mathbf{hom}(Y, E) \longrightarrow \mathbf{hom}(X, E)
\end{equation*}
which is an equivalence by assumption. \par 
Now let us show the adjunction is in fact a Quillen equivalence. Suppose we are given a map 
\begin{equation*}
f: X^\flat \longrightarrow Y^\natural
\end{equation*}
where $X$ is a cofibrant forest set and $Y^\natural$ is a fibrant object of $\mathbf{fSets}^+$. We have to show that $f$ is a weak equivalence if and only if the adjoint map $X \longrightarrow a(Y^\natural) = Y$ is a weak equivalence. Again, making the canonical identifications
\begin{eqnarray*}
\mathrm{Map}^\flat(X^\flat, E^\natural) & \simeq & \mathbf{hom}(X, E) \\
\mathrm{Map}^\flat(Y^\natural, E^\natural) & \simeq & \mathbf{hom}(Y, E)
\end{eqnarray*}
it is clear that this is indeed the case. Note that for the second isomorphism we use the fact that a map between operadically local objects automatically preserves equivalences. This is immediate from the corresponding fact for $\infty$-categories. $\Box$ 
\end{proof}

\subsection{Equivalences in $\infty$-operads}
\label{sec:markedfsets2}

We will need some properties of equivalences in operadically local forest sets which are analogues of similar properties of equivalences in $\infty$-categories established by Joyal (cf. Proposition 1.2.4.3 of \cite{htt}) and equivalences in dendroidal $\infty$-operads (cf. Theorems 4.2 and A.7 of \cite{cisinskimoerdijk1}).

\begin{theorem}
\label{thm:Cartequivalences}
Suppose $E$ is an operadically local forest set. Also, suppose we have a forest $F$ containing a tree $T$ with at least two vertices and having a unary root corolla, whose root we denote by $r$. Then for any lifting problem
\[
\xymatrix{
\Lambda^r[F] \ar[d]\ar[r] & E \\
F \ar@{-->}[ur] &
}
\]
such that the root corolla corresponding to $r$ maps to an equivalence in $E$ under the horizontal map, a lift exists. 
\end{theorem}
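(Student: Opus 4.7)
The plan is to reduce the outer-root-horn lifting problem to a combination of inner-horn lifting and $J$-based lifting, both of which hold in $E$ by the definition of operadic locality. The approach is in the spirit of Joyal's original argument that an $\infty$-category $\mathcal{C}$ admits a lift of the outer horn $\Lambda^n_0 \to \mathcal{C}$ whenever the initial edge maps to an equivalence.

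First, I would reduce to the case $F = T$ by a pushout argument. If $F = T \oplus F'$, one has a decomposition
\[
\Lambda^r[F] = (\Lambda^r[T] \oplus F') \cup (T \oplus \partial F'),
\]
glued over $\Lambda^r[T] \oplus \partial F'$, and the trivial fibration $\mathbf{hom}(T \oplus F', E) \to \mathbf{hom}(T,E) \times \mathbf{hom}(F',E)$ furnished by operadic locality separates the $T$- and $F'$-factors of the lifting problem, reducing us to the single-tree case via a skeletal induction on $F'$.

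For $F = T$ with unary root vertex $v$, let $r$ be the root edge, $b$ the unique edge attached to $v$ from above, and $T'$ the subtree of $T$ above $b$; since $v$ is unary, the missing face $\partial_v T$ is exactly $T'$. The image of the root $1$-corolla in $E$ is an edge $\alpha$ of the underlying $\infty$-category $i^*u^*E$, which is an equivalence by hypothesis and hence extends to a map $\bar\alpha \colon J \to i^*u^*E$. Using $\bar\alpha$ together with the given horn $g \colon \Lambda^r[T] \to E$, I would construct a compatible map
\[
\phi \colon \{0\} \otimes T \cup J \otimes \Lambda^r[T] \longrightarrow E
\]
which at the $\{1\}$-end reproduces $g$ on $\Lambda^r[T]$ and at the $\{0\}$-end encodes a ``contracted-root'' version of $T$ (where the edge $b$ is collapsed using the identity coming from $\alpha^{-1}\alpha$). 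The inclusion $\{0\} \otimes T \cup J \otimes \Lambda^r[T] \hookrightarrow J \otimes T$ is operadic anodyne by Lemma \ref{lem:basicanodpoprod} combined with the fact that $\Lambda^r[T] \hookrightarrow T$ is built from inner-horn, sum, and root-face inclusions that can be handled inductively; hence $\phi$ extends to $J \otimes T \to E$, and restriction to $\{1\} \hookrightarrow J$ yields the required lift $T \to E$.

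The main obstacle is the construction of $\phi$ on the $\{0\}$-end, which must be chosen so that the subsequent lift restricts correctly over $\Lambda^r[T]$ on the $\{1\}$-end. In practice, this means interpolating between $g$ on the $\{1\}$-side and a map on the $\{0\}$-side compatible with the contraction face $\partial_b T \subseteq \Lambda^r[T]$, while respecting the root face of $T$ in $\mathbf{\Phi}$ (which has no analogue in $\mathbf{\Omega}$ and is precisely the face being filled in). Carrying out this bookkeeping --- and verifying that the operadic-anodyne claim for the auxiliary inclusion is not circular, i.e.\ that it does not require the result being proved --- is where the technical heart of the argument lies.
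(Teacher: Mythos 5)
Your approach is genuinely different from the paper's. The paper proves Theorem \ref{thm:Cartequivalences} by a short reduction: it notes that a lift up to homotopy against a fibrant object suffices, replaces $E$ by $(\mathbf{R}u_*)u^*E$, uses $u^*\Lambda^r[uT]=\Lambda^r[T]$ to transport the problem to $\mathbf{dSets}$, and then invokes Theorem 4.2 of \cite{cisinskimoerdijk1} (the dendroidal analogue of Joyal's theorem on outer horn fillers along equivalences). You instead propose a direct combinatorial argument entirely inside $\mathbf{fSets}$, parallel to Joyal's original proof. Both approaches have merits: the paper's is short and reuses a substantial external theorem; yours, if completed, would be self-contained.

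However, your proof as written has a genuine gap. The step you correctly flag as the ``technical heart'' --- constructing the map $\phi \colon \{0\}\otimes T \cup J\otimes \Lambda^r[T]\to E$ that at the $\{1\}$-end restricts to $g$ and at the $\{0\}$-end encodes a map on all of $T$ --- is not carried out, and it is precisely there that all the difficulty (and the use of the equivalence hypothesis) lives. In the simplicial case this step is exactly the delicate combinatorics of Joyal's theorem; in the dendroidal/forestial case it is what Theorem 4.2 of \cite{cisinskimoerdijk1} does, via a nontrivial induction on shuffles and faces. Saying that one should ``interpolate between $g$ on the $\{1\}$-side and a map on the $\{0\}$-side compatible with the contraction face $\partial_b T$'' describes what is wanted but does not produce the $J$-homotopy nor the degenerated lift $h\colon T\to E$ at the $\{0\}$-end; these must be built simultaneously in a way that respects all the faces of $T$, and that construction is the entire content of the theorem. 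As it stands your argument reduces the theorem to an unverified claim of comparable difficulty.

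One smaller point: you over-complicate the auxiliary anodyne claim. The inclusion $\{0\}\otimes T\cup J\otimes\Lambda^r[T]\hookrightarrow J\otimes T$ is directly a generating operadic anodyne of type (i)(b) of Lemma \ref{lem:genbasicanodyne}, taking $A=\Lambda^r[T]$ and $B=T$, both normal objects. No induction, no appeal to Lemma \ref{lem:basicanodpoprod}, and no circularity worry is needed for that part; it holds for an arbitrary normal monomorphism in place of $\Lambda^r[T]\hookrightarrow T$. This is a correct and clean observation, but it is also why this step carries no information: the whole burden falls on constructing $\phi$.
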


\begin{remark}
In fact, the statement of the theorem is only interesting if the forest $F$ consists of only the one tree $T$. If it has multiple components, then a lift will always exist by the fact that $E$ is local with respect to sums. However, the given formulation of the theorem will be convenient in the next section.
\end{remark}

\begin{remark}
\label{rmk:Cartequivalences}
We can reformulate the theorem as follows: given an operadically local forest set $E$, the marked forest set $E^\natural$ has the right lifting property with respect to all maps of the form
\begin{equation*}
(\Lambda^r[F], \mathcal{E} \cap \Lambda^r[F]) \longrightarrow (F, \mathcal{E})
\end{equation*}
where $F$ is a forest as described above and $\mathcal{E}$ consists of all degenerate 1-corollas of $F$ together with the root corolla corresponding to $r$.
\end{remark}

For later use and ease of reference, let us make the following definition:

\begin{definition}
The class of \emph{root anodynes} is the smallest saturated class of morphisms containing the maps
\begin{equation*}
(\Lambda^r[F], \mathcal{E} \cap \Lambda^r[F]) \longrightarrow (F, \mathcal{E}) 
\end{equation*}
where $F$ is a forest containing a tree $T$ with a root corolla of valence one, $\Lambda^r[F]$ is the horn of $F$ corresponding to that root and $\mathcal{E}$ consists of all degenerate 1-corollas of $F$ together with that root corolla.
\end{definition}

Note that, unlike the formulation of Theorem \ref{thm:Cartequivalences}, we're not requiring the tree $T$ to have at least two vertices, so the class of root anodynes also includes the map
\begin{equation*}
\{1\} \longrightarrow (\Delta^1)^\sharp
\end{equation*} 

We will also need the following `dual' version of the previous result: 

\begin{theorem}
\label{thm:coCartequivalences}
Suppose $E$ is an operadically local forest set. Also suppose we have a forest $F$ containing a tree $T$ with at least two vertices and having a unary leaf corolla, whose leaf we denote by $l$. Then for any lifting problem
\[
\xymatrix{
\Lambda^l[F] \ar[d]\ar[r] & E \\
F \ar@{-->}[ur] &
}
\]
such that the leaf corolla corresponding to $l$ maps to an equivalence in $E$ under the horizontal map, a lift exists.
\end{theorem}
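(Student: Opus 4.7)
The plan is to mimic the argument for Theorem \ref{thm:Cartequivalences}, with the roles of root and leaf interchanged. A mild simplification relative to the root case is that the face being excluded from $\Lambda^l[F]$, namely the outer face $\partial_v F$ chopping off the unary leaf vertex $v$ with leaf $l$ (whose outgoing edge $e$ is an inner edge of $T$, since $T$ has at least two vertices), is not a genuinely new ``forestial'' face but the image under $u_!$ of a classical outer face in $\mathbf{\Omega}$. This means only the root faces that enter in higher stages of the induction are forest-specific.

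First I would reduce to the case $F = T$ consists of a single tree. Write $F = T \oplus F'$; the subobject $\Lambda^l[F]$ decomposes, by equation~\eqref{eq:innerhornforest} adapted to leaf horns, as
\[
\Lambda^l[T] \oplus F' \;\cup\; T \oplus \partial F'.
\]
Using property~(2) of operadically local objects together with a normal-mono induction on $F'$, a lift on $T$ extends to a lift on $F$; conversely, restriction to $T$ gives the required single-tree problem. Thus we are reduced to finding a dashed arrow $T \to E$ below, where the unary leaf corolla at $v$ is sent to an equivalence in $E$:
\[
\xymatrix{
\Lambda^l[T] \ar[d]\ar[r] & E \\
T \ar@{-->}[ur] &
}
\]

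Next I would argue by induction on the size of $T$. The base case is $T$ a two-vertex tree consisting of the unary corolla at $v$ and another corolla below it attached along $e$. Here $\Lambda^l[T] \to T$ lies in the image of $u_! i_!$: it is (up to identifying the outer leaf and root faces of the underlying simplex, which are all present in $\Lambda^l[T]$ together with the remaining bottom corolla and, in the two-vertex case, simply all the relevant non-degenerate faces) a leaf horn in the dendroidal sense whose ``special'' edge is sent to an equivalence of $i^*u^*E$. Since $i^*u^*E$ is an $\infty$-category (apply part~(1) of the definition of operadically local to $\varnothing \to \eta$), the lift exists by Joyal's theorem on the right lifting property of $\infty$-categories against left horns whose initial edge is sent to an equivalence.

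For the inductive step I would adjoin faces of $T$ to $\Lambda^l[T]$ one at a time in a carefully chosen order, producing a filtration $\Lambda^l[T] = X_0 \subset X_1 \subset \cdots \subset X_N = T$, and verify that each inclusion $X_i \subset X_{i+1}$ is a pushout of a map along which $E$ already has the lifting property. The ingredients are: inner horn extensions (handled because $E$, being operadically local, has the right lifting property against inner horns of forests by property~(3)); root-horn extensions where the root corolla is unary and sent to an equivalence (handled by Theorem \ref{thm:Cartequivalences}, applied after verifying that the relevant roots are sent to equivalences, which follows by propagating the single marked equivalence at $v$ through compositions inside $E$ using only inner horn fillings already available); and leaf-horn extensions for strictly smaller trees (handled by the inductive hypothesis). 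The main obstacle is combinatorial: organizing the filtration and verifying that along the way we only encounter root-horns whose special root corolla is, via the partial lift built so far, sent to an equivalence in $E$. This is precisely analogous to the bookkeeping carried out in the proof of Theorem \ref{thm:Cartequivalences}, and the same ordering (by size of pruning together with sets of inner edges to be filled, as in the proof of Proposition \ref{prop:poprodinneranodyne}) can be adapted here; the ``new'' equivalences appearing at intermediate stages are obtained from the original one at $v$ by two-out-of-three arguments for equivalences in the underlying $\infty$-category $i^*u^*E$.
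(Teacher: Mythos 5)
Your plan takes a genuinely different route from the paper. The paper disposes of both Theorem \ref{thm:Cartequivalences} and its leaf variant by the same short model--categorical argument: one reduces to a single tree, observes that a lift up to homotopy suffices (Proposition A.2.3.1 of \cite{htt}), and uses the derived unit $E \to (\mathbf{R}u_*)u^*E$ together with $u^*\Lambda^l[uT]=\Lambda^l[T]$ to turn the forest lifting problem into a lifting problem against a fibrant replacement of $u^*E$ in $\mathbf{dSets}$; the lift then exists by the known dendroidal result (Theorem A.7 of \cite{cisinskimoerdijk1}). You instead propose to re-run, internally in $\mathbf{fSets}$, the combinatorial filtration argument that underlies the dendroidal theorem. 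In principle this avoids a citation and keeps the argument self-contained, but it is a much longer road and, as sketched, has a concrete gap.

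The gap is in your base case. You take $T$ to have two vertices: a unary leaf corolla at $v$ sitting on top of a corolla $C_k$ at the root. You claim $\Lambda^l[T] \to T$ lies in the image of $u_!i_!$ and is therefore a simplicial left horn, whose lift into $i^*u^*E$ follows from Joyal's theorem. This only works when $k=1$, i.e.\ when $T$ is a linear tree. For $k\ge 2$ the tree $T$ is not in the image of $u_!i_!$, and $\Lambda^l[T] = \partial_e(uT) \cup \partial_{\mathrm{root}}(uT)$ involves the forestial root face $\partial_{\mathrm{root}}(uT)$, which is a proper forest of $k$ components; this is nowhere close to a simplicial horn, and Joyal's special-horn filling does not apply directly. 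To get the base case right you would already have to use some genuinely operadic input (e.g. the Segal/sums locality of $E$ to reduce to a mapping-space lifting problem for the $\infty$-category $i^*u^*E$, or a further decomposition of $\Lambda^l[T]\to T$). The inductive step is also only gestured at: the claim that every root horn encountered along the filtration has its special root corolla sent to an equivalence in $E$, "propagated by two-out-of-three," is precisely the nontrivial bookkeeping that the dendroidal proof carries out carefully, and it is not evident that the naive ordering by pruning size gives you access to the required partial lifts at each stage. As written the sketch does not establish the theorem; you would need to either fix the base case and fully carry out the induction, or fall back on the paper's transfer argument.
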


\begin{remark}
\label{rmk:coCartequivalences} 
This result admits a similar reformulation, this time in terms of lifting properties with respect to \emph{leaf anodynes}, i.e. compositions of pushouts of maps of the form
\begin{equation*}
(\Lambda^l[F], \mathcal{E} \cap \Lambda^l[F]) \longrightarrow (F, \mathcal{E})
\end{equation*}
where $F$ is a forest containing a tree $T$ with a leaf corolla of valence one, $\Lambda^l[F]$ is the horn of $F$ corresponding to that leaf and $\mathcal{E}$ consists of all degenerate 1-corollas of $F$ together with that leaf corolla. 
\end{remark}

Fortunately, both these theorems can be derived fairly easily from their dendroidal counterparts. We will show how to do this for the first one, the second is similar.

\begin{proof}[Proof of Theorem \ref{thm:Cartequivalences}]
We first recall a useful fact from the theory of model categories. Suppose we have a model category $\mathcal{C}$, a cofibration between cofibrant objects $i: A \longrightarrow B$, a fibrant object $X$ and a lifting problem
\[
\xymatrix{
A \ar[r]^f\ar[d]_i & X \\
B \ar@{-->}[ur] &
}
\]
If there exists a commutative diagram
\[
\xymatrix{
A \ar[r]^{[f]}\ar[d]_{[i]} & X \\
B \ar[ur] &
}
\]
in the homotopy category $\mathrm{ho}\,\mathcal{C}$ of $\mathcal{C}$ (i.e. a `lift up to homotopy'), then the actual lifting problem in $\mathcal{C}$ admits a solution. The proof of this fact is straightforward. If needed, it can be found in \cite{htt} as Proposition A.2.3.1. \par 
We will apply this fact as follows. First, recall that we only have to prove the theorem in case $F$ consists of a single tree $T$; otherwise a lift automatically exists since $E$ is local with respect to sums. Also, we may assume $E$ is cofibrant. Now consider the diagram 
\[
\xymatrix{
\Lambda^r[uT] \ar[d]\ar[r] & E \ar[r]^-\sim & (\mathbf{R}u_*)u^*E \\
uT \ar@{-->}[urr] & & 
}
\]
To solve the lifting problem in the theorem, it suffices (by the fact just mentioned) to find a dashed arrow as in this diagram. But this is equivalent to finding a lift in the following diagram in $\mathbf{dSets}$:
\[
\xymatrix{
\Lambda^r[T] \ar[d]\ar[r] & (u^*E)_f \\
T \ar@{-->}[ur] & 
}
\]
Here the subscript $f$ indicates a fibrant replacement of $u^*E$ and we have used the fact that $u^*\Lambda^r[uT] = \Lambda^r[T]$. Such a lift exists by Theorem 4.2 of \cite{cisinskimoerdijk1}. $\Box$
\end{proof}

Before we move on, it is worthwile to record an important property of the root and leaf anodynes mentioned above.

\begin{proposition}
\label{prop:poprodrootanodyne}
Given a root anodyne $f: X \longrightarrow Y$ between marked forest sets and a normal monomorphism $g: A \longrightarrow B$ between forest sets, the pushout-product
\begin{equation*}
X \otimes B^\flat \cup Y \otimes A^\flat \longrightarrow Y \otimes B^\flat
\end{equation*}
is a composition of root anodynes and inner anodynes, provided that $Y$ or $B$ is simplicial or both $Y$ and $B$ are open.
\end{proposition}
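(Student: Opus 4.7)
The plan is to adapt the shuffle-based filtration strategy of the proof of Proposition \ref{prop:poprodinneranodyne}, carefully accounting for the marking on the root corolla that distinguishes root anodynes from inner anodynes.

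Standard saturation arguments reduce us to the case where $f$ is a generating root anodyne $(\Lambda^r[F], \mathcal{E}) \to (F, \mathcal{E})$, with $F$ containing a tree $T$ whose root corolla is unary and marked (root edge $r$, input edge $e$), and where $g$ is a generating normal monomorphism $\partial G \to G$ for a forest $G$. An induction on the number of constituent trees of $F$ and $G$, parallel to the last step of the proof of Proposition \ref{prop:poprodinneranodyne} and using Corollary \ref{cor:posuminneranod} together with its straightforward analog for root anodynes, reduces further to the case $F = T$ and $G$ a single tree.

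I would then filter $F \otimes G^\flat$ by adjoining the shuffles of $T \otimes G$ one at a time, starting from $A_0 = (\Lambda^r[F], \mathcal{E}) \otimes G^\flat \cup (F, \mathcal{E}) \otimes (\partial G)^\flat$. These shuffles split into two types according to their root vertex: \emph{Case 1} shuffles, whose root vertex is $\rho \otimes g_0$ (where $\rho$ is the root vertex of $T$ and $g_0$ the root edge of $G$), forming a marked unary corolla in the tensor product; and \emph{Case 2} shuffles, whose root vertex is $r \otimes v_0$, where $v_0$ is the root vertex of $G$. I would order shuffles so that all Case 2 shuffles come first in the percolation order, followed by all Case 1 shuffles extending the percolation order on these. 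For each shuffle $R$, I would adjoin $R$ via the sub-filtration of Proposition \ref{prop:poprodinneranodyne} by prunings $P$ of $R$ and subsets $H \subseteq \mathcal{H}_P$, each sub-step a pushout of a generalized horn $\Lambda^\Sigma[P^{[H]}] \to P^{[H]}$, where $\Sigma$ consists of the faces that do not already factor through $A_0$ or a previous sub-stage.

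For Case 2 shuffles $\Sigma$ consists only of inner faces: the root face of $P^{[H]}$ factors through $F \otimes \partial G \subseteq A_0$, and the special edges (whose contractions are not yet present) are those of the form $r \otimes c$ for $c$ a non-root color of $G$, exactly analogously to the edges $e \otimes t$ in Proposition \ref{prop:poprodinneranodyne}. By Lemma \ref{lem:moreinneranodynes}(b) each such sub-step is a composition of inner anodynes. For Case 1 shuffles, $\Sigma$ additionally contains the root face of $P^{[H]}$ (which lies in $\partial_r F \otimes G$, excluded from $\Lambda^r[F] \otimes G$), together with possibly further inner faces corresponding to smaller Case 1 missing trees adjoined within the same sub-filtration. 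The crucial point is that the root corolla of every Case 1 piece $P^{[H]}$ remains the marked unary corolla $\rho \otimes g_0$: prunings preserve the root, and the inner edges contracted in forming $P^{[H]}$ are never adjacent to the root (contracting the root-adjacent edge $e \otimes g_0$ always yields a face in $\partial_e F \otimes G \subseteq A_0$, so it is not among the missing faces). To decompose $\Lambda^\Sigma[P^{[H]}] \to P^{[H]}$ into root anodynes I would prove a root-anodyne analog of Lemma \ref{lem:moreinneranodynes}(b) by induction on $|\Sigma \setminus \{\mathrm{root}\}|$, the base case being a generating root anodyne and the inductive step using an analog of Lemma \ref{lem:moreinneranodynes}(a) together with the preservation of the marked root corolla under face-taking of non-root-adjacent edges.

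The main technical difficulty, I expect, is the combinatorial verification of the inductive framework for Case 1 shuffles, in particular checking that the pruning-and-contraction filtration visits the Case 1 intermediate missing trees in the right order so that the generalized horn inclusions appearing are always of the required inner-plus-possibly-root type. The key structural fact supporting this is that Case 1 shuffles contain no edges of the form $r \otimes c$ with $c \neq g_0$, so no Case 2-style special edges appear inside Case 1 pieces, and the entire Case 1 analysis is internal to the root-anodyne class.
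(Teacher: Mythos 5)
Your overall strategy (filter by shuffles of $S^\diamond \otimes T^\flat$, then by prunings, then by subsets $H$ of inner edges; split into two cases according to whether the root vertex of a shuffle is the marked unary corolla $v_r \otimes g_0$) is exactly the paper's. Two details, however, need repair.

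First, your justification for why the root-adjacent edge $e \otimes g_0$ is never a genuine missing contraction is incorrect as stated. You claim that contracting $e \otimes g_0$ ``always yields a face in $\partial_e F \otimes G \subseteq A_0$.'' This is only true when the vertex sitting above $e \otimes g_0$ in $P^{[H]}$ is a vertex of $S$ (black). If that vertex is a vertex of $T$ (white), contracting $e \otimes g_0$ composes $v_r \otimes g_0$ with a white vertex; by the Boardman--Vogt relation this produces a dendrex of an \emph{earlier shuffle} $A_i$, not of $\partial_e F \otimes G$. Your conclusion --- that such $P^{[H]}$ are adjoined trivially --- is still correct, but the reason in the white-vertex case is ``earlier shuffle,'' not ``$A_0$.''

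Second, you propose to handle the marked-root case with a new ``root-anodyne analog of Lemma \ref{lem:moreinneranodynes}(b)'' for generalized horns $\Lambda^\Sigma[P^{[H]}]^\diamond$ with $\Sigma$ the root face together with ``possibly further inner faces.'' No such lemma is needed, because $\Sigma$ consists of the root face alone. In a shuffle whose root vertex is $v_r \otimes g_0$, the vertex $v_r$ of $S$ appears only once (as the root vertex), so there are no ``special edges'' in the sense of Case~2, hence $\mathcal{H}_P = I(P)$; one takes \emph{all} $H \subseteq I(P)$. Then every inner face $\partial_{e'}(P^{[H]}) = P^{[H \smallsetminus \{e'\}]}$ already appears at an earlier stage of the $H$-filtration, every leaf face appears at an earlier pruning, and the \emph{only} face that does not factor through an earlier stage is the root face. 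Each nontrivial step is therefore a pushout of a pure generating root anodyne $\Lambda^{\mathrm{root}}[P^{[H]}]^\diamond \to (P^{[H]})^\diamond$; there is no generalized horn to decompose. Your global reordering of shuffles so that all unmarked-root shuffles precede all marked-root ones is harmless but also superfluous --- the case-by-case treatment goes through for any linear extension of the percolation order.
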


\begin{proposition}
\label{prop:poprodleafanodyne}
Given a leaf anodyne $f: X \longrightarrow Y$ between marked forest sets and a normal monomorphism $g: A \longrightarrow B$ between forest sets, the pushout-product
\begin{equation*}
X \otimes B^\flat \cup Y \otimes A^\flat \longrightarrow Y \otimes B^\flat
\end{equation*}
is a composition of leaf anodynes and inner anodynes, provided that $Y$ or $B$ is simplicial or both $Y$ and $B$ are open.
\end{proposition}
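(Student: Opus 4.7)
The plan is to adapt the proof of the root anodyne analogue, Proposition \ref{prop:poprodrootanodyne}, which itself follows the combinatorial pattern of Proposition \ref{prop:poprodinneranodyne}. All three results share essentially the same logical skeleton; only the geometric role of the distinguished vertex (inner edge for \ref{prop:poprodinneranodyne}, root vertex for \ref{prop:poprodrootanodyne}, leaf vertex here) changes. By a standard saturation argument, it suffices to treat the case where $f: X \to Y$ is a generating leaf anodyne $(\Lambda^l[F], \mathcal{E}) \to (F, \mathcal{E})$ with $F$ a forest containing a unary leaf corolla at $l$ (and $\mathcal{E}$ marking that leaf corolla together with all degenerate 1-corollas), and where $g: A \to B$ is a boundary inclusion $\partial G \to G$ of a forest $G$. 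Normality of the pushout-product follows from Proposition \ref{prop:newnormalmonopoprod} under the stated openness/simplicial hypotheses.

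Following the proof of \ref{prop:poprodinneranodyne}, I would set
$$A_0 := \bigl(\Lambda^l[F], \mathcal{E} \cap \Lambda^l[F]\bigr) \otimes G^\flat \cup (F, \mathcal{E}) \otimes (\partial G)^\flat,$$
and filter $(F, \mathcal{E}) \otimes G^\flat$ above $A_0$ by adjoining the shuffles of $F \otimes G$ in an order extending the natural partial order on shuffles; within each shuffle $R$, by adjoining the prunings of $R$ in order of increasing size; and within each pruning $P$, by adjoining the trees $P^{[H]}$ obtained by contracting the inner edges of $P$ not in a chosen subset $H$, indexed over $H \subseteq I(P) \smallsetminus \Sigma_P$ in an order extending inclusion. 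Here $\Sigma_P$ is the set of \emph{special edges} of $P$: edges of the shuffle of the form $l \otimes g$ for $g$ a color of $G$, sitting immediately above the vertices $v_l \otimes g$, where $v_l$ is the marked leaf vertex of $F$ attached to $l$. By the definition $\mathcal{E}_{X \otimes Y} := \mathcal{E}_X \times \mathcal{E}_Y$, the 1-corollas at all such $v_l \otimes g$ are marked in $(F, \mathcal{E}) \otimes G^\flat$.

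Each step adjoining $P^{[H]}$ to the previous stage then becomes a pushout of an inclusion
$$\bigl(\Lambda^{\Sigma_P}[P^{[H]}], \mathcal{E}' \cap \Lambda^{\Sigma_P}[P^{[H]}]\bigr) \longrightarrow (P^{[H]}, \mathcal{E}'),$$
where $\mathcal{E}'$ contains the marked leaf corollas at the special vertices $v_l \otimes g$. Via an adaptation of Lemma \ref{lem:moreinneranodynes}(b), this inclusion decomposes into a composition of inner horn inclusions and individual marked leaf horn inclusions; the latter are precisely generating leaf anodynes. The hypothesis that either $Y$ or $B$ is simplicial, or that both are open, plays the same role as in Proposition \ref{prop:poprodinneranodyne}: it ensures that the root face of each constituent tree of a shuffle already factors through $A_0$, so the only external faces contributing nontrivially to the filtration are leaf faces.

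The main obstacle I anticipate lies in the combinatorial bookkeeping. At each stage one must verify both that the \emph{routine} faces of $P^{[H]}$ (contracting an inner edge outside $\Sigma_P$, chopping off a leaf corolla other than one of the $v_l \otimes g$, or chopping off the root) already factor through an earlier stage of the filtration, and that the \emph{special} faces at the marked vertices $v_l \otimes g$ genuinely do not. These checks run parallel to those in the proof of \ref{prop:poprodinneranodyne}, with ``inner edge in $A$'' replaced throughout by ``leaf edge in $\Sigma_P$'' and ``inner horn'' by ``marked leaf horn'' where appropriate. Once this is in place, the passage from $F$ a single tree to an arbitrary forest is accomplished by precisely the direct-sum decomposition argument at the end of the proof of Proposition \ref{prop:poprodinneranodyne}, invoking Corollary \ref{cor:posuminneranod} to handle the iterated sums.
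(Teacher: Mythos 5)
Your overall shape—shuffle filtration, sub-filtration by prunings, further sub-filtration by subsets $H$ of inner edges—is the right one, and you have correctly identified the special edges as the inputs $l \otimes g$ of the vertices $v_l \otimes g$. But the paper explicitly warns that this proof ``is not formally dual to'' the root anodyne case ``and in fact requires a little more care,'' and your proposal misses the two modifications that constitute that extra care.

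First, and most seriously, you use ordinary prunings of the shuffle $R$. The paper instead defines \emph{$l$-prunings}: prunings $P$ subject to the extra condition that if $P$ contains the outgoing edge $m \otimes t$ of a vertex $v_l \otimes t$ of $R$, then $P$ must also contain $v_l \otimes t$ itself. Without this restriction, the induction breaks. Concretely, suppose $P$ is an ordinary pruning containing a leaf vertex $v_l \otimes t$, and let $P' = \partial_{v_l \otimes t}(P)$. Then $P'$ is also an ordinary pruning, smaller than $P$, so it is adjoined earlier in your filtration; consequently the face $\partial_{v_l \otimes t}(P^{[H]})$ \emph{does} factor through an earlier stage. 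But that face is precisely the one that your horn inclusion needs to be missing—so the step adjoining $P^{[H]}$ is not a pushout of a leaf horn, and the argument collapses. The $l$-pruning condition is designed exactly to exclude $P'$ from the filtration and thereby keep that face out of the previous stage.

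Second, you treat all prunings uniformly via subsets $H \subseteq I(P) \smallsetminus \Sigma_P$ and a single horn $\Lambda^{\Sigma_P}[P^{[H]}]$, claiming it decomposes into a mixture of inner horn inclusions and leaf horn inclusions. This is not how it works. The paper splits into two mutually exclusive cases. In Case 1, $P$ has no leaf vertices of the form $v_l \otimes t$, so every special edge $l \otimes t \in \Sigma_P$ is an \emph{inner} edge; then $H$ ranges over $I(P) \smallsetminus \Sigma_P$ as you say, but the resulting $\Lambda^{\Sigma_P}[P^{[H]}] \to P^{[H]}$ is purely \emph{inner} anodyne, by Lemma \ref{lem:moreinneranodynes}(b)—no leaf horns appear. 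In Case 2, $P$ has at least one leaf vertex of the form $v_l \otimes t$; here there is no exclusion of special edges, one takes $H \subseteq I(P)$ in full, and the step is a pushout of $\Lambda^{L}[P^{[H]}]^{\diamond} \to (P^{[H]})^{\diamond}$ where $L$ is the set of marked unary leaf corollas $v_l \otimes t$—a genuine composition of leaf anodynes. Leaf horns arise only by chopping off external leaf corollas, not by contracting edges, so they cannot fall out of a $\Lambda^{\Sigma_P}$ indexed by a set of inner edges. You should separate the two cases explicitly, and in Case 2 verify (using the $l$-pruning condition) that the root face factors through $A_0$, that inner faces and leaf faces not at $v_l \otimes t$ factor through earlier stages, and that the faces at $L$ genuinely do not.
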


The proof of the second result is not formally dual to the first in any reasonable sense and in fact requires a little more care. Let us start with the first.

\begin{proof}[Proof of Proposition \ref{prop:poprodrootanodyne}]
By standard arguments, it suffices to prove this in the case where $f$ is of the form 
\begin{equation*}
(\Lambda^r[F], \mathcal{E} \cap \Lambda^r[F]) \longrightarrow (F, \mathcal{E})
\end{equation*}
as described in Remark \ref{rmk:Cartequivalences} and $g$ is of the form $\partial G \longrightarrow G$ for some forest $G$. Let us abbreviate notation by writing 
\begin{equation*}
f: \Lambda^r[F]^\diamond \longrightarrow F^\diamond.
\end{equation*}
In fact, we will now restrict our attention to the case where $F$ (resp. $G$) consists of a single tree $S$ (resp. $T$). The general case may be deduced from this one by a method completely analogous to the one at the end of the proof of Proposition \ref{prop:poprodinneranodyne}. Also, let us write $v_r$ for the root vertex of $S$. \par 
As in the proof of Proposition \ref{prop:poprodinneranodyne}, we consider the constituent shuffles of the tensor product $S^\diamond \otimes T^\flat$. This time, we pick the partial ordering on those such that the minimal element is obtained by grafting copies of $S$ onto the leaves of $T$:

\[
\begin{tikzpicture} 
[level distance=15mm, 
every node/.style={fill, circle, minimum size=.1cm, inner sep=0pt}, 
level 1/.style={sibling distance=35mm}, 
level 2/.style={sibling distance=30mm}, 
level 3/.style={sibling distance=20mm}]

\node(anchorT)[style={color=white}] {} [grow'=up] 
child {node(vertexT)[draw,fill=none] {} 
	child{ node(vertexS1) {}
		child
		child
	}
	child{ node(vertexS2) {}
		child
		child
	}
};

\tikzstyle{every node}=[]

\node at ($(vertexT) + (0,.7cm)$) {$T$};

\node at ($(vertexS1) + (0,.9cm)$) {$S$};
\node at ($(vertexS2) + (0,.9cm)$) {$S$};
\node at ($(vertexT) + (0, 1.5cm)$) {$\cdots$};

\draw ($(vertexS1) + (.3cm,-.3cm)$) -- ($(vertexS2) + (-.3cm,-.3cm)$);
\draw ($(vertexS1) + (-1.03cm,1.5cm)$) -- ($(vertexS1) + (1.03cm,1.5cm)$);
\draw ($(vertexS2) + (-1.03cm,1.5cm)$) -- ($(vertexS2) + (1.03cm,1.5cm)$);

\end{tikzpicture} 
\]

The maximal element in this partial order can be pictured as follows:

\[
\begin{tikzpicture} 
[level distance=15mm, 
every node/.style={fill, circle, minimum size=.1cm, inner sep=0pt}, 
level 1/.style={sibling distance=35mm}, 
level 2/.style={sibling distance=30mm}, 
level 3/.style={sibling distance=20mm}]

\node(anchorS)[style={color=white}] {} [grow'=up] 
child {node(vertexS) {}
 	child{ node(vertexT1)[draw,fill=none] {}
		child
		child
	}
	child{ node(vertexT2)[draw,fill=none] {}
		child
		child
	}
};

\tikzstyle{every node}=[]

\node at ($(vertexS) + (0,.7cm)$) {$S$};
\node at ($(vertexS) + (.2,-1.1cm)$) {$r$};
\node at ($(vertexS) + (-.2,-.7cm)$) {$v_r$};

\node at ($(vertexT1) + (0,.9cm)$) {$T$};
\node at ($(vertexT2) + (0,.9cm)$) {$T$};
\node at ($(vertexS) + (0, 1.5cm)$) {$\cdots$};

\draw node[fill, circle, minimum size=.1cm, inner sep=0pt] at ($(vertexS) + (0, -.7cm)$){}; 

\draw ($(vertexT1) + (.3cm,-.3cm)$) -- ($(vertexT2) + (-.3cm,-.3cm)$);
\draw ($(vertexT1) + (-1.03cm,1.5cm)$) -- ($(vertexT1) + (1.03cm,1.5cm)$);
\draw ($(vertexT2) + (-1.03cm,1.5cm)$) -- ($(vertexT2) + (1.03cm,1.5cm)$);

\end{tikzpicture} 
\]

Of course, all our shuffles in fact carry markings induced from $S^\diamond$ and $T^\flat$. We will not make this explicit in the notation. To begin with our induction, define
\begin{equation*}
A_0 := \Lambda^r[S]^\diamond \otimes T^\flat \cup S^\diamond \otimes \partial T^\flat 
\end{equation*}
and notice that the map $A_0 \rightarrow S^\diamond \otimes T^\flat$ is a normal monomorphism by Proposition \ref{prop:newnormalmonopoprod} and our assumptions. Choose a linear ordering on the set of shuffles of $S^\diamond \otimes T^\flat$ extending the partial order described above. Adjoin these shuffles one by one in that order to obtain a filtration
\begin{equation*}
A_0 \subseteq A_1 \subseteq \cdots \subseteq \bigcup_i A_i = S^\diamond \otimes T^\flat.
\end{equation*}
Consider one of the inclusions $A_i \subseteq A_{i+1}$ in this filtration. We have to distinguish two cases: 
\begin{itemize}
\item[Case 1.] The root vertex of the shuffle $R$ that we are adjoining to $A_i$ is \emph{not} of the form $v_r \otimes t$, where $t$ is a colour of $T$ and $v_r$ is the root vertex of $S$. In this case we will show that the map $A_i \subseteq A_{i+1}$ is inner anodyne. 
\item[Case 2.] The root vertex of the shuffle $R$ that we are adjoining to $A_i$ \emph{is} of the form $v_r \otimes t$. In this case we will show that the map $A_i \subseteq A_{i+1}$ is root anodyne.
\end{itemize}
\emph{Case 1.} (This case bears great similarity to what we did in the proof of \ref{prop:poprodinneranodyne}, the only difference being that we are `shuffling $S$ down through $T$' instead of the other way around and that all our trees carry markings, which will not concern us in this case.) The shuffle $R$ will have one or several vertices of the form $v_r \otimes t$, none of which are root vertices. We will refer to the outgoing edges $r \otimes t$ of these vertices as \emph{special edges}. Note that all these are inner edges of $R$, since the $v_r \otimes t$ are not root vertices. Now define a further filtration
\begin{equation*}
A_i =: A_i^0 \subseteq A_i^1 \subseteq \cdots \subseteq \bigcup_j A_i^j = A_{i+1}
\end{equation*}
by adjoining all prunings of $R$ one by one, in an order that extends the partial order of size (i.e. number of vertices of prunings). Consider an inclusion $A_i^j \subseteq A_i^{j+1}$ given by adjoining a pruning $P$ of $R$. Let $\Sigma_P$ denote the intersection of the set of special edges of $R$ with $I(P)$, the set of inner edges of $P$. We may assume this intersection is non-empty: if it is empty, then $P$ will in fact already be contained in $A_0$. Define 
\begin{equation*}
\mathcal{H}_P := I(P) - \Sigma_P 
\end{equation*} 
For each subset $H \subseteq \mathcal{H}_P$, define the tree $P^{[H]}$ as the tree obtained from $P$ by contracting all edges in $\mathcal{H}_P - H$. Pick a linear order on the subsets of $\mathcal{H}_P$ extending the partial order of inclusion and adjoin the trees $P^{[H]}$ to $A_i^j$ in this order to obtain a further filtration
\begin{equation*}
A_i^j =: A_i^{j,0} \subseteq A_i^{j,1} \subseteq \cdots \subseteq \bigcup_k A_i^{j,k} = A_i^{j+1}
\end{equation*}
Finally, consider one of the inclusions $A_i^{j,k} \subseteq A_i^{j,k+1}$ in this filtration, given by adjoining a tree $P^{[H]}$. If the map
\begin{equation*}
P^{[H]} \longrightarrow S^\diamond \otimes T^\flat
\end{equation*}
factors through $A_i^{j,k}$, then the inclusion under consideration is the identity and there is nothing to prove. If it doesn't, we can say the following:
\begin{itemize}
\item[-] Any outer face chopping off a leaf corolla factors through $A_i^j$ by our induction on the size of the prunings.
\item[-] The outer face chopping off the root of $P^{[H]}$ factors through $A_0$.
\item[-] An inner face contracting an edge that is not special (i.e. not contained in $\Sigma_P$) factors through $A_i^{j,k}$ by our induction on the size of $H$.
\item[-] An inner face contracting a special edge, or a composition of inner faces contracting several special edges, cannot factor through an earlier stage of the filtration. Indeed, it cannot factor through an earlier shuffle by the way special edges are defined. Given this, it is clear that it also cannot factor through $A_i^{j'}$ for $j' \leq j$ because of the size of the pruning $P$ under consideration or through $A_i^{j,k'}$ for $k' \leq k$ by the definition of the $P^{[H']}$. 
\end{itemize}
We conclude that the map $A_i^{j,k} \subseteq A_i^{j,k+1}$ is a pushout of the map
\begin{equation*}
\Lambda^{\Sigma_P}[P^{[H]}]^\flat \longrightarrow (P^{[H]})^\flat 
\end{equation*}
which is inner anodyne. \par 
\emph{Case 2.} The root vertex of the shuffle $R$ is of the form $v_r \otimes t$ and the root corolla is marked. Again, define a further filtration
\begin{equation*}
A_i =: A_i^0 \subseteq A_i^1 \subseteq \cdots \subseteq \bigcup_j A_i^j = A_{i+1}
\end{equation*}
by adjoining all prunings of $R$ one by one in an order that extends the partial order of size of prunings. Consider an inclusion $A_i^j \subseteq A_i^{j+1}$ given by adjoining a pruning $P$ of $R$. This time, we consider subsets $H \subseteq I(P)$ and corresponding trees $P^{[H]}$ given by contracting all edges of $P$ contained in $I(P) - H$. Adjoin all the trees $P^{[H]}$ to $A_i^j$ one by one in an order extending the partial order of inclusion of subsets to obtain a further filtration 
\begin{equation*}
A_i^j =: A_i^{j,0} \subseteq A_i^{j,1} \subseteq \cdots \subseteq \bigcup_k A_i^{j,k} = A_i^{j+1}
\end{equation*}
Now consider one of the inclusions $A_i^{j,k} \subseteq A_i^{j,k+1}$ given by adjoining a tree $P^{[H]}$. If the map
\begin{equation*}
P^{[H]} \longrightarrow S^\diamond \otimes T^\flat
\end{equation*}
factors through $A_i^{j,k}$ there is nothing to prove. Note that this is in particular the case if $H$ does not contain the incoming edge of the root vertex. Indeed, if this edge is contracted the resulting tree will factor through $A_0$ if the vertex above the root vertex is black, or through $A_i$ by the Boardman-Vogt relation in case the vertex above the root vertex is white. So let us assume $P^{[H]}$ does not factor through $A_i^{j,k}$ and therefore in particular that the root vertex of $P^{[H]}$ is of the form $v_r \otimes t$. We observe:
\begin{itemize}
\item[-] Any outer face chopping off a leaf corolla of $P^{[H]}$ factors through $A_i^j$ by induction on the size of the prunings.
\item[-] Any inner face factors through $A_i^{j,k}$ by our induction on the size of $H$.
\item[-] The outer face chopping off the unary root corolla of $P^{[H]}$ cannot factor through any earlier stage of the filtration. Indeed, it does not factor through an earlier shuffle. Also, it cannot factor through $A_i^{j'}$ for $j' \leq j$ because of the size of the pruning $P$ under consideration and it cannot factor through $A_i^{j,k'}$ for $k' \leq k$ by the definition of the $P^{[H']}$.
\end{itemize}
We conclude that the inclusion $A_i^{j,k} \subseteq A_i^{j,k+1}$ is a pushout of the map
\begin{equation*}
\Lambda^{\mathrm{root}}[P^{[H]}]^\diamond \longrightarrow (P^{[H]})^\diamond
\end{equation*}
where the superscript $\diamond$ again indicates that the only non-degenerate marked corolla is the root corolla of $P^{[H]}$. In particular, $A_i \subseteq A_{i+1}$ is a composition of pushouts of root anodynes and hence root anodyne. $\Box$
\end{proof}

\begin{proof}[Proof of Proposition \ref{prop:poprodleafanodyne}]
It suffices to prove this in the case where $f$ is of the form
\begin{equation*}
(\Lambda^l[F], \mathcal{E} \cap \Lambda^l[F]) \longrightarrow (F, \mathcal{E})
\end{equation*}
as described in Remark \ref{rmk:coCartequivalences} and $g$ is a boundary inclusion $\partial G \longrightarrow G$, for some forest $G$. Again, for the duration of this proof we abbreviate notation by writing
\begin{equation*}
f: \Lambda^l[F]^\diamond \longrightarrow F^\diamond.
\end{equation*} 
Also, the map of the proposition is a normal monomorphism. To avoid excessive bookkeeping, we again focus on the case where $F$ (resp. $G$) is just a single tree $S$ (resp. $T$). As before, one may use the method of the last part of the proof of Proposition \ref{prop:poprodinneranodyne} to deduce the general case from this one. Let us write $v_l$ for the vertex of the leaf corolla of $S$ under consideration. The leaf or incoming edge of this corolla is $l$ and we denote its outgoing edge by $m$. \par 
Consider the shuffles of the tensor product $S^\diamond \otimes T^\flat$ and put the partial order on these in which the minimal element is given by grafting copies of $T$ onto the leaves of $S$. This partial order is the opposite of the one considered in the previous proof, but coincides with the one used in the proof of Proposition \ref{prop:poprodinneranodyne}. The ideas we are going to employ are similar to what was done before, but for this proof we have to modify our definition of prunings slightly. Given a shuffle $R$, let us define an \emph{$l$-pruning} of $R$ to be a pruning of $R$, i.e. a subtree obtained by iteratively chopping off leaf corollas, satisfying the following extra property:
\begin{itemize}
\item[-] If there is a vertex $v_l \otimes t$ of $R$ whose outgoing edge $m \otimes t$ is contained in $P$, then $v_l \otimes t$ is itself contained in $P$.
\end{itemize}
Let us start our induction. Define 
\begin{equation*}
A_0 \, := \, \Lambda^l[S]^\diamond \otimes T^\flat \cup S^\diamond \otimes \partial T^\flat
\end{equation*} 
Choose a linear ordering on the shuffles of $S^\diamond \otimes T^\flat$ that extends the partial order we fixed before. Adjoin these shuffles one by one in this order to obtain a filtration
\begin{equation*}
A_0 \subseteq A_1 \subseteq \cdots \subseteq \bigcup_i A_i = S^\diamond \otimes T^\flat
\end{equation*}
Consider an inclusion $A_i \subseteq A_{i+1}$ in this filtration given by adjoining a shuffle $R$. Define a further filtration
\begin{equation*}
A_i =: A_i^0 \subseteq A_i^1 \subseteq \cdots \subseteq \bigcup_j A_i^j = A_{i+1}
\end{equation*}
by adjoining the $l$-prunings of $R$ one by one, in an order extending the partial order of size. Now consider an inclusion $A_i^j \subseteq A_i^{j+1}$ given by adjoining an $l$-pruning $P$ of $R$. We have to distinguish two cases:
\begin{itemize}
\item[Case 1.] The pruning $P$ does \emph{not} have any leaf vertices of the form $v_l \otimes t$, for $t$ a colour of $T$. In this case we will show that the map $A_i^j \subseteq A_i^{j+1}$ is inner anodyne.
\item[Case 2.] The pruning $P$ \emph{does} have at least one leaf vertex of the form $v_l \otimes t$ (which is then necessarily marked). In this case we will show that the map $A_i^j \subseteq A_i^{j+1}$ is leaf anodyne.
\end{itemize}
\emph{Case 1.} We may assume the tree $P$ has one or several vertices of the form $v_l \otimes t$, none of which are leaf vertices since we're in Case 1. (Indeed, if $P$ does not contain any such vertices then it is easily verified that $P$ is already contained in $A_0$: by the definition of $l$-pruning, $P$ cannot contain any edges of the form $m \otimes t$ and must therefore be contained in $\partial_m S^\diamond \otimes T^\flat$, which is itself contained in $\Lambda^l[S]^\diamond \otimes T^\flat$.) We will refer to the incoming edges $l \otimes t$ of the vertices $v_l \otimes t$ as \emph{special edges}. All of these are inner edges of $P$ and we denote the collection of these special edges by $\Sigma_P$. Define
\begin{equation*}
\mathcal{H}_P := I(P) - \Sigma_P
\end{equation*}
As usual, we consider trees $P^{[H]}$ obtained from $P$ by contracting the inner edges in $\mathcal{H}_P - H$, where $H$ ranges over the subsets of $\mathcal{H}_P$. These subsets are partially ordered by inclusion and we adjoin the trees $P^{[H]}$ one by one in an order extending this partial order to obtain a further filtration
\begin{equation*}
A_i^j =: A_i^{j,0} \subseteq A_i^{j,1} \subseteq \cdots \subseteq \bigcup_k A_i^{j,k} = A_i^{j,k+1}
\end{equation*}
Consider one of the inlcusions $A_i^{j,k} \subseteq A_i^{j,k+1}$ in this filtration, given by adjoining a tree $P^{[H]}$. If $P^{[H]}$ is already contained in $A_i^{j,k}$, there is nothing to prove. If it doesn't, we can say the following:
\begin{itemize}
\item[-] Any leaf face of $P^{[H]}$ will factor through $A_i^j$ by our induction on the size of $l$-prunings. Indeed, the leaf vertices of $P$ are assumed not to be of the form $v_l \otimes t$, so chopping a leaf vertex off of $P$ yields another $l$-pruning. 
\item[-] The root face of $P^{[H]}$ will factor through $A_0$.
\item[-] An inner face contracting an edge that is not in $\Sigma_P$ factors through $A_i^{j,k}$ by our induction on $H$.
\item[-] An inner face contracting a special edge or a composition of inner faces contracting several special edges cannot factor through any earlier stage of the filtration (cf. the proofs of Propositions \ref{prop:poprodinneranodyne} and \ref{prop:poprodrootanodyne}).
\end{itemize}
We conclude that $A_i^{j,k} \subseteq A_i^{j,k+1}$ is a pushout of
\begin{equation*}
\Lambda^{\Sigma_P}[P^{[H]}]^\flat \longrightarrow (P^{[H]})^\flat
\end{equation*}
and hence inner anodyne. \par 
\emph{Case 2.} The pruning $P$ has at least one (unary) leaf vertex of the form $v_l \otimes t$ and the corolla with this vertex is marked. Consider subsets $H \subseteq I(P)$ and corresponding trees $P^{[H]}$ given by contracting the edges in $I(P) - H$. Adjoin these trees to $A_i^j$ in an order compatible with the natural partial order on the subsets of $I(P)$ to obtain a filtration
\begin{equation*}
A_i^j =: A_i^{j,0} \subseteq A_i^{j,1} \subseteq \cdots \subseteq \bigcup_k A_i^{j,k} = A_i^{j+1}
\end{equation*}
Now consider an inclusion $A_i^{j,k} \subseteq A_i^{j,k+1}$ given by adjoining a tree $P^{[H]}$. If $P^{[H]}$ is contained in $A_i^{j,k}$ there is nothing to prove. Note that this is in particular the case if $H$ does not contain any edges of the form $m \otimes t$ corresponding to a leaf corolla $v_l \otimes t$: if all such edges are contracted, the resulting tree factors either through $\partial_m S^\diamond \otimes T^\flat$ (if they connect two black vertices), or through a previous shuffle, and hence through $A_i$, by the Boardman-Vogt relation (if they connect the black vertices $v_l \otimes t$ to white vertices). Hence, we may assume $P^{[H]}$ has at least one marked unary leaf corolla of the form $v_l \otimes t$. Let us denote the collection of such corollas by $L$. We find the following:
\begin{itemize}
\item[-] Any leaf face \emph{not} chopping off a vertex of the form $v_l \otimes t$ factors through $A_i^j$, by the induction on $l$-prunings.
\item[-] The root face of $P^{[H]}$ factors through $A_0$.
\item[-] Any inner face factors through $A_i^{j,k}$ by the induction on $H$.
\item[-] Any face chopping off a (marked) leaf corolla of the form $v_l \otimes t$ cannot factor through an earlier stage of the filtration. Indeed, such a face cannot factor through an earlier shuffle and chopping off such a corolla would not yield an $l$-pruning.
\end{itemize}
We conclude that the map $A_i^{j,k} \subset A_i^{j,k+1}$ is a pushout of the map
\begin{equation*}
\Lambda^{L}[P^{[H]}]^\diamond \longrightarrow (P^{[H]})^\diamond
\end{equation*}
where the superscript $\diamond$ indicates that the leaf corollas in $L$ are marked. It is easily verified that this is a composition of pushouts of leaf anodynes (analogous to Lemma \ref{lem:moreinneranodynes}(b)) and hence is itself leaf anodyne. $\Box$
\end{proof}

\subsection{Marked anodyne morphisms}
\label{sec:markedfsets3}

The main technical device in proving Theorem \ref{thm:markedfSets} is a good supply of `anodynes':

\begin{definition}
\label{def:markedanodynes}
The class of \emph{strong marked anodyne morphisms} is the smallest saturated class of maps in $\mathbf{fSets}^+$ containing the following:
\begin{itemize}
\item[($M_1$)] For any forest $F$ and any inner edge $e$ in $F$, the inner horn inclusion
\begin{equation*}
\Lambda^e[F]^\flat \longrightarrow F^\flat
\end{equation*}
\item[($M_2$)] The root anodynes, i.e. the inclusions
\begin{equation*}
(\Lambda^r[F], \mathcal{E} \cap \Lambda^r[F]) \longrightarrow (F, \mathcal{E}) 
\end{equation*}
where $F$ is a forest containing a tree $T$ with a root corolla of valence one, $\Lambda^r[F]$ is the horn of $F$ corresponding to that root and $\mathcal{E}$ consists of all the degenerate 1-corollas of $F$ together with that root corolla.
\item[($M_3$)] The map
\begin{equation*}
(\Lambda_1^2)^\sharp \cup_{(\Lambda_1^2)^\flat} (\Delta^2)^\flat \longrightarrow (\Delta^2)^\sharp
\end{equation*}
\item[($M_4$)] The inclusion $J^\flat \subseteq J^\sharp$.
\end{itemize}
Also, the class of \emph{marked anodyne morphisms} is the smallest saturated class containing the strong marked anodynes and the following maps:
\begin{itemize}
\item[($M_5$)] For any $n \geq 0$ and any non-empty sequence $T_1, \ldots, T_k$ of trees, the map
\begin{equation*}
(\partial \Delta^n)^\flat \otimes (T_1 \oplus \cdots \oplus T_k)^\flat \cup (\Delta^n)^\flat \otimes (T_1 \amalg \cdots \amalg T_k)^\flat \longrightarrow (\Delta^n)^\flat \otimes (T_1 \oplus \cdots \oplus T_k)^\flat,  
\end{equation*}
which is a normal monomorphism by \ref{prop:newnormalmonopoprod}.
\end{itemize}
\end{definition}

\begin{remark}
It is useful to note that for any marked anodyne morphism $f$ of marked \emph{simplicial} sets, as defined in \cite{htt}, the morphism $u_!i_!f$ is a strong marked anodyne morphism of marked forest sets.
\end{remark}

The following fact is immediate from Corollary 3.1.1.7 of \cite{htt} and the previous remark:
\begin{lemma}
\label{lem:horn22markedanod}
The map
\begin{equation*}
(\Lambda^2_2)^\sharp \cup_{(\Lambda^2_2)^\flat} (\Delta^2)^\flat \longrightarrow (\Delta^2)^\sharp 
\end{equation*}
is strong marked anodyne.
\end{lemma}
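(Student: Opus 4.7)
The plan is to identify the map in question as the image under the composite functor $u_! \circ i_! \colon \mathbf{sSets}^+ \to \mathbf{fSets}^+$ of a marked anodyne morphism of marked simplicial sets, and then to apply the remark preceding the lemma. More precisely, both sides of the map are in the essential image of $u_! \circ i_!$: we have $(\Lambda^2_2)^\sharp = u_!i_! (\Lambda^2_2)^\sharp$, $(\Lambda^2_2)^\flat = u_!i_!(\Lambda^2_2)^\flat$, $(\Delta^2)^\flat = u_!i_!(\Delta^2)^\flat$ and $(\Delta^2)^\sharp = u_!i_!(\Delta^2)^\sharp$, and since $u_! \circ i_!$ preserves colimits it commutes with the pushout forming the domain.

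Hence it suffices to check that the map
\begin{equation*}
(\Lambda^2_2)^\sharp \cup_{(\Lambda^2_2)^\flat} (\Delta^2)^\flat \longrightarrow (\Delta^2)^\sharp
\end{equation*}
is marked anodyne in the sense of \cite{htt}, Definition 3.1.1.1. In Lurie's list of generating marked anodynes, the generator of type (3) is the $\Lambda^2_1$-version of this map; the $\Lambda^2_2$-version is precisely the statement of \cite{htt}, Corollary 3.1.1.7, which asserts that this dual outer-horn variant is again marked anodyne.

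Given this, the conclusion is then immediate from the remark above the lemma: any marked anodyne morphism $f$ in $\mathbf{sSets}^+$ is sent by $u_! \circ i_!$ to a strong marked anodyne morphism of marked forest sets, since $u_! \circ i_!$ preserves colimits and carries the four generating classes $(M_1)$--$(M_4)$ of marked anodynes from \cite{htt} into the corresponding classes among $(M_1)$--$(M_4)$ in Definition \ref{def:markedanodynes}. The main (and essentially only) point to verify is therefore the reduction to \cite{htt}, Corollary 3.1.1.7, which amounts to the routine verification that $u_!i_!$ commutes with the relevant pushout.
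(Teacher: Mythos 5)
Your proof is correct and takes essentially the same approach as the paper: both reduce the statement to Lurie's Corollary~3.1.1.7 on the $\Lambda^2_2$ variant of the marked anodyne generator in $\mathbf{sSets}^+$, and then apply the remark that $u_!i_!$ carries marked anodyne maps of marked simplicial sets to strong marked anodyne maps of marked forest sets. The paper simply states this in a single sentence, while you spell out the (correct) bookkeeping that $u_!i_!$ commutes with the relevant pushout.
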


For ease of reference, we record the following crucial property of strong marked anodynes:

\begin{lemma}
\label{lem:poprodmarkedanodyne}
Let $f: X \longrightarrow Y$ be a strong marked anodyne and $g: A \longrightarrow B$ a normal mono. If $Y$ or $B$ is simplicial or both $Y$ and $B$ are open, then the pushout-product
\begin{equation*}
X \otimes B \cup Y \otimes A \longrightarrow Y \otimes B
\end{equation*}
is also strong marked anodyne. 
\end{lemma}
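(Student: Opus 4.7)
The plan is a case-by-case verification on generators. By the usual saturation argument (the class of maps $g$ such that $f \Box g$ is strong marked anodyne is saturated for fixed $f$, and vice versa), it suffices to consider $f$ a generating strong marked anodyne of one of the types $M_1$--$M_4$ and $g$ a generating normal monomorphism, i.e.\ either a boundary inclusion $(\partial F)^\flat \to F^\flat$ for a forest $F$, or the mark-enhancing map $(C_1)^\flat \to (C_1)^\sharp$. This leaves eight cases, and in each case the hypothesis that $Y$ or $B$ is simplicial or both are open lets us invoke the combinatorial results of the previous section.

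First I would dispatch the ``boundary-inclusion'' column. When $f$ is in $M_1$, the underlying forest-set pushout-product is precisely the map of Proposition~\ref{prop:poprodinneranodyne}, hence inner anodyne; since the markings on both domain and codomain are the minimal ones coming from $F^\flat \otimes G^\flat$, the map lies in the saturation of $M_1$. When $f$ is in $M_2$, Proposition~\ref{prop:poprodrootanodyne} realises the underlying forest-set map as a composition of root and inner anodynes; a straightforward check that the only nondegenerate 1-corollas acquiring a marking during this construction are precisely the designated root corollas shows the result is in the saturation of $M_1 \cup M_2$. For $f$ in $M_3$ or $M_4$ against a boundary inclusion, the maps in $M_3$ and $M_4$ are isomorphisms on underlying forest sets, so the pushout-product has underlying forest-set map an isomorphism and only changes the markings on $\Delta^2 \otimes F$ (respectively $J \otimes F$). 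Filtering by shuffles of these tensor products, in the spirit of the proofs of Propositions~\ref{prop:poprodrootanodyne} and~\ref{prop:poprodleafanodyne}, one sees that each new marking required can be introduced by a pushout of a map in $M_3$ (resp.\ $M_4$) along a normal monomorphism, together with inner-anodyne manipulations to contract the auxiliary edges; the open or simplicial hypothesis is exactly what makes the required shuffle intersections behave as in Lemma~\ref{lem:tensorintersection}.

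The remaining column concerns the mark-enhancing cofibration $(C_1)^\flat \to (C_1)^\sharp$. When $f$ is in $M_1$ or $M_2$, the pushout-product has underlying forest-set map inner (respectively root) anodyne by Propositions~\ref{prop:poprodinneranodyne} and~\ref{prop:poprodrootanodyne} applied to $F$ and $C_1$, while the extra markings to be added are all of the form $v \otimes c$ where $v$ is a degenerate 1-corolla of the $F$-factor and $c$ is the unique nondegenerate 1-corolla of $C_1$: such 1-corollas are already degenerate in the tensor product, so the marking is automatic and no new anodyne moves are needed. When $f$ is in $M_3$ or $M_4$, both factors are simplicial, and the pushout-product is the image under $u_! \circ i_!$ of the corresponding pushout-product of marked simplicial sets, which is marked anodyne by Lurie's analogous result (HTT, Prop.~3.1.2.3); the remark following Definition~\ref{def:markedanodynes} then places the result in the strong marked anodynes. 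The main obstacle is the combinatorial bookkeeping in the $M_3$ and $M_4$--versus--boundary cases, where one must simultaneously keep track of the shuffle structure of $\Delta^2 \otimes F$ or $J \otimes F$ and of the propagation of markings through it, using Lemma~\ref{lem:horn22markedanod} to handle the $02$-edges that appear over each layer of the filtration.
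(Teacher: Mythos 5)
Your overall strategy of reducing to eight generator-on-generator cases matches the paper's approach, and your treatment of the $(M_1)$--boundary and $(M_2)$--boundary cases is correct. However, your handling of the marking-map column $(C_1)^\flat \to (C_1)^\sharp$ rests on two incorrect claims, and this produces a genuine gap.

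You assert that when $f$ has type $(M_1)$ or $(M_2)$, the pushout-product with $(C_1)^\flat \to (C_1)^\sharp$ has underlying forest-set map ``inner (respectively root) anodyne by Propositions~\ref{prop:poprodinneranodyne} and~\ref{prop:poprodrootanodyne}.'' That is not so: since $(C_1)^\flat \to (C_1)^\sharp$ is an isomorphism on underlying forest sets, the underlying map of the pushout-product is the identity on $F \otimes C_1$, and the cited propositions (which concern pushout-products with boundary inclusions $\partial G \to G$, not with the marking map) do not apply. You also claim the 1-corollas $v \otimes c$ (an edge $v$ of $F$ tensored with the nondegenerate corolla $c$ of $C_1$) are ``already degenerate in the tensor product.'' They are not: they are nondegenerate 1-corollas of $F \otimes C_1$. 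For $(M_1)$ the conclusion that the pushout-product is an isomorphism nonetheless holds, but for a different reason: every edge of $F$ already lies in $\Lambda^e[F]$, so every marking $v \otimes c$ already appears in $\Lambda^e[F]^\flat \otimes (C_1)^\sharp$. This argument breaks down for $(M_2)$ precisely when $F$ is a single 1-corolla: then $\Lambda^r[C_1]$ is just $\eta$, the marked root corolla of $C_1^\diamond$ does not lie in the horn, and the pushout-product is genuinely not an isomorphism. The paper deals with exactly this sub-case explicitly, factoring the map as a pushout along a strong marked anodyne of type $(M_3)$ followed by an instance of Lemma~\ref{lem:horn22markedanod}. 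Your argument offers no way to close this case.

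A smaller remark: for $(M_3)$ and $(M_4)$ against a boundary inclusion $\partial G^\flat \to G^\flat$, the shuffle filtration you propose is unnecessary. These generators change only markings over a fixed $\Delta^2$ or $J$, and every edge of $G$ already lies in $\partial G$ whenever $G$ is larger than $\eta$, so the extra markings in $Y \otimes G$ are already supplied by $Y \otimes \partial G$ and the pushout-product is an isomorphism; when $G = \eta$ the pushout-product is just the generator $f$ itself.
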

\begin{proof}
By standard arguments, we may restrict our attention to the case where $f$ is one of the generators listed in the previous definition and $g$ is of the form (i) (i.e. $\partial G^\flat \subseteq G^\flat$) or (ii) (i.e. $C_1^\flat \subseteq C_1^\sharp$) as described in Remark \ref{rmk:gencofmarkedfSets}. This gives us eight cases to check.
\begin{itemize} 
\item[($M_1$)(i)] In this case the pushout-product is again inner anodyne by Proposition \ref{prop:poprodinneranodyne}. 
\item[($M_1$)(ii)] The pushout-product is an isomorphism.
\item[($M_2$)(i)] The pushout-product is a composition of marked anodynes of types ($M_1$) and ($M_2$) by Proposition \ref{prop:poprodrootanodyne}.
\item[($M_2$)(ii)] If $F$ is just a 1-corolla, then the pushout-product is a composition of a pushout of a strong marked anodyne of type ($M_3$) followed by a strong marked anodyne of the kind described in Lemma \ref{lem:horn22markedanod}. If $F$ is bigger than that, the pushout-product is an isomorphism.
\item[($M_3$)(i)] If $G = \eta$, the pushout-product is isomorphic to the marked anodyne of type ($M_3$). If $G$ is bigger than that, the pushout-product is an isomorphism.
\item[($M_3$)(ii)] The pushout-product is a pushout of a marked anodyne of type ($M_3$).
\item[($M_4$)(i)] If $G = \eta$, the pushout-product is isomorphic to the marked anodyne of type ($M_4$). If $G$ is bigger than that, the pushout-product is an isomorphism.
\item[($M_4$)(ii)] The pushout-product is a (possibly transfinite) composition of pushouts of marked anodynes of type ($M_3$).
\end{itemize}
$\Box$  
\end{proof}

Of course, we also have the following:

\begin{lemma}
\label{lem:poprodmarkedanodyne2}
Let $f: A \longrightarrow B$ be a monomorphism of simplicial sets. Then the normal monomorphism obtained as the pushout-product of a marked anodyne of type ($M_5$) with the map $f^\flat$ is a marked equivalence.
\end{lemma}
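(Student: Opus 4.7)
The plan is to exhibit the pushout-product in question as the $\flat$-image of an operadic trivial cofibration in $\mathbf{fSets}$, and then translate operadic weak equivalence into marked equivalence via the canonical identification $\mathrm{Map}^\flat(Z^\flat, E^\natural) \cong \mathbf{hom}(Z, E)$.

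First, I would observe that both the generator of type $(M_5)$ and the map $f^\flat$ lie in the essential image of the functor $(-)^\flat\colon \mathbf{fSets} \to \mathbf{fSets}^+$. Since $(-)^\flat$ is a left adjoint (hence preserves pushouts) and is compatible with the tensor product on both sides, the pushout-product under consideration has the form $g^\flat\colon P^\flat \to Q^\flat$ for a certain map $g\colon P \to Q$ of forest sets. Concretely, $g$ is the iterated pushout-product in $\mathbf{fSets}$ of the three maps $\partial\Delta^n \to \Delta^n$, $T_1 \amalg \cdots \amalg T_k \to T_1 \oplus \cdots \oplus T_k$, and $A \to B$. By Proposition \ref{prop:newnormalmonopoprod} (always applicable here since $\Delta^n$, $A$, and $B$ are simplicial), both $P$ and $Q$ are normal forest sets, so that $P^\flat$ and $Q^\flat$ are their own normalizations in $\mathbf{fSets}^+$.

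The key step is to show that $g$ is an operadic trivial cofibration in $\mathbf{fSets}$. The map $T_1 \amalg \cdots \amalg T_k \to T_1 \oplus \cdots \oplus T_k$ is an operadic trivial cofibration by Proposition \ref{prop:trivcofoperadic}(c). Applying axiom (H1) of the homotopic enrichment of the operadic model structure over the Joyal model structure (Theorem \ref{thm:basicmodelstructure}(v)), its pushout-product with the Joyal cofibration $\partial\Delta^n \to \Delta^n$ is again an operadic trivial cofibration; this is precisely the underlying forest set map of the generator of type $(M_5)$. A second application of (H1), now with the simplicial monomorphism $f\colon A \to B$, yields that $g$ itself is an operadic trivial cofibration.

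To conclude, I would use the natural identification $\mathrm{Map}^\flat(Z^\flat, E^\natural)_n = \mathbf{fSets}^+((\Delta^n \otimes Z)^\flat, E^\natural) = \mathbf{fSets}(\Delta^n \otimes Z, E) = \mathbf{hom}(Z, E)_n$, which holds for any forest set $Z$ and any operadically local $E$, via the adjunction $(-)^\flat \dashv a$ and the identity $a(E^\natural) = E$. Since $g$ is an operadic weak equivalence between normal objects, the map $\mathbf{hom}(Q, E) \to \mathbf{hom}(P, E)$ is an equivalence of $\infty$-categories for every operadically local $E$, by Theorem \ref{thm:basicmodelstructure}(v) again (or directly by Proposition \ref{prop:operadicwes}); translating via the identification above, the same holds for $\mathrm{Map}^\flat(Q^\flat, E^\natural) \to \mathrm{Map}^\flat(P^\flat, E^\natural)$. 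As $P^\flat, Q^\flat$ serve as their own normalizations, this establishes the required marked equivalence.

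The only real (and very minor) point of care is the first step's compatibility $X^\flat \otimes Y^\flat = (X \otimes Y)^\flat$, which I expect to follow routinely from the definition of the tensor product on $\mathbf{fSets}^+$ since $\flat$-marking forces the product-marking $\mathcal{E}_{X^\flat} \times \mathcal{E}_{Y^\flat}$ to pick out exactly the degenerate 1-corollas of the tensor.
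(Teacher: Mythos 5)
Your proof is correct and takes essentially the same approach as the paper's (much terser) argument: show that the underlying forest-set map of the pushout-product is an operadic trivial cofibration, using that the underlying map of an $(M_5)$ generator is a trivial cofibration together with the homotopy enrichment axiom (H1), and then translate through the natural identification $\mathrm{Map}^\flat(Z^\flat, E^\natural) \cong \mathbf{hom}(Z, E)$. One small slip: your parenthetical citation of Proposition~\ref{prop:operadicwes} is inapplicable, as that result concerns maps between \emph{fibrant} objects and the cofibration $P \to Q$ is generally not such, but your primary appeal to Theorem~\ref{thm:basicmodelstructure}(v) carries the conclusion.
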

\begin{proof}
We have to show that for any operadically local $E$, the marked forest set $E^\natural$ has the right lifting property with respect to such a pushout-product. But this follows directly from the fact that the operadic model structure is homotopically enriched over the Joyal model structure and the observation that the underlying map of forest sets associated to a map of type ($M_5$) is a trivial cofibration in the operadic model structure. $\Box$
\end{proof}

\begin{lemma}
\label{lem:markedmappingspace}
Suppose $A \longrightarrow B$ is a cofibration between marked forest sets and $E$ has the right lifting property with respect to all strong marked anodynes. Then the map
\begin{equation*}
\mathrm{Map}^\flat(B, E) \longrightarrow \mathrm{Map}^\flat(A, E)
\end{equation*}
is an inner fibration and
\begin{equation*}
\mathrm{Map}^\sharp(B, E) \longrightarrow \mathrm{Map}^\sharp(A, E)
\end{equation*}
is a right fibration.
\end{lemma}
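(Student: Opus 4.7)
\emph{Plan of proof.} The strategy is to convert both claims into lifting problems for $E$ in $\mathbf{fSets}^+$, using the adjunction relations
\begin{equation*}
\mathbf{sSets}(K,\mathrm{Map}^\flat(X,Y)) \simeq \mathbf{fSets}^+(K^\flat \otimes X, Y), \qquad \mathbf{sSets}(K,\mathrm{Map}^\sharp(X,Y)) \simeq \mathbf{fSets}^+(K^\sharp \otimes X, Y),
\end{equation*}
and then invoke Lemma \ref{lem:poprodmarkedanodyne} to identify the relevant pushout-product maps as strong marked anodynes.

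For the first assertion, a lifting problem for $\mathrm{Map}^\flat(B,E) \longrightarrow \mathrm{Map}^\flat(A,E)$ against an inner horn $\Lambda^n_k \hookrightarrow \Delta^n$ (with $0<k<n$) corresponds by the first adjunction to a lifting problem for $E \to \ast$ against the pushout-product
\begin{equation*}
(\Lambda^n_k)^\flat \otimes B \cup_{(\Lambda^n_k)^\flat \otimes A} (\Delta^n)^\flat \otimes A \longrightarrow (\Delta^n)^\flat \otimes B.
\end{equation*}
Since $(\Lambda^n_k)^\flat \to (\Delta^n)^\flat$ is strong marked anodyne of type $(M_1)$ (regarding $\Delta^n$ as a linear tree, using the embedding $u_! i_!$) and $(\Delta^n)^\flat$ is simplicial, Lemma \ref{lem:poprodmarkedanodyne} applies and the pushout-product is strong marked anodyne. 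Hence a lift exists by hypothesis on $E$.

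For the second assertion, a right fibration is characterized by right lifting against $\Lambda^n_k \hookrightarrow \Delta^n$ for $0<k\leq n$. By the second adjunction, such a lifting problem for $\mathrm{Map}^\sharp(B,E) \to \mathrm{Map}^\sharp(A,E)$ becomes a lifting problem for $E$ against
\begin{equation*}
(\Lambda^n_k)^\sharp \otimes B \cup_{(\Lambda^n_k)^\sharp \otimes A} (\Delta^n)^\sharp \otimes A \longrightarrow (\Delta^n)^\sharp \otimes B.
\end{equation*}
By Lemma \ref{lem:poprodmarkedanodyne}, it therefore suffices to establish that $(\Lambda^n_k)^\sharp \hookrightarrow (\Delta^n)^\sharp$ is itself strong marked anodyne for $0<k\leq n$.

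The main obstacle is this last claim, which is the forest-theoretic analogue of Lurie's HTT Proposition 3.1.1.6. Its proof proceeds in two steps. First, the inclusion of underlying simplicial sets is filled in by a map of type $(M_1)$, reducing to the problem of propagating the $\sharp$-markings. Second, one argues that each edge of $\Delta^n$ which becomes marked in $(\Delta^n)^\sharp$ can be marked by an iterated pushout of maps of type $(M_3)$ together with Lemma \ref{lem:horn22markedanod}: for $n \geq 3$ every edge lies in some already-present inner face, and for $n=2$ the two edges in $\Lambda^2_k$ can be marked in this way, with the remaining edge handled by the $(M_3)$ or $(M_2)$ map directly. The case $k=n$ is the critical one: here the root face $d_n\Delta^n$ (the face omitting the root vertex of $\Delta^n$ viewed as a linear tree) is missing from $\Lambda^n_n$, and the root corolla of $\Delta^n$ is exactly the edge $\Delta^{\{n-1,n\}}$; filling it in using the root anodyne of type $(M_2)$ applied to $F=\Delta^n$ (with the root corolla marked) produces precisely the desired strong marked anodyne, after the auxiliary markings on the other edges have been adjoined by $(M_3)$-pushouts. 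Collecting these contributions yields that $(\Lambda^n_k)^\sharp \to (\Delta^n)^\sharp$ lies in the strong marked anodyne saturation, completing the proof.
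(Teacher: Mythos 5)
Your proof is correct and follows the same strategy as the paper: translate each lifting problem via the tensor/hom adjunctions into a lifting problem for $E$ against a pushout-product, invoke Lemma \ref{lem:poprodmarkedanodyne} (using that $(\Delta^n)^\flat$ or $(\Delta^n)^\sharp$ is simplicial), and for the $\sharp$-case observe that $(\Lambda^n_k)^\sharp \to (\Delta^n)^\sharp$ is strong marked anodyne. Your explicit treatment of the low-dimensional cases, where after the $(M_1)$ or $(M_2)$ pushout the residual edge $\Delta^{\{0,2\}}$ of $\Delta^2$ must still be marked via $(M_3)$ or Lemma \ref{lem:horn22markedanod}, is in fact slightly more careful than the paper's terse parenthetical remark, which glosses over this point.
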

\begin{proof}
Consider a lifting problem
\[
\xymatrix{
\Lambda_i^n \ar[d]\ar[r] & \mathrm{Map}^\flat(B, E) \ar[d] \\
\Delta^n \ar[r]\ar@{-->}[ur] & \mathrm{Map}^\flat(A, E)
}
\]
where $0 < i < n$. This is equivalent to the lifting problem
\[
\xymatrix{
(\Delta^n)^\flat \otimes A \cup (\Lambda_i^n)^\flat \otimes B \ar[d]\ar[r] & E \\
(\Delta^n)^\flat \otimes B \ar@{-->}[ur] &
}
\]
By Lemma \ref{lem:poprodmarkedanodyne} the left-hand map is a strong marked anodyne, so by our assumption on $E$ there exists a lift. To prove the second statement, we have to solve lifting problems of the form
\[
\xymatrix{
\Lambda_i^n \ar[d]\ar[r] & \mathrm{Map}^\sharp(B, E) \ar[d] \\
\Delta^n \ar[r]\ar@{-->}[ur] & \mathrm{Map}^\sharp(A, E)
}
\]
where $0 < i \leq n$. Note that $(\Lambda_i^n)^\sharp \longrightarrow (\Delta^n)^\sharp$ is strong marked anodyne (it is a pushout of a strong marked anodyne of type ($M_1$), respectively ($M_2$), for $i < n$, respectively $i=n$), so again by Lemma \ref{lem:poprodmarkedanodyne} we can find a lift in
\[
\xymatrix{
(\Delta^n)^\sharp \otimes A \cup (\Lambda_i^n)^\sharp \otimes B \ar[d]\ar[r] & E \\
(\Delta^n)^\sharp \otimes B \ar@{-->}[ur] &
}
\]
This completes the proof. $\Box$ 
\end{proof}

\begin{corollary}
\label{cor:markedmappingspace}
Let $A$ be a normal marked forest set and let $E$ be a marked forest set having the right lifting property with respect to all strong marked anodynes. Then $\mathrm{Map}^\flat(A, E)$ is an $\infty$-category and $\mathrm{Map}^\sharp(A, E)$ is the largest Kan complex contained in it.
\end{corollary}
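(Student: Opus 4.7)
The plan: first apply Lemma \ref{lem:markedmappingspace} to the cofibration $\varnothing \hookrightarrow A$ (which is normal because $A$ is). Since $\mathrm{Map}^\flat(\varnothing, E) = \mathrm{Map}^\sharp(\varnothing, E) = \Delta^0$, the lemma shows directly that $\mathrm{Map}^\flat(A,E)$ is an $\infty$-category (inner fibration over the point) and $\mathrm{Map}^\sharp(A,E)$ is a Kan complex (right fibration over the point). Since both of these contain every vertex of $\mathrm{Map}^\flat(A,E)$, identifying $\mathrm{Map}^\sharp(A,E)$ with the largest Kan subcomplex of $\mathrm{Map}^\flat(A,E)$ reduces to proving that the marked $1$-simplices of $\mathrm{Map}^+(A,E)$ coincide with the equivalences in $\mathrm{Map}^\flat(A,E)$.

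For the direction ``equivalence implies marked'' I would use Joyal's criterion. If $f$ is an equivalence, then $f$ extends to a map $J \to \mathrm{Map}^\flat(A,E)$, which by adjunction is a map $J^\flat \otimes A \to E$. Axiom $(M_4)$ says $J^\flat \to J^\sharp$ is strong marked anodyne; taking its pushout-product with $\varnothing \to A$ and invoking Lemma \ref{lem:poprodmarkedanodyne} (which applies because $J^\sharp$ is simplicial) yields the strong marked anodyne $J^\flat \otimes A \to J^\sharp \otimes A$. Lifting against $E$ and then restricting along the forward edge inclusion $(\Delta^1)^\sharp \hookrightarrow J^\sharp$ exhibits $f$ as a marked $1$-simplex.

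For the reverse direction ``marked implies equivalence'' I would extract a right inverse using the root anodyne $(M_2)$ and then iterate. Interpret $\Delta^2$ as a linear tree whose root vertex is unary; then the root horn inclusion $\Lambda^r[\Delta^2] = \Lambda^2_2 \hookrightarrow \Delta^2$ with root corolla $d_0 = [1,2]$ marked is an $(M_2)$-anodyne, and tensoring with $A$ via Lemma \ref{lem:poprodmarkedanodyne} yields a strong marked anodyne. Lifting fills the horn $\Lambda^2_2 \to \mathrm{Map}^\flat(A,E)$ with $d_0 = f$ and $d_1 = s_0(y)$, producing $g : y \to x$ with $fg \simeq \mathrm{id}_y$. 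Then Lemma \ref{lem:horn22markedanod}, also tensored with $A$, applied to the resulting $2$-simplex (whose faces $d_0 = f$ and $d_1 = s_0(y)$ are marked), promotes $g = d_2$ to a marked edge. Repeating the same construction with $g$ in place of $f$ gives $h : x \to y$ with $gh \simeq \mathrm{id}_x$, and then in the homotopy category $f \simeq f \cdot \mathrm{id}_x \simeq f(gh) \simeq (fg)h \simeq h$, so $gf \simeq gh \simeq \mathrm{id}_x$. Thus $g$ is a two-sided inverse of $f$, and $f$ is an equivalence.

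The principal obstacle is this last direction: the strong marked anodynes do not include leaf anodynes, so one cannot directly produce a left inverse to $f$ by a symmetric argument. Lemma \ref{lem:horn22markedanod} circumvents the difficulty by promoting the right inverse $g$ to a marked edge, which enables a second application of $(M_2)$ and an elementary identity in the homotopy category.
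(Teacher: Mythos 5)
Your proposal is correct. You follow the paper's approach for the first half (applying Lemma \ref{lem:markedmappingspace} to the inclusion $\varnothing \hookrightarrow A$ to get that $\mathrm{Map}^\flat(A,E)$ is an $\infty$-category and $\mathrm{Map}^\sharp(A,E)$ is a Kan complex) and for the ``equivalence implies marked'' direction (lifting along the strong marked anodyne $J^\flat \otimes A \rightarrow J^\sharp \otimes A$ via $(M_4)$ and Lemma \ref{lem:poprodmarkedanodyne}). For the converse direction, however, you construct a two-sided inverse by hand, using a root anodyne $(M_2)$ to extract a right inverse $g$, then Lemma \ref{lem:horn22markedanod} to promote $g$ to a marked edge, and finally a second application of $(M_2)$ plus an elementary identity in the homotopy category. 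The paper sidesteps this direction entirely: once $\mathrm{Map}^\sharp(A,E)$ is known to be a Kan complex, it is an $\infty$-groupoid sitting inside the $\infty$-category $\mathrm{Map}^\flat(A,E)$, so its edges are automatically equivalences and the containment of $\mathrm{Map}^\sharp(A,E)$ in the maximal Kan subcomplex is immediate. Your analysis of the obstacle is accurate (the strong marked anodynes do not include leaf anodynes, so no direct symmetric argument for a left inverse is available), and your workaround is sound, but it re-derives by hand a fact that already drops out of the Kan-ness established in the first step. Either route is valid; the paper's is shorter, while yours is more self-contained and makes explicit which lifting properties are actually used.
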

\begin{proof}
For any normal marked forest set $A$, we can apply the previous lemma to the inclusion $\varnothing \longrightarrow A$ to conclude that $\mathrm{Map}^\flat(A, E)$ is an $\infty$-category and $\mathrm{Map}^\sharp(A, E)$ is a Kan complex. Indeed, a right fibration over a point (or in fact over any Kan complex) is a Kan fibration. Also, applying Lemma \ref{lem:poprodmarkedanodyne} above, we see that $\mathrm{Map}^+(A, E)$ has the right lifting property with respect to $J^\flat \subseteq J^\sharp$, so that every equivalence in $\mathrm{Map}^+(A, E)$ is marked. This shows the maximal Kan complex in $\mathrm{Map}^\flat(A, E)$ is contained in $\mathrm{Map}^\sharp(A, E)$ and the result follows. $\Box$ 
\end{proof}

From the previous lemma we can in fact prove the following stronger statement. 

\begin{proposition}
\label{prop:fibmarkedmappingspace}
Suppose $E$ has the right lifting property with respect to all strong marked anodynes. For a cofibration $A \longrightarrow B$, the map of simplicial sets
\begin{equation*}
\mathrm{Map}^\flat(B, E) \longrightarrow \mathrm{Map}^\flat(A, E)
\end{equation*}
is a categorical fibration (i.e. a fibration in the Joyal model structure) and 
\begin{equation*}
\mathrm{Map}^\sharp(B, E) \longrightarrow \mathrm{Map}^\sharp(A, E)
\end{equation*}
is a Kan fibration.
\end{proposition}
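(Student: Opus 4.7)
The strategy is to upgrade the inner and right fibrations provided by Lemma \ref{lem:markedmappingspace} to categorical and Kan fibrations respectively. By replacing $A \hookrightarrow B$ with a cofibration between normalizations if necessary, we may assume $A$ and $B$ are normal, so that Corollary \ref{cor:markedmappingspace} applies: $\mathrm{Map}^\flat(A, E)$ and $\mathrm{Map}^\flat(B, E)$ are $\infty$-categories, and $\mathrm{Map}^\sharp(A, E)$, $\mathrm{Map}^\sharp(B, E)$ are their maximal Kan subcomplexes.

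For the first claim, we appeal to Joyal's criterion (cf.\ \cite{htt}) that an inner fibration between $\infty$-categories is a categorical fibration precisely when it lifts equivalences in the target to equivalences in the source. So suppose $\bar e \colon \bar x \to \bar y$ is an equivalence in $\mathrm{Map}^\flat(A, E)$ and $\tilde y$ a vertex of $\mathrm{Map}^\flat(B, E)$ lying over $\bar y$. Corollary \ref{cor:markedmappingspace} lets us view $\bar e$ as a map $(\Delta^1)^\sharp \otimes A \to E$ (it is marked in $\mathrm{Map}^+(A,E)$) and $\tilde y$ as a map $\{1\}^\sharp \otimes B \to E$, so producing a lift $\tilde e$ amounts to solving
\[
\xymatrix{
\{1\}^\sharp \otimes B \cup_{\{1\}^\sharp \otimes A} (\Delta^1)^\sharp \otimes A \ar[d]\ar[r] & E \\
(\Delta^1)^\sharp \otimes B \ar@{-->}[ur] &
}
\]
The left vertical arrow is the pushout-product of the cofibration $A \to B$ with the map $\{1\}^\sharp \to (\Delta^1)^\sharp$, which is the instance of the root anodyne $(M_2)$ corresponding to the $1$-corolla (with its root edge marked). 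Since $(\Delta^1)^\sharp$ is simplicial, Lemma \ref{lem:poprodmarkedanodyne} identifies the pushout-product as a strong marked anodyne, so the hypothesis on $E$ furnishes the desired lift. The resulting $\tilde e$ is marked by construction, hence an equivalence in $\mathrm{Map}^\flat(B, E)$ by Corollary \ref{cor:markedmappingspace} applied now to $B$.

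For the second claim, Lemma \ref{lem:markedmappingspace} already provides that $\mathrm{Map}^\sharp(B, E) \to \mathrm{Map}^\sharp(A, E)$ is a right fibration, and Corollary \ref{cor:markedmappingspace} identifies both its source and target as Kan complexes. A right fibration between Kan complexes is automatically a Kan fibration (see e.g.\ \cite{htt}, where the total space of a right fibration over a Kan complex is again a Kan complex and categorical fibrations between Kan complexes coincide with Kan fibrations). The main subtlety worth highlighting is that the class of strong marked anodynes is deliberately asymmetric, containing the root anodynes $(M_2)$ but not their leaf analogues; this asymmetry is exactly tailored to Joyal's one-sided equivalence-lifting criterion in the first step, and it is what forces the indirect Kan-complex argument in the second step in place of a direct lifting against leaf horns.
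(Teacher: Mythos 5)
Your proof is correct, and it takes a genuinely different route from the paper's. The paper proves the statements in the opposite order: it first establishes that $\mathrm{Map}^\sharp(B,E) \to \mathrm{Map}^\sharp(A,E)$ is a Kan fibration (a right fibration over the Kan complex $\mathrm{Map}^\sharp(A,E)$), and then deduces the categorical-fibration claim by observing that any map $J \to \mathrm{Map}^\flat(A,E)$ automatically factors through $\mathrm{Map}^\sharp(A,E)$, so the remaining lifting problem against $\{1\} \to J$ reduces to a lift along the already-established Kan fibration of $\sharp$-mapping spaces. You instead treat the categorical-fibration claim directly and independently, via Joyal's equivalence-lifting criterion, by solving a single lifting problem against the pushout-product of $A \to B$ with $\{1\} \to (\Delta^1)^\sharp$ and invoking Lemma \ref{lem:poprodmarkedanodyne} to recognize that pushout-product as a strong marked anodyne. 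Your route exposes the direct role of the root-anodyne generator $(M_2)$ in producing equivalence-lifts, whereas the paper's route is slightly more economical because it reuses the $\sharp$-result; both are about the same length.

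One small caveat: the phrase ``by replacing $A \hookrightarrow B$ with a cofibration between normalizations if necessary'' is not a valid reduction, since passing to normalizations alters the mapping spaces $\mathrm{Map}^\flat(-,E)$ and $\mathrm{Map}^\sharp(-,E)$ and there is no comparison that would transfer the fibration property back. The correct reading is that the proposition (like the paper's own proof, which invokes Corollary \ref{cor:markedmappingspace}) tacitly assumes $A$ normal; this forces $B$ normal as well, since $A \to B$ is a normal monomorphism. With that understanding your argument is complete. Also, for the second claim you only need the base $\mathrm{Map}^\sharp(A,E)$ to be a Kan complex, not both source and target, though stating both is harmless.
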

\begin{proof}
We know that $\mathrm{Map}^\sharp(A, E)$ is a Kan complex and that $\mathrm{Map}^\sharp(B, E) \longrightarrow \mathrm{Map}^\sharp(A, E)$ is a right fibration, so it is in fact a Kan fibration. To prove that $\mathrm{Map}^\flat(B, E) \longrightarrow \mathrm{Map}^\flat(A, E)$ is a categorical fibration, it only remains to show it has the right lifting property with respect to the map $\{1\} \longrightarrow J$. By Corollary \ref{cor:markedmappingspace} any map $J \longrightarrow \mathrm{Map}^\flat(A, E)$ factors through $\mathrm{Map}^\sharp(A, E)$, so it suffices to solve the lifting problem
\[
\xymatrix{
\{1\}\ar[d]\ar[r] & \mathrm{Map}^\sharp(B, E) \ar[d] \\
J \ar@{-->}[ur]\ar[r] & \mathrm{Map}^\sharp(A, E)
}
\]
The map on the right is a Kan fibration and the map on the left is a trivial cofibration in the Quillen model structure on simplicial sets, so a lift exists. $\Box$
\end{proof}

\begin{proposition}
\label{prop:RLPmarkedanodynes}
A marked forest set $E$ has the right lifting property with respect to all marked anodynes if and only if $aE$ is an operadically local forest set and $E = (aE)^\natural$, i.e. precisely the equivalences in $E$ are marked.
\end{proposition}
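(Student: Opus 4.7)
The plan is to prove the two implications separately. For the reverse direction, assume $aE$ is operadically local and $E = (aE)^\natural$. I would verify the RLP against each generating marked anodyne of Definition~\ref{def:markedanodynes}: $(M_1)$ and $(M_5)$ follow immediately from Lemma~\ref{lem:genbasicanodyne} applied to the operadically local $aE$; $(M_2)$ is precisely Theorem~\ref{thm:Cartequivalences}, since the marked root corolla maps to an equivalence under the hypothesis $E = (aE)^\natural$; $(M_3)$ amounts to the composition of equivalences in the $\infty$-category $i^*u^*aE$ being an equivalence; and $(M_4)$ holds because every non-degenerate $1$-simplex of $J$ is an isomorphism in the walking groupoid, hence maps to a marked edge in $E$.

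For the forward direction, assume $E$ has RLP with respect to every marked anodyne. I would first identify $\mathcal{E}_E$ with the equivalences. The containment ``equivalence $\Rightarrow$ marked'' follows at once from $(M_4)$: an equivalence $e$ extends to a map $J^\flat \to E$ (using that $J$ is the nerve of a groupoid), which by $(M_4)$ factors through $J^\sharp$. Conversely, given marked $e : a \to b$, apply the root anodyne $(M_2)$ to $\Delta^2$ with vertex assignment $(b,a,b)$, $d_0\sigma = e$ (marked) and $d_1\sigma = \mathrm{id}_b$ (degenerate, hence marked) to produce a right inverse $r : b \to a$ with $er \simeq \mathrm{id}_b$. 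Lemma~\ref{lem:horn22markedanod} applied to the resulting 2-simplex yields that $r$ itself is marked. Iterating, $r$ admits a right inverse $s$, so passing to the homotopy category shows $e$ is an isomorphism there, hence an equivalence in $aE$.

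It remains to show $aE$ is operadically local. By Lemma~\ref{lem:genbasicanodyne}(ii), conditions (a) and (c) are immediate from $(M_1)$ and $(M_5)$. For the $J$-lifting condition (b), given $f : \{0\} \otimes F \cup J \otimes \partial F \to aE$ with $F$ representable, the restriction of $f$ to each copy $J \otimes \eta_e$ sends the non-degenerate $1$-cells of $J$ to equivalences in $aE$, which are marked in $E$ by the previous paragraph. Hence $f$ lifts canonically to $f^+ : (\{0\} \otimes F \cup J \otimes \partial F)^+ \to E$, where the superscript $+$ denotes the marking that additionally marks all $1$-corollas of the form $c \otimes t$ with $c$ a non-degenerate $1$-cell of $J$. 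The desired extension will exist once one verifies that the inclusion
\[
(\{0\} \otimes F \cup J \otimes \partial F)^+ \longrightarrow (J \otimes F)^+
\]
is marked anodyne, for then $f^+$ extends and forgetting the markings recovers the forest-set lift.

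The main obstacle is this final verification. The strategy is to filter $(J \otimes F)^+$ by adjoining shuffles of $J \otimes F$ one at a time, in the spirit of the proofs of Propositions~\ref{prop:poprodinneranodyne}, \ref{prop:poprodrootanodyne}, and~\ref{prop:poprodleafanodyne}. At each stage one further stratifies by prunings and by subsets of inner edges, and shows that each elementary extension is a pushout of either an inner horn inclusion (handled by $(M_1)$) or a root-type horn whose marked root corolla has the form $c \otimes t$ for $c$ a non-degenerate $1$-cell of $J$ (already marked in the $+$-marking), handled by $(M_2)$. The bookkeeping is delicate, requiring a careful identification at each stage of which new cells are not already present in an earlier stage and verifying that the relevant root corolla is indeed marked; however, these verifications parallel those already carried out for the root and inner anodyne pushout-products and combine to establish marked anodyne-ness of the full inclusion.
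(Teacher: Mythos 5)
Your reverse direction and your treatment of ``markings coincide with equivalences'' in the forward direction match the paper's proof; the root-horn-plus-Lemma~\ref{lem:horn22markedanod} argument is precisely what the paper's ``easy exercise'' parenthetical means, and the $(M_4)$ step is as you say. For the operadic locality of $aE$, conditions~(a) and~(c) of Lemma~\ref{lem:genbasicanodyne}(ii) are handled as in the paper. The divergence -- and the gap -- is in the $J$-lifting condition~(b). The paper does not attempt a combinatorial filtration here at all; it invokes Proposition~\ref{prop:fibmarkedmappingspace}, which (using the RLP against \emph{strong} marked anodynes together with the pushout-product Lemma~\ref{lem:poprodmarkedanodyne} and Corollary~\ref{cor:markedmappingspace}, that $\mathrm{Map}^\sharp$ is the maximal Kan complex inside $\mathrm{Map}^\flat$) shows $\mathrm{Map}^\flat(F^\flat, E) \to \mathrm{Map}^\flat(\partial F^\flat, E)$ is a categorical fibration. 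That directly gives condition~(1) of being operadically local, and in particular the $J$-lifting.

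The filtration-by-shuffles sketch you give for condition~(b) is where the proof is incomplete, and there are real obstacles. First, $J$ is infinite-dimensional, so ``adjoining shuffles of $J \otimes F$ one at a time'' is not literally the pattern of Propositions~\ref{prop:poprodinneranodyne}--\ref{prop:poprodleafanodyne}, which concern tensor products of finite trees; the filtration would have to be organised transfinitely over the non-degenerate finite simplices of $J$. Second, the unary vertices arising from $J$ need not sit at the root of a shuffle; your assertion that each elementary extension is a pushout of an inner horn or a root-type horn with marked root corolla $c \otimes t$ is therefore not automatic, and one would have to rule out leaf-type horn configurations. Leaf anodynes are \emph{not} strong marked anodynes -- they are only shown to be marked equivalences in Lemma~\ref{lem:leafanodmarkedequiv} -- so if they appear, RLP of $E$ against them does not follow formally from your hypothesis. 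The cleanest way to complete your plan would be to show that $u_!i_!(\{0\} \to J^\sharp)$ is itself a strong marked anodyne (a nontrivial transfinite root-horn argument, using the symmetry of $J$) and then appeal to Lemma~\ref{lem:poprodmarkedanodyne}; but the paper's route through Proposition~\ref{prop:fibmarkedmappingspace} sidesteps all of this and is the intended argument.
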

\begin{proof}
Suppose $E$ is a marked forest set having the right lifting property with respect to marked anodynes. By Proposition \ref{prop:fibmarkedmappingspace} above, $aE$ is an operadically local object. Since it has the right lifting property with respect to marked anodynes of type ($M_5$) it is also local with respect to sums. Then the fact that it has the right lifting property with respect to marked anodynes of type ($M_1$) implies it is operadically local. The right lifting property with respect to marked anodynes of type ($M_4$) implies that all equivalences in $E$ are marked. Also, given a marked 1-corolla of $E$, the existence of lifts against marked anodynes of type ($M_2$) implies it is an equivalence. (In fact, an easy exercise shows one only needs root horns of 2- and 3-simplices for this.) \par 
Now suppose $E$ is of the form $(aE)^\natural$ and we wish to show it has the right lifting property with respect to marked anodynes. Lifts against anodynes of types ($M_1$), ($M_4$) and ($M_5$) exist by assumption. Lifts with respect to ($M_2$) exist by Theorem \ref{thm:Cartequivalences} of the previous section. Lifts with respect to ($M_3$) exist because equivalences are closed under composition. $\Box$
\end{proof}

\begin{proof}[Proof of Proposition \ref{prop:markedmappingspace}]
Combine Corollary \ref{cor:markedmappingspace} with Proposition \ref{prop:RLPmarkedanodynes}. $\Box$
\end{proof}

\begin{lemma}
\label{lem:expfibmarkedfset}
Let $E$ be an operadically local forest set and let $M$ be a simplicial set. 
Then the cotensor $(E^\natural)^{M^\flat}$ has the right lifting property with respect to all marked anodynes. In particular, $E^M$ is operadically local and we have
\begin{equation*}
(E^\natural)^{M^\flat} \, = \, (E^M)^\natural
\end{equation*}
This statement can be rephrased by saying that the equivalences in $E^M$ are the `pointwise' equivalences.
\end{lemma}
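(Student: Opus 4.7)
The plan is to use Proposition \ref{prop:RLPmarkedanodynes} as the characterization of objects of the form $G^\natural$ for $G$ operadically local. Concretely, once we verify that $(E^\natural)^{M^\flat}$ has the right lifting property with respect to every marked anodyne, it follows automatically that $a((E^\natural)^{M^\flat})$ is operadically local and that $(E^\natural)^{M^\flat} = (a((E^\natural)^{M^\flat}))^\natural$. A separate and purely formal identification of $a((E^\natural)^{M^\flat})$ with $E^M$ via the adjunction will then finish the proof.

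By the adjunction $(- \otimes M^\flat) \dashv (-)^{M^\flat}$, a lifting problem against a marked anodyne $f : X \to Y$ transposes to a lifting problem against $M^\flat \otimes f : M^\flat \otimes X \to M^\flat \otimes Y$ with codomain $E^\natural$. So it suffices to show that $E^\natural$ has the right lifting property with respect to all maps of the form $M^\flat \otimes f$, for $f$ a generating marked anodyne. We treat the two types of generators separately:
\begin{itemize}
\item If $f$ is a \emph{strong} marked anodyne (types $(M_1)$--$(M_4)$), apply Lemma \ref{lem:poprodmarkedanodyne} to $f$ and the normal monomorphism $\varnothing \to M^\flat$. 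Since $M^\flat$ is simplicial, the hypothesis of that lemma is satisfied, and the pushout-product $M^\flat \otimes f$ is again strong marked anodyne. Then $E^\natural$ has the desired lifting property by Proposition \ref{prop:RLPmarkedanodynes} applied to the starting datum.
\item If $f$ is of type $(M_5)$, apply Lemma \ref{lem:poprodmarkedanodyne2} to $f$ and the mono $\varnothing \to M$ of simplicial sets. This yields exactly the conclusion that $E^\natural$ has the right lifting property with respect to $M^\flat \otimes f$.
\end{itemize}

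For the final identification, note that any map $M \otimes Z \to E$ automatically sends the markings of $M^\flat \otimes Z^\flat$ (degenerate $1$-corollas) into the markings of $E^\natural$ (equivalences in $E$, which certainly include all identities). Therefore for every forest set $Z$ the adjunction chain
\begin{equation*}
\mathbf{fSets}(Z, a((E^\natural)^{M^\flat})) = \mathbf{fSets}^+(Z^\flat, (E^\natural)^{M^\flat}) = \mathbf{fSets}^+(M^\flat \otimes Z^\flat, E^\natural) = \mathbf{fSets}(M \otimes Z, E) = \mathbf{fSets}(Z, E^M)
\end{equation*}
produces a natural bijection, so by the Yoneda lemma $a((E^\natural)^{M^\flat}) \cong E^M$, whence $(E^\natural)^{M^\flat} = (E^M)^\natural$ and $E^M$ is operadically local.

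The argument contains no real obstacle; the only point that requires care is the split into the two classes of generators, since a pushout-product of an $(M_5)$-type map with $\varnothing \to M^\flat$ need not itself be marked anodyne in the sense of Definition \ref{def:markedanodynes}. The finer statement of Lemma \ref{lem:poprodmarkedanodyne2}, which directly asserts the right lifting property against $E^\natural$ rather than describing the resulting class of maps, is what makes this case work.
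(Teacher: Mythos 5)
Your proof is correct and follows the paper's route: reduce via Proposition \ref{prop:RLPmarkedanodynes} to the right lifting property against the generating marked anodynes, transpose by adjunction to lifting against $M^\flat \otimes f$ with target $E^\natural$, then handle the generators case by case; the adjunction-chain identification $a\bigl((E^\natural)^{M^\flat}\bigr) \cong E^M$ is left implicit in the paper but is exactly as you describe. The one substantive difference is the $(M_5)$ case. The paper asserts that for $f$ of type $(M_5)$ the map $M^\flat \otimes f$ is itself marked anodyne, dismissing the verification as ``clear by inspection''; you instead invoke Lemma \ref{lem:poprodmarkedanodyne2}. Your caution here is well-founded: a direct decomposition of $M^\flat \otimes f$ into $(M_5)$-pushouts would require the iterated tensor $M^\flat \otimes \bigl((\Delta^n)^\flat \otimes F^\flat\bigr)$ to agree with something of the form $(\Delta^m)^\flat \otimes F^\flat$, and since the tensor product is not associative, the natural comparison with $(M \times \Delta^n)^\flat \otimes F^\flat$ is only a weak equivalence, not an isomorphism, so a naive skeletal argument does not reduce to the $(M_5)$ generators on the nose. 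Lemma \ref{lem:poprodmarkedanodyne2}, whose proof deduces the required lifting property from axiom (H1) for the unmarked operadic model structure, sidesteps this issue entirely and is the cleaner route for this case.
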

\begin{proof}
Using Proposition \ref{prop:RLPmarkedanodynes}, note that it suffices to prove that for any marked anodyne map $X \longrightarrow Y$ the map $M^\flat \otimes X \longrightarrow M^\flat \otimes Y$ is again marked anodyne. For strong marked anodynes, this follows directly from Lemma \ref{lem:poprodmarkedanodyne}. For marked anodynes of type ($M_5$), this is clear by inspection. $\Box$
\end{proof}

\subsection{A model structure on $\mathbf{fSets}^+$}
\label{sec:markedfsetsmodelstruct}

Before establishing our model structure on the category of marked forest sets, we still need a few observations concerning the marked equivalences and the trivial cofibrations.

\begin{lemma}
\label{lem:countablegenmarkedtrivcof}
The class of marked trivial cofibrations (i.e. cofibrations that are also marked equivalences) is generated by the marked trivial cofibrations between countable and normal objects.
\end{lemma}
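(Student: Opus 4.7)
The plan is to establish the marked analogues of Lemmas \ref{lem:pushoutbasictrivcof}, \ref{lem:trivcofgenbetweennormals} and \ref{lem:countablesubpresh}, after which the proof of the lemma itself is a verbatim transcription of the argument of Lemma \ref{lem:countablegentrivcof}. First, to see that pushouts of marked trivial cofibrations are marked trivial cofibrations, one pulls back along a normalization of the pushout vertex to reduce to the case where all four objects are normal. For any operadically local $E$, the functor $\mathrm{Map}^\flat(-, E^\natural)$ carries the pushout square to a pullback of $\infty$-categories whose vertical maps are categorical fibrations (Proposition \ref{prop:fibmarkedmappingspace}) and whose bottom edge is a categorical equivalence by the definition of marked equivalence; hence so is the top, showing the pushout of the original marked trivial cofibration is a marked equivalence. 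The marked analogue of Lemma \ref{lem:trivcofgenbetweennormals} then follows by the identical normalization-and-pushout trick used there: normalize the target, pull back to obtain a trivial cofibration between normal objects, form the pushout and exhibit the original map as a retract.

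The heart of the argument is a marked analogue of the countable subpresheaf lemma \ref{lem:countablesubpresh}, and it rests on two preliminary ingredients. The first is a marked version of Proposition \ref{prop:countable}: since every generating marked anodyne in Definition \ref{def:markedanodynes} has a countable (indeed finite) source and target, the small object argument produces, for every marked forest set $X$, a marked anodyne $X \to X_f$ with $X_f$ fibrant and countable whenever $X$ is, together with the analogous relative and `closure' statements as in the unmarked case. The second is the fact that a marked trivial cofibration $f: A \to B$ between normal fibrant marked forest sets is a $J^\sharp$-deformation retract. For this, apply $\mathrm{Map}^\sharp(-, A)$ to $f$: the resulting map is a Kan fibration by Proposition \ref{prop:fibmarkedmappingspace}, and it is a weak equivalence of Kan complexes because the corresponding map on $\mathrm{Map}^\flat(-, A)$ is a categorical equivalence of $\infty$-categories (by definition of marked equivalence, noting that $A = (aA)^\natural$ for $aA$ operadically local) and $\mathrm{Map}^\sharp$ picks out the maximal Kan complex inside $\mathrm{Map}^\flat$ by Proposition \ref{prop:markedmappingspace}. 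Hence $\mathrm{Map}^\sharp(B, A) \to \mathrm{Map}^\sharp(A, A)$ is a trivial Kan fibration, yielding a section $r$ with $rf = \mathrm{id}_A$; a $J^\sharp$-homotopy $fr \simeq \mathrm{id}_B$ rel $A$ is then obtained by a standard lifting argument against this fibration using that $\partial J \subseteq J$ is a cofibration in the Kan--Quillen model structure.

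With these ingredients in hand, the proof of the countable subpresheaf lemma follows the pattern of Lemma \ref{lem:countablesubpresh} verbatim: complete $u: X \to Y$ to a square with $X_f \to Y_f$ a trivial cofibration between normal fibrant objects, fix a retraction $r$ and $J^\sharp$-homotopy $h$, and iteratively enlarge countable subpresheaves of $X$ and $Y$ so that their fibrant replacements (which remain countable by the marked version of Proposition \ref{prop:countable}) are closed under $r$ and $h$. The countable union gives the desired pullback square $\tilde A \to \tilde B$ with $\tilde A \to \tilde B$ a marked trivial cofibration. Finally, the proof of Lemma \ref{lem:countablegenmarkedtrivcof} itself is a transfinite induction identical to that of Lemma \ref{lem:countablegentrivcof}: having reduced to a marked trivial cofibration $X \rightarrowtail Y$ between normal objects via the analogue of Lemma \ref{lem:trivcofgenbetweennormals}, one well-orders $Y - X$ and successively adjoins countable normal trivial cofibrations, invoking the countable subpresheaf lemma at each step to ensure the next element is incorporated. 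The main obstacle is verifying the $J^\sharp$-deformation retract property, which relies critically on the interplay between Propositions \ref{prop:fibmarkedmappingspace} and \ref{prop:markedmappingspace} to upgrade a categorical equivalence on $\mathrm{Map}^\flat$ to a trivial Kan fibration on $\mathrm{Map}^\sharp$; once this is in place, the rest of the argument is routine bookkeeping transferred from the unmarked case.
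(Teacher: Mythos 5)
Your proposal is correct and follows the same route the paper intends, which is simply to port the proofs of Lemma \ref{lem:countablegentrivcof} and its supporting lemmas verbatim, replacing ``operadic anodyne'' by ``marked anodyne''; you have usefully spelled out the only places where a marked-specific check is really needed, in particular the $J^\sharp$-deformation retract step via $\mathrm{Map}^\sharp$ and Propositions \ref{prop:fibmarkedmappingspace} and \ref{prop:markedmappingspace}. Two small imprecisions to tidy: in the pushout step, since Joyal is not right proper you should note that the bottom edge $\mathrm{Map}^\flat(B,E^\natural)\to\mathrm{Map}^\flat(A,E^\natural)$ is itself a \emph{trivial fibration} (being both a categorical fibration by Proposition \ref{prop:fibmarkedmappingspace} and a categorical equivalence), rather than invoking the equivalence of the bottom edge together with fibrancy of the verticals; and the generator $J^\flat\subseteq J^\sharp$ of type $(M_4)$ has countably infinite, not finite, source and target, though this suffices for the countability argument.
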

\begin{proof}
This is the direct analogue of Lemma \ref{lem:countablegentrivcof}. One can check that the proofs of that lemma and of its preliminaries can be applied to the present setting. The only necessary modification is to replace `operadic anodyne' by `marked anodyne' throughout. $\Box$
\end{proof}

\begin{lemma}
\label{lem:markedanodequiv}
Marked anodyne morphisms are marked trivial cofibrations.
\end{lemma}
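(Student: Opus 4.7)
The plan is to verify the statement first on each of the five families of generating marked anodynes, and then to propagate it through the saturation operations. I would begin by noting that every generator is a normal monomorphism between normal marked forest sets: for types $(M_1)$--$(M_4)$ this is immediate from the construction, and for $(M_5)$ it follows from Proposition \ref{prop:newnormalmonopoprod}. Since both the source and the target of each generator $f\colon X \to Y$ are already normal, showing that $f$ is a marked equivalence reduces to exhibiting the induced map
\[
\mathrm{Map}^\flat(Y, E^\natural) \longrightarrow \mathrm{Map}^\flat(X, E^\natural)
\]
as a trivial fibration of simplicial sets for every operadically local $E$, since such a trivial fibration is a fortiori an equivalence of $\infty$-categories and $X,Y$ can serve as their own normalizations in the defining square.

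By the adjunction defining $\mathrm{Map}^\flat$, this is equivalent to $E^\natural$ having the right lifting property with respect to the pushout-product
\[
(\partial\Delta^n)^\flat \otimes Y \,\cup\, (\Delta^n)^\flat \otimes X \longrightarrow (\Delta^n)^\flat \otimes Y
\]
for every $n \geq 0$. For a strong marked anodyne generator, Lemma \ref{lem:poprodmarkedanodyne} shows this pushout-product is itself strong marked anodyne, and Proposition \ref{prop:RLPmarkedanodynes} then produces the lift. For a generator of type $(M_5)$, Lemma \ref{lem:poprodmarkedanodyne2} supplies the lift directly, because $(\partial\Delta^n)^\flat \to (\Delta^n)^\flat$ is a monomorphism of simplicial sets; note that at this stage both external inputs have already been established.

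It remains to extend the conclusion from the generators to the whole saturated class. For this I would verify that the class of cofibrations $f\colon X \to Y$ with the property that $\mathrm{Map}^\flat(Y,E^\natural)\to \mathrm{Map}^\flat(X,E^\natural)$ is a trivial Kan fibration for every operadically local $E$ is closed under pushouts, transfinite composition, and retracts. This is formal from Proposition \ref{prop:fibmarkedmappingspace}: the functor $\mathrm{Map}^\flat(-,E^\natural)$ sends pushouts of cofibrations to pullbacks of categorical fibrations and transfinite compositions of cofibrations to towers of such, and trivial Kan fibrations are stable under pullbacks, inverse limits of towers, and retracts. Finally, to pass from this mapping-space criterion to the official definition of marked equivalence for arbitrary (possibly non-normal) sources and targets, one invokes the $J$-homotopy uniqueness of normalizations exactly as in the proof of Lemma \ref{lem:normalizationsindep}. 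The principal technical obstacle has in fact already been met in the preceding sections, namely the pushout-product results for root and leaf anodynes (Propositions \ref{prop:poprodrootanodyne} and \ref{prop:poprodleafanodyne}) that feed into Lemma \ref{lem:poprodmarkedanodyne}; granted those, the present lemma is essentially formal.
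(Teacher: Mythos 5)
Your proof is correct and takes essentially the same approach as the paper: reduce to the generating marked anodynes, translate the trivial-fibration condition on $\mathrm{Map}^\flat(-,E^\natural)$ into a right lifting property of $E^\natural$ against pushout-products, and invoke Lemmas \ref{lem:poprodmarkedanodyne} and \ref{lem:poprodmarkedanodyne2}. The paper's closure argument is phrased in terms of the marked analogue of Lemma \ref{lem:pushoutbasictrivcof} rather than your more explicit verification that the mapping-space condition is preserved under the saturation operations, but the content is the same.
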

\begin{proof}
Since compositions and retracts of marked trivial cofibrations are clearly marked trivial cofibrations again and the same is true for pushouts by the obvious analogue of Lemma \ref{lem:pushoutbasictrivcof}, it suffices to prove that the generating marked anodynes of Definition \ref{def:markedanodynes} are marked trivial cofibrations. Let $f: X \longrightarrow Y$ be such a generating marked anodyne. We wish to show that for any operadically local forest set $E$, the map
\begin{equation*}
\mathrm{Map}^\flat(Y, E^\natural) \longrightarrow \mathrm{Map}^\flat(X, E^\natural)
\end{equation*}
is a trivial fibration of simplicial sets. This is equivalent to $E$ having the right lifting property with respect to maps of the form
\begin{equation*}
N^\flat \otimes X \cup M^\flat \otimes Y \longrightarrow N^\flat \otimes Y
\end{equation*}
where $M \longrightarrow N$ is a monomorphism of simplicial sets. But this lifting property follows from Lemmas \ref{lem:poprodmarkedanodyne} and \ref{lem:poprodmarkedanodyne2}. 
$\Box$  
\end{proof}

\begin{lemma}
\label{lem:RLPwrtcofibrations}
Any map of marked forest sets having the right lifting property with respect to all cofibrations is a marked equivalence.
\end{lemma}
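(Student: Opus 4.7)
The plan is to closely mimic the proof of the unmarked analogue, Lemma \ref{lem:RLPwrtnormalmonos}. Given $f: Y \to X$ with the right lifting property with respect to all normal monomorphisms of marked forest sets, I would construct a commutative square
\[
\xymatrix{
Y' \ar@{->>}[d]_t \ar[r]^{f'} & X' \ar@{->>}[d]^p \\
Y \ar[r]_f & X
}
\]
with $p$ and $t$ normalizations, and then exhibit $f'$ as a $J^\sharp$-deformation retract; this will suffice, since such a retract is automatically a marked equivalence for the reason recalled below.

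To build the square, first pick a normalization $p: X' \to X$. Since $f$ has the right lifting property with respect to the cofibration $\varnothing \to X'$, there is a section $s: X' \to Y$ of $p$. Factor $s$ as $X' \stackrel{i}{\to} Y' \stackrel{t}{\to} Y$ with $i$ a cofibration and $t$ having the right lifting property with respect to all cofibrations (hence a normalization, using Lemma \ref{lem:normalovernormal}). Both $f$ and $t$ have the right lifting property against cofibrations, so $ft$ does too; hence we can lift in
\[
\xymatrix{
X' \ar@{=}[r]\ar[d]_i & X' \ar[d]^p \\
Y' \ar@{-->}[ur]^{f'}\ar[r]_{ft} & X
}
\]
to obtain $f': Y' \to X'$ with $f'i = \mathrm{id}_{X'}$ and $pf' = ft$.

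Next, I would produce a $J^\sharp$-homotopy from $if'$ to $\mathrm{id}_{Y'}$ relative to $X'$. Since $\partial J^\sharp \to J^\sharp$ and $i: X' \to Y'$ are both cofibrations between normal objects, the pushout-product
\[
\partial J^\sharp \otimes Y' \cup J^\sharp \otimes X' \longrightarrow J^\sharp \otimes Y'
\]
is a normal monomorphism (by the marked version of Proposition \ref{prop:newnormalmonopoprod}). Using the right lifting property of $ft$, I would then solve the lifting problem
\[
\xymatrix{
\partial J^\sharp \otimes Y' \cup J^\sharp \otimes X' \ar[r]^-\phi \ar[d] & Y' \ar[d]^{ft} \\
J^\sharp \otimes Y' \ar@{-->}[ur]^h \ar[r]_-{ft\varepsilon} & X
}
\]
where $\phi$ is $if'$ on $\{0\}^\sharp \otimes Y'$, $\mathrm{id}_{Y'}$ on $\{1\}^\sharp \otimes Y'$, and $i\varepsilon$ on $J^\sharp \otimes X'$, with $\varepsilon: J^\sharp \to \eta$ the collapse (one checks these agree on intersections using $pf' = ft$).

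Finally, I would argue that $f'$ is a marked equivalence. We have $i^* f'^* = \mathrm{id}$, so it remains to see that $f'^* i^* \simeq \mathrm{id}$ on $\mathrm{Map}^\flat(Y', E^\natural)$ for any operadically local $E$. Postcomposing $h$ with $g: Y' \to E^\natural$ gives a map $J^\sharp \otimes Y' \to E^\natural$, which by the tensor-hom adjunction and the fact that $(-)^\sharp$ is left adjoint to the forgetful functor taking a marked simplicial set to its simplices with all edges marked, corresponds to a 1-simplex of $\mathrm{Map}^\sharp(Y', E^\natural)$; by Proposition \ref{prop:markedmappingspace} this is a Kan complex, hence the 1-simplex is an equivalence from $f'^* i^*(g)$ to $g$ inside the $\infty$-category $\mathrm{Map}^\flat(Y', E^\natural)$. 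The whole map $h$ likewise yields a functor $\mathrm{Map}^\flat(Y', E^\natural) \to \mathrm{Map}^\flat(J^\sharp \otimes Y', E^\natural)$ whose evaluations at $\{0\}^\sharp, \{1\}^\sharp$ are $f'^* i^*$ and $\mathrm{id}$, assembling these pointwise equivalences into a natural equivalence. Hence $f'^*$ and $i^*$ are inverse equivalences of $\infty$-categories and $f'$ is a marked equivalence, so $f$ is too.

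The main obstacle is the last step: packaging the pointwise equivalences coming from $h$ into an actual natural equivalence of functors between $\infty$-categories. This requires careful bookkeeping of the interaction between markings and the weak simplicial enrichment (in particular the maps $\beta$ of Lemma \ref{lem:exphomotenriched} and the marked analogue of Lemma \ref{lem:expfibmarkedfset}); everything else is formal manipulation of lifting properties modeled on Lemma \ref{lem:RLPwrtnormalmonos}.
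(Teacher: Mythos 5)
Your proposal follows exactly the route the paper intends: the proof in the paper is explicitly stated to be ``a straightforward adaptation'' of the unmarked Lemma~\ref{lem:RLPwrtnormalmonos}, and your construction of the square of normalizations, the section $s$, the retraction $f'$, and the $J^\sharp$-homotopy $h$ reproduces that argument faithfully. The strategy is correct and the construction of $h$ is sound; checking that $\phi$ is well-defined on the intersection and that $ft\phi = ft\varepsilon$ on the horn both go through using $f'i = \mathrm{id}$ and $pf' = ft$, as you say.

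Two small remarks. First, your appeal to Lemma~\ref{lem:normalovernormal} to see that $t: Y' \to Y$ is a normalization is backwards: that lemma says a map \emph{into} a normal object has normal domain, whereas here you need that $Y'$ is normal because $i: X' \rightarrowtail Y'$ is a normal monomorphism with normal domain $X'$. The latter follows directly from the definition of normal monomorphism (freeness of $\mathrm{Aut}(F)$ on the complement plus freeness on $X'(F)$ gives freeness on $Y'(F)$), not from Lemma~\ref{lem:normalovernormal}. Second, you rightly flag that the last step---turning the marked homotopy $h$ into an equivalence of the $\infty$-categories $\mathrm{Map}^\flat(-,E^\natural)$---requires a marked analogue of the remark following the definition of $J$-homotopy. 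The paper leaves this implicit as well, so this is not a gap in your proposal relative to the paper's intent; just note that the cleanest packaging is to show $h$ induces a map $\mathrm{Map}^\flat(Y', E^\natural) \to \mathrm{Map}^\flat(Y', E^\natural)^J$ via the associativity map $\alpha$ of the weak enrichment and the inclusion $J^\flat \otimes Y' \to J^\sharp \otimes Y'$, and then invoke that $J$-homotopic maps of $\infty$-categories with a one-sided strict inverse are inverse equivalences.
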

\begin{proof}
The proof is a straightforward adaptation of the proof of Lemma \ref{lem:RLPwrtnormalmonos}. $\Box$
\end{proof}

\begin{proof}[Proof of Theorem \ref{thm:markedfSets}]
First we establish a cofibrantly generated model structure as described in the statement of the Theorem. We check Quillen's axioms CM1-5. As usual, the axioms (CM1) for existence of limits and colimits, (CM2) for two-out-of-three for weak equivalences and (CM3) for retracts are obvious. For the factorization axiom (CM5), Remark \ref{rmk:gencofmarkedfSets} guarantees that every map can be factored as a normal monomorphism followed by a map having the right lifting property with respect to all normal monos and the latter is a trivial fibration by Lemma \ref{lem:RLPwrtcofibrations}. Also, any map $X \longrightarrow Y$ can be factored as $X \rightarrowtail Z \rightarrow Y$ where $X \rightarrowtail Z$ lies in the saturation of the class of trivial cofibrations between countable normal objects and $Z \rightarrow Y$ has the right lifting property with respect to this class. By Lemma \ref{lem:countablegenmarkedtrivcof}, this map is a fibration. It remains to verify the lifting axiom (CM4). Consider a commutative square
\[
\xymatrix{
A \ar[d]_i\ar[r] & X \ar[d]^p \\
B \ar[r] & Y
}
\]
where $i$ is a cofibration and $p$ is a fibration. If $i$ is a marked equivalence, then a lift exists by definition of the fibrations. If $p$ is a weak equivalence, then one applies the same standard retract argument as in the proof of Theorem \ref{thm:basicmodelstructure}. We defer the proof of left properness to Lemma \ref{lem:leftproper}. Let us now establish claims (i) and (ii). We prove (iii) further on in this section, in Lemma \ref{lem:markedmodelstrucsimplicial}, after having established a convenient characterization of the marked trivial cofibrations. \par 
(i). A fibrant object $X$ has the right lifting property with respect to all marked anodynes, by Lemma \ref{lem:markedanodequiv}, and must therefore be of the form $E^\natural$ for some operadically local forest set $E$ by Proposition \ref{prop:RLPmarkedanodynes}. Conversely, assume we have a marked forest set which has the right lifting property with respect to all marked anodynes, i.e. something of the form $E^\natural$. By Lemma \ref{lem:countablegenmarkedtrivcof}, we only have to show that $E^\natural$ has the right lifting property with respect to trivial cofibrations between (countable) normal objects. So let $A \longrightarrow B$ be such a cofibration. The map
\begin{equation*}
\mathrm{Map}^\sharp(B, E^\natural) \longrightarrow \mathrm{Map}^\sharp(A, E^\natural)
\end{equation*}
is a trivial fibration of simplicial sets. Indeed, it is a homotopy equivalence by assumption and a fibration by Proposition \ref{prop:fibmarkedmappingspace}. But a trivial fibration is surjective on vertices, so any lifting problem of the form
\[
\xymatrix{
A \ar[d]\ar[r] & E^\natural \\
B \ar@{-->}[ur] & 
}
\]
admits a solution. \par 
(ii). Let $f: X \longrightarrow Y$ be a map between fibrant objects. If it is a fibration, then it has the right lifting property with respect to marked anodynes. Conversely, suppose it has this right lifting property. Choose a factorization
\[
\xymatrix{
X \ar[r]^i & Z \ar[r]^p & Y 
}
\]
where $i$ is a trivial cofibration and $p$ is a fibration. Since $X$ is fibrant, the map $i$ has a retract $r: Z \longrightarrow X$. Next, note that the map
\begin{equation*}
(\Delta^1)^\sharp \otimes X \cup (\partial \Delta^1)^\sharp \otimes Z \longrightarrow (\Delta^1)^\sharp \otimes Z 
\end{equation*}
is a trivial cofibration, by the analogue of Lemma \ref{lem:halfhomotenrich}. Therefore we can find a lift $h$ in
\[
\xymatrix@C=50pt{
(\Delta^1)^\sharp \otimes X \cup (\partial \Delta^1)^\sharp \otimes Z \ar[r]^-{fs_0 \cup (p, fr)} \ar[d] & Y \\
(\Delta^1)^\sharp \otimes Z \ar@{-->}[ur]_h &
}
\]
because $Y$ is fibrant as well. (Note that this gives a `homotopy over $(\Delta^1)^\sharp$' from $p$ to $fr$ relative to $X$.) Finally, lift in
\[
\xymatrix@C=40pt{
(\Delta^1)^\sharp \otimes X \cup_{\{1\} \otimes X} \{1\} \otimes Z \ar[r]^-{s_0 \cup r}\ar[d] & X \ar[d]^f \\
(\Delta^1)^\sharp \otimes Z \ar@{-->}[ur]_k \ar[r]_h & Y
}
\]
This is possible because the map on the left is (strong) marked anodyne by Lemma \ref{lem:poprodmarkedanodyne}. Then $r' = k_0$ has the property that $fr' = h_0 = p$ and $r'i = \mathrm{id}_X$, so that the diagram
\[
\xymatrix{
X \ar[r]^i \ar[d]_f & Z \ar[r]^{r'}\ar[d]_p & X \ar[d]_f \\
Y \ar@{=}[r] & Y \ar@{=}[r] & Y
}
\]
exhibits $f$ as a retract of $p$. In particular, $f$ is a fibration. $\Box$ 
\end{proof}

To establish left properness we need the following:

\begin{proposition}
\label{prop:markedequivmapsharp}
Let $f: X \longrightarrow Y$ be a map in $\mathbf{fSets}^+$ and let 
\[
\xymatrix{
X' \ar[r]^{f'}\ar@{->>}[d] & Y' \ar@{->>}[d] \\
X \ar[r]^f & Y
}
\]
be a commutative square in which the vertical arrows are normalizations. Then the following are equivalent:
\begin{itemize}
\item[(i)] The map $f$ is a marked equivalence.
\item[(ii)] For every operadically local forest set $E$, the map $f'$ induces a homotopy equivalence of Kan complexes
\begin{equation*}
\mathrm{Map}^\sharp(Y', E^\natural) \longrightarrow \mathrm{Map}^\sharp(X', E^\natural)
\end{equation*}
\end{itemize}
\end{proposition}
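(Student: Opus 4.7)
The implication (i) $\Rightarrow$ (ii) is the easy half. By Proposition~\ref{prop:markedmappingspace}, both $\mathrm{Map}^\flat(X', E^\natural)$ and $\mathrm{Map}^\flat(Y', E^\natural)$ are $\infty$-categories and $\mathrm{Map}^\sharp(-, E^\natural)$ picks out their maximal Kan complexes. The functor assigning to a simplicial set its maximal Kan complex is right Quillen from the Joyal model structure to the Kan-Quillen model structure, so it sends Joyal equivalences between $\infty$-categories to weak equivalences of Kan complexes. Applying this to $f'^*: \mathrm{Map}^\flat(Y', E^\natural) \to \mathrm{Map}^\flat(X', E^\natural)$ gives (ii).

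For the converse, the key move is to apply (ii) not just to $E$ but to $E^M$ for an arbitrary simplicial set $M$. By Lemma~\ref{lem:expfibmarkedfset}, $E^M$ is again operadically local and $(E^\natural)^{M^\flat} = (E^M)^\natural$. A direct adjunction computation shows that for any normal $A \in \mathbf{fSets}^+$,
\[
\mathrm{Map}^\flat(A, (E^M)^\natural) \,=\, \mathrm{Fun}(M, \mathrm{Map}^\flat(A, E^\natural)),
\]
and combining this with Proposition~\ref{prop:markedmappingspace} we obtain
\[
\mathrm{Map}^\sharp(A, (E^M)^\natural) \,=\, \mathrm{Fun}(M, \mathrm{Map}^\flat(A, E^\natural))^\simeq,
\]
where $(-)^\simeq$ denotes the maximal Kan complex. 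Hypothesis (ii), applied to $(E^M)^\natural$, therefore says exactly that for every simplicial set $M$ the induced map
\[
\mathrm{Fun}(M, \mathrm{Map}^\flat(Y', E^\natural))^\simeq \longrightarrow \mathrm{Fun}(M, \mathrm{Map}^\flat(X', E^\natural))^\simeq
\]
is a homotopy equivalence of Kan complexes.

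It remains to invoke the following standard fact about $\infty$-categories: a functor $F: \mathcal{C} \to \mathcal{D}$ between $\infty$-categories is a Joyal equivalence if and only if the induced map $\mathrm{Fun}(M, \mathcal{C})^\simeq \to \mathrm{Fun}(M, \mathcal{D})^\simeq$ is a Kan equivalence for every simplicial set $M$. One direction follows since the Joyal model structure is cartesian and $(-)^\simeq$ is right Quillen. For the converse, take $M = \mathcal{D}$: surjectivity on $\pi_0$ of $\mathrm{Fun}(\mathcal{D}, -)^\simeq$ yields $G: \mathcal{D} \to \mathcal{C}$ with $FG$ $J$-homotopic to $\mathrm{id}_\mathcal{D}$. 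Precomposing this $J$-homotopy with $F$ gives $FGF \simeq F$ in $\mathrm{Fun}(\mathcal{C}, \mathcal{D})$; taking $M = \mathcal{C}$ and using injectivity on $\pi_0$ of $\mathrm{Fun}(\mathcal{C}, \mathcal{C})^\simeq \to \mathrm{Fun}(\mathcal{C}, \mathcal{D})^\simeq$, we conclude $GF \simeq \mathrm{id}_\mathcal{C}$, so $G$ is a $J$-homotopy inverse to $F$, hence $F$ is a Joyal equivalence. Applying this to $F = f'^*: \mathrm{Map}^\flat(Y', E^\natural) \to \mathrm{Map}^\flat(X', E^\natural)$ completes the proof.

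The main conceptual point is the translation afforded by Lemma~\ref{lem:expfibmarkedfset}: cotensoring the fibrant object $E^\natural$ by $M^\flat$ converts the sharp-mapping-space condition into a condition about maximal Kan complexes of functor $\infty$-categories, at which point the result is purely a statement about $\infty$-categories. No delicate combinatorics of forests is needed beyond what is already encoded in Proposition~\ref{prop:markedmappingspace} and Lemma~\ref{lem:expfibmarkedfset}.
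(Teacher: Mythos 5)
Your overall strategy coincides with the paper's: prove (i) $\Rightarrow$ (ii) by passing to maximal Kan complexes, and prove (ii) $\Rightarrow$ (i) by applying (ii) to the cotensored objects $E^M$ together with the characterization of Joyal equivalences via $\mathrm{Fun}(M, -)^\simeq$ (the paper cites Lemma 3.1.3.2 of \cite{htt}, which you reprove inline — both are fine). However, your key intermediate step contains a genuine gap.

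You assert that a ``direct adjunction computation'' gives a strict equality
\[
\mathrm{Map}^\flat(A, (E^M)^\natural) \,=\, \mathrm{Fun}(M, \mathrm{Map}^\flat(A, E^\natural)).
\]
Unwinding both sides, this would require
\[
(M \times \Delta^n)^\flat \otimes A \,\cong\, M^\flat \otimes \bigl((\Delta^n)^\flat \otimes A\bigr),
\]
which is exactly the associator $\alpha_{M^\flat, (\Delta^n)^\flat, A}$ of the weak enrichment of $\mathbf{fSets}^+$ over simplicial sets. The paper emphasizes (see the remark after its construction in Section~\ref{subsec:weakenrichment}) that $\alpha_{M,N,X}$ is \emph{typically not an isomorphism}: the tensor product of forest sets is only associative up to natural weak equivalence. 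So the strict equality you invoke is false. What is true is that there is a natural comparison map $\beta_{M,A,E^\natural}: \mathrm{Map}^\flat(A, (E^\natural)^{M^\flat}) \longrightarrow \mathrm{Map}^\flat(A, E^\natural)^M$ (induced by $\alpha$), and this is a \emph{trivial fibration} when $A$ is cofibrant and $E^\natural$ is fibrant; this is Lemma~\ref{lem:exphomotenriched} combined with the marked analogue of Proposition~\ref{prop:forestsetsH2}. The paper's proof records precisely this via a commutative square whose columns are trivial fibrations, from which the desired conclusion about maximal Kan complexes follows by two-out-of-three. Your argument is repairable by the same observation, but as stated it silently treats a weak enrichment as strict, which is the very subtlety the framework is built to accommodate.
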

\begin{proof}
Assume (i). Then the map
\begin{equation*}
\mathrm{Map}^\flat(Y', E^\natural) \longrightarrow \mathrm{Map}^\flat(X', E^\natural)
\end{equation*}
is an equivalence of $\infty$-categories and the map stated in (ii) is the induced map on maximal Kan complexes, so it is clear that (i) implies (ii). \par 
Conversely, let us assume (ii). First, recall that a map $\mathcal{C} \longrightarrow \mathcal{D}$ of $\infty$-categories is a categorical equivalence if and only if, for every simplicial set $M$, the map $\mathcal{C}^M \longrightarrow \mathcal{D}^M$ induces a homotopy equivalence between the maximal Kan complexes contained in the $\infty$-categories $\mathcal{C}^M$ and $\mathcal{D}^M$ (see for example Lemma 3.1.3.2 of \cite{htt}). We wish to show that
\begin{equation*}
\mathrm{Map}^\flat(Y', E^\natural) \longrightarrow \mathrm{Map}^\flat(X', E^\natural)
\end{equation*}
is an equivalence of $\infty$-categories. To this end, we will prove that for an arbitrary simplicial set $M$, the map
\begin{equation*}
\mathrm{Map}^\flat(Y', E^\natural)^M \longrightarrow \mathrm{Map}^\flat(X', E^\natural)^M
\end{equation*}
induces a homotopy equivalence on the maximal Kan complexes contained in these $\infty$-categories. Notice that this map fits into a square
\[
\xymatrix{
\mathrm{Map}^\flat(Y', (E^\natural)^{M^\flat}) \ar[d]\ar[r] & \mathrm{Map}^\flat(X', (E^\natural)^{M^\flat}) \ar[d] \\
\mathrm{Map}^\flat(Y', E^\natural)^M \ar[r] & \mathrm{Map}^\flat(X', E^\natural)^M.
}
\]
The vertical maps are trivial fibrations; indeed, this follows from the marked analogue of Proposition \ref{prop:forestsetsH2} (which has the same proof) and Lemma \ref{lem:exphomotenriched}. Also, the top map induces an equivalence on maximal Kan complexes by our assumption and the isomorphism
\begin{equation*}
(E^\natural)^{M^\flat} \simeq (E^M)^\natural 
\end{equation*}
following from Lemma \ref{lem:expfibmarkedfset}. Therefore, the bottom map induces an equivalence on maximal Kan complexes as well. $\Box$
\end{proof}

\begin{lemma}
\label{lem:leftproper}
The model structure of Theorem \ref{thm:markedfSets} is left proper.
\end{lemma}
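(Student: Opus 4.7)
The plan is to mirror the proof of left properness for the operadic model structure on $\mathbf{fSets}$ given in part (ii) of the proof of Theorem \ref{thm:basicmodelstructure}, using Proposition \ref{prop:markedequivmapsharp} as the bridge that lets us read off marked equivalences as homotopy equivalences between Kan complexes, where standard homotopy-pullback arguments apply.

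Concretely, start with a pushout square
\[
\xymatrix{
A \,\, \ar@{>->}^{\sim}[r] \ar[d] & B \ar[d] \\
C \,\, \ar@{>->}[r] & D
}
\]
in which the top map is a marked equivalence and the left map is a cofibration. Choose a normalization $D' \twoheadrightarrow D$ and pull back along it to obtain a cube
\[
\xymatrix@!0{
A' \ar[rr]\ar[dd]\ar[dr] & & B' \ar'[d][dd]\ar[dr] & \\
& C' \ar[rr]\ar[dd] & & D'\ar[dd] \\
A \ar'[r][rr]\ar[dr] & & B \ar[dr] & \\
& C \ar[rr] & & D
}
\]
in which every vertical face is a pullback and the top face is again a pushout. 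Since the right lifting property with respect to normal monomorphisms is stable under pullback, each of the vertical maps is a normalization; together with (the evident marked analogue of) Lemma \ref{lem:normalovernormal}, this forces every object in the top face to be normal. By the definition of marked equivalence, $A' \to B'$ is then a marked equivalence between normal objects.

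For any operadically local forest set $E$, apply $\mathrm{Map}^\sharp(-, E^\natural)$ to the top face. This yields a pullback square of Kan complexes (Corollary \ref{cor:markedmappingspace})
\[
\xymatrix{
\mathrm{Map}^\sharp(D', E^\natural) \ar[r]\ar[d] & \mathrm{Map}^\sharp(C', E^\natural) \ar[d] \\
\mathrm{Map}^\sharp(B', E^\natural) \ar[r]^\sim & \mathrm{Map}^\sharp(A', E^\natural)
}
\]
in which the right vertical map is a Kan fibration by Proposition \ref{prop:fibmarkedmappingspace} and the bottom horizontal map is a homotopy equivalence by Proposition \ref{prop:markedequivmapsharp} applied to the marked equivalence $A' \to B'$. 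Hence this is a homotopy pullback square in the Kan--Quillen model structure, so the top horizontal map is a homotopy equivalence of Kan complexes. Proposition \ref{prop:markedequivmapsharp}, read in the other direction, lets us conclude that the bottom map $C \to D$ of the original square is a marked equivalence, as required.

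The only genuine point of care is bookkeeping: checking that the pullback cube genuinely has the properties used (normality in the top face, normalizations on the vertical edges, pushout on top). All of this is formally identical to what was carried out in the proof of Lemma \ref{lem:pushoutbasictrivcof} and part (ii) of the proof of Theorem \ref{thm:basicmodelstructure}(ii), and the only substantive input beyond those formalities is Proposition \ref{prop:markedequivmapsharp}, which is what makes the whole argument go through for the \emph{marked} structure without having to redevelop the $\mathbf{hom}$-level analysis in the weakly enriched setting.
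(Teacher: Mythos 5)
Your proof is correct and follows essentially the same route as the paper: normalize $D$ and pull back to obtain a pushout of normal objects, apply $\mathrm{Map}^\sharp(-,E^\natural)$ to get a pullback of Kan complexes in which one leg is a Kan fibration (Proposition \ref{prop:fibmarkedmappingspace}) and one is a homotopy equivalence, and conclude by right properness of the Kan--Quillen model structure (equivalently, the homotopy-pullback argument) before passing back through Proposition \ref{prop:markedequivmapsharp}. The only cosmetic difference is that you phrase the final step via homotopy pullbacks rather than directly invoking right properness, which is a standard reformulation of the same fact.
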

\begin{proof}
Consider a pushout square
\[
\xymatrix{
A \ar[r]^\sim\ar[d] & B \ar[d] \\
C \ar[r] & D
}
\]
in which the top map is a marked equivalence and the left map is a cofibration. Choose a normalization $D' \longrightarrow D$ and pull the square back along this map to obtain another square
\[
\xymatrix{
A' \ar[r]^\sim\ar[d] & B' \ar[d] \\
C' \ar[r] & D'
}
\]
This square is still a pushout and all the objects in it are normal. Now let $E$ be an operadically local forest set and consider the pullback square
\[
\xymatrix{
\mathrm{Map}^\sharp(A', E^\natural) & \mathrm{Map}^\sharp(B', E^\natural) \ar[l]_\sim \\
\mathrm{Map}^\sharp(C', E^\natural) \ar[u] & \mathrm{Map}^\sharp(D', E^\natural) \ar[l]\ar[u]
}
\]
By assumption, the top map is a homotopy equivalence of simplicial sets. By Proposition \ref{prop:fibmarkedmappingspace}, the left map is a Kan fibration. Since the Quillen model structure on simplicial sets is right proper, the bottom map must then also be a homotopy equivalence. We now apply Proposition \ref{prop:markedequivmapsharp} to conclude that $C \longrightarrow D$ is a marked equivalence. $\Box$
\end{proof}

The following analogue of Proposition \ref{prop:trivcofoperadic} will be convenient when establishing Quillen adjunctions:

\begin{lemma}
\label{lem:markedtrivcof}
The class of marked trivial cofibrations is the smallest saturated class which is closed under two-out-of-three among cofibrations and contains the marked anodynes. In fact, it is enough to demand it contains the following morphisms:
\begin{itemize}
\item[(a)] For any tree $T$, the Segal core inclusion
\begin{equation*}
\mathrm{fSc}(T)^\flat \longrightarrow T^\flat
\end{equation*}
\item[(b)] The inclusions
\begin{equation*}
(\Lambda^r[T], \mathcal{E} \cap \Lambda^r[T]) \longrightarrow (T, \mathcal{E})
\end{equation*}
where $T$ is a tree with a root corolla of valence one, $\Lambda^r[T]$ is the horn of $T$ corresponding to that root and $\mathcal{E}$ consists of all degenerate 1-corollas of $T$ together with that root corolla. 
\item[(c)] The map
\begin{equation*}
(\Lambda_1^2)^\sharp \cup_{(\Lambda_1^2)^\flat} (\Delta^2)^\flat \longrightarrow (\Delta^2)^\sharp
\end{equation*}
\item[(d)] For any Kan complex $K$, the inclusion $K^\flat \subseteq K^\sharp$.
\item[(e)] For any non-empty sequence $T_1, \ldots, T_k$ of trees, the map
\begin{equation*}
(T_1 \amalg \cdots \amalg T_k)^\flat \longrightarrow (T_1 \oplus \cdots \oplus T_k)^\flat
\end{equation*}
\end{itemize}
\end{lemma}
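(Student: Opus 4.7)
The plan is to prove two claims: first, that the marked trivial cofibrations coincide with the smallest saturated class containing the marked anodynes and closed under two-out-of-three among cofibrations; second, that every marked anodyne of type ($M_1$)--($M_5$) lies in the class generated by the morphisms (a)--(e).

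For the first claim I would mimic the proof of Lemma \ref{lem:basictrivcofs}. Starting from a marked trivial cofibration $f: X \to Y$, factor $X \to Y$ into a marked anodyne $X \to X'$ followed by a map with the right lifting property against marked anodynes; by Proposition \ref{prop:RLPmarkedanodynes} the codomain $X'$ is fibrant. Building a similar fibrant replacement of $Y$ and completing to a commutative square, the problem reduces to showing that a trivial cofibration $g: X' \to Y'$ between fibrant objects is already marked anodyne. That assertion follows from Theorem \ref{thm:markedfSets}(ii) combined with the retract trick: factor $g$ as a marked anodyne followed by a fibration, observe that the second leg is a trivial fibration, and exhibit $g$ as a retract of the first leg. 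Two-out-of-three then places $f$ in any class containing the marked anodynes and closed under cancellation.

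For the second claim I would run through the five types separately. Type ($M_1$), flat inner horns $\Lambda^e[F]^\flat \to F^\flat$, is recovered from flat Segal cores (a) by transporting Propositions \ref{prop:Segalcoreinnanod} and \ref{prop:Segcoregeninneran} through the colimit-preserving functor $(-)^\flat$; the hypersaturation closure under direct sums used in loc.\ cit.\ is supplied by (e) together with two-out-of-three. Type ($M_2$) is obtained from (b) by attaching one constituent tree at a time through pushouts, exactly as in the forest-to-forest reduction at the end of the proof of Proposition \ref{prop:poprodinneranodyne}. Type ($M_3$) coincides with (c). For type ($M_4$), the special case $K = J$ is (d); for an arbitrary Kan complex $K$, any edge $\Delta^1 \to K$ extends through $J \to K$ because $K$ is Kan, and one realizes $K^\flat \to K^\sharp$ as a transfinite composition of pushouts of $J^\flat \to J^\sharp$ indexed by the non-degenerate $1$-simplices of $K$. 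Finally, type ($M_5$) reduces to (e) by the argument at the start of the proof of Proposition \ref{prop:trivcofoperadic}: first peel off the $\Delta^n$-factor by two-out-of-three against the two inclusions $\partial\Delta^n \otimes(-) \cup \Delta^n \otimes(-)$, reducing to the $n = 0$ version, which is exactly (e).

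The main obstacle I foresee is organizing the saturation operations compatibly across the marked/unmarked interface for the Segal-core step: Proposition \ref{prop:Segcoregeninneran} requires closure under direct sums, which is not built into our notion of saturation plus two-out-of-three. The trick is to use (e) to introduce direct sums on flat generators one stage at a time, propagating them through pushouts and cancellation; all the other reductions are essentially combinatorial variations on arguments already established in the unmarked setting.
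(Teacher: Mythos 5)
Your approach matches the paper's: reduce the first assertion to Lemma \ref{lem:basictrivcofs} via the characterisation of fibrations between fibrant objects, then derive each marked anodyne type ($M_1$)--($M_5$) from the generators (a)--(e) using Propositions \ref{prop:Segalcoreinnanod}, \ref{prop:Segcoregeninneran} and \ref{prop:trivcofoperadic} and the reduction from forests to trees via sums. The one wrinkle is your treatment of ($M_4$): you have the direction of implication reversed. We need ($M_4$), the map $J^\flat \to J^\sharp$, to lie in the class generated by (a)--(e); since $J$ is a Kan complex, ($M_4$) is literally an instance of (d), so no argument is required. Your subsequent claim about realising $K^\flat \to K^\sharp$ for an arbitrary Kan complex $K$ as a transfinite composition of pushouts of $J^\flat \to J^\sharp$ proves the converse inclusion ((d) from ($M_4$)), which is not needed here. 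It is harmless, but the wording "the special case $K = J$ is (d)" reads as if (d) were the special case and ($M_4$) the general one; it is the other way around, and the trivial observation is the one that matters.
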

\begin{proof}
The statement of the first sentence is proved in the same way as Proposition \ref{prop:basictrivcofs}, using the fact that fibrations between fibrant objects are `detected' by the marked anodynes. Next, reducing the inner anodynes to Segal core inclusions for (a) and reducing from marked anodynes of type ($M_5$) to the maps listed under (e) in the proposition was done in the previous chapter, cf. Corollary \ref{cor:operadictrivcofSegal}. Then for marked anodynes of the type
\begin{equation*}
(\Lambda^r[F], \mathcal{E} \cap \Lambda^r[F]) \longrightarrow (F, \mathcal{E})
\end{equation*}
we can restrict to the case where the forest $F$ consists of just one tree by what we already know about sums. $\Box$
\end{proof}

\begin{lemma}
\label{lem:markedmodelstrucsimplicial}
With the weak simplicial enrichment on $\mathbf{fSets}^+$ corresponding to the mapping objects $\mathrm{Map}^\sharp(-,-)$, the model structure of Theorem \ref{thm:markedfSets} is homotopically enriched over the Kan-Quillen model structure on simplicial sets.
\end{lemma}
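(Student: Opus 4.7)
Let $K \otimes X := K^\sharp \otimes X$ denote the tensoring of $\mathbf{fSets}^+$ over $\mathbf{sSets}$. I verify axioms (H1) and (H2) of Definition \ref{def:stronghomotenrich}.

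For (H1), given a cofibration $i: M \to N$ in $\mathbf{sSets}$ and a normal monomorphism $j: X \to Y$ in $\mathbf{fSets}^+$, the pushout-product
\[
M^\sharp \otimes Y \cup_{M^\sharp \otimes X} N^\sharp \otimes X \longrightarrow N^\sharp \otimes Y
\]
has underlying forest-set map a normal monomorphism by Proposition \ref{prop:newnormalmonopoprod}, and inherits compatible markings, so is a cofibration. For the triviality part I translate via adjunction: the pushout-product is a marked equivalence if and only if, for every operadically local $E$, the map $\mathrm{Map}^\sharp(Y, E^\natural) \to \mathrm{Map}^\sharp(X, E^\natural)$ has the right lifting property with respect to $i$. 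By Proposition \ref{prop:fibmarkedmappingspace} this map is always a Kan fibration, and by Proposition \ref{prop:markedequivmapsharp} it is even a trivial Kan fibration when $j$ is a marked equivalence; in either of the two cases demanded by (H1), the needed lift exists.

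For (H2), I follow the strategy of Lemma \ref{lem:forestsetsH2}. The essential technical point is to show that the associator
\[
\alpha_{M,N,X}: (M \times N)^\sharp \otimes X \longrightarrow M^\sharp \otimes (N^\sharp \otimes X)
\]
is a marked trivial cofibration for any cofibrant $M, N \in \mathbf{sSets}$ and cofibrant $X \in \mathbf{fSets}^+$; axiom (H2) then follows by combining this with (H1) and two-out-of-three, exactly as in the closing paragraph of the proof of Lemma \ref{lem:forestsetsH2}. To establish the claim about $\alpha$, I exploit the fact that the class of marked trivial cofibrations is hypersaturated (closed under pushouts, transfinite compositions, retracts, direct sums, and two-out-of-three cancellation among cofibrations, as in any left proper cofibrantly generated model category) and contains the Segal-core inclusions $\mathrm{fSc}(T)^\flat \to T^\flat$ by Lemma \ref{lem:markedtrivcof}(a). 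I then run the argument of Proposition \ref{prop:forestsetsH2} in the marked setting: when $X = F^\flat$ for a representable $F$, the underlying shuffle-and-prune reduction to corollas produces maps whose underlying forest-set maps are inner anodyne, and the $\flat$-marking on the forest factor makes these strong marked anodynes of type $(M_1)$. I then extend from such $X$ to arbitrary cofibrant $X \in \mathbf{fSets}^+$ by skeletal induction using the two types of generating cofibrations $(\partial F)^\flat \to F^\flat$ and $C_1^\flat \to C_1^\sharp$ from Remark \ref{rmk:gencofmarkedfSets}.

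The main obstacle is the second generator $C_1^\flat \to C_1^\sharp$ in the inductive step: unlike in the purely unmarked setting of Proposition \ref{prop:forestsetsH2}, one must handle the promotion of markings compatibly with the associator. The remedy is to use that $\alpha$ is natural in $X$ and to observe that, after tensoring with $(M \times N)^\sharp$ and $M^\sharp \otimes N^\sharp$ respectively, the comparison reduces to marked anodynes of type $(M_2)$ (root anodynes producing the marking promotion), which are already in the hypersaturated class of marked trivial cofibrations by Lemma \ref{lem:markedanodequiv}. Combining all of this with (H1) and two-out-of-three yields that the map $h$ of Definition \ref{def:stronghomotenrich} is a marked trivial cofibration, completing (H2).
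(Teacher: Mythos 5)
Your verification of axiom (H1) has a gap at the adjunction step. You claim ``the pushout-product is a marked equivalence if and only if, for every operadically local $E$, the map $\mathrm{Map}^\sharp(Y, E^\natural) \to \mathrm{Map}^\sharp(X, E^\natural)$ has the right lifting property with respect to $i$.'' By the tensor/mapping-space adjunction, the latter condition is precisely that $E^\natural$ has the right lifting property against the pushout-product $P$, for all operadically local $E$. That is a \emph{necessary} but not a \emph{sufficient} condition for $P$ to be a trivial cofibration: a cofibration against which every fibrant object has the right lifting property need not be a weak equivalence. (Consider $\Delta^0 \to \Delta^0 \sqcup \Delta^0$ in the Kan--Quillen model structure on $\mathbf{sSets}$: every Kan complex lifts against it, but it is not a weak equivalence.) To deduce that $P$ is a marked equivalence, Proposition \ref{prop:markedequivmapsharp} requires that
\begin{equation*}
\mathrm{Map}^\sharp(N^\sharp \otimes Y, E^\natural) \longrightarrow \mathrm{Map}^\sharp(M^\sharp \otimes Y \cup N^\sharp \otimes X, E^\natural)
\end{equation*}
be a homotopy equivalence of Kan complexes; as this map is a Kan fibration by Proposition \ref{prop:fibmarkedmappingspace}, what one needs is a lifting property against all $\partial\Delta^n \to \Delta^n$, not just against the chosen $i$.

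The paper handles (H1) differently, and differently in the two cases. When the marked-forest-set cofibration $j: X \to Y$ is trivial, it reduces via Lemma \ref{lem:markedtrivcof} to the case where $j$ is a generating marked anodyne, and then applies Lemmas \ref{lem:poprodmarkedanodyne}, \ref{lem:poprodmarkedanodyne2} and \ref{lem:markedanodequiv}. When the simplicial cofibration $i: M \to N$ is trivial, it shows that the displayed map above is a trivial Kan fibration by \emph{transposing} the lifting problem: by the symmetry of the weak enrichment, a lift of $\partial\Delta^n \to \Delta^n$ against that map is equivalent to a lift of $i: M \to N$ against
\begin{equation*}
\mathrm{Map}^\sharp((\Delta^n)^\sharp \otimes Y, E^\natural) \longrightarrow \mathrm{Map}^\sharp((\Delta^n)^\sharp \otimes X \cup (\partial\Delta^n)^\sharp \otimes Y, E^\natural),
\end{equation*}
which solves because the latter is a Kan fibration by Proposition \ref{prop:fibmarkedmappingspace}. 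The symmetry of the enrichment is the ingredient missing from your argument; without it you cannot trade the boundary-inclusion lift for a lift against $i$.

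On (H2): the paper's own proof of this lemma in fact only verifies (H1); the marked (H2) statement is invoked without separate proof in the proof of Proposition \ref{prop:markedequivmapsharp}, via the parenthetical ``the marked analogue of Proposition \ref{prop:forestsetsH2} (which has the same proof)'' together with Lemma \ref{lem:exphomotenriched}. Your decision to also address (H2) is therefore reasonable, and your overall strategy --- showing the associator $\alpha_{M,N,X}$ is a marked trivial cofibration by running Proposition \ref{prop:forestsetsH2}'s argument with markings, then concluding via (H1) and two-out-of-three as at the end of Lemma \ref{lem:forestsetsH2} --- is what the authors tacitly endorse. However, the paragraph about the generator $C_1^\flat \to C_1^\sharp$ remains a sketch rather than an argument, and the appeal to root anodynes of type $(M_2)$ as a ``remedy'' is not obviously the right reduction; these details would need to be filled in before this part of your proof is complete.
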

\begin{proof}
Let $i: A \longrightarrow B$ be a cofibration of marked forest sets and let $j: M \longrightarrow N$ be a monomorphism of simplicial sets. We have to show that the pushout-product
\begin{equation*}
N^\sharp \otimes A \cup M^\sharp \otimes B \longrightarrow N^\sharp \otimes B
\end{equation*}
is a cofibration, which is trivial if either $i$ or $j$ is a weak equivalence. The fact that it is a cofibration follows from the corresponding fact for $\mathbf{fSets}$, since cofibrations are defined on the level of underlying forest sets. To prove the second part, first consider the case where $i$ is trivial. By Lemma \ref{lem:markedtrivcof} it suffices to treat the case where $i$ is marked anodyne. In this case the pushout-product is a trivial cofibration by Lemmas \ref{lem:poprodmarkedanodyne}, \ref{lem:poprodmarkedanodyne2} and \ref{lem:markedanodequiv}. In case $j$ is assumed to be a trivial cofibration of simplicial sets, we argue as follows. We will show that
\begin{equation*}
\mathrm{Map}^\sharp(N^\sharp \otimes B, E^\natural) \longrightarrow \mathrm{Map}^\sharp(N^\sharp \otimes A \cup M^\sharp \otimes B, E^\natural)
\end{equation*}
is a trivial fibration of simplicial sets, for any operadically local $E$. 
Consider a lifting problem
\[
\xymatrix@C=40pt{
\partial \Delta^n \ar[d]\ar[r] & \mathrm{Map}^\sharp(N^\sharp \otimes B, E^\natural) \ar[d] \\
\Delta^n \ar[r] & \mathrm{Map}^\sharp(N^\sharp \otimes A \cup M^\sharp \otimes B, E^\natural).
}
\]
By symmetry of the enrichment, this is equivalent to the lifting problem
\[
\xymatrix@C=40pt{
M \ar[d]\ar[r] & \mathrm{Map}^\sharp((\Delta^n)^\sharp \otimes B, E^\natural) \ar[d] \\
N \ar[r] & \mathrm{Map}^\sharp((\Delta^n)^\sharp \otimes A \cup (\partial \Delta^n)^\sharp \otimes B, E^\natural).
}
\]
The right vertical map is a Kan fibration by Proposition \ref{prop:fibmarkedmappingspace}, so a lift exists. $\Box$
\end{proof}


Finally, let us also note the following, which we will need in the next chapter.

\begin{lemma}
\label{lem:leafanodmarkedequiv}
Leaf anodyne maps (see Remark \ref{rmk:coCartequivalences}) are marked equivalences.
\end{lemma}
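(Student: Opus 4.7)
The plan is to reduce to the generating leaf anodynes and then use the mapping-space characterization of marked equivalences. More precisely, since the class of marked trivial cofibrations is closed under pushouts, transfinite compositions and retracts, it suffices to show that each generator
\[
i \colon \bigl(\Lambda^l[F],\,\mathcal{E} \cap \Lambda^l[F]\bigr) \longrightarrow (F, \mathcal{E})
\]
is a marked trivial cofibration. Both source and target are normal, so by the very definition of marked equivalence together with Proposition \ref{prop:fibmarkedmappingspace} it is enough to prove that for every operadically local forest set $E$ the induced map
\[
\mathrm{Map}^\flat\bigl((F,\mathcal{E}), E^\natural\bigr) \longrightarrow \mathrm{Map}^\flat\bigl((\Lambda^l[F],\mathcal{E}\cap\Lambda^l[F]), E^\natural\bigr)
\]
is a trivial fibration of simplicial sets (being automatically a categorical fibration, the equivalence condition required for a weak equivalence between normal objects is then automatic).

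By adjunction, this amounts to solving every lifting problem of the form
\[
\xymatrix{
(\Delta^n)^\flat \otimes (\Lambda^l[F], \mathcal{E}\cap\Lambda^l[F]) \cup (\partial\Delta^n)^\flat \otimes (F, \mathcal{E}) \ar[d] \ar[r] & E^\natural \\
(\Delta^n)^\flat \otimes (F, \mathcal{E}) \ar@{-->}[ur] &
}
\]
for $n \geq 0$. The crucial input is Proposition \ref{prop:poprodleafanodyne}: since $(\Delta^n)^\flat$ is simplicial (so the hypothesis on $Y$ or $B$ being simplicial is met), the left-hand vertical map is a (possibly transfinite) composition of pushouts of leaf anodyne maps and inner anodyne maps of the form $\Lambda^e[G]^\flat \to G^\flat$. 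It therefore suffices to check that $E^\natural$ lifts against each such generator separately.

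Lifts against inner anodyne maps $\Lambda^e[G]^\flat \to G^\flat$ exist because the underlying forest set $aE$ is operadically local. Lifts against the generators of the leaf anodyne class are guaranteed by Theorem \ref{thm:coCartequivalences} (equivalently, by Remark \ref{rmk:coCartequivalences}, which phrases it exactly as the right lifting property of $E^\natural$ against leaf anodynes). Closure of right lifting under pushouts and transfinite compositions then handles the whole class produced by Proposition \ref{prop:poprodleafanodyne}. There is no real obstacle; the only thing one has to watch is that the markings on the constituent leaf and inner anodynes appearing in the decomposition of the pushout-product match those for which Theorem \ref{thm:coCartequivalences} applies, which is precisely what the proof of Proposition \ref{prop:poprodleafanodyne} was set up to ensure.
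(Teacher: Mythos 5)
Your proof is correct and follows essentially the same route as the paper: reduce to generating leaf anodynes, deduce the trivial-fibration condition on $\mathrm{Map}^\flat(-,E^\natural)$ from Proposition \ref{prop:poprodleafanodyne}, and use Theorem \ref{thm:coCartequivalences} (together with operadic locality for the inner anodynes appearing in the decomposition) to supply the lifts. Your spelling out of the adjunction step and the explicit separation of the leaf-anodyne and inner-anodyne lifting conditions is a bit more careful than the paper's compressed phrasing, but the substance is identical.
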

\begin{proof}
Just as in the proof of \ref{lem:markedanodequiv} it suffices to check this for generating leaf anodynes. First, note that by Theorem \ref{thm:coCartequivalences} every fibrant object $E^\natural$ of $\mathbf{fSets}^+$ has the right lifting property with respect to leaf anodynes. Now, let $X \longrightarrow Y$ be a generating leaf anodyne map and let $E$ be an operadically local forest set. We have to show that the map
\begin{equation*}
\mathrm{Map}^\flat(Y, E^\natural) \longrightarrow \mathrm{Map}^\flat(X, E^\natural)
\end{equation*}
is a categorical equivalence. It is in fact a trivial fibration; indeed, this now follows from the fact that for any monomorphism $A \longrightarrow B$ of simplicial sets, the pushout-product
\begin{equation*}
 B^\flat \otimes X \cup A^\flat \otimes Y  \longrightarrow B^\flat \otimes Y
\end{equation*}
is leaf anodyne (which follows from Proposition \ref{prop:poprodleafanodyne}) and the fact that $E^\natural$ has the right lifting property with respect to leaf anodynes.  $\Box$
\end{proof}

\subsection{Marked dendroidal sets}
\label{sec:markeddsets}

In much the same way as we did for forest sets, one can establish a category of marked dendroidal sets with a corresponding model structure. All proofs can be given in analogy with what was done for forest sets, or the relevant results can be derived from those for forest sets by applying $u^*$ and using that it preserves tensor products. We briefly summarize what we need. \par 
A \emph{marked dendroidal set} is a pair $(X, \mathcal{E})$ where $X$ is a dendroidal set and $\mathcal{E}$ is a subset of the set of 1-corollas of $X$ containing all degenerate 1-corollas. Together with the maps preserving marked 1-corollas, marked dendroidal sets form a category $\mathbf{dSets}^+$. There is a forgetful functor 
\begin{equation*}
a: \mathbf{dSets}^+ \longrightarrow \mathbf{dSets}
\end{equation*}
which has left and right adjoints $(-)^\flat$ and $(-)^\sharp$ respectively. Using the tensor product on the category of dendroidal sets, we obtain a tensor product for marked dendroidal sets by defining
\begin{equation*}
(X, \mathcal{E}_X) \otimes (Y, \mathcal{E}_Y) \, := \, (X \otimes Y, \mathcal{E}_X \times \mathcal{E}_Y). 
\end{equation*}
This tensor product can be used to construct simplicial mapping objects $\mathrm{Map}^+(-,-)$, $\mathrm{Map}^\flat(-,-)$ and $\mathrm{Map}^\sharp(-,-)$ as before. We define cofibrations, normalizations and marked equivalences of marked dendroidal sets by obvious analogy with the corresponding definitions for marked forest sets. We can also define the marked anodyne maps of marked dendroidal sets to simply be the image under $u^*$ of the marked anodyne maps of forest sets. Of course, we do not have to worry about marked anodynes of type ($M_5$), since these are all sent to isomorphisms by $u^*$. For the same reason, we only have to consider the tree versions of ($M_1$) and ($M_2$).

\begin{theorem}
There exists a left proper, cofibrantly generated model structure on the category $\mathbf{dSets}^+$ such that:
\begin{itemize}
\item[(C)] The cofibrations are the normal monomorphisms.
\item[(W)] The weak equivalences are the marked equivalences.
\end{itemize}
Furthermore, this model structure enjoys the following properties:
\begin{itemize}
\item[(i)] An object is fibrant if and only if it is of the form $E^\natural$, for a dendroidal $\infty$-operad $E$.
\item[(ii)] A map $f$ between fibrant objects is a fibration if and only if it has the right lifting property with respect to all marked anodyne morphisms.
\item[(iii)] With the weak simplicial enrichment on $\mathbf{dSets}^+$ corresponding to the mapping objects $\mathrm{Map}^\sharp(-,-)$, the model structure is homotopically enriched over the Kan-Quillen model structure on simplicial sets.
\end{itemize}
\end{theorem}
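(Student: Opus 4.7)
The plan is to repeat the proof of Theorem \ref{thm:markedfSets} essentially verbatim, with trees in place of forests. Virtually every technical ingredient has a dendroidal counterpart that is either cheaper to establish or can be transported via the left Quillen equivalence $u^* \colon \mathbf{fSets}^+ \to \mathbf{dSets}^+$, which preserves tensor products, cofibrations, inner/root/leaf horns of trees and sends marked anodynes of type $(M_5)$ to isomorphisms. I would proceed in three stages: set up the combinatorial pushout-product machinery for marked anodynes; establish the mapping-space/fibrancy characterization; and then assemble the model structure via the small object argument plus Bousfield-style generation of trivial cofibrations by countable objects.

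First, the analogue of Lemma \ref{lem:poprodmarkedanodyne} (pushout-product stability of marked anodynes against normal monos $\partial \Omega[T]^\flat \to \Omega[T]^\flat$ and $C_1^\flat \to C_1^\sharp$) is deduced case by case just as in \ref{lem:poprodmarkedanodyne}, the necessary pushout-product results for inner and root anodynes of trees being the dendroidal shadows of Propositions \ref{prop:poprodinneranodyne} and \ref{prop:poprodrootanodyne} (no sum-axiom of type $(M_5)$ is needed). This gives, in succession, the dendroidal versions of Lemma \ref{lem:markedmappingspace}, Corollary \ref{cor:markedmappingspace}, Proposition \ref{prop:fibmarkedmappingspace} and Lemma \ref{lem:expfibmarkedfset}, so that for any normal marked dendroidal $A$ and any $\infty$-operad $E$, $\mathrm{Map}^\flat(A, E^\natural)$ is an $\infty$-category and $\mathrm{Map}^\sharp(A, E^\natural)$ its maximal Kan complex. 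Next, the characterization of marked-anodyne-injectives as precisely the objects $E^\natural$ with $E$ a dendroidal $\infty$-operad (analogue of Proposition \ref{prop:RLPmarkedanodynes}) uses the dendroidal versions of Theorems \ref{thm:Cartequivalences} and \ref{thm:coCartequivalences}, which are Theorem 4.2 and Theorem A.7 of \cite{cisinskimoerdijk1} and were in fact the starting point of the forest-set versions here.

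With these inputs, the model axioms go through exactly as in the proof of Theorem \ref{thm:markedfSets}: CM1--3 are formal; CM5 is produced by the small object argument applied to $\{\partial \Omega[T]^\flat \to \Omega[T]^\flat\} \cup \{C_1^\flat \to C_1^\sharp\}$ and to the countable trivial cofibrations (analogue of Lemma \ref{lem:countablegenmarkedtrivcof}), noting that a map with the right lifting property against all cofibrations is a marked equivalence by the dendroidal analogue of Lemma \ref{lem:RLPwrtcofibrations}; CM4 follows by the standard retract-of-a-trivial-fibration trick; and left properness is deduced as in Lemma \ref{lem:leftproper} from the $\mathrm{Map}^\sharp$-reformulation of marked equivalence (the dendroidal analogue of Proposition \ref{prop:markedequivmapsharp}) together with right properness of the Kan--Quillen model structure. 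Properties (i) and (ii) are then immediate from the characterization of marked-anodyne-injectives and the same homotopy-retract argument used in Theorem \ref{thm:markedfSets}(ii), while (iii) is the pushout-product argument of Lemma \ref{lem:markedmodelstrucsimplicial}. The one step that merits care, but which is in no way an obstacle, is the transfer of the root-anodyne pushout-product proposition along $u^*$: one must check that the combinatorial shuffling argument of Proposition \ref{prop:poprodrootanodyne} never creates a forest with more than one component when the inputs are trees and applied through $u^*$, which is clear from the construction. Since no other step involves the distinction between trees and forests, the proof goes through.
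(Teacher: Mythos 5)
Your proposal takes essentially the same approach as the paper, which explicitly says that "all proofs can be given in analogy with what was done for forest sets, or the relevant results can be derived from those for forest sets by applying $u^*$," and then defines the marked anodynes in $\mathbf{dSets}^+$ as the images under $u^*$ of those in $\mathbf{fSets}^+$ (with types $(M_5)$ and the sum versions of $(M_1)$, $(M_2)$ disappearing for exactly the reason you note). Your filled-in details, including pulling the inner/root/leaf pushout-product lemmas through $u^*$ and invoking Theorems 4.2 and A.7 of \cite{cisinskimoerdijk1} for the dendroidal equivalence lemmas, match what the paper has in mind.
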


\begin{corollary}
\label{cor:equivmarkedunmarked2}
In the following commutative square all functors are left Quillen and induce Quillen equivalences:
\[
\xymatrix{
\mathbf{dSets} \ar[d]_{(-)^\flat}& \mathbf{fSets} \ar[l]_{u^*} \ar[d]^{(-)^\flat} \\
\mathbf{dSets}^+ & \mathbf{fSets}^+ \ar[l]^{u^*}
}
\]
\end{corollary}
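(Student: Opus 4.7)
The plan for this corollary is to use what is already established and close the remaining gap by a formal two-out-of-three argument on the square of derived functors. First I would verify that the square commutes: both composites $\mathbf{fSets} \rightrightarrows \mathbf{dSets}^+$ send a forest set $Y$ to $(u^*Y)^\flat$, since $u^*$ acts trivially on the set of $1$-corollas (as $u$ is fully faithful, $(u^*Z)(C_1) = Z(uC_1)$) and commutes with the operation of marking only the degenerate $1$-corollas. Three of the four arrows are already known to be left Quillen equivalences: the top arrow by Theorem~\ref{thm:equivfSetsdSets}, the right arrow by Corollary~\ref{cor:equivmarkedunmarked}, and the left arrow by the dendroidal analogue of Corollary~\ref{cor:equivmarkedunmarked}. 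The proof of this last statement is formally identical to that of Corollary~\ref{cor:equivmarkedunmarked}: one observes that $(-)^\flat$ preserves normal monomorphisms, and that for $X \in \mathbf{dSets}$ and $E$ a dendroidal $\infty$-operad there is a canonical identification $\mathrm{Map}^\flat(X^\flat, E^\natural) \simeq \mathbf{hom}(X,E)$, whence both preservation of weak equivalences between cofibrant objects and the Quillen equivalence property follow directly.

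The one new verification needed is that the bottom arrow $u^*: \mathbf{fSets}^+ \to \mathbf{dSets}^+$ is itself a left Quillen functor. Preservation of cofibrations is immediate, since cofibrations in both categories are defined on the underlying unmarked presheaves and $u^*: \mathbf{fSets} \to \mathbf{dSets}$ is already known to preserve normal monomorphisms. For acyclic cofibrations I would appeal to the efficient generating set of Lemma~\ref{lem:markedtrivcof} and verify that $u^*$ sends each generator (a)--(e) to a marked trivial cofibration of $\mathbf{dSets}^+$. Explicitly: the forest Segal core $\mathrm{fSc}(uT)^\flat \to (uT)^\flat$ of (a) is sent to the dendroidal Segal core $\mathrm{Sc}(T)^\flat \to T^\flat$, using that $u^*$ is a left adjoint sending $\oplus$ to $\amalg$ together with the inductive description of the Segal core (Remark~\ref{rmk:Segalcore}); the root anodyne of (b) for a tree $T$ with unary root is sent to the dendroidal root anodyne, via the fact that when the root of $T$ is unary the map $u(\partial_{\mathrm{root}} T) \to \partial_{\mathrm{root}}(uT)$ is an isomorphism, so that $\Lambda^r[uT] = u_!\Lambda^r[T]$ and $u^*$ recovers $\Lambda^r[T] \to T$; generators (c) and (d) are sent to their literal counterparts under the identity $u^*u_!i_! = i_!$; and the sum-to-coproduct map of (e) is sent by $u^*$ to an identity. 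Saturation and two-out-of-three among cofibrations (cf.~Lemma~\ref{lem:markedtrivcof}) then give that $u^*$ preserves all marked trivial cofibrations.

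Having established that all four arrows are left Quillen, a two-out-of-three argument on the induced square of derived functors on homotopy categories concludes the proof. The left, top and right arrows are equivalences, so the down-right composite $\mathbb{L}(-)^\flat \circ \mathbb{L}u^*: \mathrm{Ho}(\mathbf{fSets}) \to \mathrm{Ho}(\mathbf{dSets}^+)$ is an equivalence; this composite coincides with the right-down composite $\mathbb{L}u^* \circ \mathbb{L}(-)^\flat$, so cancelling the equivalence $\mathbb{L}(-)^\flat$ on the right shows that $\mathbb{L}u^*: \mathrm{Ho}(\mathbf{fSets}^+) \to \mathrm{Ho}(\mathbf{dSets}^+)$ is itself an equivalence, which is precisely the statement that the bottom adjunction is a Quillen equivalence. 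The main combinatorial obstacle is the verification in (b), which hinges specifically on the unary root hypothesis; the remaining steps are either direct quotations from earlier results or formal consequences of the two-out-of-three property.
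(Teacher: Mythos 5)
Your proposal is correct and follows the same overall strategy as the paper's proof: establish the left, top and right arrows as left Quillen equivalences, observe that the bottom $u^*$ preserves cofibrations and sends the generators of Lemma~\ref{lem:markedtrivcof} to trivial cofibrations, and conclude by two-out-of-three on derived functors (the paper is terser, delegating the left-Quillen-ness of the bottom arrow to its choice, in Section~\ref{sec:markeddsets}, to \emph{define} the marked anodynes of $\mathbf{dSets}^+$ as $u^*$-images of those of $\mathbf{fSets}^+$). One small imprecision in your verification of generator (b): the claimed identity $\Lambda^r[uT] = u_!\Lambda^r[T]$ is doubtful, because $u_!$ need not send monomorphisms to monomorphisms (Remark~\ref{rmk:warning}) — the intersections of non-root faces in $\mathbf{fSets}$ can involve direct sums $\oplus$ that $u_!$ does not reproduce from the $\mathbf{dSets}$-side unions; however the conclusion you actually need, namely $u^*(\Lambda^r[uT]) = \Lambda^r[T]$, is true and follows directly from $u^*$ preserving colimits and intersections together with $u^*u_! = \mathrm{id}$, so the argument goes through unchanged.
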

\begin{proof}
As in the proof of Corollary \ref{cor:equivmarkedunmarked} one shows that the left vertical functor is part of a Quillen equivalence. We already know that the top and right functors induce Quillen equivalences and hence so does the bottom one. $\Box$
\end{proof}

\section{The dendrification functor}
\label{chap:dendrification}

\subsection{The functor $\omega$}

In this chapter we will finally relate the category of open forest sets to Lurie's category of non-unital $\infty$-preoperads. We start by defining a functor
\begin{equation*}
\omega: \mathbf{\Delta}/N\mathbf{F}_o \longrightarrow \mathbf{fSets}_o
\end{equation*}
taking simplices in the nerve of $\mathbf{F}_o$ to forest sets. We will refer to it as the \emph{dendrification functor} (even though strictly speaking its values only become dendroidal sets after applying the functor $u^*$). We gave a heuristic description of this functor in Section \ref{sec:mainresults}; we now define it properly. Let us first describe $\omega$ on objects. Suppose we have a simplex
\begin{equation*}
A: \Delta^n \longrightarrow N\mathbf{F}_o.
\end{equation*}
If this simplex is the constant $n$-simplex with image $\langle 0 \rangle$, we define
\begin{equation*}
\omega(A) := \varnothing
\end{equation*}
Otherwise, $\omega(A)$ will be a representable forest set defined as follows:
\begin{itemize}
\item[(i)] The set of edges of the forest $\omega(A)$ is $\coprod_{i=0}^n A(i)$.
\item[(ii)] For every $a \in A(i)$, $i > 0$, there is a vertex $v_a$ with output $a$ (i.e. attached to the top of the edge $a$). An edge $b \in A(i-1)$ is an input of $v_a$, for $a \in A(i)$, if the map $A(i-1) \longrightarrow A(i)$ sends $b$ to $a$.
\end{itemize}

It might help the reader's intuition to see how this works in a picture; a typical example was already drawn in Section \ref{sec:mainresults}.

\begin{remark}
It might seem odd that we do not construct $\omega$ in such a way that $\omega(A)$ is \emph{always} representable. We could add an object $O$ to the category $\mathbf{\Phi}$ respresenting the empty forest and define $\omega(\langle 0 \rangle) = O$. However, this causes several problems elsewhere. In particular, the functor $\bar\omega^*$ we construct later will not be left Quillen.
\end{remark}

Let us now define the dendrification functor $\omega$ on morphisms in the category $\mathbf{\Delta}/N\mathbf{F}_o$. It suffices to do this on faces and degeneracies and check that the simplicial relations hold. We start with faces. So assume we have a diagram
\[
\xymatrix{
\Delta^{n-1} \ar[rr]^{\partial_i} \ar[dr]_{d_i A} & & \Delta^n \ar[dl]^A \\
& N\mathbf{F}_o &  
}
\]
The cases where $A$ or $d_i A$ is the degenerate simplex at $\langle 0 \rangle$ are uniquely determined by the fact that $\varnothing$ is the initial object in $\mathbf{fSets}_o$, so let us assume that both $\omega(A)$ and $\omega(d_i A)$ are forests. The map of forests $\omega(d_i A) \longrightarrow \omega(A)$ is induced by the evident map on edges. We can describe it explicitly as follows:
\begin{itemize}
\item[$i=0$:] The map is a composition of external faces chopping off all edges in $A(0)$ and all the vertices $v_a$ for $a \in A(1)$.  
\item[$i>0$:] The map is a composition of faces contracting all edges $e$ in the domain of definition of the partial map $A(i) \longrightarrow A(i+1)$ (these are all inner edges) and root faces chopping off $v_r$ and $r$ for all $r \in A(i)$ that are \emph{not} in the domain of definition of that partial map (note that these are indeed roots of constituent trees of the forest $\omega(A)$).
\end{itemize}

As an example, we can consider the maps $d_0 A \longrightarrow A$ and $d_1 A \longrightarrow A$ for the simplex $A$ we drew in Section \ref{sec:mainresults}. They can be pictured as follows: 

\[
\begin{tikzpicture} 
[level distance=10mm, 
every node/.style={fill, circle, minimum size=.1cm, inner sep=0pt}, 
level 1/.style={sibling distance=20mm}, 
level 2/.style={sibling distance=10mm}, 
level 3/.style={sibling distance=5mm}]

\node (lefttree)[style={color=white}] {} [grow'=up] 
child {node (level1) {} 
	child{ node (level2) {}
		child
		child
		child
	}
	child{ node {}
		child
	}
};

\node (righttree)[style={color=white}, right = 1.5cm of lefttree] {};
\node (righttreestart)[style={color=white}, above = .88cm of righttree] {} [grow'=up] 
child {node {} 
	child
	child
};

\node (Llefttree)[style={color=white}, left = 5cm of righttree] {};
\node (Llefttreestart)[style={color=white}, below = 3cm of Llefttree] {}[grow'=up] 
child {node (Llevel1) {} 
	child
	child
	child
	child
};

\node (Lrighttree)[style={color=white}, right = 1.7cm of Llefttreestart] {};
\node (Lrighttreestart)[style={color=white}, above = .88cm of Lrighttree] {} [grow'=up] 
child {};

\node (Lrighttree)[style={color=white}, right = 2.4cm of Llefttreestart] {};
\node (Lrighttreestart)[style={color=white}, above = .88cm of Lrighttree] {} [grow'=up] 
child {};

\node (Rlefttree)[style={color=white}, right = 2cm of righttree] {};
\node (Rlefttreestart)[style={color=white}, below = 3cm of Rlefttree] {}[grow'=up] 
child {node (Rlevel1) {} 
	child
	child
};

\node (Rrighttree)[style={color=white}, right = 1cm of Rlefttreestart] {};
\node (Rrighttreestart)[style={color=white}, above = .88cm of Rrighttree] {} [grow'=up] 
child {};

\tikzstyle{every node}=[]

\draw[dashed] ($(level1) + (-1.5cm, 0)$) -- ($(level1) + (2.5cm, 0)$);
\draw[dashed] ($(level1) + (-1.5cm, 1cm)$) -- ($(level1) + (2.5cm, 1cm)$);
\draw[dashed] ($(Llevel1) + (-1.5cm, 0)$) -- ($(Llevel1) + (2.8cm, 0)$);
\draw[dashed] ($(Rlevel1) + (-1.5cm, 0)$) -- ($(Rlevel1) + (2.1cm, 0)$);

\node at ($(level1) + (-1.5cm, 1.5cm)$) {$0$};
\node at ($(level1) + (-1.5cm, .5cm)$) {$1$};
\node at ($(level1) + (-1.5cm, -.5cm)$) {$2$};
\node at ($(Llevel1) + (-1.5cm, .5cm)$) {$0$};
\node at ($(Llevel1) + (-1.5cm, -.5cm)$) {$2$};
\node at ($(Rlevel1) + (2cm, .5cm)$) {$1$};
\node at ($(Rlevel1) + (2cm, -.5cm)$) {$2$};

\draw[->] ($(Llefttreestart) + (0.4cm,2.5cm)$) -- node[above]{$\partial_1$} ($(Llefttreestart) + (1.4cm, 3.5cm)$);
\draw[->] ($(Rlefttreestart) + (0cm,2.5cm)$) -- node[above]{$\partial_0$} ($(Rlefttreestart) + (-1cm, 3.5cm)$);

\end{tikzpicture} 
\]

Let us now consider a degeneracy map. Suppose we have a diagram
\[
\xymatrix{
\Delta^{n+1} \ar[rr]^{\sigma_j} \ar[dr]_{s_j A} & & \Delta^n \ar[dl]^{A} \\
& N\mathbf{F}_o &
}
\]
The map $\sigma_j$ is the degeneracy identifying $j$ and $j+1$. Again, the map $\omega(s_j A) \longrightarrow \omega(A)$ is the evident one on edges. All vertices $v_a$ for $a \in s_j A(j+1)$ are unary and they are mapped to $\mathrm{id}_a$ in $\omega(A)$. In particular, the map $\omega(s_j A) \longrightarrow \omega(A)$ is a composition of degeneracies, one for each $a \in A(j)$. \par
It remains to verify the simplicial relations in order for $\omega$ to define a functor. But a map of forests is uniquely determined by what it does on edges, so these relations must be satisfied, simply because the maps on edges satisfy them.  

\subsection{Two Quillen pairs induced by $\omega$}
\label{sec:omegaQpairs}

In this section we will discuss how the dendrification functor
\begin{equation*}
\omega: \mathbf{\Delta}/N\mathbf{F}_o \longrightarrow \mathbf{fSets}_o
\end{equation*}
induces two adjoint pairs of functors. These pairs are in fact Quillen pairs, but the proofs that they are will be postponed until Sections \ref{sec:omega!leftQ} and \ref{sec:omega*leftQ}. \par 
First of all, by left Kan extension, the functor $\omega$ induces an adjoint pair of functors
\[
\xymatrix@C=40pt{
\omega_!: \mathbf{sSets}/N\mathbf{F}_o \ar@<.5ex>[r] & \mathbf{fSets}_o: \omega^* \ar@<.5ex>[l]
}
\]
completely determined (up to natural isomorphism) by the requirement that $\omega_!$ agrees with $\omega$ on representables $A: \Delta^n \longrightarrow N\mathbf{F}_o$. The following simple observation, which is clear from our definitions, will be of crucial importance later on:

\begin{lemma}
\label{lem:omega*pointed}
For a representable object $A: \Delta^n \longrightarrow N\mathbf{F}_o$ of $\mathbf{sSets}/N\mathbf{F}_o$, the forest set $\omega_!(A)$ is again representable, except when $A$ is the degenerate simplex at $\langle 0 \rangle$. In that case, $\omega_!(\langle 0 \rangle) = \varnothing$. So for an arbitrary forest set $X$, the set $\omega^*(X)(\langle 0 \rangle)$ is a one-point set.
\end{lemma}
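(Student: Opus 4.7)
The statement is essentially an immediate unpacking of the explicit construction of $\omega$ given just before the lemma, so the plan is really just to organize that unpacking carefully. First I would record the standard fact that for a functor $\omega$ out of a small category $\mathcal{C}$, the left Kan extension $\omega_!: \mathbf{Sets}^{\mathcal{C}^{\mathrm{op}}} \to \mathbf{fSets}_o$ (and similarly for simplicial-set valued presheaves on the simplex subcategory) agrees with $\omega$ on representables; here this lets me identify $\omega_!(A)$ with $\omega(A)$ whenever $A : \Delta^n \to N\mathbf{F}_o$ is a representable object of $\mathbf{sSets}/N\mathbf{F}_o$, i.e. an object of $\mathbf{\Delta}/N\mathbf{F}_o$.

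With that identification in hand, the first assertion is read off directly from the definition: the construction of $\omega(A)$ produces an object of $\mathbf{\Phi}_o$ (an open forest) with edge set $\coprod_{i=0}^n A(i)$ and vertex set indexed by $\coprod_{i>0} A(i)$, and this yields a representable forest set in $\mathbf{fSets}_o$ via $u_!$. The sole exception is the case where $A$ is constant at $\langle 0 \rangle$, because then both the edge and vertex indexing sets are empty, and by explicit convention in the construction we set $\omega(A) = \varnothing$ in $\mathbf{fSets}_o$.

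For the final assertion about $\omega^*(X)(\langle 0 \rangle)$, I would simply invoke the adjunction. Since $\omega^*$ is right adjoint to $\omega_!$, for any representable $A$ we have
\[
\omega^*(X)(A) \;=\; \mathrm{Hom}_{\mathbf{fSets}_o}(\omega_!(A), X).
\]
Applying this to $A = \langle 0 \rangle$, viewed as the degenerate $0$-simplex, and using the value $\omega_!(\langle 0 \rangle) = \varnothing$ just established, gives $\omega^*(X)(\langle 0 \rangle) = \mathrm{Hom}_{\mathbf{fSets}_o}(\varnothing, X)$, which is a one-point set because $\varnothing$ is initial in $\mathbf{fSets}_o$.

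There is no real obstacle here; the only thing to be careful about is the mild notational point that a representable of $\mathbf{sSets}/N\mathbf{F}_o$ means an honest simplex of $N\mathbf{F}_o$ and thus an object of the indexing category $\mathbf{\Delta}/N\mathbf{F}_o$ on which $\omega$ is defined, so that the left Kan extension formula applies directly. The significance of the lemma, which the subsequent chapter exploits, is that $\omega_!$ remembers the base point $\langle 0 \rangle$ only in the universal way through the initial object, so that $\omega^*$ always lands in essentially pointed forest sets.
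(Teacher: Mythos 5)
Your proposal is correct and matches the paper's (tersely stated) reasoning, which presents the lemma as "clear from our definitions." You correctly identify the three ingredients: the left Kan extension agrees with $\omega$ on representables (i.e.\ on objects of $\mathbf{\Delta}/N\mathbf{F}_o$), the explicit construction of $\omega(A)$ yields a representable open forest unless $A$ is constant at $\langle 0 \rangle$ in which case $\omega(A)=\varnothing$ by fiat, and the adjunction formula together with initiality of $\varnothing$ gives $\omega^*(X)(\langle 0 \rangle) = \mathrm{Hom}(\varnothing,X) = *$. The only quibble is the phrase ``degenerate $0$-simplex'' — you just mean the vertex $\langle 0 \rangle$ — which does not affect the argument.
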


We now wish to lift this adjoint pair between the categories `without markings' to an adjoint pair of functors
\[
\xymatrix@C=40pt{
\omega_!: \mathbf{POp}_o \ar@<.5ex>[r] & \mathbf{fSets}_o^+: \omega^* \ar@<.5ex>[l]
}
\]
between the categories `with markings'. If $A: (\Delta^1)^\sharp \longrightarrow N\mathbf{F}_o^\natural$ is a marked $1$-simplex over $N\mathbf{F}_o$, then $\omega_!(A)$ is (the presheaf represented by) the marked forest given by marking (the corollas corresponding to) the unary vertices $v_a$ for each $a \in A(1)$. This completely determines the functor $\omega_!$. In the other direction, for a marked forest set $X$, the marked edges of $\omega^*(X)$ are determined by adjunction. Indeed, a 1-simplex in $\omega^*(X)$ is a map
\[
\xymatrix{
(\Delta^1)^\flat \ar[rr]^{\alpha}\ar[dr]_A & & \omega^*(X) \ar[dl] \\
& N\mathbf{F}_o^\natural & 
}
\]
or equivalently, a map $\hat\alpha: \omega_!(A) \longrightarrow X$. Such a map is marked precisely when $A$ extends to a map $(\Delta^1)^\sharp \longrightarrow N\mathbf{F}_o^\natural$ while $\hat\alpha_a: u_!(C_1)^\flat \longrightarrow X$ is marked in $X$ (i.e. factors through $u_!(C_1)^\sharp$) for every $a \in A(1)$.

\begin{lemma}
\label{lem:omega!prescof}
The functor $\omega_!: \mathbf{POp}_o \longrightarrow \mathbf{fSets}_o^+$ preserves cofibrations.
\end{lemma}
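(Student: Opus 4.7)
The plan is to reduce to generating cofibrations and then analyze the two generator types. Since $\omega_!$ is a left adjoint, it preserves all colimits, so it sends transfinite compositions of pushouts of generators to transfinite compositions of pushouts of the images. Hence it suffices to show $\omega_!$ sends a generating set of cofibrations of $\mathbf{POp}_o$ to normal monomorphisms. The cofibrations of $\mathbf{POp}_o = \mathbf{sSets}^+/N\mathbf{F}_o^\natural$ are generated by (a) boundary inclusions $(\partial\Delta^n)^\flat \to (\Delta^n)^\flat$ equipped with a structure map $A: \Delta^n \to N\mathbf{F}_o$, and (b) marking additions $(\Delta^1)^\flat \to (\Delta^1)^\sharp$ equipped with an inert edge $e: \Delta^1 \to N\mathbf{F}_o$.

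For the generators of type (b), by the explicit description of $\omega$, the forest $\omega(e)$ has only unary vertices: for each $a \in e(1)$ there is a unary corolla $v_a$ (since $e$ inert means $e(1) \to e(0)$ is injective with domain the partial-map domain), plus some isolated edges for elements of $e(0)$ outside the domain. The underlying forest set of $\omega_!$ of this generator is unchanged; only the unary corollas $v_a$ acquire markings. This map is therefore a pushout of copies of the generating cofibration $(C_1)^\flat \to (C_1)^\sharp$ of $\mathbf{fSets}_o^+$, hence a normal monomorphism.

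For the generators of type (a), if $A$ is constant at $\langle 0 \rangle$ both sides are $\varnothing$ and we are done. Otherwise $\omega(A) = F_A$ is a representable open forest, and since the markings are minimal on both sides it suffices to show that the underlying map $\omega_!(\partial\Delta^n) \to F_A$ of forest sets is a normal monomorphism. Each face $\omega(d_i A) \to F_A$ is a composition of face maps in $\mathbf{\Phi}$ (leaf faces for $i=0$, and inner and root faces for $i>0$), hence is a monomorphism of representable forest sets; we may identify $\omega(d_i A)$ with its image, a sub-forest set of $F_A$. The image of $\omega_!(\partial\Delta^n) \to F_A$ is thus the union $B_A := \bigcup_{i=0}^n \omega(d_i A) \subseteq F_A$, and being a subobject of the normal forest set $F_A$ it is automatically normal by Lemma \ref{lem:normalovernormal}.

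It remains to show that the canonical map $\omega_!(\partial\Delta^n) \to B_A$ is an isomorphism. Writing $\partial\Delta^n$ as the colimit of the Čech-type diagram indexed by non-empty proper subsets $\{i_1 < \cdots < i_k\} \subseteq \{0,\ldots,n\}$ of the faces $d_{i_1}\cdots d_{i_k}\Delta^n$ and applying the colimit-preserving functor $\omega_!$, the assertion reduces to the combinatorial claim that, inside $F_A$,
\[
\omega(d_{i_1} A) \cap \cdots \cap \omega(d_{i_k} A) \;=\; \omega(d_{i_1}\cdots d_{i_k} A)
\]
for every such subset. This is the main technical point and is verified directly from the explicit description of $\omega$: the sub-forest $\omega(d_i A) \subseteq F_A$ is characterized as the result of the level-$i$ operation (trimming the top leaf corollas for $i=0$, or contracting the inner edges and excising the root-face subtrees at level $i$ for $i>0$), and two such operations at distinct levels commute and yield the iterated face. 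The main obstacle lies precisely in this combinatorial bookkeeping — in particular, handling the case where a level-$i$ edge is not in the domain of the partial map to level $i+1$ (so that its subtree is deleted by a root face) — but once the intersection formula is established, the lemma follows.
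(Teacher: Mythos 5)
Your proposal follows the same overall plan as the paper's proof: reduce to the two types of generating cofibrations, observe that the marking-type generator goes to a (composition of pushouts along) $(C_1)^\flat \to (C_1)^\sharp$, and reduce the boundary-inclusion case to the claim that $\omega_!$ carries intersections of faces of $\Delta^n$ to intersections of the corresponding subforests of $\omega_!(A)$. This is exactly the paper's strategy.

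Where you fall short is on that last point. You correctly isolate the intersection formula as the crux, but you describe $\omega(d_iA)$ only \emph{operationally} (as the result of trimming, contracting and excising at level $i$) and then gesture at the commutation of these operations at distinct levels. That commutation is precisely the non-trivial part and is not argued. The paper instead supplies an \emph{intrinsic} characterization that makes the intersection formula immediate: $\omega(d_iA)$ is the maximal subforest of $\omega_!(A)$ not containing any edge corresponding to an element of $A(i)$. With this description, the identity $\omega(d_{i_1}A) \cap \omega(d_{i_2}A) = \omega(\text{iterated face})$ follows at once, since the intersection of maximal subforests avoiding two sets of edges is the maximal subforest avoiding their union. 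Moreover, this characterization depends essentially on the morphisms in $\mathbf{F}_o$ being \emph{surjective}; the paper flags this dependence and, in Remark \ref{rmk:Fo}, exhibits a counterexample showing the lemma (and even the monomorphism claim) fails over $\mathbf{F}$. You mention that root faces are a potential source of trouble, which is in the right neighborhood, but you neither state the surjectivity hypothesis you are implicitly relying on nor give the characterization that actually closes the argument. Supplying that single observation would complete your proof and align it with the paper's.
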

\begin{proof}
The generating cofibrations in $\mathbf{POp}_o$ are
\[
\xymatrix{
(\Delta^1)^\flat \ar[rr]^{\alpha}\ar[dr] & & (\Delta^1)^\sharp \ar[dl]^A \\
& N\mathbf{F}_o^\natural & 
}
\]
and
\[
\xymatrix{
(\partial\Delta^n)^\flat \ar[rr]^{\alpha}\ar[dr] & & (\Delta^n)^\flat \ar[dl]^A \\
& N\mathbf{F}_o^\natural & 
}
\]
The functor $\omega_!$ maps the first one to a direct sum of maps which are either of the form
\begin{equation*}
C_1^\flat \longrightarrow C_1^\sharp
\end{equation*}
(one for each $a \in A(1)$) or of the form
\begin{equation*}
\eta \simeq \eta
\end{equation*}
(one for each $a \in A(0)$ at which $A(0) \longrightarrow A(1)$ is undefined).  As for the second generating cofibration: in the definition of $\omega_!$ we saw that it sends a face inclusion to a composition of face maps (and hence a cofibration) in $\mathbf{fSets}_o^+$. In fact, a face $d_i A$ is sent to the inclusion of the maximal subforest of $\omega_!(A)$ not containing the edges corresponding to elements of $A(i)$ (this uses that the maps in $\mathbf{F}_o$ are surjections, compare Remark \ref{rmk:Fo}). Using this observation one verifies that $\omega_!$ sends an intersection of faces to the intersection of the corresponding subobjects of $\omega_!(A)$, which then implies that it also sends the given boundary inclusion to a cofibration. $\Box$
\end{proof}

\begin{remark}
\label{rmk:Fo}
The observation about the effect of $\omega_!$ on a face inclusion \emph{fails} when we consider an analogous functor $\omega_!$ defined on all of $N\mathbf{F}$. A minimal counterexample is the inclusion $d_0A \rightarrow A$, for $A$ the unique 1-simplex $\langle 0 \rangle \rightarrow \langle 1 \rangle$ of $N\mathbf{F}$. As a consequence, the obvious extension of $\omega_!$ to a functor from $\mathbf{POp}$ to $\mathbf{fSets}^+$ does \emph{not} preserve cofibrations. A counterexample is given by taking $A$ to be the unique 2-simplex
\begin{equation*}
\langle 1 \rangle \rightarrow \langle 0 \rangle \rightarrow \langle 1 \rangle 
\end{equation*}
and considering the inclusion
\[
\xymatrix{
\Lambda_2^2 \ar[dr]\ar[rr] & & \Delta^2 \ar[dl]^A \\
& N\mathbf{F} & 
}
\]
Indeed, applying $\omega_!$ to this diagram yields the map of forest sets
\begin{equation*}
\eta \oplus (C_0 \cup_\eta C_0) \longrightarrow \eta \oplus C_0,
\end{equation*}
which is not a monomorphism, since it maps two different nullary operations to a single one.
\end{remark}

In Section \ref{sec:omega!leftQ} we will prove the following:

\begin{proposition*}[See Proposition \ref{prop:omega!leftQ}]
The pair
\[
\xymatrix@C=40pt{
\omega_!: \mathbf{POp}_o \ar@<.5ex>[r] & \mathbf{fSets}_o^+: \omega^* \ar@<.5ex>[l]
}
\]
is a Quillen pair.
\end{proposition*}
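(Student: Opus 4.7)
The plan is to use Lemma \ref{lem:omega!prescof}, which already shows that $\omega_!$ preserves cofibrations, and reduce the question to checking that $\omega_!$ carries a set of generating trivial cofibrations of $\mathbf{POp}_o$ to marked trivial cofibrations in $\mathbf{fSets}_o^+$. For this we invoke Lurie's description of the model structure on $\mathbf{POp}$ in terms of a categorical pattern on $N\mathbf{F}^\natural$, which furnishes an explicit set of ``$P$-anodyne'' generating trivial cofibrations. Restricting to $\mathbf{POp}_o$, these fall into four basic families: (i) inner horn inclusions $(\Lambda^n_k,\mathcal{E})^\flat \to (\Delta^n,\mathcal{E})^\flat$ over $N\mathbf{F}_o^\natural$ for $0<k<n$; (ii) outer horn inclusions $(\Lambda^n_0,\mathcal{E})^\flat \to (\Delta^n,\mathcal{E})^\flat$ whose initial edge is marked (``inert''); (iii) marking axioms of the form $(\Delta^1)^\flat\to (\Delta^1)^\sharp$ over an inert 1-simplex, together with the composition axioms of type $(M_3)$; and (iv) the Segal decomposition maps expressing $\Delta^0 \to N\mathbf{F}_o$ with value $\langle n\rangle$ as a coproduct of inert pieces lying over the $\rho^i$.

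For each family I would analyse the image under $\omega_!$ using the explicit combinatorics of the dendrification functor given in the previous section. For (i), a face $d_k A$ with $0<k<n$ corresponds to contracting the inner edges of $\omega_!(A)$ labelled by the domain of the partial map $A(k-1)\to A(k)$; hence $\omega_!$ sends this horn inclusion to a union of inner face inclusions missing precisely those contractions, which is inner anodyne by Lemma \ref{lem:moreinneranodynes}(b) (and is marked anodyne of type ($M_1$)). For (ii), the missing $0$-face corresponds to chopping off all roots of $\omega_!(A)$ together with the root corollas above them, and the requirement that the $01$-edge be marked translates under $\omega_!$ into the markedness of those unary root corollas; the resulting map is thus a composition of root anodynes in the sense of Remark \ref{rmk:Cartequivalences} and so is marked anodyne of type ($M_2$). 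For (iii), $\omega_!$ produces a direct sum of copies of $C_1^\flat\to C_1^\sharp$, handled by (ii) of Remark \ref{rmk:gencofmarkedfSets} together with Lemma \ref{lem:horn22markedanod} (or directly by ($M_3$)). Finally, for (iv) the Segal decomposition is sent to the sum-collapse map $T_1\amalg\cdots\amalg T_k \to T_1\oplus\cdots\oplus T_k$ (with only degenerate corollas marked), which is a marked trivial cofibration by Lemma \ref{lem:markedtrivcof}(e); equivalently, it is a marked anodyne of type ($M_5$) after passage to $(\Delta^0)^\flat$-tensors.

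The main technical obstacle will be carrying out the analysis of family (i) in full generality. The issue is that a single simplex $A$ of $N\mathbf{F}_o$ can collapse many layers of the forest $\omega_!(A)$ onto each other (because the partial maps $A(i)\to A(i+1)$ need not be bijective), so that different faces of $\Delta^n$ project to subforests whose intersection pattern in $\omega_!(A)$ is subtle. A careful bookkeeping, analogous to the shuffle-and-pruning arguments in the proofs of Propositions \ref{prop:poprodinneranodyne} and \ref{prop:poprodrootanodyne}, is required to show that the image of $\Lambda^n_k[A]$ under $\omega_!$ really is the full inner horn in $\omega_!(A)$ (rather than some larger object missing further faces) and that the markings match up. Once this combinatorial identification is made, the verification that the image is a marked anodyne, and hence a marked trivial cofibration by Lemma \ref{lem:markedanodequiv}, is immediate.
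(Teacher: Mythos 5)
Your high-level strategy matches the paper's: since Lemma \ref{lem:omega!prescof} handles cofibrations, reduce to checking that $\omega_!$ carries the $\mathfrak{P}$-anodynes of Definition \ref{def:Panodynes} to marked trivial cofibrations. (The paper additionally passes through $u^*$ to $\mathbf{dSets}_o^+$ to work with trees rather than forests, and invokes Proposition \ref{prop:Panodynestrivcof} to justify that $\mathfrak{P}$-anodynes plus hypersaturation exhaust the trivial cofibrations; you elide the latter, though it is not hard to fill in.) However, several of your concrete claims about the combinatorics are wrong, and these are where the real content of the proof lies.

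Your family (ii) has leaves and roots backwards. The $0$-face $d_0A$ of a simplex $A$ corresponds to deleting the edges in $A(0)$ and the vertices $v_a$ for $a\in A(1)$, which are at the \emph{top} of $\omega_!(A)$ --- this is a composition of leaf faces, not root faces. Consequently the $(C_0)$-type anodynes (the $\Lambda^n_0$ horns with marked initial edge) are sent to \emph{leaf anodynes} in the sense of Remark \ref{rmk:coCartequivalences}, and one appeals to Lemma \ref{lem:leafanodmarkedequiv}, not to Remark \ref{rmk:Cartequivalences}. You have also omitted the $(C_2)$-type anodynes altogether: these are the ones built from the cone $\mathbf{2}^\triangleleft$ along an inert decomposition, and it is \emph{these}, along with $(B_1)$, that $\omega_!$ sends to root anodynes (coproducts of $\{1\}^\sharp\to(\Delta^1)^\sharp$ and more elaborate root-horn inclusions). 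Your family (iv) is misidentified for the same reason: $\omega_!$ carries the Segal/cone anodynes $(B_1)$ to root anodynes, not to the sum-collapse map $T_1\amalg\cdots\amalg T_k\to T_1\oplus\cdots\oplus T_k$; the sum-collapse maps are trivial cofibrations of $\mathbf{fSets}_o^+$ that need to be available in the target category, but they are not the image of any $\mathfrak{P}$-anodyne. Finally, your treatment of family (i) is too optimistic: for $0<k<n$, the face $d_kA$ corresponds not only to contracting the inner edges in the domain of definition of $A(k)\to A(k+1)$, but also to chopping off the roots $r\in A(k)$ on which that partial map is undefined. So the horn $\Lambda^n_k$ does not push forward to a horn of the form $\Lambda^E[\omega_!(A)]$, and Lemma \ref{lem:moreinneranodynes}(b) does not directly apply. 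The paper first reduces to the case where $A$ is connected and totally active (then no such root-chopping occurs) and then runs the full pruning-and-contraction filtration argument; your anticipation that ``a careful bookkeeping analogous to Propositions \ref{prop:poprodinneranodyne} and \ref{prop:poprodrootanodyne}'' is needed is correct, but you would in fact need that machinery in all three of the $(C_0)$, $(C_1)$, $(C_2)$ cases, not just the inner one.
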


Had it been the case that $\omega: \mathbf{\Delta}/N\mathbf{F}_o \longrightarrow \mathbf{fSets}_o$ mapped into the representable forest sets, then $\omega^*$ would have had a further right adjoint. Now this cannot be the case because for the empty forest we have
\begin{equation*}
\omega^*(\varnothing) \, = \, \langle 0 \rangle 
\end{equation*}
where $\langle 0 \rangle$ stands for the one-point simplicial set over the vertex $\langle 0 \rangle \in N\mathbf{F}_o$. Thus, $\omega^*$ does not preserve colimits, so cannot have a right adjoint. To repair this, we will replace $\mathbf{POp}_o$ by the slice category $\langle 0 \rangle / \mathbf{POp}_o$. This is a relatively innocent change because of the following easy lemma, the proof of which we leave to the reader.

\begin{lemma}
\label{lem:Qslices}
\begin{itemize}
\item[(i)] Let $\mathcal{E}$ be a model category. Then any arrow $f: A \longrightarrow B$ in $\mathcal{E}$ induces a Quillen pair
\[
\xymatrix@C=40pt{
f_!: A/\mathcal{E} \ar@<.5ex>[r] & B/\mathcal{E}: f^* \ar@<.5ex>[l]
}
\]
for the induced model structures on these slice categories.
\item[(ii)] This Quillen pair is a Quillen equivalence if the map $f: A \longrightarrow B$ is a trivial cofibration.
\item[(iii)] A left adjoint functor $B/\mathcal{E} \longrightarrow \mathcal{F}$ into another model category $\mathcal{F}$ is left Quillen if and only if the composition
\begin{equation*}
A/\mathcal{E} \longrightarrow B/\mathcal{E} \longrightarrow \mathcal{F}
\end{equation*}
is so.
\end{itemize}
\end{lemma}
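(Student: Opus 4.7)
\textbf{Proof plan for Lemma \ref{lem:Qslices}.}

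For part (i), I would begin by unpacking the definitions: an object of $A/\mathcal{E}$ is a map $\alpha: A \to X$ in $\mathcal{E}$, and a morphism is a map $X \to X'$ commuting with the structure maps from $A$. Cofibrations, fibrations, and weak equivalences in $A/\mathcal{E}$ are those morphisms whose underlying map in $\mathcal{E}$ belongs to the corresponding class. The functor $f_!: A/\mathcal{E} \to B/\mathcal{E}$ sends $\alpha: A \to X$ to the pushout $B \to B \cup_A X$, and its right adjoint $f^*$ sends $\beta: B \to Y$ to the composite $A \xrightarrow{f} B \xrightarrow{\beta} Y$. The key observation is that the forgetful functor $B/\mathcal{E} \to \mathcal{E}$ commutes with $f^*$, so $f^*$ not only preserves but in fact reflects each of the three distinguished classes. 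In particular, $f^*$ preserves fibrations and trivial fibrations, making $(f_!, f^*)$ a Quillen pair.

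For part (ii), I would invoke the usual criterion: the Quillen pair is an equivalence iff for every cofibrant $X \in A/\mathcal{E}$ and every fibrant $Y \in B/\mathcal{E}$, a morphism $g: f_!(X) \to Y$ is a weak equivalence precisely when its adjoint $\tilde g: X \to f^*(Y)$ is. Given cofibrant $X$, the structure map $A \to X$ is a cofibration in $\mathcal{E}$ (by definition of cofibrancy in $A/\mathcal{E}$ and the fact that $A$ is the initial object there). Therefore the canonical map $X \to B \cup_A X$, being a pushout of the trivial cofibration $f$ along the cofibration $A \to X$, is itself a trivial cofibration, and in particular a weak equivalence. Since $\tilde g$ factors as $X \to B \cup_A X \xrightarrow{g} Y$ and weak equivalences are detected on underlying objects, the two-out-of-three property yields the desired equivalence.

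For part (iii), let $L \dashv R$ be the adjunction with $L: B/\mathcal{E} \to \mathcal{F}$. The forward implication is immediate: if $L$ is left Quillen, so is the composite $L \circ f_!$ by (i). For the converse, suppose $L \circ f_!$ is left Quillen, so that its right adjoint $f^* \circ R$ preserves fibrations and trivial fibrations. Since $f^*$ \emph{reflects} fibrations and trivial fibrations (both being detected on the underlying object in $\mathcal{E}$, which is unchanged by $f^*$), any morphism $\phi$ in $B/\mathcal{E}$ with $f^*(R(\phi))$ a fibration (resp.\ trivial fibration) must itself have $R(\phi)$ a fibration (resp.\ trivial fibration). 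Hence $R$ preserves fibrations and trivial fibrations, so $L$ is left Quillen. No step poses a genuine obstacle; the proof is essentially a matter of systematically using the fact that the distinguished classes in $A/\mathcal{E}$ and $B/\mathcal{E}$ are inherited from $\mathcal{E}$ through the forgetful functors.
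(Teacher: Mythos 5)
Your proof is correct and follows the natural approach; the paper itself leaves this lemma to the reader, so there is no printed proof to compare against. For (i) and (iii) the key observation — which you state explicitly — is that the forgetful functors $A/\mathcal{E} \to \mathcal{E}$ and $B/\mathcal{E} \to \mathcal{E}$ create the three distinguished classes, that $f^*$ acts as the identity on underlying morphisms, and hence $f^*$ both preserves and reflects fibrations and trivial fibrations. This makes (i) immediate and gives (iii) via $R$ being right Quillen if and only if $f^* R$ is. For (ii), the identification of the unit $X \to f^*f_!X$ with the pushout $X \to B \cup_A X$ of $f$ along the cofibration $A \to X$, plus two-out-of-three, is exactly the right argument. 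One cosmetic slip in (iii): you write ``any morphism $\phi$ in $B/\mathcal{E}$ with $f^*(R(\phi))$ \dots'' — the $\phi$ to which $R$ is applied should of course live in $\mathcal{F}$, not $B/\mathcal{E}$; the intended reading (if $\phi$ is a fibration in $\mathcal{F}$ and $f^*R(\phi)$ is a fibration, then $R(\phi)$ is one since $f^*$ reflects fibrations) is clear and the argument goes through.
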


Applying this to the special case at hand, we obtain (part of) the following lemma.

\begin{lemma}
\begin{itemize}
\item[(i)] The functors
\[
\xymatrix@C=40pt{
\langle 0 \rangle_!: \mathbf{POp}_o \ar@<.5ex>[r] & \langle 0 \rangle/\mathbf{POp}_o : \langle 0 \rangle^*
}
\]
defined by $\langle 0 \rangle_!(Y) = \langle 0 \rangle \amalg Y$ and $\langle 0 \rangle^*$ the forgetful functor, form a Quillen equivalence.
\item[(ii)] The left Quillen functor $\omega_!: \mathbf{POp}_o \longrightarrow \mathbf{fSets}_o^+$ factors through a left Quillen functor $\bar\omega_!$ as in
\[
\xymatrix{
\mathbf{POp}_o\ar[d]_{\langle 0 \rangle_!}\ar[r]^{\omega_!} & \mathbf{fSets}_o^+ \\
\langle 0 \rangle/\mathbf{POp}_o \ar[ur]_{\bar\omega_!}
}
\]
\end{itemize}
\end{lemma}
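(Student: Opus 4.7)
Both parts will be deduced from Lemma~\ref{lem:Qslices}, the key technical input being that the canonical map $\varnothing \to \langle 0 \rangle$ in $\mathbf{POp}_o$ is a trivial cofibration. Granting this, part (i) is immediate: it is exactly Lemma~\ref{lem:Qslices}(ii) applied to $f = (\varnothing \to \langle 0 \rangle)$, noting that $\varnothing/\mathbf{POp}_o = \mathbf{POp}_o$ and that the induced adjoint pair is precisely $\langle 0 \rangle_! \dashv \langle 0 \rangle^*$.

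The map $\varnothing \to \langle 0 \rangle$ is obviously a cofibration (a monomorphism of underlying marked simplicial sets), so the content lies in the weak equivalence, and this is the main obstacle. I would verify it using the simplicial enrichment of Proposition~\ref{prop:modelstructpop}: since both source and target are cofibrant, it suffices to show that for every fibrant object $\mathcal{O}^\natural$ the induced map of mapping spaces $\mathrm{Map}_{\mathbf{POp}}(\langle 0 \rangle, \mathcal{O}^\natural) \to \mathrm{Map}_{\mathbf{POp}}(\varnothing, \mathcal{O}^\natural) = \ast$ is a weak equivalence, i.e.\ that its source is contractible. Unwinding the tensoring $X \otimes K = X \times K^\sharp$, an $n$-simplex of this source is a map of marked simplicial sets $(\Delta^n)^\sharp \to \mathcal{O}^\natural$ landing entirely in the fiber $\mathcal{O}^\natural_{\langle 0 \rangle}$. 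Any edge of $\mathcal{O}$ over $\mathrm{id}_{\langle 0 \rangle}$ is automatically $p$-coCartesian: by axiom (3) of an $\infty$-operad the fiber $\mathcal{O}_{\langle 0 \rangle}$ is equivalent to $\mathcal{O}_{\langle 1 \rangle}^{\times 0} = \Delta^0$, hence a contractible Kan complex in which every edge is an equivalence. Consequently every edge of $\mathcal{O}_{\langle 0 \rangle}$ is marked, the marking condition on the source becomes vacuous, and the mapping space identifies with $\mathcal{O}_{\langle 0 \rangle}$ itself, which is contractible.

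For part (ii), I would define $\bar\omega_!: \langle 0 \rangle/\mathbf{POp}_o \to \mathbf{fSets}_o^+$ on an object $(\langle 0 \rangle \to Y)$ as $\omega_!(Y)$, endowed with its unique map from $\omega_!(\langle 0 \rangle) = \varnothing$; functoriality is inherited from $\omega_!$. Since $\omega_!$ preserves coproducts, one has $\bar\omega_!(\langle 0 \rangle_!(X)) = \omega_!(\langle 0 \rangle \amalg X) = \varnothing \amalg \omega_!(X) = \omega_!(X)$, so the required factorization $\bar\omega_! \circ \langle 0 \rangle_! \cong \omega_!$ holds. A right adjoint $\bar\omega^*$ to $\bar\omega_!$ is supplied by $Z \mapsto (\langle 0 \rangle \to \omega^*(Z))$, where the structure map is uniquely determined by Lemma~\ref{lem:omega*pointed}, which asserts that $\omega^*(Z)(\langle 0 \rangle)$ is a singleton; the required bijection of hom-sets is then an immediate consequence of $\omega_! \dashv \omega^*$ together with the uniqueness of the structure map. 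Finally, Lemma~\ref{lem:Qslices}(iii) reduces the left Quillen property of $\bar\omega_!$ to that of the composite $\bar\omega_! \circ \langle 0 \rangle_! = \omega_!$, which is Proposition~\ref{prop:omega!leftQ}.
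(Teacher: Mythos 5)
Your proof follows the same route as the paper's: apply Lemma \ref{lem:Qslices}(ii) to the map $\varnothing \to \langle 0 \rangle$ for part (i), and for part (ii) define $\bar\omega_!(\langle 0 \rangle \to X) = \omega_!(X)$, exhibit the right adjoint via the uniqueness statement in Lemma \ref{lem:omega*pointed}, and invoke Lemma \ref{lem:Qslices}(iii). The one place where you go beyond the paper is in actually verifying that $\varnothing \to \langle 0 \rangle$ is a trivial cofibration in $\mathbf{POp}_o$: the paper treats this as evident. Your verification is correct and worth recording — using the simplicial structure to reduce to contractibility of $\mathrm{Map}(\langle 0 \rangle, \mathcal{O}^\natural)$, then identifying that mapping space with the fiber $\mathcal{O}_{\langle 0 \rangle}$ by observing that every edge there is an equivalence over the inert identity of $\langle 0 \rangle$, hence $p$-coCartesian, hence marked in $\mathcal{O}^\natural$, so the marking condition is vacuous; contractibility of the fiber is axiom (3) of Lurie's $\infty$-operads. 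This is the content that the paper's phrase ``follows by applying Lemma \ref{lem:Qslices}'' silently relies on, and your proposal supplies it cleanly.
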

\begin{proof}
Part (i) follows by applying Lemma \ref{lem:Qslices} to the map $\varnothing \longrightarrow \langle 0 \rangle$ in $\mathbf{POp}_o$. For part (ii), define
\begin{equation*}
\bar\omega_!(\langle 0 \rangle \rightarrow X) \, = \, \omega_!(X)
\end{equation*}
Since $\omega_!(\langle 0 \rangle) = \varnothing$, the diagram commutes. Moreover, $\bar\omega_!$ has a right adjoint because for any forest set $Y$, the object $\omega^*(Y)$ in $\mathbf{POp}_o$ has a \emph{unique} map $\langle 0 \rangle \longrightarrow \omega^*(Y)$ (cf. Lemma \ref{lem:omega*pointed}). Thus there is a unique functor
\begin{equation*}
\bar\omega^*: \mathbf{fSets}_o \longrightarrow \langle 0 \rangle/\mathbf{POp}_o
\end{equation*}
with the property that $\langle 0 \rangle^*\bar\omega^* = \omega^*$. It is now trivial to check that $\bar\omega^*$ is indeed right adjoint to $\bar\omega_!$. Finally, Lemma \ref{lem:Qslices} gives that $\bar\omega_!$ is left Quillen since $\omega_!$ is (cf. Proposition \ref{prop:omega!leftQ}). $\Box$
\end{proof}

As suggested already, the main reason for the change from $\omega^*: \mathbf{fSets}_o^+ \longrightarrow \mathbf{POp}_o$ to $\bar\omega^*: \mathbf{fSets}_o^+ \longrightarrow \langle 0 \rangle/\mathbf{POp}_o$ is the following.

\begin{lemma}
\label{lem:omega*prescof}
\begin{itemize}
\item[(i)] The functor $\bar\omega^*: \mathbf{fSets}_o^+ \longrightarrow \langle 0 \rangle/\mathbf{POp}_o$ has a right adjoint.
\item[(ii)] The functor $\bar\omega^*$ preserves cofibrations.
\end{itemize}
\end{lemma}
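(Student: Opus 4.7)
For part (i), the plan is to prove that $\bar\omega^*$ preserves all small colimits; the existence of a right adjoint then follows from the adjoint functor theorem, as $\mathbf{fSets}_o^+$ and $\langle 0 \rangle / \mathbf{POp}_o$ are both locally presentable. Colimits in $\langle 0 \rangle / \mathbf{POp}_o$ are computed as follows: the initial object is $\mathrm{id}_{\langle 0 \rangle}$, arbitrary coproducts are the pushouts under $\langle 0 \rangle$ in $\mathbf{POp}_o$, and all connected colimits are computed in $\mathbf{POp}_o$ via the forgetful functor $\langle 0 \rangle^*$. Since $\langle 0 \rangle^* \bar\omega^* = \omega^*$, the task is therefore to verify three things: that $\bar\omega^*(\varnothing)$ is initial, that $\omega^*$ preserves connected colimits, and that $\omega^*$ sends a coproduct $\coprod_i X_i$ to the pushout $\coprod_{\langle 0 \rangle} \omega^*(X_i)$ formed under $\langle 0 \rangle$.

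The preservation of the initial object follows immediately from Lemma \ref{lem:omega*pointed}: an $n$-simplex of $\omega^*(\varnothing)$ over $A : \Delta^n \to N\mathbf{F}_o$ is a map $\omega_!(A) \to \varnothing$, which exists if and only if $\omega_!(A) = \varnothing$, i.e.\ $A$ factors through the vertex $\langle 0 \rangle$; hence $\omega^*(\varnothing) = \langle 0 \rangle$ and $\bar\omega^*(\varnothing) = \mathrm{id}_{\langle 0 \rangle}$. For the other two points, the key observation is that the fiber of $\omega^*(X)$ over a simplex $A$ is either a one-point set (when $\omega_!(A) = \varnothing$) or the hom-set $\mathrm{Hom}(\omega_!(A), X) = X(F_A)$ (when $\omega_!(A) = \Phi[F_A]$ is representable). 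Because colimits of presheaves are computed pointwise, for any diagram $(X_i)$ the second kind of fiber of $\omega^*(\mathrm{colim}_i X_i)$ equals $\mathrm{colim}_i X_i(F_A)$, matching the colimit of the corresponding fibers of the $\omega^*(X_i)$. The fiber over a constant simplex at $\langle 0 \rangle$ remains a single point throughout; for connected diagrams this is consistent with the colimit of the constant one-point diagram in $\mathbf{POp}_o$, and for coproducts it corresponds precisely to glueing the copies of $\langle 0 \rangle$ together, realizing the coproduct in the slice $\langle 0 \rangle/\mathbf{POp}_o$. Marking data is preserved because a 1-simplex of $\omega^*(X)$ is marked precisely when the corresponding map sends each unary corolla of $\omega_!(A)$ to a marked 1-corolla of $X$, which is a pointwise condition compatible with colimits in $\mathbf{fSets}_o^+$.

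For part (ii), we argue directly that $\omega^*$ sends monomorphisms in $\mathbf{fSets}_o^+$ to cofibrations in $\mathbf{POp}_o$. If $f : X \to Y$ is a normal monomorphism, it is in particular levelwise injective, so the assignment $(A, \phi) \mapsto (A, f \circ \phi)$ is injective on $n$-simplices for every $A$, making $\omega^*(f)$ a monomorphism of underlying simplicial sets; as $f$ preserves marked corollas, $\omega^*(f)$ preserves marked 1-simplices, so it is a cofibration in $\mathbf{POp}_o$. Cofibrations in $\langle 0 \rangle / \mathbf{POp}_o$ are detected by the forgetful functor $\langle 0 \rangle^*$, so $\bar\omega^*(f)$ is a cofibration. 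The only real delicacy in the whole argument is the bookkeeping around the vertex $\langle 0 \rangle$, which is precisely why the passage to the slice $\langle 0 \rangle / \mathbf{POp}_o$ is essential: it converts the failure of $\omega^*$ to preserve the initial object and ordinary coproducts into clean identities.
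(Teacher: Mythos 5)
Your proof is correct and takes essentially the same approach as the paper: for (i) both arguments reduce to showing $\bar\omega^*$ preserves colimits using the formula $\omega^*(Y)(A) = \mathrm{Hom}(\omega_!A, Y)$ together with the fact that $\omega_!A$ is representable for non-constant $A$ and empty for $A$ constant at $\langle 0 \rangle$ (the paper exhibits the right adjoint $\bar\omega_*$ by an explicit formula rather than invoking the adjoint functor theorem, but this is cosmetic). For (ii) the paper takes a small shortcut — since $\bar\omega^*$ is right adjoint to $\bar\omega_!$ it preserves limits, hence monomorphisms — whereas you verify levelwise injectivity directly; both are valid.
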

\begin{proof}
(i). On the underlying categories without markings, we can define a functor
\begin{equation*}
\bar\omega_*: \langle 0 \rangle/(\mathbf{sSets}/N\mathbf{F}_o) \longrightarrow \mathbf{fSets}_o
\end{equation*}
as follows. For an object $(X, x_0)$ where $X \in \mathbf{sSets}/N\mathbf{F}_o$ and $x_0: \langle 0 \rangle \rightarrow X$, and for a forest $F$, set
\begin{equation*}
\bar\omega_*(X,x_0)(F) \, = \, \mathrm{Hom}_*(\bar\omega^*(F), (X,x_0))
\end{equation*}
Here $\mathrm{Hom}_*$ denotes the set of pointed maps in $\mathbf{sSets}/N\mathbf{F}_o$. (Recall that $\omega^*(F)$ has a unique map $\langle 0 \rangle \rightarrow \omega^*(F)$.) In order to prove that $\bar\omega_*$ is indeed right adjoint, it suffices to prove that $\bar\omega^*$ preserves colimits. But this is clear from the way colimits are computed in the slice $\langle 0 \rangle/(\mathbf{sSets}/N\mathbf{F}_o)$, together with the fact that
\begin{equation*}
\omega^*(Y)(A) \, = \, \mathrm{Hom}(\omega_!A, Y)
\end{equation*}
where $\omega_!A$ is representable unless $A = \langle 0 \rangle$, while $\omega_!\langle 0 \rangle = \varnothing$ so that $\omega^*(Y)(\langle 0 \rangle)$ is a singleton, as already remarked. Finally, the markings on $\bar\omega_*(X, x_0)$ are determined by adjunction: the marked elements in $\bar\omega_*(X, x_0)(C_1)$ are the maps $\bar\omega^*(C_1^\sharp) \longrightarrow (X,x_0)$ in $\langle 0 \rangle/\mathbf{POp}_o$. \par 
(ii). As a right adjoint, the functor $\bar\omega^*$ preserves monomorphisms. A fortiori, it preserves cofibrations. $\Box$
\end{proof}

In Section \ref{sec:omega*leftQ} below we will in fact prove the following:

\begin{proposition*}[See Proposition \ref{prop:omega*leftQ}]
The adjoint functors
\[
\xymatrix{
\bar\omega^*: \mathbf{fSets}_o^+ \ar@<.5ex>[r] & \langle 0 \rangle/\mathbf{POp}_o: \bar\omega_* \ar@<.5ex>[l]
}
\]
form a Quillen pair.
\end{proposition*}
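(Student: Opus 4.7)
The strategy is to show that $\bar\omega^*$ preserves trivial cofibrations, since Lemma \ref{lem:omega*prescof}(ii) already establishes preservation of cofibrations. Because $\bar\omega^*$ is a left adjoint and hence preserves colimits, and because the model structure on $\mathbf{fSets}_o^+$ is cofibrantly generated, it is enough to verify preservation on a conveniently chosen generating family of trivial cofibrations. The natural choice is the family described by Lemma \ref{lem:markedtrivcof}, which (restricted to open forests) consists of five classes: (a) Segal core inclusions $\mathrm{fSc}(T)^\flat \rightarrowtail T^\flat$; (b) root anodynes; (c) the marker-completion map of a 2-simplex; (d) inclusions $K^\flat \rightarrow K^\sharp$ for Kan complexes $K$; and (e) sum maps $(T_1 \amalg \cdots \amalg T_k)^\flat \to (T_1 \oplus \cdots \oplus T_k)^\flat$.

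The analysis then proceeds case by case. For a representable tree $T$, one first computes $\omega^*(T^\flat)$ explicitly: its $n$-simplices are pairs $(A, f)$ with $A\colon \Delta^n \to N\mathbf{F}_o$ and $f\colon \omega_!(A) \to T$ a map of forest sets, i.e. \emph{layered sub-forests} of $T$, with the marking recording which layers consist of unary corollas. For (a), the inclusion $\omega^*(\mathrm{fSc}(T)^\flat) \rightarrow \omega^*(T^\flat)$ must be decomposed as a (possibly transfinite) composition of pushouts of inner horn inclusions over $N\mathbf{F}_o$, combined with markings of coCartesian lifts; these are recognized as trivial cofibrations in $\mathbf{POp}_o$ via Lurie's Proposition \ref{prop:modelstructpop}. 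For (b), a root anodyne gets sent by $\omega^*$ to an inclusion in which the collapsed root corolla corresponds to an inert edge lying over an inert morphism in $\mathbf{F}_o$, and the conclusion follows from the coCartesian lifting axiom built into the model structure on $\mathbf{POp}_o$. Cases (c) and (d) reduce by inspection to marked anodynes of Lurie's type. For (e), the key observation is that $\omega_!$ of the canonical simplex $A\colon \Delta^0 \to N\mathbf{F}_o$ with $A(0) = \langle k \rangle$ is the forest $\eta^{\oplus k}$ living at the root of $T_1 \oplus \cdots \oplus T_k$, so that the sum map is sent to the inclusion realizing the Segal/fiber condition in $\mathbf{POp}_o$: the map from the fiber over $\langle k \rangle$ to the product of fibers over $\langle 1 \rangle$ becomes a weak equivalence.

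The main obstacles are cases (a) and (e). Case (a) requires an intricate shuffle-type decomposition of layerings of $T$ into layerings of its Segal core, together with a careful bookkeeping of inner horn fillings to extend along successive internal layers; this is combinatorially comparable in difficulty to the proofs of Propositions \ref{prop:poprodinneranodyne} and \ref{prop:poprodrootanodyne}, now played out on the preoperadic side of the adjunction. Case (e) is delicate because of the non-unital constraint: only surjective partial maps are available in $\mathbf{F}_o$, so one must verify that no `unit' degeneracies are needed in the identification of $\omega_!$-images of simplices of $N\mathbf{F}_o$ with direct sums of trees. These combinatorial verifications form the technical heart of the argument, as foreshadowed by the introduction of the paper.
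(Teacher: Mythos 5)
Your overall reduction is correct and matches the paper's: Lemma \ref{lem:omega*prescof}(ii) gives preservation of cofibrations, so it remains to show $\bar\omega^*$ (equivalently, $\omega^*$) sends the generating trivial cofibrations of Lemma \ref{lem:markedtrivcof} to trivial cofibrations in $\mathbf{POp}_o$, and the case-by-case split into (a)--(e) is exactly what the paper does. But two of your individual cases contain genuine gaps.

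For case (b), root anodynes: you assert that the conclusion ``follows from the coCartesian lifting axiom built into the model structure on $\mathbf{POp}_o$.'' This is not a proof. The coCartesian lifting property describes \emph{fibrant} objects; it does not by itself certify that a given cofibration is a \emph{trivial} cofibration. To prove that $\omega^*\bigl((\Lambda^r[T])^\diamond \rightarrow T^\diamond\bigr)$ is a trivial cofibration, one must actually decompose it as a composition of pushouts of $\mathfrak{P}$-anodynes. The paper's Proposition \ref{prop:omega*rootanodyne} does this via a filtration by ``admissible'' simplices of $\omega^*(T^\diamond)$ (those whose tails end with a marked edge over $\langle 1\rangle \to \langle 1\rangle$ hitting the root corolla, followed by an inert $\langle 1\rangle \to \langle 0\rangle$), and shows each filtration stage is a pushout along a $\mathfrak{P}$-anodyne of type $(C_0)$ followed by inner horns. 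That combinatorial bookkeeping is unavoidable.

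For case (e), sums: you identify $\omega^*$ of the sum map with ``the inclusion realizing the Segal/fiber condition in $\mathbf{POp}_o$.'' That identification is not correct. The simplices of $\omega^*\bigl((T_1 \amalg \cdots \amalg T_k)^\flat\bigr)$ are maps $\omega_!(A) \to \amalg_i T_i$ that send each connected component of $\omega_!(A)$ entirely into a single $T_i$, whereas simplices of $\omega^*\bigl((\oplus_i T_i)^\flat\bigr)$ may spread a single layered forest across several of the $T_i$'s in an interleaved way (subject only to independence). The inclusion between these is more delicate than a Segal-type equivalence $\mathcal{O}_{\langle k\rangle} \to \mathcal{O}_{\langle 1\rangle}^{\times k}$, and it does not arise from a single 0-simplex $\langle k\rangle$. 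The paper handles it via Lemma \ref{lem:omega*sums1}, which reduces to boundary inclusions $\partial F^\flat \to F^\flat$ for disconnected forests $F$ by an inclusion-exclusion induction, and then Lemma \ref{lem:omega*sums2}, which filters $\omega^*(\partial F^\flat) \to \omega^*(F^\flat)$ by ``size'' and ``tail length'' of admissible simplices, using generalized $\mathfrak{P}$-anodynes of types $(B_1')$ and $(C_2')$ as in Remark \ref{rmk:sumaxiom}. Your case (a) is described in the right spirit, and you correctly identify cases (c) and (d) as the easy ones, though the paper's treatment there goes through the right-cone trick of Lemma \ref{lem:j!toomega*i!} (the natural transformation $j_! \to \omega^* u_! i_!$ is a weak equivalence), which reduces those verifications to simplicial-set statements.
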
 

We end this section with a discussion of the functor $\omega^*$. More precisely, we will discuss the simplices of the object $\omega^*(F^\flat)$, for $F$ a representable forest set. The goal of this discussion is twofold. First, it will allow us to fix terminology to be used in Sections \ref{sec:omega!equiv} and \ref{sec:omega*leftQ}. Second, by giving an explicit description of $\omega^*(F^\flat)$ in some particular cases we hope to provide the reader with some intuition regarding the behaviour of this functor, which should make subsequent sections easier to read. \par 
\emph{Notation.} For a simplex $A: \Delta^n \longrightarrow N\mathbf{F}_o$, we will often use the notation
\begin{equation*}
A = \langle a(0) \rangle \rightarrow \langle a(1) \rangle \rightarrow \cdots \rightarrow \langle a(n) \rangle,
\end{equation*}
where $A(i) = \langle a(i) \rangle$ denotes the object $\{1, \ldots, a(i)\}$ of $\mathbf{F}_o$ and the arrows are partial maps. An $n$-simplex of $\omega^*(F^\flat)$ over $A^\flat$, i.e. a diagram
\[
\xymatrix{
(\Delta^n)^\flat \ar[rr]^\zeta\ar[dr]_A & & \omega^*(F^\flat) \ar[dl] \\
& N\mathbf{F}_o^\natural &
}
\]
is by definition a map $\omega_!(A^\flat) \longrightarrow F^\flat$ and so in particular gives for each $i$ a map
\begin{equation*}
\zeta(i): \langle a(i) \rangle \longrightarrow \mathrm{edges}(F)
\end{equation*}
whose image is a set of pairwise independent edges of $F$. The $n$-simplex $\zeta$ is completely determined by the sequence of maps $\zeta(i)$, although of course not every such sequence defines an $n$-simplex. \par 

\emph{Terminology.} We consider the following types of maps in $\mathbf{F}_o$:
\begin{center}
\begin{tabular}{l l l}
(type 1) & $\widehat i: \langle n \rangle \longrightarrow \langle n-1 \rangle$ & (forget $i$) \\
(type 2) & $\sigma: \langle n \rangle \longrightarrow \langle n \rangle$ & (permutation) \\
(type 3) & $\mu_{k,l}: \langle k + l \rangle \longrightarrow \langle 1 + l \rangle$ &
\end{tabular}
\end{center}
The map of type 1 is the unique inert order-preserving partial map $\langle n \rangle \rightarrow \langle n-1 \rangle$ whose domain of definition is precisely $\{1, \ldots, \widehat i, \ldots, n \}$, the hat denoting omission. The map of type 2 is an isomorphism of finite sets given by some element $\sigma$ in the symmetric group $\Sigma_n$. The map of type 3 is the active morphism sending $\{1, \ldots, k\}$ to $\{1\}$ and $k+1, \ldots, k + l$ to $2, \ldots, 1+l$ respectively. Observe that every arrow in $\mathbf{F}_o$ is a composition of a sequence of arrows of these three types. Accordingly, any non-degenerate simplex of $N\mathbf{F}_o$ is a face (possibly of high codimension) of a simplex $A: \Delta^n \longrightarrow N\mathbf{F}_o$ whose edges $A(i) \rightarrow A(i+1)$ are all of the types just described. We call such a simplex $A$ \emph{elementary}. \par 
For a forest $F$, we will now define corresponding notions of \emph{elementary} 1-simplices of $\omega^*(F^\flat)$: \par
\emph{Type 1}. An independent set of edges $e_1, \ldots, e_n$ of $\mathbf{F}$ (together with an order on them as indicated) determines a non-degenerate \emph{marked} 1-simplex which we depict as
\[
\begin{tikzpicture} 
[level distance=7mm, 
every node/.style={fill, circle, minimum size=.1cm, inner sep=0pt}, 
level 1/.style={sibling distance=20mm}, 
level 2/.style={sibling distance=10mm}, 
level 3/.style={sibling distance=5mm}]

\node (lefttree)[style={color=white}] {} [grow'=up] 
child {node (level1) {}
	child
};

\node (tree2)[style={color=white}, right = 1.2cm of lefttree] {} [grow'=up] 
child {node {}
	child
};

\node (treei)[style={color=white}, right = .8cm of tree2] {};
\node (treeistart)[style={color=white}, above = .58cm of treei] {} [grow'=up] 
child;

\node (tree3)[style={color=white}, right = .8cm of treei] {} [grow'=up] 
child {node{}
	child
};

\node (tree4)[style={color=white}, right = 1.2cm of tree3] {} [grow'=up] 
child {node{}
	child
};

\tikzstyle{every node}=[]

\draw[dashed] ($(level1) + (-1.5cm, 0cm)$) -- ($(level1) + (4.5cm, 0cm)$);

\node at ($(level1) + (-1.5cm, .35cm)$) {$0$};
\node at ($(level1) + (-1.5cm, -.35cm)$) {$1$};
\node at ($(level1) + (.7cm, .35cm)$) {$\cdots$};
\node at ($(level1) + (3.8cm, .35cm)$) {$\cdots$};
\node at ($(level1) + (0cm, .9cm)$) {$e_1$};
\node at ($(level1) + (1.3cm, .9cm)$) {$e_{i-1}$};
\node at ($(level1) + (2.2cm, .9cm)$) {$e_i$};
\node at ($(level1) + (3.2cm, .9cm)$) {$e_{i+1}$};
\node at ($(level1) + (4.4cm, .9cm)$) {$e_n$};

\end{tikzpicture} 
\]

It is a 1-simplex over $\widehat i: \langle n \rangle \longrightarrow \langle n-1 \rangle$. The unary vertices in $\omega_!(\widehat i)$ are sent to identities of the respective edges $e_j$, $j \neq i$. Thus, a 1-simplex of $\omega^*(F^\flat)$ of type 1 involves no nontrivial vertices of $F$ and only `forgets' a single edge. \par
\emph{Type 2.} An independent sequence of edges $e_1, \ldots, e_n$ as above and a transposition $(i,i+1) \in \Sigma_n$ determine a non-degenerate \emph{marked} 1-simplex of $\omega^*(F^\flat)$ which we picture as 
\[
\begin{tikzpicture} 
[level distance=7mm, 
every node/.style={fill, circle, minimum size=.1cm, inner sep=0pt}, 
level 1/.style={sibling distance=20mm}, 
level 2/.style={sibling distance=10mm}, 
level 3/.style={sibling distance=5mm}]

\node (lefttree)[style={color=white}] {} [grow'=up] 
child {node (level1) {}
	child
};

\node (tree2)[style={color=white}, right = 1.2cm of lefttree] {}; 
\node at ($(tree2) + (.4cm,.7cm)$){};
\draw [rounded corners]($(tree2) + (0cm, 0cm)$) -- ($(tree2) + (0cm, 0.4cm)$) -- ($(tree2) + (.8cm, 1cm)$) -- ($(tree2) + (.8cm, 1.4cm)$); 

\draw [rounded corners]($(tree2) + (0.8cm, 0cm)$) -- ($(tree2) + (0.8cm, 0.4cm)$) -- ($(tree2) + (.5cm, .63cm)$);
\draw [rounded corners] ($(tree2) + (0cm, 1.4cm)$) -- ($(tree2) + (0cm, 1cm)$) -- ($(tree2) + (.3cm, .77cm)$);

\node (tree4)[style={color=white}, right = 2cm of tree2] {} [grow'=up] 
child {node{}
	child
};

\tikzstyle{every node}=[]

\draw[dashed] ($(level1) + (-1.5cm, 0cm)$) -- ($(level1) + (4.5cm, 0cm)$);

\node at ($(level1) + (-1.5cm, .35cm)$) {$0$};
\node at ($(level1) + (-1.5cm, -.35cm)$) {$1$};
\node at ($(level1) + (.7cm, .35cm)$) {$\cdots$};
\node at ($(level1) + (2.7cm, .35cm)$) {$\cdots$};
\node at ($(level1) + (0cm, .9cm)$) {$e_1$};
\node at ($(level1) + (1.3cm, .9cm)$) {$e_{i}$};
\node at ($(level1) + (2.2cm, .9cm)$) {$e_{i+1}$};
\node at ($(level1) + (3.4cm, .9cm)$) {$e_n$};

\end{tikzpicture} 
\]
It is a 1-simplex lying over the transposition $(i, i+1): \langle n \rangle \rightarrow \langle n \rangle$. Again, vertices of $\omega_!\bigl((i, i+1)\bigr)$ are sent to identities and no non-trivial operations of $\mathbf{F}$ are involved. Similar 1-simplices of course exist for any permutation $\sigma \in \Sigma_n$, which we will not attempt to draw. \par
\emph{Type 3.} For a vertex $v$ in $F$ with input edges $e_1, \ldots, e_k$ and output edge $d$, and then $l$ further independent edges $a_1, \ldots, a_l$ (also independent from $e_1, \ldots, e_k$), there is a 1-simplex of $\omega^*(F^\flat)$ depicted as
\[
\begin{tikzpicture} 
[level distance=7mm, 
every node/.style={fill, circle, minimum size=.1cm, inner sep=0pt}, 
level 1/.style={sibling distance=20mm}, 
level 2/.style={sibling distance=15mm}, 
level 3/.style={sibling distance=5mm}]

\node (lefttree)[style={color=white}] {} [grow'=up] 
child {node (level1) {}
	child
	child
};

\node (tree2)[style={color=white}, right = 1.3cm of lefttree] {} [grow'=up]
child{ node{}
	child
};

\node (tree3)[style={color=white}, right = 1.2cm of tree2] {} [grow'=up] 
child {node{}
	child
};

\tikzstyle{every node}=[]

\draw[dashed] ($(level1) + (-1.5cm, 0cm)$) -- ($(level1) + (3cm, 0cm)$);

\node at ($(level1) + (-1.5cm, .35cm)$) {$0$};
\node at ($(level1) + (-1.5cm, -.35cm)$) {$1$};
\node at ($(level1) + (0.05cm, .35cm)$) {$\cdots$};
\node at ($(level1) + (2.1cm, .35cm)$) {$\cdots$};
\node at ($(level1) + (-.7cm, .9cm)$) {$e_1$};
\node at ($(level1) + (.7cm, .9cm)$) {$e_k$};
\node at ($(level1) + (0cm, -.9cm)$) {$d$};
\node at ($(level1) + (1.5cm, .9cm)$) {$a_{1}$};
\node at ($(level1) + (2.8cm, .9cm)$) {$a_{l}$};

\end{tikzpicture} 
\]
It is a 1-simplex over $\mu_{k,l}$ sending the elements of $\langle k+l \rangle$ to $e_1, \ldots, e_k, a_1, \ldots, a_l$ (in that order) and sending the $k$-ary vertex of $\omega_!(\mu_{k,l})$ to $v$, while sending all the other (unary) vertices to the identities on $a_1, \ldots, a_l$ respectively. \par
Every non-degenerate simplex of $\omega^*(F^\flat)$ is a face of some $n$-simplex $\zeta$ such that each edge $\zeta(\Delta^{\{i, i+1\}})$ is of one of the three types described above. We will call such a simplex $\zeta$ \emph{elementary}. In the special case that all those edges are in fact of type 1, we will say that $\zeta$ and every face of $\zeta$ is an \emph{obliviant} simplex. Thus, an obliviant 1-simplex of $\omega^*(F^\flat)$ is given by an independent sequence $e_1, \ldots, e_n$ of edges of $F$ and a subset of these which one `forgets'. A typical picture of such an obliviant 1-simplex looks as follows: 
\[
\begin{tikzpicture} 
[level distance=7mm, 
every node/.style={fill, circle, minimum size=.1cm, inner sep=0pt}, 
level 1/.style={sibling distance=20mm}, 
level 2/.style={sibling distance=10mm}, 
level 3/.style={sibling distance=5mm}]

\node (lefttree)[style={color=white}] {} [grow'=up] 
child {node (level1) {}
	child
};

\node (tree2)[style={color=white}, right = .7cm of lefttree] {} [grow'=up] 
child {node {}
	child
};

\node (tree3)[style={color=white}, right = .7cm of tree2] {};
\node (tree3start)[style={color=white}, above = .58cm of tree3] {} [grow'=up] 
child;

\node (tree4)[style={color=white}, right = .7cm of tree3] {} [grow'=up] 
child {node{}
	child
};

\node (tree5)[style={color=white}, right = .7cm of tree4] {} [grow'=up] 
child {node{}
	child
};

\node (tree6)[style={color=white}, right = .7cm of tree5] {};
\node (tree6start)[style={color=white}, above = .58cm of tree6] {} [grow'=up] 
child;

\node (tree7)[style={color=white}, right = .7cm of tree6] {};
\node (tree7start)[style={color=white}, above = .58cm of tree7] {} [grow'=up] 
child;

\node (tree8)[style={color=white}, right = .7cm of tree7] {} [grow'=up] 
child {node{}
	child
};

\tikzstyle{every node}=[]

\draw[dashed] ($(level1) + (-.8cm, 0cm)$) -- ($(level1) + (6cm, 0cm)$);

\node at ($(level1) + (-.8cm, .35cm)$) {$0$};
\node at ($(level1) + (-.8cm, -.35cm)$) {$1$};
\node at ($(level1) + (0cm, .9cm)$) {$e_1$};
\node at ($(level1) + (.8cm, .9cm)$) {$e_2$};
\node at ($(level1) + (1.7cm, .9cm)$) {$\cdots$};
\node at ($(level1) + (5.7cm, .9cm)$) {$e_n$};

\end{tikzpicture} 
\]
\emph{Some examples.} Let us consider the values of the functor $\omega^*$ in several simple cases:
\begin{itemize}
\item[-] $F = \eta$: In this case $\omega^*(F^\flat)$ is the marked 1-simplex $\langle 1 \rangle \longrightarrow \langle 0 \rangle$ `forgetting' the single colour of $F$.
\item[-] $F = \eta \oplus \eta$: The simplicial set $\omega^*(F^\flat)$ has two non-degenerate $n$-simplices over 
\[
\xymatrix{
\langle 2 \rangle \ar[r]^\tau & \langle 2 \rangle \ar[r]^\tau & \cdots \ar[r]^\tau & \langle 2 \rangle \ar[r]^{\rho_i} & \langle 1 \rangle \ar[r] & \langle 0 \rangle, 
}
\]
the $n$-simplex of $N\mathbf{F}$ given by several repetitions of the non-trivial permutation $\tau$ of $\langle 2 \rangle$, one of the two inert maps $\rho_i$, $i = 1,2$, and the unique inert map $\langle 1 \rangle \rightarrow \langle 0 \rangle$. These two $n$-simplices are completely determined by the two possible bijections $\langle 2 \rangle \longrightarrow \mathrm{edges}(F)$. Any other non-degenerate simplex of $\omega^*(F^\flat)$ is a face of such a simplex. In particular, $\omega^*(F^\flat)$ contains the classifying space $B\Sigma_2$.
\item[-] $F = C_2$: Again we have the simplices listed in the previous item (where $\eta \oplus \eta$ corresponds to the two leaves of the corolla $C_2$), but also simplices lying over 
\[
\xymatrix{
\langle 2 \rangle \ar[r]^\tau & \langle 2 \rangle \ar[r]^\tau & \cdots \ar[r]^\tau & \langle 2 \rangle \ar[r] & \langle 1 \rangle \ar[r] & \langle 0 \rangle, 
}
\]
where the map $\langle 2 \rangle \longrightarrow \langle 1 \rangle$ is now the unique active such map, which corresponds to the vertex of $C_2$. 
\end{itemize}

\subsection{Proof of the equivalence}
\label{sec:omega!equiv}

In the previous section we defined two pairs of adjoint functors
\[
\xymatrix@C=35pt{
\langle 0 \rangle /\mathbf{POp}_o \ar@<.5ex>[r]^{\bar\omega_!} & \mathbf{fSets}_o^+ \ar@<.5ex>[l]^{\bar\omega^*} \ar@<.5ex>[r]^{\bar\omega^*} & \langle 0 \rangle /\mathbf{POp}_o \ar@<.5ex>[l]^{\bar\omega_*}
}
\]
and stated, but did not yet prove, that these are Quillen pairs, cf. Propositions \ref{prop:omega!leftQ} and \ref{prop:omega*leftQ}. These two propositions will be proved in Sections \ref{sec:omega!leftQ} and \ref{sec:omega*leftQ} respectively. Assuming that these pairs are indeed Quillen pairs, the goal of this section is to explain how to deduce that they are in fact Quillen equivalences. Once this is done, we will have related the model category $\mathbf{dSets}_o$ of dendroidal sets and the model category $\mathbf{POp}_o$ of $\infty$-preoperads by a sequence of Quillen equivalences, which all fit into the following diagram. In this diagram, the arrows denote the left Quillen functors and the number next to an arrow indicates the section in which we prove that the functor is a left Quillen equivalence.

\[
\xymatrix@C=35pt@R=35pt{
\mathbf{dSets}_o \ar[d]_{(-)^\flat}^{\ref{sec:markeddsets}} & \mathbf{fSets}_o \ar[l]_{u^*}^{\ref{sec:equivfsetsdsets}} \ar[d]_{(-)^\flat}^{\ref{sec:markedfsets}} & \\
\mathbf{dSets}_o^+ & \mathbf{fSets}_o^+ \ar[l]_{u^*}^{\ref{sec:markeddsets}} \ar[dr]_{\bar\omega^*}^{\ref{sec:omega*leftQ}} & \mathbf{POp}_o \ar[l]_{\omega_!}^{\ref{sec:omega!leftQ}} \ar[d]_{\ref{sec:omegaQpairs}}^{\langle 0 \rangle_!} \\
& & \langle 0 \rangle /\mathbf{POp}_o
}
\]

The proofs in subsequent parts of this chapter require a detailed understanding of the trivial cofibrations in $\mathbf{POp}_o$. To state what we need, we recall Definition B.1.1 from \cite{higheralgebra}. Write $\mathbf{2} = \Delta^0 \amalg \Delta^0$ and $\mathbf{2}^\triangleleft$ for the left cone on $\mathbf{2}$. Note that $\mathbf{2}^\triangleleft \simeq \Lambda_0^2$. Denote by $\Sigma$ the collection of maps
\begin{equation*}
p: (\Lambda_0^2)^\sharp \longrightarrow N\mathbf{F}_o^\natural 
\end{equation*}
given by
\begin{equation*}
\langle k \rangle \longleftarrow \langle m \rangle \longrightarrow \langle l \rangle
\end{equation*} 
where the two inert morphisms induce a bijection $\langle m \rangle \simeq \langle k \rangle \amalg \langle l \rangle$.

\begin{definition}
\label{def:Panodynes}
The class of \emph{$\mathfrak{P}$-anodyne morphisms} is the smallest saturated class of maps in $\mathbf{POp}_o$ containing the following maps:
\begin{itemize}
\item[($A_0$)] The inclusion
\begin{equation*}
(\Lambda^2_1)^\sharp \cup_{(\Lambda^2_1)^\flat} (\Delta^2)^\flat \longrightarrow (\Delta^2)^\sharp
\end{equation*}
for any map $(\Delta^2)^\sharp \longrightarrow N\mathbf{F}_o^\natural$.
\item[($A_1$)] The map $Q^\flat \longrightarrow Q^\sharp$ (for any map $Q^\sharp \longrightarrow N\mathbf{F}_o^\natural$), where $Q = \Delta^0 \amalg_{\Delta^{\{0,2\}}} \Delta^3 \amalg_{\Delta^{\{1,3\}}} \Delta^0$.
\item[($B_0$)] The inclusion $\{0\}^\sharp \longrightarrow (\Delta^1)^\sharp$, for any map $(\Delta^1)^\sharp \longrightarrow N\mathbf{F}_o^\natural$. 
\item[($B_1$)] Maps of the form $\mathbf{2} \longrightarrow (\mathbf{2}^\triangleleft)^\sharp$, for any map $p: (\mathbf{2}^\triangleleft)^\sharp \longrightarrow N\mathbf{F}_o^\natural$ contained in $\Sigma$.
\item[($C_0$)] Maps of the form
\begin{equation*}
(\Lambda_0^n)^\flat \cup_{(\Delta^{\{0,1\}})^\flat} (\Delta^{\{0,1\}})^\sharp 
\longrightarrow (\Delta^n)^\flat \cup_{(\Delta^{\{0,1\}})^\flat} (\Delta^{\{0,1\}})^\sharp
\end{equation*}
for any map $(\Delta^n)^\flat \cup_{(\Delta^{\{0,1\}})^\flat} (\Delta^{\{0,1\}})^\sharp \longrightarrow N\mathbf{F}_o^\natural$. (Note that these are precisely leaf anodynes of marked simplicial sets.)
\item[($C_1$)] The inner horn inclusions $(\Lambda_i^n)^\flat \longrightarrow (\Delta^n)^\flat$, for any $0 < i < n$ and any map $(\Delta^n)^\flat \longrightarrow N\mathbf{F}_o^\natural$. 
\item[($C_2$)] Maps of the form
\begin{equation*}
(\partial\Delta^n \star \mathbf{2})^\flat \cup_{(\{n\} \star \mathbf{2})^\flat} (\{n\} \star \mathbf{2})^\sharp \longrightarrow (\Delta^n \star \mathbf{2})^\flat \cup_{(\{n\} \star \mathbf{2})^\flat} (\{n\} \star \mathbf{2})^\sharp ,
\end{equation*}
where $n \geq 1$ and $(\{n\} \star \mathbf{2})^\sharp \simeq (\mathbf{2}^\triangleleft)^\sharp$ maps to $N\mathbf{F}_o^\natural$ by a morphism in $\Sigma$. 
\end{itemize}
\end{definition}

\begin{proposition}
\label{prop:Panodynestrivcof}
The class of trivial cofibrations in $\mathbf{POp}_o$ is the smallest saturated class $\mathcal{C}$ of cofibrations that contains the $\mathfrak{P}$-anodynes and has the following closure property: if $i: A \longrightarrow B$ and $j: B \longrightarrow C$ are cofibrations such that $j$ and $ji$ are in $\mathcal{C}$, then $i$ is in $\mathcal{C}$ as well.
\end{proposition}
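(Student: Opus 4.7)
The plan is to mirror the strategy used already for the analogous results in $\mathbf{fSets}$ and $\mathbf{fSets}^+$ (compare Lemma \ref{lem:basictrivcofs} and Lemma \ref{lem:markedtrivcof}). There are two things to check: first, that every $\mathfrak{P}$-anodyne is itself a trivial cofibration and that the class of trivial cofibrations enjoys the stated cancellation property, so that it contains $\mathcal{C}$; second, and more delicately, that conversely every trivial cofibration belongs to $\mathcal{C}$. The cancellation property for trivial cofibrations is immediate from the two-out-of-three axiom for weak equivalences. That each $\mathfrak{P}$-anodyne is a trivial cofibration is built into Lurie's construction of the model structure on $\mathbf{POp}_o$: the classes $(A_0)$--$(C_2)$ are precisely the generators he uses in \cite{higheralgebra}, Appendix B, and his characterisation of the fibrations between fibrant objects as those maps with the right lifting property against the $\mathfrak{P}$-anodynes means in particular that these generators are trivial cofibrations.

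For the converse, I would follow the retract argument already employed in the proof of Lemma \ref{lem:basictrivcofs}. Given a trivial cofibration $f \colon X \to Y$ in $\mathbf{POp}_o$, apply the small object argument with respect to the class of $\mathfrak{P}$-anodynes to produce a commutative square
\[
\xymatrix{
X \ar[d]_f \ar[r]^{a} & X' \ar[d]^{g} \\
Y \ar[r]_{b} & Y'
}
\]
in which $a$ and $b$ are $\mathfrak{P}$-anodyne, the objects $X'$ and $Y'$ are fibrant, and $g$ is a cofibration. By what we have already established together with two-out-of-three, $g$ is then a trivial cofibration between fibrant objects, and if we can show that every such map lies in $\mathcal{C}$ then the cancellation property, applied to the composite $ga = bf$, forces $f$ into $\mathcal{C}$ as well.

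So everything comes down to the key assertion: a trivial cofibration $g \colon C \to D$ between fibrant objects of $\mathbf{POp}_o$ is $\mathfrak{P}$-anodyne. To prove this, factor $g$ again using the small object argument as $g = h \circ q$, where $q \colon C \to Z$ is $\mathfrak{P}$-anodyne and $h \colon Z \to D$ has the right lifting property with respect to all $\mathfrak{P}$-anodynes. Since $q$ is a trivial cofibration and $D$ is fibrant, the intermediate object $Z$ is also fibrant. By Lurie's characterisation of fibrations between fibrant objects (Proposition \ref{prop:modelstructpop} together with \cite{higheralgebra}, B.2.7), $h$ is then a fibration, and two-out-of-three shows it is in fact a trivial fibration. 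Lifting in the square
\[
\xymatrix{
C \ar[d]_g \ar[r]^q & Z \ar[d]^h \\
D \ar@{=}[r] \ar@{-->}[ur] & D
}
\]
exhibits $g$ as a retract of the $\mathfrak{P}$-anodyne map $q$, hence places $g$ in $\mathcal{C}$.

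The main obstacle in carrying this out is the appeal to Lurie's identification of fibrations between fibrant objects with the maps having the right lifting property against the $\mathfrak{P}$-anodynes; once this is granted, the rest of the argument is the standard retract/factorisation dance. The particular shape of the generators in Definition \ref{def:Panodynes} -- especially $(C_0)$ and $(C_2)$, which encode the coCartesian lifts needed for an $\infty$-operad -- is tailored precisely to make this lifting characterisation hold, and we do not reprove it here.
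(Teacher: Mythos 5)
Your proposal is correct and follows essentially the same route as the paper's proof, which simply cites Lurie's appendix~B characterisation of fibrations between fibrant objects (via right lifting against $\mathfrak{P}$-anodynes) and then says to run the same retract argument as in Lemma~\ref{lem:basictrivcofs}; you have spelled out that retract argument explicitly. The only cosmetic difference is that you state the key step for a trivial cofibration between fibrant objects, whereas the paper's template only uses fibrancy of the codomain, but this does not affect the argument.
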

\begin{proof}
In the appendix to \cite{higheralgebra}, Lurie proves that a map between fibrant objects in $\mathbf{POp}$ is a fibration if and only if it has the right lifting property with respect to $\mathfrak{P}$-anodynes. In fact, the proof of this result follows the same standard pattern as our proof of part (ii) of Theorem \ref{thm:markedfSets}. Given this, the same proof as that of Proposition \ref{prop:basictrivcofs} will give the desired conclusion here. $\Box$
\end{proof}

\begin{remark}
\label{rmk:sumaxiom}
In Definition \ref{def:Panodynes}, one may in fact replace the $\mathfrak{P}$-anodynes of type ($B_1$) and ($C_2$) by slightly more general families of maps, let us call them ($B_1'$) and ($C_2'$) respectively, where instead of $\mathbf{2}$ one allows an arbitrary non-empty coproduct $\coprod_\mathbf{j} \Delta^0$ over $\mathbf{j} = \{1, \ldots, j\}$ and takes $\Sigma$ to be those maps $(\mathbf{j}^\triangleleft)^\sharp \longrightarrow N\mathbf{F}_o^\natural$ given by a diagram
\[
\xymatrix{
& & \langle m \rangle \ar[dll]\ar[dl]\ar[drr] & & \\ 
\langle k_1 \rangle & \langle k_2 \rangle & \cdots & & \langle k_{j} \rangle 
}
\]
in which the inert maps induce a bijection 
\begin{equation*}
\langle m \rangle \simeq \coprod_{i=1}^j \langle k_i \rangle
\end{equation*}
Indeed, the original families ($B_1$) and ($C_2$) are special cases of this (for $\mathbf{j} = \mathbf{2}$) and conversely it is a fairly straightforward exercise to show that these more general families of maps are indeed trivial cofibrations.
\end{remark}

We now begin the proof of the main result of this section by investigating the unit morphism of the adjunction $(\bar\omega_!, \bar\omega^*)$.

\begin{proposition}
\label{prop:unitequivalence}
For any object $X$ of $\langle 0 \rangle/\mathbf{POp}_o$, the unit $\eta_X: X \longrightarrow \bar\omega^*\bar\omega_!(X)$ is a weak equivalence between cofibrant objects.
\end{proposition}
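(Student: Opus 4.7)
Every object of $\mathbf{POp}_o$ is cofibrant, and hence so is every object of $\langle 0\rangle/\mathbf{POp}_o$; cofibrancy of $\bar\omega^*\bar\omega_!(X)$ is immediate from Lemmas \ref{lem:omega!prescof} and \ref{lem:omega*prescof}(ii). To prove the weak-equivalence claim I plan to exploit the fact that both the identity functor and the composition $\bar\omega^*\bar\omega_!$ preserve all colimits --- the latter because $\bar\omega_!$ is a left adjoint by construction and $\bar\omega^*$ is also a left adjoint by Lemma \ref{lem:omega*prescof}(i). Combined with left properness of $\langle 0\rangle/\mathbf{POp}_o$ (inherited from $\mathbf{POp}_o$) and the fact that both functors preserve cofibrations, this permits a standard cell-attachment argument: the class of $X$ for which $\eta_X$ is a weak equivalence is closed under pushouts along cofibrations and under transfinite compositions of cofibrations. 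It therefore suffices to verify the claim when $X$ is obtained from the initial object $\langle 0\rangle$ by attaching a single generating cofibration of $\mathbf{POp}_o$; since $\eta_{\langle 0\rangle}$ is plainly the identity (as $\omega^*(\varnothing)$ is the constant simplicial set at $\langle 0\rangle$), this reduces the problem to representables $X = \langle 0\rangle \amalg (\Delta^n, A)^\flat$, together with the operation of sharpening an inert edge.

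For $X = \langle 0\rangle \amalg (\Delta^n, A)^\flat$, the image $\bar\omega_!(X)$ is the marked forest set $\omega_!(A)^\flat$, and the unit is the inclusion of $A^\flat$ (pointed canonically at $\langle 0\rangle$) into the marked simplicial set $\omega^*(\omega_!(A)^\flat)$ over $N\mathbf{F}_o^\natural$. I would then exhibit $\omega^*(\omega_!(A)^\flat)$ as a transfinite composition of pushouts of $\mathfrak{P}$-anodyne morphisms along this inclusion. Using the classification of elementary simplices of $\omega^*(F^\flat)$ from the end of Section \ref{sec:omegaQpairs}, every non-degenerate simplex of $\omega^*(\omega_!(A)^\flat)$ is a face of an elementary simplex whose edges are obliviant (type 1), permutative (type 2), or operation-introducing (type 3). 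The simplex $A^\flat$ and its faces account for a particular collection of elementary simplices of type 3 with trivial permutations; the plan is to order and adjoin the remaining elementary simplices and their faces in stages. Type 1 simplices (inert projections, and the additional inert edges arising from splitting off summands of a forest) are handled by $\mathfrak{P}$-anodynes of types $(B_0)$, $(B_1)$ and $(C_0)$; type 2 simplices (permutations and the marking of symmetry edges as equivalences) are handled by the $(A_0)$, $(A_1)$ and $(B_0)$ anodynes; and additional type 3 simplices that introduce new active operations are handled by the inner horn fillers $(C_1)$ together with $(C_2)$ and its sum variant $(C_2')$ of Remark \ref{rmk:sumaxiom}.

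The principal technical obstacle is the combinatorial bookkeeping required to set up this filtration. Concretely, one has to choose a well-ordering on the non-degenerate elementary simplices of $\omega^*(\omega_!(A)^\flat)$ --- organised, for example, by the number of additional edges, vertices, and non-identity permutations they involve relative to $A$ --- and then verify at each stage that the boundary of the simplex being attached, with its inherited markings, matches the domain of one of the $\mathfrak{P}$-anodynes of Definition \ref{def:Panodynes}. This requires careful tracking of which 1-simplices are marked (namely those lying over inert partial maps of $\mathbf{F}_o$) and of how the sum-type anodynes $(B_1')$ and $(C_2')$ interact with the decomposition of forests into trees. Once this filtration is in place, Proposition \ref{prop:Panodynestrivcof} yields that $A^\flat \hookrightarrow \omega^*(\omega_!(A)^\flat)$ is a trivial cofibration. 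The auxiliary case of sharpening an inert edge is then dealt with by the same method applied to the single generating map $(\Delta^1, p)^\flat \to (\Delta^1, p)^\sharp$, using $(B_0)$-type anodynes together with the observation that any edge lying over an inert morphism of $\mathbf{F}_o$ is automatically marked in $\omega^*$ of a marked forest set.
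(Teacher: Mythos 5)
Your opening reduction agrees with the paper: both cofibrancy claims are correct, and the observation that $\bar\omega^*\bar\omega_!$ preserves colimits and cofibrations combined with left properness reduces the claim to representables (this is Lemma \ref{lem:unitequiv1}(i) in the paper). The marked-edge case is also correctly identified via pushout along $(\Delta^1)^\flat \to (\Delta^1)^\sharp$.

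Where your plan diverges is in how to handle a representable $(\Delta^n,A)^\flat$. You propose to exhibit $A^\flat \hookrightarrow \omega^*\omega_!(A)^\flat$ directly as a transfinite composition of pushouts of $\mathfrak{P}$-anodynes, for arbitrary $n$, by a bookkeeping argument over elementary simplices. The paper instead makes a further reduction that you have missed and that defuses nearly all of the combinatorics: since the spine inclusion $\bigcup_{i=0}^{n-1}(\Delta^{\{i,i+1\}})^\flat \to (\Delta^n)^\flat$ is inner anodyne over $N\mathbf{F}_o^\natural$ and hence a trivial cofibration, Lemma \ref{lem:unitequiv1}(ii) (together with part (i) applied to pushouts of spine segments) reduces the claim to $0$- and $1$-simplices only. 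The paper then further reduces $1$-simplices to \emph{connected} ones --- those mapped by $\omega_!$ to a single corolla, a single copy of $\eta$, or the empty forest --- by a zig-zag through a ``book'' of glued 2-simplices (Lemma \ref{lem:reducetoconnected}), after which the unit map is computed directly by a small filtration (Lemma \ref{lem:connected1simplex}).

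The gap in your plan is therefore not conceptual but load-bearing: for $n\ge 2$, $\omega^*(\omega_!(A)^\flat)$ contains many simplices lying over configurations that do not factor through any face of $A$ --- e.g.\ simplices using inner faces of the underlying forest that contract edges across the layers of $\omega_!(A)$, and arbitrary compositions of permutation edges at every layer --- and one must check at each step that the boundary of the cell being attached, with its inherited markings, lies inside the previous stage and matches a $\mathfrak{P}$-anodyne. You acknowledge this as ``the principal technical obstacle'' but do not resolve it, and it is precisely the part that the paper engineers around by reducing dimension first. As written, the proposal names the right ingredients ($(A_i)$ through $(C_2)$ and the sum variants of Remark \ref{rmk:sumaxiom}) but does not give a filtration that demonstrably works in all dimensions; the reduction to $n\le 1$ and to connected simplices is the missing step that makes the filtration finite and checkable.
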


From this proposition and the fact that $\bar\omega^*$ is also \emph{left} Quillen, we immediately obtain the following consequence.

\begin{corollary}
\label{cor:omegaunit}
The derived unit $\mathrm{id} \longrightarrow \mathbf{R}\bar\omega^*\circ\mathbf{L}\bar\omega_!$ is a weak equivalence.
\end{corollary}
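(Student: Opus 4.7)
The plan is to realize the derived unit as a composition of two maps, each of which can be shown to be a weak equivalence by invoking a result already in hand. First I would note that every object of $\langle 0 \rangle/\mathbf{POp}_o$ is cofibrant, since cofibrations in $\mathbf{POp}_o$ are precisely monomorphisms (Proposition \ref{prop:modelstructpop}), a property inherited by the slice. So for an arbitrary $X$ in $\langle 0 \rangle/\mathbf{POp}_o$, no cofibrant replacement is needed, and the derived unit at $X$ is computed by choosing a fibrant replacement $j \colon \bar\omega_!(X) \to R$ in $\mathbf{fSets}_o^+$---which by the small object argument we may take to be a trivial cofibration---and forming the composite
\begin{equation*}
X \xrightarrow{\;\eta_X\;} \bar\omega^*\bar\omega_!(X) \xrightarrow{\;\bar\omega^*(j)\;} \bar\omega^*(R).
\end{equation*}

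Next I would invoke Proposition \ref{prop:unitequivalence} directly: it says exactly that $\eta_X$ is a weak equivalence between cofibrant objects. For the second map, the key observation is that $\bar\omega^*$ is not only a right Quillen functor (as right adjoint to $\bar\omega_!$) but also a \emph{left} Quillen functor, being left adjoint to $\bar\omega_*$ by Proposition \ref{prop:omega*leftQ}. In particular $\bar\omega^*$ preserves trivial cofibrations, so $\bar\omega^*(j)$ is again a trivial cofibration and hence a weak equivalence. Two-out-of-three then yields that the derived unit is a weak equivalence, as required.

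There is essentially no obstacle in this step: the entire substance is packaged into the two inputs cited---Proposition \ref{prop:unitequivalence} for the underived unit and Proposition \ref{prop:omega*leftQ} for the fact that $\bar\omega^*$ is itself left Quillen---and the corollary is a purely formal consequence. If anything, the only point worth double-checking is that the fibrant replacement can indeed be chosen to be a trivial cofibration, which is immediate from the cofibrantly generated structure of $\mathbf{fSets}_o^+$ established in Theorem \ref{thm:markedfSets}.
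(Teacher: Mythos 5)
Your proof is correct and matches the paper's argument exactly: the paper states that the corollary follows immediately from Proposition \ref{prop:unitequivalence} (the underived unit is a weak equivalence between cofibrant objects) together with the fact that $\bar\omega^*$ is also left Quillen (Proposition \ref{prop:omega*leftQ}). You have simply unwound what ``immediately'' means—noting that all objects of $\langle 0 \rangle/\mathbf{POp}_o$ are cofibrant so no cofibrant replacement is needed, and that a left Quillen functor sends the trivial cofibration to a fibrant replacement to a weak equivalence—which is precisely the intended reasoning.
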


\begin{remark}
\label{rmk:baromega}
We have replaced the adjoint pair $\omega_!$ and $\omega^*$ with $\bar\omega_!$ and $\bar\omega^*$ in order to state that $\bar\omega^*$ is also left Quillen (in addition to being right Quillen). It follows from this that $\omega^*$ acts \emph{like} a left Quillen functor in all respects, except that it does not preserve all colimits. However, it does preserve pushouts and transfinite compositions (in fact, all connected colimits), as well as weak equivalences. This is all we will need. Note, in addition, that for an object $X$ of $\langle 0 \rangle /\mathbf{POp}_o$, the unit $X \longrightarrow \bar\omega^*\bar\omega_!(X)$ is a weak equivalence in $\langle 0 \rangle /\mathbf{POp}_o$ if and only if the unit $\langle 0 \rangle^*(X) \longrightarrow \omega^*\omega_!(\langle 0 \rangle^*(X))$ is one in $\mathbf{POp}_o$. Indeed, this is clear from the fact that $\langle 0 \rangle^*$ preserves and reflects weak equivalences, together with the identity $\bar\omega_! = \omega_!\langle 0 \rangle^*$ which holds by construction of $\bar\omega_!$. It also follows from this that $X \longrightarrow \bar\omega^*\bar\omega_!(X)$ is a weak equivalence for every object of $\langle 0 \rangle /\mathbf{POp}_o$ if and only if $Y \longrightarrow \omega^*\omega_!(Y)$ is a weak equivalence in $\mathbf{POp}_o$ for every object $Y$ there. For this reason, we will from now on not drag the extra $\langle 0 \rangle$ along and in proving the proposition above often work with $\omega^*$ and $\omega_!$ instead of $\bar\omega^*$ and $\bar\omega_!$.
\end{remark}

The proof of Proposition \ref{prop:unitequivalence} will consist of several lemmas.
\begin{lemma}
\label{lem:unitequiv1}
\begin{itemize}
\item[(i)] Consider a pushout square
\[
\xymatrix{
X \ar[d]\ar[r] & Y \ar[d] \\
X' \ar[r] & Y'
}
\]
in $\mathbf{POp}_o$, in which $X \longrightarrow X'$ is a cofibration. If the unit map $\mathrm{id} \longrightarrow \omega^*\omega_!$ is a weak equivalence at $X$, $X'$ and $Y$, then it is also a weak equivalence at $Y'$.
\item[(ii)] Let $X \rightarrowtail Y$ be a trivial cofibration in $\mathbf{POp}_o$. If the unit map $X \longrightarrow \omega^*\omega_!(X)$ is a weak equivalence, then so is $Y \longrightarrow \omega^*\omega_!(Y)$.
\end{itemize}
\end{lemma}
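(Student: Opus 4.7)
My plan is to derive both parts formally from the functorial properties of $\omega_!$ and $\omega^*$ already catalogued in this section, together with left properness of $\mathbf{POp}_o$ (which holds since $\mathbf{POp}$ is left proper by Proposition \ref{prop:modelstructpop}, and $\mathbf{POp}_o$ inherits its model structure as a slice). The inputs I need are: $\omega_!$ is left Quillen (Proposition \ref{prop:omega!leftQ}, proved in Section \ref{sec:omega!leftQ}); $\omega^*$ preserves cofibrations, trivial cofibrations, pushouts, transfinite compositions, and weak equivalences, as spelled out in Remark \ref{rmk:baromega} (using Proposition \ref{prop:omega*leftQ}); and of course $\omega_!$ preserves cofibrations and all colimits as a left adjoint. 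As noted in Remark \ref{rmk:baromega}, it suffices to work with the unpointed composite $\omega^*\omega_!$ rather than $\bar\omega^*\bar\omega_!$.

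For part (i), I plan to apply $\omega^*\omega_!$ to the given pushout square. Because $\omega_!$ is a left adjoint and $\omega^*$ preserves pushouts, the result is again a pushout
\[
\omega^*\omega_!(Y') \;\simeq\; \omega^*\omega_!(X') \cup_{\omega^*\omega_!(X)} \omega^*\omega_!(Y),
\]
and the leg $\omega^*\omega_!(X) \to \omega^*\omega_!(X')$ remains a cofibration since both $\omega_!$ and $\omega^*$ preserve cofibrations. The unit natural transformation then supplies a map of pushout spans whose three corner components $\eta_X$, $\eta_{X'}$, $\eta_Y$ are weak equivalences by hypothesis, one of whose parallel legs is a cofibration on both the source and the target span. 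The gluing (cube) lemma in the left proper model category $\mathbf{POp}_o$ now yields that the induced map on pushouts $\eta_{Y'}: Y' \to \omega^*\omega_!(Y')$ is a weak equivalence.

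For part (ii), since $\omega_!$ is left Quillen the map $\omega_!(X) \to \omega_!(Y)$ is a trivial cofibration in $\mathbf{fSets}_o^+$, and since $\omega^*$ preserves trivial cofibrations the composite $\omega^*\omega_!(X) \to \omega^*\omega_!(Y)$ is a weak equivalence. The naturality square
\[
\xymatrix{
X \ar[d]_{\eta_X}^{\sim}\ar[r]^{\sim} & Y \ar[d]^{\eta_Y} \\
\omega^*\omega_!(X) \ar[r]_{\sim} & \omega^*\omega_!(Y)
}
\]
then has three edges known to be weak equivalences (the left vertical by hypothesis, the horizontals as just noted), so two-out-of-three forces $\eta_Y$ to be a weak equivalence.

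I do not anticipate any substantial obstacle in this lemma itself; the content is entirely formal once the Quillen properties of $\omega_!$ and $\omega^*$ are established. The real combinatorial work is deferred to Sections \ref{sec:omega!leftQ} and \ref{sec:omega*leftQ}, where those Quillen adjunctions are verified; by contrast, Lemma \ref{lem:unitequiv1} is just the bookkeeping that allows the global inductive argument for Proposition \ref{prop:unitequivalence} to reduce the claim to a small class of generating cells.
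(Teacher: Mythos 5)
Your proof is correct and reaches the conclusion by essentially the same route as the paper. The only difference is cosmetic: for part (i), the paper invokes a Reedy-cofibrancy version of the cube lemma (the pushout functor on the span category $0 \leftarrow 2 \rightarrow 1$ preserves weak equivalences between Reedy cofibrant diagrams), while you invoke the gluing lemma in a left proper model category; since every object of $\mathbf{POp}_o$ is cofibrant and $\mathbf{POp}_o$ is left proper, both hypotheses are available and the two statements coincide in this setting. Part (ii) is word-for-word the paper's argument.
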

\begin{proof}
(i). This is a well-known special case of the `cube lemma' in model categories. In one of its versions for a model category $\mathcal{E}$, consider the Reedy category
\[
\xymatrix{
\mathbf{R} = (0 & 2 \ar[l]_-{+} \ar[r]^-{-} & 1)
}
\]
A cofibrant object in $\mathcal{E}^{\mathbf{R}}$ is precisely a diagram
\[
\xymatrix{
X' & X \ar[l]\ar[r] & Y
}
\]
where $X' \longleftarrow X$ is a cofibration while $X$ and $Y$ are cofibrant. The constant functor $\mathcal{E} \longrightarrow \mathcal{E}^\mathbf{R}$ is easily seen to be right Quillen with respect to the Reedy model structure on $\mathcal{E}^\mathbf{R}$. Therefore, its left adjoint preserves weak equivalences between cofibrant objets. Part (i) of the lemma now follows by applying this to the map represented by the vertical arrows in the diagram
\[
\xymatrix{
X' \ar[d] & X \ar[r]\ar[l]\ar[d] & Y \ar[d] \\
\omega^*\omega_!(X') & \omega^*\omega_!(X) \ar[r]\ar[l] & \omega^*\omega_!(Y)
}
\]
(We use here that $\omega^*$ preserves cofibrations and pushouts, cf. Remark \ref{rmk:baromega}.) \par 
(ii). In the square
\[
\xymatrix{
X \ar[r]^\sim\ar[d] & Y \ar[d] \\
\omega^*\omega_!(X) \ar[r] & \omega^*\omega_!(Y)
}
\]
the lower arrow is also a trivial cofibration because $\bar\omega^*$ and $\omega_!$ are both left Quillen, cf. Remark \ref{rmk:baromega}. Part (ii) is clear from this. $\Box$
\end{proof}

\begin{remark}
It follows from part (i) of the lemma and the usual skeletal filtration of simplicial sets that it suffices to prove Proposition \ref{prop:unitequivalence} for the special case where $X$ is a representable object $A: (\Delta^n)^\flat \longrightarrow N\mathbf{F}_o^\natural$ and for the marked 1-simplices $A: (\Delta^1)^\sharp \longrightarrow N\mathbf{F}_o^\natural$. Moreover, since for any such $n$-simplex $A$ the inclusion
\[
\xymatrix{
\bigcup_{i=0}^{n-1} (\Delta^{i,i+1})^\flat \ar[rr]\ar[dr] & & (\Delta^n)^\flat \ar[dl] \\
& N\mathbf{F}_o^\natural &
}
\]
is a weak equivalence, it follows by part (ii) of the lemma that it suffices to prove the Proposition for representables $A: (\Delta^n)^\flat \longrightarrow N\mathbf{F}_o^\natural$ of dimensions 0 and 1 only, together with the marked 1-simplices $A: (\Delta^1)^\sharp \longrightarrow N\mathbf{F}_o^\natural$ mentioned above.
\end{remark}

We begin with the case of 0-simplices.

\begin{lemma}
\label{lem:unitequivpoints}
For any vertex $A: \Delta^0 \longrightarrow N\mathbf{F}_o$, the unit $\eta_A: A \longrightarrow \omega^*\omega_!(A)$ is a weak equivalence.
\end{lemma}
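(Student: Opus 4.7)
The case $A = \langle 0 \rangle$ is immediate: by Lemma \ref{lem:omega*pointed}, we have $\omega_!(\langle 0 \rangle) = \varnothing$ and $\omega^*(\varnothing)$ is the presheaf over $N\mathbf{F}_o^\natural$ whose value at every object is a singleton. Restricted to the slice over $\langle 0 \rangle$, the unit $\eta_{\langle 0 \rangle}$ is then an isomorphism, hence a weak equivalence.

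Assume now that $A = \langle n \rangle$ with $n \geq 1$, so that $\omega_!(A)$ is the forest $n\cdot\eta := \eta \oplus \cdots \oplus \eta$ ($n$ copies). Because $n\cdot\eta$ contains no non-trivial vertices, the marked simplicial set $\omega^*((n\cdot\eta)^\flat)$ is supported over the sub-simplicial set of $N\mathbf{F}_o$ spanned by compositions of $1$-simplices of types $1$ and $2$ in the terminology of Section \ref{sec:omegaQpairs} (i.e., inert restrictions and permutations). Concretely, a non-degenerate $k$-simplex over $\langle a(0) \rangle \to \cdots \to \langle a(k) \rangle$ is determined by a compatible sequence of injections $\langle a(i) \rangle \hookrightarrow \{1, \ldots, n\}$, and every $1$-simplex of $\omega^*((n\cdot\eta)^\flat)$ is marked. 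My plan is to exhibit the unit $\eta_A: \langle n \rangle \hookrightarrow \omega^*((n\cdot\eta)^\flat)$ as a transfinite composition of pushouts of $\mathfrak{P}$-anodyne morphisms, which by Proposition \ref{prop:Panodynestrivcof} will imply that $\eta_A$ is a trivial cofibration in $\mathbf{POp}_o$.

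Concretely, I would construct a skeletal filtration of $\omega^*((n\cdot\eta)^\flat)$ starting from $\langle n \rangle$, attaching one isomorphism class of non-degenerate simplices at a time in an order refining the dimension and the complexity of the underlying simplex of $N\mathbf{F}_o$. The attaching maps should fall into three kinds. First, attaching a single marked $1$-simplex (obliviant or a permutation) onto an already-present vertex is precisely a pushout of a $(B_0)$-anodyne, i.e.\ a cocartesian lift of an inert edge. Second, attaching $k$-simplices in the fiber over a Segal-type simplex exhibiting a bijection $\langle m \rangle \simeq \langle k_1 \rangle \amalg \cdots \amalg \langle k_j \rangle$ is handled by the generalized $(B_1')$-anodynes of Remark \ref{rmk:sumaxiom}, which supply precisely the missing Segal fillers. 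Third, for higher-dimensional elementary simplices whose interior is not forced by $1$-skeletal data together with these sum decompositions, the attaching map is an inner horn inclusion and therefore a $(C_1)$-anodyne.

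The main obstacle will be verifying that the attaching map at each stage is genuinely of one of these $\mathfrak{P}$-anodyne forms rather than merely a normal monomorphism; this requires choosing the filtration carefully so that, at each step, all faces of the simplex being attached are either already present or match one of the three patterns above. The bookkeeping is tightest at simplices where a permutation $\sigma \in \Sigma_n$ is composed with a sequence of inert restrictions, since the corresponding higher-dimensional cells of $\omega^*((n\cdot\eta)^\flat)$ carry $B\Sigma_n$-type structure whose cells must be recognized as iterated cocartesian lifts of permutations. Once the filtration has been arranged to respect the decomposition of every elementary $\mathbf{F}_o$-simplex into its permutation-and-restriction constituents, each attachment falls into one of the three cases above and the conclusion follows.
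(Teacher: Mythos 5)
Your proposal takes a genuinely different route from the paper's, and as written it has a gap that is not merely bookkeeping. The trouble sits in the permutation part of $\omega^*((n\cdot\eta)^\flat)$. Already for $n=2$, the sub-simplicial set lying over the chains $\langle 2\rangle\xrightarrow{\tau}\langle 2\rangle\xrightarrow{\tau}\cdots$ of the nontrivial permutation has two vertices (one for each bijection $\langle 2\rangle\to\{1,2\}$) and, for each $k$, two non-degenerate $k$-simplices, assembling into the nerve of the chaotic groupoid on two objects with all edges marked. Starting from the image of the unit (the identity-bijection vertex), your three attaching patterns do not suffice to reach this. A $(B_0)$ pushout $\{0\}^\sharp\to(\Delta^1)^\sharp$ always creates a fresh target vertex, so after attaching the edge $\mathrm{id}\to\tau$ by $(B_0)$, the reverse edge $\tau\to\mathrm{id}$ (both of whose endpoints are already present) is not a $(B_0)$ pushout. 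And the non-degenerate $2$-cell over $(\tau,\tau)$ has degenerate $d_1$-face, so pushing out along $(\Lambda_1^2)^\flat\to(\Delta^2)^\flat$ would introduce a spurious new non-degenerate long edge instead of landing inside $\omega^*((2\cdot\eta)^\flat)$; the filtration would then fail to be by subobjects. Repairing this would require outer-horn and marking anodynes ($(A_0)$, $(A_1)$, $(C_0)$, $(C_2)$) that you have not invoked, and even then Proposition~\ref{prop:Panodynestrivcof} gives the trivial cofibrations only as the cancellation-closure of the $\mathfrak{P}$-anodyne saturation --- so it is not clear that $\eta_A$ is a $\mathfrak{P}$-anodyne composite at all, as opposed to merely a trivial cofibration.

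The paper sidesteps the entire computation with a cone construction. For $|A(0)|\geq 2$ it glues the marked inert $1$-simplices $\rho_a: A(0)\to\{a\}$ onto the vertex $A$, producing a cone $C$ that receives a trivial cofibration from $A$ (iterated $(B_0)$ pushouts) and a second trivial cofibration from $\coprod_{a\in A(0)}\langle a\rangle$ (a generalized $(B_1')$-anodyne as in Remark~\ref{rmk:sumaxium}). The unit is a weak equivalence at each singleton $\langle a\rangle$ (where it is literally the $(B_0)$-anodyne $\{0\}\to(\Delta^1)^\sharp$), hence at $C$ by Lemma~\ref{lem:unitequiv1}(ii), hence at $A$ by two-out-of-three; no description of $\omega^*\omega_!(A)$ is ever needed. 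You should either adopt this zig-zag, or substantially enlarge your toolkit of anodynes and address the degeneracy identifications in the permutation part directly.
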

\begin{proof}
The vertex $A$ is a finite set $A(0)$. If $A(0) = \varnothing$ then the unit is an isomorphism, while if $A(0)$ has one element then $\omega^*\omega_!(A)$ is the inert (marked) 1-simplex $\langle 1 \rangle \longrightarrow \langle 0 \rangle$ of $N\mathbf{F}_o^\natural$, so that the unit is a $\mathfrak{P}$-anodyne morphism of the form $\{0\} \longrightarrow (\Delta^1)^\sharp$. If $A(0)$ has more elements, consider the `cone' $C$ constructed as the pushout in the following diagram:
\[
\xymatrix@C=35pt@R=35pt{
\coprod_{a \in A(0)} \Delta^0 \ar[r]_\sim^{\coprod \partial_1}\ar[d] & \coprod_{a \in A(0)} (\Delta^1)^\sharp \ar[d] & \coprod_{a \in A(0)} \Delta^0 \ar[l]_{\coprod \partial_0} \ar@{-->}[dl] \\
\Delta^0 \ar[d]_A \ar[r]^\sim & C & \\
N\mathbf{F}^\natural & & 
}
\]
Here the summand $(\Delta^1)^\sharp$ indexed by $a \in A(0)$ is the inert 1-simplex $\rho_a: A(0) \longrightarrow \{a\}$ over $N\mathbf{F}_o^\natural$ and the corresponding vertex $\Delta^0 \longrightarrow N\mathbf{F}_o$ on the right of the diagram is the one-point set $\{a\}$. The dotted slanted map on the right is a trivial cofibration of the form discussed in Remark \ref{rmk:sumaxiom}, i.e. a generalized version of a $\mathfrak{P}$-anodyne of type ($B_1'$). In this way, we obtain a zigzag of weak equivalences 
\[
\xymatrix{
A \ar[r]^\sim & C & \coprod_{a \in A(0)} \langle a \rangle \ar[l]_-{\sim} 
}
\]
where we have written $\langle a \rangle$ for the vertex $\{a\}: \Delta^0 \longrightarrow N\mathbf{F}_o$. Since we already know that each $\eta_{\langle a \rangle}$ is a weak equivalence, it follows by Lemma \ref{lem:unitequiv1} and two-out-of-three that $\eta_A$ is also a weak equivalence. $\Box$
\end{proof}

We next turn to 1-simplices, possibly marked. Let us call a 1-simplex $A$ \emph{connected} if $\omega_!(A)$ consists of a single tree or is empty. For a general 1-simplex $A: \Delta^1 \longrightarrow N\mathbf{F}_o$, i.e. a partial surjection of finite sets $f: A(0) \longrightarrow A(1)$, we can write
\begin{equation*}
\omega_!(A) = \bigoplus_{a \in A(1)} C_a \oplus \bigoplus_{b \in U_f} \eta.
\end{equation*}
Here $C_a$ is the corolla with vertex $v_a$ and $f^{-1}(a)$ as the set of its leaves, while $U_f \subseteq A(0)$ is the set of $b \in A(0)$ on which $f$ is undefined. Similarly, we will compare the 1-simplex $A$ to its `decomposition' into a family of connected 1-simplices
\begin{equation*}
A_a = (f^{-1}(a) \longrightarrow \{a\}) \quad\quad \text{and} \quad\quad A_b = (\{b\} \longrightarrow \varnothing)
\end{equation*}
indexed by all $a \in A(1)$ and $b \in U_f$, which are all marked if $A$ is (and in this case each $f^{-1}(a)$ is a singleton, of course). The following two lemmas now show that $\eta_A: A \longrightarrow \omega^*\omega_!(A)$ is a weak equivalence and complete the proof of Proposition \ref{prop:unitequivalence}. The first one deals with the case of a connected 1-simplex, the second reduces the general case to the connected one.

\begin{lemma}
\label{lem:connected1simplex}
Let $B: (\Delta^1)^{\flat} \longrightarrow N\mathbf{F}_o^\natural$ be a 1-simplex $B(0) \longrightarrow B(1)$ in the nerve of $\mathbf{F}$. Suppose that either $B(1) = \langle 1 \rangle$ and $B$ is active, or $B(1) = \varnothing$ and $B(0) = \langle 1 \rangle$. Then $\eta_B: B \longrightarrow \omega^*\omega_!(B)$ is a weak equivalence, and similarly when $(\Delta^1)^\flat$ is replaced by $(\Delta^1)^\sharp$.
\end{lemma}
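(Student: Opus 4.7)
The plan is to verify the two cases separately, and in each to reduce the question to a (possibly iterated) $\mathfrak{P}$-anodyne map by direct analysis of the target $\omega^*\omega_!(B)$. By Remark \ref{rmk:baromega} we may and will work in $\mathbf{POp}_o$ throughout, so what has to be shown in each case is that the corresponding unit is a weak equivalence there.

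First I would dispose of the case where $B$ is the partial map $\langle 1 \rangle \to \langle 0 \rangle$. An inspection of the definition of $\omega$ on objects -- noting that $B(1)=\varnothing$ produces no vertices -- shows that $\omega_!(B^\flat) = \omega_!(B^\sharp) = \eta$. Combined with the description of $\omega^*(\eta^\flat)$ from the examples at the end of Section \ref{sec:omegaQpairs}, this identifies $\eta_{B^\sharp}$ with the identity and $\eta_{B^\flat}$ with the inclusion $(\Delta^1)^\flat \hookrightarrow (\Delta^1)^\sharp$ marking the single inert edge over $\langle 1 \rangle \to \langle 0 \rangle$. The latter is a trivial cofibration by a standard two-out-of-three argument using two instances of the $(B_0)$ $\mathfrak{P}$-anodyne at the vertex $\{0\}$.

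The substantive case is $B:\langle n\rangle\to\langle 1\rangle$ active with $n\geq 1$, where $\omega_!(B^\flat)=C_n^\flat$. The unit is the inclusion $B^\flat \hookrightarrow \omega^*(C_n^\flat)$ singling out $B$ as the 1-simplex corresponding to the identity of $C_n$. My plan is to construct an explicit filtration
\begin{equation*}
B^\flat = X_{-1} \subset X_0 \subset X_1 \subset \cdots \subset \omega^*(C_n^\flat)
\end{equation*}
whose consecutive inclusions are pushouts of $\mathfrak{P}$-anodynes, possibly in the generalized forms allowed by Remark \ref{rmk:sumaxiom}. Organizing the filtration according to the elementary-simplex analysis at the end of Section \ref{sec:omegaQpairs}, I would: first, attach the inert decomposition of $\langle n\rangle$ at the source of $B$ by a single generalized $(B_1')$-coCartesian anodyne; next, fill inner horns $\Lambda_1^2$ of type $(C_1)$ to produce the composites $\rho_i\circ B$ for $1\leq i\leq n$; then iteratively adjoin permutation edges, leaf-forgetting edges, and higher simplices using $(A_0)$, $(A_1)$, $(C_0)$, $(C_1)$ and $(C_2')$ in order of increasing complexity of the underlying map $\omega_!(A)\to C_n$. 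The marked variant occurs only for $n=1$ (since an active arrow to $\langle 1\rangle$ is inert only when it is an identity), where $B$ is the degenerate identity $\langle 1\rangle\to\langle 1\rangle$ and the additional marking is produced by a further $(B_0)$ anodyne; the rest of the argument is formally the same.

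The main obstacle I expect is the combinatorial bookkeeping: one must choose a well-founded order on the non-degenerate elementary simplices of $\omega^*(C_n^\flat)$ -- each of which is a pair consisting of an elementary simplex $A$ of $N\mathbf{F}_o$ and a map $\omega_!(A)\to C_n$ -- such that each new simplex is attached along a subobject already built, with the attachment square matching one of the $\mathfrak{P}$-anodyne squares. This parallels in spirit the filtration arguments of Propositions \ref{prop:poprodinneranodyne} and \ref{prop:poprodrootanodyne}, but the indexing set is now configurations of independent edges of $C_n$ (with orderings and symmetric-group identifications) rather than shuffles of tensor products.
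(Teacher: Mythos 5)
Your case analysis is the right one, and most of the preparatory reductions are correct. For $B = \langle 1 \rangle \to \langle 0 \rangle$ you correctly compute $\omega_!(B^\flat) = \omega_!(B^\sharp) = \eta$ and thus that $\eta_{B^\sharp}$ is an isomorphism and $\eta_{B^\flat}$ is the marking inclusion $(\Delta^1)^\flat \hookrightarrow (\Delta^1)^\sharp$ over the inert edge. But your stated justification that this is a trivial cofibration --- ``two instances of the $(B_0)$ $\mathfrak{P}$-anodyne at the vertex $\{0\}$ plus two-out-of-three'' --- does not yet close the argument: to run two-out-of-three along $\{0\}^\sharp \to (\Delta^1)^\flat \to (\Delta^1)^\sharp$ you need $\{0\}^\sharp \to (\Delta^1)^\flat$ to be a weak equivalence, and that map is not itself an instance of $(B_0)$ (its codomain is unmarked). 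It is true --- any edge of a non-unital $\infty$-operad lying over the inert $\langle 1 \rangle \to \langle 0 \rangle$ is automatically coCartesian because the fiber over $\langle 0 \rangle$ is contractible --- but it needs to be said. The paper makes the corresponding reduction by noting $\partial_1: \Delta^0 \to (\Delta^1)^{\flat/\sharp}$ is a weak equivalence and invoking Lemma~\ref{lem:unitequivpoints}, which rests on the same observation.

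The real gap is the active case $B: \langle n \rangle \to \langle 1 \rangle$, which is where the content of the lemma lives. You correctly identify $\omega_!(B^\flat) = C_n^\flat$ and the strategy of building a $\mathfrak{P}$-anodyne filtration of $\omega^*(C_n^\flat)$ starting from $B$, but then leave the filtration unconstructed, explicitly flagging the ``combinatorial bookkeeping'' as the remaining obstacle. That bookkeeping \emph{is} the proof. The sketched plan also has defects: the mentioned ``composites $\rho_i \circ B$'' do not type-check, since both $\rho_i$ and $B$ are maps $\langle n \rangle \to \langle 1 \rangle$ in $\mathbf{F}_o$; the well-founded order on simplices is never specified; and the list of anodyne types invoked ($B_1'$, $C_1$, $A_0$, $A_1$, $C_0$, $C_2'$) is much larger than what is actually needed. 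In the paper's proof the nondegenerate simplices of $\omega^*(C_n)$ are sorted into two concrete families --- strings of marked inert maps starting from $\langle n \rangle$ and landing on subsets of the leaves (``type 1''), and such a string followed by the active vertex and the terminal inert $\langle 1 \rangle \to \langle 0 \rangle$ at the root (``type 2'') --- and the filtration adds first all type-1 simplices by dimension and then all type-2 ones, each stage being a pushout along a coproduct of $(\Lambda_0^m)^\diamond \to (\Delta^m)^\diamond$ horns (type $(C_0)$) or a single $\{0\} \to (\Delta^1)^\sharp$ (type $(B_0)$). Supplying that classification and verifying those pushout squares is exactly what your proposal defers and hence what it is missing.
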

\begin{proof}
We distinguish various cases: \\
(i). In case $B$ is $\langle 1 \rangle \longrightarrow \langle 0 \rangle$ (the second case in the statement), then
\[
\xymatrix{
\Delta^0 \ar[rr]^{\partial_1}\ar[dr] & & (\Delta^1)^{\flat/\sharp} \ar[dl]^B \\
& N\mathbf{F}_o^\natural &
}
\]
is a weak equivalence, so this case follows from Lemma \ref{lem:unitequivpoints}. \par 
(ii). In case $B$ is $\langle 1 \rangle \longrightarrow \langle 1 \rangle$ (possibly marked), $B$ is degenerate and we can again apply Lemma \ref{lem:unitequivpoints}. \par 
(iii). The more complicated case is where $B$ is an active map $\langle k \rangle \longrightarrow \langle 1 \rangle$, for $k>0$, and $\omega_!(B)$ is the corresponding corolla $C_k$. In this case $\omega^*\omega_!(B) = \omega^*(C_k)$ is quite a bit larger: for example, it contains the entire classifying space $B\Sigma_k$ of the symmetric group (cf. the example at the end of Section \ref{sec:omegaQpairs}). \par 
Let us fix an order on the leaves of $C_k$, viewed as an isomorphism $\alpha: \langle k \rangle \longrightarrow \mathrm{leaves}(C_k)$. The non-degenerate simplices of $\omega^*(C_k)$ are all faces of two kinds of simplices, which we indicate by
\[
\xymatrix{
(\text{type 1}) & \langle k_0 \rangle \ar[d]_{\alpha_0}\ar[r]^{\sigma_1}_{\sharp} & \langle k_1 \rangle \ar[r]_{\sharp} & \cdots & \ar[r]^-{\sigma_n}_{\sharp} & \langle k_n \rangle \\
& \mathrm{leaves}(C_k) &&&& \\
(\text{type 2}) & \langle k_0 \rangle \ar[d]_{\alpha_0}\ar[r]^{\sigma_1}_{\sharp} & \langle k_1 \rangle \ar[r] & \cdots & \ar[r]^-{\sigma_{n-2}}_-{\sharp} & \langle k_{n-2} \rangle \ar[r]^-{\sigma_{n-1}} & \langle 1 \rangle \ar[d]\ar[r]^{\sigma_n}_{\sharp} & \langle 0 \rangle \\
& \mathrm{leaves}(C_k) &&&&& \mathrm{root}(C_k) &
}
\]
For the simplices of type 1, we require that $k_0 = k$, that $\langle k_0 \rangle$ is mapped to the leaves of $C_k$ by the fixed map $\alpha = \alpha_0$ and that each of the $\sigma_i$ is inert and marked. For the simplices of type 2, the map $\sigma_{n-1}$ is active, each of the other $\sigma_i$ for $i < n-1$ is necessarily an isomorphism, $k_0 = k$ and $\alpha_0 = \alpha$ again and every $\sigma_i$ for $i < n-1$ is marked, as is $\sigma_n$. Let us also define the following kind of simplices:
\[
\xymatrix{
(\text{type 2}') & \langle k_0 \rangle \ar[d]_{\alpha_0}\ar[r]^{\sigma_1}_{\sharp} & \langle k_1 \rangle \ar[r]_{\sharp} & \cdots & \ar[r]^-{\sigma_{n-1}}_-{\sharp} & \langle k_{n-1} \rangle \ar[r] & \langle 1 \rangle \ar[d] \\
& \mathrm{leaves}(C_k) &&&&& \mathrm{root}(C_k)
}
\]
Obviously, these are faces of the type 2 simplices. \par 
Now, the original simplex $B$ is the unique 1-simplex of type $2'$. The object $\omega^*\omega_!(B)$ has a filtration
\begin{eqnarray*}
B \subseteq F \subseteq G & = & \omega^*\omega_!(B) \\
B & = & F_0 \subseteq F_1 \subseteq F_2 \subseteq \cdots \subseteq \bigcup_n F_n = F \\
F & = & G_0 \subseteq G_1 \subseteq G_2 \subseteq \cdots \subseteq \bigcup_n G_n = G 
\end{eqnarray*}
where $F_n$ is obtained from $F_{n-1}$ by adding all $n$-simplices of type 1 and $G_n$ is obtained from $G_{n-1}$ by adding all $n$-simplices of type 2. For $n \geq 1$, the inclusion $F_{n-1} \subseteq F_n$ is a pushout of the form
\[
\xymatrix{
\coprod (\Lambda_0^n)^\diamond \ar[d]\ar[r] & F_{n-1} \ar[d] \\
\coprod (\Delta^n)^\diamond \ar[r] & F_n
}
\]
and hence $\mathfrak{P}$-anodyne. (Here the superscript $\diamond$ indicates that the 1-simplex $\Delta^{\{0,1\}}$ is marked; the left vertical map is a coproduct of $\mathfrak{P}$-anodynes of type ($C_0$).) The inclusion $G_0 \subseteq G_1$ is given by the pushout
\[
\xymatrix{
\Delta^0 \ar[d]_{\partial_1}\ar[r] & G_0 \ar[d] \\ 
(\Delta^1)^\sharp \ar[r] & G_1
}
\]
(adjoining the inert 1-simplex $\langle 1 \rangle \longrightarrow \langle 0 \rangle$) and is therefore also $\mathfrak{P}$-anodyne. For $n \geq 2$, we factor the inclusion $G_{n-1} \subseteq G_n$ as $G_{n-1} \subseteq G'_{n-1} \subseteq G_n$, where $G_{n-1} \subseteq G'_{n-1}$ is given by adding all $(n-1)$-simplices of type $2'$ and $G'_{n-1} \subseteq G_n$ is then given by adding all $n$-simplices of type 2. There are pushout diagrams
\[
\xymatrix{
\coprod (\Lambda_0^{n-1})^\diamond \ar[r]\ar[d] & G_{n-1} \ar[d] \\
\coprod (\Delta^{n-1})^\diamond \ar[r] & G'_{n-1} 
}
\] 
and
\[
\xymatrix{
\coprod (\Lambda_0^n)^\diamond \ar[r]\ar[d] & G'_{n-1} \ar[d] \\
\coprod (\Delta^n)^\diamond \ar[r] & G_n 
}
\]
This shows that each $F_{n-1} \subseteq F_n$ and $G_{n-1} \subseteq G_n$ is a trivial cofibration and hence that $B \longrightarrow \omega^*\omega_!(B)$ is. $\Box$
\end{proof}

To complete the proof of Proposition \ref{prop:unitequivalence}, we have to reduce the case of a general 1-simplex $A$ to the case of a connected 1-simplex $B$, which was treated in the previous lemma. This reduction is given by the following lemma: 

\begin{lemma}
\label{lem:reducetoconnected}
Let $A$ be a 1-simplex of $N\mathbf{F}_o^\natural$, given by $f: A(0) \longrightarrow A(1)$, with a `decomposition' into a family of 1-simplices $A_a$ for $a \in A(1)$ and $A_b$ for $b \in U_f$ (the set of $b$'s where $f$ is undefined), as described before Lemma \ref{lem:connected1simplex}. Then there is a zig-zag of trivial cofibrations in $\mathbf{POp}_o$ as follows:
\[
\xymatrix@C=35pt{
A \ar[r]^\sim & E & (\coprod_{a \in A(1)} A_a) \amalg (\coprod_{b \in U_f} A_b) \ar[l]_-\sim
}
\]
\end{lemma}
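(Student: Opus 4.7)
The object $E$ will be constructed to contain both $A$ and the decomposition $\coprod_a A_a \amalg \coprod_b A_b$ as subobjects, with both inclusions being $\mathfrak{P}$-anodyne. Explicitly, $E$ is the marked simplicial subset of $N\mathbf{F}_o^\natural$ generated by: the 1-simplex $A$; for each $a \in A(1)$, the marked inert 1-simplex $\rho_a \colon A(1) \to \{a\}$ and the 2-simplex $\sigma_a$ with $d_2\sigma_a = A$, $d_0\sigma_a = \rho_a$, $d_1\sigma_a = \rho_a \circ A$; for each $a \in A(1)$, the marked inert 1-simplex $\pi_a \colon A(0) \to f^{-1}(a)$, the 1-simplex $A_a$, and a 2-simplex $\sigma'_a$ with $d_2\sigma'_a = \pi_a$, $d_0\sigma'_a = A_a$, $d_1\sigma'_a = A_a \circ \pi_a$, noting that $A_a \circ \pi_a = \rho_a \circ A$ as 1-simplices of $N\mathbf{F}_o$; and for each $b \in U_f$, the marked inert 1-simplex $\pi_b \colon A(0) \to \{b\}$, together with the 1-simplex $A_b$ and the 2-simplex $\sigma_b$ with $d_2\sigma_b = \pi_b$, $d_0\sigma_b = A_b$. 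This construction is a natural generalization of the cone construction used in Lemma \ref{lem:unitequivpoints}, now with cone structures on both the source and target of $A$.

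To show that $A \hookrightarrow E$ is a trivial cofibration, we use an explicit filtration. First, attach the cone structure on the target: the inclusion of $A \amalg \coprod_{a \in A(1)} \{a\}$ into the pushout with $(\coprod_a \{a\})^\triangleleft$ (cone point $A(1)$, inert maps marked) is a generalized $\mathfrak{P}$-anodyne of type $(B_1')$ from Remark \ref{rmk:sumaxiom}. Next, attach the 2-simplices $\sigma_a$ via applications of generalized anodynes of type $(C_2')$, exhibiting $A(0)$ as the $0$-vertex in a join with the target cone. Then attach the source-side decomposition (vertices $f^{-1}(a)$ and $\{b\}$, together with marked inert projections $\pi_a, \pi_b$) via a further $(B_1')$ anodyne with cone point $A(0)$. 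Finally attach the active 1-simplices $A_a$, $A_b$ and the 2-simplices $\sigma'_a$, $\sigma_b$: each $\sigma'_a$ fills an inner horn $\Lambda^2_1$, and the resulting inclusion is $\mathfrak{P}$-anodyne of type $(C_1)$.

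For the inclusion $\coprod_a A_a \amalg \coprod_b A_b \hookrightarrow E$, we apply a dual filtration: first adjoin the vertex $A(0)$ with all marked inert projections $\pi_a, \pi_b$ via a generalized $(B_1')$ anodyne (cone point $A(0)$), then fill in the 2-simplices $\sigma'_a, \sigma_b$ by $(C_2')$-type anodynes, then attach the vertex $A(1)$ with the marked inert projections $\rho_a$ (again $(B_1')$), and finally the 1-simplex $A$ together with the 2-simplices $\sigma_a$ via further $(C_2')$ and inner anodyne fillings. The main obstacle will be the careful bookkeeping of markings at each step: the generalized $(C_2')$ anodynes from Remark \ref{rmk:sumaxiom} accommodate mixed markings (inert structural edges marked, while $A$, $A_a$, $A_b$ may or may not be marked depending on whether $A$ itself is marked) precisely what is needed. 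A subtle point is the case $b \in U_f$: since $A$ is undefined on $b$, there is no 2-simplex over $A(0) \to A(1) \to \varnothing$ compatible with $A$, so the connection between $A_b$ and $A$ in $E$ is made only through the inert projection $\pi_b$ and the cone structure on $A(0)$, not through any 2-simplex involving both $A(1)$ and $\varnothing$.
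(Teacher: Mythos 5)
Your overall strategy coincides with the paper's: build an intermediate object $E$ containing $A$ and the decomposition as subobjects, with both inclusions built up by $\mathfrak{P}$-anodyne attachments of cone edges, inner/outer horn fillings, and the generalized $(B_1')$/$(C_2')$ moves. The pieces you list — $A$, the inert 1-simplices $\rho_a$, $\pi_a$, $\pi_b$, the simplices $A_a$, $A_b$, and the 2-simplices $\sigma_a$, $\sigma'_a$ (the latter is the paper's $\tau_a$) — match the paper's construction for the $a\in A(1)$ part exactly.

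However, there is a genuine gap in your treatment of $b\in U_f$. Your ``subtle point'' asserts that there is no 2-simplex of $N\mathbf{F}_o$ over $A(0)\to A(1)\to\langle 0\rangle$ compatible with $A$. This is false: the unique partial map $i_b\colon A(1)\to\langle 0\rangle$ (undefined everywhere) is a perfectly good morphism of $\mathbf{F}_o$, it is surjective (vacuously) and inert, and the pair $(A, i_b)$ is a genuine $2$-simplex with $d_2 = A$. The paper's $E$ includes both this $i_b$ and this $2$-simplex (called $\sigma_b$ there), and they are load-bearing. With them present, the $2$-simplex $\tau_b$ (your $\sigma_b$, with faces $\pi_b$, $A_b$, and the diagonal $A(0)\to\langle 0\rangle$) can be attached by a $\Lambda^2_0$ filling, since its $d_1$ face has already appeared as $d_1$ of the paper's $\sigma_b$. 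In your construction, by contrast, when you reach the step that is supposed to adjoin your $\sigma_b$ and $A_b$, neither $d_0 = A_b$ nor $d_1 = A(0)\to\langle 0\rangle$ is present (the vertex $\langle 0\rangle$ has not even been created), so no horn filling applies. You would have to insert a $(B_0)$-type step to create the marked edge $A(0)\to\langle 0\rangle$ first, which effectively reintroduces the connection through $\langle 0\rangle$ that you claimed was unavailable; and you would then need to check that the resulting marking on $A(0)\to\langle 0\rangle$ is consistent across both directions of the zig-zag, which is precisely what the paper's $\sigma_b$ and the $(A_0)$-marking move handle.

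Separately, several of your anodyne type identifications are off: the attachment of $\sigma'_a$ together with $A_a$ in the $A\hookrightarrow E$ direction is a \emph{left} horn $\Lambda^2_0$ filling (a $(C_0)$/$(B_1)$-type move, with the marked edge $\pi_a$ playing the role of $\Delta^{\{0,1\}}$), not an inner horn $(C_1)$ filling, since $A_a$ is being introduced in that step; and in the $R\hookrightarrow E$ direction the adjunction of $\sigma'_a$, $\sigma_b$ is an inner horn $(C_1)$ filling, not a $(C_2')$ move ($A_a$ is generally active, not inert, so the marking condition for $(C_2')$ is not satisfied). These are fixable, but combined with the missing $\sigma_b$/$i_b$ structure they mean the filtrations as written do not go through.
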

\begin{proof}
We will explicitly construct such an $E$. As a start, construct trivial cofibrations
\[
\xymatrix@C=35pt{
A(1) \ar[r]^\sim & C_1 & \coprod_{a \in A(1)} \langle a \rangle \ar[l]_-{\sim}
}
\]
as in Lemma \ref{lem:unitequivpoints}. So $A(1)$ and each $\langle a \rangle$ are vertices of $N\mathbf{F}_o$ and $C_1$ is a wedge of marked 1-simplices connecting $A(1)$ to each $\langle a \rangle$. In the same way, we can construct 
a wedge $C_0$ which fits into a diagram
\[
\xymatrix@C=35pt{
A(0) \ar[r]^\sim & C_0 & (\coprod_{a \in A(1)} \langle f^{-1}(a) \rangle) \amalg (\coprod_{b \in U_f} \langle b \rangle) \ar[l]_-\sim
}
\]
corresponding to writing $A(0)$ as the disjoint sum of these $f^{-1}(a)$ and these $b \in U_f$. Next, attach $A$ to $C_0 \amalg C_1$ as in the pushout
\[
\xymatrix{
\partial A \ar[r]^\sim \ar[d] & C_0 \amalg C_1 \ar[d] \\
A \ar[r]^\sim & B
}
\]
Thus $B$ is a simplicial set which can be pictured as
\[
\begin{tikzpicture} 
[every node/.style={fill, circle, minimum size=.1cm, inner sep=0pt}]

\node(anchor){}; 
\node at ($(anchor) + (0cm, -1.5cm)$){};
\node at ($(anchor) + (.5cm, -.5cm)$){};
\node at ($(anchor) + (1.6cm, .6cm)$){};
\node at ($(anchor) + (2cm, 0cm)$){};
\node at ($(anchor) + (1.6cm, -.9cm)$){};
\node at ($(anchor) + (2cm, -1.5cm)$){};

\tikzstyle{every node}=[]

\node at ($(anchor) + (3cm, .1cm)$) {$\Bigr{\}} C_0$};
\node at ($(anchor) + (3cm, -1.3cm)$) {$\Bigr{\}} C_1$};

\draw[->] ($(anchor)$) -- node[left]{$A$} ($(anchor) + (0, -1.45cm)$);
\draw[->] ($(anchor)$) -- ($(anchor) + (.45cm, -.45cm)$);
\draw[->] ($(anchor)$) -- ($(anchor) + (1.57cm, .58cm)$);
\draw[->] ($(anchor)$) -- ($(anchor) + (1.95cm, 0cm)$);
\draw[->] ($(anchor) + (0cm,-1.5cm)$) -- ($(anchor) + (1.57cm, -.92cm)$);
\draw[->] ($(anchor) + (0cm,-1.5cm)$) -- ($(anchor) + (1.95cm, -1.5cm)$);

\end{tikzpicture} 
\]

The arrows in the upper half of the picture together constitute $C_0$, the arrows in the bottom half constitute $C_1$. Next, attach (by an inner anodyne map) for each $a \in A(1)$ a 2-simplex $\sigma_a$ to $B$ with $d_2\sigma_a = A$ and $d_0\sigma_a = A(1) \longrightarrow \langle a \rangle$;
\[
\begin{tikzpicture} 
[every node/.style={fill, circle, minimum size=.1cm, inner sep=0pt}]

\node(anchor){}; 
\node at ($(anchor) + (0cm, -1.5cm)$){};
\node at ($(anchor) + (1.5cm, -1.5cm)$){};

\tikzstyle{every node}=[]

\draw[->] ($(anchor)$) -- node[left]{$A$} ($(anchor) + (0, -1.45cm)$);
\draw[->] ($(anchor) + (0cm,-1.5cm)$) -- node[below]{$\sharp$}($(anchor) + (1.45cm, -1.5cm)$);
\draw[->,dashed] ($(anchor)$) -- ($(anchor) + (1.45cm, -1.45cm)$);

\node at ($(anchor) + (.6cm, -1.1cm)$) {$\sigma_a$};

\draw [fill=black, draw opacity=0, fill opacity=0.3] ($(anchor)$) -- ($(anchor) + (0cm, -1.5cm)$) -- ($(anchor) + (1.5cm, -1.5cm)$) -- cycle;

\end{tikzpicture} 
\]

Also attach for each $b \in U_f$ an inert $i_b: A(1) \longrightarrow \langle 0 \rangle$ (by a pushout along $\{0\} \longrightarrow (\Delta^1)^\sharp$) and a 2-simplex $\sigma_b$ with $d_2\sigma_b = A$ and $d_0\sigma_b = i_b$ (by an inner anodyne). This gives a $\mathfrak{P}$-anodyne map $B \longrightarrow D$, where $D$ looks like
\[
\begin{tikzpicture} 
[every node/.style={fill, circle, minimum size=.1cm, inner sep=0pt}]

\node(anchor){}; 
\node at ($(anchor) + (0cm, -1.5cm)$){};
\node at ($(anchor) + (.5cm, -.5cm)$){};
\node at ($(anchor) + (1.6cm, .6cm)$){};
\node at ($(anchor) + (2cm, 0cm)$){};
\node at ($(anchor) + (1.6cm, -.9cm)$){};
\node at ($(anchor) + (2cm, -1.5cm)$){};
\node at ($(anchor) + (.5cm, -2cm)$){};

\tikzstyle{every node}=[]

\draw[->] ($(anchor)$) -- node[left]{$A$} ($(anchor) + (0, -1.45cm)$);
\draw[->] ($(anchor)$) -- ($(anchor) + (.45, -.45cm)$);
\draw[->] ($(anchor)$) -- ($(anchor) + (1.57cm, .58cm)$);
\draw[->] ($(anchor)$) -- ($(anchor) + (1.95cm, 0cm)$);
\draw[->] ($(anchor) + (0cm,-1.5cm)$) -- ($(anchor) + (1.57cm, -.92cm)$);
\draw[->] ($(anchor) + (0cm,-1.5cm)$) -- ($(anchor) + (1.95cm, -1.5cm)$);
\draw[->] ($(anchor) + (0cm,-1.5cm)$) -- ($(anchor) + (.45cm, -1.95cm)$);
\draw[->] ($(anchor)$) -- ($(anchor) + (1.57cm, -.92cm)$);
\draw[->] ($(anchor)$) -- ($(anchor) + (1.95cm, -1.5cm)$);
\draw[->] ($(anchor)$) -- ($(anchor) + (.45cm, -1.95cm)$);

\draw [fill=black, draw opacity=0, fill opacity=0.3] ($(anchor)$) -- ($(anchor) + (1.6cm, -.9cm)$) -- ($(anchor) + (0, -1.5cm)$) -- cycle;
\draw [fill=black, draw opacity=0, fill opacity=0.3] ($(anchor)$) -- ($(anchor) + (2cm, -1.5cm)$) -- ($(anchor) + (0, -1.5cm)$) -- cycle;
\draw [fill=black, draw opacity=0, fill opacity=0.3] ($(anchor)$) -- ($(anchor) + (.5cm, -2cm)$) -- ($(anchor) + (0, -1.5cm)$) -- cycle;

\end{tikzpicture} 
\]

Finally, attach for each such $a$ and $b$ a 2-simplex $\tau_a$, respectively $\tau_b$, as in
\[
\xymatrix{
A(1)\ar[d]\ar[r]^\sharp\ar[dr]^{\tau_a}_{\sigma_a} & \langle f^{-1}(a) \rangle \ar@{-->}[d] & & A(1)\ar[d]\ar[r]^\sharp\ar[dr]^{\tau_b}_{\sigma_b} & \langle b \rangle \ar@{-->}[d] \\
A(0) \ar[r]_\sharp & \langle a \rangle & & A(0) \ar[r]_\sharp & \langle 0 \rangle
}
\]
by constructing the pushout
\[
\xymatrix{
\coprod_{a,b}(\Lambda_0^2)^\diamond \ar[d]\ar[r] & D \ar[d] \\
\coprod_{a,b}(\Delta^2)^\diamond \ar[r] & E
}
\]
This gives a trivial cofibration $A \rightarrowtail E$ by composition of $A \rightarrowtail B \rightarrowtail D \rightarrowtail E$. The simplicial set $E$ looks like a book with $A$ as its spine and a page with margin $A_a$, respectively $A_b$, for each $a \in A(1)$ and $b \in U_f$:
\[
\begin{tikzpicture} 
[every node/.style={fill, circle, minimum size=.1cm, inner sep=0pt}]

\node(anchor){}; 
\node at ($(anchor) + (0cm, -1.5cm)$){};
\node at ($(anchor) + (.5cm, -.5cm)$){};
\node at ($(anchor) + (1.6cm, .6cm)$){};
\node at ($(anchor) + (2cm, 0cm)$){};
\node at ($(anchor) + (1.6cm, -.9cm)$){};
\node at ($(anchor) + (2cm, -1.5cm)$){};
\node at ($(anchor) + (.5cm, -2cm)$){};

\tikzstyle{every node}=[]

\draw[->] ($(anchor)$) -- node[left]{$A$} ($(anchor) + (0, -1.45cm)$);
\draw[->] ($(anchor)$) -- ($(anchor) + (.45, -.45cm)$);
\draw[->] ($(anchor)$) -- ($(anchor) + (1.57cm, .58cm)$);
\draw[->] ($(anchor)$) -- ($(anchor) + (1.95cm, 0cm)$);
\draw[->] ($(anchor) + (0cm,-1.5cm)$) -- ($(anchor) + (1.57cm, -.92cm)$);
\draw[->] ($(anchor) + (0cm,-1.5cm)$) -- ($(anchor) + (1.95cm, -1.5cm)$);
\draw[->] ($(anchor) + (0cm,-1.5cm)$) -- ($(anchor) + (.45cm, -1.98cm)$);
\draw[->] ($(anchor) + (1.6cm, .6cm)$) -- ($(anchor) + (1.6cm, -.88cm)$);
\draw[->] ($(anchor) + (2cm, 0cm)$) -- node[right]{$A_a$}($(anchor) + (2cm, -1.5cm)$);
\draw[->] ($(anchor) + (.5cm, -.5cm)$) -- ($(anchor) + (.5cm, -1.98cm)$);

\node at ($(anchor) + (.7cm, -1.7cm)$){$A_b$};

\draw [fill=black, draw opacity=0, fill opacity=0.3] ($(anchor)$) -- ($(anchor) + (1.6cm, .6cm)$) -- ($(anchor) + (1.6cm, -.9cm)$) -- ($(anchor) + (0, -1.5cm)$) -- cycle;
\draw [fill=black, draw opacity=0, fill opacity=0.3] ($(anchor)$) -- ($(anchor) + (2cm, 0cm)$) -- ($(anchor) + (2cm, -1.5cm)$) -- ($(anchor) + (0, -1.5cm)$) -- cycle;
\draw [fill=black, draw opacity=0, fill opacity=0.3] ($(anchor)$) -- ($(anchor) + (.5cm, -.5cm)$) -- ($(anchor) + (.5cm, -2cm)$) -- ($(anchor) + (0, -1.5cm)$) -- cycle;

\end{tikzpicture} 
\]

These embeddings of $A_a$ into $E$ as $d_0\tau_a$ and of $A_b$ into $E$ as $d_0\tau_b$ define a map
\begin{equation*}
R = \coprod_{a \in A(1)} A_a \amalg \coprod_{b \in U_f} A_b \longrightarrow E
\end{equation*}
To complete the proof of the lemma, it now suffices to show that this map is a trivial cofibration. To this end, let us reconstruct $E$ from the coproduct of 1-simplices $R$. First, we attach to $R$ a wedge of marked 1-simplices of the form
\begin{equation*}
A(1) \longrightarrow \langle a \rangle, \quad A(1) \longrightarrow \langle 0 \rangle
\end{equation*}
and a wedge of marked 1-simplices
\begin{equation*}
A(0) \longrightarrow \langle f^{-1}(a) \rangle, \quad A(0) \longrightarrow \langle b \rangle
\end{equation*}
By pushouts along maps of type $(B_1)'$ as described in Remark \ref{rmk:sumaxiom}, this results in a trivial cofibration $R \rightarrowtail S = R \cup C_0 \cup C_1$. This $S$ looks like
\[
\begin{tikzpicture} 
[every node/.style={fill, circle, minimum size=.1cm, inner sep=0pt}]

\node(anchor){}; 
\node at ($(anchor) + (0cm, -1.5cm)$){};
\node at ($(anchor) + (.5cm, -.5cm)$){};
\node at ($(anchor) + (1.6cm, .6cm)$){};
\node at ($(anchor) + (2cm, 0cm)$){};
\node at ($(anchor) + (1.6cm, -.9cm)$){};
\node at ($(anchor) + (2cm, -1.5cm)$){};
\node at ($(anchor) + (.5cm, -2cm)$){};

\tikzstyle{every node}=[]

\draw[->] ($(anchor)$) -- ($(anchor) + (.45, -.45cm)$);
\draw[->] ($(anchor)$) -- ($(anchor) + (1.57cm, .58cm)$);
\draw[->] ($(anchor)$) -- ($(anchor) + (1.95cm, 0cm)$);
\draw[->] ($(anchor) + (0cm,-1.5cm)$) -- ($(anchor) + (1.57cm, -.92cm)$);
\draw[->] ($(anchor) + (0cm,-1.5cm)$) -- ($(anchor) + (1.95cm, -1.5cm)$);
\draw[->] ($(anchor) + (0cm,-1.5cm)$) -- ($(anchor) + (.45cm, -1.98cm)$);
\draw[->] ($(anchor) + (1.6cm, .6cm)$) -- ($(anchor) + (1.6cm, -.88cm)$);
\draw[->] ($(anchor) + (2cm, 0cm)$) -- node[right]{$A_a$}($(anchor) + (2cm, -1.5cm)$);
\draw[->] ($(anchor) + (.5cm, -.5cm)$) -- ($(anchor) + (.5cm, -1.98cm)$);

\node at ($(anchor) + (.7cm, -1.7cm)$){$A_b$};

\end{tikzpicture} 
\]
We can then enlarge $S$ by an inner anodyne map $S \rightarrowtail T$ by gluing in the 2-simplices $\tau_a$ and $\tau_b$; and finally, we can construct $T \rightarrowtail E$ by gluing in the 1-simplex $A$ together with the $\sigma_a$ using a pushout along a generalized form of a $\mathfrak{P}$-anodyne of type $(C_2')$, again as described in Remark \ref{rmk:sumaxiom}. This shows that $R \rightarrowtail E$ is a trivial cofibration and completes the proof of the lemma and hence the proof of Proposition \ref{prop:unitequivalence}. $\Box$ 
\end{proof}

With Proposition \ref{prop:unitequivalence} about the unit of the adjunction at hand, it is now easy to deal with the counit:

\begin{proposition}
\label{prop:counitequiv}
For any cofibrant object $Y$ in $\mathbf{fSets}_o^+$, the counit map $\bar\omega_!\bar\omega^*(Y) \longrightarrow Y$ is a weak equivalence in $\mathbf{fSets}_o^+$.
\end{proposition}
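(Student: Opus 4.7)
The plan is to combine a cell-induction on cofibrant objects with the triangle identity, thereby reducing the problem to the already-established Proposition \ref{prop:unitequivalence}.

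First I would observe that the composite functor $\bar\omega_!\bar\omega^*$ preserves colimits — both $\bar\omega_!$ and $\bar\omega^*$ are left adjoints, the latter by Lemma \ref{lem:omega*prescof}(i) — and cofibrations, by Lemmas \ref{lem:omega!prescof} and \ref{lem:omega*prescof}(ii). Combined with left properness of $\mathbf{fSets}_o^+$ (Theorem \ref{thm:markedfSets}) and the gluing lemma, this implies that the class of cofibrant $Y$ at which $\epsilon_Y$ is a weak equivalence is closed under pushouts along cofibrations, transfinite compositions, and retracts. Since every cofibrant object of $\mathbf{fSets}_o^+$ is built from the initial object by iterated cell attachments along the generating cofibrations $\partial F^\flat \to F^\flat$ and $C_1^\flat \to C_1^\sharp$ (Remark \ref{rmk:gencofmarkedfSets}), the problem reduces to the case where $Y$ is a representable marked forest set of the form $F^\flat$ for some open forest $F$, or the marked $1$-corolla $C_1^\sharp$.

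Second, for such representable $Y$, I would invoke the triangle identity
\[
\bar\omega^*(\epsilon_Y)\circ\eta_{\bar\omega^*(Y)} \,=\, \mathrm{id}_{\bar\omega^*(Y)}.
\]
Since $\bar\omega^*$ preserves cofibrations, $\bar\omega^*(Y)$ is cofibrant in $\langle 0\rangle/\mathbf{POp}_o$, so Proposition \ref{prop:unitequivalence} shows that $\eta_{\bar\omega^*(Y)}$ is a weak equivalence; two-out-of-three then gives that $\bar\omega^*(\epsilon_Y)$ is a weak equivalence as well.

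The main obstacle will be the final step: upgrading the fact that $\bar\omega^*(\epsilon_Y)$ is a weak equivalence to the assertion that $\epsilon_Y$ itself is one. Here I would exploit the fact that $\bar\omega^*$ is \emph{simultaneously} left and right Quillen (Proposition \ref{prop:omega*leftQ}), so that it preserves all weak equivalences between sufficiently nice objects, and I expect the cleanest route is to establish that $\bar\omega^*$ is moreover conservative on cofibrant objects. Such a reflection statement should follow by analysing, for each fibrant $E^\natural$, the mapping $\infty$-category $\mathrm{Map}^\flat(-,E^\natural)$ (Proposition \ref{prop:markedmappingspace}) and using the adjunction $(\bar\omega_!,\bar\omega^*)$ to identify $\mathrm{Map}^\flat(\bar\omega_!\bar\omega^*(Y),E^\natural)$ with an analogous mapping space in $\langle 0\rangle/\mathbf{POp}_o$, then combining this with the characterization of weak equivalences in $\mathbf{fSets}_o^+$ via Proposition \ref{prop:markedequivmapsharp}. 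As a more hands-on alternative, one can perform a direct combinatorial analysis of $\bar\omega_!\bar\omega^*(F^\flat)\to F^\flat$ for a representable forest $F$, filtering $\bar\omega_!\bar\omega^*(F^\flat)$ by subobjects indexed by subforests of $F$ (using the explicit description of the elementary simplices of $\omega^*(F^\flat)$ given in Section \ref{sec:omegaQpairs}) and exhibiting each step of the filtration as a pushout along an inner, root, or leaf anodyne via Propositions \ref{prop:poprodinneranodyne}, \ref{prop:poprodrootanodyne} and \ref{prop:poprodleafanodyne}; such maps are marked trivial cofibrations by Lemmas \ref{lem:markedanodequiv} and \ref{lem:leafanodmarkedequiv}.
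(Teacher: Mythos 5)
Your reduction to representables is fine, but the second step does not close, and the gap is the main content of the proof. The triangle identity you invoke, $\bar\omega^*(\epsilon_Y)\circ\eta_{\bar\omega^*(Y)}=\mathrm{id}$, only controls $\bar\omega^*(\epsilon_Y)$; promoting this to a statement about $\epsilon_Y$ itself requires that $\bar\omega^*$ reflect weak equivalences between cofibrant objects, which at this stage of the argument is essentially equivalent to the statement being proved. (Given that the derived unit is already known to be an equivalence by Proposition \ref{prop:unitequivalence}, the Quillen equivalence property is exactly the combination of that with $\bar\omega^*$ reflecting weak equivalences between fibrant objects; your ``obstacle'' is therefore not a technical nuisance but the whole remaining difficulty, and the sketch via mapping-space adjunction does not actually produce it.)

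The paper sidesteps this by reducing further than you do. A general representable $F^\flat$ is \emph{not} of the form $\omega_!(A)$ for a simplex $A$ of $N\mathbf{F}_o$ — those forests are ``layered,'' whereas an arbitrary tree need not be — so the \emph{other} triangle identity $\epsilon_{\omega_!A}\circ\omega_!(\eta_A)=\mathrm{id}$ is unavailable if you stop at representables. The paper's proof replaces $F^\flat$ by its Segal core $\mathrm{fSc}(F)^\flat$ (a weak equivalence preserved by the left Quillen functors $\bar\omega_!$ and $\bar\omega^*$), which is glued from direct sums of corollas and unit trees; after trading direct sums for coproducts up to weak equivalence, one is left with single corollas (possibly marked) and $\eta$. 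Every such object \emph{is} of the form $\omega_!(A)$ for a $0$- or $1$-simplex $A$ of $N\mathbf{F}_o^\natural$, so one applies the triangle identity on the $\omega_!$-side directly: $\omega_!(\eta_A)$ is a weak equivalence by Proposition \ref{prop:unitequivalence} together with the fact that $\omega_!$ preserves weak equivalences between cofibrant objects, hence $\epsilon_{\omega_!A}$ is one by two-out-of-three. No conservativity of $\bar\omega^*$ is needed. If you push your cell-induction one step further — through the Segal core — and switch to the other triangle identity, your argument closes.
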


Applying this proposition to objects $Y$ which are both fibrant and cofibrant and using that $\bar\omega^*$ is also left Quillen, we immediately deduce:

\begin{corollary}
\label{cor:omegacounit}
The derived counit $\mathbf{L}\bar\omega_!\mathbf{R}\bar\omega^* \longrightarrow \mathrm{id}$ is a weak equivalence.
\end{corollary}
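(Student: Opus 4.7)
The plan is to combine the triangle identity for the adjunction $(\bar\omega_!,\bar\omega^*)$ with Proposition~\ref{prop:unitequivalence}. The first step is straightforward: for cofibrant $Y\in\mathbf{fSets}_o^+$, Lemma~\ref{lem:omega*prescof} guarantees that $\bar\omega^*(Y)$ is cofibrant in $\langle 0\rangle/\mathbf{POp}_o$, and the triangle identity
\[
\bar\omega^*(\epsilon_Y)\circ \eta_{\bar\omega^*(Y)}\;=\;\mathrm{id}_{\bar\omega^*(Y)}
\]
together with Proposition~\ref{prop:unitequivalence} and two-out-of-three forces $\bar\omega^*(\epsilon_Y)$ to be a weak equivalence.

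To promote this to the statement that $\epsilon_Y$ itself is a weak equivalence, I plan to verify that $\bar\omega^*$ reflects weak equivalences between cofibrant objects. A first observation is that $\bar\omega^*$ is both left Quillen (Proposition~\ref{prop:omega*leftQ}) and right Quillen (as right adjoint to the left Quillen $\bar\omega_!$, Proposition~\ref{prop:omega!leftQ}), so it preserves both trivial cofibrations and trivial fibrations, and hence all weak equivalences. Moreover, for any cofibrant $X\in\langle 0\rangle/\mathbf{POp}_o$, the triangle identity $\epsilon_{\bar\omega_!(X)}\circ\bar\omega_!(\eta_X)=\mathrm{id}_{\bar\omega_!(X)}$ exhibits $\epsilon_{\bar\omega_!(X)}$ as a retract of $\bar\omega_!(\eta_X)$, which is a weak equivalence by Proposition~\ref{prop:unitequivalence} together with the left Quillen property of $\bar\omega_!$. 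Thus the counit is already known to be a weak equivalence on every object of the form $\bar\omega_!(X)$ with $X$ cofibrant.

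The main obstacle is to extend this to arbitrary cofibrant $Y$. A naive attempt via naturality of $\epsilon$ at the map $\epsilon_Y$ reduces the problem to the same counit statement at $Y$ and hence is circular, since both compositions around the naturality square factor through $\epsilon_Y$. I therefore plan to mimic the strategy used for the unit in Proposition~\ref{prop:unitequivalence}: by an analogue of Lemma~\ref{lem:unitequiv1}(i)-(ii), reduce to the case where $Y$ is a representable marked forest set $F^\flat$ (together with the cases of marked $1$-simplices), and then verify these base cases by an explicit analysis of $F^\flat$ as a suitable homotopy colimit of the ``$\omega$-cells'' $\omega_!(A)$ indexed by simplices $A$ of $\mathbf{\Delta}/N\mathbf{F}_o$. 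This combinatorial verification, analogous to the analysis carried out in Lemmas~\ref{lem:unitequivpoints}--\ref{lem:reducetoconnected} but on the $\mathbf{fSets}_o^+$ side, is where the technical heart of the argument lies.
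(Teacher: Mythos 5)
Your overall plan converges on the paper's approach (Proposition~\ref{prop:counitequiv}): reduce by skeletal induction to representable objects and verify the base case. The observation in your second paragraph — that the triangle identity $\epsilon_{\bar\omega_!(X)}\circ\bar\omega_!(\eta_X)=\mathrm{id}$ together with Proposition~\ref{prop:unitequivalence} and the fact that $\bar\omega_!$ is left Quillen forces $\epsilon_{\bar\omega_!(X)}$ to be a weak equivalence — is exactly the key step the paper uses. (A small terminological slip: this does not exhibit $\epsilon_{\bar\omega_!(X)}$ as a \emph{retract} of $\bar\omega_!(\eta_X)$; it exhibits $\bar\omega_!(\eta_X)$ as a section of $\epsilon_{\bar\omega_!(X)}$, and the conclusion follows by two-out-of-three rather than the retract axiom.)

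The place where you underestimate your own progress is at the end. You anticipate a ``technical heart'' consisting of an explicit homotopy-colimit analysis of $F^\flat$ in terms of $\omega$-cells, analogous to the laborious combinatorics behind the unit map. But the paper's proof of Proposition~\ref{prop:counitequiv} is much shorter precisely because no such analysis is needed. Once you have reduced (by the skeletal filtration and the cube lemma, which you correctly invoke) to the case $Y = F^\flat$ for a representable forest $F$, you can further reduce using the Segal core inclusion $\mathrm{fSc}(F)^\flat \to F^\flat$: this is a trivial cofibration (Proposition~\ref{prop:Segalcoreinnanod}) and both $\bar\omega_!$ and $\bar\omega^*$ are left Quillen, so the counit at $F^\flat$ is a weak equivalence if and only if it is at $\mathrm{fSc}(F)^\flat$. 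Now $\mathrm{fSc}(F)^\flat$ is built by pushouts from direct sums of corollas and copies of $\eta$; replacing direct sums by coproducts (a marked equivalence by Lemma~\ref{lem:markedtrivcof}(e)) and applying the cube lemma again reduces the base case to a single corolla $C_k$ (possibly marked) or to $\eta$. And these are precisely the objects of the form $\omega_!(A)$, where $A$ is a $0$-simplex or $1$-simplex of $N\mathbf{F}_o^\natural$. So the observation from your second paragraph finishes the proof immediately: the counit is already known to be a weak equivalence on objects of this form. In short, the ``combinatorial heart'' you are bracing for does not exist — the reduction machinery (Segal cores, sums-to-coproducts) brings every base case into the range where your triangle-identity argument applies directly.

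One further remark on the first paragraph of your proposal. The observation that $\bar\omega^*(\epsilon_Y)$ is a weak equivalence is correct, but promoting this to ``$\epsilon_Y$ is a weak equivalence'' requires showing that $\bar\omega^*$ reflects weak equivalences between cofibrant objects, and that property is essentially equivalent to the Quillen equivalence you are trying to establish. Being both left and right Quillen gives preservation of weak equivalences, not reflection, so this line is indeed a dead end, as you suspected.
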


\begin{proof}[Proof of Proposition \ref{prop:counitequiv}]
The initial steps in the proof are similar to those in the proof of Proposition \ref{prop:unitequivalence}. In particular, by using induction on the skeletal filtration of $Y$, one sees that it suffices to prove the proposition for the special case where $Y$ is a forest $F$ (possibly with some marked 1-corollas). Consider the Segal core $\mathrm{fSc}(F)$ of $F$. We have a commutative square
\[
\xymatrix{
\bar\omega_!\bar\omega^*\mathrm{fSc}(F) \ar[r]^\sim\ar[d] & \bar\omega_!\bar\omega^*(F) \ar[d] \\
\mathrm{fSc}(F) \ar[r]^\sim & F
}
\]
in which the horizontal arrows are weak equivalences. Indeed, we already know this for the bottom map (by Proposition \ref{prop:Segalcoreinnanod})and for the top map it then follows since both $\bar\omega_!$ and $\bar\omega^*$ are left Quillen functors. Thus, it suffices to prove the proposition in case $Y$ is of the form $\mathrm{fSc}(F)$. Such an object $\mathrm{fSc}(F)$ is a union of forests which are each direct sums of corollas and copies of the unit tree. Up to weak equivalence we may replace direct sums by coproducts, which allows us to reduce to the case of a single corolla (marked or unmarked) or the unit tree $\eta$. But for any such object $G$, we can write $G = \omega_!(A)$ for some object $A$ of $\mathbf{POp}_o$ (in fact, a marked or unmarked 1-simplex of $N\mathbf{F}_o^\natural$ or a 0-simplex of $N\mathbf{F}_o^\natural$). Thus, the unit 
\begin{equation*}
\eta_A: A \longrightarrow \omega^*\omega_!(A)
\end{equation*}
is a weak equivalence by Proposition \ref{prop:unitequivalence} (and Remark \ref{rmk:baromega}) and hence so is $\omega_!(\eta_A)$. We now conclude that the counit $\epsilon_G$ is a weak equivalence as well, by the triangle identity for the adjunction:
\[
\xymatrix{
\omega_!A \ar[r]^-{\sim} \ar[dr]_{\mathrm{id}} & \omega_!\omega^*\omega_!(A) \ar[d]\ar@{=}[r] & \bar\omega_!\bar\omega^*G \ar[d]^{\epsilon_G} \\
& \omega_!(A) \ar@{=}[r] & G
}
\]
$\Box$
\end{proof}

For the record, we combine Corollaries \ref{cor:omegaunit} and \ref{cor:omegacounit} into the main theorem.

\begin{theorem}
The Quillen pair
\[
\xymatrix@C=40pt{
\bar\omega_!: \langle 0 \rangle/\mathbf{POp}_o \ar@<.5ex>[r] & \mathbf{fSets}_o^+ : \bar\omega^* \ar@<.5ex>[l]
}
\]
is a Quillen equivalence. Therefore the two Quillen pairs
\[
\xymatrix@C=40pt{
\omega_!: \mathbf{POp}_o \ar@<.5ex>[r] & \mathbf{fSets}_o^+ : \omega^* \ar@<.5ex>[l]
}
\]
and
\[
\xymatrix@C=40pt{
\bar\omega^*: \mathbf{fSets}_o^+ \ar@<.5ex>[r] & \langle 0 \rangle/\mathbf{POp}_o: \bar\omega_* \ar@<.5ex>[l]
}
\]
are also Quillen equivalences.
\end{theorem}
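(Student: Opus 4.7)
The plan is short since the substantive content has already been done. The proof is essentially a formal combination of Corollaries \ref{cor:omegaunit} and \ref{cor:omegacounit} together with Lemma \ref{lem:Qslices}.

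For the first claim, that $(\bar\omega_!,\bar\omega^*)$ is a Quillen equivalence, I would appeal to the standard criterion that a Quillen pair is a Quillen equivalence precisely when both the derived unit and derived counit are weak equivalences. These are exactly Corollaries \ref{cor:omegaunit} and \ref{cor:omegacounit}, which have been established using Propositions \ref{prop:unitequivalence} and \ref{prop:counitequiv}.

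For the pair $(\omega_!,\omega^*)$, I would simply note that $\omega_! = \bar\omega_!\circ \langle 0\rangle_!$ by construction of $\bar\omega_!$, so the Quillen pair factors as the composition of $(\langle 0\rangle_!,\langle 0\rangle^*)$ and $(\bar\omega_!,\bar\omega^*)$. The first is a Quillen equivalence by the lemma recorded in Section~\ref{sec:omegaQpairs} (cf.\ Lemma~\ref{lem:Qslices}), and the second has just been shown to be one; the composition of two Quillen equivalences is a Quillen equivalence.

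For the pair $(\bar\omega^*,\bar\omega_*)$, the key observation is that $\bar\omega^*$ is simultaneously a left Quillen functor (Proposition \ref{prop:omega*leftQ}) and a right Quillen functor (as the right adjoint of the left Quillen $\bar\omega_!$). Being left Quillen, it preserves trivial cofibrations; being right Quillen, it preserves weak equivalences between fibrant objects (Ken Brown). Combining these with functorial fibrant replacement shows that $\bar\omega^*$ preserves all weak equivalences, so both $\mathbf{L}\bar\omega^*$ and $\mathbf{R}\bar\omega^*$ coincide with the functor $\bar\omega^*$ induces on homotopy categories. Since $(\bar\omega_!,\bar\omega^*)$ is a Quillen equivalence, $\mathbf{R}\bar\omega^*$ is an equivalence of homotopy categories, and hence so is $\mathbf{L}\bar\omega^*$, which is the characterization for $(\bar\omega^*,\bar\omega_*)$ to be a Quillen equivalence. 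I do not anticipate any genuine obstacle, since all the real content has already been packaged into the two corollaries cited above.
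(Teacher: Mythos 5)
Your proposal is correct and takes essentially the same approach as the paper, which presents the theorem as a direct consequence of Corollaries \ref{cor:omegaunit} and \ref{cor:omegacounit} without spelling out the formal steps. Your filling-in of the details — including the two-out-of-three argument showing $\bar\omega^*$ preserves all weak equivalences because it is simultaneously left and right Quillen, and the resulting identification of $\mathbf{L}\bar\omega^*$ with $\mathbf{R}\bar\omega^*$ — is exactly the standard reasoning the authors leave implicit.
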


\subsection{The functor $\omega_!$ is left Quillen}
\label{sec:omega!leftQ}

\begin{proposition}
\label{prop:omega!leftQ}
The pair $(\omega_!, \omega^*)$ is a Quillen pair.
\end{proposition}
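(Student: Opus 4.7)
The plan is to combine the cofibration preservation already established in Lemma \ref{lem:omega!prescof} with a case-by-case verification that $\omega_!$ sends $\mathfrak{P}$-anodynes to trivial cofibrations in $\mathbf{fSets}_o^+$. By Proposition \ref{prop:Panodynestrivcof}, the class of trivial cofibrations in $\mathbf{POp}_o$ is the smallest saturated class of cofibrations closed under the cancellation property and containing the $\mathfrak{P}$-anodynes. Since the class of trivial cofibrations in $\mathbf{fSets}_o^+$ is saturated and satisfies the same cancellation property (by the two-out-of-three axiom for weak equivalences applied to maps which we already know to be cofibrations, using that $\omega_!$ preserves cofibrations), it will follow that $\omega_!$ is left Quillen once we have handled all seven types of generating $\mathfrak{P}$-anodynes from Definition \ref{def:Panodynes}.

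For each type, the strategy is to describe $\omega_!$ on the map explicitly using the combinatorial recipe of Section 6.1 and then identify the resulting map with a (transfinite composition of pushouts of) trivial cofibrations from our list in Lemma \ref{lem:markedtrivcof} or from the root- and leaf-anodyne classes of Section \ref{sec:markedfsets2}. Specifically:
\begin{itemize}
\item[($A_0$)] Over a 2-simplex $A$ in $N\mathbf{F}_o^\natural$ whose outer edges are inert, every intermediate element at level $1$ in $\omega_!(A)$ produces a marked unary composite; the map becomes a composition of pushouts of the generator ($M_3$) from Definition \ref{def:markedanodynes}.
\item[($A_1$)] This is the ``$2$-out-of-$6$'' map; applying $\omega_!$ splits it into direct sums of the $\mathbf{fSets}_o^+$-analogue, which one reduces to ($M_3$), ($M_4$) and two-out-of-three via Lemma \ref{lem:markedtrivcof}.
\item[($B_0$)] For an inert $A:\langle m\rangle\to\langle n\rangle$, $\omega_!$ sends $\{0\}^\sharp\hookrightarrow(\Delta^1)^\sharp$ to a direct sum, indexed by $\langle n\rangle$, of the leaf anodyne $\eta_{\mathrm{leaf}}\to (uC_1)^\sharp$, together with identity maps on the remaining $\eta$-summands. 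Leaf anodynes are marked equivalences by Lemma \ref{lem:leafanodmarkedequiv}.
\item[($B_1$)] A map $\mathbf{2}\to(\mathbf{2}^\triangleleft)^\sharp$ in $\Sigma$ is sent by $\omega_!$ to a map of the shape $F_1\amalg F_2\to F_1\oplus F_2$ (up to extra inert unary wires that contribute further leaf anodynes of the type handled in ($B_0$)); both ingredients are trivial cofibrations by parts (c) and (e) of Lemma \ref{lem:markedtrivcof}.
\item[($C_1$)] An inner horn $(\Lambda^n_i)^\flat\to(\Delta^n)^\flat$ over $A$ becomes the inclusion in $\mathbf{fSets}_o^+$ obtained by gluing all face pieces of $\omega_!(A)$ except the face contracting the edges $v_r$, $r\in\mathrm{dom}(f_i)$, and chopping off the root corollas $v_r$, $r\notin\mathrm{dom}(f_i)$. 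One fills in the missing face in two stages: first contract the inner edges via inner anodynes ($M_1$), then adjoin the root-chop-offs via root anodynes ($M_2$).
\item[($C_0$) and ($C_2$)] These are the leaf-type $\mathfrak{P}$-anodynes. Here one arranges the filtration analogously to ($C_1$), but the missing face of $\omega_!(A)$ is a composition of \emph{leaf} face removals (from $d_0 A$) together with root and inner pieces; the marked edge $\Delta^{\{0,1\}}$ in the source ensures that the leaf corollas being adjoined are marked, so the filler is built out of leaf anodynes from Remark \ref{rmk:coCartequivalences} (trivial by Lemma \ref{lem:leafanodmarkedequiv}) and inner anodynes.
\end{itemize}

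The main obstacle will be the verification of ($C_0$), ($C_1$) and ($C_2$), because one needs a careful filtration of the forest $\omega_!(\Delta^n,A)$ compatible with the mixed nature of the simplicial face maps of $N\mathbf{F}_o$ (which are simultaneously inner contractions and root/leaf amputations in $\mathbf{\Phi}$). The combinatorics is analogous to, and can be modeled on, the shuffle/pruning arguments used in Propositions \ref{prop:poprodinneranodyne}, \ref{prop:poprodrootanodyne} and \ref{prop:poprodleafanodyne}: one introduces a double filtration, first on the ``prunings'' of $\omega_!(A)$ obtained by chopping off successive leaf layers, and then on the subsets of inner edges to be contracted, and in each step one exhibits the incremental inclusion as a pushout of one of the generators of our trivial-cofibration classes. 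The saturation properties collected in Lemma \ref{lem:markedtrivcof} then close off the argument.
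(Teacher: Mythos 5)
Your overall strategy --- combine Lemma \ref{lem:omega!prescof} for cofibrations with Proposition \ref{prop:Panodynestrivcof} and then verify that $\omega_!$ sends each of the seven families of $\mathfrak{P}$-anodynes to a trivial cofibration, using pruning filtrations for the $(C_*)$ families --- is precisely the strategy of the paper. However, several of your case-by-case sketches go wrong, and you skip one genuinely useful reduction.

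The paper begins by observing that since $u^*:\mathbf{fSets}_o^+ \to \mathbf{dSets}_o^+$ is a left Quillen equivalence and everything in the image of $\omega_!$ is cofibrant, it suffices to show that $u^*\omega_!$ carries $\mathfrak{P}$-anodynes to trivial cofibrations of marked \emph{dendroidal} sets. This reduction is not cosmetic: under $u^*$ the direct sums in the image of $\omega_!$ become honest coproducts, so $u^*\omega_!(A') \to u^*\omega_!(A)$ literally splits as a coproduct over the constituent trees of the forest $\omega_!(A)$, and one can immediately restrict to connected $A$. Working directly in $\mathbf{fSets}_o^+$ as you propose is possible in principle, but then you must handle this splitting through direct sums, invoking Lemma \ref{lem:markedtrivcof}(e) and Corollary \ref{cor:posuminneranod}; your sketch nowhere confronts this.

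For $(B_1)$ you have the geometry backwards. The vertices $1,2$ of $\mathbf{2}^\triangleleft \simeq \Lambda^2_0$ sit at the end of the edges $\Delta^{\{0,1\}}$ and $\Delta^{\{0,2\}}$, hence map under $\omega_!$ to the \emph{roots} of the unary corollas in $\omega_!((\mathbf{2}^\triangleleft)^\sharp)$, not their leaves. The resulting inclusion is a coproduct of maps $\{1\}^\sharp \to (C_1)^\sharp$, which are root anodynes of type $(M_2)$, i.e.\ part (b) of Lemma \ref{lem:markedtrivcof}, not leaf anodynes of the kind used in $(B_0)$.

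Most seriously, your treatment of $(C_1)$ is incorrect. After restricting to connected $A$ one further reduces to \emph{totally active} $A$: if $A$ is connected but not totally active then $A(n) = \langle 0\rangle$, in which case $\omega_!(d_nA) = \omega_!(A)$ and the horn inclusion is already an isomorphism. For totally active $A$, the inner face $d_iA$ with $0 < i < n$ is a pure inner contraction in $\mathbf{\Phi}$ --- there are no root chop-offs at all --- and the horn filler is built entirely from inner horns, i.e.\ it is inner anodyne. Your proposal to fill the face in two stages, ``contract inner edges via $(M_1)$, then adjoin root chop-offs via $(M_2)$,'' does not make sense: the root anodynes $(M_2)$ require the root corolla being chopped off to be unary \emph{and marked}, and in the $(C_1)$ situation (everything flat) there are no markings to make this work. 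The correct dictionary, after the reduction, is that $(C_0)$ produces leaf anodynes (the marking on $\Delta^{\{0,1\}}$ supplies the required marked unary leaf corollas), $(C_1)$ produces inner anodynes, and $(C_2)$ produces root anodynes (the marking over $\{n\}\star\mathbf{2}$ supplies the required marked unary root corollas), each via its own double pruning filtration of the kind you correctly anticipate at the end of your sketch.
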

\begin{proof}
We already know from Lemma \ref{lem:omega!prescof} that $\omega_!$ preserves cofibrations. It remains to show that $\omega_!$ preserves trivial cofibrations. Since the left Quillen functor $u^*: \mathbf{fSets}_o^+ \longrightarrow \mathbf{dSets}_o^+$ is part of a Quillen equivalence, it reflects weak equivalences between cofibrant objects. Observing that every object in the image of $\omega_!$ is cofibrant, we deduce that it suffices to check that the composition $u^*\omega_!$ preserves trivial cofibrations. By Proposition \ref{prop:Panodynestrivcof}, it thus suffices to check that $u^*\omega_!$ sends $\mathfrak{P}$-anodynes to trivial cofibrations. We have seven cases to handle. The first four are easy; the cases ($C_0$), ($C_1$) and ($C_2$) require some more attention. \par  
($A_0$). The functor $u^*\omega_!$ sends maps of this type to compositions of pushouts of the marked anodyne map of type ($M_3$) (or rather, the map of dendroidal sets obtained from it by applying $u^*$). \par 
($A_1$). The simplicial set $Q$ has the following important property: a map from $Q$ to an $\infty$-category must send all 1-simplices of $Q$ to equivalences. In particular, any map from $Q$ to $N\mathbf{F}$ sends all 1-simplices to isomorphisms. Therefore the map $u^*\omega_!(Q^\flat) \longrightarrow u^*\omega_!(Q^\sharp)$ is a coproduct of copies of the map $i_!Q^\flat \longrightarrow i_!Q^\sharp$, where we have included the $i_!$ in the notation for emphasis. From the property of $Q$ mentioned above, it is easy to see that this map is a marked equivalence. \par 
($B_0$). The inclusion $u^*\omega_!(\{0\}^\sharp) \longrightarrow u^*\omega_!((\Delta^1)^\sharp)$ is a coproduct of copies of the identity map of $\eta^\sharp$ and copies of the inclusion of marked dendroidal sets $\{0\}^\sharp \longrightarrow (\Delta^1)^\sharp$. The latter is a leaf anodyne map and hence a trivial cofibration, by Lemma \ref{lem:leafanodmarkedequiv}. \par 
($B_1$). Applying $u^*\omega_!$ to a map of type $B_1$ yields a coproduct of maps of the form
\begin{equation*}
\{1\}^\sharp \longrightarrow (\Delta^1)^\sharp
\end{equation*}
and is hence root anodyne, i.e. marked anodyne of type ($M_2$). \par 
($C_0$). Suppose we have a diagram
\[
\xymatrix{
(\Lambda_0^n)^\flat \cup_{(\Delta^{\{0,1\}})^\flat} (\Delta^{\{0,1\}})^\sharp \ar[rr]\ar[dr]_{A'} & & (\Delta^n)^\flat \cup_{(\Delta^{\{0,1\}})^\flat} (\Delta^{\{0,1\}})^\sharp \ar[dl]^A \\   
& N\mathbf{F}_o^\natural & 
}
\]
We will show that the map $u^*\omega_!(A') \longrightarrow u^*\omega_!(A)$ is a leaf anodyne map of dendroidal sets. First of all, note that $u^*\omega_!(A)$ is a coproduct of trees and that the stated map will in fact split as a coproduct of maps, one corresponding to each such tree. Therefore, we may restrict our attention to the case where the simplex $A$ is connected. Also, once this restriction is made, we may assume it is totally active (i.e. every 1-simplex of $A$ is active). If it isn't, then $A(n) = \langle 0 \rangle$ and it is easily verified that the map $\omega_!(A') \longrightarrow \omega_!(A)$ is an isomorphism. Indeed, we would have $\omega_!(d_n A) = \omega_!(A)$ and $d_n A$ is already contained in $A'$. \par 
With these assumptions in place, let us begin our induction. Note that $A(0)$ is exactly the set of leaves of the tree $\omega_!(A)$ and that all these leaves are attached to a unary corolla. Indeed, the edge $A(0) \longrightarrow A(1)$ is inert and by our assumption on the connectedness of $A$ it in fact maps to an isomorphism in $\mathbf{F}_o$. Furthermore, all these leaf corollas are marked. Let us define a \emph{leaf pruning} of the (marked) tree $\omega_!(A)$ to be a pruning $P$ of $A$ (as in Definition \ref{def:pruning}) satisfying the following two conditions:
\begin{itemize}
\item[-] $P$ contains at least one of the leaves of $\omega_!(A)$.
\item[-] If $P$ contains an edge corresponding to an element $a \in A(1)$, then $P$ also contains the top vertex $v_a$ of $\omega_!(A)$ attached to that edge (which is then necessarily a unary leaf vertex of $P$). 
\end{itemize}
By adjoining the leaf prunings to $\omega_!(A')$ one by one, in an order that extends the partial order of size, we obtain a filtration 
\begin{equation*}
\omega_!(A') =: F_0 \subseteq F_1 \subseteq \cdots \subseteq \bigcup_i F_i = \omega_!(A)
\end{equation*}
Consider a map $F_i \subseteq F_{i+1}$ in this filtration, given by adjoining a leaf pruning $P$. If $P$ is already contained in $F_i$, there is nothing to prove. If it doesn't, we refine our filtration further. For a subset $H \subseteq I(P)$ of the inner edges of $P$, define (as usual) $P^{[H]}$ to be the tree obtained from $P$ by contracting all inner edges in $I(P) - H$. Extend the partial order of inclusion on the subsets of $I(P)$ to a linear order and adjoin the trees $P^{[H]}$ to $F_i$ in this order to obtain a filtration
\begin{equation*}
F_i =: F_i^0 \subseteq F_i^1 \subseteq \cdots \subseteq \bigcup_j F_i^j = F_{i+1}
\end{equation*}
Consider a map $F_i^j \subseteq F_i^{j+1}$, given by adjoining a tree $P^{[H]}$. If this map is the identity there is of course nothing to prove. Note that this is in particular the case if $H$ does not contain any of the inner edges of $P$ corresponding to $e \in A(1) \cap I(P)$. Indeed, if all these are contracted, then $P^{[H]}$ is contained in $\omega_!(d_1 A)$. (Note that here we use the fact that edges $e \in A(1)$ can never be outer edges of $P$, by the second condition in the definition of leaf pruning.) Now assume the map $F_i^j \subseteq F_i^{j+1}$ is not the identity. In particular, by the previous observation, we may assume that at least one of the inner edges $e \in A(1)$ is in $H$. We find:
\begin{itemize}
\item[-] The root face of the tree $P^{[H]}$ factors through $\omega_!(d_n A)$ and hence through $\omega_!(A')$. 
\item[-] Any inner face of the tree $P^{[H]}$ factors through $F_i^j$ by induction on the size of $H$.
\item[-] A leaf face of $P^{[H]}$ chopping off a vertex $v_e$ for some $e \in A(1) \cap H$ cannot factor through an earlier stage of the filtration, since chopping off such a vertex would not yield a leaf pruning.
\item[-] Any leaf face of $P^{[H]}$ other than the ones mentioned in the previous item will factor through an earlier leaf pruning and hence through $F_i$.
\end{itemize}
We conclude that $F_i^j \subseteq F_i^{j+1}$ is a pushout of the map
\begin{equation*}
(\Lambda^L[P^{[H]}], \mathcal{E} \cap \Lambda^L[P^{[H]}]) \longrightarrow (P^{[H]}, \mathcal{E})
\end{equation*}
where $L$ denotes the set of leaves of $P^{[H]}$ attached to vertices $v_e$ for $e \in A(1) \cap H$ and $\mathcal{E}$ is the union of the set of those leaf corollas with the degenerate corollas of $P^{[H]}$. It is easily verified that this map is a composition of leaf anodynes and is therefore a trivial cofibration, by Lemma \ref{lem:leafanodmarkedequiv}. \par 
($C_1$). Suppose we have a diagram
\[
\xymatrix{
(\Lambda_i^n)^\flat \ar[rr]\ar[dr]_{A'} & & (\Delta^n)^\flat \ar[dl]^A \\   
& N\mathbf{F}_o^\natural & 
}
\]
for $0 < i < n$. We will show that the map $u^*\omega_!(A') \longrightarrow u^*\omega_!(A)$ is inner anodyne, i.e. a composition of pushouts of marked anodynes of type ($M_1$) (or rather, the image of such a map under $u^*$). First of all, by the same argument used for $(C_0)$, we may assume that the simplex $A$ is connected and totally active. Also, define $E = A(i)$, which is a subset of the inner edges of the tree $\omega_!(A)$. \par 
We will again set up an induction using the prunings $P$ of $\omega_!(A)$ (cf. Definition \ref{def:pruning}). Adjoin all these prunings to $\omega_!(A')$ in an order extending the partial order of size to obtain a filtration
\begin{equation*}
\omega_!(A') =: F_0 \subseteq F_1 \subseteq \cdots \subseteq \bigcup_i F_i = \omega_!(A)
\end{equation*} 
Consider one of the inclusions $F_i \subseteq F_{i+1}$, given by adjoining a tree $P$. Define
\begin{equation*}
\mathcal{H}_P = I(P) - (E \cap I(P))
\end{equation*}
and consider for each $H \subseteq \mathcal{H}_P$ the tree $P^{[H]}$ defined by contracting all inner edges of $P$ contained in $\mathcal{H}_P - H$. Adjoin the trees $P^{[H]}$ to $F_i$ in an order extending the natural partial order on the subsets $H$ of $\mathcal{H}_P$ to obtain a filtration
\begin{equation*}
F_i =: F_i^0 \subseteq F_i^1 \subseteq \cdots \subseteq \bigcup_j F_i^j = F_{i+1} 
\end{equation*}
Now consider one of the maps $F_i^j \subseteq F_i^{j+1}$, given by adjoining a tree $P^{[H]}$. If it is not the identity, we can say the following:
\begin{itemize}
\item[-] The root face of $P^{[H]}$ factors through $\omega_!(d_n A)$ and hence through $\omega_!(A')$.
\item[-] Any leaf face of $P^{[H]}$ factors through $F_i$ by our induction on the size of the prunings.
\item[-] Any inner face contracting an edge of $P^{[H]}$ that is \emph{not} in $E$ factors through $F_i^j$ by our induction on the size of $H$.
\item[-] Any inner face contracting an edge of $E$ cannot factor through an earlier stage of the filtration. Indeed, it cannot factor through an earlier pruning and given this, it is clear that it also cannot factor through an earlier $P^{[H']}$.
\end{itemize}
We conclude that $F_i^j \subseteq F_i^{j+1}$ is a pushout of
\begin{equation*}
(\Lambda^{E \cap I(P)}[P^{[H]}])^\flat \longrightarrow (P^{[H]})^\flat
\end{equation*}
which is inner anodyne. \par 
($C_2$). Suppose we have a diagram
\[
\xymatrix{
(\partial\Delta^n \star \mathbf{2})^\flat \cup_{(\{n\} \star \mathbf{2})^\flat} (\{n\} \star \mathbf{2})^\sharp \ar[rr]\ar[dr]_{A'} & & (\Delta^n \star \mathbf{2})^\flat \cup_{(\{n\} \star \mathbf{2})^\flat} (\{n\} \star \mathbf{2})^\sharp \ar[dl]^A \\
& N\mathbf{F}_o^\natural &
}
\]
of the form described in Definition \ref{def:Panodynes}. We will show that the map $u^*\omega_!(A') \longrightarrow u^*\omega_!(A)$ is root anodyne, i.e. a composition of marked anodynes of type ($M_2$). The marked dendroidal set $u^*\omega_!(A)$ is a coproduct of (marked) trees and it is easy to see that the map $u^*\omega_!(A') \longrightarrow u^*\omega_!(A)$ splits as a coproduct of maps, one corresponding to each component of $u^*\omega_!(A)$. Using this observation, one sees that it in fact suffices to consider diagrams of the form
\[
\xymatrix{
(\Lambda_{n+1}^{n+1})^\flat \cup_{(\Delta^{\{n,n+1\}})^\flat} (\Delta^{\{n,n+1\}})^\sharp \ar[rr]\ar[dr]_{B'} & & (\Delta^{n+1})^\flat \cup_{(\Delta^{\{n,n+1\}})^\flat} (\Delta^{\{n,n+1\}})^\sharp \ar[dl]^{B} \\
& N\mathbf{F}_o^\natural &
}
\]
where $B$ is a connected totally active simplex. Note that the root corolla of $\omega_!(B)$ is unary and is in fact marked. Now set up a filtration
\begin{equation*}
\omega_!(B') =: F_0 \subseteq F_1 \subseteq \cdots \subseteq \bigcup_i F_i = \omega_!(B)
\end{equation*}
by adjoining the prunings of $\omega_!(B)$ one by one, in an order respecting the size of prunings. (Again, prunings here in the usual sense, obtained from $\omega_!(B)$ by an iteration of leaf faces.) Consider one of the maps $F_i \subseteq F_{i+1}$ given by adjoining a pruning $P$. We filter this map again; consider subsets $H \subseteq I(P)$ of the inner edges of $P$ and adjoin the trees $P^{[H]}$ (given by contracting all edges in $I(P) - H$) one by one, in an order compatible with the natural partial order on the subsets of $I(P)$, to get
\begin{equation*}
F_i =: F_i^0 \subseteq F_i^1 \subseteq \cdots \subseteq \bigcup_j F_i^j = F_{i+1}
\end{equation*}
Consider one of the maps $F_i^j \subseteq F_i^{j+1}$ given by adjoining a tree $P^{[H]}$. If $P^{[H]}$ is already contained in $F_i^j$ there is nothing to prove. Note that this is in particular the case if $H$ does not contain the unique element of $B(n)$, which is the incoming edge of the unary root vertex of $\omega_!(B)$. Indeed, if this edge is contracted, the resulting tree is contained in $\omega_!(d_n B)$ and hence in $\omega_!(B')$. So let us now assume $H$ contains the unique edge in $B(n)$. Then:
\begin{itemize}
\item[-] Any external face chopping off a leaf corolla of $P^{[H]}$ is contained in $F_i$ by our induction on the size of prunings.
\item[-] Any inner face of $P^{[H]}$ is contained in $F_i^j$ by our induction on $H$.
\item[-] The root face chopping off the unary marked root corolla of $P^{[H]}$ cannot factor through an earlier stage of the filtration.
\end{itemize}
Therefore $F_i^j \subseteq F_i^{j+1}$ is a pushout of the map
\begin{equation*}
\Lambda^{\mathrm{root}}[P^{[H]}]^\diamond \longrightarrow (P^{[H]})^\diamond 
\end{equation*}
where the diamond, as usual, indicates that the only non-degenerate marked corolla is the unary root corolla. We conclude that this map is root anodyne, which also concludes the proof of the proposition. $\Box$
\end{proof}

\subsection{The functor $\bar{\omega}^*$ is left Quillen}
\label{sec:omega*leftQ}

We begin with a short digression on the compatibility of the functors $\omega_!$ and $\omega^*$ with the process of `taking underlying simplicial sets'. As discussed before, there is an embedding
\begin{equation*}
u_!i_!: \mathbf{sSets}^+ \longrightarrow \mathbf{fSets}_o^+
\end{equation*}
which has a right adjoint $i^*u^*$. Similarly, there is an embedding
\begin{equation*}
j_!: \mathbf{sSets}^+ \longrightarrow \mathbf{POp}_o
\end{equation*}
which simply augments a marked simplicial set $X$ with the constant map
\begin{equation*}
X \longrightarrow \langle 1 \rangle 
\end{equation*}
This functor too has a right adjoint $j^*$, which is given by taking the fiber over the vertex $\langle 1 \rangle$. The following is clear from the definitions:

\begin{lemma}
The following diagram commutes (up to natural isomorphism):
\[
\xymatrix{
\mathbf{POp}_o \ar[rr]^{\omega_!} & & \mathbf{fSets}_o^+ \\
& \mathbf{sSets}^+ \ar[ul]^{j_!}\ar[ur]_{u_!i_!} &
}
\]
\end{lemma}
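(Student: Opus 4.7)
The plan is to prove the natural isomorphism $\omega_! \circ j_! \simeq u_! i_!$ by reducing to the case of representables and then matching them directly from the definitions. Both functors in question are left adjoints: $j_!$ is left adjoint to the evident fiber functor $j^*$, the functor $u_! i_!$ is a composition of left adjoints, and $\omega_!$ is constructed as a left Kan extension and is left adjoint to $\omega^*$ (and to $\bar\omega^*$ after passing to the pointed slice). Hence $\omega_! \circ j_!$ preserves all small colimits, as does $u_! i_!$. Since every marked simplicial set $X$ is canonically a colimit of a diagram in its category of simplices, with values in the representables $(\Delta^n)^\flat$ together with the single marked representable $(\Delta^1)^\sharp$, it suffices to establish a natural isomorphism between the two functors restricted to these representables and the morphisms between them.

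First I will check the isomorphism on objects. Given $(\Delta^n)^\flat$, the functor $j_!$ sends it to the marked simplicial set $(\Delta^n)^\flat$ equipped with the constant map to the vertex $\langle 1 \rangle$ of $N\mathbf{F}_o^\natural$; call this simplex $A_n$. Unwinding the definition of $\omega$ on $A_n$, the edge set $\coprod_{i=0}^n A_n(i)$ consists of one edge at each level $0, 1, \ldots, n$ and there is a unary vertex $v_i$ for each $i \geq 1$ having the edge at level $i$ as output and the edge at level $i-1$ as input. This is precisely the linear tree $i[n]$, with minimal markings, and hence agrees with $u_!i_!((\Delta^n)^\flat)$. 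For the marked representable $(\Delta^1)^\sharp$, the functor $\omega_!$ additionally marks the unique unary vertex of $i[1] = C_1$, producing $(C_1)^\sharp = u_!i_!((\Delta^1)^\sharp)$.

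Next I will check naturality on the generating maps between representables, namely the faces $d_i$, the degeneracies $s_j$, and the marking map $(\Delta^1)^\flat \to (\Delta^1)^\sharp$. The marking map is sent by both functors to the map $(C_1)^\flat \to (C_1)^\sharp$. For the face $d_i A_n$ (the constant $(n-1)$-simplex at $\langle 1 \rangle$), unpacking the recipe in Section~5.1 gives: for $i = 0$ the composition of external faces chopping off the top leaf edge and its unary vertex, producing the $d_0$ leaf face of $i[n]$; for $0 < i < n$ the contraction of the unique edge at level $i$ (which is in the domain of definition of the identity map $A_n(i) \to A_n(i+1)$), producing the $d_i$ inner face of $i[n]$; and for $i = n$ the root face chopping off the root vertex, producing $d_n$. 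These are exactly the face maps of $i[n]$ induced by $u_!i_!(d_i)$. The case of degeneracies is analogous: $s_j$ introduces an additional unary vertex at the appropriate level, which is precisely the degeneracy $s_j$ of $i[n]$.

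The main (and only) subtlety is the comparison on face maps, which requires carefully checking that the description of $\omega$ on a face $d_i A$ specializes, for the constant simplex at $\langle 1 \rangle$, to the corresponding inner or outer face of the linear tree $i[n]$; once this is verified, functoriality in $\mathbf{sSets}^+$ together with cocontinuity of both sides yields the lemma. No further obstacles arise, since everything reduces to a direct combinatorial comparison on the generating representables of $\mathbf{sSets}^+$.
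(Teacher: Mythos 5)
Your proof is correct, and since the paper simply asserts this lemma is ``clear from the definitions'' and gives no argument, your write-up supplies exactly the computational verification one would perform. The reduction to representables via cocontinuity is the right framework, and the object-level identification $\omega_!(A_n) = u_!(i[n])$ for the constant simplex $A_n: \Delta^n \to N\mathbf{F}_o^\natural$ at $\langle 1\rangle$ is immediate from the definition of $\omega$: at each level $i$ there is exactly one edge, and for each $i\geq 1$ exactly one unary vertex joining level $i-1$ to level $i$, which is precisely the linear tree $i[n]$. The check on faces and degeneracies is also correct; it is worth emphasizing (as you do implicitly) that the case $i=n$ of the paper's face recipe should be read with the empty domain of definition of a nonexistent partial map $A(n)\to A(n+1)$, so that \emph{all} of $A(n)$ contributes a root face. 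In fact, the paper notes that the face map $\omega(d_iA)\to\omega(A)$ is ``induced by the evident map on edges,'' and since a morphism of representable forest sets which is injective on edges is uniquely determined by its effect on edges, one could shorten the naturality check by simply observing that both $\omega_!j_!(d_i)$ and $u_!i_!(\partial_i)$ have the same image set of edges inside $i[n]$. Either way, the argument goes through without difficulty.
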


The diagram
\[
\xymatrix{
\mathbf{POp}_o & & \mathbf{fSets}_o^+ \ar[ll]_{\omega^*} \\
& \mathbf{sSets}^+ \ar[ul]^{j_!}\ar[ur]_{u_!i_!} &
}
\]
does not quite commute. However, the unit of the adjunction $(\omega_!,\omega^*)$ induces a natural transformation $j_! \longrightarrow \omega^*\omega_!j_! \simeq \omega^*u_!i_!$, which we claim is a weak equivalence. To make this precise, let us introduce a construction.

\begin{definition}
Given an object $(X \longrightarrow N\mathbf{F}_o^\natural) \in \mathbf{POp}_o$, the \emph{right cone} on this object has as underlying marked simplicial set \begin{equation*}
X^\triangleright := X \star \{v\} \cup_{(X_0 \star \{v\})^\flat} (X_0 \star \{v\})^\sharp
\end{equation*}
and its map to $N\mathbf{F}_o^\natural$ is uniquely determined by the requirement that the cone vertex $\{v\}$ is sent to $\langle 0 \rangle$. In other words, the right cone on $X$ is obtained by adding, for each $n$-simplex $A$ of $X$, an $(n+1)$-simplex $A^\triangleright$, of which each edge ending in the final vertex $v$ is marked.
\end{definition}

\begin{lemma}
The inclusion $X \longrightarrow X^\triangleright$ is a trivial cofibration.
\end{lemma}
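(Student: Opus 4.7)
The inclusion is a cofibration by construction, so the only point is to show it is a weak equivalence. The slick route is via the computation of the left adjoint $\omega_{!}$ together with Proposition~\ref{prop:unitequivalence}.

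The key observation is that the map $\omega_{!}(X)\longrightarrow\omega_{!}(X^{\triangleright})$ induced by the inclusion is actually an \emph{isomorphism} of marked forest sets. Since $\omega_{!}$ preserves colimits, it suffices to analyse the new simplices of $X^{\triangleright}$: the cone vertex $v$ itself, and the cones $A^{\triangleright}$ for each non-degenerate simplex $A$ of $X$. Because $v$ maps to $\langle 0\rangle$, Lemma~\ref{lem:omega*pointed} gives $\omega_{!}(v)=\varnothing$, so $v$ contributes nothing. For a non-degenerate $n$-simplex $A\colon\Delta^{n}\to N\mathbf{F}_{o}^{\natural}$, the cone $A^{\triangleright}\colon\Delta^{n+1}\to N\mathbf{F}_{o}^{\natural}$ satisfies $A^{\triangleright}(i)=A(i)$ for $i\leq n$ and $A^{\triangleright}(n+1)=\varnothing$. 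The recipe defining $\omega$ then produces no new edges (the disjoint union $\coprod_{i}A^{\triangleright}(i)$ agrees with $\coprod_{i}A(i)$) and no new vertices (the vertices $v_{a}$ introduced at level $n+1$ are indexed by $A^{\triangleright}(n+1)=\varnothing$). Hence $\omega_{!}(A^{\triangleright})=\omega_{!}(A)$, and the face inclusion $A=d_{n+1}A^{\triangleright}\hookrightarrow A^{\triangleright}$ is sent to the identity map of this forest. A parallel check on markings shows that the newly marked edges $x\to v$ in $X^{\triangleright}$ correspond to 1-simplices $B\colon\langle m\rangle\to\langle 0\rangle$ with $B(1)=\varnothing$, so they induce no new marked (unary) vertices in $\omega_{!}$. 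Taking the colimit across all simplices therefore gives the claimed isomorphism $\omega_{!}(X)\xrightarrow{\cong}\omega_{!}(X^{\triangleright})$.

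With this in hand, Proposition~\ref{prop:unitequivalence} (together with the translation of Remark~\ref{rmk:baromega} between weak equivalences in $\mathbf{POp}_{o}$ and in $\langle 0\rangle/\mathbf{POp}_{o}$) gives a commutative square
\[
\xymatrix@C=40pt{
X \ar[r]^-{\eta_{X}}_-{\sim} \ar[d] & \omega^{*}\omega_{!}(X) \ar[d]^{\cong} \\
X^{\triangleright} \ar[r]^-{\eta_{X^{\triangleright}}}_-{\sim} & \omega^{*}\omega_{!}(X^{\triangleright})
}
\]
in which both horizontal units are weak equivalences and the right-hand vertical arrow is the isomorphism obtained by applying the functor $\omega^{*}$ to the previous isomorphism. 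By the two-out-of-three property, the left-hand vertical arrow $X\to X^{\triangleright}$ is a weak equivalence, hence the desired trivial cofibration.

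The one place where care is required is in the combinatorial verification that $\omega_{!}(A^{\triangleright})=\omega_{!}(A)$; everything after that is formal. This step is straightforward from the explicit description of $\omega$, but it is the only content of the proof and should be written out carefully.
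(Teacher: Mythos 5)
Your combinatorial observation that $\omega_!$ sends the inclusion $X \to X^\triangleright$ to an isomorphism is correct (the cone vertex sits over $\langle 0 \rangle$ so contributes $\varnothing$, and each $A^\triangleright$ adds neither edges nor vertices); indeed it is essentially what underlies the identification $\omega^*u_!i_!(K) \simeq j_!(K)^\triangleright$ in Lemma~\ref{lem:j!toomega*i!}. However, the deduction from this via Proposition~\ref{prop:unitequivalence} is circular within the paper's logical architecture. Proposition~\ref{prop:unitequivalence} is proved in Section~\ref{sec:omega!equiv} \emph{assuming} Propositions~\ref{prop:omega!leftQ} and~\ref{prop:omega*leftQ}; in particular Lemma~\ref{lem:unitequiv1}(ii), which is used repeatedly to reduce the proof of Proposition~\ref{prop:unitequivalence} to low-dimensional simplices, needs $\bar\omega^*$ to send trivial cofibrations to trivial cofibrations. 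But Proposition~\ref{prop:omega*leftQ} is proved via Lemma~\ref{lem:j!toomega*i!} and Proposition~\ref{prop:omega*easycases}, which rest squarely on the very lemma you are trying to prove. Your argument therefore feeds the conclusion back into one of its own premises. The same goes for your appeal to Remark~\ref{rmk:baromega}, whose statement that $\omega^*$ ``acts like a left Quillen functor'' presupposes Proposition~\ref{prop:omega*leftQ}.

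The lemma must be proved entirely inside $\mathbf{POp}_o$, without invoking $\omega_!$, $\omega^*$, or any consequence of the eventual Quillen equivalence. This is what the paper does: it first adjoins, by a coproduct of $\mathfrak{P}$-anodynes of type $(B_0)$, a marked inert edge to $\langle 0 \rangle$ from each vertex of $X$; then collapses all the resulting endpoints to a single cone vertex, which is a weak equivalence by left properness (a pushout of the fold map $\coprod_{x} \langle 0 \rangle \to \langle 0 \rangle$ along a cofibration); and finally fills in the higher cone simplices one skeleton at a time via pushouts along $\mathfrak{P}$-anodynes of type $(C_2)$. Since $X \to X^\triangleright$ is visibly a monomorphism and hence a cofibration, two-out-of-three across this factorization gives the result.
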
 
\begin{proof}
First form the pushout
\[
\xymatrix{
\coprod_{x \in X_0} \langle x \rangle \ar[d]_{\coprod \partial_1}\ar[r] & X \ar[d] \\ 
\coprod_{x \in X_0} (\Delta^1)^\sharp \ar[r] & Y
}
\]
to adjoin, for each vertex $x$ of $X$, an inert 1-simplex with final vertex lying over $\langle 0 \rangle$ in $N\mathbf{F}_o$. Here $\langle x \rangle$ is shorthand for the vertex $x: \Delta^0 \longrightarrow N\mathbf{F}_o$. Then form the pushout
\[
\xymatrix{
\coprod_{x \in X_0} \langle 0 \rangle \ar[r]\ar[d] & Y \ar[d] \\
\langle 0 \rangle \ar[r] & Z
}
\]
crushing the final vertices of the 1-simplices just adjoined to a single vertex $v$ lying over $\langle 0 \rangle$. The left vertical map is a weak equivalence, so $Y \longrightarrow Z$ is a weak equivalence as well. This follows from the fact that $\mathbf{POp}_o$ is left proper, or one can use the fact that the pushout is in fact a homotopy pushout (all objects are cofibrant and the top horizontal map is a cofibration). Now filter the inclusion $Z \rightarrowtail X^\triangleright$ as
\begin{equation*}
Z = S_1 \subseteq S_2 \cdots \subseteq \bigcup_n S_n = X^\triangleright
\end{equation*}
where each $S_n$ is the union of $Z$ with all the $n$-simplices of $X^\triangleright$. Then every inclusion $S_{n-1} \subseteq S_n$ is a pushout along a coproduct of $\mathfrak{P}$-anodynes of type ($C_2$) and hence itself $\mathfrak{P}$-anodyne. $\Box$ 
\end{proof}

\begin{lemma}
\label{lem:j!toomega*i!}
The natural transformation $j_! \longrightarrow \omega^*u_!i_!$ is a weak equivalence.
\end{lemma}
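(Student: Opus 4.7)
The plan is to observe that this lemma is essentially an immediate corollary of Proposition \ref{prop:unitequivalence}, together with the identification $\omega_! j_! \simeq u_! i_!$ of the preceding lemma.

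More precisely, I would argue as follows. For any marked simplicial set $X$, the preceding lemma supplies a canonical isomorphism $\omega_! j_!(X) \simeq u_! i_!(X)$ in $\mathbf{fSets}_o^+$, which is natural in $X$. Applying $\omega^*$ and composing with the unit of the adjunction $(\omega_!, \omega^*)$ at the object $j_!(X) \in \mathbf{POp}_o$, we obtain
\[
j_!(X) \longrightarrow \omega^*\omega_! j_!(X) \stackrel{\simeq}{\longrightarrow} \omega^* u_! i_!(X),
\]
and by construction this composite is precisely the natural transformation in question.

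Next, I would invoke Proposition \ref{prop:unitequivalence}: combined with Remark \ref{rmk:baromega}, it says exactly that the unit $Y \to \omega^*\omega_!(Y)$ is a weak equivalence in $\mathbf{POp}_o$ for every object $Y$ of $\mathbf{POp}_o$. Specialising to $Y = j_!(X)$ yields that the first map in the display above is a weak equivalence, and hence so is the composite. Since there is nothing hard to verify beyond quoting these two inputs, no significant obstacle arises; the content of the lemma is really just to package the previous results into a statement about the pair of embeddings $j_!$ and $u_! i_!$, which will be convenient for comparing the functor $\bar\omega^*$ restricted along $u_! i_!$ with $j_!$ in the proof that $\bar\omega^*$ is left Quillen.
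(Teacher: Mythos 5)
Your formal observation is correct: the natural transformation $j_!\to\omega^*u_!i_!$ is by construction the unit $j_!(K)\to\omega^*\omega_!j_!(K)$ followed by $\omega^*$ applied to the isomorphism $\omega_!j_!(K)\simeq u_!i_!(K)$, so the weak-equivalence claim would follow if one knew the unit of $(\omega_!,\omega^*)$ were a weak equivalence on $j_!(K)$. The problem is that invoking Proposition~\ref{prop:unitequivalence} here is circular. At the start of Section~\ref{sec:omega!equiv} the paper explicitly states that the results of that section (including Proposition~\ref{prop:unitequivalence} and, implicitly, Remark~\ref{rmk:baromega}) are proved \emph{under the assumption} that both $\omega_!$ and $\bar\omega^*$ are left Quillen; in particular, the proof of Lemma~\ref{lem:unitequiv1}(ii), which the reduction steps in the proof of Proposition~\ref{prop:unitequivalence} depend on, explicitly uses that $\bar\omega^*$ preserves trivial cofibrations. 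But Lemma~\ref{lem:j!toomega*i!} sits in Section~\ref{sec:omega*leftQ} and is needed for Proposition~\ref{prop:omega*easycases}, which is one of the ingredients in the proof of Proposition~\ref{prop:omega*leftQ} (that $\bar\omega^*$ is left Quillen). So you would be using a fact that presupposes $\bar\omega^*$ is left Quillen in the middle of the argument that establishes exactly that.

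The paper avoids this by giving a self-contained argument inside $\mathbf{POp}_o$: it introduces the right cone $X^\triangleright$ on an object $X$ of $\mathbf{POp}_o$, shows that $X\to X^\triangleright$ is a trivial cofibration by a direct filtration argument using left properness and $\mathfrak{P}$-anodynes of type $(C_2)$, and then identifies $\omega^*u_!i_!(K)\simeq j_!(K)^\triangleright$ by inspecting the definition of $\omega^*$, under which identification the natural transformation in question is exactly the cone inclusion $j_!(K)\to j_!(K)^\triangleright$. Nothing about $\bar\omega^*$ being left Quillen is used. You would need an argument of this unconditional kind to make the lemma available at the point where it is needed.
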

\begin{proof}
This follows from the previous lemma, by observing that for any marked simplicial set $K$ there is a canonical isomorphism
\begin{equation*}
\omega^*u_!i_!(K) \simeq j_!(K)^\triangleright
\end{equation*}
and that under this identification $j_!(K) \longrightarrow \omega^*u_!i_!(K)$ is precisely the map $j_!(K) \longrightarrow j_!(K)^\triangleright$ considered above. $\Box$
\end{proof}

We can now move on to the main goal of this section. Lemma \ref{lem:omega*prescof} already  states that $\bar\omega^*$ preserves cofibrations, so it remains to prove that $\bar\omega^*$ preserves trivial cofibrations. It suffices to check that $\bar\omega^*$ sends the maps of Lemma \ref{lem:markedtrivcof}(a)-(e) to trivial cofibrations in $\langle 0 \rangle/\mathbf{POp}_o$. Note that this is equivalent to checking that $\omega^*$ sends those maps to trivial cofibrations in $\mathbf{POp}_o$. Let us get the easy cases out of the way first:

\begin{proposition}
\label{prop:omega*easycases}
The functor $\omega^*$ sends maps in $\mathbf{fSets}_o^+$ of either of the following forms (see Lemma \ref{lem:markedtrivcof}) to trivial cofibrations:
\begin{itemize}
\item[(c)] The inclusion
\begin{equation*}
u_!i_!\bigl((\Lambda_1^2)^\sharp \cup_{(\Lambda_1^2)^\flat} (\Delta^2)^\flat \longrightarrow (\Delta^2)^\sharp \bigr)
\end{equation*} 
\item[(d)] For any Kan complex $K$, the inclusion
\begin{equation*}
u_!i_!\bigl(K^\flat \longrightarrow K^\sharp \bigr)
\end{equation*}
\end{itemize}
\end{proposition}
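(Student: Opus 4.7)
The plan is to leverage Lemma \ref{lem:j!toomega*i!}, which provides a natural weak equivalence $j_!(X) \xrightarrow{\sim} \omega^* u_! i_!(X)$ for every marked simplicial set $X$. Applied to a map $f$ in $\mathbf{sSets}^+$, naturality yields a commutative square in $\mathbf{POp}_o$ whose horizontal edges are these weak equivalences; by two-out-of-three, $\omega^* u_! i_!(f)$ is a weak equivalence if and only if $j_!(f)$ is. Because $\omega^*$ preserves cofibrations by Lemma \ref{lem:omega*prescof}, the task in both (c) and (d) reduces to showing that $j_!$ of the given map is a trivial cofibration in $\mathbf{POp}_o$.

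Case (c) should be immediate: by construction, $j_!$ applied to $(\Lambda_1^2)^\sharp \cup_{(\Lambda_1^2)^\flat} (\Delta^2)^\flat \to (\Delta^2)^\sharp$ is precisely a $\mathfrak{P}$-anodyne of type ($A_0$), with structure map the constant one at $\langle 1 \rangle \in N\mathbf{F}_o^\natural$. Proposition \ref{prop:Panodynestrivcof} then concludes.

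For (d), the plan is to invoke the classical fact (\cite{htt}, Proposition 3.1.1.6) that for a Kan complex $K$ the inclusion $K^\flat \to K^\sharp$ lies in the weakly saturated class generated by Lurie's marked anodynes of types ($M_1$)--($M_3$) of \cite{htt}, 3.1.1.1: inner horn inclusions, left-cone horns with marked initial edge, and the squashed $2$-horn $(\Lambda_1^2)^\sharp \cup (\Delta^2)^\flat \to (\Delta^2)^\sharp$. Direct comparison of definitions shows that $j_!$ sends these three classes of maps to $\mathfrak{P}$-anodynes of types ($C_1$), ($C_0$), and ($A_0$) respectively, since $j_!$ simply equips its input with the constant projection to $\langle 1 \rangle$ and the $\mathfrak{P}$-anodynes of Definition \ref{def:Panodynes} are modeled directly on these marked-simplicial-set counterparts. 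Because $j_!$ is cocontinuous it preserves weakly saturated closure, so $j_!(K^\flat \to K^\sharp)$ is a trivial cofibration in $\mathbf{POp}_o$.

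The only non-formal ingredient is the HTT presentation of $K^\flat \to K^\sharp$ in terms of ($M_1$)--($M_3$); everything else is bookkeeping of naturality together with the matching-up of generating classes on the two sides of $j_!$. I expect no serious obstacle beyond verifying case by case that each of the three marked-anodyne generators of $\mathbf{sSets}^+$ is sent by $j_!$ to the appropriate generator of $\mathfrak{P}$-anodynes.
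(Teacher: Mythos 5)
Your reduction to $j_!$ via Lemma \ref{lem:j!toomega*i!} and two-out-of-three is exactly the paper's strategy, and your treatment of case (c) is complete and correct: $j_!$ of the squashed $2$-horn is a $\mathfrak{P}$-anodyne of type $(A_0)$ over the constant projection at $\langle 1 \rangle$.

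The argument for case (d), however, has a real gap. You claim that for any Kan complex $K$, the inclusion $K^\flat \to K^\sharp$ lies in the \emph{weakly saturated} class generated by HTT's marked anodynes of types (1)--(3) only. That is false, and the cited Proposition 3.1.1.6 of \cite{htt} does not assert it. None of types (1)--(3) can ever mark a non-degenerate edge of $K$ that is not already a composite of previously-marked edges: inner horns (type 1) leave markings alone; the left-cone horns (type 2) require the initial edge $\Delta^{\{0,1\}}$ to already be marked; the squashed $2$-horn (type 3) marks a composite of two already-marked edges. Starting from $K^\flat$, where only degenerate edges are marked, the closure of these operations (even allowing retracts, pushouts and transfinite composites) can never mark a non-identity edge of $K$ in place. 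Concretely, for any $\infty$-category $\mathcal{C}$ the object $\mathcal{C}^\flat$ has the right lifting property against all maps of types (1)--(3), yet fails it against $J^\flat \to J^\sharp$ once $\mathcal{C}$ has a non-identity equivalence, so $K^\flat \to K^\sharp$ cannot be in the saturation of (1)--(3) alone. This is precisely why Lurie's Definition 3.1.1.1 includes $K^\flat \to K^\sharp$ as a fourth, genuinely independent, generator, and why the $\mathfrak{P}$-anodynes of Definition \ref{def:Panodynes} include the analogous type $(A_1)$, namely $Q^\flat \to Q^\sharp$.

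The fix is small but necessary: besides types (1)--(3), you must also track one ``completeness'' generator, most conveniently $Q^\flat \to Q^\sharp$ itself. Then $j_!$ sends inner horns, left-cone horns, the squashed $2$-horn, and $Q^\flat \to Q^\sharp$ to $\mathfrak{P}$-anodynes of types $(C_1)$, $(C_0)$, $(A_0)$, and $(A_1)$ respectively. Since the trivial cofibrations of the Cartesian model structure on $\mathbf{sSets}^+$ form the smallest hypersaturated class containing these four generator families (and $K^\flat \to K^\sharp$ is a trivial cofibration therein), and since $j_!$ preserves cofibrations, pushouts and colimits, it follows that $j_!(K^\flat \to K^\sharp)$ is a trivial cofibration in $\mathbf{POp}_o$. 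Alternatively, one can argue directly: $j_!(K^\flat) \to j_!(K^\sharp)$ is the identity on underlying simplicial sets, and for any fibrant $\mathcal{O}^\natural$ every map $K \to \mathcal{O}_{\langle 1 \rangle}$ from a Kan complex to an $\infty$-category automatically hits only equivalences, so the induced map on mapping spaces is an isomorphism.
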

\begin{proof}
By the previous lemma, it suffices to check that the maps
\begin{eqnarray*}
j_!\bigl((\Lambda_1^2)^\sharp \cup_{(\Lambda_1^2)^\flat} (\Delta^2)^\flat &\longrightarrow& (\Delta^2)^\sharp \bigr) \\
j_!\bigl(K^\flat &\longrightarrow& K^\sharp \bigr)
\end{eqnarray*}
are trivial cofibrations, which is clear. $\Box$
\end{proof}

The rest of this section treats the three remaining cases, which require a little more work.

\begin{proposition}
\label{prop:omega*sums}
For any non-empty sequence of trees $T_1, \ldots, T_k$, the map
\begin{equation*}
\omega^*\bigl((T_1 \amalg \cdots \amalg T_k)^\flat \longrightarrow (T_1 \oplus \cdots \oplus T_k)^\flat\bigr)
\end{equation*}
is a trivial cofibration in $\mathbf{POp}_o$.
\end{proposition}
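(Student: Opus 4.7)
The plan is to reduce the statement to the case $k = 2$ by induction, and then to prove that case by an explicit filtration argument, attaching the ``mixing'' simplices one at a time via pushouts of the generalized $\mathfrak{P}$-anodynes of types $(B_1')$ and $(C_2')$ from Remark \ref{rmk:sumaxiom}.

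For the reduction, I would factor the inclusion as
\[
\coprod_{i=1}^k T_i \;\longrightarrow\; \Bigl(\bigoplus_{i=1}^{k-1} T_i\Bigr) \amalg T_k \;\longrightarrow\; \bigoplus_{i=1}^k T_i.
\]
The first arrow is obtained by taking the coproduct with $T_k$ of the $(k-1)$-case. Since $\bar\omega^*$ is a left adjoint into $\langle 0 \rangle / \mathbf{POp}_o$, it preserves coproducts there, so the image of this arrow under $\bar\omega^*$ is a pushout along the basepoint $\langle 0 \rangle$ of the inductive case with $\bar\omega^*(T_k)$, hence a trivial cofibration by left properness and induction. This reduces the problem to the slightly more general pair statement: for forests $F$ and $G$, the map $\omega^*((F \amalg G)^\flat) \to \omega^*((F \oplus G)^\flat)$ is a trivial cofibration.

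To handle the pair case, I would unpack the simplices. A non-degenerate simplex $\xi$ of $\omega^*((F \oplus G)^\flat)$ over $A : \Delta^n \to N\mathbf{F}_o$ is a morphism $\omega_!(A) \to u(F \oplus G)$ in $\mathbf{\Phi}$, and the decomposition of $F \oplus G$ into those trees coming from $F$ and those coming from $G$ induces a partition of the connected components of $\omega_!(A)$, and hence level-wise partitions $A(i) = B_F^{(i)} \amalg B_G^{(i)}$. The simplex $\xi$ already lies in $\omega^*((F \amalg G)^\flat)$ precisely when all components of $\omega_!(A)$ map into the same forest; otherwise I call $\xi$ a \emph{mixing} simplex. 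For such a $\xi$, the level-wise partitions yield both restricted simplices $\xi_F \in \omega^*(F^\flat)$ and $\xi_G \in \omega^*(G^\flat)$ (which already belong to $\omega^*((F \amalg G)^\flat)$), together with a morphism $\mathbf{2}^\triangleleft \to N\mathbf{F}_o$ in the class $\Sigma$ at each level.

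I would then order the mixing simplices (first by the dimension of $A$, and within each dimension by a suitable secondary measure such as the number of components of $\omega_!(A)$ mapping into $G$) and adjoin them one at a time. To attach a mixing $\xi$, one first glues to $\xi_F$ and $\xi_G$ an initial splitting at level $0$ via a generalized $(B_1')$-anodyne, and then iteratively propagates the splittings upward along the levels of $A$ by pushouts of joins of the form $\partial\Delta^n \star \mathbf{2} \to \Delta^n \star \mathbf{2}$, each realized by a generalized $(C_2')$-anodyne. The main obstacle, as usual in this paper, is the combinatorial bookkeeping: one must verify that the orderings are compatible, that at each stage the boundary of the attached simplex factors through the earlier stages, and that the pushouts are along generating $\mathfrak{P}$-anodynes rather than more complicated maps. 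The argument resembles the case analysis in the proof of Proposition \ref{prop:omega!leftQ}, especially the case $(C_2)$ there, but run in reverse: there one unravels an existing cone structure into a tree, whereas here one assembles the cone structure from sub-simplices already present in $\omega^*((F \amalg G)^\flat)$.
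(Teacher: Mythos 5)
Your reduction step is sound: factoring $\coprod T_i \to \bigoplus T_i$ through $(\bigoplus_{i<k} T_i) \amalg T_k$ and using that $\bar\omega^*$ is also left Quillen (so preserves coproducts into $\langle 0 \rangle/\mathbf{POp}_o$) correctly reduces matters by induction to the ``pair'' statement $\omega^*((F \amalg G)^\flat \to (F \oplus G)^\flat)$ for forests $F, G$. However, the route you take from there diverges from the paper's, and I think it is the harder road. The paper instead reduces to boundary inclusions $\omega^*(\partial F^\flat \to F^\flat)$ for disconnected $F$, via a double induction (Lemma \ref{lem:omega*sums1}) over the subobjects $A$ between $u_!u^*F^\flat$ and $\partial F^\flat$, each written as $u_!u^*F^\flat \cup H_1^\flat \cup \cdots \cup H_n^\flat$ for disconnected subforests $H_i$. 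The reason this reduction is worth the trouble is that the ``new'' simplices of $\omega^*(F^\flat)$ not already in $\omega^*(\partial F^\flat)$ are sharply constrained: they must hit \emph{every} edge of $F$, which forces $e(0)$ to be a bijection onto the leaves and $e(n)$ to land in the roots, and makes the ``size $s$ / tail length $t$'' bookkeeping of Lemma \ref{lem:omega*sums2} possible. Your mixing simplices of $\omega^*((F\oplus G)^\flat)$ have no such covering constraint and can be essentially arbitrary; the combinatorial parametrization you'd need is correspondingly wider and is not supplied.

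Beyond that, there are two concrete holes in the proposed filtration. First, the claim that only generalized $(B_1')$- and $(C_2')$-anodynes suffice is wrong even in the paper's more constrained setting: the step $W^{(s,t-1)} \rightarrowtail W^{(s,t)}$ extending tail length is a pushout along inner horn inclusions $(\Lambda^n_s)^\flat \to (\Delta^n)^\flat$, i.e. anodynes of type $(C_1)$. Any variant of this argument will need those. Second, the phrase ``propagate the splittings upward along the levels of $A$ by pushouts of joins $\partial\Delta^n \star \mathbf{2} \to \Delta^n \star \mathbf{2}$'' does not describe a legal move: a $(C_2')$-anodyne is the specific marked inclusion
\begin{equation*}
(\partial\Delta^n \star \mathbf{j})^\flat \cup_{(\{n\}\star\mathbf{j})^\flat} (\{n\}\star\mathbf{j})^\sharp \longrightarrow (\Delta^n \star \mathbf{j})^\flat \cup_{(\{n\}\star\mathbf{j})^\flat} (\{n\}\star\mathbf{j})^\sharp
\end{equation*}
with the marked cone $\{n\}\star\mathbf{j}$ living over a morphism in $\Sigma$. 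A mixing simplex has the $\Sigma$-splitting at \emph{every} level (as you note), but the $(C_2')$-anodyne only lets you cone at the terminal vertex, and only against inert edges to the constituent roots. The paper only ever applies it in that terminal position, after arranging (via the ``admissible'' trick: elongating tails by obliviant edges) that the simplex being attached has exactly that cone structure at its end. You would have to build the analogous ``admissible hull'' apparatus for your mixing simplices before the pushouts could be recognized as generating anodynes. So the shape of the argument is plausible, but the key lemma that makes the bookkeeping finite --- the reduction to boundary inclusions of disconnected forests --- is missing, and the stated list of anodyne generators is incomplete.
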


This proposition is a consequence of the following two lemmas:

\begin{lemma}
\label{lem:omega*sums1}
Suppose the functor $\omega^*$ sends boundary inclusions $\partial F^\flat \longrightarrow F^\flat$ to weak equivalences, for forests $F$ which have at least two components. Then $\omega^*$ sends the maps of Proposition \ref{prop:omega*sums} to weak equivalences.
\end{lemma}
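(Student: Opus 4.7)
Write $X = T_1 \amalg \cdots \amalg T_k$ and $Y = T_1 \oplus \cdots \oplus T_k$, regarded as marked forest sets with the minimal marking $(-)^\flat$. My strategy is to exhibit $\omega^*(X^\flat \rightarrow Y^\flat)$ as a transfinite composition of pushouts of maps that are already known to be trivial cofibrations by hypothesis, using the relative skeletal filtration of Proposition~\ref{prop:skeletalfiltration}.

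The first step is to observe that any non-degenerate element $e \colon F_e \rightarrow Y$ lying in $Y(F_e) \setminus X(F_e)$ necessarily has $F_e$ of at least two components. Indeed, if $F_e$ were a single tree, then a map from $F_e$ into the direct sum $T_1 \oplus \cdots \oplus T_k$ would have to factor through some individual summand $T_j$, and hence through $X$. So every cell appearing in the relative skeletal filtration of $(Y, X)$ is a boundary inclusion $\partial F_e \rightarrow F_e$ with $F_e$ a forest with at least two components, which is precisely the class to which the standing hypothesis applies.

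Next, I would invoke the relative version of Proposition~\ref{prop:skeletalfiltration} to write $X^\flat \rightarrow Y^\flat$ as a transfinite composition of cell-by-cell pushouts of the form
\[
\xymatrix{
\partial F_e^\flat \ar[r]\ar[d] & Z_\alpha \ar[d] \\
F_e^\flat \ar[r] & Z_{\alpha+1},
}
\]
where each $F_e$ has at least two components. Applying $\omega^*$, I need it to preserve this construction: cofibrations are preserved by Lemma~\ref{lem:omega*prescof}, and although $\omega^*$ is a right adjoint and therefore need not preserve arbitrary colimits, Remark~\ref{rmk:baromega} guarantees that it preserves all connected colimits, in particular pushouts and transfinite compositions. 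By hypothesis, each $\omega^*(\partial F_e^\flat \rightarrow F_e^\flat)$ is a trivial cofibration in $\mathbf{POp}_o$, and trivial cofibrations there are closed under pushout and transfinite composition. Composing these observations gives the conclusion.

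I do not foresee a serious obstacle: the only point requiring care is that the pushouts in Proposition~\ref{prop:skeletalfiltration} are stated as coproducts over isomorphism classes of cells, which are not connected colimits and therefore not obviously preserved by $\omega^*$. This is easily circumvented by attaching cells one at a time (or equivalently, by well-ordering the isomorphism classes and using the transfinite composition formulation), since each individual attachment is a genuine pushout and so preserved by $\omega^*$.
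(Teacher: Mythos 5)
Your proof is correct and takes a genuinely different, and arguably more systematic, route than the paper's. The key observation you make — that every non-degenerate element $e \colon F_e \to Y = T_1 \oplus \cdots \oplus T_k$ not already in $X = T_1 \amalg \cdots \amalg T_k$ must have $F_e$ with at least two components (because a map in $\mathbf{\Phi}$ from a single tree $S$ into a direct sum of trees always factors through one summand) — reduces the lemma to an application of the relative skeletal filtration, cell by cell, and the hypothesis of the lemma then applies directly. You have also correctly anticipated and dispatched the two real subtleties: that $\omega^*$, being a right adjoint, preserves only connected colimits (so one attaches cells one at a time rather than via the coproduct-level pushout of Proposition \ref{prop:skeletalfiltration}), and that $\omega^*$ preserves cofibrations (so that the image of each boundary inclusion is not merely a weak equivalence but a trivial cofibration, whence closure under pushout and transfinite composition in $\mathbf{POp}_o$ gives the conclusion). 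By contrast, the paper's proof does not invoke the skeletal filtration at all: it factors $X^\flat \to Y^\flat$ through the boundary $\partial F^\flat$ (where $F = T_1 \oplus \cdots \oplus T_k$), and then proves a strengthened statement — that for any intermediate normal mono $u_!u^*F^\flat \rightarrowtail A \rightarrowtail \partial F^\flat$, both inclusions lie in $\mathcal{W}$ — by a bespoke double induction on the size of $F$ and the number of disconnected subforests $H_1, \ldots, H_n$ generating $A$, using a pullback–pushout argument to identify the attaching maps. The paper's induction on the size of $F$ is needed because the attaching map at each stage is $(u_!u^*F \cap H_n) \cup \bigcup_{i<n}(H_i \cap H_n) \to \partial H_n \to H_n$, and one needs the result for the strictly smaller forest $H_n$; your approach avoids this recursion entirely because the skeletal filtration already isolates the boundary inclusions of individual cells. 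Both proofs depend on the same closure properties of trivial cofibrations and of $\omega^*$ under connected colimits, so the difference is organizational rather than in the underlying ideas; your version trades the paper's hands-on combinatorics for reuse of the general skeletal machinery.
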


\begin{lemma}
\label{lem:omega*sums2}
Let $F$ be a disconnected forest, i.e. a forest consisting of at least two trees. Then the map
\begin{equation*}
\omega^*(\partial F^\flat \longrightarrow F^\flat)
\end{equation*}
is a trivial cofibration.
\end{lemma}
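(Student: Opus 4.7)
The strategy is to exhibit $\omega^*(\partial F^\flat) \to \omega^*(F^\flat)$ as a transfinite composition of pushouts of $\mathfrak{P}$-anodynes, so that Proposition \ref{prop:Panodynestrivcof} delivers the conclusion. The non-degenerate simplices of $\omega^*(F^\flat)$ outside $\omega^*(\partial F^\flat)$ are those pairs $(A,\zeta)$, with $\zeta: \omega_!(A) \to F$, whose total image $\bigcup_i \zeta(i)$ meets every constituent tree of $F$ in an essential way, i.e., does not factor through any face of $F$. I would first perform a reduction: by induction on the size $|F|$ (sum of edges and vertices of $F$), and using that $\omega^*$ preserves connected colimits by Remark \ref{rmk:baromega}, the pushout decomposition $\partial F = (\partial G \oplus H) \cup_{\partial G \oplus \partial H} (G \oplus \partial H)$ for any non-trivial splitting $F = G \oplus H$ into two non-empty subforests translates into a pushout in $\mathbf{POp}_o$; combined with left properness of $\mathbf{POp}_o$ and the inductive hypothesis applied to the forests $\partial G \oplus H$ and $G \oplus \partial H$, this reduces the problem to the key case where $F = T_1 \oplus T_2$ for two trees.

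For this key case, the plan is an explicit filtration, carried out in the spirit of the cone-and-book constructions of Lemmas \ref{lem:unitequivpoints} and \ref{lem:reducetoconnected}, together with pruning and shuffle filtrations analogous to those in the proof of Proposition \ref{prop:omega!leftQ}. Roughly, a simplex $\zeta$ whose image meets both $T_1$ and $T_2$ can, by a chain of $\mathfrak{P}$-anodynes, be glued onto configurations that separately map into $T_1$ and $T_2$ via a $2$-simplex witnessing a merger of independent edge families from $T_1$ and $T_2$ inside a common object of $\mathbf{F}_o$. To implement this, one adjoins, in order of increasing combinatorial complexity, the elementary simplices of $\omega^*(F^\flat)$ (in the sense of Section \ref{sec:omegaQpairs}), each newly adjoined simplex being obtained by a pushout along a generalized $\mathfrak{P}$-anodyne of type $(B_1')$ or $(C_2')$ from Remark \ref{rmk:sumaxiom}, which is precisely designed to handle bijections $\langle m \rangle \simeq \langle k_1 \rangle \amalg \langle k_2 \rangle$ arising from the disconnectedness of $F$; inner-edge shuffling and leaf manipulation is handled by $\mathfrak{P}$-anodynes of types $(C_0)$ and $(C_1)$, exactly as in the proof of Proposition \ref{prop:omega!leftQ}.

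The main difficulty will lie in the combinatorics. The simplices of $\omega^*(F^\flat)$ freely permute independent edge families (so that even $\omega^*((\eta \oplus \eta)^\flat)$ contains the classifying space $B\Sigma_2$, as noted at the end of Section \ref{sec:omegaQpairs}), and determining which non-degenerate simplices are genuinely new, i.e., cover $F$ essentially, requires a careful stratification by the $F$-type of the image (which edges and vertices of $F$ are hit) and by permutation type. Verifying that in each pushout adjoining such a new simplex, the remaining faces of the simplex factor either through earlier stages of the filtration or through $\omega^*(\partial F^\flat)$ will demand a bookkeeping argument analogous to, but more elaborate than, the ones in the proofs of Propositions \ref{prop:poprodrootanodyne} and \ref{prop:poprodleafanodyne}; this is where the bulk of the work, and the only genuine difficulty, is expected to reside.
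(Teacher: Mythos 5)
Your proposal correctly identifies the general strategy (filtration by pushouts of $\mathfrak{P}$-anodynes, with the generalized $(B_1')$ and $(C_2')$ anodynes of Remark \ref{rmk:sumaxiom} doing the heavy lifting for the disconnectedness), but there are two substantive problems.

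First, the preliminary reduction to the case $F = T_1 \oplus T_2$ is not justified. You propose applying ``the inductive hypothesis'' to $\partial G \oplus H$ and $G \oplus \partial H$, but the statement of the lemma concerns boundary inclusions of \emph{representable} disconnected forests, and those objects are forest sets built as unions of representables, not representable forests themselves. Even granting a stronger inductive statement, it is not clear how the pushout decomposition of $\partial F$ together with left properness yields a reduction of the map $\omega^*(\partial F^\flat) \to \omega^*(F^\flat)$ to the two-tree case: the pushout square you write down decomposes the \emph{domain} $\partial F$, but what is needed is a factorization or decomposition of the \emph{inclusion} into $F$. In fact the paper's proof does not perform any such reduction; it treats a general disconnected forest $F$ directly.

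Second, and more importantly, the key combinatorial filtration is not actually supplied. The paper assigns to each non-degenerate $n$-simplex $e: A \to \omega^*(F^\flat)$ not already in $\omega^*(\partial F^\flat)$ a \emph{size} $s$ (the smallest index such that $\omega_!(A|_{\Delta^{\{s,\ldots,n\}}})$ is connected, or $n+1$ if the last vertex hits more than one root) and a \emph{tail length} $t = n - (s-1)$, and defines a simplex to be \emph{admissible} when $t \geq 1$ and the edge $e|_{\Delta^{\{s-1,s\}}}$ is obliviant in the sense of Section \ref{sec:omegaQpairs}. The filtration is then double-indexed by $(s,t)$ in lexicographic order: passing from $W^{(s-1)}$ to $W^{(s,1)}$ is a pushout along generalized $(C_2')$-anodynes (using the connectivity of the tail to identify the cone structure), passing from $W^{(s,t-1)}$ to $W^{(s,t)}$ is a pushout along inner horns $\Lambda_s^n \to \Delta^n$, and the base step adjoins admissible $1$-simplices via generalized $(B_1')$-anodynes. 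Your proposal does not identify these invariants, does not define admissible simplices, and explicitly defers ``the bulk of the work'' to a bookkeeping argument you have not carried out. As written, it is a plan rather than a proof; the genuine content — the specific stratification by $(s,t)$ and the verification of which faces of each newly adjoined simplex land in earlier stages of the filtration — is missing.
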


\begin{proof}[Proof of Lemma \ref{lem:omega*sums1}]
Let $\mathcal{W}$ denote the set of cofibrations in $\mathbf{fSets}_o^+$ that are sent to weak equivalences by $\omega^*$ and assume that $\mathcal{W}$ contains the maps $\partial F^\flat \longrightarrow F^\flat$, for all forests $F$ consisting of at least two trees. Now let $F = T_1 \oplus \cdots \oplus T_k$ be any such forest. We wish to show that
\begin{equation*}
(T_1 \amalg \cdots \amalg T_k)^\flat \simeq u_!u^*F^\flat \longrightarrow F^\flat
\end{equation*}
is contained in $\mathcal{W}$. We may factor the given map as
\begin{equation*}
u_!u^*F^\flat \rightarrowtail \partial F^\flat \rightarrowtail F^\flat.
\end{equation*}
The second map is in $\mathcal{W}$ by assumption, so we have to show that the first map is as well. In fact we will prove something slightly stronger, namely that for any factorization
\begin{equation*}
u_!u^*F^\flat \rightarrowtail A \rightarrowtail \partial F^\flat
\end{equation*}
where both arrows are monos, both these maps are in $\mathcal{W}$. Such an $A$ can be written as
\begin{equation*}
A = u_!u^*F^\flat \cup H_1^\flat \cup \cdots \cup H_n^\flat
\end{equation*}
for subforests $H_1 \rightarrowtail F, \ldots, H_n \rightarrowtail F$ and we may assume that each $H_i$ is disconnected, because otherwise it is contained in $u_!u^*F$. We proceed by induction on the size of $F$ and the number $n$ of forests in $A$. The smallest case is the one where $F = \eta \oplus \eta$. Then $\partial F = \eta \amalg \eta$ and $u_!u^*F \rightarrowtail \partial F$ is an isomorphism, so there is nothing to prove. For general $F$, now assume that the assertion has been proved for all forests smaller than $F$, as well as for $A' = u_!u^*F \cup H_1 \cup \cdots \cup H_{n-1}$. Consider the diagram
\[
\xymatrix{
& P \ar[d]\ar[r]^h & H_n^\flat \ar[d] \\
u_!u^*F^\flat \ar[r]^f & A' \ar[r]^g & A
}
\]
where $P$ is the pullback in the square. Then the square is also a pushout (all maps in the diagram are monos) and the map $h$ is the composition
\begin{equation*}
h: (u_!u^*F \cap H_n)^\flat \cup \bigcup_{i < n} (H_i \cap H_n)^\flat \rightarrowtail \partial H_n^\flat \rightarrowtail H_n^\flat
\end{equation*}
The first map is in $\mathcal{W}$ by the inductive hypothesis (since $H_n$ is strictly smaller than $F$ and $u_!u^*H_n$ is contained in the domain of $h$), the second map is in $\mathcal{W}$ by assumption and therefore $h$ is in $\mathcal{W}$. Thus $g$ is in $\mathcal{W}$ since $\mathcal{W}$ is closed under pushouts. Also $f \in \mathcal{W}$ by the inductive assumption on $n$ and therefore $u_!u^*F^\flat \rightarrowtail A$ is in $\mathcal{W}$. By letting $A = \partial F^\flat$ we reach the desired conclusion. $\Box$
\end{proof}

\begin{proof}[Proof of Lemma \ref{lem:omega*sums2}]
To prove that $\omega^*(\partial F^\flat \rightarrowtail F^\flat)$ is a trivial cofibration, we will show that we can use $\mathfrak{P}$-anodynes to successively adjoin certain non-degenerate simplices to $\omega^*(\partial F^\flat)$, so that at the end every non-degenerate simplex of $\omega^*(F^\flat)$ is a face of one of the simplices having been adjoined. \par 
Consider an $n$-simplex $e: A \longrightarrow \omega^*(F^\flat)$. For this simplex not to factor through $\omega^*(\partial F^\flat)$, every edge of $F$ must occur in the image of some $e(i): \langle a(i) \rangle \rightarrow \mathrm{edges}(F)$. In particular, $e(0)$ is a bijection to the set of all leaves of $F$ and the image of $e(n)$ is a subset of the set of roots of $F$. We will especially be interested in simplices where the image of $e(n)$ consists of exactly one root of $F$, say the root of one of the constituent trees $T$ of $F$. In that case there will be a smallest number $s$, $0 \leq s \leq n$, for which $\omega_!(A|_{\Delta^{\{s, \ldots, n\}}})$ is connected. If $s=0$, then $\omega_!(A)$ is connected so $e: A \longrightarrow \omega^*(F^\flat)$ must factor through $\omega^*(\partial F^\flat)$. If $s > 0$, then $e$ maps $\omega_!(A|_{\Delta^{\{0, \ldots, s-1\}}})$ into a sum of trees. In case the last vertex $e(n): \langle a(n) \rangle \rightarrow \mathrm{edges}(F)$ consists of more than one root, we will set $s = n+1$. Let us also write $t = n - (s-1)$. This number $t$ is the number of vertices $s, \ldots, n$ of the simplex $A$ mapped into $\omega^*(T)$. In this way, we have assigned to each $n$-simplex $e: A \longrightarrow \omega^*(F^\flat)$ a \emph{size} $s$ and a \emph{tail length} $t$. A typical (schematic) picture is this:
\[
\begin{tikzpicture} 
[level distance=7mm, 
every node/.style={fill, circle, minimum size=.1cm, inner sep=0pt}, 
level 1/.style={sibling distance=3mm}, 
level 2/.style={sibling distance=3mm}, 
level 3/.style={sibling distance=3mm}]

\node (lefttree)[style={color=white}] {} [grow'=up] 
child {node (level1) {}
	child
	child
};

\node (tree2)[style={color=white}, right = 1.8cm of lefttree] {};
\node (tree2start)[style={color=white}, above = 2cm of tree2] {} [grow'=up] 
child{node (node2){}
	child
	child
};

\node (tree3start)[style={color=white}, right = 1.5cm of tree2start] {} [grow'=up] 
child{node (node3){}
	child
	child
};

\tikzstyle{every node}=[]

\draw[dashed] ($(level1) + (-1.1cm, 0cm)$) -- ($(level1) + (4cm, 0cm)$);
\draw[dashed] ($(level1) + (-1.1cm, 1.4cm)$) -- ($(level1) + (4cm, 1.4cm)$);
\draw[dashed] ($(level1) + (-1.1cm, 2.1cm)$) -- ($(level1) + (4cm, 2.1cm)$);
\draw[dashed] ($(level1) + (-1.1cm, 3.5cm)$) -- ($(level1) + (4cm, 3.5cm)$);
\draw ($(level1) + (-.15cm,.7cm)$) -- ($(level1) + (-.825cm,3.85cm)$);
\draw ($(level1) + (.15cm,.7cm)$) -- ($(level1) + (.825cm,3.85cm)$);
\draw ($(node2) + (-.15cm,.7cm)$) -- ($(node2) + (-.375cm,1.75cm)$);
\draw ($(node2) + (.15cm,.7cm)$) -- ($(node2) + (.325cm,1.75cm)$);
\draw ($(node3) + (-.15cm,.7cm)$) -- ($(node3) + (-.375cm,1.75cm)$);
\draw ($(node3) + (.15cm,.7cm)$) -- ($(node3) + (.325cm,1.75cm)$);

\node at ($(level1) + (-1.1cm, 3.85cm)$) {$0$};
\node at ($(level1) + (-1.1cm, 1.75cm)$) {$s-1$};
\node at ($(level1) + (-1.1cm, .8cm)$) {$\vdots$};
\node at ($(level1) + (-1.1cm, -.35cm)$) {$n$};

\end{tikzpicture} 
\]

Define a non-degenerate $n$-simplex $e$ to be \emph{admissible} if $t \geq 1$ (tail of length at least 1) and the edge $e|_{\Delta^{\{s-1,s\}}}$ is obliviant (recall the terminology from Section \ref{sec:omegaQpairs}). Note that any $n$-simplex of size $s \leq n$ is a face of an admissible $m$-simplex of the same size $s$ (but with a longer tail, in general). \par 
Now let $V$ denote the collection of all admissible 1-simplices of $\omega^*(F^\flat)$, necessarily having $s=t=1$, and consider the map
\begin{equation*}
\omega^*(\partial F^\flat) \longrightarrow \omega^*(\partial F^\flat) \cup V
\end{equation*}
If the forest $F$ contains a vertex, then any simplex in $V$ is in fact already contained in $\omega^*(\partial F^\flat)$. In particular, the given map is the identity. So the only non-trivial case is where $F = k \cdot \eta = \oplus_{i=1}^k \eta$, a sum of copies of the unit tree. In this case, the given map is a pushout of a generalized $\mathfrak{P}$-anodyne of type $(B_1')$, i.e. a trivial cofibration of the form described in Remark \ref{rmk:sumaxiom}. \par 
We will proceed by induction on the pair $(s,t)$, lexicographically ordered. To this end, let $W^{(s)}$ denote the union of $\omega^*(\partial F^\flat) \cup V$ with all the admissible simplices of size at most $s$ and let $W^{(s,t)}$ denote the union of $\omega^*(\partial F^\flat) \cup V$ with all admissible simplices of size at most $s$ and tail length at most $t$. This defines filtrations
\begin{eqnarray*}
\bigl(\omega^*(\partial F^\flat) \cup V \bigr) \subseteq W^{(1)} \subseteq W^{(2)} \subseteq & \cdots & \subseteq \bigcup_s W^{(s)} = \omega^*(\partial F^\flat) \\
W^{(s-1)} \subseteq W^{(s,1)} \subseteq W^{(s,2)} \subseteq & \cdots & \subseteq \bigcup_t W^{(s,t)} = W^{(s)}
\end{eqnarray*}
It now suffices to show that the maps $W^{(s-1)} \rightarrowtail W^{(s,1)}$ and $W^{(s,t-1)} \rightarrowtail W^{(s,t)}$ are all trivial cofibrations. \par
\emph{The map $W^{(s-1)} \rightarrowtail W^{(s,1)}$}: Since $\omega^*(\partial F^\flat) \cup V = W^{(1,1)}$, we may assume that $s > 1$. Consider an $(s-1)$-simplex $e: A \rightarrow \omega^*(F^\flat)$ of size $s$ that does not factor through $W^{(s-1)}$. Then $A$ is necessarily of the form
\[
\xymatrix{
\langle a(0) \rangle \ar[r] & \cdots \ar[r] & \langle a(s-1) \rangle
}
\]
where all the maps are active, $\langle a(0) \rangle$ is in bijection with the set of leaves of $F$ and $\langle a(s-1) \rangle$ is in bijection with the set of roots of $F$. Consider the collection $V_e$ of all admissible simplices $\overline{e}: \overline{A} \rightarrow W^{(s,1)}$ of the form
\[
\xymatrix{
\langle a(0) \rangle \ar[r] & \cdots \ar[r] & \langle a(s-1) \rangle \ar[r]^-{\rho^i} & \langle 1 \rangle
}
\]
which restrict to $e$ on $A|_{\Delta^{\{0, \ldots, s-1\}}}$. Every simplex of $W^{(s,1)}$ that we're adjoining is of this form, for some $e$. We have a pushout diagram
\[
\xymatrix@C=40pt{
(\partial \Delta^{s-1} \star \mathbf{j})^\flat \cup_{(\{s-1\} \star \mathbf{j})^\flat} (\{s-1\} \star \mathbf{j})^\sharp \ar[r]\ar[d] & W^{(s-1)} \ar[d] \\
(\Delta^{s-1} \star \mathbf{j})^\flat \cup_{(\{s-1\} \star \mathbf{j})^\flat} (\{s-1\} \star \mathbf{j})^\sharp \ar[r] & W^{(s-1)} \cup V_e
}
\]
where the left vertical map is a generalized $\mathfrak{P}$-anodyne of type ($C_2'$), cf. Remark \ref{rmk:sumaxiom}, with $j$ being precisely the number of roots of $F$. Indeed, the faces $d_i(e) \star \mathbf{j}$ will be admissible of smaller size for $0 \leq i \leq s-2$, the face $d_{s-1}(e) \star \mathbf{j}$ may not be admissible but is a face of an admissible simplex of smaller size (possibly with longer tail), and $e$ does not factor through $W^{(s-1)}$ by assumption. Now letting $e$ vary, all the simplices of $W^{(s,1)}$ can be adjoined in similar fashion and we see that the map $W^{(s-1)} \rightarrowtail W^{(s,1)}$ is a trivial cofibration. \par 
\emph{The map $W^{(s,t-1)} \rightarrowtail W^{(s,t)}$}: Let $e: A \rightarrow \omega^*(F^\flat)$ be an admissible $n$-simplex of size $s$ with tail length $t > 1$ that is not already contained in $W^{(s,t-1)}$. Its face $d_k(e)$ lies in $W^{(s-1)}$ for $k < s$ and in $W^{(s,t-1)}$ for $k > s$. For $k=s$, the face $d_k(e) = d_s(e)$ cannot lie in $\omega^*(\partial F^\flat)$ because $e|_{\Delta^{\{s-1,s\}}}$ is obliviant, so no edge of $F$ is deleted in passing from $e$ to $d_s(e)$. The face $d_s(e)$ is a non-admissible simplex of size $s$ and tail length $t-1$ and it occurs as a face of a \emph{unique} admissible $n$-simplex, viz. $e$ itself. Thus, $W^{(s,t)}$ can be constructed from $W^{(s,t-1)}$ by a pushout along a coproduct of inner horn inclusions
\begin{equation*}
\coprod (\Lambda_s^n)^\flat \longrightarrow (\Delta^n)^\flat
\end{equation*}
ranging over all such admissible $n$-simplices of size $s$ and tail size $t$ (so $n = s+t-1$). In particular, $W^{(s,t-1)} \rightarrowtail W^{(s,t)}$ is inner anodyne, which completes the proof of the proposition. $\Box$
\end{proof}

\begin{proposition}
\label{prop:omega*inneranodyne}
For any tree $T$, the map
\begin{equation*}
\omega^*\bigl(\mathrm{fSc}(T)^\flat \longrightarrow T^\flat\bigr)
\end{equation*}
is a trivial cofibration.
\end{proposition}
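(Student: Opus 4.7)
The argument proceeds by induction on the number of vertices of $T$. The base case, where $T$ has at most one vertex (so $T = \eta$ or $T = C_p$), is immediate since $\mathrm{fSc}(T) = T$. For the inductive step, write $T = C_p \star (T_1, \ldots, T_p)$, where $C_p$ is the root corolla and each $T_i$ is the maximal subtree of $T$ rooted at the $i$-th leaf of $C_p$. By Remark \ref{rmk:Segalcore}(c),
\[
\mathrm{fSc}(T) \; = \; C_p \cup_{p \cdot \eta} \bigl(\mathrm{fSc}(T_1) \oplus \cdots \oplus \mathrm{fSc}(T_p)\bigr),
\]
while the analogous formula gives $T = C_p \cup_{p \cdot \eta}(T_1 \oplus \cdots \oplus T_p)$. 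Thus the inclusion $\mathrm{fSc}(T)^\flat \hookrightarrow T^\flat$ is the pushout, along $(p \cdot \eta)^\flat \hookrightarrow C_p^\flat$, of the ``sum-of-Segal-cores'' inclusion
$(\mathrm{fSc}(T_1) \oplus \cdots \oplus \mathrm{fSc}(T_p))^\flat \hookrightarrow (T_1 \oplus \cdots \oplus T_p)^\flat$.
Since $\omega^*$ preserves pushouts (Remark \ref{rmk:baromega}) and the trivial cofibrations in $\mathbf{POp}_o$ are closed under pushouts, it suffices to show that $\omega^*$ sends the latter inclusion to a trivial cofibration.

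To handle this sum-of-Segal-cores inclusion I would compare it with the corresponding coproduct via the commutative square
\[
\xymatrix{
\bigl(\coprod_{i=1}^p \mathrm{fSc}(T_i)\bigr)^\flat \ar[r]\ar[d] & \bigl(\coprod_{i=1}^p T_i\bigr)^\flat \ar[d] \\
\bigl(\bigoplus_{i=1}^p \mathrm{fSc}(T_i)\bigr)^\flat \ar[r] & \bigl(\bigoplus_{i=1}^p T_i\bigr)^\flat.
}
\]
Both vertical maps are sent to trivial cofibrations by $\omega^*$: this is Proposition \ref{prop:omega*sums} in the case of trees, and the reduction of Lemma \ref{lem:omega*sums1} applies verbatim to summands that are Segal cores rather than trees (the argument only uses the weak-equivalence statement for boundary inclusions of disconnected forests together with the fact, recorded in Remark \ref{rmk:baromega}, that $\omega^*$ preserves pushouts and normal monomorphisms). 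Since $\omega^*$ preserves cofibrations (Lemma \ref{lem:omega*prescof}), the two-out-of-three property for weak equivalences then reduces the problem to showing that $\omega^*$ sends the top horizontal inclusion to a trivial cofibration.

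By the inductive hypothesis, each constituent $\omega^*(\mathrm{fSc}(T_i)^\flat \hookrightarrow T_i^\flat)$ is a trivial cofibration. The difficulty is that $\omega^*$ does \emph{not} preserve coproducts: for a simplex $A \colon \Delta^n \to N\mathbf{F}_o^\natural$ with $|A(n)| > 1$, the forest $\omega_!(A)$ is disconnected and its components may be distributed across distinct summands of a coproduct. Consequently $\omega^*(\coprod_i X_i)$ contains ``mixed'' simplices that lie over no single summand, and these must be adjoined separately. My plan is to proceed by a secondary induction on $p$, reducing to the case of a two-fold coproduct $X \amalg Y$ and there filtering the mixed simplices of $\omega^*((X \amalg Y)^\flat)$ by an ``$(s,t)$''-style combinatorial complexity measure modelled on the proof of Proposition \ref{prop:omega*sums}, where $s$ records the depth at which the components of $\omega_!(A)$ first split between the two summands, and $t$ records the tail length of active morphisms above that level.

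The main obstacle will be verifying, at each stage of this filtration, that the newly adjoined mixed simplices can be attached via pushouts along $\mathfrak{P}$-anodynes of type $(C_2)$ (or type $(C_2')$ as in Remark \ref{rmk:sumaxiom}) together with inner horns of type $(C_1)$, and that the faces of each such simplex either factor through a previous stage of the filtration or lie precisely on the horn being filled in. As in the proofs of Propositions \ref{prop:omega!leftQ} and \ref{prop:omega*sums}, this amounts to a delicate combinatorial bookkeeping on the possible shapes of $\omega_!(A)$ and the ways a simplex can fail to factor through an earlier stage, and is where the bulk of the work will lie.
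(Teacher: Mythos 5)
There is a genuine gap, and it originates from an incorrect identity at the very start of the inductive step. You write that ``the analogous formula gives $T = C_p \cup_{p \cdot \eta}(T_1 \oplus \cdots \oplus T_p)$,'' but this is false: the object $C_p \cup_{p\cdot\eta}(T_1 \oplus \cdots \oplus T_p)$ is a \emph{proper} subobject of $T$, namely the union of the root corolla and the root face. It is exactly the object $\mathrm{fSc}(T) \cup \partial_{\mathrm{root}}(T)$ computed in the proof of Proposition \ref{prop:Segcoregeninneran}, and it does not contain, for instance, the nondegenerate dendrex of $T$ corresponding to the entire tree. (Already for $T = C_1 \star C_1$ the union $C_1 \cup_\eta C_1$ is just the Segal core, not $T$.) Consequently, the pushout square you exhibit identifies the map you are reducing to not with $\mathrm{fSc}(T)^\flat \hookrightarrow T^\flat$ but only with $\mathrm{fSc}(T)^\flat \hookrightarrow (\mathrm{fSc}(T) \cup \partial_{\mathrm{root}}(T))^\flat$. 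The remaining inclusion $(\mathrm{fSc}(T) \cup \partial_{\mathrm{root}}(T))^\flat \hookrightarrow T^\flat$, i.e.\ the part that adjoins the dendrices of $T$ that genuinely involve both the root vertex and the subtrees above it, is not addressed anywhere in your argument.

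That remaining inclusion is precisely where the hard work of this proposition lies, and it is qualitatively different from the ``sum vs.\ coproduct'' issue you propose to handle by a filtration on $(s,t)$: that sort of filtration (as in Proposition \ref{prop:omega*sums}) controls where the components of $\omega_!(A)$ split across summands of a forest, but it has no bearing on the presence of a nontrivial root vertex. The paper's proof uses your inductive reduction (to $\partial^{\mathrm{ext}}(T)^\flat \hookrightarrow T^\flat$, via closure of the relevant class under pushout, composition and direct sums) and then finishes with a dedicated filtration of $\omega^*(T^\flat)$ by ``admissible'' simplices whose final edge lies over $\langle 1 \rangle \to \langle 0 \rangle$ and whose penultimate edge is active and hits the root corolla $C_p$; each stage is adjoined by pushouts along $\mathfrak{P}$-anodynes of types $(C_1)$ and $(C_2)$, using that the face $d_{n-2}(d_n(e))$ of such a simplex determines $e$ uniquely. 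Your proposal contains nothing playing the role of this filtration, so even granting your reductions involving sums and coproducts, the proof does not close. You would need to add an argument of this kind for the passage from the external boundary (or from $\mathrm{fSc}(T) \cup \partial_{\mathrm{root}}(T)$) up to $T$.
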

\begin{proof}
We work by induction on the size of $T$. If $T$ is $\eta$ or $T$ is a corolla, then $\mathrm{fSc}(T) = T$ and there is nothing to prove. Write $\mathcal{W}$ for the collection of all cofibrations in $\mathbf{fSets}_o^+$ that are sent to weak equivalences by $\omega^*$. Now let $T$ be an arbitrary (larger) tree and assume the statement has been proved for all trees smaller than $T$ (i.e. all trees $S$ that admit a monomorphism $S \rightarrowtail T$ that is not an isomorphism). As in the first part of the proof of Proposition \ref{prop:Segcoregeninneran}, we conclude by induction that $\mathcal{W}$ contains the map
\begin{equation*}
\mathrm{fSc}(T)^\flat \rightarrowtail \partial^{\mathrm{ext}}(T)^\flat,
\end{equation*}
where we use that $\mathcal{W}$ is closed under composition and pushout, but also under sums, invoking Proposition \ref{prop:omega*sums} above. So it remains to prove that
\begin{equation*}
\partial^{\mathrm{ext}}(T)^\flat \longrightarrow T^\flat
\end{equation*}
is in $\mathcal{W}$. Write
\begin{equation*}
T = C_p \star (T_1, \ldots, T_p)
\end{equation*}
so that $T$ is given by grafting the trees $T_1, \ldots, T_p$ onto the leaves of $C_p$. Let us label the leaves of $C_p$ by $l_1, \ldots, l_p$ (implicitly fixing an order on them). Let us consider a non-degenerate $n$-simplex $e: A \rightarrow \omega^*(T^\flat)$ that is not already contained in $\omega^*(\partial^{\mathrm{ext}}(T)^\flat)$. Then the image of $e$ must contain the root of $T$. We will say that $e$ is \emph{admissible (of size $n$)} if $A$ is of the form
\[
\xymatrix{
\langle a(0) \rangle \ar[r] & \cdots \ar[r] & \langle p \rangle \ar[r] & \langle 1 \rangle \ar[r]^-\sharp & \langle 0 \rangle
}
\]
and furthermore the following conditions are satisfied:
\begin{itemize}
\item[-] The final edge $e|_{\Delta^{\{n-1,n\}}}$ lying over $\langle 1 \rangle \rightarrow \langle 0 \rangle$ is marked, as indicated.
\item[-] The edge $\langle p \rangle \rightarrow \langle 1 \rangle$ is active and is sent to the root corolla $C_p$ by $e$.
\item[-] The map $e(n-2):\langle p \rangle \rightarrow \mathrm{leaves}(C_p)$ maps $i$ to $l_i$ for $1 \leq i \leq p$ (i.e. is order-preserving).  
\end{itemize}
Note that any simplex of $\omega^*(T^\flat)$ is a face of some admissible simplex. Write $W^{(n)}$ for the union of $\omega^*(\partial^{\mathrm{ext}}(T)^\flat)$ with all admissible simplices of size at most $n$. This gives a filtration
\begin{equation*}
\omega^*(\partial^{\mathrm{ext}}(T)^\flat) = W^{(2)} \subseteq W^{(3)} \subseteq \cdots \subseteq \bigcup_n W^{(n)} = \omega^*(T^\flat)
\end{equation*}
We wish to show that all of the maps in this filtration are $\mathfrak{P}$-anodyne. Consider an inclusion $W^{(n-1)} \subseteq W^{(n)}$ given by adjoining a collection of admissible $n$-simplices. Let us first adjoin the $n$'th faces $d_n(e)$ of all these simplices (these are also not contained in $W^{(n-1)}$, since they are not admissible, or faces of admissible simplices already adjoined). This is done by a pushout
\[
\xymatrix{
\coprod_e (\Lambda_{n-2}^{n-1})^\flat \ar[d]\ar[r] & W^{(n-1)} \ar[d] \\
\coprod_e (\Delta^{n-1})^\flat \ar[r] & \overline{W}^{(n-1)}
}
\]
Indeed, the faces $d_i(d_n(e))$ for $i < n-2$ are faces of admissible simplices of smaller size and are thus contained in $W^{(n-1)}$, whereas the face $d_{n-1}(d_n(e))$ is contained in $\omega^*(\partial^{\mathrm{ext}}(T)^\flat)$ since it `chops off the root'. The face $d_{n-2}(d_n(e))$ on the other hand is \emph{not} a face of an admissible simplex of smaller size; the smallest admissible simplex it is a face of is in fact $e$ itself. Also, it is not a face of an admissible simplex of size $n$ other than $e$; indeed, the face $d_{n-2}(d_n(e))$ in fact uniquely determines the admissible simplex $e$. So, the map $W^{(n-1)} \rightarrowtail \overline{W}^{(n-1)}$ is inner anodyne. Now, we form another pushout
\[
\xymatrix{
\coprod_e (\Lambda_{n-2}^n)^\flat \ar[d]\ar[r] & \overline{W}^{(n-1)} \ar[d] \\
\coprod_e (\Delta^n)^\flat \ar[r] & W^{(n)}
}
\] 
which is established by similar reasoning. We conclude that $W^{(n-1)} \rightarrowtail W^{(n)}$ is inner anodyne, which also concludes the proof. $\Box$
\end{proof}

\begin{proposition}
\label{prop:omega*rootanodyne}
Consider an inclusion of the form
\begin{equation*}
(\Lambda^r[T], \mathcal{E} \cap \Lambda^r[T]) \longrightarrow (T, \mathcal{E})
\end{equation*}
where $T$ is a tree with a root corolla of valence one, $\Lambda^r[T]$ is the horn of $T$ corresponding to that root and $\mathcal{E}$ consists of all degenerate 1-corollas of $T$ together with that root corolla. The functor $\omega^*$ sends this map to a trivial cofibration.
\end{proposition}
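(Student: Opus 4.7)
The proof will parallel those of Propositions \ref{prop:omega*sums} and \ref{prop:omega*inneranodyne}: a filtration argument where new simplices are adjoined to $\omega^*(\Lambda^r[T]^\diamond)$ one at a time by pushouts along $\mathfrak{P}$-anodynes. Write $T = C_1 \star S$, where $C_1$ is the unary root corolla with root edge $r$ and input edge $b$ (equal to the root of $S$). The root face of $T$ in $\mathbf{\Phi}$ is $uS$, obtained by deleting both $v_r$ and $r$; thus $\Lambda^r[T]$ is the union of all other faces of $T$.

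The first step is to understand the effect of the marking. Using the description at the end of Section \ref{sec:omegaQpairs}, a 1-simplex of $\omega^*(T^\diamond)$ given by a simplex $A: \Delta^1 \to N\mathbf{F}_o$ and a map $\omega_!(A) \to T$ is marked precisely when $A$ is inert and every unary vertex of $\omega_!(A)$ is sent either to $v_r$ or to a degenerate identity of $T$. In particular, for any inert partial map $\rho: \langle k \rangle \to \langle 1 \rangle$, a realization of the resulting unary cap of $\omega_!(\rho)$ as the root corolla of $T$ yields a marked 1-simplex of $\omega^*(T^\diamond)$. This is the key source of marked edges we will exploit.

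Call a non-degenerate simplex $e: A \to \omega^*(T^\diamond)$ \emph{$r$-essential} if it does not factor through $\omega^*(\Lambda^r[T]^\diamond)$; equivalently, the forest-set map $\omega_!(A) \to T$ must realize at least one unary vertex of $\omega_!(A)$ as the root corolla of $T$ (sending input to $b$ and output to $r$) in a way that cannot be subsumed by any non-root face of $T$. I will parameterize the $r$-essential simplices by a \emph{size} $s$ (the smallest level of $A$ in whose image the root edge $r$ appears) and a \emph{tail length} $t$ (the number of trailing levels of $A$ that sit over $v_r$ or the subsequent root-drop to $\langle 0 \rangle$). Declare $e$ \emph{admissible} if the edge $A|_{\Delta^{\{n-1,n\}}}$ is the marked inert 1-simplex $\langle 1 \rangle \to \langle 0 \rangle$ and the edge $A|_{\Delta^{\{s-1,s\}}}$ is the marked 1-simplex witnessing the root corolla. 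Every $r$-essential simplex is a face of an admissible one. The plan is then to filter $\omega^*(T^\diamond)$ by successively adjoining admissible simplices in lexicographic order of $(s,t)$: transitions increasing $t$ will be pushouts along inner horn anodynes of type $(C_1)$, while the transition from size $s-1$ to size $s$ will be a pushout along a $\mathfrak{P}$-anodyne of type $(A_0)$, making crucial use of the marking on the 1-simplex witnessing the root corolla.

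The main obstacle, as in the proofs of Propositions \ref{prop:omega*sums} and \ref{prop:omega*inneranodyne}, lies in the combinatorial bookkeeping: one must verify that for each admissible $n$-simplex $e$, every face $d_i e$ either already lies in the current stage of the filtration or in $\omega^*(\Lambda^r[T]^\diamond)$, except for the single face whose adjunction corresponds to the $\mathfrak{P}$-anodyne step under consideration. The difficulty is magnified by the fact that a given forest $\omega_!(A)$ can cap off onto the root corolla of $T$ in many different combinatorial configurations, and one must check that none of these leak out of the filtration. The marking of the root corolla in $\mathcal{E}$ is exactly the input required to turn the critical 1-simplices of $\omega^*(T^\diamond)$ into marked edges; without this marking, only the weaker inner horn anodynes of type $(C_1)$ would be available, and the argument would fail precisely at the step adjoining the admissible simplices of minimal size.
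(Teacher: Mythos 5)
Your overall strategy is the right one, and it parallels the paper's proof: filter $\omega^*(T^\diamond)$ and adjoin the admissible simplices one dimension at a time, using inner horns for the final filling step and a marked anodyne at the critical step, with the marking on the root corolla of $T^\diamond$ supplying the needed marked edge. But the proposal has two problems, one small and one serious.

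The small one: the $(s,t)$-parameterization carried over from Proposition \ref{prop:omega*sums} is misleading here. Once the root edge $r$ appears at some level $s$ of a non-degenerate simplex $A \to \omega^*(T^\diamond)$, the image at that level can contain no other edge of $T$ (no edge of a tree is independent of its root), so $A(s) = \langle 1 \rangle$; and since the morphisms of $\mathbf{F}_o$ are \emph{surjective} partial maps, the only non-degenerate continuation from $\langle 1 \rangle$ is $\langle 1 \rangle \to \langle 0 \rangle$. The tail length is therefore always $1$ or $2$, and your double filtration collapses to a single filtration by the total dimension $n$ of the admissible simplices, with a two-step pushout at each stage: first adjoin $d_n(e)$, then adjoin $e$.

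The serious gap: you identify the trivial cofibration for the size-increasing step as the $\mathfrak{P}$-anodyne of type $(A_0)$. But $(A_0)$ is the inclusion $(\Lambda^2_1)^\sharp \cup_{(\Lambda^2_1)^\flat}(\Delta^2)^\flat \to (\Delta^2)^\sharp$, which only marks the long edge of a $2$-simplex whose other two edges are already marked — it changes no underlying simplicial set and cannot adjoin any new simplices. The map that is actually needed to adjoin the faces $d_n(e)$ is the \emph{right marked horn}
\[
(\Lambda^{n-1}_{n-1})^\diamond \longrightarrow (\Delta^{n-1})^\diamond
\]
over $N\mathbf{F}_o^\natural$, where $\diamond$ indicates that $\Delta^{\{n-2,n-1\}}$ is marked. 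That edge is the $1$-simplex of $\omega^*(T^\diamond)$ witnessing the root corolla; it is marked precisely because the root corolla is marked in $T^\diamond$, and it lies over the degenerate (inert) morphism $\langle 1 \rangle \to \langle 1 \rangle$ of $N\mathbf{F}_o^\natural$. Consequently in any fibrant $\mathcal{O}^\natural$ the corresponding edge is an equivalence, so a right horn whose last edge is an equivalence can always be filled; this makes the right marked horn a trivial cofibration in $\mathbf{POp}_o$, even though it is not one of the listed $\mathfrak{P}$-anodyne generators. Once $d_n(e)$ has been adjoined in this way, $e$ itself is adjoined along the inner horn $(\Lambda^n_{n-1})^\flat \to (\Delta^n)^\flat$, as you correctly anticipate. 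As written, your argument stalls exactly at the step where you flagged the marking as crucial, because $(A_0)$ cannot do what you ask of it.
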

\begin{proof}
The proof is quite similar to that of the previous proposition. In what should by now be familiar notation, we will use the abbreviation
\begin{equation*}
(\Lambda^r[T])^\diamond \longrightarrow T^\diamond
\end{equation*}
for the map under consideration. Consider a non-degenerate $n$-simplex $e: A \rightarrow \omega^*(T^\diamond)$ that is not already contained in $\omega^*((\Lambda^r[T])^\diamond)$. For the purposes of this proof, we will say that $e$ is \emph{admissible (of size $n$)} if $A$ is of the form
\[
\xymatrix{
\langle a(0) \rangle \ar[r] & \cdots \ar[r] & \langle 1 \rangle \ar[r]^\sharp & \langle 1 \rangle \ar[r]^-\sharp & \langle 0 \rangle
}
\]
and furthermore the following conditions are satisfied:
\begin{itemize}
\item[-] The final edge $e|_{\Delta^{\{n-1,n\}}}$ lying over $\langle 1 \rangle \rightarrow \langle 0 \rangle$ is marked, as indicated.
\item[-] The edge $A|_{\Delta^{\{n-2,n-1\}}}: \langle 1 \rangle \rightarrow \langle 1 \rangle$ is marked and is sent to the root corolla $C_r$ of $T$.  
\end{itemize}
In fact, the second condition is automatic by the requirement that $e$ be non-degenerate and doesn't factor through $\omega^*((\Lambda^r[T])^\diamond)$, but it is worth emphasizing. Note that any simplex of $\omega^*(T^\diamond)$ is a face of an admissible simplex. Now, similar to the last proof, let us write $W^{(n)}$ for the union of $\omega^*((\Lambda^r[T])^\diamond)$ with all admissible simplices of size at most $n$. We obtain a filtration
\begin{equation*}
\omega^*((\Lambda^r[T])^\diamond) = W^{(1)} \subseteq W^{(2)} \subseteq \cdots \subseteq \bigcup_n W^{(n)} = \omega^*(T^\diamond)
\end{equation*}
Consider an inclusion $W^{(n-1)} \subseteq W^{(n)}$ given by adjoining the collection of admissible $n$-simplices. Let us (again) first adjoin the $n$'th faces $d_n(e)$ of all these simplices (these are not contained in $W^{(n-1)}$ since they're not admissible, or faces of admissible simplices already adjoined), which is achieved by forming a pushout
\[
\xymatrix{
\coprod_e (\Lambda_{n-1}^{n-1})^\diamond \ar[r]\ar[d] & W^{(n-1)} \ar[d] \\
\coprod_e (\Delta^{n-1})^\diamond \ar[r] & \overline{W}^{(n-1)}
}
\]
where the superscript $\diamond$ now indicates that the edge $\Delta^{\{n-2,n-1\}}$ is marked. This square is indeed a pushout: the faces $d_i(d_n(e))$ for $i < n-1$ are faces of admissible simplices of smaller size and hence contained in $W^{(n-1)}$, whereas the face $d_{n-1}(d_n(e))$ is \emph{not} a face of an admissible simplex of smaller size, or a face of an admissible simplex of size $n$ other than $e$. We deduce that the map $W^{(n-1)} \rightarrowtail \overline{W}^{(n-1)}$ is $\mathfrak{P}$-anodyne. To finish, we form the pushout
\[
\xymatrix{
\coprod_e (\Lambda_{n-1}^n)^\flat \ar[r]\ar[d] & \overline{W}^{(n-1)} \ar[d] \\
\coprod_e (\Delta^n)^\flat \ar[r] & W^{(n)}
}
\]
from which we see that $\overline{W}^{(n-1)} \subseteq W^{(n)}$ is inner anodyne and thus that $W^{(n-1)} \rightarrowtail W^{(n)}$ is a trivial cofibration. $\Box$
\end{proof}

Combining Propositions \ref{prop:omega*easycases}, \ref{prop:omega*sums}, \ref{prop:omega*inneranodyne} and \ref{prop:omega*rootanodyne}, we arrive at the following result:

\begin{proposition}
\label{prop:omega*leftQ}
The adjoint pair
\[
\xymatrix@C=40pt{
\bar\omega^*: \mathbf{fSets}_o^+ \ar@<.5ex>[r] & \langle 0 \rangle/\mathbf{POp}_o: \bar\omega_* \ar@<.5ex>[l]
}
\]
is a Quillen pair.
\end{proposition}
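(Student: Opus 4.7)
The plan is to show that $\bar\omega^*$ is left Quillen by verifying the two conditions: it preserves cofibrations, and it preserves trivial cofibrations. The first is already handled in Lemma \ref{lem:omega*prescof}(ii), so essentially everything reduces to preservation of trivial cofibrations.

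First I would reduce checking preservation of trivial cofibrations to checking a small generating set. By Lemma \ref{lem:markedtrivcof}, the class of trivial cofibrations in $\mathbf{fSets}_o^+$ is the smallest hypersaturated class (closed under pushouts, retracts, transfinite composition, coproducts, and two-out-of-three for cofibrations) containing the generators (a)-(e) listed there. Since $\bar\omega^*$ has a right adjoint (Lemma \ref{lem:omega*prescof}(i)), it preserves all colimits; and the trivial cofibrations in $\langle 0\rangle/\mathbf{POp}_o$ are automatically closed under pushout, transfinite composition, retract, and the cancellation property inherited from the model structure. So it suffices to check that $\bar\omega^*$ sends each generator of types (a)-(e) to a trivial cofibration. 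By Lemma \ref{lem:Qslices}(iii) applied to the factorization $\bar\omega^* \circ \langle 0 \rangle_! \simeq \omega^*$ (up to the identification built into $\bar\omega^*$), this is equivalent to checking that $\omega^*$ sends these five classes of maps to trivial cofibrations in $\mathbf{POp}_o$.

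Next I would assemble the four preceding combinatorial propositions to cover all five cases. Cases (c) and (d) (the map $(\Lambda_1^2)^\sharp\cup_{(\Lambda_1^2)^\flat}(\Delta^2)^\flat\to(\Delta^2)^\sharp$ and the inclusions $K^\flat\to K^\sharp$ for Kan complexes $K$) are handled by Proposition \ref{prop:omega*easycases}, via the comparison with $j_!$ provided by Lemma \ref{lem:j!toomega*i!}. Case (e) (the sum maps $(T_1\amalg\cdots\amalg T_k)^\flat\to(T_1\oplus\cdots\oplus T_k)^\flat$) is handled by Proposition \ref{prop:omega*sums}, which in turn rests on the skeletal reduction of Lemma \ref{lem:omega*sums1} and the detailed filtration argument of Lemma \ref{lem:omega*sums2}. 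Case (a) (Segal core inclusions $\mathrm{fSc}(T)^\flat \to T^\flat$) is Proposition \ref{prop:omega*inneranodyne}. Case (b) (root anodynes) is Proposition \ref{prop:omega*rootanodyne}.

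The routine part is the bookkeeping in the first paragraph; no real obstacle lies there, since $\bar\omega^*$ preserves colimits and saturation considerations are automatic. The genuinely hard part has already been absorbed into the four preceding propositions, in particular Propositions \ref{prop:omega*sums}, \ref{prop:omega*inneranodyne}, and \ref{prop:omega*rootanodyne}, each of which requires an explicit filtration of $\omega^*$ of a representable (or boundary inclusion) by elementary simplices — organized by size and tail length in the disconnected case, by successively adjoining admissible simplices $\langle a(0)\rangle\to\cdots\to\langle p\rangle\to\langle 1\rangle\to\langle 0\rangle$ for Segal cores, and by the analogous admissible simplices ending $\langle 1\rangle\xrightarrow{\sharp}\langle 1\rangle\xrightarrow{\sharp}\langle 0\rangle$ for root anodynes — so that each step in the filtration is a pushout along a $\mathfrak{P}$-anodyne of the types classified in Definition \ref{def:Panodynes} (mostly of types ($C_1$), ($C_2$), ($B_1$), and the generalized forms in Remark \ref{rmk:sumaxiom}). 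Given those propositions, the conclusion of Proposition \ref{prop:omega*leftQ} then follows by direct combination.
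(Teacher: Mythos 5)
Your proposal is correct and follows the paper's proof exactly: preservation of cofibrations is Lemma \ref{lem:omega*prescof}(ii), and preservation of trivial cofibrations reduces via the generating set of Lemma \ref{lem:markedtrivcof}(a)--(e) to checking that $\omega^*$ sends those generators to trivial cofibrations in $\mathbf{POp}_o$, which is precisely the content of Propositions \ref{prop:omega*easycases}, \ref{prop:omega*sums}, \ref{prop:omega*inneranodyne} and \ref{prop:omega*rootanodyne}. The only minor quibble is your citation of Lemma \ref{lem:Qslices}(iii) for the reduction from $\bar\omega^*$ to $\omega^*$: that lemma concerns left adjoints defined \emph{on} a slice category $B/\mathcal{E}$, whereas $\bar\omega^*$ has its \emph{codomain} in the slice $\langle 0\rangle/\mathbf{POp}_o$; the reduction is nonetheless immediate because cofibrations and weak equivalences in the slice are detected by the forgetful functor $\langle 0 \rangle^*$ and $\langle 0\rangle^*\bar\omega^* = \omega^*$, which is what the paper implicitly uses.
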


\section{Some additional properties of dendrification}

In the first section of this final chapter, we deduce from the existence of the Quillen equivalences of the previous chapter that the perhaps more evident nerve functor from non-unital simplicial operads to non-unital preoperads induces an equivalence on the level of homotopy categories. In the second section we prove that the Quillen equivalences between the model categories of open dendroidal sets and non-unital preoperads are compatible with tensor products. In the third section we investigate the associativity properties of the tensor product of dendroidal sets more closely. We will conclude that the Quillen equivalences of the previous chapter induce an equivalence of symmetric monoidal categories between the homotopy categories $\mathrm{Ho}(\mathbf{dSets}_o)$ and $\mathrm{Ho}(\mathbf{POp}_o)$. Sections \ref{sec:tensorproduct} and \ref{subsec:unbiased} are independent of the first section.

\subsection{Simplicial operads and $\infty$-preoperads}
\label{sec:strictification}

As we have seen, there are Quillen equivalences (left adjoints on top) as follows:
\[
\xymatrix@C=35pt{
\mathbf{POp}_o \ar@<.5ex>[r]^{u^*\omega_!} & \mathbf{dSets}_o^+ \ar@<.5ex>[l]^{\omega^*u_*}\ar@<-.5ex>[r]_{a} & \mathbf{dSets}_o \ar@<-.5ex>[l]_{(-)^\flat} \ar@<.5ex>[r]^{W_!} & \mathbf{sOp}_o \ar@<.5ex>[l]^{W^*}
}
\]
relating the category of $\infty$-preoperads to the category of simplicial operads. However, as explained in Section \ref{sec:POp}, there is also a direct functor
\begin{equation*}
\nu: (\mathbf{sOp}_o)_f \longrightarrow \mathbf{POp}_o: \mathbf{P} \longmapsto \bigl(N\mathrm{cat}(\mathbf{P})^\natural \rightarrow N\mathbf{F}_o^\natural\bigr)
\end{equation*}
where the subscript $f$ indicates the full subcategory of $\mathbf{sOp}_o$ spanned by the fibrant simplicial operads. The goal of this section is to compare these two functors
\[
\xymatrix@C=75pt{
(\mathbf{sOp}_o)_f \ar@<.5ex>[r]^{\nu} \ar@<-.5ex>[r]_{\mathbf{R}\omega^*u_*\circ\mathbf{L}(-)^\flat\circ\mathbf{R}W^*} & \mathbf{POp}_o
}
\]
and in fact show that they are weakly equivalent. In particular, this allows us to conclude that the functor $\nu$ induces an equivalence on the level of homotopy categories. \par 
Mostly, we just have to unravel the definitions. First of all, let us take a closer look at the lower of these two functors. Assume that $\mathbf{P}$ is a fibrant simplicial operad. As was noted in Remark \ref{rmk:Sigmacofibrant}, the dendroidal set $W^*\mathbf{P}$ is cofibrant if we assume that the operad $\mathbf{P}$ is $\Sigma$-cofibrant (in particular, if it is cofibrant), so that in this case we may take
\begin{equation*}
\mathbf{L}(-)^\flat\circ\mathbf{R}W^*(\mathbf{P}) = W^*(\mathbf{P})^\flat
\end{equation*}  
To compute the effect of $\mathbf{R}\omega^*u_*$ we should fibrantly replace this marked dendroidal set. But that is easy: indeed, the map
\begin{equation*}
W^*(\mathbf{P})^\flat \longrightarrow W^*(\mathbf{P})^\natural
\end{equation*}
is a weak equivalence. (This is clear directly, but one could also identify it as the derived counit of the $((-)^\flat, a)$-adjunction.) Therefore, we may take
\begin{equation*}
\mathbf{R}\omega^*u_*\circ\mathbf{L}(-)^\flat\circ\mathbf{R}W^*(\mathbf{P}) = \omega^*u_*(W^*(\mathbf{P})^\natural).
\end{equation*} 

Our claim can then be formulated as follows:

\begin{proposition}
For $\mathbf{P}$ a fibrant and $\Sigma$-cofibrant non-unital simplicial operad, there is a natural weak equivalence
\begin{equation*}
\alpha: \omega^*u_*(W^*(\mathbf{P})^\natural) \longrightarrow \nu(\mathbf{P}).
\end{equation*}
\end{proposition}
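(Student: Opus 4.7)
The plan is to construct $\alpha$ using the adjunctions at hand together with the classical comparison between the Boardman--Vogt $W$-construction and the Cordier construction $\mathfrak{C}$, left adjoint to the homotopy coherent nerve $N$. A simplex $\Delta^n \to \omega^*u_*W^*(\mathbf{P})^\natural$ lying over a given $A: \Delta^n \to N\mathbf{F}_o$ corresponds, via the $(u^*, u_*)$ and $(W_!, W^*)$ adjunctions, to a map of simplicial operads $\hat f: W_!u^*\omega_!(A) \to \mathbf{P}$. On the other hand, an $n$-simplex of $\nu(\mathbf{P}) = N\mathrm{cat}(\mathbf{P})^\natural$ over $A$ is a simplicial functor $\mathfrak{C}[\Delta^n] \to \mathrm{cat}(\mathbf{P})$ lifting the corresponding $[n] \to \mathbf{F}_o$. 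I would therefore define $\alpha$ by producing, naturally in $A$, a simplicial functor
\[
\beta_A: \mathfrak{C}[\Delta^n] \longrightarrow \mathrm{cat}(W_!u^*\omega_!(A))
\]
over $A$, and setting $\alpha(\hat f) = \mathrm{cat}(\hat f) \circ \beta_A$.

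The functor $\beta_A$ is built from the known identification $\mathfrak{C}[\Delta^n] \simeq W(\Delta^1, [n])$. Concretely, set $\beta_A(i) = (c_a)_{a \in A(i)}$, where $c_a$ denotes the edge $a$ viewed as a colour of $W_!u^*\omega_!(A)$. For $i \leq j$ and $a \in A(j)$, write $T_a^{[i,j]}$ for the subtree of $\omega(A)$ with root $a$ and leaves in the preimage of $a$ under $A(i) \to A(j)$. The morphism space $N(P_{i,j})$ of $\mathfrak{C}[\Delta^n]$, where $P_{i,j}$ is the poset of subsets $\{i,j\} \subseteq S \subseteq [i,j]$, maps into the operation space $W_!u^*\omega_!(A)((c_b)_{b \in \phi_{i,j}^{-1}(a)}; c_a) \supseteq (\Delta^1)^{I(T_a^{[i,j]})}$ via the nerve of the monotone map sending a subset $S$ to the point whose coordinate at an inner edge at level $k$ equals $1$ iff $k \in S$. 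Taking the product over $a \in A(j)$ gives $\beta_A$ on the morphism space from $i$ to $j$. Checking that $\beta_A$ is compatible with composition in $\mathfrak{C}[\Delta^n]$, is natural in $A$, and that the resulting $\alpha$ preserves markings (which correspond on both sides to equivalences) is routine from the definitions.

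To show $\alpha$ is a weak equivalence, observe that both source and target are fibrant in $\mathbf{POp}_o$. The target is fibrant by the proposition in Section \ref{sec:POp}; the source is fibrant since $\omega^*$ is right Quillen by Proposition \ref{prop:omega!leftQ}, $u_*$ is right Quillen by Theorem \ref{thm:equivfSetsdSets}, and $W^*(\mathbf{P})^\natural$ is fibrant in $\mathbf{dSets}_o^+$. By the defining properties of $\infty$-operads, a map between such fibrant objects is a weak equivalence iff it induces equivalences on fibers over $\langle 1 \rangle$ and on mapping spaces over active morphisms $\langle k \rangle \to \langle 1 \rangle$. For $A$ constant at $\langle 1 \rangle$, $\omega(A)$ is the linear tree $i(\Delta^n)$, so the fiber of the source at $\langle 1 \rangle$ in degree $n$ is $\mathrm{Hom}(\Delta^n, W^*(\mathbf{P})) = N(\iota^*\mathbf{P})_n$; the fiber of the target is the same simplicial set by construction. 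A similar direct calculation for the active map $\phi: \langle k \rangle \to \langle 1 \rangle$, where $\omega(\phi) = C_k$, shows that the mapping space over $\phi$ between tuples $(c_1, \ldots, c_k)$ and $(d)$ is $\mathbf{P}(c_1, \ldots, c_k; d)$ on both sides and that $\alpha$ induces the identity.

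The main obstacle will be the careful construction of $\beta_A$, in particular verifying its compatibility with composition in $\mathfrak{C}[\Delta^n]$ and with face and degeneracy maps acting on $A$ through the forest functor $\omega$. These reduce to unwinding the identification $\mathfrak{C}[\Delta^n] \simeq W(\Delta^1, [n])$ applied to each subtree $T_a^{[i,j]}$; once $\beta_A$ is in place, the weak equivalence assertion follows cleanly from the pointwise agreement of fibers and mapping spaces established above.
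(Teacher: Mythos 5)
Your proposal is correct and follows essentially the same route as the paper. The functor $\beta_A$ you construct is precisely the paper's $\phi: \mathfrak{C}(\Delta^n) \to \mathrm{cat}(W_!u^*\omega_!(A))$, built the same way from the identification of the mapping cubes of $\mathfrak{C}[\Delta^n]$ with length assignments on inner edges of the underlying forest of $\omega_!(A)$, and the weak-equivalence check by computing fibers over $\langle 1 \rangle$ and mapping spaces over active maps $\langle k \rangle \to \langle 1 \rangle$ mirrors the paper's argument. The only place where you are slightly looser than the paper is in the mapping-space step: the paper pins down $\mathrm{Map}^L(x,y)_f$ on both sides as maps out of the pushout $W_!(T) \amalg_{W_!(\partial_v T)} W_!(\eta)$ and exhibits $\alpha^L_{x,y}$ as an \emph{isomorphism}, whereas you assert the mapping space ``is'' $\mathbf{P}(c_1,\ldots,c_k;d)$, which is true only up to a concrete but nontrivial identification; making that identification explicit is exactly what the pushout model accomplishes.
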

\begin{proof}
Let us construct the map $\alpha$. First we define a map
\begin{equation*}
\omega^*u_*(W^*(\mathbf{P})) \longrightarrow \bigl(N\mathrm{cat}(\mathbf{P}) \rightarrow N\mathbf{F}_o\bigr)
\end{equation*}
of underlying simplicial sets, where $N$ as usual denotes the homotopy-coherent nerve. A simplex
\[
\xymatrix{
\Delta^n \ar[dr]_A \ar[rr]^-\zeta & & \omega^*u_*(W^*(\mathbf{P})) \ar[dl] \\
& N\mathbf{F}_o & 
}
\]
corresponds by adjunction to a map 
\begin{equation*}
W_!(u^*\omega_!(A)) \longrightarrow \mathbf{P}
\end{equation*}
which in turn gives rise to a functor
\begin{equation}
\label{eq:simplicialoperads}
\mathrm{cat}(W_!(u^*\omega_!(A))) \longrightarrow \mathrm{cat}(\mathbf{P}).
\end{equation} 
Note that we may write
\begin{equation*}
W_!(u^*\omega_!(A)) = \coprod_{p \in \pi_0(A)} W_!(u^*\omega_!(A_p))
\end{equation*} 
where the coproduct is over the connected components of $A$, i.e. the trees constituting the forest $\omega_!(A)$. Also, recall that $W_!(u^*\omega_!(A_p))$ is the Boardman-Vogt resolution of the operad $\Omega(u^*\omega_!(A_p))$, the free operad in $\mathbf{Sets}$ generated by the tree $u^*\omega_!(A_p)$. For the rest of this proof, let us use the abbreviation $\hat A = u^*\omega_!(A)$ to avoid awkward expressions. A simplex
\[
\xymatrix{
\Delta^n \ar[dr]_A \ar[rr]^-\xi & & N\mathrm{cat}(\mathbf{P}) \ar[dl] \\
& N\mathbf{F}_o & 
}
\]
is the same thing as a map
\[
\xymatrix{
\mathfrak{C}(\Delta^n) \ar[dr]\ar[rr] & & \mathrm{cat}(\mathbf{P}) \ar[dl] \\
& \mathbf{F}_o & 
}
\]
of simplicial categories over $\mathbf{F}_o$. Of course, the functor $\mathfrak{C}$ is just the restriction of the functor $W_!$ to simplicial sets, but we distinguish in notation to avoid possible confusion in what follows. There is a functor between simplicial categories over $\mathbf{F}_o$
\[
\xymatrix{
\mathfrak{C}(\Delta^n) \ar[rr]^-{\phi}\ar[dr] & & \mathrm{cat}(W_!\hat A) \ar[dl] \\
& \mathbf{F}_o &
}
\]
which can be described as follows:
\begin{itemize}
\item[-] On objects, $\phi(i) = A(i)$.
\item[-] Given the description of the Boardman-Vogt resolution in terms of labelled trees with `lengths' assigned to inner edges of trees, there is an evident map
\begin{equation*}
\phi_{i,j}: \mathfrak{C}(\Delta^n)(i,j) \longrightarrow \mathrm{cat}(W_!\hat A)(A(i),A(j)).
\end{equation*}
Indeed, the simplicial set $\mathfrak{C}(\Delta^n)(i,j)$ is a cube whose vertices can be identified with maps $v: \{i+1, \ldots, j-1\} \rightarrow \{0,1\}$, assigning lengths of either 0 or 1 to the inner edges of the simplex $\Delta^{\{i, \ldots, j\}}$. To specify the corresponding vertex of $\mathrm{cat}(W_!\hat A)(A(i),A(j))$, we should specify for each inner edge of the forest $\omega_!(A|_{\Delta^{\{i, \ldots, j\}}})$ a length of either 0 or 1. Each such inner edge $e$ is an element of some $A(k)$ for $i < k < j$ and we simply assign $v(k)$. The map $\phi_{i,j}$ is completely determined by this description.
\end{itemize} 
Now precomposing the functor of (\ref{eq:simplicialoperads}) with the functor $\phi$ yields a map
\begin{equation*}
\omega^*u_*(W^*(\mathbf{P}))(A) \longrightarrow N\mathrm{cat}(\mathbf{P})(A).
\end{equation*}
Observe that this map is natural in $A$. Also, it respects markings: a marked 1-simplex of $\omega^*u_*(W^*(\mathbf{P}))$ lying over an inert morphism $f: \langle m \rangle \rightarrow \langle n \rangle$ of $\mathbf{F}_o$ corresponds to a collection of equivalences $\{f_i\}_{i \in \langle n \rangle}$ in the operad $\mathbf{P}$ and a collection of colours $\{c_i\}_{i \in U_f}$, one for each $i$ at which $f$ is undefined. Clearly, this also corresponds to an inert 1-simplex of $N\mathrm{cat}(\mathbf{P})(A)$ and hence to a marked 1-simplex of $\nu(\mathbf{P})$. \par 
It remains to show that the map
\begin{equation*}
\alpha: \omega^*u_*(W^*(\mathbf{P})^\natural) \longrightarrow \nu(\mathbf{P})
\end{equation*}
is a weak equivalence. Since it is a map between fibrant objects of $\mathbf{POp}_o$, it suffices to check the following:
\begin{itemize}
\item[(i)] The map $\alpha$ is essentially surjective.
\item[(ii)] For each active morphism $f: \langle k \rangle \rightarrow \langle 1 \rangle$ the induced map
\begin{equation*}
\alpha_{x,y}: \mathrm{Map}(x,y)_f \longrightarrow \mathrm{Map}(\alpha(x),\alpha(y))_f
\end{equation*}
is a homotopy equivalence, for any $x$ in the fiber over $\langle k \rangle$ and $y$ in the fiber over $\langle 1 \rangle$. Here, the $\mathrm{Map}$ on the left-hand side refers to a mapping space computed in $\omega^*u_*W^*(\mathbf{P})^\natural$, the right-hand side to a mapping space in $\nu(\mathbf{P})$. Equivalently, we may also check this for the map
\begin{equation*}
\alpha^L_{x,y}: \mathrm{Map}^L(x,y)_f \longrightarrow \mathrm{Map}^L(\alpha(x),\alpha(y))_f
\end{equation*}
Recall (cf. \cite{htt}) that for an $\infty$-category $\mathcal{C}$ with vertices $x$ and $y$, these mapping objects are defined as follows:
\begin{equation*}
\mathrm{Hom}(\Delta^n,\mathrm{Map}^L(x,y)) = \{x\} \times_{\mathrm{Hom}(\{0\}, \mathcal{C})} \mathrm{Hom}(\Delta^{n+1} \amalg_{\Delta^{\{1, \ldots, n+1\}}} \Delta^0, \mathcal{C}) \times_{\mathrm{Hom}(\{1\}, \mathcal{C})} \{y\}
\end{equation*}
\end{itemize}

For (i), we note that $\alpha$ induces an isomorphism on vertices and hence is in particular essentially surjective. It remains to verify (ii). 
But the map $\alpha^L_{x,y}$ above is in fact an isomorphism. Indeed, let $T$ be the following tree:
\[
\begin{tikzpicture} 
[level distance=6mm, 
every node/.style={fill, circle, minimum size=.1cm, inner sep=0pt}, 
level 1/.style={sibling distance=12mm}, 
level 2/.style={sibling distance=12mm}, 
level 3/.style={sibling distance=12mm}]
\node[style={color=white}] {} [grow'=up] 
child {node (1){} 
		child {node{} 
			child
			child
		}
};
\node[style={color=white}, below=3cm of zero] {} [grow'=up] 
child {node (n+1){}
		child{} 
};
\draw[dashed] ($(1) + (0cm,.6cm)$) --($(n+1) + (0cm,.6cm) $);

\tikzstyle{every node}=[]

\node at ($(1) + (0cm, 1.8cm)$) {$k$ leaves};
\node at ($(1) + (0cm, 1.5cm)$) {$\overbrace{\quad\quad\quad\quad}$};
\node at ($(1) + (0cm, 1cm)$) {$\cdots$};
\node at ($(1) + (.2cm,.3cm)$) {$1$};
\node at ($(1) + (-.2cm,.6cm)$) {$v$};
\node at ($(1) + (.2cm,-.3cm)$) {$2$};
\node at ($(n+1) + (.5cm,-.3cm)$) {$n+1$};

\end{tikzpicture} 
\]

Then the $n$-simplices of $\mathrm{Map}^L(x,y)_f$ (resp. $\mathrm{Map}^L(\alpha(x),\alpha(y))_f$) computed in $\omega^*u_*(W^*(\mathbf{P})^\natural)$ (resp. $\nu(\mathbf{P})$) canonically correspond to maps
\begin{equation*}
W_!(T) \amalg_{W_!(\partial_v T)} W_!(\eta) \longrightarrow \mathbf{P}
\end{equation*}
sending the leaves of $T$ to $x$ and the root of $T$ to $y$. The map $\alpha$ is compatible with these identifications. This concludes the proof. $\Box$
\end{proof}

\subsection{Compatibility with tensor products}
\label{sec:tensorproduct}

Both the category $\mathbf{POp}_o$ of non-unital $\infty$-preoperads and the category $\mathbf{dSets}_o$ of open dendroidal sets carry a tensor product. Our goal in this section is to compare these two structures. Let us begin with a brief review of the relevant definitions.

\begin{definition}
There is a functor $\wedge: \mathbf{F} \times \mathbf{F} \longrightarrow \mathbf{F}$ which can be described as follows:
\begin{itemize}
\item[(i)] On objects, we have $\langle m \rangle \wedge \langle n \rangle = \langle mn \rangle$.
\item[(ii)] For morphisms $f: \langle m \rangle \rightarrow \langle m' \rangle$ and $g: \langle n \rangle \rightarrow \langle n' \rangle$, we have
\begin{equation*}
(f \wedge g)((k-1)n + l) = (f(k)-1)n' + g(l)
\end{equation*}
where $1 \leq k \leq m$ and $1 \leq l \leq n$.
\end{itemize}
In other words, the operation $\wedge$ is given by identifying $\langle m \rangle \times \langle n \rangle$ with $\langle mn \rangle$ via the lexicographical ordering.
\end{definition}

The operation $\wedge$ is strictly associative, but manifestly not symmetric. We can use it to define a monoidal structure on $\mathbf{POp}$ as follows:

\begin{definition}
For objects $X, Y \in \mathbf{POp}$, their \emph{tensor product} $X \odot Y$ is the composite
\[
\xymatrix{
X \times Y \ar[r] & N\mathbf{F}^\natural \times N\mathbf{F}^\natural \ar[r]^-{\wedge} & N\mathbf{F}^\natural
}
\]
\end{definition}

Observe that the operation $\odot$ restricts to a monoidal structure on the category $\mathbf{POp}_0$ of non-unital preoperads. 



We have already used the tensor products on $\mathbf{dSets}_o$ and $\mathbf{fSets}_o$ several times in this paper. Recall that for trees $S$ and $T$, their tensor product $S \otimes T$ can be written as a colimit over the \emph{shuffles} of the trees $S$ and $T$ (cf. \cite{moerdijkweiss}). The tensor product on $\mathbf{dSets}_o$ is completely determined by this description and the fact that it preserves colimits in each variable separately. Similarly, the tensor product on $\mathbf{fSets}_o$ is determined by the formula $u_!S \otimes u_!T = u_!(S \otimes T)$, the fact that it distributes over sums and the fact that it preserves colimits in each variable separately (cf. Section \ref{sec:fsetsproperties}). These tensor products also induce tensor products on the categories $\mathbf{dSets}_o^+$ and $\mathbf{fSets}_o^+$ of open marked dendroidal and marked forest sets respectively, as explained in Chapter \ref{chap:markedfsets}. Recall that all the left Quillen equivalences in the diagram
\[
\xymatrix{
\mathbf{dSets}_o \ar[d]_{(-)^\flat} & \mathbf{fSets}_o \ar[l]_{u^*} \ar[d]_{(-)^\flat} \\
\mathbf{dSets}_o^+ & \mathbf{fSets}_o^+ \ar[l]_{u^*}
}
\]
are compatible with tensor products. The key ingredient for our comparison results on tensor products is the following:

\begin{theorem}
\label{thm:monoidal}
\begin{itemize}
\item[(i)] For $X$ and $Y$ objects in $\mathbf{POp}_o$ there is a map
\begin{equation*}
\theta_{X,Y}: \omega_!(X \odot Y) \longrightarrow \omega_!(X) \otimes \omega_!(Y)
\end{equation*}
which is natural in $X$ and $Y$.
\item[(ii)] The natural transformation $\theta$ is a weak equivalence. 
\end{itemize}
\end{theorem}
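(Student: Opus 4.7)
The plan is to construct $\theta$ on representable simplices via a direct combinatorial description and extend by colimit preservation, then show $\theta$ is a weak equivalence by a cell induction reducing via the cube lemma to a small collection of elementary simplices.

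For part (i), I first observe that both $(X,Y) \mapsto \omega_!(X \odot Y)$ and $(X,Y) \mapsto \omega_!(X) \otimes \omega_!(Y)$ preserve colimits in each variable separately. This holds because $\omega_!$ is a left adjoint (Proposition \ref{prop:omega!leftQ}); because $\odot$ on $\mathbf{POp}_o$ has underlying simplicial set the Cartesian product, whose preservation of colimits in each variable descends to the slice over $N\mathbf{F}_o^\natural$; and because $\otimes$ on $\mathbf{fSets}_o^+$ preserves colimits in each variable by Proposition \ref{prop:fsetsproperties}. Hence it suffices to construct $\theta_{A,B}$ naturally for pairs of simplices $A: (\Delta^m)^\flat \to N\mathbf{F}_o^\natural$ and $B: (\Delta^n)^\flat \to N\mathbf{F}_o^\natural$ together with their sharp-marked one-simplex variants. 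For such $A$ and $B$, the underlying simplicial set of $A \odot B$ is $\Delta^m \times \Delta^n$, which decomposes as the union of its $(m+n)$-simplices indexed by monotone lattice paths $\sigma$ from $(0,0)$ to $(m,n)$. Each such $\sigma$ produces a forest $\omega((A \odot B) \circ \sigma)$ whose set of edges is contained in $E(\omega(A)) \times E(\omega(B))$ under the canonical identification $\langle p \rangle \wedge \langle q \rangle \simeq \langle p \rangle \times \langle q \rangle$, and this forest admits a canonical embedding into $\omega(A) \otimes \omega(B)$ as a shuffle. Patching over all $\sigma$ and over face inclusions between such shuffles produces $\theta_{A,B}$, with naturality immediate from the construction.

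For part (ii), I begin by noting that both $\omega_!(X \odot Y)$ and $\omega_!(X) \otimes \omega_!(Y)$ are cofibrant objects of $\mathbf{fSets}_o^+$: the former because $\omega_!$ preserves cofibrations (Lemma \ref{lem:omega!prescof}) and every object of $\mathbf{POp}_o$ is cofibrant, the latter because the tensor product of normal monomorphisms between open forest sets is a normal monomorphism by Proposition \ref{prop:newnormalmonopoprod}. To show $\theta_{X,Y}$ is a weak equivalence I intend to run a two-variable cell induction over the generators of the cofibrations and trivial cofibrations of $\mathbf{POp}_o$ (recalling Proposition \ref{prop:Panodynestrivcof}). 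The inductive step uses the cube lemma, which applies because $\mathbf{fSets}_o^+$ is left proper (Theorem \ref{thm:markedfSets}), together with the fact that tensoring with a cofibrant object preserves weak equivalences between cofibrant objects; the latter follows from a pushout-product axiom for $\otimes$, which is proved along the same lines as the homotopical enrichment of Theorem \ref{thm:markedfSets}(iii), using Proposition \ref{prop:newnormalmonopoprod}. This induction reduces the verification to the case where $A$ and $B$ are elementary simplices among $\Delta^0$, $(\Delta^1)^\flat$, and $(\Delta^1)^\sharp$ over $N\mathbf{F}_o^\natural$.

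The principal obstacle lies in this base case. The combinatorics is subtle because the operation $\wedge: \mathbf{F} \times \mathbf{F} \to \mathbf{F}$ is strictly non-symmetric and interacts non-trivially with partial morphisms that drop colours: in these cases $\omega((A \odot B) \circ \sigma)$ is not a single tree but a disconnected forest, and verifying that the gluing across paths $\sigma$ recovers exactly the shuffle decomposition of $\omega(A) \otimes \omega(B)$ requires a careful enumeration of shuffles. In the fully active case, the map $\theta_{A,B}$ is in fact an isomorphism, since the lattice paths of $\Delta^m \times \Delta^n$ correspond bijectively to the shuffles of the relevant tensor product of corollas. In the inert or marked cases, one expects $\theta_{A,B}$ to be a trivial cofibration rather than an isomorphism, and its recognition as such should proceed by exhibiting it as a composition of pushouts of inner horn, leaf, and root anodyne inclusions, in the spirit of the shuffle arguments deployed in Sections \ref{sec:omega!leftQ} and \ref{sec:omega*leftQ}.
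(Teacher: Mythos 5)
Your construction of $\theta$ in part (i) is the same as the paper's: the layered shuffles of $\omega_!(A)$ and $\omega_!(B)$ (in your terms, the forests $\omega((A\odot B)\circ\sigma)$ ranging over lattice paths $\sigma$ in $\Delta^m\times\Delta^n$) form a subunion of all the shuffles of $\omega_!(A)\otimes\omega_!(B)$, and the inclusion is natural. That part is fine.

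In part (ii), there is a real gap in the base case, and the difficulty you flag as ``the principal obstacle'' is exactly where your argument breaks. After you reduce to $A$ and $B$ of dimension $\leq 1$, you assert that in the fully active case $\theta_{A,B}$ is an isomorphism. This is false whenever $A$ or $B$ is a \emph{disconnected} 1-simplex, i.e., when $\omega_!(A)$ or $\omega_!(B)$ has more than one component. For instance, take $A\colon\langle 4\rangle\to\langle 2\rangle$ and $B\colon\langle 2\rangle\to\langle 1\rangle$ both active, so $\omega_!(A)=C_2\oplus C_2$ and $\omega_!(B)=C_2$. Then $\omega_!(A)\otimes\omega_!(B)=(C_2\otimes C_2)\oplus(C_2\otimes C_2)$ is a union of four shuffle-forests, obtained by choosing one of the two shuffles of $C_2\otimes C_2$ independently in each component. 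But $\Delta^1\times\Delta^1$ has only two nondegenerate $2$-simplices, so $\omega_!(A\odot B)$ is a union of only two layered shuffle-forests, those in which both components use the \emph{same} shuffle type. Thus $\theta_{A,B}$ is a proper monomorphism, not an isomorphism, and the base case is not as simple as you claim; as written the argument would have to fall back on a shuffle-anodyne induction, which you do not carry out.

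The missing idea is a further reduction via Lemma \ref{lem:reducetoconnected}: a general $1$-simplex is weakly equivalent, by a zig-zag of trivial cofibrations, to a coproduct of \emph{connected} $1$-simplices, i.e., those whose image under $\omega_!$ is a single corolla, a single $\eta$, or $\varnothing$. Combined with the cube lemma and the two-variable left Quillen properties of $\odot$ and $\otimes$, this reduces the base case to pairs of connected $1$-simplices (and $0$-simplices), and it is only there that $\theta_{A,B}$ is literally an isomorphism, since $C_k\otimes C_m$ has exactly two shuffles matching the two nondegenerate $2$-simplices of $\Delta^1\times\Delta^1$. With this extra step the base case requires no anodyne arguments at all, so the proof is considerably shorter than you anticipate.
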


We will now construct the map $\theta$ and establish its desired properties. The preceding theorem will follow from Proposition \ref{prop:proofmonoidal}. Since $\odot$ and $\otimes$ preserve colimits in each variable separately and since $\omega_!$ preserves colimits, it suffices to define $\theta$ on representables $X$ and $Y$ (i.e. simplices, possibly with markings) and extend its definition by colimits. In fact, if one can prove that part (ii) of the theorem holds for simplices, it holds for all $X$ and $Y$ by induction on skeletal filtrations in view of the cube lemma (cf. the proof of Lemma \ref{lem:unitequiv1}) applied to cubes of the form
\[
\xymatrix@C=10pt{
\omega_!(\partial A \odot Y) \ar[rr]\ar[dr]\ar[dd] & & \omega_!(X \odot Y) \ar'[d][dd]\ar[dr] & \\
& \omega_!(A \odot Y) \ar[dd]\ar[rr] & & \omega_!(X' \odot Y)\ar[dd] \\
\omega_!(\partial A) \otimes \omega_!(Y) \ar'[r][rr]\ar[dr] & & \omega_!(X) \otimes \omega_!(Y) \ar[dr] & \\
& \omega_!(A) \otimes \omega_!(Y) \ar[rr] & & \omega_!(X') \otimes \omega_!(Y)
}
\]
arising from a pushout
\[
\xymatrix{
\partial A \ar[r]\ar[d] & X \ar[d] \\ 
A \ar[r] & X'.
}
\]
This reduction to simplices using the skeletal filtration is standard and has already been used several times in this paper, so we omit the details. \par 
Let us turn our attention to constructing the map $\theta_{A,B}$ for two (marked) simplices 
\begin{equation*}
A: (\Delta^m, \mathcal{E}_A) \longrightarrow N\mathbf{F}^\natural \quad\quad \text{and} \quad\quad B: (\Delta^n, \mathcal{E}_B) \longrightarrow N\mathbf{F}^\natural
\end{equation*}
We first introduce some helpful terminology:
\begin{itemize}
\item[-] Recall that the cartesian product $\Delta^m \times \Delta^n$ can also be described in terms of shuffles: this product is a union of $m+n$-simplices (each of which is called a shuffle), one corresponding to every way of linearly ordering $m$ white vertices and $n$ black vertices, respecting the order already existing on each colour. There are $\binom{m+n}{m}$ such shuffles.
\item[-] A \emph{layered forest} is a forest $F$ with a function $\lambda: \mathrm{vertices}(F) \rightarrow \mathbb{N}$, such that for any path from a leaf to a root, this function increases by 1 from any vertex to the next. In other words, given an inner edge $e$ with bottom vertex $w$ and top vertex $v$, we have $\lambda(w) = \lambda(v) + 1$. A \emph{layer} of such a forest is simply a set of vertices of the form $\lambda^{-1}(i)$. The forests $\omega_!(A)$ and $\omega_!(B)$ are naturally layered in an obvious way; for example, given a vertex $v_a$ of $\omega_!(A)$ arising from some $a \in A(i)$, we set $\lambda(v_a) = i$.
\item[-] Given two layered forests $F$ and $G$ (with layerings $\lambda_F$ and $\lambda_G$), we can consider the \emph{layered shuffles} of $F$ and $G$. To be precise, consider any shuffle $S$ of $F$ and $G$. A vertex of $S$ corresponds to either a vertex of $F$ or a vertex of $G$. We say $S$ is layered if it admits the structure of a layering $\lambda_S$ in such a way that each layer of $S$ is precisely the set of vertices corresponding to either a layer of $F$ or a layer of $G$.
\end{itemize}

As an example, consider the 2-simplex $A^\flat$ and 1-simplex $B^\flat$ of $N\mathbf{F}^\natural$ pictured below:
\[
\begin{tikzpicture} 
[level distance=10mm, 
every node/.style={fill, circle, minimum size=.1cm, inner sep=0pt}, 
level 1/.style={sibling distance=20mm}, 
level 2/.style={sibling distance=10mm}, 
level 3/.style={sibling distance=5mm}]

\node (leftsimplex)[style={color=white}] {} [grow'=up] 
child {node (Llevel1) {} 
	child{ node (Llevel2) {}
		child
		child
	}
	child{ node {}
		child
	}
};

\node (rightsimplex)[style={color=white}, right = 7cm of leftsimplex] {};
\node (rightsimplexstart)[style={color=white}, above = .4cm of rightsimplex] {} [grow'=up] 
child {node(Rlevel1)[draw,fill=none]{} 
	child
	child
};

\tikzstyle{every node}=[]

\draw[dashed] ($(Llevel1) + (-1.5cm, 0)$) -- ($(Llevel1) + (1.5cm, 0)$);
\draw[dashed] ($(Llevel1) + (-1.5cm, 1cm)$) -- ($(Llevel1) + (1.5cm, 1cm)$);
\draw[dashed] ($(Rlevel1) + (-1cm, 0)$) -- ($(Rlevel1) + (1cm, 0)$);

\node at ($(Llevel1) + (-1.5cm, 1.5cm)$) {$0$};
\node at ($(Llevel1) + (-1.5cm, .5cm)$) {$1$};
\node at ($(Llevel1) + (-1.5cm, -.5cm)$) {$2$};
\node at ($(Rlevel1) + (-1cm, .5cm)$) {$0$};
\node at ($(Rlevel1) + (-1cm, -.5cm)$) {$1$};
\node at ($(Llevel1) + (-2.5cm, .5cm)$) {$A:$};
\node at ($(Rlevel1) + (-2.5cm, 0cm)$) {$B:$};

\end{tikzpicture} 
\]

The set of shuffles of the forests $\omega_!(A)$ and $\omega_!(B)$ looks as follows:
\[
\begin{tikzpicture} 
[level distance=5mm, 
every node/.style={fill, circle, minimum size=.1cm, inner sep=0pt}, 
level 1/.style={sibling distance=20mm}, 
level 2/.style={sibling distance=10mm}, 
level 3/.style={sibling distance=5mm},
level 4/.style={sibling distance=4mm}]

\node (shuffle1)[style={color=white}] {} [grow'=up] 
child {node[draw,fill=none] {} 
	child{ node {}
		child{ node {}
			child
			child
		}
		child{ node {}
			child
		}
	}
	child{ node {}
		child{ node {}
			child
			child
		}
		child{ node {}
			child
		}
	}
};

\node (shuffle2)[style={color=white}, right = 3cm of shuffle1] {} [grow'=up] 
child {node {} 
	child{node[draw,fill=none]{}
		child{node{}
			child
			child
		}
		child{node{}
			child
			child
		}
	}
	child{node[draw,fill=none]{}
		child{node{}
			child
		}
		child{node{}
			child
		}
	}
};

\node (shuffle3)[style={color=white}, right = 3cm of shuffle2]{};
\node (shuffle3start)[style={color=white}, above = 1.2cm of shuffle3] {} [grow'=up] 
child {node {} 
	child{node{}
		child{node[draw,fill=none]{}
			child
			child
		}
		child{node[draw,fill=none]{}
			child
			child
		}
	}
	child{node[draw,fill=none]{}
		child{node{}
			child
		}
		child{node{}
			child
		}
	}
};

\node (shuffle4)[style={color=white}, right = 3cm of shuffle2]{};
\node (shuffle4start)[style={color=white}, below = 1.2cm of shuffle4] {} [grow'=up] 
child {node {} 
	child{node[draw,fill=none]{}
		child{node{}
			child
			child
		}
		child{node{}
			child
			child
		}
	}
	child{node{}
		child{node[draw,fill=none]{}
			child
			child
		}
	}
};

\node (shuffle5)[style={color=white}, right = 6cm of shuffle2] {} [grow'=up] 
child {node {} 
	child{node{}
		child{node[draw,fill=none]{}
			child
			child
		}
		child{node[draw,fill=none]{}
			child
			child
		}
	}
	child{node{}
		child{node[draw,fill=none]{}
			child
			child
		}
	}
};

\tikzstyle{every node}=[]

\draw ($(shuffle1) + (1cm, 1cm)$) -- ($(shuffle1) + (2cm, 1cm)$);
\draw ($(shuffle2) + (1.3cm, 1.7cm)$) -- ($(shuffle2) + (2cm, 2.2cm)$);
\draw ($(shuffle2) + (1.3cm, 0.7cm)$) -- ($(shuffle2) + (2cm, .2cm)$);
\draw ($(shuffle2) + (4.3cm, 2.2cm)$) -- ($(shuffle2) + (5cm, 1.7cm)$);
\draw ($(shuffle2) + (4.3cm, 0.2cm)$) -- ($(shuffle2) + (5cm, .7cm)$);

\node at ($(shuffle1) + (-.5cm, 0)$) {$S_1$};
\node at ($(shuffle2) + (-.5cm, 0cm)$) {$S_2$};
\node at ($(shuffle3start) + (-.5cm, 0cm)$) {$S_3$};
\node at ($(shuffle4start) + (-.5cm, 0cm)$) {$S_4$};
\node at ($(shuffle5) + (-.5cm, 0cm)$) {$S_5$};

\end{tikzpicture} 
\]

However, the only shuffles that are layered are $S_1$, $S_2$ and $S_5$. Now, for any two marked simplices
\begin{equation*}
A: (\Delta^m, \mathcal{E}_A) \longrightarrow N\mathbf{F}_o^\natural \quad\quad \text{and} \quad\quad B: (\Delta^n, \mathcal{E}_B) \longrightarrow N\mathbf{F}_o^\natural
\end{equation*}
it should be clear that $\omega_!(A \odot B)$ is the union of the layered shuffles of the forests $\omega_!(A)$ and $\omega_!(B)$, corresponding precisely to the shuffles of the simplices $\Delta^m$ and $\Delta^n$. On the other hand, $\omega_!(A) \otimes \omega_!(B)$ is the union of \emph{all} the shuffles of the forests $\omega_!(A)$ and $\omega_!(B)$. There is an evident inclusion
\begin{equation*}
\theta_{A,B}: \omega_!(A \odot B) \longrightarrow \omega_!(A) \otimes \omega_!(B)
\end{equation*}
which is easily seen to be natural in $A$ and $B$. This takes care of part (i) of Theorem \ref{thm:monoidal}. It remains to deal with part (ii):

\begin{proposition}
\label{prop:proofmonoidal}
For simplices $A$ and $B$ as above, the map $\theta_{A,B}$ is a weak equivalence.
\end{proposition}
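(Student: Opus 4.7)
The map $\theta_{A,B}$ is by construction the inclusion of the subobject of $\omega_!(A) \otimes \omega_!(B)$ given by the union of all \emph{layered} shuffles of the forests $\omega_!(A)$ and $\omega_!(B)$ into the union of all shuffles, so it is a normal monomorphism. My plan is to exhibit this inclusion as a transfinite composition of trivial cofibrations by adjoining the non-layered shuffles one at a time. Specifically, fix a partial order on the set $\mathrm{Sh}(A,B)$ of shuffles by declaring $R \leq R'$ when $R'$ is obtained from $R$ by a sequence of percolations (swapping a white-over-black pair of adjacent vertices to black-over-white, or vice versa). The layered shuffles are precisely the minimal elements with respect to a variant of this order in which a percolation is only allowed when it destroys layering. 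Linearize this order and write
\[
\omega_!(A \odot B) = Z_0 \subseteq Z_1 \subseteq \cdots \subseteq Z_N = \omega_!(A)\otimes\omega_!(B),
\]
where $Z_{i+1}$ is obtained from $Z_i$ by adjoining a single non-layered shuffle $R_{i+1}$.

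For each non-layered shuffle $R$ I will single out a non-empty set $\Xi_R$ of \emph{characteristic inner edges}. Concretely, if $R$ is obtained from a layered shuffle by a sequence of percolations, then for each percolation the inner edge of $R$ lying between the two swapped vertices of different colors records that percolation, and $\Xi_R$ is the collection of all such edges. Any internal face of $R$ contracting an edge in $\Xi_R$ yields a shuffle that is strictly smaller in our partial order (closer to layered and thus already in $Z_i$), while contracting an inner edge \emph{not} in $\Xi_R$ or chopping off an outer vertex likewise factors through earlier stages. I then show the inclusion $Z_i \subseteq Z_{i+1}$ is inner anodyne by the familiar double-filtration technique: outer filtration by prunings $P$ of $R$, inner filtration by subsets $H \subseteq \Xi_{R,P} := \Xi_R \cap I(P)$, where adjoining $P^{[H]}$ (the tree obtained from $P$ by contracting all edges of $I(P) \setminus (\Xi_{R,P} \cup H)$ not in the chosen set) realizes $Z_i^{j,k} \subseteq Z_i^{j,k+1}$ as a pushout of the generalized horn inclusion
\[
\Lambda^{\Xi_{R,P}}\bigl[P^{[H]}\bigr]^\flat \longrightarrow \bigl(P^{[H]}\bigr)^\flat,
\]
which is inner anodyne by Lemma \ref{lem:moreinneranodynes}(b).

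Markings are handled along the way: the marked corollas of $\omega_!(A \odot B)$ and $\omega_!(A) \otimes \omega_!(B)$ are exactly the unary vertices arising from marked $1$-simplices of $A$ or $B$, so when a percolation involves a marked vertex, the "new" face in the filtration is either a root face or a leaf face of a tree with a marked unary corolla at that position. In those cases, the corresponding pushout becomes an instance of the root anodynes of Definition \ref{def:markedanodynes}$(M_2)$ or the leaf anodynes of Remark \ref{rmk:coCartequivalences}, both of which are trivial cofibrations in $\mathbf{fSets}_o^+$ by Proposition \ref{prop:poprodrootanodyne}, Proposition \ref{prop:poprodleafanodyne}, and Lemma \ref{lem:leafanodmarkedequiv}. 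Thus each stage of the filtration is a trivial cofibration, and $\theta_{A,B}$ is a weak equivalence.

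The main obstacle is combinatorial: rigorously defining the partial order on shuffles in terms of percolations and verifying that $\Xi_R$, together with the set of prunings of $R$, controls the faces of $R$ in the precise way required, i.e.\ that an outer face chops off a leaf/root corolla in a way that reduces to a previously adjoined pruning or layered shuffle, while the inner faces contracting edges outside $\Xi_R$ stay within the current inner-filtration stage and only inner faces contracting edges of $\Xi_R$ produce something genuinely new. This bookkeeping is delicate because percolations can interact, and the full argument will resemble the arguments for Propositions \ref{prop:poprodinneranodyne}, \ref{prop:poprodrootanodyne}, and \ref{prop:poprodleafanodyne} in the paper, with the layered-vs-arbitrary shuffle dichotomy playing the role that the special edges $r \otimes t$ or $l \otimes t$ played there.
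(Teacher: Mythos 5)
Your approach is genuinely different from the paper's and, as it stands, has a substantial gap. The paper avoids the shuffle combinatorics entirely: it first replaces $A$ and $B$ by their Segal cores $\mathrm{Sc}(A)=\bigcup_{i}\Delta^{\{i,i+1\}}\hookrightarrow A$, which are trivial cofibrations of marked simplicial sets over $N\mathbf{F}_o^\natural$. Since $\omega_!$, $\odot$ and $\otimes$ are all left Quillen in each variable, the cube lemma then reduces the statement to the case where $A$ and $B$ are $0$- or $1$-simplices. Lemma \ref{lem:reducetoconnected} reduces further to the case where $A$ and $B$ are ``connected'', i.e.\ $\omega_!(A)$ and $\omega_!(B)$ are each $\varnothing$, $\eta$, or a corolla $C_k$; and in all of those cases one checks that $\theta_{A,B}$ is an \emph{isomorphism} (in particular $C_k\otimes C_l$ has exactly two shuffles and both are layered). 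No filtration or anodyne argument is needed.

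Your plan instead tries to show the inclusion of layered shuffles into all shuffles is a composition of anodyne pushouts for arbitrary $A,B$, which is a much stronger assertion than the proposition requires, and the key step is not in place. First, the set $\Xi_R$ of ``characteristic edges'' is not well-defined: you describe it in terms of a chosen percolation sequence from a layered shuffle to $R$, but a non-layered shuffle typically has layered shuffles both above and below it in the percolation poset (in the paper's own example in Section 6.2, $S_2<S_3<S_5$ with $S_2,S_5$ layered), and different sequences give different sets of edges. Second, and more seriously, your $Z_0=\omega_!(A\odot B)$ already contains maximal shuffles, so the technique of Propositions \ref{prop:poprodinneranodyne}, \ref{prop:poprodrootanodyne}, \ref{prop:poprodleafanodyne} — adjoin shuffles in increasing order, and verify that contracting special edges cannot land in an earlier shuffle because only later shuffles contain them — does not transfer: you must additionally rule out the characteristic faces of $R$ hiding inside a layered shuffle \emph{above} $R$, and show that all remaining faces factor through $Z_i$; neither claim has an argument. (Also the inner filtration as written indexes over $H\subseteq\Xi_{R,P}$ rather than $H\subseteq I(P)\setminus\Xi_{R,P}$, which is inconsistent with the formula for $P^{[H]}$ and with the standard pattern — presumably a slip, but symptomatic.) You acknowledge the bookkeeping is unresolved, and that is the point: what you have is a plan, not a proof, and there is no guarantee the plan closes. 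The paper's reduction shows it does not need to.
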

\begin{proof}
Just for simplicity of notation, we will not indicate markings in this proof and leave them implicit. They play no essential role here. First of all, we consider the Segal cores of $A$ and $B$, which give trivial cofibrations
\begin{eqnarray*}
\mathrm{Sc}(A) = \bigcup_{i=0}^{m-1} \Delta^{\{i,i+1\}} & \longrightarrow & A \\
\mathrm{Sc}(B) = \bigcup_{i=0}^{n-1} \Delta^{\{i,i+1\}} & \longrightarrow & B 
\end{eqnarray*}
By the fact that $\omega_!$ is left Quillen and $\odot$, $\otimes$ are left Quillen in each variable separately, it suffices to prove that $\theta_{\mathrm{Sc}(A),\mathrm{Sc}(B)}$ is a weak equivalence. Invoking the cube lemma again, we can now reduce to the case where $A$ and $B$ are both of dimensions $0$ or $1$. By Lemma \ref{lem:reducetoconnected}, we may reduce further to the case where both $A$ and $B$ are `connected' simplices, i.e. the case where the forests $\omega_!(A)$ and $\omega_!(B)$ each have at most one component. Then we either have $\omega_!(A) \simeq \varnothing$, $\omega_!(A) \simeq \eta$ or $\omega_!(A) \simeq C_k$ for some $k \geq 1$ and similarly for $B$. In these cases, $\theta_{A,B}$ is an isomorphism. $\Box$
\end{proof}

Let us prove a corollary that was already mentioned in Section \ref{sec:mainresults}: 

\begin{corollary}
\label{cor:omega*monoidal2}
For cofibrant objects $P, Q \in \mathbf{fSets}_o^+$ there is a natural weak equivalence
\begin{equation*}
\omega^*(P) \odot \omega^*(Q) \longrightarrow \omega^*(P \otimes Q). 
\end{equation*}
\end{corollary}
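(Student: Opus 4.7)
The plan is to build the map by transposing the equivalence of Theorem~\ref{thm:monoidal} across the Quillen adjunction $(\omega_!, \omega^*)$ and then verify it is a weak equivalence using the fact that both $\omega_!$ and $\bar\omega^*$ are left Quillen.

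First, for $P, Q \in \mathbf{fSets}_o^+$ cofibrant, apply Theorem~\ref{thm:monoidal} to $X = \omega^*(P)$ and $Y = \omega^*(Q)$ to obtain a natural weak equivalence
\[
\theta_{\omega^*P,\omega^*Q}: \omega_!\bigl(\omega^*(P) \odot \omega^*(Q)\bigr) \longrightarrow \omega_!\omega^*(P) \otimes \omega_!\omega^*(Q).
\]
Composing with the counits $\varepsilon_P: \omega_!\omega^*(P) \to P$ and $\varepsilon_Q: \omega_!\omega^*(Q) \to Q$ yields a map $\omega_!(\omega^*(P) \odot \omega^*(Q)) \to P \otimes Q$, and taking its adjoint under $(\omega_!,\omega^*)$ gives the required natural transformation
\[
\omega^*(P) \odot \omega^*(Q) \longrightarrow \omega^*(P \otimes Q).
\]
Naturality is automatic from the naturality of $\theta$, of the counit, and of the adjunction.

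Next I would show this map is a weak equivalence. By Proposition~\ref{prop:counitequiv} (applied to the cofibrant objects $P$ and $Q$), the counits $\varepsilon_P$ and $\varepsilon_Q$ are weak equivalences between cofibrant objects. Since $\otimes$ on $\mathbf{fSets}_o^+$ is a left Quillen bifunctor (this is the monoidal compatibility established for the operadic model structure on $\mathbf{fSets}_o$ in the forthcoming Section~\ref{sec:tensorproduct} and transferred to the marked category by Theorem~\ref{thm:markedfSets}; see also Theorem~\ref{thm:markeddSetsfSets}), the map $\varepsilon_P \otimes \varepsilon_Q$ is a weak equivalence. Combined with Theorem~\ref{thm:monoidal}, this gives that the composite
\[
\omega_!\bigl(\omega^*(P) \odot \omega^*(Q)\bigr) \longrightarrow P \otimes Q
\]
is a weak equivalence between cofibrant objects (its domain is cofibrant because $\omega_!$ preserves cofibrations and $\odot$ on $\mathbf{POp}_o$ preserves cofibrations between cofibrant objects; its codomain is cofibrant as a tensor of cofibrant objects).

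Finally, to convert this into a statement about $\omega^*(P) \odot \omega^*(Q) \to \omega^*(P\otimes Q)$, choose a fibrant replacement $j: P \otimes Q \to R$ as a trivial cofibration, so $R$ is both fibrant and cofibrant. Since $\bar\omega^*$ (and hence $\omega^*$) is left Quillen by Proposition~\ref{prop:omega*leftQ}, the map $\omega^*(j): \omega^*(P \otimes Q) \to \omega^*(R)$ is a weak equivalence. On the other hand, the Quillen equivalence $(\omega_!,\omega^*)$ turns the weak equivalence $\omega_!(\omega^*P \odot \omega^*Q) \xrightarrow{\sim} P \otimes Q \xrightarrow{\sim} R$ into a weak equivalence $\omega^*P \odot \omega^*Q \to \omega^*(R)$, because the source is cofibrant and the target fibrant. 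Two-out-of-three in the triangle
\[
\omega^*P \odot \omega^*Q \longrightarrow \omega^*(P \otimes Q) \xrightarrow{\ \omega^*(j)\ } \omega^*(R)
\]
then forces the first arrow to be a weak equivalence, which is the desired conclusion.

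The main obstacle I anticipate is pinning down the assertion that $\otimes$ is a left Quillen bifunctor on $\mathbf{fSets}_o^+$, since the precise monoidal compatibility is only spelled out in later parts of the paper. Once that is in hand, the rest is a fairly mechanical adjunction-and-counit manipulation.
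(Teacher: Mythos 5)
Your proof is correct and follows essentially the same route as the paper's: construct the map as the adjoint of the composite $\omega_!(\omega^*P \odot \omega^*Q) \to \omega_!\omega^*P \otimes \omega_!\omega^*Q \to P \otimes Q$ (Theorem~\ref{thm:monoidal} followed by counits), verify that composite is a weak equivalence using Proposition~\ref{prop:counitequiv} and compatibility of $\otimes$ with the model structure, then pass to a fibrant replacement and conclude by the Quillen equivalence and two-out-of-three. The only minor cosmetic difference is that you make the left-Quillen-bifunctor step explicit where the paper cites the equivalence of $\omega^*$ and $\bar\omega^*$ and leaves the tensor-compatibility implicit.
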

\begin{proof}
There are natural weak equivalences
\begin{equation*}
\omega_!(\omega^*(P) \odot \omega^*(Q)) \longrightarrow \omega_!\omega^*(P) \otimes \omega_!\omega^*(Q) \longrightarrow P \otimes Q,
\end{equation*}
the first one coming from Theorem \ref{thm:monoidal}, the second one from Proposition \ref{prop:counitequiv} and the fact that the functors $\omega^*$ and $\bar\omega^*$ are weakly equivalent. We will denote their composition, which is a weak equivalence, by $\psi$. Now consider the diagram
\[
\xymatrix{
\omega^*(P) \odot \omega^*(Q) \ar[dr]\ar[r] & \omega^*(P \otimes Q) \ar[d] \\
& \omega^*\bigl((P \otimes Q)_f\bigr)
}
\]
where the subscript $f$ denotes a fibrant replacement. The horizontal map is the adjoint of $\psi$. The skew map is a weak equivalence since the pair $(\omega_!,\omega^*)$ is a Quillen equivalence and $\psi$ is a weak equivalence. The vertical map is a weak equivalence because $\omega^*$ preserves weak equivalences between cofibrant objects: indeed, $\omega^*$ is weakly equivalent to $\bar\omega^*$, which is a left Quillen functor. The result now follows by two-out-of-three. $\Box$
\end{proof}

\subsection{Homotopical monoidal structures}
\label{subsec:unbiased}

The tensor products on the categories $\mathbf{dSets}$ and $\mathbf{fSets}$ are not associative. We already encountered this defect in Section \ref{subsec:weakenrichment} when discussing the weak enrichments of these categories over the category of simplicial sets. There we showed that the necessary associativity constraints, although not isomorphisms, were weak equivalences. More generally, the tensor products of dendroidal sets and forest sets, while not being associative up to isomorphism, can be made associative up to weak equivalence. In particular, the homotopy categories of $\mathbf{dSets}$ and $\mathbf{fSets}$ are symmetric monoidal categories. In this section we will formalize such `weakly associative' monoidal structures.

Recall that for a coloured operad $\mathbf{P}$ in $\mathbf{Sets}$ we write $\iota^*\mathbf{P}$ for its underlying category, consisting of the unary operations of $\mathbf{P}$. We call $\mathbf{P}$ \emph{corepresentable} if, for any tuple $(x_1, \ldots, x_n)$ of colours of $\mathbf{P}$, the functor
\begin{equation*}
\mathbf{P}(x_1, \ldots, x_n; -): \iota^*\mathbf{P} \longrightarrow \mathbf{Sets}
\end{equation*}
is corepresentable. 

\begin{definition}
For a category $\mathbf{C}$, a \emph{lax symmetric monoidal structure} on $\mathbf{C}$ is a corepresentable operad $\mathbf{P}$ with $\iota^*\mathbf{P} = \mathbf{C}$. Dually (and more relevant to our examples), a \emph{colax symmetric monoidal structure} on a category $\mathbf{C}$ is a lax symmetric monoidal structure on the opposite category $\mathbf{C}^{\mathrm{op}}$.
\end{definition}
 
Our first goal in this section is to exhibit colax symmetric monoidal structures on the model categories $\mathbf{dSets}$ and $\mathbf{fSets}$ and their marked variants. We will give a more elaborate reformulation of this definition below, but first we need to introduce some notation. 

We will need the category $\mathbf{\Omega}_{\mathrm{pl}}$ of \emph{planar} trees: the objects are trees, like the objects of $\mathbf{\Omega}$, but now also equipped with a planar structure. The maps in $\mathbf{\Omega}_{\mathrm{pl}}$ are as in $\mathbf{\Omega}$, but with the extra requirement that they preserve the planar structure. In particular, every object in $\mathbf{\Omega}_{\mathrm{pl}}$ has no other automorphisms than the identity. Also, we consider the category $\mathbf{\Omega}_{\mathrm{s}}$ having the same objects as $\mathbf{\Omega}_{\mathrm{pl}}$, but now with \emph{all} maps between trees, not necessarily preserving planar structures. There is an obvious embedding $\mathbf{\Omega}_{\mathrm{pl}} \rightarrow \mathbf{\Omega}_{\mathrm{s}}$ and a functor $\mathbf{\Omega}_{\mathrm{s}} \rightarrow \mathbf{\Omega}$ forgetting the planar structure on the objects. The latter functor is an equivalence of categories. Finally, we will consider the subcategories
\begin{equation*}
\mathbf{\Omega}_{\mathrm{pl}}^{\mathrm{in}} \subseteq \mathbf{\Omega}_{\mathrm{pl}}, \quad\quad\quad  \mathbf{\Omega}_{\mathrm{s}}^{\mathrm{in}} \subseteq  \mathbf{\Omega}_{\mathrm{s}},
\end{equation*}
which have the same objects, but with arrows generated by \emph{inner} face maps, degeneracies and isomorphisms only.

Let $\mathcal{E}$ be a category with a colax symmetric monoidal structure. This structure determines, by corepresentability, a sequence of functors
\begin{equation*}
\otimes_n: \mathcal{E}^n \longrightarrow \mathcal{E}, \quad\quad n \geq 0.
\end{equation*}
For $n = 0$ this gives an object of $\mathcal{E}$ called the \emph{unit} and denoted $I$. For $n=1$, the functor $\otimes_1: \mathcal{E} \rightarrow \mathcal{E}$ is the identity functor of $\mathcal{E}$. For general $n$ we just write $X_1 \otimes \cdots \otimes X_n$ for $\otimes_n(X_1, \ldots, X_n)$. By induction on trees and composition of functors, this gives for each planar tree $T \in \mathbf{\Omega}_{\mathrm{pl}}$ a functor 
\begin{equation*}
\otimes_T: \mathcal{E}^{l(T)} \longrightarrow \mathcal{E},
\end{equation*}
with $l(T)$ denoting the (ordered) set of leaves of $T$. Explicitly, if $T$ is a corolla $C_n$ then $\otimes_T = \otimes_n$, if $T = \eta$ then $\otimes_\eta = \mathrm{id}_{\mathcal{E}}$ and if $T = C_n \star (T_1, \ldots, T_n)$ is the tree obtained by grafting trees $T_1, \ldots, T_n$ onto the leaves of the corolla $C_n$, then $\otimes_T$ is the composition
\begin{equation*}
\otimes_T = \otimes_{n} \circ (\otimes_{T_1}, \ldots, \otimes_{T_n}).
\end{equation*}

The colax symmetric monoidal structure (specifically, composition of operations in the associated operad), determines an extension of the collection of these functors $\otimes_T$ to a contravariant functor on $\mathbf{\Omega}_{\mathrm{pl}}^{\mathrm{in}}$. More precisely, the $\otimes_T$ are equipped with the following structure. Each morphism $\alpha: S \rightarrow T$ in $\mathbf{\Omega}_{\mathrm{pl}}^{\mathrm{in}}$ induces an isomorphism $l(\alpha): l(S) \rightarrow l(T)$. For each such $\alpha$ and $X \in \mathcal{E}^{l(T)}$ there is a natural map 
\begin{equation*}
\alpha^*: \otimes_T(X) \longrightarrow \otimes_S(X \circ l(\alpha)).
\end{equation*} 
These maps are functorial in $\alpha$. (One can of course encode the collection of such $\alpha$'s into a single functor from the category $\int_{\mathbf{\Omega}_{\mathrm{pl}}^{\mathrm{in}}}\mathcal{E}^{l(-)}$ back to $\mathcal{E}$, where the first denotes the fibered category associated to the functor $(\mathbf{\Omega}_{\mathrm{pl}}^{\mathrm{in}})^{\mathrm{op}} \rightarrow \mathbf{Cat}$ sending $T$ to $\mathcal{E}^{l(T)}$.)

\begin{example}
If $\alpha$ is the following map $\partial_e$
\[
\begin{tikzpicture} 
[level distance=5mm, 
every node/.style={fill, circle, minimum size=.1cm, inner sep=0pt}, 
level 1/.style={sibling distance=5mm}, 
level 2/.style={sibling distance=5mm}, 
level 3/.style={sibling distance=5mm},
level 4/.style={sibling distance=4mm}]

\node (shuffle1)[style={color=white}] {} [grow'=up] 
child {node {} 
	child
	child
	child
};

\node (shuffle2)[style={color=white}, right = 3cm of shuffle1] {} [grow'=up] 
child {node {} 
	child
	child{ node {}
		child
		child
	}
};

\tikzstyle{every node}=[]

\draw[->] ($(shuffle1) + (1cm, .7cm)$) -- ($(shuffle1) + (2cm, .7cm)$);

\node at ($(shuffle1) + (1.5cm, .9cm)$) {$\partial_e$};
\node at ($(shuffle2) + (.25cm, .7cm)$) {$e$};

\end{tikzpicture} 
\]
then
\begin{equation*}
\partial_e^*: X_1 \otimes (X_2 \otimes X_3) \longrightarrow X_1 \otimes X_2 \otimes X_3,
\end{equation*}
while for
\[
\begin{tikzpicture} 
[level distance=5mm, 
every node/.style={fill, circle, minimum size=.1cm, inner sep=0pt}, 
level 1/.style={sibling distance=5mm}, 
level 2/.style={sibling distance=5mm}, 
level 3/.style={sibling distance=5mm},
level 4/.style={sibling distance=4mm}]

\node (shuffle1)[style={color=white}] {} [grow'=up] 
child {node {} 
	child
};

\node (shuffle2)[style={color=white}, right = 3cm of shuffle1] {} [grow'=up] 
child {node {} 
	child
	child{ node {}
	}
};

\tikzstyle{every node}=[]

\draw[->] ($(shuffle1) + (1cm, .7cm)$) -- ($(shuffle1) + (2cm, .7cm)$);

\node at ($(shuffle1) + (1.5cm, .9cm)$) {$\partial_a$};
\node at ($(shuffle2) + (.25cm, .7cm)$) {$a$};

\end{tikzpicture} 
\]
we obtain a map
\begin{equation*}
\partial_a^*: X \otimes I \longrightarrow X.
\end{equation*}
\end{example}

In the examples relevant to us, all maps coming from contracting an edge below a nullary vertex are in fact isomorphisms. If this is the case, we say that the unit is \emph{strong}. The symmetry of the colax monoidal structure provides a further extension of the above to a functor on $\mathbf{\Omega}_{\mathrm{s}}^{\mathrm{in}}$. This symmetry gives for each (non-planar) isomorphism $\sigma: T \rightarrow T'$ of trees, with induced isomorphism $l(\sigma)$ on leaves, an isomorphism
\begin{equation*}
\sigma^*: \otimes_T X \longrightarrow \otimes_{T'} (X \circ l(\sigma)), \quad\quad X \in \mathcal{E}^{l(T)}.
\end{equation*}
These natural transformations $\sigma^*$ are completely determined by the maps associated to isomorphisms between corollas. The reader should observe that the colax symmetric monoidal structure on $\mathcal{E}$ is completely determined by the data of the tensor products $\otimes_n$ and the maps $\alpha^*, \sigma^*$ together with their functoriality described above.

\begin{definition}
A colax symmetric monoidal structure on a model category $\mathcal{E}$ is \emph{homotopical} if for each morphism $\alpha: S \rightarrow T$ in $\mathbf{\Omega}_{\mathrm{s}}^{\mathrm{in}}$ and each $l(T)$-indexed sequence $X \in \mathcal{E}^{l(T)}$ consisting of cofibrant objects, the map $\alpha^*$ is a weak equivalence.
\end{definition}

The reader should observe that the homotopy category of a colax symmetric monoidal model category can naturally be made a symmetric monoidal category.

\begin{proposition}
\label{prop:unbiasedtensor}
The (binary) tensor product on $\mathbf{dSets}$ can be extended to a colax symmetric monoidal structure. When restricted to the subcategory $\mathbf{dSets}_o$ of open dendroidal sets, this monoidal structure is homotopical. The analogous statements hold true for the categories of forest sets, marked dendroidal sets and marked forest sets (and their open variants).
\end{proposition}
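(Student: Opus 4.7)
The plan is to construct the colax symmetric monoidal structure by a biased definition from the binary tensor product. For each planar tree $T$ with $n$ leaves, define $\otimes_T$ inductively: set $\otimes_\eta := \mathrm{id}$; on a corolla $C_n$ take the right-bracketed iterate $\otimes_n(X_1, \ldots, X_n) := X_1 \otimes (X_2 \otimes (\cdots \otimes X_n))$; and for $T = C_k \star (T_1, \ldots, T_k)$ set $\otimes_T := \otimes_k \circ (\otimes_{T_1}, \ldots, \otimes_{T_k})$. For the nullary corolla, choose a unit object; it plays no role on the open non-unital subcategories relevant to the proposition. Natural transformations $\alpha^*$ for the three generating kinds of morphisms in $\mathbf{\Omega}_{\mathrm{s}}^{\mathrm{in}}$—inner-edge contractions, corolla isomorphisms, and symmetries across different components—are built, respectively, from the binary associator, the binary symmetry, and their naturality. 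This data assembles into a corepresentable operad with $\mathbf{P}(X_1, \ldots, X_n; Y) := \mathbf{dSets}(Y, \otimes_n(X_1, \ldots, X_n))$, which is the desired colax symmetric monoidal structure.

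The coherence required of $\mathbf{P}$—functoriality of $\alpha \mapsto \alpha^*$ on $\mathbf{\Omega}_{\mathrm{s}}^{\mathrm{in}}$ and compatibility of composition of operations—reduces to the pentagon, hexagon, and triangle identities for $\otimes$ viewed as a weakly symmetric monoidal structure. These in turn follow from the construction of $\otimes$ via the strictly associative and symmetric Boardman--Vogt tensor on operads, combined with cocontinuity of $\otimes$ in each variable; the verification is completely parallel to the coherence of the weak simplicial enrichment of Section \ref{subsec:weakenrichment}.

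For the homotopical property on the open subcategories, each $\alpha^*$ should be a weak equivalence whenever the $X_i$ are cofibrant. Isomorphisms $\alpha$ induce isomorphism transformations $\alpha^*$, so it suffices to treat a single inner-edge contraction. After unfolding the inductive definition of $\otimes_T$, such an $\alpha^*$ is the binary associator $(A \otimes B) \otimes C \to A \otimes (B \otimes C)$, tensored on the outside with further cofibrant factors. By the homotopical enrichment of Theorem \ref{thm:basicmodelstructure}(v), $\otimes$-ing a trivial cofibration between cofibrant objects with any cofibrant object yields a trivial cofibration; the claim therefore reduces to showing that the binary associator is itself a trivial cofibration on open cofibrant inputs. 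This is the three-variable analogue of Proposition \ref{prop:forestsetsH2}, proved by the same hypersaturation technique of Section \ref{sec:tensorprodsnormals}: one exhibits the associator as a normal monomorphism lying in every hypersaturated class containing the Segal-core inclusions of forests, then invokes Proposition \ref{prop:trivcofoperadic}.

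The remaining cases follow by transfer. The forest-set case is verbatim (using Proposition \ref{prop:forestsetsH2} itself), and the dendroidal case then follows by applying $u^*$ in view of the compatibility $u^*(X \otimes Y) \simeq u^*(X) \otimes u^*(Y)$ of Proposition \ref{prop:fsetsproperties}(iv). For the marked variants one transfers along the left Quillen equivalences $(-)^\flat$ of Theorems \ref{thm:markedfSets} and \ref{thm:markeddSetsfSets}, which preserve both tensor products and cofibrations. The main obstacle throughout is the three-variable hypersaturation estimate for the binary associator: Proposition \ref{prop:forestsetsH2} handles only the case where two of the three factors are simplices, and extending it to arbitrary cofibrant open forests requires a combinatorial analysis of shuffles of three trees, in the spirit of Section \ref{sec:tensorprodsnormals} but with one more layer of bookkeeping.
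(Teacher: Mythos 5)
Your proposal diverges from the paper at the very first step, and the divergence produces a genuine gap. You define $\otimes_n$ as the right-bracketed binary iterate $X_1 \otimes (X_2 \otimes (\cdots \otimes X_n))$ and then propose to build the structure maps $\alpha^*$ from ``the binary associator.'' But for general open dendroidal sets $A$, $B$, $C$ there is \emph{no} map (in either direction) between $(A\otimes B)\otimes C$ and $A\otimes(B\otimes C)$: both are subobjects of the unbiased three-fold tensor $A\otimes B\otimes C$ (unions of shuffles of three trees), and each generically contains shuffles that the other lacks. What does exist is the zigzag
\[
(A\otimes B)\otimes C \hookrightarrow A\otimes B\otimes C \hookleftarrow A\otimes(B\otimes C),
\]
with both inclusions inner anodyne. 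The map $\alpha_{M,N,X}\colon (M\times N)\otimes X\to M\otimes(N\otimes X)$ of Section \ref{subsec:weakenrichment} is not a binary associator of $\otimes$ --- its source involves the \emph{cartesian product} of simplicial sets, and $i_!(M\times N)$ is a proper subobject of $i_!M\otimes i_!N$. This non-existence is fatal to your construction: already the operadic composition in $\mathbf{P}(X_1,\ldots,X_n;Y)=\mathbf{dSets}(Y,X_1\otimes(X_2\otimes\cdots))$ requires re-bracketing maps that are not available, and the appeal to pentagon and hexagon coherence presupposes an associator that does not exist.

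The paper sidesteps this entirely by taking the \emph{unbiased} definition on representables, $X_1\otimes\cdots\otimes X_n = N_d(\tau_d X_1\otimes\cdots\otimes\tau_d X_n)$, so that $\otimes_n$ is the union of all shuffles of $n$ trees, not a nested binary tensor. With that choice the only structure maps $\alpha^*$ one ever needs are the contraction maps $\otimes_T(X)\to\otimes_S(X)$ for inner face maps $\alpha\colon S\to T$, and these are honest monomorphisms (inclusions of a subset of shuffles into a larger one). The combinatorial heart of the argument, Proposition \ref{lem:alphaunbiased}, proves precisely that the inclusion $X_1\otimes(X_2\otimes\cdots\otimes X_n)\hookrightarrow X_1\otimes\cdots\otimes X_n$ is inner anodyne, by a filtration over shuffles and prunings; the general $\alpha^*$ is then handled via maximal binary expansions and two-out-of-three. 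Your instinct that the proof must reduce, via Segal cores, skeletal induction, and homotopical enrichment, to a shuffle-combinatorial lemma for corollas is exactly right, and so is the transfer along $u^*$ and $(-)^\flat$ to the other four categories --- but the lemma you should be proving is the one about contraction inclusions, not about a nonexistent associator.
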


Let us first construct the required tensor products and associativity maps $\alpha^*$. We work in the category $\mathbf{dSets}$, the other cases being analogous. The tensor products $\otimes_T$ are to preserve colimits in each variable separately. Thus, it suffices to construct the tensor product
\begin{equation*}
X_1 \otimes \cdots \otimes X_n
\end{equation*}
of a sequence of \emph{representable} dendroidal sets in functorial fashion. To this end, set
\begin{equation*}
X_1 \otimes \cdots \otimes X_n = N_d(\tau_d(X_1) \otimes \cdots \otimes \tau_d(X_n)),
\end{equation*}
where the tensor product on the right is the $n$-fold Boardman-Vogt tensor product of operads and $(\tau_d, N_d)$ is the usual adjunction relating dendroidal sets and operads in sets. Note that the functor $\tau_d$ distributes over tensor products in $\mathbf{dSets}$: this follows from the fact that it preserves colimits and that $\tau_dN_d = \mathrm{id}$, which establishes distributivity on representables.

Next, we wish to construct the relevant associativity maps. If $\alpha$ is a degeneracy we can take $\alpha^*$ to be the identity. Now, by a straightforward induction on trees, it suffices to define $\alpha^*$ in the case where $\alpha$ is the inner face map
\begin{equation*}
\partial_i: C_{n+m-1} \longrightarrow C_n \circ_i C_m,
\end{equation*}
where the right-hand side denotes the tree obtained by grafting $C_m$ onto the $i$'th leaf of $C_n$. So, we are looking for a natural map
\[
\xymatrix{
X_1 \otimes \cdots \otimes X_{i-1} \otimes (Y_1 \otimes \cdots \otimes Y_m) \otimes X_{i+1} \otimes \cdots \otimes X_n \ar[d] \\
X_1 \otimes \cdots \otimes X_{i-1} \otimes Y_1 \otimes \cdots \otimes Y_m \otimes X_{i+1} \otimes \cdots \otimes X_n.
}
\]
The codomain is the nerve of an operad in sets, namely $\tau_d(X_1) \otimes \cdots \otimes \tau_d(X_n)$. Using adjunction and the distributivity of $\tau_d$ over tensor products, it suffices to specify a map
\[
\xymatrix{
\tau_d(X_1 \otimes \cdots \otimes X_{i-1} \otimes (Y_1 \otimes \cdots \otimes Y_m) \otimes X_{i+1} \otimes \cdots \otimes X_n) \ar[d] \\
\tau_d(X_1) \otimes \cdots \otimes \tau_d(Y_1) \otimes \cdots \otimes \tau_d(Y_m) \otimes \cdots \otimes \tau_d(X_n).
}
\]
Using the distributivity of $\tau_d$ over tensor products in the domain, such a map is given by the associativity isomorphisms of the Boardman-Vogt tensor product of operads. It is routine to verify that the associativity maps thus defined for the tensor products on $\mathbf{dSets}$ have the required naturality and functoriality properties. Moreover, they can be made symmetric using the symmetry of the tensor product of operads. Also, the unit $\eta$ is strong.

We now wish to show that the colax symmetric monoidal structure on $\mathbf{dSets}_o$ is homotopical. To be able to use skeletal induction, we need the following:

\begin{lemma}
\label{lem:tensorprodhomotopical}
Let $X_1, \ldots, X_{i-1}, X_{i+1}, \ldots, X_n$ be normal dendroidal sets, which are moreover open. Then the functor
\begin{equation*}
\mathbf{dSets}_o \rightarrow \mathbf{dSets}_o: X_i \longmapsto X_1 \otimes \cdots \otimes X_{i-1} \otimes X_i \otimes X_{i+1} \otimes \cdots \otimes X_n
\end{equation*}
preserves cofibrations and trivial cofibrations.
\end{lemma}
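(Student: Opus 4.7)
The plan is to reduce to the binary case and then invoke the fact, noted in the remark immediately after Theorem~\ref{thm:CMmodelstruct}, that the tensor product restricts to a monoidal model structure on $\mathbf{dSets}_o$. In particular, the pushout-product axiom holds: for cofibrations $f\colon A\to B$ and $g\colon C\to D$ in $\mathbf{dSets}_o$, the pushout-product $A\otimes D\cup_{A\otimes C} B\otimes C\to B\otimes D$ is again a cofibration, and trivial if either $f$ or $g$ is.

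First I would establish two stability properties of the class of normal open dendroidal sets under binary tensor products. Openness is closed under $\otimes$ because $N_d\mathbf{Com}^-\otimes N_d\mathbf{Com}^-=N_d\mathbf{Com}^-$. Normality (i.e.\ cofibrancy in $\mathbf{dSets}_o$) is closed under $\otimes$ as a formal consequence of the pushout-product axiom, applied to the pair of cofibrations $\varnothing\to X$, $\varnothing\to Y$ between cofibrant objects: their pushout-product is simply $\varnothing\to X\otimes Y$. Combining, the tensor product of two normal open dendroidal sets is again normal and open.

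Next I would treat the binary case of the lemma. If $N$ is a normal open dendroidal set and $f\colon X\to Y$ is a (trivial) cofibration in $\mathbf{dSets}_o$, then applying the pushout-product axiom to the cofibration $\varnothing\to N$ and $f$ shows that $\mathrm{id}_N\otimes f\colon N\otimes X\to N\otimes Y$ is a (trivial) cofibration; by symmetry the same holds for $f\otimes\mathrm{id}_N$. Thus for any normal open $N$ the functors $-\otimes N$ and $N\otimes -$ are left Quillen endofunctors of $\mathbf{dSets}_o$.

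Finally, the general case follows by induction on $n$. Fix some planar bracketing of the $n$-fold tensor product (this is implicit in the statement of the lemma; any choice will do, since only one-sided iterated tensorings are involved). By the first step, each partial tensor product of the fixed normal open factors $X_j$, $j\ne i$, is again normal and open, and the functor $X_i\mapsto X_1\otimes\cdots\otimes X_n$ factors as a composition of functors of the form $-\otimes N$ or $N\otimes -$ with $N$ one of these intermediate tensor products. Each such factor preserves cofibrations and trivial cofibrations by the binary case, and compositions of such maps are again of the same type, proving the lemma. The only place requiring any care is to ensure inductively that the intermediate tensor products remain normal open so that the next factor in the composition is again left Quillen, but this is guaranteed by the first step above.
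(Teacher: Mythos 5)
Your argument correctly establishes that binary tensoring with a fixed normal open dendroidal set is left Quillen, and that iterated binary tensor products of normal open objects remain normal and open. However, the reduction from the $n$-fold tensor to the binary case does not work, and this is the crux of the matter: the symbol $X_1 \otimes \cdots \otimes X_n$ in the lemma denotes the \emph{unbiased} $n$-fold tensor product $\otimes_n$ just constructed (on representables it is $N_d(\tau_d(X_1) \otimes \cdots \otimes \tau_d(X_n))$ using the $n$-ary Boardman--Vogt tensor of operads, extended by colimits in each variable), and this is \emph{not} isomorphic to any iterated binary tensoring. That is precisely the point of Section \ref{subsec:unbiased}: the comparison maps $\alpha^*$ between bracketed and unbracketed tensor products are monomorphisms but typically not isomorphisms (cf.\ the Remark following Proposition \ref{prop:weakenrichment}), and the whole purpose of Lemma \ref{lem:tensorprodhomotopical} and Proposition \ref{lem:alphaunbiased} is to show they are nonetheless weak equivalences. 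So the claim ``the functor $X_i \mapsto X_1 \otimes \cdots \otimes X_n$ factors as a composition of functors of the form $-\otimes N$ or $N\otimes -$'' is false for the functor the lemma is actually about; your proof establishes the analogous statement for a different (only weakly equivalent) functor, and one cannot transfer the conclusion across a comparison map which is only known to be a monomorphism.

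The paper's own (sketched) proof is different in kind: for $n=2$ it invokes the explicit pushout-product combinatorics (Propositions \ref{prop:newnormalmonopoprod}, \ref{prop:poprodinneranodyne}, \ref{prop:trivcofoperadic}, and Lemma \ref{lem:poprodmarkedanodyne} in the marked case), and for higher $n$ it asserts that the same shuffle-based arguments go through verbatim for $n$-fold shuffles. In other words, what is needed is a direct $n$-ary analogue of the pushout-product analysis, carried out for the unbiased $\otimes_n$, not an inductive reduction to the binary case. As a small additional remark, the remark after Theorem \ref{thm:CMmodelstruct} does not assert that $\mathbf{dSets}_o$ is a monoidal model category (it cannot be, since $\otimes$ is not associative); it asserts compatibility in the sense that the binary pushout-product axiom holds, which is what your binary step actually uses.
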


The case $n=2$ follows from Propositions \ref{prop:newnormalmonopoprod}, \ref{prop:poprodinneranodyne} and \ref{prop:trivcofoperadic} (and Lemma \ref{lem:poprodmarkedanodyne} in the marked case). Proving the analogous statement for higher $n$ is done in completely analogous fashion; we omit the details. To prove that the colax symmetric monoidal structure is homotopical, we may now (using the previous lemma) apply the usual skeletal induction to reduce to the case where all the $X_i$ are representable dendroidal sets $T_i$. In fact, using the Segal core inclusions $\mathrm{Sc}(T_i) \rightarrow T_i$, which are trivial cofibrations, and applying the lemma again, we may reduce to the case where all the $X_i$ are simply corollas. Proposition \ref{prop:unbiasedtensor} is then a consequence of the following:

\begin{proposition}
\label{lem:alphaunbiased}
For a tree $T \in \mathbf{\Omega}_{\mathrm{s}}^\mathrm{in}$ with $n$ leaves, let $\alpha: C_n \rightarrow T$ denote the map contracting all the inner edges of $T$. For a collection of corollas $X_1, \ldots, X_n$, the map
\begin{equation*}
\alpha^*: \otimes_T(X_1, \ldots, X_n) \longrightarrow X_1 \otimes \cdots \otimes X_n
\end{equation*}
is a trivial cofibration of dendroidal sets.
\end{proposition}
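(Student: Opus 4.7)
My plan is to proceed by induction on the number of inner edges of $T$. The base case, $T = C_n$ (no inner edges), is immediate since $\alpha$ is then an isomorphism. For the inductive step, I would pick any inner edge $e$ of $T$ and factor $\alpha : C_n \to T$ as $C_n \xrightarrow{\alpha'} T/e \xrightarrow{\partial_e} T$. By the inductive hypothesis applied to $T/e$, the map $(\alpha')^*$ is a trivial cofibration between cofibrant objects, so it suffices to prove that $\partial_e^* : \otimes_T(X) \to \otimes_{T/e}(X)$ is a trivial cofibration for any single inner face $\partial_e$.

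Writing $T = C_k \star (T_1, \ldots, T_k)$, the nature of $\partial_e^*$ depends on the location of $e$. If $e$ lies strictly inside some subtree $T_j$, then $\partial_e^*$ is obtained by applying $\otimes_k$ to the map $\otimes_{T_j}(X^{(j)}) \to \otimes_{T_j/e}(X^{(j)})$ in the $j$-th slot, with all other slots cofibrant. A secondary induction on the size of $T_j$, combined with the multi-variable analogue of Lemma \ref{lem:tensorprodhomotopical} (which can be proved by exactly the same shuffle arguments as the binary case, together with Proposition \ref{prop:newnormalmonopoprod2} extended to several factors), handles this situation.

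The genuine content of the proposition lies in the remaining case, where $e$ is an input of the root corolla of $T$, so that contracting $e$ merges the root corollas of $T$ and $T_j$. Writing $T_j = C_l \star (T_{j,1}, \ldots, T_{j,l})$, the map $\partial_e^*$ becomes an honest associativity comparison
\[
\otimes_k(Y_1, \ldots, \otimes_l(Z_1, \ldots, Z_l), \ldots, Y_k) \longrightarrow \otimes_{k+l-1}(Y_1, \ldots, Z_1, \ldots, Z_l, \ldots, Y_k),
\]
with $Y_i = \otimes_{T_i}(X^{(i)})$ and $Z_m = \otimes_{T_{j,m}}(X^{(j,m)})$. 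Both the domain and the codomain map to the common operadic nerve $N_d(\tau_d(Y_1) \otimes \cdots \otimes \tau_d(Y_k))$, and under these maps $\partial_e^*$ is realized as the inclusion of ``grouped'' shuffles---those respecting the two-level grouping imposed by $T$---into a larger collection of shuffles of the same corollas $X_1, \ldots, X_n$.

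The hard part will be showing that this last inclusion is a trivial cofibration. My approach would be to first reduce by skeletal induction in each variable (using the multi-variable version of Lemma \ref{lem:tensorprodhomotopical} already invoked above) to the case where all $Y_i$ and $Z_m$ are themselves representable trees, so that each is a union of shuffles of corollas. I would then adjoin the missing shuffles to the ``grouped'' subobject one at a time, in a filtration ordered by the extent to which they mix the two levels. For each newly adjoined shuffle $W$, the plan would be to identify a set $\Sigma_W$ of its inner edges such that the inner horn $\Lambda^{\Sigma_W}[W] \hookrightarrow W$ presents the step as a pushout of an inner anodyne map, with every face of $W$ not obtained by contracting an edge in $\Sigma_W$ already lying in an earlier stage of the filtration. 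This bookkeeping---choosing the ordering of shuffles and the set $\Sigma_W$ so that the rest of the faces already belong to the previous stage---is the main obstacle, and it is carried out in the spirit of the shuffle manipulations of Propositions \ref{prop:poprodinneranodyne} and \ref{prop:proofmonoidal}; normality of each stage follows by a verification parallel to Proposition \ref{prop:newnormalmonopoprod2}.
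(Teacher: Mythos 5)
Your inductive strategy is a genuine alternative to the paper's, but it misses the trick that makes the paper's argument tractable, and as a result the crucial step is left as a sketch of something substantially harder than what the paper actually has to prove. The paper first proves only the single map $\iota\colon X_1 \otimes (X_2 \otimes \cdots \otimes X_n) \to X_1 \otimes \cdots \otimes X_n$ (with the $X_i$ corollas, and the outer bracketing $\emph{binary}$) is inner anodyne, by a shuffle/pruning filtration, and then handles an arbitrary $\alpha^* \colon \otimes_T(X) \to X_1 \otimes \cdots \otimes X_n$ by choosing a $\emph{maximal binary expansion}$ $\beta\colon T \to \widehat{T}$: both $\beta$ and $\beta \circ \alpha$ factor into elementary (binary) expansions, each inducing a tensor product of $\iota$ with cofibrant factors and hence a trivial cofibration by Lemma~\ref{lem:tensorprodhomotopical}; two-out-of-three then gives the result. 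Your route instead inducts on inner edges, factoring $\alpha$ through a single inner face $\partial_e$, reduces (by a secondary induction) to the case $e$ adjacent to the root, and then confronts the comparison $\otimes_k(Y_1,\ldots,\otimes_l(Z_1,\ldots,Z_l),\ldots,Y_k) \to \otimes_{k+l-1}(\ldots)$. This is strictly more general than $\iota$ (which is the case $k=2$), and after your skeletal reduction the $Y_i,Z_m$ are representable trees rather than corollas, so the required shuffle/pruning bookkeeping is yet again more involved than what the paper carries out. You correctly identify this as ``the main obstacle,'' but you only gesture at it.

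The gap is not that your route cannot work---it plausibly can, following the template of Proposition~\ref{prop:poprodinneranodyne}---but that you treat the general root-adjacent case as the base of the induction requiring a fresh combinatorial argument, when in fact it too can be dispatched by the maximal-binary-expansion plus two-out-of-three device applied to the one-inner-edge tree $C_k \circ_j C_l$, reducing it to compositions of $\iota$. Not noticing this makes your primary/secondary induction redundant and leaves the heaviest combinatorics undone. Two smaller slips: the phrase ``representable trees, so that each is a union of shuffles of corollas'' is a misstatement (a representable tree is not itself a union of shuffles); and you could push the skeletal reduction further, replacing each representable tree by its Segal core and hence by corollas, which would at least put you in the same territory as the paper's $\iota$, though with arbitrary $k$ rather than $k=2$.
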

\begin{proof}
Since the unit of the tensor product is strong, we can without loss of generality assume that $T$ has no nullary vertices. Then the cases $n=1, 2$ of the statement of the proposition are trivial. For $n \geq 3$ and any collection of corollas $X_1, \ldots, X_n$, we will prove that the map
\begin{equation*}
\iota: X_1 \otimes (X_2 \otimes \cdots \otimes X_n) \longrightarrow X_1 \otimes X_2 \otimes \cdots \otimes X_n
\end{equation*}
is inner anodyne. Afterwards, we will show how the statement of the proposition can be deduced from this. Note that $\iota$ is a special case of a map of the form $\alpha^*$, for $T$ a tree with a binary vertex at the root and a vertex of valence $n-1$ attached to it.

The $n$-fold tensor product $X_1 \otimes \cdots \otimes X_n$ is a union of the shuffles of the corollas $X_1, \ldots, X_n$ as in the case of binary tensor products. The tensor product $X_1 \otimes(X_2 \otimes \cdots \otimes X_n)$ is the union of a subset of these shuffles (see the proof of Proposition \ref{prop:forestsetsH2} for a typical example). Consider the set of shuffles $\Sigma$ of $X_1 \otimes \cdots \otimes X_n$; it can be partially ordered by declaring $S_1 < S_2$ whenever $S_2$ is obtained from $S_1$ by shuffling vertices corresponding to the vertex of $X_1$ downwards (i.e. towards the root). Extend this partial order to a linear order in an arbitrary fashion. Now filter the map $\iota$ by adjoining the shuffles in $\Sigma$ one by one according to the chosen order to obtain a sequence of maps
\begin{equation*}
X_1 \otimes (X_2 \otimes \cdots \otimes X_n) =: A_0 \subseteq A_1 \subseteq \cdots \subseteq A_N = X_1 \otimes \cdots \otimes X_n.
\end{equation*}
Consider an inclusion $A_i \subseteq A_{i+1}$ given by adjoining some shuffle $S$. The tree $S$ has a set $V$ of distinguished vertices corresponding to the vertex of the corolla $X_1$. Define a \emph{$V$-pruning} to be a subtree of $S$ that contains all the vertices of $V$ and is obtained from $S$ by iteratively chopping off leaf vertices and root vertices. These $V$-prunings form a partially ordered set $\mathcal{P}$ by declaring $P_1 < P_2$ if $P_1$ is a subtree of $P_2$. This poset has a minimal element given by removing from $S$ all vertices above $V$ and all unary vertices at the root (if any). It also has a maximal element given by $S$ itself. Note that the minimal $V$-pruning is already contained in $A_0$. Extend the partial order on $\mathcal{P}$ to a linear order arbitrarily and adjoin all $V$-prunings one by one in this order to obtain a further filtration
\begin{equation*}
A_i =: A_{i}^0 \subseteq A_i^1 \subseteq \cdots \subseteq \bigcup_j A_i^j = A_{i+1}.
\end{equation*}
Consider an inclusion $A_i^j \subseteq A_i^{j+1}$ given by adjoining a $V$-pruning $P$. Denote by $\mathcal{E}(P)$ the collection of \emph{special} edges of $P$: an edge is special if it is an input edge of a vertex in $V$ and also an inner edge of $P$. Without loss of generality we may assume $\mathcal{E}(P)$ to be non-empty: if not, the pruning $P$ has no vertices above the vertices of $V$ and is therefore already contained in $A_0$. Let $I(P)$ denote the set of inner edges of $P$ and write
\begin{equation*}
\mathcal{H}(P) = I(P) - \mathcal{E}(P).
\end{equation*}
Now, for any subset $H \subset \mathcal{H}(P)$, denote by $P^{[H]}$ the tree obtained from $P$ by contracting all the edges in $\mathcal{H}(P) - H$. Pick a linear order on the subsets of $\mathcal{H}(P)$ that extends the partial order of inclusion and adjoin the trees $P^{[H]}$ to $A_i^j$ in this order to obtain a filtration
\begin{equation*}
A_i^j=: A_{i}^{j,0} \subseteq A_i^{j,1} \subseteq \cdots \subseteq \bigcup_k A_i^{j,k} = A_i^{j+1}.
\end{equation*}
Finally, consider an inclusion $A_i^{j,k} \subseteq A_i^{j,k+1}$ given by adjoining a tree $P^{[H]}$. If $P^{[H]}$ is already contained in $A_i^{j,k}$ there is nothing to prove. If not, we can say the following:
\begin{itemize}
\item[-] Any inner face of $P^{[H]}$ contracting a special edge, or a composition of inner faces contracting several special edges, is not contained in $A_i^{j,k}$. Indeed, $P^{[H]}$ is not contained in $A_i^{j,k}$ because the trees on top of the various vertices of $V$ do not all stem from the \emph{same} shuffle of $C_{k_2} \otimes \cdots \otimes C_{k_n}$. For the same reason, a tree obtained by contracting any number of special edges is also not contained in $A_i^{j,k}$.
\item[-] Any inner face of $P^{[H]}$ contracting an edge that is not special is contained in $A_i^{j,k}$ by our induction on the size of $H$.
\item[-] Any outer face of $P^{[H]}$ is contained in $A_i^{j,k}$ is contained in $A_i^j$ by our induction on the size of prunings.
\end{itemize}
We conclude that the map $A_i^{j,k} \subseteq A_i^{j,k+1}$ is a pushout of the inclusion
\begin{equation*}
\Lambda^{\mathcal{E}(P)}[P^{[H]}] \longrightarrow P^{[H]}
\end{equation*}
and hence inner anodyne.

We have proved $\iota$ is a trivial cofibration (for all $n \geq 3$). It remains to treat the general case of a map
\begin{equation*}
\alpha^*: \otimes_T(X_1, \ldots, X_n) \longrightarrow X_1 \otimes \cdots \otimes X_n.
\end{equation*}
The left-hand side is the union of a subset of the shuffles that make up the right-hand side, so that $\alpha^*$ is a normal monomorphism. Consider a \emph{maximal binary expansion} $\beta: T \rightarrow \widehat{T}$ of the tree $T$: that is, a composition of inner face maps such that the tree $\widehat{T}$ contains only binary vertices. It is clear that such an expansion always exists. The composition $\beta \circ \alpha: C_n \rightarrow \widehat{T}$ (which is itself a maximal binary expansion of $C_n$) may be factored into \emph{elementary expansions}, i.e. maps of the following form:
\[
\begin{tikzpicture} 
[level distance=5mm, 
every node/.style={fill, circle, minimum size=.1cm, inner sep=0pt}, 
level 1/.style={sibling distance=7mm}, 
level 2/.style={sibling distance=5mm}, 
level 3/.style={sibling distance=3mm}]

\node(anchorT)[style={color=white}] {} [grow'=up] 
child {node(vertexT) {} 
	child
	child
	child
	child
};


\node (anchorT')[style={color=white}, right = 3cm of anchorT] {} [grow'=up] 
child {node(vertexT') {} 
	child{
	}
	child{node {}
		child
		child
		child
	}
};

\tikzstyle{every node}=[]

\draw[->] ($(shuffle1) + (1cm, .5cm)$) -- ($(shuffle1) + (2cm, .5cm)$);

\node at ($(vertexT) + (-0.2cm,0cm)$) {$v$};
\node at ($(vertexT) + (-.5cm, .7cm)$) {$\cdots$};
\node at ($(vertexT) + (.5cm, .7cm)$) {$\cdots$};
\node at ($(vertexT) + (0, -.7cm)$) {$\cdots$};

\node at ($(vertexT') + (-0.2cm,0cm)$) {$w$};
\node at ($(vertexT') + (0.55cm,.5cm)$) {$u$};
\node at ($(vertexT') + (.27cm, 1.1cm)$) {$\cdots$};
\node at ($(vertexT') + (-.3cm, .7cm)$) {$\cdots$};
\node at ($(vertexT') + (0, -.7cm)$) {$\cdots$};

\end{tikzpicture} 
\]
The dots indicate that we are only picturing the relevant part of the trees in question: more may be attached to the roots and leaves of the corollas drawn. In words, an elementary expansion is a map decomposing a vertex $v$ into a composition $w \circ_2 u$, where $w$ is binary and $u$ has valence one less than $v$. Let $\gamma$ denote such an elementary expansion. By what we proved above, $\gamma^*$ is a trivial cofibration. Indeed, it is a tensor product of a map of the form $\iota$ with a sequence of normal and open dendroidal sets and hence itself a trivial cofibration by Lemma \ref{lem:tensorprodhomotopical}. We conclude that $(\beta \circ \alpha)^*$ is a trivial cofibration. Similarly, the map $\beta: T \rightarrow \widehat{T}$ may be factored into elementary expansions, so that $\beta^*$ is a trivial cofibration as well. By two-out-of-three, we conclude that $\alpha^*$ must be a trivial cofibration. $\Box$ 
\end{proof}

Since the category $\mathbf{POp}_o$ is monoidal, its binary tensor product $\odot$ can be used to construct a colax (non-symmetric) monoidal category for which all the associativity maps $\alpha^*$ are isomorphisms. A straightforward elaboration of the proof of Theorem \ref{thm:monoidal} then gives the following:

\begin{lemma}
\label{lem:unbiasedequiv}
For any objects $X_1, \ldots, X_n \in \mathbf{POp}_o$ and $T$ a tree with $n$ leaves, there exists a weak equivalence
\begin{equation*}
\theta^T_{X_1, \ldots, X_n}: \omega_!\bigl(\odot_T(X_1, \ldots, X_n)\bigr) \longrightarrow \otimes_T(\omega_! X_1, \ldots, \omega_! X_n).
\end{equation*}
Furthermore, these equivalences are natural in the $X_i$ and are compatible with the associativity maps $\alpha^*$ for the tensor products $\odot$ and $\otimes$.
\end{lemma}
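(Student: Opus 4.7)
The plan is to mirror the argument for the binary case given in Theorem \ref{thm:monoidal} and Proposition \ref{prop:proofmonoidal}. First, I would construct $\theta^T_{X_1,\ldots,X_n}$ naturally in the $X_i$. Since both $\odot_T$ and $\otimes_T$ preserve colimits in each variable separately, it suffices to define the map when each $X_i$ is a representable (marked) simplex $A_i$ over $N\mathbf{F}_o^\natural$, and then extend by colimits. For such simplices, both sides of $\theta^T$ are unions of shuffles of the forests $\omega_!(A_i)$: the codomain $\otimes_T(\omega_! A_1, \ldots, \omega_! A_n)$ is the union of all shuffles organized according to the tree $T$, while the domain $\omega_!(\odot_T(A_1,\ldots,A_n))$ is the union of those shuffles which are \emph{layered} with respect to the simplicial structure of each $A_i$. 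We define $\theta^T$ as the evident inclusion.

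Second, to show that $\theta^T$ is a weak equivalence on all inputs, I would proceed by a skeletal induction in each variable. Using a multivariable version of the cube lemma (cf.\ Lemma \ref{lem:unitequiv1}) and the fact that $\odot$ and $\otimes_T$ are both left Quillen in each variable (by Lemma \ref{lem:tensorprodhomotopical} and standard properties of $\mathbf{POp}_o$), this reduces to proving $\theta^T$ is a weak equivalence when each $X_i$ is a representable $A_i$. Applying Segal cores together with the cube lemma allows one to reduce further to the case where each $A_i$ has dimension at most $1$, and then Lemma \ref{lem:reducetoconnected} reduces to the case of ``connected'' simplices, where $\omega_!(A_i) \in \{\varnothing, \eta, C_k\}$. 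In this final case one checks directly from the construction that $\theta^T$ is an isomorphism.

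Finally, naturality in the $X_i$ is built into the construction. The required compatibility with associativity maps means that for each morphism $\alpha: S \to T$ in $\mathbf{\Omega}_{\mathrm{s}}^{\mathrm{in}}$, the square
\[
\xymatrix@C=30pt{
\omega_!(\odot_T(X_1,\ldots,X_n)) \ar[r]^-{\theta^T}\ar[d]_{\omega_!(\alpha^*)} & \otimes_T(\omega_! X_1,\ldots,\omega_! X_n) \ar[d]^{\alpha^*} \\
\omega_!(\odot_S(X_1,\ldots,X_n)) \ar[r]_-{\theta^S} & \otimes_S(\omega_! X_1,\ldots,\omega_! X_n)
}
\]
commutes; note that the left vertical map is an isomorphism since $\odot$ is strictly monoidal on $\mathbf{POp}_o$, while the right vertical is an associativity map of the colax symmetric monoidal structure from Proposition \ref{prop:unbiasedtensor}. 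Commutativity is checked on representables, where it amounts to tracing through the definition of the inclusion of layered shuffles and its interaction with inner face maps and symmetries of trees.

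The main obstacle I expect is the careful treatment of the layered shuffles for a general tree $T$: unlike the binary case, where a shuffle of $\omega_!(A) \otimes \omega_!(B)$ is layered exactly when its sequence of vertices respects the two simplicial orderings, for higher $T$ one needs a more intricate compatibility condition that correctly reflects the tree structure. Verifying that this notion behaves well both under morphisms of the underlying simplices (needed for naturality and for the reductions above) and under inner face maps of $T$ (needed for compatibility with $\alpha^*$) requires a uniform and careful combinatorial description, but no essentially new ideas beyond those already used in the binary case.
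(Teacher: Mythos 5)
Your construction of $\theta^T$ as the inclusion of layered shuffles, and the reduction via colimits, skeletal induction, Segal cores, and Lemma \ref{lem:reducetoconnected} to the case where each $A_i$ is a connected (marked) simplex with $\omega_!(A_i)\in\{\varnothing,\eta,C_k\}$, all parallel Proposition \ref{prop:proofmonoidal} correctly. But the final step fails: it is \emph{not} true that $\theta^T$ is an isomorphism in this reduced case. Take $T=C_3$ and each $A_i$ a connected active $1$-simplex with $\omega_!(A_i)=C_2$. Then the domain $\omega_!(A_1\odot A_2\odot A_3)$ is the union of the layered shuffles, i.e. those coming from the $3!=6$ nondegenerate $3$-simplices of $\Delta^1\times\Delta^1\times\Delta^1$, whereas the codomain $\otimes_{C_3}(C_2,C_2,C_2)=C_2\otimes C_2\otimes C_2$ is the nerve of the ternary Boardman--Vogt tensor, which is the union of \emph{all} $12$ shuffles (choose the root colour, then independently choose the order of the remaining two colours on each of the two branches). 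Six of these are not layered, so $\theta^{C_3}$ is a proper inclusion. The binary case $n=2$ is special precisely because for two corollas every shuffle is layered; this does not persist once a vertex of $T$ has valence $\geq 3$.

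This is the point where the paper diverges from a naive iteration of the binary argument. Instead of trying to prove the reduced-case inclusion is an isomorphism, it chooses a maximal binary expansion $\beta\colon T\to\widehat T$ and forms the commuting square relating $\theta^{\widehat T}$ to $\theta^T$. On the $\odot$ side the associator is an isomorphism (strict monoidality of $\odot$); on the $\otimes$ side it is the map $\beta^*$, which is a weak equivalence by the homotopicality of the colax structure (Proposition \ref{prop:unbiasedtensor}, proved via Proposition \ref{lem:alphaunbiased}). And $\theta^{\widehat T}$ factors as an iterated composite of binary $\theta$'s (plus maps of the form $\theta\otimes\mathrm{id}$), each a weak equivalence by Theorem \ref{thm:monoidal} and Lemma \ref{lem:tensorprodhomotopical}, so $\theta^{\widehat T}$ is a weak equivalence. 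Two-out-of-three then gives the result for $\theta^T$. Your proof contains no analogue of this binary-expansion step and no appeal to the weak equivalence of the $\alpha^*$ maps, so as written it cannot close the argument. To repair it you would either need to insert exactly this mechanism, or supply a genuinely new direct combinatorial filtration showing the inclusion of layered shuffles into $\otimes_T$ of corollas is inner anodyne for every tree $T$, which is not a short check.
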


To be more specific, the map $\theta^T_{X_1, \ldots, X_n}$ is constructed as in the proof of Theorem $\ref{thm:monoidal}$. The fact that it is a weak equivalence is a consequence of Theorem $\ref{thm:monoidal}$, using a maximal binary expansion of $T$ on both sides combined with the fact that the colax monoidal structures on $\mathbf{POp}_o$ and $\mathbf{fSets}^+_o$ are homotopical. Note that in this last step we are also using that all the objects involved are cofibrant (i.e. the $X_i$, $\omega_!X_i$ and tensor products of such). 

We wish to conclude that $\omega_!$ induces an equivalence of symmetric monoidal categories between $\mathrm{Ho}(\mathbf{POp}_o)$ and  $\mathrm{Ho}(\mathbf{fSets}^+_o)$. However, the tensor product on $\mathbf{POp}_o$ is not symmetric. Still, it is symmetric `up to weak equivalence' and can be used to give $\mathrm{Ho}(\mathbf{POp}_o)$ a symmetric monoidal structure. We briefly recall how this is done and then prove Proposition \ref{prop:equivsymmon} below.

\begin{definition}
Let $F: \mathbf{F}_o^{\times n} \longrightarrow \mathbf{F}_o$ be a functor. We will say $F$ is an \emph{$n$-fold smash product} if:
\begin{itemize}
\item[-] $F(\langle 1 \rangle, \ldots, \langle 1 \rangle) = \langle 1 \rangle$;
\item[-] $F$ preserves coproducts in each variable separately.
\end{itemize}
\end{definition}  

The functor $\wedge$ we used to define $\odot$ is a two-fold smash product. For every $n$, the collection of $n$-fold smash products and natural isomorphisms between them form a groupoid which is denoted $\mathfrak{S}(n)$. Since there is a unique natural isomorphism between any two $n$-fold smash products, this groupoid is contractible. In fact, by composing smash products, these groupoids fit together into a (strict) operad in groupoids, which we will denote by $\mathfrak{S}$. \par 
It is important to observe that for the construction of the natural transformation $\theta$ of Theorem \ref{thm:monoidal}, the choice of (two-fold) smash product used to construct the tensor product $\odot$ is completely irrelevant. To be more precise, the observations we made before will also prove the following: 

\begin{lemma}
\label{lem:othersmashes}
Let $P_1, \ldots, P_n$ be objects of $\mathbf{POp}_o$, let $\sigma$ be a $k$-simplex of the nerve of $\mathfrak{S}(n)$ and define $\bigodot_{\sigma}\{P_i\}_{1 \leq i \leq n}$ to be the composition
\[
\xymatrix{
(\Delta^k)^\sharp \times \prod_{i=1}^n P_i \ar[r] & (\Delta^k)^\sharp \times (N\mathbf{F}_o^\natural)^{\times n} \ar[r]^-{\bar\sigma} & N\mathbf{F}_o^\natural
}
\]
where the map $\bar\sigma$ corresponds to the simplex $\sigma$. Then there are natural weak equivalences
\begin{equation*}
\omega_!\bigl(\bigodot_{\sigma}\{P_i\}_{1 \leq i \leq n}\bigr) \longrightarrow (\Delta^k)^\sharp \otimes \omega_!\bigl(\bigodot_{i=1}^n P_i\bigr) \longrightarrow (\Delta^k)^\sharp \otimes \bigotimes_{i=1}^n \omega_!(P_i),
\end{equation*}
the second map coming from Lemma \ref{lem:unbiasedequiv}.
\end{lemma}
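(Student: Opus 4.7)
The approach mirrors the proof of Theorem \ref{thm:monoidal} (more precisely its $n$-ary generalization Lemma \ref{lem:unbiasedequiv}), but in a ``parametrized'' form that accommodates the $k$-simplex $\sigma$. The plan is to construct the first weak equivalence explicitly for representable arguments and then extend by a standard skeletal-filtration and cube-lemma argument. The second weak equivalence in the statement is supplied directly by Lemma \ref{lem:unbiasedequiv}, so only the first one requires work.

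Concretely, I would first reduce to the case where the $P_i$ are representable marked simplices $A_i$: by the cube lemma applied to skeletal filtrations of each $P_i$ in turn, it suffices to check that the first map is a weak equivalence whenever the $A_i$ are representables (together with ensuring that the construction is natural, so that the reduction is legitimate). This reduction requires analogues of Lemma \ref{lem:tensorprodhomotopical}, asserting that the functors $X \mapsto (\Delta^k)^\sharp \otimes X$ and the various multi-tensor-product functors preserve cofibrations and trivial cofibrations between cofibrant open objects. These are proved exactly as in the binary case.

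For representable $A_i$, I would identify $\omega_!(\bigodot_{\sigma}\{A_i\})$ explicitly as a union of \emph{$\sigma$-layered shuffles} of the forests $\omega_!(A_i)$, where the layering records, at each vertex of $\Delta^k$, which smash product from $\sigma$ is being used; the morphisms in $\sigma$ then govern how adjacent layers are related. The target $(\Delta^k)^\sharp \otimes \omega_!(\bigodot A_i)$ is by definition a union of all shuffles of $\Delta^k$ with all shuffles of $\omega_!(\bigodot A_i)$, and the first map is realized as the evident inclusion of $\sigma$-layered shuffles into this total union, generalising the map $\theta_{A,B}$ constructed in the proof of Theorem \ref{thm:monoidal}. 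To show this inclusion is a weak equivalence, I would use the Segal-core reductions together with Lemma \ref{lem:reducetoconnected} to pass to \emph{connected} representables, where each $\omega_!(A_i)$ is either $\varnothing$, $\eta$, or a single corolla. In these base cases there is no non-trivial shuffling to do and the map becomes an isomorphism.

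The main obstacle is a bookkeeping one: encoding the composable chain of natural isomorphisms in $\sigma$ into the combinatorial data of layered shuffles, and then verifying that the resulting inclusion is natural both in the $P_i$ and in $\sigma$. The contractibility of the groupoid $\mathfrak{S}(n)$ is what ultimately makes the specific choice of $\sigma$ irrelevant up to weak equivalence, but the coherence between the natural isomorphisms comprising $\sigma$ and the combinatorics of shuffles on the forest-set side is where the technicality lies. Once one has the explicit description on representables, however, everything reduces to a mild elaboration of the combinatorial arguments already present in the proofs of Theorem \ref{thm:monoidal} and Proposition \ref{prop:proofmonoidal}, so no fundamentally new ideas are required.
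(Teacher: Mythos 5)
Your proposal is correct and matches the paper's intended approach: the paper itself offers no explicit proof of this lemma, merely asserting that ``the observations we made before will also prove the following,'' and your expansion correctly identifies the relevant observations (the skeletal-filtration/cube-lemma reduction to representables, the description of $\omega_!$ applied to products of simplices as unions of layered shuffles, and the final reduction via Segal cores and Lemma \ref{lem:reducetoconnected} to connected simplices where the comparison map becomes an isomorphism) and assembles them in the right order.
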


Define a (symmetric) simplicial coloured operad $\bigl(\mathbf{POp}_o\bigr)_{\mathfrak{S}}^\otimes$ as follows:
\begin{itemize}
\item[-] Let the colours of $\bigl(\mathbf{POp}_o\bigr)_{\mathfrak{S}}^\otimes$ be the fibrant (and automatically cofibrant) objects of $\mathbf{POp}_o$.
\item[-] For fibrant objects $X_1, \ldots, X_n$ and $Y$, let the $k$-simplices of the simplicial set $\bigl(\mathbf{POp}_o\bigr)_{\mathfrak{S}}^\otimes(X_1, \ldots, X_n ; Y)$ be commutative diagrams of the form
\[
\xymatrix{
(\Delta^k)^\sharp \times \prod_{i=1}^n X_i \ar[r]\ar[d] & Y \ar[d] \\
(\Delta^k)^\sharp \times (N\mathbf{F}_o^\natural)^{\times n} \ar[r] & N\mathbf{F}_o^\natural
}
\]
where the bottom horizontal arrow corresponds to a $k$-simplex of the nerve of $\mathfrak{S}(n)$. 
\item[-] Define composition using the operad structure on $\mathfrak{S}(n)$.
\item[-] The symmetric group $\Sigma_n$ acts by permuting the $X_i$ and through its evident action on $\mathfrak{S}(n)$.
\end{itemize}

Denote by $\mathrm{Ho}\bigl(\bigl(\mathbf{POp}_o\bigr)_{\mathfrak{S}}^\otimes\bigr)$ the operad in sets obtained by taking connected components of the simplicial sets defining the operations in $\bigl(\mathbf{POp}_o\bigr)_{\mathfrak{S}}^\otimes$. This operad has underlying category $\mathrm{Ho}(\mathbf{POp}_o)$ and defines a symmetric monoidal structure on this category; this is immediate from the observation that the functor $\mathrm{Ho}\bigl(\bigl(\mathbf{POp}_o\bigr)_{\mathfrak{S}}^\otimes\bigr)(X_1, \ldots, X_n: -)$ is corepresented by $X_1 \odot \cdots \odot X_n$ and the fact that this operad is symmetric.

Now for fibrant-cofibrant objects $X_1, \ldots, X_n, Y \in \mathbf{fSets}^+_o$, define a simplicial set as follows:
\begin{equation*}
\bigl(\mathbf{fSets}_o^+\bigr)_{\mathfrak{S}}^\otimes(X_1, \ldots, X_n; Y) \, := \, \mathrm{Map}^\sharp\bigl(\bigotimes_{i=1}^n X_i, Y\bigr) \times \mathfrak{S}(n).
\end{equation*}
These simplicial sets do \emph{not} naturally form a coloured operad. Indeed, the associativity maps $\alpha^*$ go the wrong way if one were to try to define composition  (the analogous construction for the opposite category of $\mathbf{fSets}^+$ would give a simplicial operad). However, taking the connected components of these simplicial sets does give an operad $\mathrm{Ho}\bigl(\bigl(\mathbf{fSets}_o^+\bigr)_{\mathfrak{S}}^\otimes\bigr)$ in sets; indeed, the associators $\alpha^*$ have inverses in the homotopy category $\mathrm{Ho}(\mathbf{fSets}^+_o)$ which can be used to define composition. Again, this operad encodes the symmetric monoidal structure of $\mathrm{Ho}(\mathbf{fSets}^+_o)$.

Proposition \ref{prop:omegasymmonoidal} now follows from the next result, together with the fact that $\mathbf{fSets}_o^+$ and $\mathbf{dSets}_o$ are linked by a chain of symmetric monoidal Quillen equivalences:

\begin{proposition}
\label{prop:equivsymmon}
The functor $\omega^*$ induces an equivalence of operads $\mathrm{Ho}\bigl(\bigl(\mathbf{fSets}_o^+\bigr)_{\mathfrak{S}}^\otimes\bigr) \rightarrow \mathrm{Ho}\bigl(\bigl(\mathbf{POp}_o\bigr)_{\mathfrak{S}}^\otimes\bigr)$.
\end{proposition}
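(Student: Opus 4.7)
To prove that $\omega^*$ induces an equivalence of operads, we need to check that the associated functor on categories of colours is essentially surjective and that, for any tuple of fibrant-cofibrant objects $X_1, \ldots, X_n, Y$ of $\mathbf{fSets}_o^+$, the induced map
$$\pi_0\bigl(\bigl(\mathbf{fSets}_o^+\bigr)_{\mathfrak{S}}^\otimes(X_1, \ldots, X_n; Y)\bigr) \longrightarrow \pi_0\bigl(\bigl(\mathbf{POp}_o\bigr)_{\mathfrak{S}}^\otimes(\omega^*X_1, \ldots, \omega^*X_n; \omega^*Y)\bigr)$$
is a bijection. The underlying map of operadic simplicial sets sends a $k$-simplex $(\sigma, f)$ with $f\colon (\Delta^k)^\sharp \otimes \bigotimes_i X_i \to Y$ to the pair $(\sigma, g)$, where $g$ is the composite of $\omega^*(f)$ with the natural comparison map $\bigodot_\sigma\{\omega^*X_i\} \to \omega^*((\Delta^k)^\sharp \otimes \bigotimes_i X_i)$ obtained by iterating Corollary \ref{cor:omega*monoidal2}. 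Essential surjectivity on colours is immediate from the Quillen equivalence of Corollary \ref{cor:equivalencePOpdSets}.

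To compute the two $\pi_0$'s, I will identify them with concrete hom sets in the appropriate homotopy categories. Since $N\mathfrak{S}(n)$ is a contractible Kan complex, the source is simply $[\bigotimes_i X_i, Y]_{\mathrm{Ho}(\mathbf{fSets}_o^+)}$. For the target, fix a vertex $\sigma_0 \in N\mathfrak{S}(n)_0$, that is, a choice of $n$-fold smash product. Any vertex $(\sigma, f)$ of the target can be connected, via a 1-simplex of $N\mathfrak{S}(n)$ (which is automatically a natural isomorphism, since $\mathfrak{S}(n)$ is a groupoid), to a vertex over $\sigma_0$; the residual equivalence relation among maps $\bigodot_{\sigma_0}\{\omega^*X_i\} \to \omega^*Y$ is then generated by $(\Delta^1)^\sharp$-homotopies in $\mathbf{POp}_o$. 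Since $\omega^*Y$ is fibrant (as $\omega^*$ is right Quillen) and $\bigodot_{\sigma_0}\{\omega^*X_i\}$ is cofibrant (all objects of $\mathbf{POp}_o$ are), the target $\pi_0$ is canonically identified with $[\bigodot_{\sigma_0}\{\omega^*X_i\}, \omega^*Y]_{\mathrm{Ho}(\mathbf{POp}_o)}$.

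Under these identifications, the induced map factors as
$$[\textstyle\bigotimes_i X_i, Y]_{\mathrm{Ho}(\mathbf{fSets}_o^+)} \xrightarrow{\omega^*} [\omega^*(\textstyle\bigotimes_i X_i), \omega^*Y]_{\mathrm{Ho}(\mathbf{POp}_o)} \xrightarrow{\simeq} [\textstyle\bigodot_{\sigma_0}\{\omega^*X_i\}, \omega^*Y]_{\mathrm{Ho}(\mathbf{POp}_o)}.$$
The first arrow is a bijection by the Quillen equivalence, and the second by the iterated natural weak equivalence $\bigodot_{\sigma_0}\{\omega^*X_i\} \simeq \omega^*(\bigotimes_i X_i)$ obtained from Corollary \ref{cor:omega*monoidal2} combined with Proposition \ref{prop:unbiasedtensor}, which makes the choice of bracketing in the iterated tensor product irrelevant up to weak equivalence.

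The main remaining task is to verify that this bijection respects the operadic structure on both sides, namely compositions, symmetric group actions, and units, so that we genuinely obtain an equivalence of operads rather than merely a levelwise bijection. This reduces to the naturality of the comparison maps in Corollary \ref{cor:omega*monoidal2} and Lemma \ref{lem:unbiasedequiv} with respect to morphisms in the operad $\mathfrak{S}$ of smash products, together with the fact that both sides were set up precisely to encode the homotopical symmetric monoidal structures on the underlying model categories. The hard part will be carefully aligning the two sources of coherence data, namely the homotopicalness of the colax monoidal structures and the weak monoidality of $\omega^*$, so that the bijections on $\pi_0$ are seen to intertwine the operadic compositions; once the relevant coherence diagrams are written down, their commutativity up to homotopy follows formally from this naturality.
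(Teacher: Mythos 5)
Your proof takes essentially the same approach as the paper's: construct a comparison map of simplicial operads via the (lax) monoidality of the dendrification functor and then use the Quillen equivalence to establish bijectivity on $\pi_0$ of operation spaces. One small imprecision: ``iterating Corollary~\ref{cor:omega*monoidal2}'' does not by itself furnish the comparison map $\bigodot_\sigma\{\omega^*X_i\} \to \omega^*\bigl((\Delta^k)^\sharp \otimes \bigotimes_i X_i\bigr)$ over a general $k$-simplex $\sigma$ of $N\mathfrak{S}(n)$, since the binary corollary handles neither the arbitrary smash product $\sigma$ nor the $(\Delta^k)^\sharp$ factor; the paper supplies precisely this map in adjoint form via Lemma~\ref{lem:othersmashes}, and with that substitution your argument coincides with the paper's.
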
 
\begin{proof}
We will use $\omega^*$ to define maps of simplicial sets
\begin{equation*}
\bigl(\mathbf{fSets}_o^+\bigr)_{\mathfrak{S}}^\otimes(X_1, \ldots, X_n; Y) \longrightarrow \bigl(\mathbf{POp}_o\bigr)_{\mathfrak{S}}^\otimes(\omega^*X_1, \ldots, \omega^*X_n ; \omega^*Y)
\end{equation*}
which induce an equivalence of operads after passing to connected components. A $k$-simplex of the left-hand side is a map
\begin{equation*}
f: (\Delta^k)^\sharp \otimes \bigotimes_{i=1}^n X_i \longrightarrow Y
\end{equation*}
and a $k$-simplex $\sigma$ of $\mathfrak{S}(n)$. We have to define a diagram
\[
\xymatrix{
(\Delta^k)^\sharp \times \prod_{i=1}^n \omega^*(X_i) \ar[d]\ar[r] & \omega^*(Y) \ar[d] \\ 
(\Delta^k)^\sharp \times (N\mathbf{F}_o^\natural)^{\times n} \ar[r] & N\mathbf{F}_o^\natural
}
\]
By adjunction, this is equivalent to defining a map
\begin{equation*}
\omega_!\bigl(\bigodot_{\sigma}\{\omega^*X_i\}_{1 \leq i \leq n}\bigr) \longrightarrow Y.
\end{equation*}
Such a map is given by the composition
\begin{equation*}
\omega_!\bigl(\bigodot_{\sigma}\{\omega^*X_i\}_{1 \leq i \leq n}\bigr) \longrightarrow (\Delta^k)^\sharp \otimes \bigotimes_{i=1}^n \omega_!\omega^*(X_i) \longrightarrow (\Delta^k)^\sharp \otimes \bigotimes_{i=1}^n X_i \longrightarrow Y,
\end{equation*}
where the first arrow is the map $\theta$ provided by Lemma \ref{lem:othersmashes}, the second one is induced by the counit of the adjunction and the last one is the map $f$. It is straightforward to verify that the definition of this map is natural and yields a map of operads after passing to connected components. The fact that $\omega^*$ induces an equivalence of homotopy categories $\mathrm{Ho}(\mathbf{fSets}^+_o) \rightarrow \mathrm{Ho}(\mathbf{POp}_o)$ combined with Lemma \ref{lem:unbiasedequiv} proves that this map of operads is in fact an equivalence. $\Box$
\end{proof}

\newpage
\bibliographystyle{plain}
\bibliography{biblio}

\begin{thebibliography}{10}

\bibitem{arinkingaitsgory}
D.~Arinkin and D.~Gaitsgory.
\newblock Singular support of coherent sheaves, and the geometric {L}anglands
  conjecture.
\newblock {\em arXiv:1201.6343}, 2012.

\bibitem{barwickkan2}
C.~Barwick and D.~M. Kan.
\newblock A characterization of simplicial localization functors and a
  discussion of {DK} equivalences.
\newblock {\em Indagationes Mathematicae}, 23(1):69--79, 2012.

\bibitem{barwickkan1}
C.~Barwick and D.~M. Kan.
\newblock Relative categories as another model for the homotopy theory of
  homotopy theories.
\newblock {\em Indagationes Mathematicae}, 23(1):42--68, 2012.

\bibitem{beilinsondrinfeld}
A.~Beilinson and V.~G. Drinfeld.
\newblock {\em Chiral algebras}.
\newblock American Mathematical Society Providence, RI, 2004.

\bibitem{benzvinadler2}
D.~Ben-Zvi and D.~Nadler.
\newblock Loop spaces and representations.
\newblock {\em arXiv:1004.5120}, 2010.

\bibitem{benzvinadler1}
D.~Ben-Zvi and D.~Nadler.
\newblock Loop spaces and connections.
\newblock {\em Journal of Topology}, 5(2):377--430, 2012.

\bibitem{bergermoerdijk}
C.~Berger and I.~Moerdijk.
\newblock Resolution of coloured operads and rectification of homotopy
  algebras.
\newblock {\em Contemp. Math}, 431:31--58, 2007.

\bibitem{bergermoerdijkReedy}
C.~Berger and I.~Moerdijk.
\newblock On an extension of the notion of {R}eedy category.
\newblock {\em Mathematische Zeitschrift}, pages 1--28, 2008.

\bibitem{bergner}
J.~E. Bergner.
\newblock A model category structure on the category of simplicial categories.
\newblock {\em Transactions of the American Mathematical Society},
  359(5):2043--2058, 2007.

\bibitem{boardmanvogt}
M.~Boardman and R.~Vogt.
\newblock {\em Homotopy invariant algebraic structures on topological spaces},
  volume 347 of {\em Lecture Notes in Math.}
\newblock Springer-Verlag, 1973.

\bibitem{cisinskimoerdijk1}
D.C. Cisinski and I.~Moerdijk.
\newblock Dendroidal sets as models for homotopy operads.
\newblock {\em Journal of {T}opology}, 4(2):257--299, 2011.

\bibitem{cisinskimoerdijk2}
D.C. Cisinski and I.~Moerdijk.
\newblock Dendroidal {S}egal spaces and infinity-operads.
\newblock {\em Journal of {T}opology}, 6(3):675--704, 2013.

\bibitem{cisinskimoerdijk3}
D.C. Cisinski and I.~Moerdijk.
\newblock Dendroidal sets and simplicial operads.
\newblock {\em Journal of {T}opology}, 6(3):705--756, 2013.

\bibitem{cisinskimoerdijkerr}
D.C. Cisinski and I.~Moerdijk.
\newblock Note on the tensor product of dendroidal sets.
\newblock {\em arXiv:1403.6507}, 2014.

\bibitem{dunn}
G.~Dunn.
\newblock Tensor product of operads and iterated loop spaces.
\newblock {\em Journal of Pure and Applied Algebra}, 50(3):237--258, 1988.

\bibitem{dwyerkan}
W.~G. Dwyer and D.~M. Kan.
\newblock Simplicial localizations of categories.
\newblock {\em J. Pure Appl. Algebra}, 17(3):267--284, 1980.

\bibitem{fiedorowiczvogt}
Z.~Fiedorowicz and R.~M. Vogt.
\newblock Interchanging {$A_\infty$} and {$E_n$} structures.
\newblock {\em arXiv:1102.1311}, 2011.

\bibitem{francis}
J.~Francis.
\newblock Factorization homology of topological manifolds.
\newblock {\em arXiv:1206.5522}, 2012.

\bibitem{francisgaitsgory}
J.~Francis and D.~Gaitsgory.
\newblock Chiral {K}oszul duality.
\newblock {\em Selecta Mathematica}, 18(1):27--87, 2012.

\bibitem{gaitsgory}
D.~Gaitsgory.
\newblock Outline of the proof of the geometric {L}anglands conjecture for
  {GL}(2).
\newblock {\em arXiv:1302.2506}, 2013.

\bibitem{gaitsgoryrozenblyum}
D.~Gaitsgory and N.~Rozenblyum.
\newblock Notes on geometric {L}anglands: crystals and {D}-modules.
\newblock {\em arXiv:1111.2087}, 2011.

\bibitem{heuts}
G.S.K.S. Heuts.
\newblock Algebras over infinity-operads.
\newblock {\em arXiv:1110.1776}, 2011.

\bibitem{hirschowitzsimpson}
A.~Hirschowitz and C.~Simpson.
\newblock Descente pour les n-champs.
\newblock {\em arXiv:math.AG/9807049}, 2001.

\bibitem{joyalpaper}
A.~Joyal.
\newblock Quasi-categories and {K}an complexes.
\newblock {\em Journal of Pure and Applied Algebra}, 175(1):207--222, 2002.

\bibitem{joyal}
A.~Joyal.
\newblock The theory of quasi-categories {I}.
\newblock {\em preprint}, 2008.

\bibitem{lambek}
J.~Lambek.
\newblock Deductive systems and categories {II}. {S}tandard constructions and
  closed categories.
\newblock In {\em Category theory, homology theory and their applications {I}},
  pages 76--122. Springer, 1969.

\bibitem{luriedagiii}
J.~Lurie.
\newblock Derived algebraic geometry {III}: commutative algebra.
\newblock {\em arXiv:math/0703204}, 2007.

\bibitem{htt}
J.~Lurie.
\newblock {\em Higher topos theory}, volume 170.
\newblock Princeton University Press, 2009.

\bibitem{higheralgebra}
J.~Lurie.
\newblock Higher algebra.
\newblock {\em preprint}, 2012.

\bibitem{may}
J.~P. May.
\newblock {\em The geometry of iterated loop spaces}.
\newblock Springer-Verlag Berlin, 1972.

\bibitem{moerdijklectures}
I.~Moerdijk.
\newblock Lectures on dendroidal sets, simplicial methods for operads and
  algebraic geometry, {A}dv. {C}ourses {M}ath. {CRM} {B}arcelona, 2010.

\bibitem{moerdijkweiss}
I.~Moerdijk and I.~Weiss.
\newblock On inner {K}an complexes in the category of dendroidal sets.
\newblock {\em Advances in Mathematics}, 221(2):343--389, 2009.

\bibitem{nadler}
D.~Nadler.
\newblock Fukaya categories as categorical {M}orse homology.
\newblock {\em arXiv:1109.4848}, 2011.

\bibitem{quillenHA}
D.~G. Quillen.
\newblock {\em Homotopical algebra}.
\newblock Springer-Verlag Berlin, 1967.

\bibitem{quillenrational}
D.~G. Quillen.
\newblock Rational homotopy theory.
\newblock {\em Ann. of Math}, 90(2):205--295, 1969.

\bibitem{rezk}
C.~Rezk.
\newblock A model for the homotopy theory of homotopy theory.
\newblock {\em Transactions of the American Mathematical Society},
  353(3):973--1007, 2001.

\bibitem{segal}
G.~Segal.
\newblock Categories and cohomology theories.
\newblock {\em Topology}, 13(3):293--312, 1974.

\end{thebibliography}

\textsc{Gijs Heuts} \\
\textsc{\small Harvard University, Department of Mathematics, 1 Oxford Street, 02138 Cambridge, Massachusetts, USA} \\
\emph{E-mail address:} gheuts@math.harvard.edu \\
\par 

\textsc{Vladimir Hinich} \\
\textsc{\small University of Haifa, Department of Mathematics, Mount Carmel, Haifa 31905, Israel} \\
\emph{E-mail address:} hinich@math.haifa.ac.il \\
\par 

\textsc{Ieke Moerdijk} \\
\textsc{\small Radboud Universiteit Nijmegen, Institute for Mathematics, Astrophysics and Particle Physics, Heyendaalseweg 135, 6525 AJ Nijmegen, The Netherlands} \\
\emph{E-mail address:} i.moerdijk@math.ru.nl \par 

\end{document}